\newtheorem{thm}{Theorem}[subsection]
\newtheorem{prop}[thm]{Proposition}     
\newtheorem{lem}[thm]{Lemma}
\newtheorem{cor}[thm]{Corollary}
\theoremstyle{definition}
\newtheorem{defn}[thm]{Definition}
\newtheorem{example}[thm]{Example} 
\newtheorem{rem}[thm]{Remark}
\DeclareFontFamily{OT1}{rsfs}{}
\DeclareFontShape{OT1}{rsfs}{n}{it}{<-> rsfs10}{}
\DeclareMathAlphabet{\curly}{OT1}{rsfs}{n}{it}
\makeatletter \@addtoreset{equation}{subsection} \makeatother  
\makeatletter \@addtoreset{thm}{subsection} \makeatother  
\makeatletter \@addtoreset{equation}{subsection} \makeatother  
\makeatletter \@addtoreset{thm}{subsection} \makeatother  
\newcommand{\Pointsclosed}{Lemma~11}    
\newcommand{\Inverselimits}{Theorem~22} 
\newcommand{\Zerolocus}{Lemma~15}       
\newcommand{\Biginduced}{Lemma~18}      
\newcommand{\Esp}{{\bf Esp}} 
\newcommand{\Univ}{{\bf Univ}} 
\newcommand{\Alg}{{\bf Alg}} 
\newcommand{\An}{{\bf An}} 
\newcommand{\AS}{{\bf AS}} 
\newcommand{\LAS}{{\bf LAS}} 
\newcommand{\Ab}{{\bf Ab}} 
\newcommand{\A}{{\bf Kat}} 
\newcommand{\Sets}{{\bf Sets}} 
\newcommand{\Log}{{\bf Log}} 
\newcommand{\LogLRS}{{\bf LogLRS}} 
\newcommand{\PreLog}{{\bf PreLog}} 
\newcommand{\Mon}{{\bf Mon}} 
\newcommand{\Mod}{{\bf Mod}} 
\newcommand{\DS}{{\bf DS}}   
\newcommand{\LRS}{{\bf LRS}}  
\newcommand{\RS}{{\bf RS}}  
\newcommand{\LDS}{{\bf LDS}} 
\newcommand{\PRS}{{\bf PRS}} 
\newcommand{\fLDS}{{\bf fLDS}} 
\newcommand{\PLDS}{{\bf PLDS}} 
\newcommand{\LogSch}{{\bf LogSch}} 
\newcommand{\LogEsp}{{\bf LogEsp}} 
\newcommand{\IntLogEsp}{{\bf IntLogEsp}} 
\newcommand{\IntLDS}{{\bf IntLDS}} 
\newcommand{\IntPLDS}{{\bf IntPLDS}} 
\newcommand{\CohLogEsp}{{\bf CohLogEsp}} 
\newcommand{\fLogEsp}{{\bf FineLogEsp}} 
\newcommand{\FineLogEsp}{{\bf FineLogEsp}} 
\newcommand{\Sch}{{\bf Sch}} 
\newcommand{\Open}{{\bf Open}} 
\newcommand{\LMS}{{\bf LMS}} 
\newcommand{\fLMS}{{\bf FineLMS}} 
\newcommand{\PMS}{{\bf PMS}} 
\newcommand{\SMS}{{\bf SMS}} 
\newcommand{\fSMS}{{\bf FineSMS}} 
\newcommand{\MS}{{\bf MS}}   
\newcommand{\Top}{{\bf Top}}   
\newcommand{\Fans}{{\bf Fans}} 
\newcommand{\SFans}{{\bf SFans}} 
\newcommand{\Z}{{\bf Z}} 
\newcommand{\KN}{{\bf KN}}   
\renewcommand{\AA}{\mathbb{A}} 
\newcommand{\NN}{\mathbb{N}} 
\newcommand{\ZZ}{\mathbb{Z}} 
\newcommand{\RR}{\mathbb{R}} 
\newcommand{\QQ}{\mathbb{Q}} 
\newcommand{\PP}{\mathbb{P}}
\newcommand{\CC}{\mathbb{C}}
\newcommand{\GG}{\mathbb{G}}
\newcommand{\p}{\mathfrak{p}}
\newcommand{\q}{\mathfrak{q}}
\newcommand{\m}{\mathfrak{m}}
\newcommand{\C}{\curly{C}}  
\newcommand{\boundary}{\Delta}
\newcommand{\M}{\mathcal{M}} 
\newcommand{\N}{\mathcal{N}}
\renewcommand{\O}{\mathcal{O}} 
\newcommand{\str}{\mathcal{A}} 
\renewcommand{\u}{\underline}
\newcommand{\ov}{\overline}
\newcommand{\into}{\hookrightarrow}
\newcommand{\be}{\begin{eqnarray*}}
\newcommand{\ee}{\end{eqnarray*}}
\newcommand{\lms}[1]{ {#1}^\dagger }
\newcommand{\bne}[1]{\begin{eqnarray} \label{#1} }
\newcommand{\ene}{\end{eqnarray}}
\newcommand{\xym}{\xymatrix}
\newcommand{\bp}{\begin{pmatrix}}
\newcommand{\ep}{\end{pmatrix}}
\newcommand{\slot}{ \hspace{0.05in} {\rm \_} \hspace{0.05in} } 
\newcommand{\dirlim}{\displaystyle \lim_{ \longrightarrow } \,} 
\newcommand{\invlim}{\displaystyle \lim_{ \longleftarrow } \,} 
\newcommand{\sms}[1]{\ov{#1}}
\newcommand{\Hom}{\operatorname{Hom}}   
\newcommand{\Bilin}{\operatorname{Bilin}}   
\newcommand{\Ext}{\operatorname{Ext}}
\newcommand{\Aut}{\operatorname{Aut}}
\newcommand{\gp}{\operatorname{gp}}   
\newcommand{\Bl}{\operatorname{Bl}}
\newcommand{\sign}{\operatorname{sign}}
\newcommand{\Ker}{\operatorname{Ker}}
\newcommand{\Cok}{\operatorname{Cok}}
\newcommand{\Sym}{\operatorname{Sym}}    
\newcommand{\sat}{\operatorname{sat}}
\newcommand{\Id}{\operatorname{Id}}
\newcommand{\Spec}{\operatorname{Spec}}
\newcommand{\Proj}{\operatorname{Proj}}
\newcommand{\GL}{\operatorname{GL}}     
\numberwithin{equation}{subsection}
\begin{document}

\author{W.~D.~Gillam}
\address{Department of Mathematics, Bogazici University, Istanbul}
\email{william.gillam@boun.edu.tr}

\author{S.~Molcho}
\address{Department of Mathematics, University of Colorado Boulder, Boulder, CO}
\email{samouil.molcho@colorado.edu}

\date{\today}
\title[Log differentiable spaces]{Log differentiable spaces \\ and manifolds with corners}

\begin{abstract}  We develop a general theory of \emph{log spaces}, in which one can make sense of the basic notions of logarithmic geometry, in the sense of Fontaine-Illusie-Kato.  Many of our general constructions with log spaces are new, even in the algebraic setting.  In the differentiable setting, our theory yields a framework for treating manifolds with corners generalizing recent work of Kottke-Melrose.  We give a treatment of the theory of \emph{fans}, which are to monoids as schemes are to rings.  By adapting similar results from logarithmic algebraic geometry, we prove a general result on resolution of toric singularities which can be used to resolve singularities of a wide class of ``log smooth" spaces.  \end{abstract}

\maketitle

\arraycolsep=2pt 
\def\arraystretch{1.2}

\section*{Introduction}  It is generally agreed that a \emph{manifold with corners} is a locally ringed space over $\RR$ locally isomorphic to an open subset of $\RR_{\geq 0}^n$ (for various $n$) with its sheaf of smooth functions.  The definition of a \emph{morphism} of manifolds with corners, however, is debatable.  One might simply say that a morphism is just a morphism of locally ringed spaces over $\RR$---a ``smooth morphism" in the usual sense of differential geometry.   The problem with this is that manifolds with corners carry various additional structures---log tangent bundles, for example---which are not functorial with respect to such morphisms.  D.~Joyce \cite{Joyce} has proposed a theory of manifolds with corners which is at the opposite extreme in the sense that he allows very few morphisms.  For example, $x \mapsto x^2$ is not a valid Joyce endomorphism of $\RR_{\geq 0}.$

Kottke and Melrose \cite{KM} proposed a notion of morphism which is less restrictive than Joyce's, but controlled enough to allow the usual constructions to become functorial.  It turns out that the Kottke-Melrose formalism for manifolds with corners becomes very natural in the language of logarithmic geometry, as developed by Fontaine, Illusie, Kato, and others.  Unfortunately, the machinery of logarithmic geometry has largely been confined to algebraic geometry, where it has been extremely useful for various purposes.  One major goal of this work is to import this theory into the setting of differential geometry.  With a certain amount of formalism in place, it turns out that we can carry most of the usual algebro-geometric constructions over to our differential-geometric setting.

There are various points in the story where we will leave the setting of \emph{smooth manifolds}, as we have already done in a sense by introducing manifolds with corners.  At some point, the appearance of even more singular spaces will be inevitable.  For example, unlike the situation with smooth manifolds, the fibered product of transverse (in an appropriate ``log" sense) maps of manifolds with corners will not yield a manifold with corners, nor even a topological manifold with boundary.  We treat these singular spaces by working systematically in the category of \emph{differentiable spaces}, reviewed in \S\ref{section:differentiablespaces}.  This category will have a certain appeal to those with a background in algebraic geometry, but might be offputting to differential geometers.  All that really matters is that we have some category containing smooth manifolds as a full subcategory in which we can speak intelligibly about fibered products and so forth.

In fact, we work quite generally with an arbitrary category of \emph{log spaces}.  The basic input here is a reasonable category of ``spaces" (schemes, analytic spaces, or differentiable spaces, say), together with a distinguished monoid object $\AA^1$.  Our theory of \emph{positive log differentiable spaces} is recovered as the special case where ``spaces" are differentiable spaces and $\AA^1 = \RR_+$ is the monoid of non-negative real numbers under multiplication.  When possible, we have tried to phrase our log-geometric constructions in this general setting.  One such construction which seems to be new, even in the algebraic setting, is our notion of \emph{boundary} (\S\ref{section:boundary}).  In the differential setting it specializes, in particular, to Joyce's notion of the boundary of a manifold of corners, which, for the local model $\RR_+^n$ is the disjoint union of the coordinate hyperplanes.  For a toric variety, the boundary is the disjoint union of the toric divisors.  The boundary construction is necessary for a complete understanding of the Kottke-Melrose notion of ``$b$-map," in that the natural maps of log spaces we consider are actually the so-called ``interior $b$-maps" of their setup.  The boundary construction (and its relative variants) can be useful in the algebraic theory of gluing and degeneration \cite{logflatness}.

Monoids are analogous to rings.  A monoid $P$ has a prime spectrum $\Spec P$, which is a topological space equipped with a sheaf of monoids $\M_P$ (\S\ref{section:Spec}).  A \emph{fan} is then defined to be a topological space equipped with a sheaf of monoids which is locally isomorphic to $\Spec P$, for varying $P$, so that fans are to monoids what schemes are to rings.   There is a theory of modules over monoids (\S\ref{section:modules}) and a corresponding theory of coherent sheaves on fans, developed to the extent that we need it in \S\ref{section:monoidalspacesII}.  Just as the category of schemes has certain advantages over the category of varieties, the category of fans has advantages over the category of ``classical fans" familiar from toric varieties: one can form finite inverse limits and coproducts, for example.  The category of fans lies over any category of log spaces (\S\ref{section:realizationoffans}), so that fans serve as a combinatorial testing ground for all constructions in log geometry, much as the classical theory of toric varieties provides a testing ground for the general theory of schemes.  For example, in future work \cite{GM}, we intend to make use of fans as a testing ground for various Morse-theoretic constructions.

Some parts of this paper are specifically devoted to the differential setting.  We develop our theory of \emph{log smooth maps} only in this context, mostly to avoid setting up a cumbersome axiomatic theory of smooth maps in an arbitrary category of spaces.  Our definition of a log smooth map (\S\ref{section:logsmoothness}) is in terms of a ``chart criterion," like the one in \cite{Kat1}.  It takes a significant amount of work to show that such maps are stable under composition and base change.  In algebraic geometry, one can avoid this work by falling back on the notion of ``formally log smooth," which is not available in our differentiable setting.  (We discuss this point further in \S\ref{section:scholium}.)  Our arguments could be used to directly work out a theory of log smoothness in the algebraic setting without using the notion of formal log smoothness, and might therefore be of some independent interest.

Another advantage of our general approach is that one is led naturally to consider the relationships between various kinds of ``log spaces."  For example, we interpret the so-called ``Kato-Nakayama space" as a functor from log analytic spaces to positive log differentiable spaces (\S\ref{section:KNspace}).

In addition to the ``foundational" aspects of our work mentioned above, we also establish some new general results concerning ``resolution" of ``toric singularities," by which we mean ``resolution" of ``log smooth" spaces.  For now, let us agree that a ``log smooth" space is roughly a space with the sort of singularities that can appear in a toric variety.  We emphasize at this point that we work here in the general setting of log spaces, so that ``toric variety" has many possible interpretations.  Our results specialize to yield resolution theorems for fans, log analytic spaces, log schemes, and log differentiable spaces.

We close with a brief overview of our approach to resolution of singularities.  Given a log smooth space $X$, we want to produce a \emph{resolution} $Y \to X$ of $X$: a locally projective (roughly: ``proper") map from a smooth space to $X$ which is an isomorphism over the smooth part of $X$.  We remark here that any category of log spaces comes with a notion of ``locally projective morphism" because it comes with a notion of projective space $\PP^n$ since it recevies a functor from fans.   Our approach is to divide the resolution process into two parts: First, there is a combinatorial part, which always involves ``projectively subdividing cones," or, more abstractly, ``blowing up monoids."  Second, there is a ``geometric realization" step in which one passes from the combinatorial resolution to an actual map of (log) spaces.  For example, in the classical theory of toric varieties, there isn't much beyond the combinatorial step of correctly subdividing cones.  The geometric realization step is: apply the monoid algebra functor and the $\Spec$ functor.

When dealing with spaces that are not so rigidly combinatorial as toric varieties both steps become more involved.  There is a long history of such toric resolutions.  The first significant step beyond the classical case of toric varieties probably can be attributed to Mumford et al.\ in the theory of \emph{toroidal embeddings} \cite{KKMS}.  Significant generalizations were made by Kato \cite{Kat2}, using the language of log geometry.  This approach reaches its apex in work of Nizio\l\, \cite{Niz}, on which our approach is loosely based.  A certain amount of our work here is devoted to separating out the combinatorial and geometric realization steps in Nizio\l's work.  We then have to formulate the combinatorial step in a sufficiently general way that we can make sense of it in our general framework.  In fact, we formulate the combinatorial step purely as a problem of \emph{sharp monoidal spaces}---topological spaces equipped with a sheaf of sharp monoids (\S\ref{section:monoidalspacesI}).  We need a general result on functorial resolution of monoids (\S\ref{section:functorialresolution}), which, unfortunately, is proved through an appeal to functorial resolution for complex varieties, though it is a purely combinatorial statement.  For the geometric realization step, we generalize Kato's procedure from \cite[\S10]{Kat2}.  

\noindent {\bf Acknowledgements.}  The origins of the present paper can be traced to a talk given by Chris Kottke at Brown in the Fall of 2012 on his joint work \cite{KM} with Melrose.  Dan Abramovich and the present authors were in attendence.  We had been working on log geometry in other algebro-geometric contexts, thus it occurred naturally to us that the language of log geometry would provide a useful formalism for treating manifolds with corners and extending that theory to an appropriate class of more singular spaces.  The present work is the culmination of that effort.  We are especially grateful to Chris Kottke for explaining several points of \cite{KM} to us and for helping us work out and clarify our own point of view.  We also thank Abramovich for several helpful conversations.  Gillam was partially supported by an NSF Postdoctoral Fellowship.

\newpage

\setcounter{tocdepth}{2}
\tableofcontents

\newpage

\section{Monoids} \label{section:monoids}  For reasons of logical dependence we begin with a general discussion about monoids.  However, this section is used mainly as a sort a repository for general information about monoids needed elsewhere in the paper.  Most readers should probably just review the basic terminology of \S\ref{section:monoidbasics} and \S\ref{section:idealsandfaces} and then skip to \S\ref{section:differentiablespaces}.  The more delicate results on \emph{refinements} in \S\ref{section:monoidrefinements} will not be needed until \S\ref{section:geometricrealization}.

\subsection{Monoid basics} \label{section:monoidbasics} A \emph{monoid} is a set $P$ equipped with an associative, commutative binary operation $(p,q) \mapsto p+q$  (``addition") admitting a (necessarily unique and two-sided) identity element $0 \in P$.  A \emph{morphism} of monoids is a map between their underlying sets that commutes with addition and takes $0$ to $0$.  The category of monoids $\Mon$ has all direct and inverse limits.

An element $u \in P$ is called a \emph{unit} if there is a $v \in P$ so that $u + v =0$.  The units in $P$ form a submonoid denoted $P^* \subseteq P$.  If $P^* = \{ 0 \}$, then $P$ is called \emph{sharp}.  The monoid $\ov{P} := P / P^*$ is called the \emph{sharpening} of $P$.  The map $P \to \ov{P}$ is initial among maps from $P$ to a sharp monoid and $P \mapsto \ov{P}$ defines a functor from monoids to sharp monoids which is left adjoint to the inclusion.  A monoid homomorphism $h : Q \to P$ is called \emph{strict} iff $\ov{h} : \ov{Q} \to \ov{P}$ is an isomorphism.  A non-zero element $p$ of a sharp monoid is called \emph{irreducible} iff whenever $p=a+b$ either $a=0$ or $b=0$.  It can be shown that a fine, sharp monoid $P$ has only finitely many irreducible elements and that these elements generate $P$.

If $P^* = P$, then $P$ is called a \emph{group}.\footnote{All ``groups" considered in this paper are abelian.}  For any monoid $P$ there is a map of monoids $P \to P^{\gp}$ from $P$ to a group, through which any other map to a group factors uniquely.  That is: $P \mapsto P^{\rm gp}$ is left adjoint to the inclusion of groups into monoids.  More generally, if $S \subseteq P$ is any submonoid, then the \emph{localization} of $P$ at $S$, denoted $S^{-1} P$, is the monoid whose elements are equivalence classes $[p,s]$ of pairs $(p,s)$ where $p \in P$ and $s \in S$, where $(p,s) \sim (p',s')$ iff $p+s'+t = p'+s+t$ for some $t \in S$.  The natural map $P \to S^{-1}P$ given by $p \mapsto [p,0]$ is initial among maps from $P$ to a monoid taking elements of $S$ to units.  Let $h : Q \to P$ be a map of monoids, $T$ a submonoid of $Q$, $S$ its image under $h$.  Then we have a natural map $T^{-1}h : T^{-1}Q \to S^{-1} P$ and it is clear from the universal properties of the monoids in question that \bne{localizationpushout} & \xym@C+15pt{ Q \ar[r]^-h \ar[d] & P \ar[d] \\ T^{-1}Q \ar[r]^-{T^{-1}h} & S^{-1} P } \ene is a pushout diagram of monoids.

A monoid is called \emph{finitely generated} iff there is a surjective (on underlying sets) monoid homomorphism $\NN^r \to P$ for some finite $r$.  If $h : Q \to P$ is any surjection of finitely generated monoids, then it can be shown that the monoid \be E & := & \{ (q_1,q_2) \in Q \times Q : h(q_1)=h(q_2) \} \ee is also finitely generated.  It follows that every finitely generated monoid $P$ is \emph{finitely presented} in the sense that there is a coequalizer diagram of monoids $$ \NN^m \rightrightarrows \NN^n \to P $$ for some finite $m,n$ (often called a \emph{presentation of} $P$).  It also follows that any two presentations of $P$ receive maps from a third presentation of $P$.  

If $P$ is a finitely generated monoid, then $P^{\gp}$ is a finitely generated abelian group.  A monoid is called \emph{integral} iff $P \to P^{\gp}$ is injective.  For an arbitary monoid $P$, we let $P^{\rm int}$ denote the image of $P$ in $P^{\rm gp}$.  The map $P \to P^{\rm int}$ is initial among maps from $P$ to an integral monoid.  A monoid is called \emph{fine} iff it is finitely generated and integral.  We will mostly be interested in fine monoids.  The forgetful functor $\Mon \to \Sets$ has a left adjoint $X \mapsto \oplus_X \NN$ called the \emph{free monoid} functor.  A monoid in the essential image of this functor is called \emph{free}.  Any finitely generated free monoid is isomorphic to $\NN^n$ for some $n$.

\begin{lem} \label{lem:integral} Let $P$ be an integral monoid.  Then for any submonoid $Q \subseteq P$, $Q$ and $P/Q$ are integral. \end{lem}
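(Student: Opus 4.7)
The plan is to reformulate ``integral'' as a cancellation property and then reduce both claims to straightforward cancellation inside $P$.  Recall that $P \to P^{\gp}$ is injective precisely when $P$ is \emph{cancellative}, i.e., whenever $p + a = p + b$ in $P$, already $a = b$.  Indeed, by the usual construction of the Grothendieck group, $p_1$ and $p_2$ are identified in $P^{\gp}$ iff $p_1 + e = p_2 + e$ in $P$ for some $e \in P$, and this is exactly the failure of cancellation.  So the lemma is equivalent to: if $P$ is cancellative, then so are $Q$ and $P/Q$.

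For the submonoid $Q \subseteq P$, cancellativity is inherited immediately: any equation $q + a = q + b$ with $q,a,b \in Q$ is also an equation in $P$, and $P$ is cancellative.

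For the quotient $P/Q$, I first unpack the relation: two elements $p, p' \in P$ represent the same class in $P/Q$ iff there exist $q, q' \in Q$ with $p + q = p' + q'$ (this is the evident congruence making the elements of $Q$ become zero, as used already for the sharpening $\overline{P} = P/P^*$).  Now suppose $[a] + [c] = [b] + [c]$ in $P/Q$, i.e.\ $a + c + q = b + c + q'$ in $P$ for some $q, q' \in Q$.  By cancellativity of $P$ we may cancel $c$ to get $a + q = b + q'$, and this shows $[a] = [b]$ in $P/Q$.  Hence $P/Q$ is cancellative, therefore integral.

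There is really no serious obstacle here; the only step requiring any care is confirming that the quotient relation defining $P/Q$ is exactly $p \sim p'$ iff $p + q = p' + q'$ for some $q,q' \in Q$ (and in particular that this is already transitive, so no further saturation is needed).  Once that identification is made, cancellativity in $P$ does all the work for both halves of the statement.
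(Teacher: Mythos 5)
Your proof is correct. The paper leaves this lemma as an exercise, and your argument is the expected one: reformulating integrality as cancellativity (which is exactly what injectivity of $P \to P^{\gp}$ amounts to), noting that cancellativity passes trivially to submonoids, and cancelling inside $P$ for the quotient. Your identification of the congruence defining $P/Q$ --- namely $p \sim p'$ iff $p + q = p' + q'$ for some $q, q' \in Q$ --- agrees with the description the authors themselves use later (in the proof of the lemma on saturated monoids), and your check that this relation is already transitive and monoidal is the only point where any care is needed.
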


\begin{proof} Exercise. \end{proof}

\begin{lem} \label{lem:chartproduction} Let $A$ be an abelian group, $P$ an integral monoid, $\pi: P \to \ov{P}$ the sharpening map.  Suppose $h : A \to P^{\rm gp}$ is a group homomorphism such that the image of $\pi^{\rm gp} h : A \to \ov{P}^{\rm gp}$ contains $\ov{P}$.  Set $Q := h^{-1}(P)$ so that $h$ restricts to a monoid homomorphism $h : Q \to P$.  Then $Q^* = h^{-1}(P^*)$ and $\ov{h} : \ov{Q} \to \ov{P}$ is an isomorphism.  If $A$ and $\ov{P}$ are finitely generated, then $Q$ is fine. \end{lem}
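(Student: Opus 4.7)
The plan is to proceed in three steps: first establish $Q^* = h^{-1}(P^*)$, then that $\ov{h}$ is an isomorphism, and finally fineness under the finite-generation hypothesis.

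\textbf{Step 1.} For $Q^* \subseteq h^{-1}(P^*)$: if $q + q' = 0$ in $Q$, then $h(q) + h(q') = 0$ in $P$, so $h(q) \in P^*$. Conversely, given $a \in A$ with $h(a) \in P^*$, the inverse $-a$ exists in $A$ (since $A$ is a group) and $h(-a) = -h(a)$ lies in $P^* \subseteq P$; hence both $a$ and $-a$ lie in $Q$ and sum to $0$, so $a \in Q^*$.

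\textbf{Step 2.} The key algebraic fact, which uses integrality of $P$ essentially, is the identification
\[ (\pi^{\gp})^{-1}(\ov{P}) = P \]
inside $P^{\gp}$. To see this, note that $\pi^{\gp} : P^{\gp} \to \ov{P}^{\gp}$ factors through $P^{\gp}/P^*$ and induces an isomorphism $\ov{P}^{\gp} \cong P^{\gp}/P^*$ by universal properties; in particular $\ker \pi^{\gp} = P^*$. Given $x \in P^{\gp}$ with $\pi^{\gp}(x) \in \ov{P}$, choose $p \in P$ with $\pi(p) = \pi^{\gp}(x)$; then $x - p \in P^* \subseteq P$, so $x = p + (x - p) \in P$ since $P$ is closed under addition inside $P^{\gp}$ (here I use integrality to regard $P$ as a submonoid of $P^{\gp}$). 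With this in hand, surjectivity of $\ov{h}$ is immediate: any $\bar{p} \in \ov{P}$ equals $\pi^{\gp}(h(a))$ for some $a \in A$ by hypothesis, and the previous observation forces $h(a) \in P$, so $a \in Q$ and $\ov{h}([a]) = \bar{p}$. For injectivity: if $\ov{h}([q]) = 0$ then $h(q) \in P^*$, so by Step 1 we get $q \in Q^*$ and $[q] = 0$ in $\ov{Q}$.

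\textbf{Step 3.} Assume now $A$ and $\ov{P}$ are finitely generated. Since $Q$ is a submonoid of the abelian group $A$, the universal map $Q \to Q^{\gp}$ agrees with the inclusion $Q \hookrightarrow A$, so $Q$ is integral (cf. Lemma~\ref{lem:integral}). For finite generation, observe that $Q^* = h^{-1}(P^*)$ is a subgroup of the finitely generated abelian group $A$, hence is itself finitely generated, say by $v_1, \ldots, v_\ell$. Lift a finite generating set $\bar{q}_1, \ldots, \bar{q}_n$ of $\ov{Q} \cong \ov{P}$ through the quotient $Q \to \ov{Q}$ to elements $q_1, \ldots, q_n \in Q$. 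For any $q \in Q$, writing $[q] = \sum a_i \bar{q}_i$ with $a_i \in \NN$ shows $q - \sum a_i q_i \in Q^*$, which is a $\ZZ$-linear combination of the $v_j$ and hence an $\NN$-linear combination of the $\pm v_j$. Thus $\{q_1, \ldots, q_n\} \cup \{\pm v_1, \ldots, \pm v_\ell\}$ generates $Q$ as a monoid.

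The substantive content is concentrated in Step 2, and specifically in the identification $(\pi^{\gp})^{-1}(\ov{P}) = P$, which is where integrality of $P$ is genuinely used; Step 1 is a one-line argument exploiting that $A$ is a group, and Step 3 is routine once Steps 1 and 2 are in place.
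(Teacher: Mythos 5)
Your Steps 1 and 3 are correct and match the paper's (much terser) argument, and your explicit identification $(\pi^{\gp})^{-1}(\ov{P}) = P$ is a worthwhile elaboration of the surjectivity claim that the paper dispatches in one line. The problem is your injectivity argument in Step 2. You verify only that $\ov{h}^{-1}(0) = \{0\}$, i.e.\ that $\ov{h}$ has trivial ``kernel.'' For monoid homomorphisms this does \emph{not} imply injectivity: the addition map $\NN^2 \to \NN$, $(a,b) \mapsto a+b$, is a surjection of sharp integral monoids whose preimage of $0$ is $\{0\}$, yet it is far from injective. So as written, Step 2 does not establish that $\ov{h}$ is an isomorphism.

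The gap is easily closed with the ingredients you already have, and this is exactly what the paper does: suppose $\ov{h}([q_1]) = \ov{h}([q_2])$ for $q_1, q_2 \in Q$. Then $h(q_1)$ and $h(q_2)$ have the same image in $\ov{P} = P/P^*$, and since $P^*$ is a group and $P$ is integral this means $h(q_1) = h(q_2) + u$ for some $u \in P^*$. Hence $h(q_1 - q_2) = u \in P^*$, so $q_1 - q_2 \in h^{-1}(P^*) = Q^*$ by your Step 1, and therefore $[q_1] = [q_2]$ in $\ov{Q}$. (Alternatively, your key fact shows $Q = (\pi^{\gp}h)^{-1}(\ov{P})$ and $\ov{h}$ is induced by the \emph{group} homomorphism $\pi^{\gp}h$, whose kernel lands in $Q^*$; either route works, but some such argument must appear.)
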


\begin{proof} The equality $Q^* = h^{-1}(P^*)$ is clear.  The map $\ov{h}$ is surjective by the assumption on the image of $\pi^{\rm gp} h$.  For injectivity, suppose $\ov{h}(q_1) = \ov{h}(q_2)$ for $q_1,q_2 \in Q$.  Then $h(q_1) = h(q_2)+u$ for some $u \in P^*$.  But then $h^{\rm gp}(q_1-q_2) = u$ implies that $q_1-q_2 \in Q^*$, which implies $q_1=q_2$ in $\ov{Q}$.  The monoid $Q$ is always integral since it is a submonoid of $A$.  When $A$ is finitely generated, so is the subgroup $h^{-1}(P^*)=Q^*$.  If $\ov{P}$ is also finitely generated, then it is easy to see that $Q$ is finitely generated using finite generation of $Q^*$ and $\ov{Q} \cong \ov{P}$. \end{proof}

A monoid $P$ is called \emph{saturated} iff it is integral and whenever $n q \in P$ for some $q \in P^{\gp}$ and some positive integer $n$, we have $q \in P \subseteq P^{\gp}$.  For every integral monoid $P$, we let \be P^{\rm sat} & := & \{ q \in P^{\rm gp} : nq \in P {\rm \; for \; some \; } n > 0 \}. \ee  We extend this definition to possibly non-integral $P$ by setting $P^{\rm sat} := (P^{\rm int})^{\rm sat}$.  The map $P \to P^{\rm sat}$ is initial among monoid homomorphisms from $P$ to a saturated monoid and is injective when $P$ is integral.

We take this opportunity to emphasize that the functors $P \mapsto P^{\rm gp}$, $P \mapsto P^{\rm int}$, $P \mapsto P^{\rm sat}$, and $P \mapsto \ov{P}$ are all left adjoints to (and retractions of) inclusions of various full subcategories of $\Mon$.  In particular, all of these functors preserve direct limits---we will use this fact often without further comment.

\begin{lem} \label{lem:saturated}  Suppose $P$ is a saturated monoid.  If $P$ is sharp, then $P^{\rm gp}$ is torsion-free.  For any submonoid $Q \subseteq P$, the quotient $P/Q$ is saturated.  In particular, $\ov{P}^{\rm gp}$ is torsion-free. \end{lem}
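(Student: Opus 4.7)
The plan is to handle the three claims in order, letting parts (1) and (3) fall out with little extra work once the main content of part (2) is in place.

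For the first claim I would take any torsion element $x$ of $P^{\gp}$ and argue directly. If $n x = 0$ for some positive integer $n$, then $n x \in P$, so saturation of $P$ gives $x \in P$; applying the same reasoning to $-x$ shows $-x \in P$ as well. Thus $x$ lies in $P^*$, which is trivial by the sharpness assumption, so $x = 0$.

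The real work is the second claim, which I plan to prove directly from the definitions after recording the key identification $(P/Q)^{\gp} = P^{\gp}/Q^{\gp}$. This identification follows from the observation in the text that group completion is a left adjoint, hence preserves the colimit that defines the monoidal quotient $P/Q$. By Lemma~\ref{lem:integral} we already know $P/Q$ is integral, so $P/Q$ embeds into $(P/Q)^{\gp}$. Suppose $\bar x \in (P/Q)^{\gp}$ satisfies $n \bar x \in P/Q$ for some positive integer $n$. Lift $\bar x$ to an element $x \in P^{\gp}$ and represent $n \bar x$ by some $y \in P$. Then $n x - y$ lies in $Q^{\gp}$, and so it can be written as $q - q'$ with $q, q' \in Q$. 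A short manipulation gives $n (x + q') = y + q + (n-1) q'$, which is an element of $P$. Setting $z := x + q'$, saturation of $P$ now forces $z \in P$; since $q'$ maps to $0$ in $P/Q$, the image of $z$ in $P/Q$ is $\bar x$, placing $\bar x$ in $P/Q$ as required. The main obstacle here is simply keeping careful track of the difference between ``$\bar x \in (P/Q)^{\gp}$'' and ``$\bar x \in P/Q$''; the argument itself is just a single lift followed by one application of saturation in $P$.

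Part (3) is then immediate: the sharpening $\ov P = P/P^*$ is saturated by part (2), and is sharp by its defining universal property, so part (1) applied to $\ov P$ yields that $\ov P^{\gp}$ is torsion-free.
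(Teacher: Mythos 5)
Your proposal is correct and follows essentially the same route as the paper: the torsion-freeness claim is the same two-sided application of saturation plus sharpness, the quotient claim is the same computation (identify $(P/Q)^{\gp}$ with $P^{\gp}/Q^{\gp}$, lift, add a suitable multiple of an element of $Q$ to land $n$ times a representative in $P$, then invoke saturation of $P$), and the final claim is deduced from the first two exactly as the paper intends. No gaps.
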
  

\begin{proof} For the first statement, suppose $p_1-p_2 \in P^{\rm gp}$ is torsion for some $p_1,p_2 \in P$.  Then $n(p_1-p_2)=0$ in $P^{\rm gp}$ for some $n>0$.  But then $n(p_2-p_1)$ is also zero in $P^{\rm gp}$, so, in particular, $n(p_1-p_2), n(p_2-p_1) \in P \subseteq P^{\rm gp}$, hence $p_1-p_2, p_2-p_1 \in P$ because $P$ is saturated.  But then $p_1-p_2$ is a unit in $P$, so if $P$ is sharp, $p_1-p_2=0$.  

For the second statement, first note that $F$ is integral as it is a submonoid of the integral monoid $P$ (Lemma~\ref{lem:integral}).  

For the third statement, we first note that $P/Q$ is integral by Lemma~\ref{lem:integral}.  Two elements $a,b \in P$ have the same image in $P/Q$ iff there are $q_1,q_2 \in Q$ with $a+q_1 = b + q_2$ in $P$.  A typical element of $(P/Q)^{\rm gp} = P^{\rm gp} / Q^{\rm gp}$ can be written as an equivalence class $[p_1-p_2]$ for $p_1,p_2 \in P$.  Fix $n > 0$.  To say that $n[p_1-p_2] \in (P/Q)$ is to say that there is a $p \in P$ and $q_1,q_2 \in Q$ such that $n(p_1-p_2) + q_1 = q_2 + p$ in $P^{\rm gp}$.  Add $(n-1)q_1$ to both sides and rearrange to conclude that \be n(p_1-p_2+q_1) = q_2 + (n-1)q_1+p \ee in $P^{\rm gp}$.  The right hand side is in $P$ and $P$ is saturated, so $p_1-p_2+q_1 \in P$.  But $[p_1-p_2+q_1] = [p_1-p_2]$ in $(P/Q)^{\rm gp}$ and $[p_1-p_2+q_1] \in (P/Q)$, so $[p_1-p_2] \in (P/Q)$ as desired. \end{proof}

\begin{defn} \label{defn:fs} A monoid $P$ is called \emph{fs} iff $P$ is both fine and saturated. \end{defn}

If $P$ is a sharp fs monoid, then $P^{\rm gp} \cong \ZZ^r$ is a finitely generated free abelian group.  It can be shown that the rank of $P^{\rm gp}$ coincides with the Krull dimension of $P$, defined in terms of chains of prime ideals of $P$ (\S\ref{section:idealsandfaces}), though we do not need this result. 

\begin{lem} \label{lem:splitting} Let $P$ be an integral monoid, $A \subseteq P^*$ a sub\emph{group}.  The quotient $P/A$ is also integral.  If the exact sequence $$0 \to A \to P^{\rm gp} \to (P/A)^{\rm gp} \to 0$$ of abelian groups splits, then the quotient map $P \to P/A$ admits a section $s : P/A \to P$ and any such section yields an isomorphism of monoids $P \cong A \oplus P/A$. \end{lem}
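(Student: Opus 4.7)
The plan is to handle the three claims of the lemma in succession. First, the integrality of $P/A$ is immediate from Lemma~\ref{lem:integral} applied to the submonoid $A \subseteq P$; in particular $P/A$ embeds into $(P/A)^{\rm gp}$, so maps out of $P/A$ can be specified by restricting group homomorphisms out of $(P/A)^{\rm gp}$.

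Second, given the splitting $\sigma : (P/A)^{\rm gp} \to P^{\rm gp}$ of the exact sequence $0 \to A \to P^{\rm gp} \to (P/A)^{\rm gp} \to 0$, I would produce the section $s : P/A \to P$ simply by restricting $\sigma$ to $P/A \hookrightarrow (P/A)^{\rm gp}$. The only verification required is that this restriction actually lands in $P \subseteq P^{\rm gp}$. For $p \in P$ with class $[p] \in P/A$, both $p$ and $\sigma([p])$ project to $[p]$ under $P^{\rm gp} \to (P/A)^{\rm gp}$, so $a := \sigma([p]) - p$ lies in the kernel $A$; since $A \subseteq P^*$ by hypothesis, the element $a$, and hence $\sigma([p]) = p + a$, lies in $P$. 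The resulting map $s$ is a monoid homomorphism because $\sigma$ is, and it visibly sections $P \to P/A$.

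Third, for any such section $s$, I would exhibit the isomorphism $P \cong A \oplus (P/A)$ via the mutually inverse monoid homomorphisms $\phi(a,x) := a + s(x)$ and $\psi(p) := (p - s([p]),\,[p])$, where the first coordinate of $\psi$ is computed in $P^{\rm gp}$ but automatically lies in $A$ because it projects to $0$ in $(P/A)^{\rm gp}$. Checking $\phi \circ \psi = \Id_P$ and $\psi \circ \phi = \Id_{A \oplus P/A}$ is a direct calculation using the fact that $s$ sections the quotient.

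The entire argument hinges on a single non-formal point, namely that restricting the group-theoretic splitting $\sigma$ to $P/A$ yields values in $P$. This step uses the hypothesis $A \subseteq P^*$ in an essential way: it is precisely because the correction term $\sigma([p]) - p$ lies in $A$, a group of units, that it lies in $P$ rather than merely in $P^{\rm gp}$. Everything else is a routine verification of the universal properties of direct sums and quotients in $\Mon$.
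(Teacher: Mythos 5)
Your proof is correct and follows exactly the route of the paper's (much terser) proof: restrict the group-theoretic splitting to $P/A$, observe that the correction term $\sigma([p])-p$ lies in the kernel $A\subseteq P^*\subseteq P$ so the restriction lands in $P$, and then check that $(a,x)\mapsto a+s(x)$ is an isomorphism. You have simply filled in the details the paper leaves as "easy to see."
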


\begin{proof} The quotient $P/A$ is integral by Lemma~\ref{lem:integral}.  Set $Q := P/A$.  If $s :  Q^{\rm gp} \to P^{\rm gp}$ is a splitting of the sequence of groups, then it is easy to see that $s(q) \in P$ for $q \in Q \subseteq Q^{\rm gp}$ using the fact that $A$ is a group.  Similarly using the fact that $A$ is a group, one sees that $(\subseteq , s) : A \oplus Q \to P$ is an isomorphism. \end{proof}

\begin{lem} \label{lem:strictfiniteness} Let $P$ be a finitely generated monoid, $h : Q \to P$ a strict, surjective monoid homomorphism.  There is a finitely generated submonoid $S \subseteq Q$ such that $h|S : S \to P$ is strict and surjective. \end{lem}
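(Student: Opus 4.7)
The plan is to construct $S$ explicitly as the submonoid generated by lifts of generators of $P$, augmented by finitely many extra elements that serve as ``witnesses'' both for the units of $P$ and for the relations that present $P$. The key input is that $P$, being finitely generated, is actually finitely presented (as recalled in the excerpt), so only finitely many relations need to be accommodated.

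I would begin by choosing a finite presentation of $P$: generators $p_1,\ldots,p_n$ together with finitely many defining relations $\sum_i \alpha_{k,i}p_i = \sum_i \beta_{k,i}p_i$ for $k=1,\ldots,N$. Since $h$ is surjective, pick lifts $q_i \in Q$ with $h(q_i)=p_i$. Let $I := \{i : p_i \in P^*\}$. Strictness of $h$ forces $q_i \in Q^*$ whenever $i \in I$ (since $h(q_i) = p_i \in P^*$ implies $\ov{q_i}=0$ in $\ov{Q}$), so we may choose an inverse $q_i' \in Q$ for each such $i$. Similarly, for each defining relation, the two sides $\sum_i \alpha_{k,i}q_i$ and $\sum_i \beta_{k,i}q_i$ have equal image in $P$, hence equal image in $\ov{Q}$ via $\ov{h}$, so strictness yields $u_k,v_k \in Q^*$ with $\sum_i \alpha_{k,i}q_i + u_k = \sum_i \beta_{k,i}q_i + v_k$ in $Q$; choose inverses $u_k',v_k' \in Q$ as well.

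Now let $S \subseteq Q$ be the submonoid generated by the $q_i$, the $q_i'$ for $i \in I$, and the $u_k,u_k',v_k,v_k'$ for $k=1,\ldots,N$. By construction $S$ is finitely generated, and $h|S : S \to P$ is surjective because the images of the $q_i$ generate $P$. To verify strictness, I would exhibit an explicit inverse $\ov{\phi} : \ov{P} \to \ov{S}$ to $\ov{h|S} : \ov{S} \to \ov{P}$, defined on generators by $\ov{p_i} \mapsto \ov{q_i}$. Well-definedness requires that every defining relation of $\ov{P}$ hold in $\ov{S}$: the relations $\ov{p_i} = 0$ for $i \in I$ are realized because $q_i \in S^*$ (with inverse $q_i' \in S$), and each lifted relation $\sum_i \alpha_{k,i}\ov{q_i} = \sum_i \beta_{k,i}\ov{q_i}$ holds because the equation $\sum_i \alpha_{k,i}q_i + u_k = \sum_i \beta_{k,i}q_i + v_k$ valid in $Q$ transfers to $S$ (both sides lie in $S$, and $S \subseteq Q$) and then collapses in $\ov{S}$ since $u_k,v_k \in S^*$. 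The composition $\ov{h|S} \circ \ov{\phi}$ is the identity on the generators of $\ov{P}$, while $\ov{\phi} \circ \ov{h|S}$ is the identity on $\ov{S}$ because every element of $S$ is congruent modulo $S^*$ to a non-negative integer combination of the $\ov{q_i}$ for $i \notin I$, all other generators of $S$ being units.

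The principal obstacle is guaranteeing that the finitely many defining relations of $P$ lift to equalities in $S$ modulo $S^*$, rather than only in $Q$ modulo $Q^*$; the role of the unit witnesses $u_k,v_k$ (together with their inverses) is precisely to enforce this, so that the candidate inverse map $\ov{\phi}$ is actually well defined.
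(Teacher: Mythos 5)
Your proof is correct, and it follows the same basic strategy as the paper's---lift a finite generating set of $P$ to $Q$ and then adjoin finitely many units of $Q$ that witness strictness---but the two arguments diverge in which finiteness fact drives the choice of witnesses and in how strictness of $h|S$ is verified. The paper takes $T$ to be the submonoid generated by the lifts, forms the equalizer $E \subseteq \ov{T}^2$ of the two maps $\ov{h|T}\pi_i : \ov{T}^2 \to \ov{P}$, invokes the fact that a finite inverse limit of finitely generated monoids is finitely generated to extract finitely many pairs $(\ov{a}_j,\ov{b}_j)$ generating $E$, adjoins the units $\pm u_j$ with $b_j = a_j + u_j$, and then checks that the corresponding equalizer for $\ov{h|S}$ collapses to the diagonal. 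You instead use the derived consequence of that same finiteness fact---finite presentability of $P$---to select the witnesses from the defining relations, and you verify strictness by writing down an explicit two-sided inverse $\ov{\phi}$ to $\ov{h|S}$; this is arguably more concrete and makes the role of each adjoined unit transparent, at the cost of having to produce a presentation of $\ov{P}$. On that last point there is one step you should make explicit: for your list of relations (the lifted $R_k$ together with $\ov{p}_i = 0$ for $i \in I$) to present $\ov{P} = P/P^*$, you need $P^*$ to be generated by $\{p_i : i \in I\}$; this is not automatic from the definitions but is exactly the content of Lemma~\ref{lem:faces} (a face of $P$ is generated by the generators of $P$ it contains, applied to the face $P^*$), so it is available---just cite it.
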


\begin{proof} Choose a finite set of generators $p_1,\dots,p_k$ for $P$ and lifts $t_1,\dots,t_k$ of the $p_i$ to $Q$ and let $T \subseteq Q$ be the submonoid generated by the $t_i$, so that $h|T : T \to P$ is surjective and hence $\ov{h|T} : \ov{T} \to \ov{P}$ is also surjective.  Let $E \subseteq \ov{T}^2$ be the equalizer of $\ov{h|T}\pi_1,\ov{h|T}\pi_2 : \ov{T}^2 \to \ov{P}$ so that $E$ contains the diagonal copy of $\ov{T}$ and this containment is an equality iff $\ov{h|T}$ is an isomorphism.  Since $\ov{T}^2$ and $\ov{P}$ are finitely generated, it follows from general finiteness results (a finite inverse limit of finitely generated monoids is finitely generated) that $E$ is finitely generated, so there are elements $a_1,\dots,a_n,b_1, \dots,b_n \in T$ such that $(\ov{a}_1,\ov{b}_1),\dots,(\ov{a}_n,\ov{b}_n)$ generate $E$.  Since $h$ is strict, there are units $u_i \in Q^*$ such that $b_i = a_i+u_i$ for $i=1,\dots,n$.  Let $S$ be the submonoid of $Q$ generated by the $t_i$ and the $\pm u_i$, so $h|S :S \to P$ is surjective and any element of $S$ can be written in the form $t+\sum_{i=1}^n m_i u_i$ for $t \in T$, $m_i \in \ZZ$.  Then the inclusion $j : T \into S$ clearly induces a surjection $\ov{j} : \ov{T} \to \ov{S}$ and $\ov{h|S}\ov{j} = \ov{h|T}$, so if we define $F$ to be the equalizer of $\ov{h|S}\pi_1,\ov{h|S}\pi_2 : \ov{S}^2 \to \ov{P}$, then we have a surjection $E \to F$ so any set of generators for $E$ maps to a set of generators for $F$.  But our construction of $S$ ensures that the generators of $E$ map to the diagonal copy of $\ov{S}$ in $F$, so that $F$ is equal to this diagonal copy of $\ov{S}$ and hence $\ov{h|S}$ is an isomorphism, so $S$ is as desired. \end{proof}

\subsection{Ideals and faces}  \label{section:idealsandfaces} Recall \cite[1.3]{Ogus}, \cite[5.1]{Kat2} that an \emph{ideal} $I$ of a monoid $P$ is a subset $I \subseteq P$ so that $I+P \subseteq I$.  An ideal $\p \subseteq P$ is called \emph{prime} if its complement $P \setminus \p$ is a submonoid of $P$.  The complement of a prime ideal is called a \emph{face}.  Equivalently, a \emph{face} $F$ of $P$ is a submonoid $F \subseteq P$ whose complement is an ideal (necessarily prime).  Yet another equivalent formulation not mentioning ideals: a submonoid $F \subseteq P$ is a face iff, for all $p,q \in P$, $p+q \in F$ implies $p,q \in F$.  The \emph{codimension} of a face $F \subseteq P$ is the rank of the abelian group $(P/F)^{\rm gp}$ (defined when this abelian group is finitely generated).  The next two lemmas are easy exercises with the definitions.

\begin{lem} \label{lem:quotientbyface} Let $P$ be a monoid, $F \subseteq P$ a face.  Then $F=P$ iff $P/F=0$. \end{lem}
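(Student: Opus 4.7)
The forward direction is immediate: if $F = P$ then the equivalence relation defining $P/F$ identifies every element with $0$, so $P/F$ is the trivial monoid $0$.

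For the converse, the plan is to use the explicit description of the equivalence relation on $P$ defining the quotient $P/F$, namely $a \sim b$ iff there exist $f_1, f_2 \in F$ with $a + f_1 = b + f_2$ in $P$ (this is the description already used in the proof of Lemma~\ref{lem:saturated}). Assuming $P/F = 0$, every $p \in P$ satisfies $p \sim 0$, so there exist $f_1, f_2 \in F$ with $p + f_1 = f_2$. The key observation is then that $p + f_1$ lies in $F$, so by the face property of $F$ (namely that $p + q \in F$ implies $p, q \in F$) we conclude $p \in F$. Since $p$ was arbitrary, $F = P$.

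The proof is essentially a one-line unraveling of definitions, so I do not expect any genuine obstacle; the only subtlety is remembering to invoke the face condition, rather than merely that $F$ is a submonoid, in the last step.
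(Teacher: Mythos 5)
Your proof is correct. The paper leaves this lemma as an easy exercise, and your argument---unraveling the congruence $a \sim b$ iff $a+f_1 = b+f_2$ for some $f_1,f_2 \in F$ (which is indeed already a congruence, so no transitive closure issues arise) and then invoking the face condition on $p+f_1 = f_2 \in F$---is exactly the intended one.
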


\begin{lem} \label{lem:sharpening} Let $P$ be a monoid, $F \subseteq P$ a face.  The natural map $F^{\rm gp} \to F^{-1} P$ is an isomorphism onto $(F^{-1} P)^*$ and we have $\ov{ F^{-1} P } = P/F$.  In particular, $P/F$ is sharp.  \end{lem}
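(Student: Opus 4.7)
The plan is to exploit the explicit construction of the localization: elements of $F^{-1}P$ are equivalence classes $[p,f]$ with $p \in P$, $f \in F$, where $(p,f) \sim (p',f')$ iff $p + f' + t = p' + f + t$ for some $t \in F$. All three assertions of the lemma then follow from bookkeeping with this description, together with the defining property of a face ($p+q \in F$ implies $p, q \in F$).

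For the first assertion, I would check that the natural map $F^{\rm gp} \to F^{-1}P$, given by $[f_1 - f_2] \mapsto [f_1, f_2]$, lands in the units: indeed $[f_1, f_2] + [f_2, f_1] = [f_1 + f_2, f_1 + f_2] = 0$. Injectivity is immediate because the relation $[f_1, f_2] = [f'_1, f'_2]$ in $F^{-1}P$ unpacks to an equation $f_1 + f'_2 + t = f'_1 + f_2 + t$ that takes place entirely inside $F$, hence yields $f_1 - f_2 = f'_1 - f'_2$ in $F^{\rm gp}$. For surjectivity onto $(F^{-1}P)^*$, if $[p,f]$ admits an inverse $[q, g]$, then $[p+q, f+g] = [0,0]$ translates to $p + q + t = f + g + t$ for some $t \in F$, so $p + q + t \in F$; the face property then forces $p \in F$, and $[p,f]$ is the image of $[p - f] \in F^{\rm gp}$.

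For the second assertion, I would construct mutually inverse monoid maps $P/F \leftrightarrow \ov{F^{-1}P}$. In one direction, the composition $P \to F^{-1}P \to \ov{F^{-1}P}$ sends every $f \in F$ to a unit, hence to $0$ in the sharpening, and therefore factors through the quotient $P/F$. In the other direction, the assignment $[p, f] \mapsto [p]$ defines a monoid map $F^{-1}P \to P/F$ (well-defined because the relation $p + f' + t = p' + f + t$ becomes $[p] = [p']$ in $P/F$, as $f, f', t$ are killed there) which sends units to $0$ by the first assertion, and therefore factors through $\ov{F^{-1}P}$. By construction the resulting maps $P/F \leftrightarrow \ov{F^{-1}P}$ are mutually inverse, proving $\ov{F^{-1}P} = P/F$. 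Sharpness of $P/F$ is then automatic since it is a sharpening; alternatively, if $[p] + [p'] = 0$ in $P/F$, then $p + p' + f \in F$ for some $f \in F$, and two applications of the face property yield $p, p' \in F$, so $[p] = [p'] = 0$.

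The argument is entirely routine bookkeeping with the equivalence relation defining $F^{-1}P$; the only mildly delicate point is verifying that $[p,f] \mapsto [p]$ is well-defined on equivalence classes and descends to the sharpening, but both reductions are immediate from the face axiom.
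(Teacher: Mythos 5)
Your proof is correct and is exactly the direct unwinding of the definitions that the paper intends (it explicitly leaves this lemma as an ``easy exercise with the definitions'' and gives no written proof). All the key points --- the face axiom forcing $p\in F$ whenever $[p,f]$ is a unit, the well-definedness of $[p,f]\mapsto[p]$, and the descent to the sharpening --- are handled properly.
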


\begin{lem} \label{lem:fssplitting} Let $P$ be an fs monoid, $F \subseteq P$ a face.  Then $(P/F)^{\rm gp}$ is free and there is a (non-canonical) isomorphism of monoids $F^{-1} P \cong F^{\rm gp} \oplus P/F$ compatible with the projection $F^{-1} P \to \ov{F^{-1}P}=P/F$.  In particular, when $F=P^*$ we obtain a splitting $P = P^* \oplus \ov{P}$.  \end{lem}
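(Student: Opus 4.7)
The plan is to apply Lemma~\ref{lem:splitting} to the localization $F^{-1}P$ rather than to $P$ itself, so that the face $F$ becomes (by Lemma~\ref{lem:sharpening}) the full unit group---the hypothesis $A \subseteq P^*$ in Lemma~\ref{lem:splitting} then comes for free.  The only real thing to check is that the relevant short exact sequence of abelian groups splits, and for this it suffices to show that $(P/F)^{\rm gp}$ is free abelian.

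First I would establish that $(P/F)^{\rm gp}$ is free.  The monoid $P/F$ is integral by Lemma~\ref{lem:integral}, saturated by Lemma~\ref{lem:saturated}, sharp by Lemma~\ref{lem:sharpening}, and finitely generated because the images of a finite generating set of $P$ generate $P/F$.  Thus $P/F$ is a sharp fs monoid, so Lemma~\ref{lem:saturated} gives that $(P/F)^{\rm gp}$ is torsion-free.  Being a finitely generated torsion-free abelian group, it is free.

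Next I would apply Lemma~\ref{lem:splitting} to the integral monoid $F^{-1}P$ with subgroup $A := F^{\rm gp}$.  By Lemma~\ref{lem:sharpening}, the natural map $F^{\rm gp} \to F^{-1}P$ is an isomorphism onto $(F^{-1}P)^*$ (so $A$ is indeed a subgroup of the units), and $(F^{-1}P)/F^{\rm gp} = \ov{F^{-1}P} = P/F$.  The resulting short exact sequence of abelian groups
$$0 \to F^{\rm gp} \to (F^{-1}P)^{\rm gp} \to (P/F)^{\rm gp} \to 0$$
splits because its right-hand term is free by the previous paragraph.  Lemma~\ref{lem:splitting} then produces the desired isomorphism $F^{-1}P \cong F^{\rm gp} \oplus P/F$, and the construction in that lemma (via a section of the projection) makes it clear that the isomorphism is compatible with the sharpening map to $P/F$.

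For the last statement, take $F = P^*$: since $F$ already consists of units in $P$, the universal property of localization gives $F^{-1}P = P$, while $F^{\rm gp} = P^*$ and $P/F = \ov{P}$, so the previous paragraph yields $P \cong P^* \oplus \ov{P}$.  There is no serious obstacle here---the whole proof is a matter of correctly identifying the pieces so that the preceding lemmas assemble into the result; the one conceptual step is the passage to $F^{-1}P$, which trades the face $F$ for the unit group of an integral monoid and thereby puts us in the situation of Lemma~\ref{lem:splitting}.
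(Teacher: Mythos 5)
Your proof is correct and follows essentially the same route as the paper's: show $P/F$ is a sharp fs monoid so that $(P/F)^{\rm gp}$ is free, split the resulting exact sequence of groups, and apply Lemma~\ref{lem:splitting} to the subgroup $F^{\rm gp} = (F^{-1}P)^*$ of the localization. Your write-up is just slightly more explicit about why $P/F$ is fs and about the compatibility with the sharpening projection.
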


\begin{proof} The monoid $P/F$ is a sharp fs monoid by Lemma~\ref{lem:sharpening} and Lemma~\ref{lem:saturated}, so the group $(P/F)^{\rm gp} = P^{\rm gp} / F^{\rm gp}$ is free (Lemma~\ref{lem:saturated}), hence the natural exact sequence $$0 \to F^{\rm gp} \to P^{\rm gp} \to (P/F)^{\rm gp} \to 0 $$ splits (non-canonically).  Note that $P^{\rm gp} = (F^{-1} P)^{\rm gp}$, so we obtain the desired splitting of monoids by applying Lemma~\ref{lem:splitting} to the subgroup $F^{\rm gp} = (F^{-1} P)^*$. \end{proof}

\begin{lem} \label{lem:faces} Let $P$ be a monoid, $F \subseteq P$ a face of $P$.  If $P$ is finitely generated (resp.\ saturated, integral, fine, fs), then so is $F$.  In fact, if $p_1,\dots,p_n$ generate $P$, then $\{ p_i : p_i \in F \}$ generates $F$.  \end{lem}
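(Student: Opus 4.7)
The plan is to prove the ``in fact'' statement first, since all finite-generation claims (finitely generated, fine, fs) follow from it immediately, and then handle integrality and saturation separately using the face property.

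For the ``in fact'' part, I would take $p_1,\dots,p_n$ generating $P$ and set $F_0 := \{p_i : p_i \in F\}$. Given $f \in F$, I can write $f = \sum a_i p_i$ with $a_i \in \NN$. The key observation is the equivalent formulation of ``face'' stated before Lemma~\ref{lem:quotientbyface}: if $p+q \in F$ then both $p,q \in F$. Iterating this on the sum $f = \sum a_i p_i$, each summand $a_i p_i$ must lie in $F$, and then, since $a_i p_i = p_i + (a_i-1)p_i$, we conclude $p_i \in F$ whenever $a_i > 0$. Hence $f$ is an $\NN$-linear combination of elements of $F_0$, so $F_0$ generates $F$. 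This immediately gives that $F$ is finitely generated whenever $P$ is.

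Integrality of $F$ when $P$ is integral is a direct consequence of Lemma~\ref{lem:integral}, since $F \subseteq P$ is a submonoid. For saturation, suppose $P$ is (integral and) saturated, and let $q \in F^{\rm gp}$ with $nq \in F$ for some positive integer $n$. Since $F$ is a submonoid of $P$, $F^{\rm gp}$ embeds naturally in $P^{\rm gp}$; so $q \in P^{\rm gp}$ and $nq \in F \subseteq P$ implies $q \in P$ by saturation of $P$. Writing $q = f_1 - f_2$ with $f_1,f_2 \in F$, we have $q + f_2 = f_1 \in F$, and the face property gives $q \in F$. Thus $F$ is saturated.

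The ``fine'' and ``fs'' cases are then immediate by combining: fine $=$ finitely generated $+$ integral, and fs $=$ fine $+$ saturated. There is no real obstacle here; the only mildly subtle point is invoking the correct equivalent characterization of a face in the first step (the summand-closure property), which is built into the definition just given. All other pieces are formal manipulations with the adjoint functors $P \mapsto P^{\rm gp}$ and with Lemma~\ref{lem:integral}.
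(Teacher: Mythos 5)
Your proof is correct and follows essentially the same route as the paper's: both derive the generation statement by writing $f=\sum a_ip_i$ and using the summand-closure characterization of a face to force $p_i\in F$ whenever $a_i>0$, both get integrality from Lemma~\ref{lem:integral}, and both prove saturation by first using saturation of $P$ to place $q$ in $P$ and then the face property to push it into $F$. No gaps; your finishing step $q+f_2=f_1\in F$ is a slightly more explicit version of the paper's contrapositive, but it is the same idea.
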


\begin{proof}  After possibly reordering we can assume $p_1,\dots,p_k \in F$ and $p_{k+1},\dots,p_n \notin F$.  Then we claim $p_1,\dots,p_k$ generate $F$.  Given $f \in F$, we can write $f = \sum_{i=1}^n a_i p_i$ for $a_i \in \NN$.  Since $F$ is a face we must have $a_i = 0$ for $i>k$ (otherwise $p_i +b \in F$ for some $i>k$ for some $b \in P$, which would imply $p_i \in F$).  Obviously a face of an integral monoid is integral because any submonoid of an integral monoid is integral.  Suppose $P$ is saturated.  To see that $F$ is saturated, suppose $n(f_1-f_2) \in F \subseteq F^{\rm gp}$ for some $f_1,f_2 \in F$ for some $n>0$.  Since $P$ is saturated this implies $f_1-f_2 \in P$.  But if $f_1-f_2$ where not in $F \subseteq P$, then $n(f_1-f_2)$ could not be in $F$ because $F$ is a face.  \end{proof}

Every monoid has a smallest prime ideal, $\emptyset$, and a largest prime ideal $\m_P := P \setminus P^*$ (which coincide iff $P^*=P$ is a group), so every monoid is ``local," but not every \emph{morphism} of monoids is \emph{local} in the sense of:

\begin{defn} \label{defn:local} A morphism $h : P \to Q$ of monoids is called \emph{local} iff $f(\m_P) \subseteq \m_Q$. \end{defn}

For example, every map out of a group is local, but a map \emph{to} a group is local iff its domain is a group.

For a prime ideal $\p \subseteq P$ with complementary face $F := P \setminus \p$, we call $P_{\p} := F^{-1} P$ the \emph{localization} of $P$ at $\p$.  If $h : P \to Q$ is a monoid homomorphism and $\q$ is a prime of $Q$ with complementary face $G$ and $\p = h^{-1}(\q)$, then $h$ induces a local morphism $h : F^{-1} P \to G^{-1} Q$ as in the case of rings.  The proofs of the next two lemmas are left as exercises for the reader.

\begin{lem} \label{lem:local1} A map of monoids $h : P \to Q$ is local iff its sharpening $\ov{h}$ is local iff $\ov{h}^{-1}(0)=\{ 0 \}$.  In particular the sharpening map itself is local. \end{lem}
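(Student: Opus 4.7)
The plan is to reduce locality to a condition on preimages of units and then exploit the fact that the sharpening of any monoid is sharp, so that the ``maximal ideal'' there coincides with the complement of $\{0\}$.

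First I would rephrase the defining condition: $h : P \to Q$ is local iff $h^{-1}(Q^*) \subseteq P^*$. This is just the contrapositive of $h(P \setminus P^*) \subseteq Q \setminus Q^*$, and the reverse inclusion $P^* \subseteq h^{-1}(Q^*)$ is automatic since any monoid map preserves inverses.

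For the second ``iff,'' I would note that $\ov{Q}$ is sharp, so $\ov{Q}^* = \{0\}$ and $\m_{\ov{Q}} = \ov{Q} \setminus \{0\}$. Thus $\ov{h}(\m_{\ov{P}}) \subseteq \m_{\ov{Q}}$ reads ``$\ov{p} \neq 0 \Rightarrow \ov{h}(\ov{p}) \neq 0$,'' which (together with the trivial fact $\ov{h}(0) = 0$) is exactly $\ov{h}^{-1}(0) = \{0\}$.

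For the first ``iff,'' I would use the characterization $\pi_P(p) = 0 \iff p \in P^*$ of the sharpening projection $\pi_P : P \to \ov{P}$, together with the naturality square $\ov{h} \circ \pi_P = \pi_Q \circ h$. This gives the chain of equivalences
\[
h(p) \in Q^* \iff \pi_Q(h(p)) = 0 \iff \ov{h}(\pi_P(p)) = 0.
\]
Hence $h^{-1}(Q^*) \subseteq P^*$ translates, via the surjection $\pi_P$, into $\ov{h}^{-1}(0) \subseteq \{0\}$, and this is precisely locality of $\ov{h}$ by the previous step.

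The last clause is immediate from what has been proved: the sharpening map $\pi_P : P \to \ov{P}$ has sharpening $\ov{\pi_P} = \mathrm{id}_{\ov{P}}$ (since $\ov{P}$ is already sharp, the unit $\ov{P} \to \ov{\ov{P}}$ of the adjunction is the identity), and the identity map obviously satisfies $\mathrm{id}^{-1}(0) = \{0\}$; hence $\pi_P$ is local. No serious obstacle is expected here; the whole argument is an unwinding of definitions, with the only mild subtlety being to make sure the preimage condition is stated correctly and to remember that $\pi_P$ is surjective when passing from ``$\ov{h} \pi_P(p) = 0 \Rightarrow \pi_P(p) = 0$'' to ``$\ov{h}^{-1}(0) = \{0\}$.''
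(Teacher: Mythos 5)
Your proof is correct. The paper leaves this lemma as an exercise for the reader, and your argument is exactly the intended definition-unwinding: rephrasing locality as $h^{-1}(Q^*)\subseteq P^*$, using sharpness of $\ov{Q}$ to identify $\m_{\ov{Q}}$ with $\ov{Q}\setminus\{0\}$, and transporting the condition along the surjection $\pi_P$ via the naturality square $\ov{h}\pi_P=\pi_Q h$ and the fact that $\pi_P(p)=0$ iff $p\in P^*$.
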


\begin{lem} \label{lem:local} Suppose $h : P \to Q$ is a surjective local map of monoids.  Then for any monoid map $f : Q \to R$, $f$ is local iff $fh$ is local. \end{lem}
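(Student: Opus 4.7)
The plan is to treat the two implications separately, with surjectivity of $h$ used only for one direction.

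For the forward direction, assume $f$ is local; then $(fh)(\m_P) \subseteq f(h(\m_P)) \subseteq f(\m_Q) \subseteq \m_R$ using locality of $h$ in the second inclusion and locality of $f$ in the third. This is just the (obvious) fact that a composition of local maps is local, and it does not use surjectivity of $h$.

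For the backward direction, assume $fh$ is local. Given $q \in \m_Q$, I would use surjectivity of $h$ to choose $p \in P$ with $h(p) = q$. The key observation is that $p$ must lie in $\m_P$: if instead $p \in P^*$, then $h(p) = q$ would lie in $Q^*$, contradicting $q \in \m_Q = Q \setminus Q^*$. Once we know $p \in \m_P$, we conclude $f(q) = (fh)(p) \in \m_R$ by the assumed locality of $fh$. Hence $f(\m_Q) \subseteq \m_R$, i.e., $f$ is local.

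There is no real obstacle here: the proof is a direct unwinding of Definition~\ref{defn:local} together with the standing identification $\m_P = P \setminus P^*$, and surjectivity of $h$ enters only to lift an arbitrary non-unit of $Q$ to a non-unit of $P$. Note that locality of $h$ itself is not needed for either implication; the hypothesis that $h$ is local presumably appears in the statement simply because the lemma will be applied in situations where $h$ is already known to be local (e.g., the sharpening map, via Lemma~\ref{lem:local1}).
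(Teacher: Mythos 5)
Your proof is correct. The paper leaves this lemma as an exercise, and your argument is the intended one: the forward direction is just the fact that local maps are closed under composition, and the backward direction uses surjectivity of $h$ to lift a non-unit $q \in \m_Q$ to some $p \in P$, which must be a non-unit because monoid homomorphisms carry units to units.

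One correction to your closing remark, though: locality of $h$ \emph{is} needed for the forward implication, as your own chain of inclusions already shows. If $h$ is surjective but not local, then some $p \in \m_P$ has $h(p) \in Q^*$, whence $(fh)(p) = f(h(p)) \in R^*$ for \emph{any} $f$ (again since homomorphisms preserve units), so $fh$ fails to be local even when $f$ is local. A concrete instance: $P = \NN$, $Q = \{0\}$, $h$ the unique surjection, $f : \{0\} \to R$ arbitrary; here $f$ is vacuously local but $fh$ sends $1 \in \m_{\NN}$ to a unit. The accurate accounting is that surjectivity is dispensable in the forward direction and locality of $h$ is dispensable in the backward direction; neither hypothesis is globally redundant.
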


\begin{thm} \label{thm:directlimitlocal} Let $\{ P_i, f_{ij} : P_i \to P_j \}$ be a functor to monoids with direct limit $Q$.  Suppose the transition maps $f_{ij}$ are local.  Then the structure maps $P_i \to Q$ are local and $Q$ is also the direct limit of the $P_i$ in the category of monoids with \emph{local} maps as the morphisms. \end{thm}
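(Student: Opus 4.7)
The plan is to exploit the concrete description of a directed limit in $\Mon$: every element of $Q$ has a representative in some $P_i$, and any relation in $Q$ can be traced back to an honest equality in some $P_{k'}$ further along in the system. With this in hand both assertions reduce to routine bookkeeping; the only place the directedness hypothesis really enters is in the first step, of realizing a unit relation in $Q$ as an equation in a single $P_{k'}$.

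First I would show that each structure map $\iota_i : P_i \to Q$ is local. Suppose $p \in P_i$ has $\iota_i(p) \in Q^*$, so $\iota_i(p) + \bar q = 0$ in $Q$ for some $\bar q \in Q$. Choose a representative $q \in P_j$ of $\bar q$ and, using directedness, a common upper bound $k$ of $i$ and $j$. The resulting relation $\iota_k(f_{ik}(p) + f_{jk}(q)) = 0$ in $Q$ is witnessed by an actual equality $f_{ik'}(p) + f_{jk'}(q) = 0$ in some $P_{k'}$ with $k' \geq k$, so $f_{ik'}(p) \in P_{k'}^*$. Since $f_{ik'}$ factors as a composition of the given local transition maps by functoriality, and a composition of local maps is obviously local, locality of $f_{ik'}$ then forces $p \in P_i^*$, as desired.

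Next, to check that $Q$ is also the direct limit in the category of monoids with \emph{local} morphisms, suppose one is given a compatible cone of local maps $g_i : P_i \to R$. The universal property of $Q$ in $\Mon$ produces a unique monoid homomorphism $g : Q \to R$ with $g \iota_i = g_i$, and it remains only to check that $g$ itself is local. If $q \in Q$ satisfies $g(q) \in R^*$, choose any $p \in P_i$ with $\iota_i(p) = q$; then $g_i(p) = g(q) \in R^*$ forces $p \in P_i^*$ by locality of $g_i$, and hence $q = \iota_i(p) \in Q^*$, since any monoid homomorphism preserves units.
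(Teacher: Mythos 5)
Your argument is correct as far as it goes, but it silently assumes that the index category is \emph{filtered}: you use that every element of $Q$ is the image of some element of a single $P_i$, that any two indices $i,j$ admit a common upper bound $k$, and that a relation holding in $Q$ is witnessed by an honest equality in some $P_{k'}$ further along. All three facts are specific to filtered colimits and fail for general direct limits in $\Mon$. The theorem, as the paper states and uses it, concerns an arbitrary functor to monoids: in the proof of Theorem~\ref{thm:LMSinverselimits} it is applied to the colimit of stalks arising from an arbitrary finite inverse limit of locally monoidal spaces, which at the level of monoids is typically a pushout or coequalizer. Already for a pushout $Q = P_1 \oplus_{P_0} P_2$ of local maps there is no common upper bound of the indices $1$ and $2$, and a typical element $[p_1,p_2]$ of $Q$ is a \emph{sum} of images from $P_1$ and $P_2$ rather than the image of an element of a single $P_i$ --- so both halves of your argument break down at their first step.

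The paper's proof is designed precisely to avoid these filteredness assumptions: it first reduces to the case where all $P_i$ are sharp (using that sharpening preserves direct limits), then presents $Q$ as the quotient of $P := \oplus_i P_i$ by the smallest \emph{monoidal} equivalence relation $\cong$ containing the elementary relations $p_i e_i \cong f_{ij}(p_i)e_j$, and verifies that the key property ``$p \cong 0$ implies $p = 0$'' is preserved under each of the closure operations (reflexive, symmetric, monoidal, transitive) used to generate $\cong$. To repair your proof you would either need to restrict the statement to filtered systems (which is not enough for the paper's applications) or replace the ``lift an element and a relation to a finite stage'' steps by an analysis of the generated equivalence relation along the lines of the paper's claim. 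Your second paragraph, by contrast, is essentially fine once the first part is in place in full generality, provided you replace ``choose $p \in P_i$ with $\iota_i(p) = q$'' by the observation (as in the paper) that the single surjective local map $P \to Q$ together with Lemma~\ref{lem:local} already yields the universal property in the local category.
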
 

\begin{proof} By Lemma~\ref{lem:local1} and the fact that sharpening preserves direct limits, we reduce to the case where the $P_i$ are sharp in the first part of the theorem.  The assumption that the $f_{ij}$ are local then means $f_{ij}^{-1}(0)=\{ 0 \}$.  Let $P := \oplus_i P_i$.  Clearly $P$ is sharp.  As a matter of notation we write an element $p \in P$ as a formal sum $\sum_i p_i e_i$ where $e_i$ is a formal symbol and all but finitely many $p_i$ are zero.  It is straightforward to see that the direct limit $Q$ (in $\Mon$, as opposed to sharp monoids) can be explicitly presented as $Q = P/\cong$ where $\cong$ is the smallest equivalence relation on $P$ satisfying: \begin{enumerate} \item $\cong$ is \emph{monoidal} in the sense that $$ \forall \; p,p',q,q' \in P \; ( p \cong p' \; {\rm and} \; q \cong q' ) \implies (p+q) \cong (p'+q').  $$  Equivalently, (the ``graph" of) $\cong$ is a submonoid of $P \times P$.  \item $p_i e_i \cong f_{ij}(p_i)e_j$ for every transition map $f_{ij}$  and every $p_i \in P_i$. \end{enumerate} Since $P_i \to P$ is certainly local, it suffices to prove that $P \to Q$ is local.  Since $P$ is sharp and $P \to Q$ is surjective, $P \to Q$ local is equivalent to: for any $p \in P$, $p \cong 0$ in $P$ implies $p=0$ in $P$.  It follows easily from $f_{ij}^{-1}(0) = \{ 0 \}$ that the relation $\sim$ on $P$ defined by \bne{simdefn}  p \sim p' & := & \exists i \, \exists p_i \in P_i \; p= p_i e_i \, \; {\rm and} \, \; p' = f_{ij}(p_i)e_j \ene satisfies the following condition:

\noindent {\bf (*)} For any $p \in P$, if $p \sim 0$ or $0 \sim p$, then $p=0$.

\noindent \emph{Claim:}  If $\sim$ is any relation on any sharp monoid $P$ satisfying {\bf (*)}, then \begin{enumerate} \item \label{refclosure} the reflexive closure of $\sim$ also satisfies {\bf (*)}. \item \label{symclosure} the symmetric closure of $\sim$ also satisfies {\bf (*)}.  \item \label{monclosure} the relation $\simeq$ on $P$ defined by \be p \simeq p' & := & \exists q,q',r,r' \in P : q \sim q', \; r \sim r', \; p = q+r, \; p' = q'+r' \ee also satisfies {\bf (*)}. \item \label{tranclosure} the transitive closure $\simeq$ of $\sim$ also satisfies {\bf (*)}.  \end{enumerate}

The claim implies the (first part of) the theorem because $p \cong 0$ implies $p \simeq 0$ for some relation $\simeq$ obtained from the relation $\sim$ of \eqref{simdefn} by applying finitely many ``closure operations" of the type mentioned in the claim.  (The point is that $\cong$ is obtained from $\sim$ by repeatedly applying this sequence of closure operations a denumerably infinite number of times; one can do this a bit more efficiently but that is unimportant now.)  Parts \eqref{refclosure} and \eqref{symclosure} of the claim are trivial.  For \eqref{monclosure}:  If $\sim$ satisfies {\bf (*)} and $p \simeq 0$ (witnessed by $q,q',r,r'$ as in the definition of $\simeq$) then $0 = q'+r'$ in the sharp monoid $P$ implies $q'=r'=0$.  By {\bf (*)}, the condition $q \sim q'=0$ (resp.\ $r \sim r'=0$) then implies $q = 0$ (resp.\ $r=0$), so $p=q+r=0$ as desired.  For \eqref{tranclosure}, $p \simeq 0$ implies there is a sequence $$p = p_1  \sim p_2 \sim \cdots \sim p_{n-1} \sim p_n =0.$$ First apply {\bf (*)} to see that $p_{n-1}=0$, then that $p_{n-2}=0$, and so forth to find that $p=0$. 

The second statement of the theorem follows easily from Lemma~\ref{lem:local} using the fact that $P \to Q$ is local and surjective. \end{proof}

\begin{defn} \label{defn:benign} A morphism $h : P \to Q$ of monoids is called \emph{benign} iff $Q$ is isomorphic, as a monoid under $P$, to the quotient of $P$ by a subgroup $A \subseteq P^*$. \end{defn}

For example, the sharpening map $P \to \ov{P}$ is benign.  The next result is an easy exercise.

\begin{lem} \label{lem:benign}  Benign maps are surjective local maps and are stable under pushout and composition. \end{lem}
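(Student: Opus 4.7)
The plan is to unpack the definition and verify each of the four claims (surjectivity, locality, closure under composition, closure under pushout) using elementary computations with quotients by subgroups of units. The key preliminary observation I would establish first is that if $A \subseteq P^*$ is a subgroup and $q : P \to P/A$ is the quotient, then $(P/A)^* = P^*/A$ and consequently $\ov{P/A} = P/P^* = \ov{P}$, with $\ov{q}$ an isomorphism. The inclusion $P^*/A \subseteq (P/A)^*$ is obvious; the reverse inclusion is the small but crucial point: if $\ov{p}+\ov{p'}=0$ in $P/A$ then $p+p' = a \in A \subseteq P^*$, and since $-a \in P \subseteq P$ we get $p + (p'-a) = 0$ in $P$, exhibiting $p$ (and $p'$) as units of $P$.

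With that in hand, surjectivity is immediate from the definition, and locality follows from Lemma~\ref{lem:local1}: since $\ov{q}$ is an isomorphism, $\ov{q}^{-1}(0) = \{0\}$.

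For closure under composition, let $h_1 : P \to P/A$ and $h_2 : P/A \to (P/A)/B$ be benign with $A \subseteq P^*$ and $B \subseteq (P/A)^* = P^*/A$. Set $C := h_1^{-1}(B) \subseteq P^*$; this is a subgroup of $P^*$ containing $A$ (preimage of a subgroup under a group homomorphism $h_1|_{P^*}$). A direct verification of universal properties identifies $(P/A)/B$ with $P/C$ as monoids under $P$: a map from $P/C$ to any monoid $S$ is the same data as a map from $P$ killing $C$, which is the same as a map from $P$ killing $A$ (since $A \subseteq C$) whose factorization through $P/A$ kills $B$, i.e.\ a map from $(P/A)/B$.

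For closure under pushout, let $h : P \to P/A$ be benign and $f : P \to R$ arbitrary. I would compute the pushout directly by its universal property: maps out of the pushout correspond to maps $g : R \to S$ such that $gf$ kills $A$, equivalently $g$ kills $f(A)$, so the pushout is $R/f(A)$. Since $A \subseteq P^*$ is a subgroup, $f(A) \subseteq R^*$ is a subgroup (images of units are units, and homomorphic images of groups are groups), so $R \to R/f(A)$ is benign. No step here is truly delicate; the one spot that could trip one up is confusing the pushout of monoids with a set-theoretic quotient, which is why I would write out the universal property explicitly and note that quotients by subgroups of units are particularly well-behaved (no nontrivial congruence beyond the evident one arises).
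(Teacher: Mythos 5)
The paper gives no proof of this lemma (it is explicitly left as ``an easy exercise''), and your argument is correct and complete: the computation $(P/A)^* = P^*/A$, the identification $(P/A)/B \cong P/h_1^{-1}(B)$ for composition, and the identification of the pushout with $R/f(A)$ all check out. You also correctly flagged the one genuinely delicate point, namely that for a subgroup $A \subseteq P^*$ the relation ``differ by an element of $A$'' is already a congruence, so $P/A$ is just the orbit set and the universal-property manipulations go through without having to saturate to a larger congruence.
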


\begin{lem} \label{lem:pushout} Consider a commutative diagram of monoids as below.  $$\xym{ G \ar[r]^f \ar[d]_k & Q \ar[d]^h \\ S \ar[r]^i & P }$$ If this square is a pushout, then the natural map $\Cok f \to \Cok i$ is an isomorphism.  The converse holds when $G$ and $S$ are groups, $P$ is integral, and $i$ is monic.  In particular, $$\xym{ Q^* \ar[r] \ar[d]_{h^*} & Q \ar[d]^h \\ P^* \ar[r] & P }$$ is a pushout diagram whenever $h$ is strict and $P$ is integral. \end{lem}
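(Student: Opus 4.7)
The first statement is a formal manipulation of universal properties, and I would prove it by showing that $\Cok f$ and $\Cok i$ corepresent the same functor: a morphism $P \to R$ out of the pushout amounts to a pair $(\alpha : Q \to R, \beta : S \to R)$ with $\alpha f = \beta k$, and such a morphism factors through $\Cok i$ iff $\beta = 0$, which in view of the compatibility condition is equivalent to a morphism $\Cok f \to R$. The natural comparison is the resulting Yoneda isomorphism.

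For the converse, my plan is to form the pushout $P' := Q \oplus_G S$ in $\Mon$, let $\psi : P' \to P$ be the canonical map induced by $h$ and $i$, and show $\psi$ is an isomorphism. For surjectivity, I would use that $S$ is a group to describe $\Cok i$ explicitly as the quotient of $P$ by the congruence $p \sim p' \iff p + i(s_1) = p' + i(s_2)$ for some $s_j \in S$; since $Q \to \Cok f \cong \Cok i$ is surjective, every $p \in P$ satisfies $p + i(s_1) = h(q) + i(s_2)$ for some $q \in Q$ and $s_j \in S$, and cancelling $i(s_1)$ in $P$ (possible because $S$ is a group and $P$ is integral) yields $p = h(q) + i(s_2 - s_1) \in \psi(P')$.

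The main technical step is injectivity. Given $(q, s), (q', s') \in Q \oplus S$ with $h(q) + i(s) = h(q') + i(s')$, the hypothesis forces $q$ and $q'$ to have equal image in $\Cok f$, so $q + f(g_1) = q' + f(g_2)$ in $Q$ for some $g_j \in G$. Applying $h$ to this and combining with the original equation in $P^{\gp}$ yields $i(s - k(g_1)) = i(s' - k(g_2))$; since $P$ is integral and $i$ is monic, the composite $S = S^{\gp} \hookrightarrow P \hookrightarrow P^{\gp}$ is injective, so $s - k(g_1) = s' - k(g_2)$ in $S$. The defining pushout relation $(f(g), 0) \sim (0, k(g))$, together with the group structure on $S$, implies $(f(g), -k(g)) = 0$ in $P'$, so that $(q, s) = (q + f(g_1), s - k(g_1)) = (q' + f(g_2), s' - k(g_2)) = (q', s')$ in $P'$.

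The final statement is then the case $G = Q^*$, $S = P^*$, $f$ and $i$ the inclusions, $k = h^*$. Here $\Cok f = \ov{Q}$ and $\Cok i = \ov{P}$, the natural comparison map is $\ov{h}$, which is an isomorphism precisely when $h$ is strict, and the hypotheses that $G, S$ are groups, $i$ is monic, and $P$ is integral are all automatic or assumed, so the converse applies. The main obstacle is establishing injectivity in the converse, where integrality of $P$ is essential for transferring the equality in $P^{\gp}$ back to one in $S$.
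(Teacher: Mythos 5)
Your proof is correct and follows essentially the same route as the paper's: the first statement by a formal/universal-property argument, and the converse by exhibiting the comparison map $Q\oplus_G S\to P$ and checking surjectivity (using that $S$ is a group) and injectivity (using integrality of $P$ and that $i$ is monic), with the final claim as the special case $G=Q^*$, $S=P^*$. The only cosmetic differences are that the paper first records the explicit description of a pushout along maps out of a group and cancels inside $P$ directly, whereas you pass through $P^{\gp}$ and keep two group elements $g_1,g_2$ instead of reducing to one; both are sound.
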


\begin{proof} The first statement is a formality---direct limits commute amongst themselves.  We now prove the converse under the indicated assumptions.  In general the pushout of maps of monoids is difficult to describe, but the pushout of two maps out of a group is easy.  In particular, the pushout $S \oplus_G Q$ can be described as the quotient of $S \oplus Q$ by the equivalence relation $\sim$ where $(s,q) \sim (s',q')$ iff there is some $g \in G$ such that $s=s'+k(g)$ and $q'=q+f(g)$.  We first show that the natural map \bne{natty} S \oplus_G Q & \to & P \\ \nonumber [s,q] & \mapsto &  s+h(q) \ene is injective (we now drop notation for the monomorphism $i$).  If $[s,q]$ and $[s',q']$ have the same image under \eqref{natty}, then we have $s+h(q)=s'+h(q')$ in $P$, which implies $h(q)$ and $h(q')$ are equal in $\Cok i$, which, by the assumption that $\Cok f \to \Cok i$ is an isomorphism, implies that $q$ and $q'$ have the same image in $\Cok f$, which implies that $q'=q+f(g)$ for some $g \in G$.  We then have \be s+h(q) & = & s'+h(q) + h(f(g)) \\ & = & s' + h(q) + k(g) \ee in $P$, which implies that $s = s' + k(g)$ because $P$ is integral, hence $[s,q]=[s',q']$.  For surjectivity of \eqref{natty}: Given $p \in P$, the assumption that $\Cok f \to \Cok i$ is an isomorphism implies that we can find $q \in Q$ so that $h(q)$ and $p$ have the same image in $\Cok i$, which, since $S$ is a group, implies that we can write $p = h(q)+s$ for some $s \in S$, hence $p$ is the image of $[s,q]$ under \eqref{natty}. \end{proof}

\subsection{Refinements} \label{section:monoidrefinements} Let $h : Q \to P$ be a morphism of integral monoids.  Let $R \subseteq Q^{\rm gp}$ be the preimge of $P$ under $h^{\rm gp} : Q^{\rm gp} \to P^{\rm gp}$ so that we have a commutative diagram \bne{exactnessdiagram} \xym@C+10pt{ Q \ar@/_1pc/[rdd] \ar[rd]^i \ar@/^1pc/[rrd]^h \\ & R \ar[r]^-p \ar[d] & P \ar[d] \\ & Q^{\rm gp} \ar[r]^-{h^{\rm gp}} & P^{\rm gp} } \ene with \emph{cartesian} square.  The ``vertical" arrows in this diagram (including $i$ but not $h$) are monic (in particular, $R \subseteq Q^{\rm gp}$ is integral) and $R^{\rm gp} = Q^{\rm gp}$.  When we consider this diagram for different monoid homomorphisms $h$ at the same time, we will write $R_h$ instead of $R$ to avoid confusion.

\begin{defn} \label{defn:monoidrefinement} The map $h$ is called \emph{exact} iff $i : Q \to R$ is an isomorphism.  The map $h$ is called a \emph{refinement} (resp.\ \emph{good refinement}, \emph{strong refinement}) iff $\ov{p} : \ov{R} \to \ov{P}$ has a section $\ov{s}$ satisfying $\ov{i} = \ov{s} \ov{h}$ (resp.\ $p$ is an isomorphism, $\ov{p}$ is an isomorphism). \end{defn}

Obviously a good refinement is a strong refinement and a strong refinement is a refinement.

\begin{lem} \label{lem:injectivepullback} Let $h : Q \to P$ be a morphism of monoids with $h^{\rm gp}$ surjective.  Then \bne{hN} h^* : \Hom_{\Mon}(P,R) & \to & \Hom_{\Mon}(Q,R) \ene is injective for any integral monoid $R$.  In particular, a map $h$ of integral monoids with $h^{\rm gp}$ surjective is an epimorphism in the category of integral monoids.\end{lem}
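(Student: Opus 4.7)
The plan is to prove injectivity of $h^*$ by passing to the groupifications and exploiting the two hypotheses separately: surjectivity of $h^{\rm gp}$ controls what happens ``upstairs,'' while integrality of $R$ lets us pull conclusions ``downstairs.''

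More concretely, suppose $f_1, f_2 : P \to R$ are two monoid homomorphisms with $f_1 h = f_2 h$. I would first apply the functor $(-)^{\rm gp}$ to both sides of this equality to obtain $f_1^{\rm gp} h^{\rm gp} = f_2^{\rm gp} h^{\rm gp}$ as maps $Q^{\rm gp} \to R^{\rm gp}$. Since $h^{\rm gp}$ is a surjection of groups, and maps of groups are determined by their values on any generating set, this forces $f_1^{\rm gp} = f_2^{\rm gp}$ as homomorphisms $P^{\rm gp} \to R^{\rm gp}$.

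Next, I would use integrality of $R$. By definition, the unit $R \to R^{\rm gp}$ is injective, so the commutative square with horizontal arrows $f_i : P \to R$ and vertical arrows $P \to P^{\rm gp}$, $R \to R^{\rm gp}$ exhibits $f_i$ as the restriction of $f_i^{\rm gp}$ along $P \to P^{\rm gp}$ followed by the injection $R \hookrightarrow R^{\rm gp}$. From $f_1^{\rm gp} = f_2^{\rm gp}$ we therefore conclude $f_1 = f_2$, proving injectivity of $h^*$.

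For the ``in particular'' clause: if $h$ is a morphism of integral monoids with $h^{\rm gp}$ surjective, the injectivity result just proved applied to any integral test object $R$ says exactly that $h$ is an epimorphism in the full subcategory of integral monoids. There is no real obstacle here; the only point of care is noting that integrality of $R$ is used essentially (without it, two distinct maps could agree modulo the canonical map $R \to R^{\rm gp}$), and that the argument does not require $Q$ or $P$ to be integral for the first statement.
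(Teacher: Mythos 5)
Your proof is correct and follows essentially the same route as the paper's: apply groupification to get $f_1^{\rm gp}=f_2^{\rm gp}$ from surjectivity of $h^{\rm gp}$, then use injectivity of $R\to R^{\rm gp}$ (integrality of $R$) together with naturality of groupification to conclude $f_1=f_2$. Your closing remark that the first statement needs no integrality hypothesis on $Q$ or $P$ is also consistent with how the lemma is stated.
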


\begin{proof} Suppose $f,g : P \to R$ are monoid homomorphisms with $fh = gh$.  Then $f^{\rm gp} h^{\rm gp} = g^{\rm gp} h^{\rm gp}$ so $f^{\rm gp} = g^{\rm gp} =:k $ since $h^{\rm gp}$ is surjective.  Since $R$ is integral $i : R \into R^{\rm gp}$ is injective, so we can show $f=g$ by showing $if = ig$.  Let $j : P \to P^{\rm gp}$ be the natural map.   Then $if=ig=kj$ by naturality of the groupification. \end{proof}

\begin{lem} \label{lem:refinements} Let $h : Q \to P$ be a map of integral monoids.  Then: \begin{enumerate} \item \label{hexact} The map $h$ is exact iff $\ov{h}$ is exact. \item \label{sharpening} The map $h$ is a refinement (resp. strong refinement) iff $\ov{h}$ is a refinement (resp.\ strong refinement).  \item \label{fine} If $Q$ and $P$ are fine, then the monoid $R$ in \eqref{exactnessdiagram} is also fine.   \item \label{sat} If $P$ is saturated, then $R$ is also saturated.  \item \label{groupsurjective} If $h$ is a refinement, then $\ov{h}^{\rm gp}$ is surjective.  \item \label{sectionuniqueness}  If $h$ is a refinement and $\ov{s}_i$ ($i=1,2$) are sections of $\ov{p}$ satisfying $\ov{i} =  \ov{s}_i \ov{h}$, then $\ov{s}_1=\ov{s}_2$.  That is, a section $\ov{s}$ as in Definition~\ref{defn:monoidrefinement} is necessarily unique. \item \label{groupiso} If $h^{\rm gp}$ is an isomorphism, then $h$ is a good refinement. \item \label{saturationrefinement}For any integral monoid $Q$, the saturation map $Q \to Q^{\rm sat}$ is a good refinement.  \item \label{fsrefinement} If $P$ is fs and $\ov{h}^{\rm gp}$ is surjective, then $h$ is a refinement. \end{enumerate}  \end{lem}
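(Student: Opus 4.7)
First I would establish the key technical fact $R^* = (h^{\rm gp})^{-1}(P^*)$, from which one extracts $R_{\bar h} = R/Q^*$ and $\overline{R_{\bar h}} = R/R^* = \bar R$. This identification makes clauses (1) and (2) formal consequences of the functoriality of sharpening. Next I would dispose of the short clauses: (4) by applying the definition of saturation to $n h^{\rm gp}(x) \in P$; (5) using the factorization $\bar h^{\rm gp} = \bar p^{\rm gp} \bar i^{\rm gp}$, where $\bar i^{\rm gp} : Q^{\rm gp}/Q^* \to Q^{\rm gp}/R^*$ is the surjective quotient map (noting $R^{\rm gp} = Q^{\rm gp}$) and $\bar p^{\rm gp}$ is surjective because $\bar p$ has a section; (6) from the fact that two sections $\bar s_i$ must agree on $\bar h(\bar Q)$ by the compatibility condition, hence on $\bar P^{\rm gp}$ by (5), and then on $\bar P$ by integrality of $\bar R$; (7) immediately from the definitions; and (8) by applying (7) to the saturation map, whose groupification is the identity. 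Clause (3), that $R$ is fine when $Q$ and $P$ are, I would approach via Lemma~\ref{lem:chartproduction} applied to $A = Q^{\rm gp}$ and $h = h^{\rm gp}$, after reducing to the sharp case so that the image condition can be arranged.

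For the principal clause (9) the plan is to reduce immediately to the sharp case using (2); here $\bar P$ remains sharp fs by Lemmas~\ref{lem:integral} and~\ref{lem:saturated}, and $\bar Q$ remains integral. The hypothesis then becomes that $h^{\rm gp}$ itself is surjective. Next I would show $R^* = \ker h^{\rm gp}$: sharpness of $P$ forces $P^* = 0$, so $r \in R$ is a unit iff $-r \in R$ iff $h^{\rm gp}(r) \in P \cap (-P) = 0$. Then $\bar p : \bar R = R/\ker h^{\rm gp} \to P$ is injective by construction, and surjective because surjectivity of $h^{\rm gp}$ lets any $x \in P$ be lifted to $q \in Q^{\rm gp}$ with $h^{\rm gp}(q) = x \in P$, forcing $q \in R$ and $\bar p([q]) = x$. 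A bijective monoid homomorphism is automatically an isomorphism, so $\bar p$ is an iso; taking $\bar s := \bar p^{-1}$ yields the required section, satisfying $\bar s \bar h = \bar p^{-1} \bar p \bar i = \bar i$. This argument in fact shows $h$ is a strong refinement, slightly stronger than claimed.

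The main obstacle lies not in (9) itself, which becomes a short bijection-plus-monoid-homomorphism argument once one is in the sharp case, but in the preparatory bookkeeping for (1) and (2): one must keep straight the chain $Q^* \subseteq R^* = (h^{\rm gp})^{-1}(P^*) \subseteq R \subseteq Q^{\rm gp}$ and verify that the operations ``take preimage then sharpen'' and ``sharpen then take preimage'' agree up to canonical isomorphism. Once this identification is clean, all nine clauses admit short proofs, with (9) in particular reducing to the direct bijection argument above.
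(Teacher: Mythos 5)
Your clauses (1), (2) and (4)--(8) follow the paper's route in substance: the key identifications $R^* = (h^{\rm gp})^{-1}(P^*)$, $R_{\ov{h}} = R_h/Q^*$ and $\ov{R_{\ov{h}}} = \ov{R_h}$ are exactly what the paper establishes for (1) and (2); your quotient-map observation for (5) replaces the paper's Snake Lemma but proves the same surjectivity of $\ov{i}^{\rm gp}$; and your argument for (6) is precisely the content of Lemma~\ref{lem:injectivepullback}, which the paper cites.

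There is one genuine gap, in clause (3). Lemma~\ref{lem:chartproduction} carries the hypothesis that the image of $\pi^{\rm gp}h^{\rm gp} : Q^{\rm gp} \to \ov{P}^{\rm gp}$ contains $\ov{P}$, and this is a restriction on $h$ that sharpening cannot ``arrange'': for $h : 0 \to \NN$ the image is $\{0\}$, and indeed the conclusion $\ov{Q} \cong \ov{P}$ of that lemma fails there (one gets $\ov{R}=0 \neq \NN$). Without that hypothesis you only know that $\ov{p} : \ov{R} \to \ov{P}$ is injective, and a submonoid of a finitely generated monoid need not be finitely generated, so finiteness of $R$ does not follow along these lines. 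The paper instead observes that $R = Q^{\rm gp} \times_{P^{\rm gp}} P$ is a finite inverse limit of finitely generated monoids and invokes the general finiteness theorem for such limits; you need that fact (or some substitute for it) here.

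Your clause (9) is correct and takes a genuinely different, cleaner route than the paper. The paper uses the fs hypothesis to make $\ov{P}^{\rm gp}$ free, splits the surjection $\ov{h}^{\rm gp}$ at the level of groups, and restricts the splitting to $\ov{P}$. You instead show $\ov{p}$ is bijective outright: injectivity needs only integrality of $P$ (via $R^* = (h^{\rm gp})^{-1}(P^*)$), and surjectivity needs only surjectivity of $\ov{h}^{\rm gp}$. This proves the stronger conclusion that $h$ is a strong refinement and shows the fs hypothesis is superfluous. Combined with (5), it even shows that for integral monoids the notions of ``refinement'' and ``strong refinement'' in Definition~\ref{defn:monoidrefinement} coincide, since a split surjection that is injective is an isomorphism --- an observation worth recording, given that the paper maintains the distinction between the two notions throughout.
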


\begin{proof} Statement \eqref{hexact} is a straightforward diagram chase in the diagram \eqref{exactnessdiagram} and its analog for $\ov{h}$.  One sees easily that $R_h/Q^* \to R_{\ov{h}}$ is an isomorphism (in particular, $R_h \to R_{\ov{h}}$ is strict) and from this it is easy to see that $i : Q \to R_h$ is surjective iff $j : \ov{Q} \to R_{\ov{h}}$ is surjective (note that $i$ and $j$ are always injective).  The statement \eqref{sharpening} is obvious from the fact that $R_h \to R_{\ov{h}}$ is strict.  For \eqref{fine} note that $R$ is finitely generated because it is a general fact that a finite inverse limit of finitely generated monoids is finitely generated.  For \eqref{sat}, suppose $q \in R^{\rm gp} = Q^{\rm gp}$ is such that $nq \in R$ for some $n \geq 1$.  Then $h^{\rm gp}(nq) = n h^{\rm gp}(q) \in P$ by definition of $R$, hence $h^{\rm gp}(q) \in P$ since $P$ is saturated, hence $q \in R$ by definition of $R$.  For \eqref{groupsurjective}, first note that $Q^* \subseteq R^*$ so the Snake Lemma applied to $$ \xym{ 0 \ar[r] & Q^* \ar[d] \ar[r] & Q^{\rm gp} \ar@{=}[d]^{i^{\rm gp}} \ar[r] & \ov{Q}^{\rm gp} \ar[d]^{\ov{i}^{\rm gp}} \ar[r] & 0 \\ 0 \ar[r] & R^* \ar[r] & R^{\rm gp} \ar[r] & \ov{R}^{\rm gp} \ar[r] & 0 } $$ implies that $\ov{i}^{\rm gp} : \ov{Q}^{\rm gp} \to \ov{R}^{\rm gp}$ is surjective.  Since $\ov{p}$ has a section $\ov{s}$ (by definition of refinement), so does $\ov{p}^{\rm gp}$ (namely $\ov{s}^{\rm gp}$), so, in particular, $\ov{p}^{\rm gp}$ is surjective, hence the composition $\ov{h}^{\rm gp} = \ov{p}^{\rm gp} \ov{i}^{\rm gp}$ is surjective.  For \eqref{sectionuniqueness}, apply Lemma~\ref{lem:injectivepullback} to $\ov{h} : \ov{Q} \to \ov{P}$ using the previous result to conclude $\ov{s}_1=\ov{s}_2$ from the equality $\ov{s}_1 \ov{h} = \ov{s}_1 \ov{h}$.  For \eqref{groupiso}: If $h^{\rm gp}$ is an isomorphism, then so is its pullback $p : R \to P$.  Clearly \eqref{groupiso} implies \eqref{saturationrefinement}.  For \eqref{fsrefinement}, note that $P$ fs implies $\ov{P}$ fs and $\ov{P}^{\rm gp}$ free (Lemma~\ref{lem:saturated}), so we can find a section $s : \ov{P}^{\rm gp} \to \ov{Q}^{\rm gp}$ of the surjection $\ov{h}^{\rm gp}$.  It is clear from the definition of $R_{\ov{h}}$ that $s|\ov{P} : \ov{P} \to \ov{Q}^{\rm gp}$ actually takes values in $R_{\ov{h}} \subseteq \ov{Q}^{\rm gp}$ and that the sharpening of this map $\ov{P} \to \ov{R}_{\ov{h}} = \ov{R}$ is a section of $\ov{p}$ with the desired property.  \end{proof}

\subsection{Spec} \label{section:Spec} The set of prime ideals (\S\ref{section:idealsandfaces}) in a monoid $P$ is denoted $\Spec P$.  If $p_1,\dots,p_n$ generate $P$, then the map \be \{ {\rm \; faces \; of \; } P \, \} & \to & \{ {\rm \; subsets \; of \; } \{ p_1, \dots, p_n \} \, \} \\   F & \mapsto & \{ p_i : p_i \in F \} \ee is injective by Lemma~\ref{lem:faces}.  In particular:

\begin{lem} If $P$ is a finitely generated monoid then $\Spec P$ is finite. \end{lem}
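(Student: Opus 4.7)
The plan is to exploit directly the injective map displayed just before the lemma statement, which sends a face $F \subseteq P$ to the subset $\{ p_i : p_i \in F \}$ of a fixed finite generating set $\{p_1, \dots, p_n\}$ of $P$. Since prime ideals of $P$ are in bijection with their complementary faces, it suffices to bound the number of faces of $P$.

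First I would fix a finite generating set $p_1, \dots, p_n$ of $P$, which exists by hypothesis. Then I would recall from Lemma~\ref{lem:faces} that if $F$ is any face of $P$, then $F$ is generated (as a submonoid) by exactly those $p_i$ that happen to lie in $F$. Consequently, the face $F$ is determined by the subset $\{p_i : p_i \in F\} \subseteq \{p_1, \dots, p_n\}$: two faces containing the same generators must coincide, since both are generated by that same set of $p_i$'s. This is exactly the injectivity of the map displayed just above the lemma statement.

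Finally I would conclude by noting that the set of subsets of $\{p_1, \dots, p_n\}$ has cardinality $2^n$, hence there are at most $2^n$ faces of $P$, hence at most $2^n$ prime ideals of $P$, so $\Spec P$ is finite (of cardinality at most $2^n$). There is no real obstacle here; the entire content has been absorbed into Lemma~\ref{lem:faces}, and the statement is essentially a one-line corollary of the injection already noted in the text.
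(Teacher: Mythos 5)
Your proof is correct and follows exactly the paper's own route: the paper states the injection $F \mapsto \{p_i : p_i \in F\}$ (justified by Lemma~\ref{lem:faces}) immediately before the lemma and presents the finiteness of $\Spec P$ as a direct consequence. Nothing is missing.
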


The set $\Spec P$ is topologized by taking the sets \be U_p & := & \{ \p \in \Spec P : p \notin \p \} \ee (for each $p \in P$)  as basic opens.  If we instead view $\Spec P$ as the set of faces of $P$, then \be U_p & = & \{ F \in \Spec P : p \in F \}. \ee  Recall that every monoid has a unique maximal ideal $\m_P = P \setminus P^*$ and minimal ideal $\emptyset$.  The only open neighborhood of the maximal ideal is the entire space $\Spec P$, so the global section functor for sheaves on $\Spec P$ coincides with the stalk functor at the maximal ideal, hence it is exact.  

The space $\Spec P$ carries a sheaf of monoids $\M_P$ characterized by \be \M_P(U_p) & = & P_p, \ee and constructed in much the same way one would construct the structure sheaf of an affine scheme $\Spec A$ (\cite[Page~70]{H}).  We will construct this structure sheaf as a special case of more general results in \S\ref{section:Specrevisited}, but it is not hard to do directly.  The stalk of $\M_P$ at a prime $\p \in \Spec P$ with complementary face $F$ is given by \bne{stalkformula} \M_{P,\p} & = & F^{-1} P. \ene  A map of monoids $h : Q \to P$ induces a continuous map of topological spaces \be \Spec h : \Spec P & \to & \Spec Q \\ \nonumber \p & \mapsto & h^{-1}(\p). \ee  There is also an induced map $(\Spec h)^{-1} \M_Q \to \M_P$ of sheaves of monoids on $\Spec P$ whose stalk \be h_\p : \M_{Q,(\Spec h)(\p)} & \to & \M_{P,\p} \ee is identified with the natural map \be Q_{h^{-1} \p} & \to & P_{\p} \\ \nonumber [q,s] & \mapsto & [h(q),h(s)]. \ee  This map is clearly a \emph{local} map of monoids.

\begin{example} \label{example:Specgroup} If $A$ is a group, $\Spec A$ is the one point space with structure ``sheaf" $A$. \end{example}

In any finite space $X$ every point $x$ has a smallest open neighborhood $U_x$ and the stalk functor $F \mapsto F_x$ coincides with the section functor $F \mapsto F(U_x)$, so any map of sheaves inducing an isomorphism on stalks at $x$ actually induces an isomorphism on a neighborhood of $x$.  In fact, suppose $P$ is finitely generated and $\p \in \Spec P$ is a prime ideal in $P$ with complementary face $F$.  Then $F$ is also finitely generated (Lemma~\ref{lem:faces}), say by $f_1,\dots,f_n$, so that the set \be U & := & \cap_{f \in F} U_f \\ & = & \cap_{i=1}^n U_{f_i} \ee is open.  Since $U$ is the intersection of all basic opens containing $\p$, it is the smallest neighborhood of $\p$ in $\Spec P$ and, moreover, it is ``affine" in the sense that the natural map \be \Spec F^{-1} P & \to & \Spec P \ee is an isomorphism onto $U$.

\begin{lem} If $h :  Q \to P$ is surjective, then $\Spec h$ is an embedding (not necessarily closed!) of spaces. \end{lem}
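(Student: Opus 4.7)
The plan is to verify the three conditions for an embedding of topological spaces: injectivity, continuity, and the condition that the topology on $\Spec P$ agrees with the initial topology induced by $\Spec h$ from $\Spec Q$.

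First I would dispose of injectivity of $\Spec h$. It is more transparent to work with faces rather than ideals, since $F \subseteq P$ is a face iff $P \setminus F$ is prime. The assignment sending a face $F \subseteq P$ to $h^{-1}(F) \subseteq Q$ corresponds to $\Spec h$, so suppose $h^{-1}(F_1)=h^{-1}(F_2)$ for two faces $F_1,F_2 \subseteq P$. Applying $h$ and using surjectivity, $F_i = h(h^{-1}(F_i))$ for $i=1,2$, so $F_1 = F_2$. Thus $\Spec h$ is injective.

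Next, continuity and the embedding condition follow simultaneously from a single observation about basic opens. For any $q \in Q$, a prime $\p \in \Spec P$ satisfies $(\Spec h)(\p) \in U_q$ iff $q \notin h^{-1}(\p)$ iff $h(q) \notin \p$ iff $\p \in U_{h(q)}$; hence
\[ (\Spec h)^{-1}(U_q) \;=\; U_{h(q)}, \]
which proves continuity. Conversely, the surjectivity of $h$ guarantees that every basic open $U_p \subseteq \Spec P$ has the form $U_{h(q)}$ for some $q \in Q$ (take any $q$ with $h(q)=p$), and hence equals $(\Spec h)^{-1}(U_q)$. Since the basic opens $\{U_p : p \in P\}$ form a base of the topology on $\Spec P$, this shows that the topology on $\Spec P$ coincides with the initial topology with respect to $\Spec h$, which, combined with injectivity, says precisely that $\Spec h$ is a topological embedding.

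There is no real obstacle here; the only subtle point is the use of surjectivity in two separate places—first to promote equality of preimages of faces to equality of faces, and second to arrange that every basic open downstairs is hit. I would close with a brief remark recording why the embedding is not generally closed: for instance, if $h : \NN \to \NN$ is the zero map (viewing $\NN$ on the right as a group under... no, that is not surjective)—rather, one can simply note that the image of $\Spec h$ need not contain $\emptyset$ or $\m_Q$ in general, and leave an explicit example to the interested reader.
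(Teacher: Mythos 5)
Your proof is correct and is essentially the paper's own argument: injectivity from surjectivity of $h$ (the paper phrases it with prime ideals, you with complementary faces, which is equivalent), and the identity $(\Spec h)^{-1}(U_q) = U_{h(q)}$ together with the observation that every basic open $U_p$ downstairs arises this way by lifting $p$. The trailing aside about non-closedness is inessential; the paper records the standard counterexample ($\NN^2 \to \NN$ by addition) separately.
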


\begin{proof} Since $h$ is surjective, $h^{-1}(\p) = h^{-1}(\q)$ clearly implies $\p=\q$, so $\Spec h$ is monic.  For $p \in P$, if we choose a lift $q \in Q$ with $h(q)=p$, then $(\Spec h)^{-1}(U_q) = U_p$, so every basic open subset of $\Spec P$ is obtained by intersecting an open subset of $\Spec Q$ with $\Spec P$, hence $\Spec h$ is an embedding. \end{proof}

\begin{example} \label{example:Specsurjectionnotclosed} If $h : \NN^2 \to \NN$ is the addition map $h(a,b) := a+b$, then $h$ is certainly surjective.  The image of $\Spec h$ consists of the closed point and the generic point of $\Spec( \NN^2 )$.  This image is certainly not closed since it contains the generic point but fails to contain two other points. \end{example}

\begin{lem} \label{lem:sharpeninghomeo} The sharpening map $f : P \to \ov{P}$ induces a homeomorphism $\Spec \ov{P} \to \Spec P$. \end{lem}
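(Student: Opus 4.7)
The plan is to combine two facts: the sharpening map $f : P \to \ov{P}$ is surjective, so by the preceding lemma $\Spec f$ is already an embedding; and $\Spec f$ is a bijection on underlying sets, whence an embedding that is a bijection is a homeomorphism.

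For the bijection, the key observation is that every face $F \subseteq P$ automatically contains $P^* \subseteq P$. Indeed, $0 \in F$, and if $u \in P^*$ has inverse $v \in P^*$, then $u+v = 0 \in F$, so by the defining property of a face ($a+b \in F \Rightarrow a,b \in F$) we get $u, v \in F$. Equivalently, every prime $\p \subseteq P$ is contained in $\m_P = P \setminus P^*$.

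Therefore the assignment $\p \mapsto f^{-1}(\p)$ of $\Spec f$ is a bijection between $\Spec \ov{P}$ and $\Spec P$: its inverse sends a prime $\p \in \Spec P$ to its image $f(\p) \subseteq \ov{P}$, which is a prime ideal (it is an ideal because $f$ is surjective, and its complement $f(P \setminus \p)$ is a submonoid since $P \setminus \p$ contains $P^*$, so taking images commutes with taking complements here). That $f^{-1}(f(\p)) = \p$ uses precisely $\p \cap P^* = \emptyset$, and $f(f^{-1}(\q)) = \q$ is immediate from surjectivity.

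Combining the two observations, $\Spec f$ is a continuous bijection which is also an embedding by the preceding lemma (since $f$ is surjective), hence a homeomorphism. There is no real obstacle; the only point requiring attention is the elementary verification that faces contain the units, which makes the bijection on primes automatic.
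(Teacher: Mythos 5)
Your proof is correct and follows essentially the same route as the paper's: use the previous lemma (surjectivity of $f$ gives that $\Spec f$ is an embedding) and then check that $\p \mapsto f(\p)$ provides an inverse on points, via the observation that every prime is disjoint from $P^*$. You have simply spelled out the verification that the paper leaves as "one checks easily."
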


\begin{proof} Certainly $f$ is surjective, so $\Spec f$ is an embedding by the previous lemma, so it is enough to show that $\Spec f$ is surjective.  Given $\p \in \Spec P$, one checks easily that $f(\p)$ is a prime ideal of $\Spec \ov{P}$ with $f^{-1}(f(\p)) = \p$. \end{proof}

The quotient sheaf $\ov{\M}_P := \M_P / \M_P^*$ is a sheaf of \emph{sharp} monoids.  For many purposes, the sheaf $\ov{\M}_P$ on $\Spec P$ is more useful than the sheaf $\M_P$.  The homeomorphism $\Spec \ov{P} \to \Spec P$ induced by sharpening also induces an isomorphism of sheaves of monoids $\ov{\M}_P \to \ov{\M}_{\ov{P}}$.  The stalk of $\ov{\M}_P$ at a prime ideal $\p \in \Spec P$ with complementary face $F$ is given by \be \ov{\M}_{P,\p} & = & \ov{P_{\p}} \\ & = & \ov{F^{-1}P} \\ & = & P/F. \ee

We will return to our study of $\Spec$ in \S\ref{section:Specrevisited} after we properly set up the category in which $(\Spec P,\M_P)$ will live.

\begin{example} \label{example:SpecN}  The prototypical example to keep in mind is $\Spec \NN$.  The monoid $\NN$ has only the two obvious prime ideals: $\emptyset$ and $\m = \NN \setminus \{ 0 \}$.  The basic open sets $U_n$ are given by \be U_0 & = & \{ \emptyset, \m \} \\ U_{n \neq 0} & = & \{ \emptyset \} . \ee  Evidently then, the topological space $\Spec \NN$ is the two point ``Sierpinski space" where $\m$ is the closed point and $\emptyset$ is the generic point.  Since $\m \in \{ \emptyset \}^-$, we have a specialization map $F_{\m} \to F_{\emptyset}$ on stalks for any sheaf $F$ on $\Spec \NN$.  On a finite topological space, the specialization maps uniquely determine a sheaf, so the category of sheaves on $\Spec \NN$ is just the category of maps of sets.  The structure sheaf $\M_{\NN}$ is given by \be \M_{\m} & = & \NN_{\m} \\ & = & \{ 0 \}^{-1} \NN \\ & = & \NN \\ \M_{\emptyset} & = & \NN_{\emptyset} \\ & = & \ZZ, \ee with the obvious specialization map $\NN \to \ZZ$.  After sharpening, the structure sheaf becomes \be \ov{\M}_{\m} & = & \NN \\ \ov{\M}_{\emptyset} & = & \{ 0 \} , \ee with specialization map $\NN \to \{ 0 \}$. \end{example}

\subsection{Monoid algebra} \label{section:monoidalgebra} For a monoid $P$, let $\ZZ[P]$ be the free abelian group on $\{ [p] : p \in P \}$.  The unique $\ZZ$-bilinear map $\ZZ[P] \times \ZZ[P] \to \ZZ[P]$ satisfying $([p],[q]) \mapsto [p+q]$ serves as the multiplication map for a ring structure on $\ZZ[P]$ with multiplicative identity $1 = [0]$.  This defines a functor \bne{ZP} : \Mon & \to & \An \\ \nonumber P & \mapsto & \ZZ[P] \ene which is left adjoint to the forgetful functor $\An \to \Mon$ obtained by viewing a ring as a monoid under multiplication.  In other words, we have a bijection \bne{ZPAnadjunction} \Hom_{\An}(\ZZ[P],A) & = & \Hom_{\Mon}(P,A) \ene natural in $P \in \Mon$, $A \in \An$.

\subsection{Modules} \label{section:modules} For a monoid $P$, a \emph{module} over $P$ is a set $M$ equipped with an \emph{action} map \be P \times M & \to & M \\ (p,m) & \mapsto & p \cdot m \ee such that $0 \cdot m = m $ and $(p_1+p_2)\cdot m = p_1\cdot(p_2 \cdot m)$ for all $m \in M$, $p_1,p_2 \in P$.

\begin{example} An $\NN$ module is the same thing as a set $M$ equipped with an endomorphism $f : M \to M$.  The datum $(M,f)$ corresponds to the action $n \cdot m := f^n(m)$ on $M$ and one recovers an endomorphism $f : M \to M$ from an $\NN$ module $M$ by setting $f(m) := 1 \cdot m$. \end{example}

Modules over $P$ form a category $\Mod(P)$ where a morphism $M \to N$ is a map respecting the actions.  The category $\Mod(P)$ has all (small) limits.  The categorical product of $P$ modules $M_i$ is the set-theoretic product $\prod_i M_i$ with coordinate-wise $P$ action.  A general inverse limit is the set-theoretic inverse limit with the action it inherits from the embedding in the corresponding product.  The direct sum of $P$ modules $M_i$ is the set-theoretic disjoint union $\coprod_i M_i$.  Unlike the category of modules over a ring, finite products and coproducts in $\Mod(P)$ do \emph{not} coincide.  In particular, $\Mod(P)$ is not an abelian category.  The coequalizer of $f,g : M \rightrightarrows N$ is the quotient of $N$ by the smallest equivalence relation on $N \times N$ which is also a $P$ submodule.

A monoid $P$ becomes a module over itself by using addition in $P$ as the action.  If $P \to Q$ is a map of monoids, then $Q$ becomes a $P$ module similarly.  In particular, $P^{\rm gp}$ is naturally a $P$ module.  A module $M$ is \emph{finitely generated} iff there is a surjective map of $P$ modules $\coprod_n P \to M$ for some finite $n$.  

An ideal (in the sense of \S\ref{section:idealsandfaces}) of $P$ is a submodule of $P$.  A \emph{fractional ideal} is a $P$ submodule of $P^{\rm gp}$.  Just as in the case of modules over a ring, the forgetful functor $\Mod(P) \to \Sets$ admits a left adjoint $S \mapsto \coprod_S P$.  A module in the essential image of the latter functor is called \emph{free}.  Equivalently, $M \in \Mod(P)$ is free iff there is a subset $S \subseteq M$ (called a \emph{basis}) such that every element $m \in M$ can be uniquely written as $m = p \cdot s$ for $p \in P$, $s \in S$.  A $P$ module $M$ is called \emph{flat} iff it can be written as a filtered direct limit of free $P$-modules.  For more on flat and free modules, see \cite{logflatness}.

\begin{example} \label{example:freemodules} If $A \to B$ is an injective map of abelian groups, then $B$ is free as an $A$-module.  One can take as a basis any set $S \subseteq B$ containing exactly one representative of each element of $B/A$. \end{example}

For a $P$ module $M$, the free abelian group $\ZZ[M]$ on $M$ becomes a module (in the usual sense) over the monoid algebra $\ZZ[P]$ (\S\ref{section:monoidalgebra}).  This defines a functor \bne{ModPtoModZP} \ZZ[ \slot ] : \Mod(P) & \to & \Mod( \ZZ[P] ) \ene which clearly takes finitely generated modules to finitely generated modules and (proper) submodules to (proper) submodules.  The functor \eqref{ModPtoModZP} takes free (resp.\ flat) $P$ modules to free (resp.\ flat) $\ZZ[P]$-modules.  Since $\ZZ[P]$ is a noetherian ring when $P$ is a finitely generated monoid, it follows that any submodule of a finitely generated $P$ module is itself finitely generated (check the ACC).  The functor $\ZZ[ \slot ]$ admits a right adjoint ``forgetful functor" \bne{forget} \Mod( \ZZ[P] ) & \to & \Mod(P) \ene taking a $\ZZ[P]$ module $N$ to $N$ regarded as a $P$ module via the action $p \cdot n := [p]n$, where $[p] \in \ZZ[P]$ is the image of $p \in P$ in $\ZZ[P]$ and the juxtaposition is the action of $\ZZ[P]$ on $N$.  In particular, $\ZZ[ \slot ]$ preserves direct limits.

If $I$ is an ideal of a finitely generated monoid $P$, then we saw above that $I$ is finitely generated as a $P$ module, so we can find a finite subset $S \subseteq I$ inducing a surjection of $P$ modules $\coprod_S P \to I$.  That is, every $i \in I$ can be written in the form $i=s+p$ for some $s \in S$, $p \in P$.

\subsection{Saturation and density} \label{section:saturationanddensity}  Here we make some general remarks about the analogues (for monoids) of finite and integral morphisms of rings.  These results will be useful in \S\ref{section:propertiesofrealizations}.

\begin{defn} \label{defn:denseandfinite} A map of monoids $h : Q \to P$ is called \dots \begin{enumerate}[label=\dots] \item  \emph{saturated} iff, for any $p \in P$ with $np \in h(Q)$ for a positive integer $n$, we have $p \in h(Q)$.  \item \emph{dense} iff, for all $p \in P$, there is a positive integer $n$ (possibly depending on $p$) so that $np \in h(Q)$.  \item \emph{finite} iff $h$ makes $P$ a finitely generated $Q$-module. \end{enumerate} \end{defn}

For example, an integral monoid $P$ is saturated iff the map $P \to P^{\rm gp}$ is saturated in the above sense.

\begin{thm} \label{thm:dense}  {\bf (Gordan's Lemma)}.  Let $h : Q \to P$ be a map of monoids.  \begin{enumerate} \item \label{dense1} If $h$ is finite, then $P^{\rm gp} / Q^{\rm gp}$ is finite.  \item \label{dense2} If $P$ is finitely generated and $h$ is dense, then $h$ is finite. \item \label{dense3} If $Q$ is finitely generated, $P$ is integral, and $h$ is finite, then $P$ is fine and $h$ is dense. \item \label{dense4} If $Q$ is fine, $P$ is integral, $h$ is dense, and $P^{\rm gp}$ is finitely generated, then $h$ is finite. \item \label{dense5} If $Q$ is a fine monoid and $Q \into G$ is any injective monoid homomorphism from $Q$ to a finitely generated abelian group $G$, then the saturation $P$ of $Q$ in $G$ is a finitely generated $Q$-module. \end{enumerate} \end{thm}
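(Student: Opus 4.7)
The plan is to handle the five parts in the order $(1), (2), (3), (5), (4)$, since $(4)$ reduces cleanly to $(5)$ and the other parts are largely independent. Parts $(1)$ and $(2)$ are essentially bookkeeping: for $(1)$, $Q$-module generators $p_1, \dots, p_n$ of $P$ ensure every $p \in P$ has the form $h(q) + p_i$, so the image of $P$ in $P^{\rm gp}/h^{\rm gp}(Q^{\rm gp})$ lies in $\{\ov{p}_1, \dots, \ov{p}_n\}$; since $P^{\rm gp}$ is generated as a group by $P$, the quotient is a group whose elements are the finitely many differences $\ov{p}_i - \ov{p}_j$, hence finite. For $(2)$, pick monoid generators $p_1, \dots, p_n$ of $P$ and positive integers $N_i$ with $N_i p_i \in h(Q)$; ordinary division with remainder rewrites any $p = \sum a_i p_i$ as $h(q) + \sum r_i p_i$ with $0 \leq r_i < N_i$, so the finitely many such remainder-sums generate $P$ as a $Q$-module.

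Part $(3)$ is the main obstacle. A direct pigeonhole attempt on $kp$ ($k = 1, 2, \dots$), expressed via $Q$-module generators as $h(q_k) + p_{f(k)}$, only produces the relation $(k' - k) p + h(q_k) = h(q_{k'})$ in $P$, which stops short of $(k' - k) p \in h(Q)$ itself. The correct tool is the monoid algebra of \S\ref{section:monoidalgebra}: module-finiteness of $h$ lifts to module-finiteness of $\ZZ[Q] \to \ZZ[P]$, and hence to integrality of this ring extension. Writing an integral equation for $[p]$ over $\ZZ[Q]$ and expanding its coefficients as $\ZZ$-linear combinations of basis elements $[q]$ ($q \in Q$) yields a relation
$$[Np] + \sum_{j=0}^{N-1} \sum_k n_{j,k} [h(q_{j,k}) + jp] \; = \; 0$$
in $\ZZ[P]$. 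Since $\ZZ[P]$ is the free abelian group on $\{[p'] : p' \in P\}$, the coefficient of $[Np]$ must vanish, so some pair $(j, k)$ with $j < N$ satisfies $h(q_{j,k}) + jp = Np$ in $P$; cancelling $jp$ by integrality of $P$ gives $(N - j) p = h(q_{j,k}) \in h(Q)$ with $N - j \geq 1$, which is density. That $P$ is fine then follows because combining a finite generating set of $Q$ with the $Q$-module generators $p_1, \dots, p_n$ gives monoid generators of the integral monoid $P$.

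For $(5)$, reduce to the torsion-free case by projecting along $\pi : G \to G/T \cong \ZZ^r$, where $T$ is the torsion subgroup of the finitely generated abelian group $G$; a direct check shows $\pi(P) = \pi(Q)^{\rm sat}$ computed inside $\ZZ^r$. In this torsion-free setting, Gordan's classical \emph{fundamental parallelepiped} argument supplies finite generators: if $q_1, \dots, q_n$ generate $\pi(Q)$, the set $\{\sum_i \lambda_i q_i : 0 \leq \lambda_i < 1\} \cap \ZZ^r$ is a bounded discrete subset of $\RR^r$, hence finite, and writing $\lambda_i = \lfloor \lambda_i \rfloor + \{\lambda_i\}$ exhibits any $x \in \pi(Q)^{\rm sat}$ as an element of $\pi(Q)$ plus one element of this finite set. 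Lifting these generators to $P$ and translating by the torsion subgroup $T$ (which lies in $P$ since $|T| \cdot t = 0 \in Q$ for all $t \in T$) produces finitely many $Q$-module generators of $P$. For $(4)$, replace $Q$ by its image $Q' := h(Q) \subseteq P \subseteq P^{\rm gp}$, which is fine (integral as a submonoid of $P$, finitely generated as an image of $Q$); applying $(5)$ to $Q' \into P^{\rm gp}$ shows the saturation $(Q')^{\rm sat}$ in $P^{\rm gp}$ is $Q'$-module-finite, density gives $P \subseteq (Q')^{\rm sat}$, and the noetherian property of $\ZZ[Q']$-modules recalled in \S\ref{section:modules} lets us conclude $P$ itself is $Q'$-module-finite, hence $Q$-module-finite.
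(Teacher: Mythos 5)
Your proof is correct, and parts \eqref{dense1}, \eqref{dense2}, and \eqref{dense4} match the paper's argument almost verbatim (including the reduction of \eqref{dense4} to \eqref{dense5} via passage to the image and the ascending-chain fact for submodules). Where you genuinely diverge is part \eqref{dense3}: the paper also routes through the monoid algebra, but it argues via the ACC --- the chain of $Q$-submodules $Q_n$ generated by $0,p,\dots,np$ must stabilize since $\ZZ[\,\slot\,]$ of it would otherwise be an infinite strictly ascending chain of $\ZZ[Q]$-submodules of the finitely generated module $\ZZ[P]$ over the noetherian ring $\ZZ[Q]$ --- yielding $np = kp + h(q)$ and then $(n-k)p = h(q)$ by cancellation. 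You instead invoke integrality of the module-finite extension $\ZZ[Q] \to \ZZ[P]$ and extract the relation by comparing coefficients of the basis element $[Np]$ in the free abelian group $\ZZ[P]$. Both arguments import essentially one piece of commutative algebra (Hilbert basis theorem versus Cayley--Hamilton); yours has the small advantage of producing the witnessing relation explicitly rather than through a stabilization argument, while the paper's ACC device is reused elsewhere (e.g.\ in its proof of \eqref{dense4}), so it gets double duty there. In part \eqref{dense5} your reduction is also organized differently --- you project along $G \to G/T$ and check $\pi(P) = \pi(Q)^{\rm sat}$, then lift generators and translate by the torsion subgroup $T \subseteq P$, whereas the paper first sharpens $Q$ and then intersects with a free complement $\ZZ^n \subseteq G$ to get a splitting $P = P' \oplus T$ --- but both versions terminate in the same classical fundamental-parallelepiped argument, so this is a cosmetic rather than structural difference.
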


\begin{example} \label{example:finitenotdense} Before giving the proof, let us give an example of an injective monoid homomorphism $\NN \to P$ which is finite but not dense.  (In this situation $P$ cannot be integral by part \eqref{dense3}.)  As a set, take $P := \NN \coprod \ZZ_{>0}s$.  The additional law on $P$ is defined so that $\NN \subseteq P$ is a submonoid (in fact a face) and so that $n + ms = (n+m)s$ and $ms+m's = (m+m')s$ for all $n \in \NN$, $m,m' \in \ZZ_{>0}$.  Then $P$ is generated as an $\NN$ module by $\{ 0, s \} \subseteq P$, but there is no $n \in \ZZ_{>0}$ for which $ns \in \NN$.  \end{example}

\begin{proof} (c.f.\ \cite[2.2.5]{Ogus}) For \eqref{dense1}, suppose $p_1,\dots,p_n$ generate $P$ as a $Q$-module.  This means every $p \in P$ can be written as $p = h(q)+p_i$ for some $q \in Q$ and some $i \in \{ 1, \dots, n \}$.  It is immediate that every element of $P^{\rm gp} / Q^{\rm gp}$ is equal to (the image of) $p_i-p_j$ for some $i,j \in \{ 1, \dots, n \}$.  

For \eqref{dense2}, suppose $p_1,\dots,p_n$ generate $P$ as a monoid.  Since $h$ is dense, we can write $n_ip_i = h(q_i)$ for some positive integers $n_i$ and some $q_i \in Q$.  Let $S \subseteq P$ be the set of all elements of $P$ which can be written as $\sum_{i=1}^n a_i p_i$ with $a_i \in \{ 0, \dots, n_i-1 \}$.  The set $S$ is clearly finite.  It remains to show that $S$ generates $P$ as a $Q$-module.  Any $p \in P$ can be written $\sum_{i=1}^n b_i p_i$ for some $b_i \in \NN$.  Write $b_i = m_in_i+a_i$ with $m_i \in \NN$, $a_i \in \{ 0,\dots, n_i-1 \}$.  Then $p = h(q)+s$ with $q = \sum_{i=1}^n m_i q_i$, $s = \sum_{i=1}^n a_i p_i \in S$. 

For \eqref{dense3}:  Obviously $P$ is finitely generated since $Q$ is finitely generated and $h$ is finite.  To see that $h$ is dense, consider an arbitrary $p \in P$ and let $Q_n$ be the $Q$-submodule of $P$ generated by $0,p,\dots,np$ so that we have an ascending chain of $Q$-submodules $$Q = Q_0 \subseteq Q_1 \subseteq Q_2 \subseteq \cdots $$ of $P$.  This ascending chain must be eventually constant (otherwise $\ZZ[ \slot ]$ of it would be an infinite strictly ascending chain of $\ZZ[Q]$-submodules of the finitely generated $\ZZ[Q]$-module $\ZZ[P]$, which can't happen because $\ZZ[Q]$ is noetherian) so for a large enough $n$ we have $Q_n = Q_{n-1}$, hence we can write $np = kp+h(q)$ for some $k < n$ and some $q \in Q$, hence by integrality of $P$ we find $(n-k)p = h(q) \in Q$.  

We reduce \eqref{dense4} to \eqref{dense5} as follows:  By replacing $Q$ with its image, we can assume $h$ is injective (note that the image $h(Q)$ is still fine).  Since a submodule of a finitely generated $Q$-module is finitely generated (by an ascending chain argument as in \eqref{dense3}), it suffices to show that the saturation of $Q$ in $P^{\rm gp}$ is finitely generated (since $P$ is a $Q$-submodule of the latter).  

We prove \eqref{dense5} in three steps:

\noindent {\bf Step 1:}  Reduction to the case where $Q$ is fine \emph{and sharp}.  For this step, we first observe that $P/Q^*$ is equal (as a submonoid of $G/Q^*$) to the saturation of $\ov{Q} = Q/Q^*$ in $G/Q^*$.  If this latter saturation is generated as a $\ov{Q}$-module by $\ov{g}_1,\dots,\ov{g}_n$, then we check easily that any lifts $g_1,\dots,g_n \in G$ of the $\ov{g}_i$ in fact lie in $P$ and generate $P$ as a $Q$-module.

\noindent {\bf Step 2:} Reduction from the case where $Q$ is sharp and fine to the case where $Q$ is sharp and fine \emph{and} $G$ \emph{is torsion-free}.  For this step, we write $G = \ZZ^n \oplus T$ where $T$ is finite.  Let $Q' := Q \cap \ZZ^n$, viewing $Q$ and $\ZZ^n$ as submonoids of $G$.  We appeal to the general fact that a finite inverse limit of finitely generated monoids is finitely generated to see that $Q$ is finitely generated; it is then obvious that $Q'$ is sharp and fine (since $Q' \subseteq Q$) and that $A := (Q')^{\rm gp}$ is torsion-free (since $Q' \subseteq \ZZ^n$, so $A \subseteq \ZZ^n$).  Let $P'$ denote the saturation of $Q'$ in $\ZZ^n$.  We see easily that $P = P' \oplus T$, so if we knew that $P'$ was generated as a $Q'$-module by $p_1',\dots,p_n'$, then we would find that $P$ is generated as a $Q$-module by the finitely-many elements $(p_i',t) \in P$ (with $t \in T$ arbitrary). 

\noindent {\bf Step 3:} The case where $Q$ is sharp and fine and $G \cong \ZZ^m$ is torsion-free.  Since $G$ is torsion-free, $G \into G_{\RR} := G \otimes_{\ZZ} \RR$ is injective.  Let \be C(Q) & := & \left \{ \sum_i q_i \otimes \lambda_i \in G_\RR : q_i \in Q, \lambda_i \in \RR_{\geq 0}  \right \} \ee be the cone over $Q$.  It is easy to see that $P = C(Q) \cap G$ is the monoid of ``lattice points" of this cone.  If we choose any finite set of generators $q_1,\dots,q_n$ for $Q$, then $$ G \cap \left \{ \sum_{i=1}^n q_i \otimes \lambda_i :  \lambda_i \in [0,1] \right \} $$ is a finite subset of $P$ and the classical Gordan's Lemma argument (much like the proof of \eqref{dense2}) shows that this subset of $P$ generates $P$ as a $Q$-module. \end{proof}

\begin{example} Even if $h : Q \to P$ is a finite map of finitely generated monoids, one cannot conclude that $h$ is dense.  Indeed, one cannot even conclude this when $Q = \{ 0 \}$ and $|P|=2$: Taking $P$ equal to the ``unique" monoid $P = \{ 0,1 \}$ with two elements not isomorphic to $\ZZ/2 \ZZ$ yields the desired counterexample because $n1 = 1$ for all positive integers $n$ in this monoid $P$. \end{example}

\subsection{Tensor product} \label{section:tensorproduct}  Let $P$ be a monoid and let $M,N,T$ be $P$ modules.  A function $f : M \times N \to T$ is called $P$-\emph{bilinear} iff \bne{bilinear} f(p \cdot m, n) & = & p \cdot f(m,n) \\ \nonumber f(m, p \cdot n) & = & p \cdot f(m,n) \ene for every $m \in M$, $n \in N$, $p \in P$.  Let $\Bilin_P(M \times N,T)$ denote the set of $P$-bilinear maps from $M \times N$ to $T$.  If $T$ is a $\ZZ[P]$ module, then it is clear that \bne{bilin} \Bilin_P(M \times N, T) & = & \Bilin_{\ZZ[P]}(\ZZ[M] \times \ZZ[N],T) , \ene where, on the left, $T$ is regarded as a $P$ module via the forgetful functor \eqref{forget}, and the right side is the set of bilinear maps of $\ZZ[P]$ modules in the usual sense.

\begin{prop} \label{prop:tensorproduct} For any $M,N \in \Mod(P)$, there is a $P$ module $M \otimes_P N$, unique up to unique isomorphism, with the following universal property:  There is a $P$-bilinear map $\tau : M \times N \to M \otimes_P N$ such that any $P$-bilinear map $f : M \times N \to T$ factors uniquely as $\ov{f} \tau$ for a $P$ module map $\ov{f} : M \otimes_P N \to T$. \end{prop}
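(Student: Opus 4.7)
The plan is to mimic the classical ring-module tensor product construction: uniqueness of $M \otimes_P N$ up to unique isomorphism is automatic from the stated universal property, so existence is the only real content. I would form the free $P$-module $F := \coprod_{(m,n) \in M \times N} P$ on the underlying set $M \times N$, with basis $\{e_{(m,n)}\}$, and then take the quotient of $F$ by the smallest $P$-submodule congruence on $F$ that identifies $e_{(p \cdot m, n)}$ with $p \cdot e_{(m,n)}$ and $e_{(m, p \cdot n)}$ with $p \cdot e_{(m,n)}$ for every $p \in P$, $m \in M$, $n \in N$. Concretely, this quotient is realized as the coequalizer in $\Mod(P)$ of an evident pair of $P$-module maps out of $\coprod_{P \times M \times N} P \,\coprod\, \coprod_{P \times M \times N} P$ encoding these two families of relations; such a coequalizer exists by the explicit description (``quotient by the smallest $P$-submodule equivalence relation") recalled in \S\ref{section:modules}. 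Denote the resulting object by $M \otimes_P N$ and define $\tau : M \times N \to M \otimes_P N$ by $\tau(m,n) := [e_{(m,n)}]$; it is $P$-bilinear by construction.

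To verify the universal property, I would take an arbitrary $P$-bilinear $f : M \times N \to T$ and invoke the universal property of the free $P$-module on the set $M \times N$ to produce a unique $P$-module map $\tilde{f} : F \to T$ with $\tilde{f}(e_{(m,n)}) = f(m,n)$. The bilinearity identities \eqref{bilinear} are exactly the conditions needed for $\tilde{f}$ to equalize the two maps defining the coequalizer, so $\tilde{f}$ descends uniquely to a $P$-module map $\overline{f} : M \otimes_P N \to T$ with $\overline{f} \tau = f$. Uniqueness of $\overline{f}$ is immediate because the image of $\tau$ generates $M \otimes_P N$ as a $P$-module (indeed, the $e_{(m,n)}$ generate $F$).

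The only point deserving care is that $\Mod(P)$ carries no underlying abelian group structure, so the usual device of ``free abelian group on $M \times N$ modulo the subgroup spanned by bilinearity relations" is unavailable, and one must form the quotient using the set-level coequalizer construction of \S\ref{section:modules}. This is the only place where the argument departs from the classical one, and it is the main (very mild) obstacle: one has to trust that the smallest $P$-submodule equivalence relation containing the prescribed generators really exists and behaves well, which is exactly what the cited coequalizer description guarantees.
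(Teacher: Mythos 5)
Your proposal is correct. It differs from the paper's argument mainly in packaging: the paper constructs $M \otimes_P N$ as a quotient of the bare \emph{set} $M \times N$ by the smallest equivalence relation satisfying two explicit closure conditions, and then checks the universal property by hand, observing that for any bilinear $f$ the relation $(m_1,n_1) \cong (m_2,n_2) \iff f(m_1,n_1)=f(m_2,n_2)$ satisfies those conditions and therefore contains $\sim$, so $\ov{f}$ is well defined; uniqueness follows because $\tau$ is surjective. You instead first form the free $P$-module $\coprod_{M \times N} P$ and then coequalize the two families of maps encoding \eqref{bilinear}, so the factorization of $f$ drops out of the universal properties of free modules and coequalizers rather than from an explicit description of the equivalence relation. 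Both routes are valid, and they produce canonically isomorphic objects (in your quotient every class $[p \cdot e_{(m,n)}]$ equals $[e_{(p \cdot m,\, n)}]$, so $\tau$ is again surjective, not merely generating). What the paper's version buys is a concrete description of elements of $M \otimes_P N$ as classes $m \otimes n$ of pairs, which is then used immediately afterwards (e.g.\ in the explicit descriptions of $M/SM$ and $S^{-1}M$); what yours buys is a cleaner, more formal verification that requires no ad hoc check that the candidate relation is the smallest one with the required properties. You are also right to flag that the absence of an underlying abelian group is the only real departure from the ring case, and your reliance on the coequalizer description from \S\ref{section:modules} is exactly the right substitute.
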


\begin{proof}  The uniqueness argument is standard.  For existence, define $M \otimes_P N$ to be the quotient of $M \times N$ by the smallest equivalence relation $\sim$ enjoying the following two properties: \begin{enumerate} \item \label{sim1} $(p \cdot m,n) \sim (m, p \cdot n)$ for every $p \in P$, $m \in M$, $n \in N$. \item \label{sim2} If $(m_1,n_1) \sim (m_2,n_2)$ for some $m_i \in M$, $n_i \in N$, then $(p \cdot m_1,n_1) \sim (p \cdot m_2,n_2)$ for every $p \in P$. \end{enumerate}  For $(m,n) \in M \times N$, let $m \otimes n$ denote the image of $(m,n)$ in $M \otimes_P N$.  Regard $M \otimes_P N$ as a $P$ module using the action $p \cdot (m \otimes n) := (p \cdot m) \otimes n$.  This is well-defined since $\sim$ satisfies \eqref{sim2} and clearly satisfies the requisite property \be (p_1+p_2) \cdot (m \otimes n) & = & p_1 \cdot (p_2 \cdot (m \otimes n)) \ee for an action.  If we define $\tau : M \times N \to M \otimes_P N$ by $\tau(m,n) := m \otimes n$, then $\tau$ is $P$-bilinear because $\sim$ satisfies \eqref{sim1}.  

Suppose $f : M \times N \to T$ is $P$-bilinear.  Define an equivalence relation $\cong$ on $M \times N$ by declaring $(m_1,n_1) \cong (m_2,n_2)$ iff $f(m_1,n_1) = f(m_2,n_2)$.  It is clear from bilinearity that $\cong$ satisfies \eqref{sim1} and \eqref{sim2}, so, since $\sim$ is the smallest equivalence relation satisfying these properties, we have $$(m_1,n_1) \sim (m_2,n_2) \implies f(m_1,n_1) = f(m_2,n_2) $$ and we can therefore define a function $\ov{f} : M \otimes_P N \to T$ by $\ov{f}(m \otimes n) := f(m,n)$.  It is clear that this $\ov{f}$ is a $P$ module map and that $f = \ov{f} \tau$.  The uniqueness of $\ov{f}$ is automatic because $\tau$ is surjective (this is one place where the tensor product of $P$ modules is a little easier than the tensor product of modules over a ring). \end{proof}

The module $M \otimes_P N$ of Proposition~\ref{prop:tensorproduct} will be called the \emph{tensor product} of the $P$ modules $M$ and $N$.  Using the universal properties of tensor products (for modules over rings and monoids), formula \eqref{bilin}, and the adjointness of \eqref{ModPtoModZP} and \eqref{forget}, we obtain a natural isomorphism of $\ZZ[P]$ modules \bne{tensorformula} \ZZ[ M \otimes_P N ] & = & \ZZ[M] \otimes_{\ZZ[P]} \ZZ[N] \ene by showing that both sides have the same maps to any $\ZZ[P]$ module $T$.

\begin{example} \label{example:freemodule} Just as in the case of rings, it follows formally from the universal properties of free modules and tensor products that the free $P$ module $\coprod_S P$ on a set $S$ is obtained via base change from the free module on $S$ over the initial monoid $\{ 0 \}$: \be \coprod_S P & = & P \otimes_{ \{ 0 \} } ( \coprod_S \{ 0 \} ). \ee  Of course this does not say much since $\coprod_S \{ 0 \} = S$ and the above equality is clear from the construction (or characteristic property) of the tensor product. \end{example}

We will be most interested in the tensor product in the following situation.  Suppose $S$ is a submonoid of $P$ and $M$ is a $P$ module; then $P/S$ becomes a $P$ module via the projection $P \to P/S$ (c.f.\ \S\ref{section:modules}).  We will often use the notation \be M/SM & := & M \otimes_P (P/S). \ee  The tensor product $M/SM$ can be explicitly described as the quotient of $M$ by the equivalence relation $\sim$ where $m \sim m'$ iff $s \cdot m = s' \cdot m'$ for some $s,s' \in S$.  If we denote the image of $m \in M$ in $M/SM$ by $\ov{m}$ and the image of $p \in P$ in $P/S$ by $\ov{p}$ then $M/SM$ is a $P/S$ module via the action $\ov{p} \cdot \ov{m} := \ov{p \cdot m}.$  It is straightforward to check that this is well-defined and that \be \tau : M \times P/S & \to & M/SM \\ (m,\ov{p}) & \mapsto & \ov{p} \cdot \ov{m} \ee is a bilinear map with the requisite universal property.

Even more particularly, suppose $S = P^* \subseteq P$ and $M$ is a $P$ module.  Then the notation $$ \ov{M} := M/P^*M = M \otimes_P \ov{P} $$ is quite natural.

Another important special case of the tensor product is \emph{localization}.  Let $S$ be a submonoid of a monoid $P$, and let $P \to S^{-1} P$ be the localization of $P$ at $S$ (\S\ref{section:monoidbasics}).  For a $P$ module $M$, the tensor product $M \otimes_P S^{-1}P$ is usually denoted $S^{-1} M$ and has the ``expected" description.  Elements $[m,s]$ of $S^{-1} M$ are equivalence classes of pairs $(m,s)$ where $m \in M$ and $s \in S$.  Two such pairs $(m,s)$ and $(m',s')$ are equivalent iff \be (t + s') \cdot m & = & (t+s) \cdot m' \ee for some $t \in S$.  The $P$ module $S^{-1}M$ is also a module over the localized monoid $S^{-1} P$ (\S\ref{section:monoidbasics}) via the action $[p,s] \cdot [m,s'] := [p \cdot m, s+ s'].$

Starting with the tensor product, one can carry out the usual constructions familiar from the category of modules over a ring.  For example, the forgetful functor $P /\Mon \to \Mod(P)$ taking $P \to Q$ to $Q \in \Mod(P)$ admits a left adjoint \bne{Sym} \Sym_P^* : \Mod(P) & \to & P / \Mon \\ \nonumber M & \mapsto & \Sym^*_P M. \ene As in the case of rings, $\Sym_P^* M$ comes with an $\NN$ grading (c.f.\ \S\ref{section:Proj}) \bne{Symgrading} \Sym^*_P M & = & \coprod_{n \in \NN} \Sym^n_P M. \ene  The equality \be \Sym^*_P(M \coprod N) & = & ( \Sym_P^* M) \oplus_P (\Sym_P^* N) \ee follows formally as in the case of rings.

\begin{example} \label{example:freesymmetricmonoid} The symmetric monoid $\Sym_P^*( \coprod_S P)$ on the free $P$ module on a set $S$ can be described explicitly as follows: Let $S^{[n]} := S^n / \mathfrak{S}_n$ denote the set of $n$-element ``multisubsets" of $S$.  Then \be \Sym_P^* ( \coprod_S P) & = & \coprod_n ( P \times S^{[n]} ) \ee with addition law $(p,M)+(q,N) = (p+q,M+N)$.  The addition in the first coordinate here is the one for $P$, while the addition in the second coordinate is the addition law $S^{[m]} \times S^{[n]} \to S^{[m+n]}$ for multisubsets of $S$.  In particular, if we write $S$ instead of $\coprod_S  \{ 0 \} $ for the free $\{ 0 \}$ module on a set $S$, then we see that  \be \Sym_{ \{ 0 \} }^* S & = & \coprod_n S^{[n]}. \ee is the monoid of all multisubsets of $S$, graded by ``number of elements" (counting repeats) in the multiset.  We can alternatively describe this monoid of multisubsets of $S$ as the set of functions $S \to \NN$ with finite support, under the operation of coordinate-wise addition of functions.  The $\NN$ grading corresponding to the above coproduct decomposition is then $|f| = \sum_{s \in S} f(s)$.  If $S$ is finite, then this monoid is just $\NN^S$ with the grading given by the sum of the coordinates.  \end{example}

\section{Differentiable spaces} \label{section:differentiablespaces}  The purpose of this section is to recall some basic facts about the category $\DS$ of differentiable spaces.  This category provides a natural setting for differential geometry, incorporating singular spaces.  See \cite{GS} or \cite{diffspace} for further details.

\subsection{Basic notions}  \label{section:basicnotions} If $X$ is a locally ringed space and $I \subseteq \O_X$ is a sheaf of ideals, we define a new locally ringed space $\Z(I)$, called the \emph{zero locus} of $I$, as follows.  As a topological space, \be \Z(I) & := & \{ x \in X : I_x \subseteq \m_x \},  \ee with the topology inherited from $X$. (We use the usual notation for stalks of sheaves and for the unique maximal ideal $\m_x$ of the local ring $\O_{X,x}$.)  It is easy to see that $\Z(I)$ is closed in $X$.  By definition, the structure sheaf of $\Z(I)$ is given by \be \O_{\Z(I)} & := & i^{-1} (\O_X / I), \ee where $i : \Z(I) \into X$ is the inclusion.  Note that $i^{-1}( \O_X / I) = i^{-1} \O_X / i^{-1} I$ because $i^{-1}$ preserves finite limits.  This space $\Z(I)$ is a locally ringed space, and $i : \Z(I) \to X$ becomes a morphism of locally ringed spaces by defining $i^{\sharp}: i^{-1} \O_X \to \O_{\Z(I)}$ to be the natural quotient map.  The locally ringed space $\Z(I)$ represents the presheaf taking a locally ringed space $U$ to the set of $\LRS$ morphisms $f : U \to X$ such that $f^\sharp : f^{-1} \O_X \to \O_U$ kills $f^{-1} I$.

Let $X$ be a locally ringed space, $I \subseteq \O_X$ an ideal.  Let $\ov{I}_x$ denote the completion of the stalk $I_x$ in the topology it inherits from the $\m_x$-adic topology on $\O_{X,x}$ via the inclusion $I_x \subseteq \O_{X,x}$.  (This topology is not generally the same as the $\m_x$-adic topology on $I_x$.)  There is a natural inclusion $\ov{I}_x \subseteq \hat{\O}_{X,x}$, where $\hat{\O}_{X,x}$ denotes the $\m_x$-adic completion.  Let $t_x : \O_{X,x} \to \hat{\O}_{X,x}$ denote the natural map.  We often call it the ``Taylor series" map.  We say that $I$ is \emph{closed} iff the following holds: For any open subseteq $U \subseteq \O_X$ and any $f \in \O_X(U)$, if $t_x(f_x) \in \ov{I}_x$ for all $x \in U$, then $f \in I(U)$.

Recall \cite{GS}, \cite{diffspace} that a \emph{differentiable space} is a locally ringed space $X$ over $\RR$ locally isomorphic to the zero locus $\Z(I)$ of a closed ideal $I \subseteq \O_U$ for some open subset $U \subseteq \RR^n$ with its usual sheaf of smooth real-valued functions.  Differentiable spaces form a full subcategory $\DS$ of the category of locally ringed spaces over $\RR$.  Any smooth manifold $M$, with its usual sheaf of smooth, real-valued functions is a differentiable space.  The category $\DS$ has all finite inverse limits \cite[\Inverselimits]{diffspace}, in particular pullbacks.  The presheaf $X \mapsto \Gamma(X,\O_X)$ on $\DS$ is represented by the real line $\RR$, with its usual structure sheaf of smooth functions and its usual metric topology.  The forgetful functor $\DS \to \Top$ commutes with finite inverse limits \cite[\Inverselimits]{diffspace}.

If $X$ is a differentiable space, and $Z \subseteq X$ is a closed subspace of its underlying topological space, then both of the ideals \be I^{\rm big}(Z) & := & \{ f \in \O_X : t_x(f_x) = 0 {\rm \; for \; all \; } x \in Z \} \\ I^{\rm small}(Z) & := & \{ f \in \O_X : f(x) = 0 {\rm \; for \; all \; } x \in Z \} \\ & = & \{ f \in \O_X : f_x \in \m_x {\rm \; for \; all \; } x \in Z \} \ee are closed ideals of $\O_X$ and we clearly have $I^{\rm big}(Z) \subseteq I^{\rm small}(Z)$.  The zero locus of a closed ideal in a differentiable space is again a differentiable space \cite[\Zerolocus]{diffspace}, so we have closed embeddings of differentiable spaces $$ \Z(I^{\rm small}(Z)) \into \Z(I^{\rm big}(Z)) \into X.$$  One can check that $\Z(I^{\rm small}(Z))=\Z(I^{\rm big}(Z)) = Z$ as topological spaces by reducing to the local situation and using the following standard fact: For any closed subspace $Z$ of an open subspace $U \subseteq \RR^n$, we can find a smooth function $f : U \to \RR$ which is non-vanishing away from $Z$ whose Taylor series at any point of $Z$ is identically zero.  We call $\Z(I^{\rm small}(Z))$ (resp.\ $\Z(I^{\rm big}(Z))$) the \emph{small (resp.\ big) induced (differentiable space) structure} on the closed subspace $Z$.  The small induced structure is analogous to the reduced-induced closed subscheme structure in algebraic geometry.  The ``big ideal" $I^{\rm big}(Z)$ is rarely of any use in algebraic geometry because it is almost never quasi-coherent, so its zero locus is almost never a scheme (though it is a perfectly good locally ringed space).  The big induced structure is important to us because of the following result \cite[\Biginduced]{diffspace}.

\begin{lem}  \label{lem:big} The big induced structure $\Z(I^{\rm big}(Z))$ on a closed subspace $Z \subseteq X$ represents the presheaf taking $U \in \DS^{\rm op}$ to the set of $f \in \Hom_{\DS}(U,X)$ which factor through $Z$ on the level of topological spaces. \end{lem}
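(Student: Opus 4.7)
The plan is to translate the topological factoring condition into the zero-locus condition, using functoriality of Taylor series under $\LRS$-morphisms. By the universal property of the zero locus recalled earlier in this subsection, $\Z(I^{\rm big}(Z))$ represents those $\LRS$-morphisms $f : U \to X$ whose sheaf map $f^\sharp$ annihilates $f^{-1} I^{\rm big}(Z)$; moreover $\Z(I^{\rm big}(Z)) = Z$ as topological spaces, since for $x \notin Z$ any open neighborhood $V$ of $x$ disjoint from $Z$ has $I^{\rm big}(Z)(V) = \O_X(V)$, so $I^{\rm big}(Z)_x = \O_{X,x} \not\subseteq \m_x$. Hence any morphism to $\Z(I^{\rm big}(Z))$ factors through $Z$ topologically, and the content of the lemma is the converse: every $\DS$-morphism $f : U \to X$ with $f(U) \subseteq Z$ already kills $f^{-1} I^{\rm big}(Z)$.

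For the converse, fix a section $g \in I^{\rm big}(Z)(V)$ for some open $V \subseteq X$, and take any $u \in f^{-1}(V)$. The stalk map $f^\sharp_u : \O_{X,f(u)} \to \O_{U,u}$ is a \emph{local} homomorphism of local $\RR$-algebras, so it carries $\m_{f(u)}^n$ into $\m_u^n$, is continuous for the $\m$-adic topologies, and extends uniquely to a map of completions $\c f^\sharp_u : \c\O_{X,f(u)} \to \c\O_{U,u}$ satisfying $\c f^\sharp_u \circ t_{f(u)} = t_u \circ f^\sharp_u$. Since $f(u) \in Z$, the definition of $I^{\rm big}(Z)$ forces $t_{f(u)}(g_{f(u)}) = 0$; composing yields $t_u(f^\sharp(g)_u) = 0$. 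Thus $f^\sharp(g) \in \O_U(f^{-1}V)$ is a section whose Taylor series vanishes at every point.

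The final step is to upgrade this pointwise vanishing of Taylor series to genuine vanishing of $f^\sharp(g)$ as a section of $\O_U$. This is the ``$I = 0$'' instance of the closed-ideal condition for $U$, which can be verified in a local chart $U = \Z(K) \subseteq W \subseteq \RR^m$ with $K$ closed: locally lift $f^\sharp(g)$ to a section $\tilde h$ of $\O_W$, translate the pointwise vanishing on $U$ into the condition $t_u(\tilde h_u) \in \ov{K}_u$ for all relevant $u$, and apply closedness of $K$ to conclude $\tilde h \in K$ and hence $f^\sharp(g) = 0$ in $\O_U$. This gives $f^\sharp_u(g) = 0$ for every $u \in f^{-1}V$, i.e.\ $f^\sharp$ annihilates the stalks of $f^{-1} I^{\rm big}(Z)$, which is exactly the universal-property condition. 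The main obstacle, as opposed to routine bookkeeping, is the Taylor-series compatibility in the middle paragraph: one must verify carefully that the local ring map $f^\sharp_u$ between stalks of differentiable spaces really does induce a map of completions commuting with the $t$-maps appearing in the definitions of $I^{\rm big}(Z)$ and of closed ideals, since it is this commutativity that transports the vanishing hypothesis from $X$ to $U$. Once it is in place, the rest is a formal assembly of the universal property of $\Z(-)$ with the local chart structure of $U$.
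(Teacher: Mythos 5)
This lemma is not proved in the paper at all: it is quoted from \cite[\Biginduced]{diffspace}, a reference listed as ``in preparation,'' so there is no in-text proof to compare against. Judged on its own, your argument is correct and uses exactly the ingredients the paper sets up in \S\ref{section:basicnotions}: the universal property of $\Z(I)$, the identification $\Z(I^{\rm big}(Z))=Z$ as spaces, and the closed-ideal condition. The key step --- that a local homomorphism of stalks carries $\m_{f(u)}^n$ into $\m_u^n$ and hence induces a map of $\m$-adic completions intertwining the Taylor-series maps, which transports $t_{f(u)}(g_{f(u)})=0$ to $t_u(f^\sharp(g)_u)=0$ --- is right, and the reduction of ``identically vanishing Taylor series implies zero'' to closedness of the defining ideal $K$ in a local chart is sound. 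Two small points worth making explicit: (i) for the topological identification you only argue $\Z(I^{\rm big}(Z))\subseteq Z$; the reverse containment is the (trivial) observation that a germ with vanishing Taylor series at $x$ lies in $\m_x$; (ii) in the last step, ``for all relevant $u$'' should be spelled out as \emph{all} $u$ in the ambient open subset of $\RR^m$, the point being that for $u\notin\Z(K)$ one has $K_u=\O_{W,u}$, so $\ov{K}_u=\hat\O_{W,u}$ and the membership $t_u(\tilde h_u)\in\ov{K}_u$ is automatic --- this is needed because the closedness condition quantifies over the whole open set, not just over $\Z(K)$. One should also record that $\hat\O_{U,u}\cong\hat\O_{W,u}/\ov{K}_u$ compatibly with the $t$-maps (a Mittag-Leffler argument on the surjections $\O_{W,u}/\m_u^n\to\O_{W,u}/(K_u+\m_u^n)$), since that is what converts ``zero Taylor series in the quotient'' into ``$t_u(\tilde h_u)\in\ov{K}_u$.''
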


\begin{example} \label{example:Rplus}  For example, $X = \RR_+ = \{ x \in \RR : x \geq 0 \}$ is a closed subset of the real line $\RR$.  Its structure sheaf $\O_X$ in the big induced differentiable space structure is the quotient of the sheaf $\C^\infty$ of smooth functions on $\RR$ by the ideal sheaf $I$ consisting of functions with zero Taylor series at each point of $\RR_+$.  This is not the same thing as the restriction $ \C^\infty | X$ of the structure sheaf of $\RR$.  For example, the smooth function $f$ given by $\exp(-x^2)$ for $x < 0$ and zero for $x \geq 0$ yields a global section of $\C^\infty | X$ which is nonzero in the stalk $(\C^\infty | X)_0 = \C^\infty_0$, but this $f$ is clearly in $I$, hence $f$ maps to zero in $\O_X$ under $\C^\infty | X \to \O_X$. \end{example}

\begin{example} \label{example:bigideal} If $X = \RR$ is the real line and $Z = \{ 0 \}$ is the origin, the big ideal $I$ of $Z$ consists of all smooth functions on $X$ with zero Taylor series at the origin.  The big induced structure on $Z$ endows the point $Z$ with the ``sheaf" of rings given by the quotient $\C^\infty_0 / I_0$ of germs of smooth functions at the origin, modulo those with zero Taylor series; this ring is just the formal power series ring $\RR[[x]]$.  If one tried to do the analogous construction in algebraic geometry with, say, $X = \AA^1_{\CC}$, the big ideal $I$ is just given by \be I(U) & = & \left \{ \begin{array}{lll} \O_X(U), & \quad & 0 \notin U \\ 0, & & 0 \in U \end{array} \right . \ee because a rational function defined near the origin is determined by its power series at the origin.  This ideal sheaf isn't quasi-coherent.  The quotient locally ringed space is the one-point space with the ``sheaf" of rings $\O_{X,0} = \CC[x]_{(x)}$ (this isn't a scheme). \end{example}

The topological space underlying any differentiable space is $T_1$ \cite[\Pointsclosed]{diffspace} and locally compact in the sense of the following

\begin{defn} \label{defn:locallycompact} A topological space $X$ is locally compact iff each point $x \in X$ is in the interior of a compact Hausdorff subspace $Z \subseteq X$. \end{defn}

Note that $Z \subseteq X$ is not required to be a \emph{closed} subspace in the above definition.  This ensures that being locally compact is a local property.  Every open subspace of $\RR^n$ is locally compact and every closed subspace of a locally compact space is locally compact, hence every differentiable space is locally compact since the question is local and every differentiable space is locally a closed subspace of an open subspace of $\RR^n$.

\subsection{Positive functions} \label{section:positivefunctions}  Recall (\S\ref{section:basicnotions}) that the differentiable space $\RR$ represents the functor $X \mapsto \Gamma(X,\O_X)$.  In fact, the addition, multiplication, $0$, and $1$ maps for $\RR$ are maps of differentiable spaces, so that $\RR$ is a ring object in $\DS$, representing the functor \be \DS^{\rm op} & \to & \An \\ X & \mapsto & \Gamma(X,\O_X).\ee  We often regard $\RR$ as a monoid object in $\DS$ under multiplication, so that $\RR$ represents $X \mapsto \Gamma(X,\O_X)$, regarding the ring $\Gamma(X,\O_X)$ as a monoid under multiplication.  The subspace $\RR_+ \subseteq \RR$ of Example~\ref{example:Rplus} is in fact a (multiplicative) submonoid object of $\RR$.  

For a differentiable space $X$, let $\O_X^{\geq 0} \subseteq \O_X$ denote the subsheaf \be U & \mapsto & \{ f \in \O_X(U) : f(x) \geq 0 \; \forall \; x \in U \}. \ee  (The meaning of $f(x)$ is the image of $f$ in $\O_{X,x} / \m_x = \RR$---all points of a differentiable space are ``$\RR$-points"---but one can also think of $f(x) \in \RR$ as the image of $x$ under the map $f : U \to \RR$ corresponding to $f \in \O_X(U)$.)  Then $\O_X^{\geq 0}$ is a sheaf of (multiplicative) submonoids of $\O_X$, called the sheaf of \emph{non-negative functions}.  A section $f \in \O_X^{\geq 0}(U)$ is the same thing as a map of differentiable spaces $f : U \to \RR$ factoring set-theoretically through $\RR_+$, which, by Lemma~\ref{lem:big}, is the same thing as a map of differentiable spaces $f : U \to \RR_+$.  In other words, the monoid object $\RR_+$ in $\DS$ represents the functor $X \mapsto \Gamma(X,\O_X^{\geq 0})$.

Similarly, the open subspace $\RR^* = \RR \setminus \{ 0 \}$ of $\RR$ represents the sheaf of groups $X \mapsto \Gamma(X,\O_X^*)$ on $\DS$, and the open subspace $\RR_{>0} \subseteq \RR_+$ represents the sheaf of groups $X \mapsto \Gamma(X,\O_X^{>0})$ on $\DS$.  Here $\O_X^{>0}$ is the sheaf \be U & \mapsto & \{ f \in \O_X^*(U) : f(x) > 0 \; \forall \; x \in U \} \ee of \emph{positive functions}.  

The sign map $u \mapsto u/|u|$ defines a map of groups objects \bne{sign} \sign : \RR^* & \to & \{ \pm 1 \} \cong \ZZ / 2 \ZZ \ene in $\DS$, where $\ZZ / 2 \ZZ$ is the two element discrete group.  For any $X \in \DS$, we thus obtain a natural map of sheaves of abelian groups \bne{signmap} \sign: \O_X^* & \to & \underline{\ZZ/2\ZZ} \ene on $X$, where $\underline{\ZZ / 2 \ZZ}$ is the sheaf of locally constant maps to $\ZZ / 2 \ZZ$ (the sheaf of abelian groups represented by the discrete group object $\ZZ / 2 \ZZ$).  The map \eqref{signmap} admits an obvious section by taking a locally constant function $f : X \to \{ \pm 1 \} = \ZZ / 2 \ZZ$ to ``$f$" in $\O_X^*$, so we obtain a natural splitting of sheaves of abelian groups \bne{unitsplitting} \O_X^* & = & \O_X^{>0} \oplus \underline{\ZZ / 2 \ZZ}.\ene 

\subsection{Local properties of maps} \label{section:localproperties} By a \emph{property of maps} in a category $\C$, we mean a class of maps in $\C$ containing all identity maps and closed under composing with any isomorphism in $\C$.  For example, being an isomorphism is a ``property of maps."  If $\C$ is the category of locally ringed spaces, differentiable spaces, etc.,\footnote{that is, if $\C$ has a topology} and $\mathsf{P}$ is a property of maps in $\C$, then we say that $f : X \to Y$ has property $\mathsf{P}$ \emph{locally on the base} (or \emph{locally on} $Y$) iff there is an open cover $\{ U_i \}$ of $Y$ such that each map $f|f^{-1}(U_i) : f^{-1}(U_i) \to U_i$ has property $\mathsf{P}$.  We say that property $\mathsf{P}$ of $\C$ morphisms is \emph{local on the base} iff having property $\mathsf{P}$ and having property $\mathsf{P}$ locally on the base are equivalent.  Similarly, we say that $f$ has property $\mathsf{P}$ \emph{locally on the domain} iff there is an open cover $\{ U_i \}$ of $X$ such that each map $f|U_i :U_i \to Y$ has $\mathsf{P}$ and we say that $\mathsf{P}$ is \emph{local on the domain} if having $\mathsf{P}$ is equivalent to having $\mathsf{P}$ locally on the domain.

For $x \in X$, a \emph{neighborhood of} $x$ in $f$ is a pair $(U,V)$ consisting of a neighborhood $V$ of $f(x)$ in $Y$ and a neighborhood $U$ of $x$ in $f^{-1}(V) \subseteq X$.  We say that $f$ has property $\mathsf{P}$ \emph{at} a point $x \in X$ iff there is a neighborhood $(U,V)$ of $x$ in $f$ such that $f|U : U \to V$ has property $\mathsf{P}$.  We say that $f$ has property $\mathsf{P}$ \emph{locally on} $X$ \emph{and} $Y$ (or \emph{locally on} $f$) iff $f$ has property $\mathsf{P}$ at each $x \in X$.  We say that a property $\mathsf{P}$ of $\C$ morphisms is \emph{local on the domain and codomain} (or just ``\emph{is local}") iff, for all $\C$ morphisms $f : X \to Y$, having property $\mathsf{P}$ is equivalent to having property $\mathsf{P}$ locally on $f$.

\begin{example} \label{example:closedlocalonbase} In the category of topological spaces, the property of being an isomorphism, or a closed embedding is local on the base. \end{example}

\begin{example} \label{example:coveringspace} Call a map of topological spaces $f : X \to Y$ a \emph{trivial covering space} iff $X$ is isomorphic, as a space over $Y$, to a disjoint unions of copies of $Y$, each mapping to $Y$ by the identity map.  The property of being a trivial covering space is not local on the base; a covering space in the usual sense of topology is a map which is a trivial covering space locally on the base. \end{example}

The most important example is the following:

\begin{defn} \label{defn:localisomorphism} Call a map of topological spaces (or ringed spaces, schemes, differentiable spaces, \dots) $f :X \to Y$ a \emph{local isomorphism} iff ``$f$ is an isomorphism locally on $f$."  In other words, $f$ is a local isomorphism iff, for any $x \in X$, there is a commutative diagram $$ \xym{ U \ar[r] \ar[d]_g & X \ar[d]^f \\ V \ar[r] & Y } $$ where the horizontal arrows are open embeddings, $g$ is an isomorphism, and $x$ is in the image of $U \to X$.  Since the composition of $g$ and $V \to Y$ is also an open embedding, we can always take $U=V$, $g=\Id$ if desired, so one could equivalently say that $f : X \to Y$ is a local isomorphism iff, for each $x \in X$, there is an open embedding $i : U \to X$ with $x \in i(U)$ such that $fi$ is also an open embedding.  Notice that a map of topological spaces $f : X \to Y$ is a local isomorphism (usually one says ``local homeomorphism") iff it is open and locally one-to-one.  A local isomorphism which is surjective (on underlying spaces) is called a \emph{Zariski cover}. \end{defn}

\subsection{Smooth morphisms} \label{section:smoothmorphisms}  A morphism of differentiable spaces $f : X \to Y$ is called \emph{smooth at} $x \in X$ iff there is a neighborhood $(U,V)$ of $x$ in $f$ such that $f|U: U \to V$ can be factored as the inclusion of an open subspace $i : U \into V \times \RR^n$ (for some $n$), followed by the projection $\pi_1 : V \times \RR^n \to V$.  Such a morphism $f$ is called \emph{\'etale at} $x$ iff there is a neighborhood $(U,V)$ of $x$ in $f$ such that $f|U : U \to V$ is an isomorphism.  Clearly ``\'etale at $x$" implies ``smooth at $x$."  The morphism $f$ is called \emph{smooth} (resp.\ \emph{\'etale}) iff it is smooth (resp.\ \'etale) at $x$ for every $x \in X$.  

A differentiable space $X$ is called \emph{smooth} iff it is smooth over the terminal object $\Spec \RR$.  A smooth differentiable space is evidently the same thing as a \emph{smooth manifold} in the usual sense, but without the usual paracompactness and Hausdorff requirements on the underlying topological space, which we can always impose separately if necessary.

It is straightforward to check that a composition of smooth (resp.\ \'etale) morphisms is again smooth (resp.\ \'etale) and that the base change (pullback) of any smooth (resp.\ \'etale) morphism is again smooth (resp.\ \'etale).  The inclusion of an open subspace is \'etale.  The property of being smooth (resp.\ \'etale) is local in the sense of \S\ref{section:localproperties}.

\subsection{Proper and projective morphisms} \label{section:propermorphisms}  Here we recall the definition of a proper map of topological spaces, and some useful variants.

\begin{defn} \label{defn:proper} A continuous map of topological spaces $f : X \to Y$ is \emph{proper} iff $f$ is closed with compact fibers and for any distinct $x_1,x_2 \in X$ with $f(x_1)=f(x_2)$, there are disjoint open neighborhoods $U_1,U_2$ of $x_1,x_2$ in $X$. \end{defn}

\begin{lem} \label{lem:propermaps} Proper maps are closed under composition and base change. Being proper is local on the base. \end{lem}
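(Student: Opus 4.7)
The plan is to verify the three assertions in the order stated, with the closedness clause for base change being the only step that requires real work; the rest reduces to bookkeeping.

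\emph{Composition.} Suppose $f\colon X\to Y$ and $g\colon Y\to Z$ are proper. Composition of closed maps is closed. For the fiber condition, I first establish the classical fact that a closed map $f$ with compact fibers sends closed saturated sets nicely, so that the preimage $f^{-1}(K)$ of any compact $K\subseteq Y$ is compact: cover $f^{-1}(y)$ for each $y\in K$ by finitely many opens from a prescribed cover, shrink in $Y$ using closedness of $f$ to a neighborhood whose preimage lies in that finite union, and extract a finite subcover of $K$. Applied to $K=g^{-1}(z)$, this gives $(gf)^{-1}(z)=f^{-1}(g^{-1}(z))$ compact. For condition (c), given distinct $x_1,x_2$ with $gf(x_1)=gf(x_2)$, split into the cases $f(x_1)=f(x_2)$ (apply (c) for $f$) and $f(x_1)\neq f(x_2)$ (apply (c) for $g$ to get disjoint opens in $Y$ and pull them back through $f$).

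\emph{Base change.} Let $f\colon X\to Y$ be proper, $g\colon Y'\to Y$ arbitrary, and $f'\colon X':=X\times_Y Y'\to Y'$ the projection. The fiber $(f')^{-1}(y')$ is homeomorphic to $f^{-1}(g(y'))$, hence compact. Condition (c) transfers directly: distinct points of $X'$ with the same image in $Y'$ have distinct first components, so any disjoint opens around those first components in $X$ pull back to disjoint opens in $X'$. The main obstacle, and the only non-formal step, is showing $f'$ is closed. Given closed $C\subseteq X'$ and $y'\notin f'(C)$, I use a tube-lemma argument: for each $x\in f^{-1}(g(y'))$ the point $(x,y')\notin C$, so there exist product neighborhoods $U_x\times W_x$ in $X\times Y'$ disjoint from $C$; compactness of $f^{-1}(g(y'))$ gives a finite subcover yielding opens $U\supseteq f^{-1}(g(y'))$ and $W\ni y'$ with $(U\times W)\cap C=\emptyset$. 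Then $V_0:=Y\setminus f(X\setminus U)$ is open in $Y$ by closedness of $f$ and contains $g(y')$, and I take $V:=g^{-1}(V_0)\cap W$. A short verification shows $(f')^{-1}(V)\cap C=\emptyset$, because any $(x,y'')\in X'$ with $y''\in V$ forces $x\in f^{-1}(g(y''))\subseteq U$, placing $(x,y'')\in (U\times W)\cap C=\emptyset$.

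\emph{Locality on the base.} One direction is immediate from base change applied to the open inclusions $V_i\hookrightarrow Y$. For the converse, assume $\{V_i\}$ is an open cover of $Y$ and each restriction $f_i:=f|f^{-1}(V_i)\colon f^{-1}(V_i)\to V_i$ is proper. Closedness of $f(C)$ is checked locally on $Y$: $f(C)\cap V_i=f_i(C\cap f^{-1}(V_i))$ is closed in $V_i$ by closedness of $f_i$. Each fiber of $f$ equals the corresponding fiber of some $f_i$, hence is compact. Condition (c) is checked by choosing an index $i$ with $f(x_1)=f(x_2)\in V_i$ and applying (c) for $f_i$, since open sets in $f^{-1}(V_i)$ are open in $X$.

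Overall, the bulk of the argument lies in the tube-lemma computation in Step 2; the remaining verifications are routine bookkeeping from the three-part definition.
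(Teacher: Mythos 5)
Your proof is correct, and it fills in precisely what the paper declines to write out: the paper's ``proof'' of this lemma is simply ``exercise for the reader; only base change is a little tricky,'' and your tube-lemma argument for closedness of the base-changed map is the standard and expected way to handle that tricky step (the other verifications being the routine bookkeeping you describe). No gaps: the preimage-of-a-compact-set lemma in the composition step, the case split for the separation condition, and the local-on-the-base checks are all sound.
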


\begin{proof} This is an exercise for the reader.  The only statement that is a little tricky is stability under base-change. \end{proof}

\begin{defn} \label{defn:properEuclidean} A map of differentiable spaces $f : X \to Y$ is called \emph{projective} iff there is a vector bundle $V$ on $Y$ such that $f$ factors as a closed embedding $X \into \PP(V)$ followed by the projection $\PP(V) \to Y$.  The $f$ is called \emph{Euclidean proper} iff, locally on $Y$, there is a compact differentiable space $E$ such that $f$ factors as a closed embedding $X \into Y \times E$ followed by the projection $Y \times E \to Y$.  \end{defn}

Clearly being proper Euclidean is local on the base and it is clear that every closed embedding of differentiable spaces is both projective and proper Euclidean.  Proper Euclidean maps are proper.  Every compact differentiable space embeds in $\RR^n$ for large enough $n$.  Every $\DS$ fiber bundle with compact fibers is proper Euclidean.  A map which is projective locally on the base is proper Euclidean.  Proper Euclidean maps are closed under composition and base change.

\subsection{Differentialization} \label{section:differentialization}  Let $\Sch_{\RR}$ denote the category of schemes of locally finite type over $\RR$.  To any $X \in \Sch_{\RR}$, there is an associated differentiable space $X^{\DS}$ called the \emph{differentialization} of $X$ and a map $X^{\DS} \to X$ of locally ringed spaces over $\RR$ which is terminal among $\LRS/\RR$ morphisms from a differentiable space to $X$.  Formation of $X^{\DS} \to X$ is functorial in $X$, and the functor \be \DS : \Sch_{\RR} & \to & \DS \ee is a kind of right adjoint in that it satisfies \bne{DSadjointness} \Hom_{\LRS / \RR} (Y, X) & = & \Hom_{ \DS }(Y,X^{\DS}) \\ \nonumber & = & \Hom_{\LRS/\RR}(Y,X^{\DS}) \ene for every $Y \in \DS$.  The set of points of $X^{\DS}$ is naturally bijective with the set $X(\RR)$ of $\RR$ points of $X$.  See \cite{diffspace} for further details.

Let $\Sch_{\CC}$ denote the category of schemes of locally finite type over $\CC$.  The above differentialization construction is analogous to the \emph{analytification} functor $X \mapsto X^{\AS}$ from $\Sch_{\CC}$ to the category $\AS$ of analytic spaces.  All results mentioned in the previous paragraph continue to hold when ``$\RR$," ``differentiable," ``differentialization," and ``$\DS$" are replaced by ``$\CC$," ``analytic," ``analytification," and ``$\AS$," respectively.

If $X = \Spec \RR[x_1,\dots,x_n]/(f_1,\dots,f_m)$, then the polynomials $f_i$ may be viewed as smooth functions on $\RR^n$.  A theorem of Malgrange implies that $(f_1,\dots,f_n)$ is a closed ideal in the ring of smooth functions on $\RR^n$.  The differentiable space $X^{\DS}$ is the zero locus of this ideal.

The differentialization functor $X \mapsto X^{\rm DS}$ takes closed embeddings to closed embeddings, open embeddings to open embeddings, $\AA^n$ to $\RR^n$, vector bundles to vector bundles, \'etale maps to \'etale maps, and so forth.  Keep in mind, however, that it does not take surjections to surjections because a surjective map of schemes need not be surjective on $\RR$ points.

Let $\Sch_{\CC}$ denote the category of schemes of locally finite type over $\CC$.  Recall that the base change functor \be \Sch_{\RR} & \to & \Sch_{\CC} \\  X & \mapsto & X \times_{\Spec \RR} \Spec \CC \ee admits a right adjoint \be W : \Sch_{\CC} & \to & \Sch_{\RR} \ee called the \emph{Weil restriction}.  In particular, $W(X)(\RR) = X(\CC)$ for $X \in \Sch_{\CC}$.  If $X = \Spec \CC[z_1,\dots,z_n]/(f_1,\dots,f_m)$, then \be W(X) & = & \Spec \RR[x_1,\dots,x_n,y_1,\dots,y_n]/( \Re f_1, \dots, \Re f_m, \Im f_1, \dots, \Im f_m ), \ee where one writes $z_j = x_j+iy_j$ and defines $\Re f_k$ and $\Im f_k$ by formally collecting the real and imaginary parts of $f_k$, treating the formal variables $x_j$ and $y_j$ as real numbers.  We define the \emph{differentialization} of $X \in \Sch_{\CC}$ to be the differentialization of $W(X)$ as defined above.

There is also a differentialization functor $X \mapsto X^{\rm \DS}$ from the category $\AS$ of analytic spaces to the category $\DS$ of differentiable spaces.  The differentiable space $X^{\rm \DS}$ is characterized by the existence of a bijection \be \Hom_{\LRS / \CC}(Y \otimes_{\RR} \CC, X) & = & \Hom_{\DS}(Y,X^{\rm \DS}) \ee natural in $Y \in \DS$.  Here $Y \otimes_{\RR} \CC$ is nothing but the topological space $Y$ equipped with the sheaf of \emph{complex valued} smooth functions $\O_Y \otimes_{\RR} \CC$.

We could also have defined the differentialization of $X \in \Sch_{\CC}$ in terms of the analytification and differentialization functors by the formula $X^{\DS} := (X^{\AS})^{\DS}$.  One checks easily using the ``adjunction" isomorphisms mentioned above $(X^{\AS})^{\DS} = W(X)^{\DS}$ by showing that both differentiable spaces represent the same functor.

\subsection{Monoids to differentiable spaces} \label{section:RP} Our main use for the differentialization construction of \S\ref{section:differentialization} is to associate differentiable spaces $\RR(P)$ and $\RR_+(P)$ to a finitely generated monoid $P$.  In fact, we could accomplish this through ``general nonsense" as in \S\ref{section:spaces}, but it is useful to have a concrete description of $\RR(P)$ and $\RR_+(P)$.

Since $\RR[P]$ is a finite type $\RR$ algebra, $\Spec \RR[P] \in \Sch_{\RR}$ is a finite type (affine) scheme over $\RR$.  For any $X \in \LRS/\RR$ we have \bne{RPadj} \Hom_{\LRS / \RR}(X,\Spec \RR[P]) & = & \Hom_{\RR-\Alg}(\RR[P] , \Gamma(X,\O_X)) \\ \nonumber & = & \Hom_{\Mon}(P, \Gamma(X,\O_X)). \ene  We set $\RR(P) := (\Spec \RR[P])^{\DS}$.  Note that the points of the topological space $\RR(P)$ are the $\RR$ points of $\Spec \RR[P]$, which are the monoid homomorphisms $P \to \RR$, regarding $\RR$ as a monoid under multiplication.  By the universal property of differentialization and \eqref{RPadj}, we have a natural bijection \bne{RRPadjunction} \Hom_{\DS}(X,\RR(P)) & = & \Hom_{\LRS/\RR}(X, \Spec \RR[P]) \\ \nonumber & = & \Hom_{\Mon}(P,\Gamma(X,\O_X)) \ene for $X \in \DS$.

Building on this construction, we next note that $\RR_+(P) := \Hom_{\Mon}(P,\RR_+)$ is a closed subspace of $\RR(P)$.  We regard $\RR_+(P)$ as a differentiable space by giving it the big induced differentiable space structure (\S\ref{section:basicnotions}) from $\RR(P)$.  A map $X \to \RR(P)$ corresponding to a monoid homomorphism $f : P \to \O_X(X)$ under \eqref{RRPadjunction} will factor set-theoretically through the closed subspace $\RR_+(P) \subseteq \RR(P)$ iff $f$ takes values in $\O_X^{\geq 0}(X)$, where $\O_X^{\geq 0} \subseteq \O_X$ is the submonoid of non-negative functions defined in \S\ref{section:PLDS}.  So \eqref{RRPadjunction} and the universal property of the big induced structure (Lemma~\ref{lem:big}) combine to yield a natural bijection \bne{RplusPadjunction} \Hom_{\DS}(X,\RR_+(P)) & = & \Hom_{\Mon}(P,\O_X^{\geq 0}(X)). \ene  In particular, if $P = \NN^n$, then $\RR_+^n := \RR_+(\NN^n)$ is the positive orthant in $\RR^n$ and represents the presheaf $X \mapsto \Gamma(X,\O_X^{\geq 0})^n.$  We already encountered the differentiable space $\RR_+$ in Example~\ref{example:Rplus}.

Formation of the differentiable spaces $\RR(P)$ and $\u{\RR}_+(P)$ is contravariantly functorial in $P$.  The adjunction formulas \eqref{RRPadjunction} and \eqref{RplusPadjunction} imply that the functors $\RR(\slot)$ and $\RR_+(\slot)$ take finite direct limits of monoids to finite inverse limits of differentiable spaces.

\section{Monoidal spaces I} \label{section:monoidalspacesI}  This section is essentially independent from the rest of the paper and should be readable by anyone with a general knowledge of topological spaces and sheaves, though a certain basic knowledge of algebraic geometry and the theory of monoids (at the level of \S\ref{section:monoids}) will probably be necessary to follow the finer points.  

The main object of study will be the category of \emph{locally monoidal spaces}, which is analogous to the category $\LRS$ of locally ringed spaces, but with monoids playing the role of rings.  Inside $\LRS$, we have the category $\Fans$ of \emph{fans} (\S\ref{section:fans}), which is analogous to the category $\Sch$ of schemes inside $\LRS$.  We develop some rudimentary ``algebraic geometry" of these spaces in the later sections, then formulate and prove a resolution of singularities statement in \S\ref{section:functorialresolution}.  Although essentially of a combinatorial nature, our proof of this result relies on the existence of a functorial resolution of singularities for varieties over $\CC$, as established by Bierstone and Milman in \cite{BM}.  The basic point is that the $\CC$-scheme realization of a monoid is a very faithful reflection of the monoid (\S\ref{section:CP}), so that we can propogate $\CC$-scheme results to general results about monoids, fans, and so forth.  For example, this ultimately allows us to extract general ``resolution of singularities" results from the corresponding results for $\CC$-schemes.  Our approach along these lines is not entirely new---we have discussed the history of these ideas in the Introduction.

\subsection{Definitions} \label{section:monoidalspacedefinitions} We warn the reader that our terminology here conflicts with Kato's terminology in \cite[\S9]{Kat2} and with Ogus's terminology \cite[Definition~1.3.4]{Ogus}, though it is consistent with \cite[\S3.7]{loc}.

\begin{defn} \label{defn:monoidalspace} A \emph{monoidal space} $X = (X,\M_X)$ is a topological space $X$ equipped with a sheaf of monoids $\M_X$ called its \emph{structure sheaf}.  We let $\M_{X,x}$ denote the stalk of $\M_X$ at $x \in X$ and we let $\m_x := \M_{X,x} \setminus \M_{X,x}^*$ denote the maximal ideal of $\M_{X,x}$.  Monoidal spaces form a category $\MS$ where a morphism $f = (f,f^\dagger) : (X,\M_X) \to (Y,\M_Y)$ is a pair $(f,f^\dagger)$ consisting of a map of topological spaces $f : X \to Y$ and a map of sheaves of monoids $f^\dagger : f^{-1} \M_Y \to \M_X$.  The category $\LMS$ of \emph{locally monoidal spaces} has the same objects as $\MS$, but an $\LMS$ morphism is an $\MS$ morphism $f$ as above such that \be f_x^\dagger : \M_{Y,f(x)} \to \M_{X,x} \ee is a local map of monoids (Definition~\ref{defn:local}) for each $x \in X$ (i.e.\ $f_x^\dagger(\m_{f(x)}) \subseteq \m_x$). \end{defn}

\begin{defn} A monoidal space $X$ is called \emph{sharp} iff $\M_X$ is a sheaf of sharp monoids (\S\ref{section:monoidbasics}).  By definition, sharp monoidal spaces form a full subcategory $\SMS$ of $\LMS$.  \end{defn}

One could of course view sharp monoidal spaces as a full subcategory of $\MS$, but we will have no use for maps of sharp monoidal spaces that are not local.  For any monoidal space $X$, the quotient $\ov{\M}_X := \M_X / \M_X^*$ is a sheaf of sharp monoids, and the quotient map $\M_X \to \ov{\M}_X$ is an $\LMS$ morphism.  The sharp monoidal space $\ov{X} := (X,\ov{\M}_X)$ is called the \emph{sharpening} of $X$.  Sharpening defines a functor $\LMS \to \SMS$ which is right adjoint to the inclusion $\SMS \into \LMS$.

\subsection{Prime systems} \label{section:primesystems} To construct relative Spec functors, blowups, and so forth in the category of locally monoidal spaces, we will follow the general strategy for doing this in the case of locally ringed spaces from \cite{loc}.  In fact, we initially used this machinery to construct inverse limits in $\LMS$ (following the analogous construction of inverse limits in $\LRS$ in \cite{loc}) before we discovered that this could be done more easily by using Theorem~\ref{thm:directlimitlocal} (we had trouble proving this result for some time), as in Theorem~\ref{thm:LMSinverselimits}.

\begin{defn} \label{defn:primesystem} Let $X$ be a monoidal space.  A \emph{prime system} on $X$ is the data of a subset $M_x \subseteq \Spec \M_{X,x}$ for each $x \in X$.  A \emph{primed monoidal space} $(X,M)$ is a monoidal space $X$ equipped with a prime system $M$.  Primed monoidal spaces form a category $\PMS$ where a morphism $(X,M) \to (Y,N)$ is a morphism of monoidal spaces $f : X \to Y$ such that $(f_x^\dagger)^{-1}(\p) \in N_{f(x)}$ for each $\p \in M_x$ for each $x \in X$.  \end{defn}

\begin{defn} \label{defn:primesystemexamples}  For a monoidal space $X$, the \emph{local prime system} $L_X$ on $X$ (or just $L$ if $X$ is clear from context) is the prime system with $L_x = \{ \m_x \}$ for each $x \in X$ and the \emph{terminal prime system} $T_X$ (or just $T$) is the prime system with $T_x = \Spec \M_{X,x}$ for every $x \in X$.  If $f : X \to Y$ is a map of monoidal spaces and $M$ is a prime system on $Y$, then the \emph{inverse image prime system} $f^* M$ is the prime system on $X$ with \be (f^*M)_x & := & \{ \p \in \Spec \M_{X,x} : (f_x^\dagger)^{-1} \p \in M_{f(x)} . \ee   \end{defn}

The local prime system can be viewed as a functor $L : \LMS \to \PMS$ making $\LMS$ a full subcategory of $\PMS$.  Similarly, the terminal prime system can be viewed as a functor $T : \MS \to \PMS$ which is clearly right adjoint to the forgetful functor $\PMS \to \MS$ forgetting the prime system.

All of the above constructions have analogues with ``monoidal" replaced by ``ringed" studied in \cite{loc}. 

\subsection{Localization} \label{section:localization} For $(X,M) \in \PMS$, we construct a new monoidal space $(X,M)^{\rm loc}$ (or just $X^{\rm loc}$ if $M$ is clear from context) called the \emph{localization} of $X$ at $M$ as follows.  As a set, we let $X^{\rm loc}$ be the set of pairs $(x,z)$ where $x \in X$ and $z \in M_x$. For an open subset $U$ of $X$ and a section $s \in \M_X(U)$, we set \be U_s & := & \{ (x,z) \in X^{\rm loc} : x \in U, \; s_x \notin z \}. \ee  These $U_s$ form the basic opens for a topology on $X^{\rm loc}$ in light of the formula \be U_{s} \cap V_t & = & (U \cap V)_{s+t} \ee (notation for restriction to $U \cap V$ dropped).  The map $\tau : X^{\rm loc} \to X$ given by $\tau(x,z) := x$ is continuous because $\tau^{-1}U = U_0$. 

We construct a sheaf of monoids $\M_{X^{\rm loc}}$ on $X^{\rm loc}$ as follows.  For each open $V \subseteq X^{\rm loc}$, we let $\M_{X^{\rm loc}}(V)$ denote the set of $$ s = (s(x,z))  \in \prod_{(x,z) \in V} (\M_{X,x})_z $$  satisfying the following \emph{local consistency condition}: For every $(x,z) \in V$, there is an open neighborhood $U$ of $x$ in $X$ and sections $p,q \in \M_X(U)$ with $q_x \notin z$ such that \be s(x',z') & = & p_{x'} - q_{x'} \ee for every $(x',z') \in U_q$.  This is a monoid under coordinatewise addition and is a sheaf in light of the local nature of the local consistency condition.  There is a natural isomorphism \be \M_{X^{\rm loc},(x,z)} & = & (\M_{X,x})_z \ee for each $(x,z) \in X^{\rm loc}$.  The map $\tau$ of topological spaces defined above lifts in an evident manner to an $\MS$ morphism \bne{tau} \tau : (X,M)^{\rm loc} & \to & X \ene such that \be \tau_{(x,z)}^\dagger : \M_{X,x} & \to & (\M_{X,x})_z \ee is the localization of $\M_{X,x}$ at the prime ideal $z$.

The above construction and detailed explanations of all above statements are given in the case of prime \emph{ringed} spaces in \cite[\S2.2]{loc}.  The following elementary result is proved in a manner identical to the proof of \cite[Theorem~2]{loc}: Just replace the word ``ring" (resp.\ ``ringed") with ``monoid" (resp.\ ``monoidal") everywhere in that proof, switch to multiplicative notation for monoids, and use the universal property of localization of monoids (\S\ref{section:monoidbasics}) in place of the one for rings.

\begin{thm} \label{thm:localization} Let $f : (X,M) \to (Y,N)$ be a morphism in $\PMS$.  Then there is a unique $\LMS$ morphism $\ov{f} : (X,M)^{\rm loc} \to (Y,N)^{\rm loc}$ making the diagram $$ \xym{ (X,M)^{\rm loc} \ar[d]_{\tau} \ar[r]^{\ov{f}} & (Y,N)^{\rm loc} \ar[d]_{\tau} \\ X \ar[r]^f & Y } $$ commute in $\MS$.  Localization defines a functor $\PMS \to \LMS$ retracting the local prime system functor $L : \LMS \to \PMS$ and right adjoint to it. \end{thm}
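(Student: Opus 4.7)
The plan is to mimic the construction of $\ov{f}$ in \cite[Thm 2]{loc} in the ringed setting, using the universal property of localization of monoids from \S\ref{section:monoidbasics} in place of the corresponding property for rings. First I would define $\ov{f}$ on underlying sets by the formula $\ov{f}(x,z) := (f(x),(f^\dagger_x)^{-1}(z))$; this is a valid point of $(Y,N)^{\rm loc}$ precisely because $f$ is a $\PMS$ morphism, so $(f_x^\dagger)^{-1}(z) \in N_{f(x)}$ whenever $z \in M_x$. To see that $\ov{f}$ is continuous, I would compute the preimage of a basic open $V_t$ (for $V \subseteq Y$ open and $t \in \M_Y(V)$): one checks directly from the definitions that $\ov{f}^{-1}(V_t) = U_s$, where $U := f^{-1}(V)$ and $s := f^\dagger(t) \in \M_X(U)$. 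This verifies that $\ov{f}$ is a continuous map over $f$, which is already enough to force uniqueness on underlying spaces: if $\ov{f}'$ is any other $\LMS$-lift of $f$, then at the stalk $(x,z)$ its sheaf map is a local monoid homomorphism $(\M_{Y,f(x)})_{(f_x^\dagger)^{-1}(z)} \to (\M_{X,x})_z$ whose composition with $\M_{Y,f(x)} \to (\M_{Y,f(x)})_{(f_x^\dagger)^{-1}(z)}$ equals $\M_{Y,f(x)} \xrightarrow{f_x^\dagger} \M_{X,x} \to (\M_{X,x})_z$, and the universal property of localization shows that there is exactly one such map, which in particular forces $\ov{f}'(x,z) = \ov{f}(x,z)$.

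Next I would define the sheaf homomorphism $\ov{f}^\dagger$. On stalks the map is forced by the previous paragraph, but one has to globalize this to a map of sheaves $\ov{f}^{-1}\M_{Y^{\rm loc}} \to \M_{X^{\rm loc}}$ respecting the local consistency condition used to define $\M_{X^{\rm loc}}$ in \S\ref{section:localization}. Given a section $s$ of $\M_{Y^{\rm loc}}$ over an open $V \subseteq Y^{\rm loc}$, one takes $\ov{f}^\dagger(s)$ to be the section of $\M_{X^{\rm loc}}$ over $\ov{f}^{-1}V$ whose value at $(x,z)$ is the image of $s(f(x),(f_x^\dagger)^{-1}(z))$ under the canonical localization-induced map above. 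If $s$ is represented locally around $(f(x),(f_x^\dagger)^{-1}(z))$ by a difference $p_y - q_y$ with $p,q \in \M_Y(U')$ and $q_{f(x)} \notin (f_x^\dagger)^{-1}(z)$, then $f^\dagger(p) - f^\dagger(q)$ is a local representative around $(x,z)$ with $f_x^\dagger(q_{f(x)}) \notin z$, so the local consistency condition is preserved. Localness of $\ov{f}^\dagger$ at each stalk is automatic because the map of localizations induced by $f_x^\dagger$ is local by construction (the preimage of the maximal ideal of $(\M_{X,x})_z$ is exactly $(f_x^\dagger)^{-1}(z)$). Functoriality of the assignment $f \mapsto \ov{f}$ then follows from the uniqueness just established, since stalkwise both sides equal the unique factorization through the relevant localizations.

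Finally, for the retraction and adjunction statements, I would first check that $(X,L_X)^{\rm loc} = X$ canonically for $X \in \LMS$. Indeed, $L_x = \{\m_x\}$ gives a bijection on points $(x,\m_x) \leftrightarrow x$; the basic opens satisfy $U_0 = U$ (since $0 \in \M_{X,x}^*$ everywhere) while other $U_s$ are already open in $X$, so the topologies coincide; and the stalk $(\M_{X,x})_{\m_x}$ is just $\M_{X,x}$ since the complementary face $\M_{X,x} \setminus \m_x = \M_{X,x}^*$ is already a group. For the adjunction
\[
\Hom_\LMS(Z, (X,M)^{\rm loc}) \;\cong\; \Hom_\PMS(L(Z), (X,M)),
\]
the right-to-left map sends a $\PMS$ morphism $g : L(Z) \to (X,M)$ to its lift $\ov{g}$ constructed above (note $(X,M)^{\rm loc} = L(Z)^{\rm loc}$ agrees with $(X,M)^{\rm loc}$ up to the retraction), while the left-to-right map composes with $\tau : (X,M)^{\rm loc} \to X$. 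The composite $\tau \circ \ov{g} = g$ by construction, and conversely if $\tilde{f} : Z \to (X,M)^{\rm loc}$ is a given local morphism then $((\tau\tilde f)_z^\dagger)^{-1}(\m_z) = \tilde f(z)$ by localness of $\tilde f$, so $\widetilde{\tau \tilde f} = \tilde f$ on points, and on sheaves both agree by the stalkwise uniqueness argument of the first paragraph. I expect the main technical obstacle to be the careful verification of the local consistency condition in the construction of $\ov{f}^\dagger$ together with the bookkeeping of which elements lie outside which primes; everything else is formal once the monoid-theoretic universal property is invoked in the right spot.
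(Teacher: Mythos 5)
Your proposal is correct and follows essentially the same route as the paper: the paper's proof simply says to transcribe the proof of \cite[Theorem~2]{loc} with ``ring'' replaced by ``monoid'' and the universal property of localization of monoids in place of that for rings, and your argument is exactly that transcription carried out in detail (point map $(x,z)\mapsto (f(x),(f_x^\dagger)^{-1}(z))$, continuity via $\ov{f}^{-1}(V_t)=U_{f^\dagger(t)}$, stalkwise uniqueness from localness plus the universal property, the local consistency check, and the retraction/adjunction bookkeeping).
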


\begin{rem} \label{rem:localizationisloca} It is clear from the construction that formation of $(X,M)^{\rm loc}$ is local in nature: If $U$ is an open subspace of $X$, then $(U,M|U)^{\rm loc} = \tau^{-1}(U) \subseteq (X,M)^{\rm loc}$. \end{rem}

\subsection{Inverse limits} \label{section:inverselimits}  Here we summarize the basic properties of inverse limits in our various categories of monoidal spaces.

\begin{thm} \label{thm:LMSinverselimits}  The categories $\MS$, $\PMS$, $\LMS$, and $\SMS$ have all inverse limits.  The functors $$\MS \to \Top, \; \; \LMS \to \MS, \; \; L : \LMS \to \PMS, \; {\rm and} \; \LMS \to \SMS$$ preserve inverse limits.  \end{thm}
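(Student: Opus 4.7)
The plan is to build everything from a single $\MS$-construction and then refine it for $\LMS$, $\PMS$, and $\SMS$. For an inverse system $\{X_i\}$ in $\MS$, I would take $X := \invlim_i X_i$ in $\Top$ and define $\M_X := \dirlim_i \pi_i^{-1}\M_{X_i}$, where $\pi_i : X \to X_i$ is the projection. The structure maps come from the canonical arrows $\pi_i^{-1}\M_{X_i} \to \M_X$ into the colimit, and the universal property against an $\MS$-cone $Y \to X_i$ follows formally: the $\Top$-limit produces a continuous $f : Y \to X$, and since $f^{-1}\M_X = \dirlim_i f^{-1}\pi_i^{-1}\M_{X_i}$, the compatible sheaf maps $g_i^\dagger : f^{-1}\pi_i^{-1}\M_{X_i} \to \M_Y$ factor uniquely through $\M_X$. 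This construction also immediately gives preservation by $\MS \to \Top$.

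The subtle step is promoting this construction to $\LMS$. Stalks satisfy $\M_{X,x} = \dirlim_i \M_{X_i, \pi_i(x)}$, and when the original diagram is in $\LMS$ all transition maps in this direct system are local. Theorem~\ref{thm:directlimitlocal} is precisely the non-trivial input: it guarantees simultaneously that (i) the structure map $\M_{X_i,\pi_i(x)} \to \M_{X,x}$ is local, so every projection $\pi_i$ is an $\LMS$-morphism, and (ii) the same colimit represents the functor of cones of \emph{local} maps, so the unique factorization produced from any $\LMS$-cone is automatically local on stalks. This establishes the existence of $\LMS$-limits and preservation under $\LMS \to \MS$ in one stroke.

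For $\PMS$, I would equip the $\MS$-limit with the prime system $M_x := \bigcap_i (\pi_i^{*} M_i)_x$, which is the largest system making every projection a $\PMS$-morphism; the universal property falls out because the cone condition for a $\PMS$-map $(Y,N) \to (X_i,M_i)$ pulls back through each $\pi_i$ to exactly the defining condition for $M_x$. To see that $L : \LMS \to \PMS$ preserves limits, I would apply this recipe to the local prime systems $L_{X_i}$ and check that the resulting prime system on $X$ is $L_X$. The key combinatorial fact, again from Theorem~\ref{thm:directlimitlocal}, is that in a direct limit of monoids with local transitions a unit in the colimit must already be the image of a unit at some stage; equivalently, every non-unit in $\M_{X,x}$ is the image of a non-unit in some $\M_{X_i,\pi_i(x)}$. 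Hence any prime $\q \in \Spec \M_{X,x}$ with $(\pi_{i,x}^\dagger)^{-1}\q = \m_{\pi_i(x)}$ for every $i$ contains $\m_x$, and so equals it by maximality. For $\SMS$, the cleanest argument is that sharpening is right adjoint to the inclusion $\SMS \into \LMS$ and therefore preserves all inverse limits; existence of $\SMS$-limits can then be recovered by observing (via the same unit-lifting principle) that the stalks of the $\LMS$-limit of a sharp diagram are themselves sharp, so $\SMS$ is in fact closed under $\LMS$-limits.

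The main obstacle is the $\LMS$-step. Without Theorem~\ref{thm:directlimitlocal} there is no obvious reason for the $\MS$-construction to inherit the locality required at the level of structure maps or at the level of universal factorizations; with that theorem in hand every remaining point becomes a formal manipulation of universal properties, sheaf pullback, and the compatibility of stalks with direct limits.
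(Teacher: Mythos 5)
Your proposal is correct and follows essentially the same route as the paper: the same $\MS$-construction (topological inverse limit with $\M_X = \dirlim_i \pi_i^{-1}\M_{X_i}$), the same prime system for $\PMS$, the paper's ``second approach'' for $\LMS$ via Theorem~\ref{thm:directlimitlocal} together with Lemma~\ref{lem:local}, and the same right-adjoint argument for sharpening. The only cosmetic caveat is that for a non-filtered index category an element of the colimit stalk is a finite sum of images from several stages rather than the image of a single element, but your unit/non-unit argument goes through verbatim after that adjustment.
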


\begin{proof} ``The" inverse limit of a functor $i \mapsto X_i$ to $\MS$ can be constructed as follows.  Let $X$ be ``the" inverse limit of the underlying topological spaces of the $X_i$ and let $\pi_i : X \to X_i$ be the projections.  Let $\M_X$ be the direct limit of the sheaves of monoids $\pi_i^{-1} \M_{X_i}$ on $X$.  The structure maps $\pi_i^\sharp : \pi_i^{-1} \M_{X_i} \to \M_X$ to the direct limit allow us to view $\pi_i : X \to X_i$ as a $\MS$ morphism.  Since direct limits commute with stalks, we have \be \M_{X,x} & = & \dirlim \M_{X_i,\pi_i(x)} \ee for any $x \in X$, so that the map $\M_{X_i,\pi_i(x)} \to \M_{X,x}$ obtained from our structural $\MS$ morphism $\pi_i : X \to X_i$ is the usual structure map to the direct limit.  It is trivial to check that $X = (X,\M_X)$ is the inverse limit of the $X_i$ in $\MS$ and it is clear from this construction that $\MS \to \Top$ commutes with inverse limits.  

To construct the inverse limit of a functor $i \mapsto (X_i,M_i)$ in $\PMS$, first form the inverse limit $X$ of the $X_i$ in $\MS$ as discussed above, then endow it with the prime system $M$ where $\p \in M_x$ iff $(\pi_{i,x}^\dagger)^{-1}(\p) \in (M_i)_{\pi_i(x)}$ for every $i$.  

Now we have two ways to construct the inverse limit of a functor $i \mapsto X_i$ in $\LMS$.  

\noindent {\bf First approach:} (c.f.\ \cite[\S3.1]{loc})  Composing with the local prime system $L : \LMS \to \PMS$, we obtain an inverse limit system $i \mapsto (X_i,L_i)$ in $\PMS$, which,as we saw above, has inverse limits, so we can form the inverse limit $(X,M)$ of $i \mapsto (X_i,L_i)$.  Localization is a right adjoint (Theorem~\ref{thm:localization}), so it preserves inverse limits, hence $(X,M)^{\rm loc}$ is the inverse limit of $i \mapsto (X_i,L_i)^{\rm loc}$ in $\LMS$.  But $(X_i,L_i)^{\rm loc} = X_i$ because localization retracts $L$ (Theorem~\ref{thm:localization}), so $(X,M)^{\rm loc}$ is the desired inverse limit of $i \mapsto X_i$ in $\LMS$. 

\noindent {\bf Second approach:}  Let $X$ be the inverse limit of $i \mapsto X_i$ in $\MS$.  We claim that the structure maps $\pi_i : X \to X_i$ are in fact $\LMS$-morphisms.  By construction of $X$, for $x \in X$ we have $\M_{X,x} = \dirlim \M_{X_i,\pi_i(x)}$ and \be \pi_{i,x} : \M_{X_i,\pi_i(x)} & \to & \M_{X,x} \ee is the structure map to the direct limit.  This is indeed local by Theorem~\ref{thm:directlimitlocal}.  Next we claim that $X$ is in fact the inverse limit of $i \mapsto X_i$ in $\LMS$.  Consider a set of $\LMS$ maps $f_i : Y \to X_i$ so that \be (f_i) & \in & \invlim \Hom_{\LMS}(Y,X_i). \ee  Since $X$ is the inverse limit of $i \mapsto X_i$ in $\MS$, there is a unique $\MS$-morphism $f : Y \to X$ such that $\pi_i f = f_i$ for all $i$.  The issue is to show that $f$ is actually an $\LMS$ map.  Pick a point $y \in Y$.  We need to show that $f_y : \M_{X,f(y)} \to \M_{Y,y}$ is local.  We have a diagram $$ \bigoplus_i \M_{X_i,f_i(x)} \to \M_{X,f(y)} \to \M_{Y,y} $$ where the left map is surjective and local (c.f.\ the proof of Theorem~\ref{thm:directlimitlocal}) and the composition is local because the $f_i$ are $\LMS$ maps, so $f_y$ is local by Lemma~\ref{lem:local}.   

It is clear from this second approach that $\LMS \to \MS$ preserves inverse limits.  The fact that $L : \LMS \to \PMS$ preserves inverse limits is a similar application of Theorem~\ref{thm:directlimitlocal}.  The sharpening functor $\LMS \to \SMS$ commutes with inverse limits because it is a right adjoint (\S\ref{section:monoidalspacedefinitions}).  \end{proof}

Notice that the corresponding statements with ``monoidal" replaced by ``ringed" are certainly not true.  This is the main point where the theory of locally monoidal spaces differs from that of locally ringed spaces.

\begin{defn} \label{defn:benignMSmorphism} An $\MS$ morphism $f : X \to Y$ is \emph{benign} iff $f^\dagger_x : \M_{Y,f(x)} \to \M_{X,x}$ is a benign morphism of monoids (Definition~\ref{defn:benign}) for every $x \in X$. \end{defn}

For example, strict morphisms are benign and the sharpening morphism $\ov{X} \to X$ is benign.  Every benign $\MS$ morphism is in fact an $\LMS$ morphism because a benign map of monoids is local.

\begin{cor} \label{cor:benignstrictmorphisms} Strict (resp.\ benign, \dots) morphisms in $\LMS$ are closed under base change. \end{cor}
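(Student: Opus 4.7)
The plan is to reduce everything to a statement about stalks via Theorem~\ref{thm:LMSinverselimits}. Given a cartesian square $\xym{ X' \ar[r] \ar[d]_{f'} & X \ar[d]^f \\ Y' \ar[r]^g & Y }$ in $\LMS$ with $f$ strict (resp.\ benign), the theorem tells us that the underlying topological space of $X'$ is the ordinary pullback $X \times_Y Y'$, and at any point $x' \in X'$ lying over $x \in X$, $y' \in Y'$, and $y = f(x) = g(y') \in Y$, the stalk of $\M_{X'}$ is the direct limit (over the cospan diagram) of the corresponding stalks; that is, the pushout
\[ \M_{X',x'} \;\cong\; \M_{X,x} \oplus_{\M_{Y,y}} \M_{Y',y'} \]
in $\Mon$. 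Under this identification, the stalk map $(f')^\dagger_{x'}$ is precisely the pushout of $f^\dagger_x$ along $g^\dagger_{y'}$.

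Hence it suffices to show that strictness (resp.\ benignness) of a monoid homomorphism is stable under pushout in $\Mon$. For benign maps this is exactly the content of Lemma~\ref{lem:benign}. For strict maps, the point is that the sharpening functor $P \mapsto \ov{P}$ is left adjoint to the inclusion of sharp monoids into $\Mon$, hence commutes with pushouts; sharpening the pushout square above therefore yields the pushout square
\[ \xym{ \ov{\M}_{Y,y} \ar[r]^-{\ov{f^\dagger_x}} \ar[d] & \ov{\M}_{X,x} \ar[d] \\ \ov{\M}_{Y',y'} \ar[r]^-{\ov{(f')^\dagger_{x'}}} & \ov{\M}_{X',x'} } \]
in the category of sharp monoids, and since $\ov{f^\dagger_x}$ is an isomorphism by strictness of $f$ at $x$, its pushout $\ov{(f')^\dagger_{x'}}$ is an isomorphism as well, so $f'$ is strict at $x'$. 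Since $x' \in X'$ was arbitrary, both claims follow. The ``$\ldots$'' in the corollary can be filled in by any property of monoid maps known to be stable under pushout (for instance surjectivity and locality, by the same template).

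There is essentially no obstacle in this proof: the nontrivial work is already packaged in Theorem~\ref{thm:LMSinverselimits} (which itself rests on the delicate Theorem~\ref{thm:directlimitlocal}) and in Lemma~\ref{lem:benign}; the corollary is a formal consequence of these, with the only mild conceptual ingredient being the observation that sharpening, as a left adjoint, respects colimits.
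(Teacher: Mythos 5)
Your proof is correct and follows essentially the same route as the paper: base change in $\LMS$ is computed in $\MS$ (Theorem~\ref{thm:LMSinverselimits}), so stalks of the fibered product are pushouts of stalks, and one reduces to stability of the relevant property of monoid homomorphisms under pushout, citing Lemma~\ref{lem:benign} for the benign case. One small caveat: in this paper a \emph{strict} $\LMS$ morphism is one for which $f^\dagger : f^{-1}\M_Y \to \M_X$ is an isomorphism on the nose (Definition~\ref{defn:monoidalspacechart}), not merely an isomorphism after sharpening, so the strict case is even more immediate than your argument suggests --- a pushout of an isomorphism is an isomorphism. Your left-adjoint argument via sharpening is correct, but it establishes stability of the weaker, monoid-level notion of strictness (isomorphism on sharpenings of stalks); that is a true and useful statement, just not literally the one the corollary asserts under the paper's conventions.
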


\begin{proof}  The point is that base change in $\LMS$ is the same as base change in $\MS$ so we just have to prove that isomorphisms (resp.\ benign maps, \dots) of monoids are closed under pushout, which is clear (resp.\ Lemma~\ref{lem:benign}). \end{proof}

\begin{cor} \label{cor:sharpening} An $\LMS$ map $ f : X \to Y$ which is surjective (resp.\ open, proper, a homeomorphism, \dots) on underlying topological spaces also has this property after any $\LMS$ base change.  \end{cor}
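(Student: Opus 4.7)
The plan is to reduce the entire statement to the corresponding facts about continuous maps of topological spaces. The crucial input is Theorem~\ref{thm:LMSinverselimits}, which says the forgetful functor $\LMS \to \MS$ preserves inverse limits, combined with the fact (also from that theorem) that $\MS \to \Top$ preserves inverse limits. Composing, the forgetful functor $\LMS \to \Top$ preserves inverse limits and in particular preserves fibered products. Consequently, given an $\LMS$ pullback square
$$\xymatrix{ X' \ar[r] \ar[d]_{f'} & X \ar[d]^f \\ Y' \ar[r] & Y, }$$
the underlying topological space of $X'$ is the topological fibered product $Y' \times_Y X$, and $f'$ is simply the topological base change of $f$ along $Y' \to Y$.

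Once this reduction is in place, it suffices to verify that each property in the list is stable under topological base change. For surjectivity: given $y' \in Y'$ with image $y \in Y$, surjectivity of $f$ provides $x \in X$ with $f(x) = y$, and then $(y',x) \in Y' \times_Y X$ maps to $y'$. For openness: an open set in $Y' \times_Y X$ can be covered by basic opens $U' \times_Y U$ with $U' \subseteq Y'$, $U \subseteq X$ open, and the image of such a basic open under $f'$ equals $U' \cap g^{-1}(f(U))$, where $g : Y' \to Y$, which is open in $Y'$ because $f(U)$ is open in $Y$. A homeomorphism is by definition a bijective bicontinuous map, hence enjoys surjectivity and openness; one also checks injectivity directly (if $(y_1',x_1),(y_2',x_2) \in Y' \times_Y X$ have the same image in $Y'$, then $y_1' = y_2'$, so $f(x_1) = f(x_2)$ forces $x_1 = x_2$ when $f$ is injective). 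Properness is provided by Lemma~\ref{lem:propermaps}, which asserts explicitly that proper maps are stable under base change.

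There is essentially no obstacle beyond invoking the two theorems just mentioned; the only mild subtlety is that stability under base change must be read with respect to \emph{topological} fibered products, which is exactly what Theorem~\ref{thm:LMSinverselimits} guarantees for $\LMS$ base change. The ``\ldots'' in the statement is understood to range over topological properties of maps that are themselves local on the base and stable under topological base change, and for any such property the same argument applies verbatim.
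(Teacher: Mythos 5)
Your proposal is correct and follows exactly the paper's argument: the paper's proof is the one-line observation that $\LMS \to \Top$ preserves inverse limits (Theorem~\ref{thm:LMSinverselimits}) and that the listed properties are stable under base change in $\Top$. You have merely spelled out the routine topological verifications that the paper leaves to the reader.
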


\begin{proof} $\LMS \to \Top$ preserves inverse limits by the theorem and the types of $\Top$ morphisms listed are stable under base change in $\Top$. \end{proof}

\subsection{Spec revisited} \label{section:Specrevisited} The $\Spec$ construction from \S\ref{section:Spec} defines a functor \bne{unsharpenedSpec} \Spec : \Mon^{\rm op} & \to & \LMS \\ \nonumber P & \mapsto & (\Spec P, \M_P). \ene  It is not hard to see directly from the definitions that we have a natural bijection \bne{Specadjunction} \Hom_{\LMS}(X,\Spec P) & = & \Hom_{\Mon}(P,\M_X(X)) \ene for $X \in \LMS$, $P \in \Mon$.  This bijection can be constructed in the same manner as the analogous natural bijection \bne{ringsSpecadjunction} \Hom_{\LRS}(X,\Spec A) & = & \Hom_{\An}(A,\O_X(X)) \ene for $X \in \LRS$, $A \in \An$ as in \cite[Err. I.1.8]{EGA}, though we will prove this ``by general nonsense" in just a moment.  Given $h : P \to \M_X(X)$, the corresponding map of topological spaces, abusively denoted $h : X \to \Spec P$, takes $x \in X$ to $h(x) := h^{-1}(\m_x) \in \Spec P$.  Here we write $h$ as abuse of notation for the composition of $h : P \to \M_X(X)$ and the map $\M_X(X) \to \M_{X,x}$ taking a global section to its stalk at $x$. 

\begin{example} \label{example:mapsfromSpecN} Recall the discussion of $\Spec \NN = \{ \emptyset, \m \}$ from Example~\ref{example:SpecN}.  To give an $\MS$-morphism $f : \Spec \NN \to X$ is to give points $f(\emptyset), f(\m) \in X$ with $f(\m) \in \{ f(\emptyset) \}^-$, and a commutative diagram of monoids as below.  $$ \xym{  \M_{X,f(\m)} \ar[d]_{f^\dagger_{\m}} \ar[r] & \M_{X,f(\emptyset)} \ar[d]^{f^\dagger_{\emptyset}} \\ \NN \ar[r] & \ZZ } $$ The horizontal arrows here are the generalization maps on stalks.  Such a map is an $\LMS$ morphism iff the vertical maps are local, which is equivalent to saying $(f^\dagger_{\m})^{-1}(0) = \{ 0 \}$ and $\M_{X,f(\emptyset)}$ is a group.  Compare the description \cite[II.4.4]{H} of maps out of a trait in algebraic geometry.  \end{example}

Given a monoid $P$, we can regard the one point space $*$ with ``sheaf" of monoids $P$ as a monoidal space $(*,P) \in \MS$.  The pullback of the ``sheaf" $P$ along the map from a space $X$ to $*$ is the constant sheaf $\underline{P}$ on $X$, so it is clear that \bne{homformula} \Hom_{\MS}(X,(*,P)) & = & \Hom_{\Mon(X)}(\underline{P},\M_X) \\ \nonumber & = & \Hom_{\Mon}(P,\M_X(X)). \ene We give $(*,P)$ the terminal prime system (Definition~\ref{defn:primesystemexamples}), so $(*,P,T) \in \PMS$, and we define \be \Spec P & := & (*,P,T)^{\rm loc}. \ee  As in \S\ref{section:Spec} we write $\M_P$ for the structure sheaf of $\Spec P$.  The structure map $\tau : (*,P,T)^{\rm loc} \to (*,P)$ is a $\MS$ morphism, hence we have a map \bne{SpecPmap} \tau^\dagger :  \underline{P} & \to & \M_P \ene of sheaves of monoids on $\Spec P$, where $\underline{P}$ is the constant sheaf associated to the monoid $P$.  As asserted in \S\ref{section:Spec}, we see from the general construction of localization that the stalk of $\tau^\dagger$ at $\p \in \Spec P$ is the localization $P \to P_{\p}$.  

Using the adjointness properties of $T$, localization (Theorem~\ref{thm:localization}), and \eqref{homformula} we see immediately that \be \Hom_{\LMS}(X,\Spec P) & = & \Hom_{\LMS}(X,(*,P,T)^{\rm loc}) \\ & = & \Hom_{\PMS}((X,L),(*,P,T)) \\ & = & \Hom_{\MS}(X,(*,P)) \\ & = & \Hom_{\Mon}(P,\M_X(X)), \ee as asserted in \eqref{Specadjunction} and in \S\ref{section:Spec}.  In particular, since $\Gamma(\Spec P,\M_P) = P$ (\S\ref{section:Spec}), we have \be \Hom_{\LMS}(\Spec P,\Spec Q) & = & \Hom_{\Mon}(Q,P) \ee for any monoids $P,Q$.

\begin{lem} \label{lem:Spec} Let $P$ be a monoid, $X := \Spec P$, $*$ the one point space.  Define a prime system $M$ on $(X,\underline{P}) \in \MS$ by $M_x := \{ x \} \subseteq \Spec \underline{P}_x = \Spec P$.  Then we have natural $\PMS$ morphisms $$ (X,\M_P,L) \to (X,\underline{P},M) \to (*,P,T) $$ inducing isomorphisms $$ \Spec P = (X,\M_P,L)^{\rm loc} = (X,\underline{P},M)^{\rm loc} = (*,P,T)^{\rm loc} $$ on localizations. \end{lem}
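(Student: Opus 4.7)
The plan is to describe the two $\PMS$ morphisms explicitly, then identify the localization of each primed monoidal space with $\Spec P$ by matching their underlying sets, stalks, and bases of opens.

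First, I would construct the right-hand morphism $(X,\underline{P},M) \to (*,P,T)$: the map of topological spaces is the unique map $X \to *$, and the map on sheaves is the identity $\underline{P} \to \underline{P}$ (the pullback of the ``sheaf'' $P$ on $*$ along any continuous map is the constant sheaf). The $\PMS$ compatibility is automatic, since the target carries the terminal prime system. For the left-hand morphism $(X,\M_P,L) \to (X,\underline{P},M)$, I would take the identity on $X$ together with the canonical map $\tau^\dagger : \underline{P} \to \M_P$ of \eqref{SpecPmap}, whose stalk at $x$ is the localization $P \to P_x$. The $\PMS$ condition requires $(\tau^\dagger_x)^{-1}(\m_x) \in M_x = \{x\}$, and this holds since the preimage under the localization map $P \to P_x$ of the maximal ideal $\m_x \subseteq P_x$ is precisely the prime ideal $x \subseteq P$.

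Next, I would identify each localization with $\Spec P$. The equality $(X,\M_P,L)^{\rm loc} = \Spec P$ is immediate from Theorem~\ref{thm:localization}: localization retracts the local prime system functor $L$, and $(X,\M_P)$ is, by construction, just $\Spec P \in \LMS$. For $(*,P,T)^{\rm loc}$, this is $\Spec P$ by the very definition given just above the lemma. It remains to show that the middle localization also recovers $\Spec P$ and that the induced $\LMS$ maps are isomorphisms. For $(X,\underline{P},M)^{\rm loc}$, the points are pairs $(x,z)$ with $z \in M_x = \{x\}$, so the underlying set is in bijection with $X = \Spec P$. The stalk at $(x,x)$ is $(\underline{P}_x)_x = P_x$, in agreement with the stalks of $\M_P$.

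Finally, to upgrade these set-level and stalk-level agreements to isomorphisms in $\LMS$, I would compare the bases of opens from the explicit construction in \S\ref{section:localization}: a basic open on any of the three localizations indexed by a section $s \in P$ consists of those points $(x,z)$ (or $(*,z)$) with $s \notin z$, and this description is invariant under our maps. The induced sheaf maps on stalks are the identity maps $P_x \to P_x$ (resp. $P_z \to P_z$) built from the universal property of localization, hence isomorphisms. I expect the main bit of care to be purely bookkeeping --- confirming that the $\PMS$ conditions are satisfied and that the bases of opens transport correctly under the identifications $X \leftrightarrow \{(x,x)\} \leftrightarrow \Spec P$ --- rather than any substantive obstruction, since all three constructions are essentially different presentations of the same data that $\Spec P$ encodes.
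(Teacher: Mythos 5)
Your proof is correct. The paper gives no details here---it simply says the proof is the same as that of \cite[Lemma~3]{loc}, the prime ringed space analogue---and your explicit construction of the two $\PMS$ morphisms (checking that $(\tau^\dagger_x)^{-1}(\m_x)=x$ for the left one) together with the matching of points, stalks, and basic opens across the three localizations is exactly the argument that citation encodes, transposed from rings to monoids.
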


\begin{proof} The proof is the same as that of \cite[Lemma~3]{loc}. \end{proof}

\begin{lem} \label{lem:strictfanmaps} Suppose $h : Q \to P$ is a monoid homomorphism such that the induced $\LMS$ morphism $\Spec h : \Spec P \to \Spec Q$ is strict.  Then there is a submonoid $T \subseteq Q$ such that $P$ is isomorphic (as a monoid under $Q$) to $T^{-1}Q$.  In particular, if $Q$ is finitely generated, an $\LMS$ morphism $\Spec P \to \Spec Q$ is strict iff it is an open embedding. \end{lem}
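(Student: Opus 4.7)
\emph{Proof plan.} First I would take $T := h^{-1}(P^*) \subseteq Q$. Since $h(T) \subseteq P^*$, the universal property of localization produces a unique factorization $Q \to T^{-1}Q \xrightarrow{g} P$ of $h$ as $Q$-monoid maps, and the plan is to prove that $g$ is an isomorphism, which will yield the first assertion.

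The key observation should be that $g$ already appears as a stalk map of $\Spec h$. Indeed, $\m_P := P \setminus P^*$ is the maximal prime of $P$, and by \eqref{stalkformula} the stalk $\M_{P,\m_P} = (P^*)^{-1}P$ is canonically isomorphic to $P$. The image of $\m_P$ under $\Spec h$ is $h^{-1}(\m_P)$, whose complementary face is $h^{-1}(P^*) = T$, so the same formula gives $\M_{Q,h^{-1}(\m_P)} = T^{-1}Q$; unwinding the construction of $\Spec h$ identifies the induced stalk map with $g$. The strictness hypothesis on $\Spec h$, which I read (consistent with the proof of Corollary~\ref{cor:benignstrictmorphisms}) as the property that every stalk map of structure sheaves is an isomorphism of monoids, will then force $g$ to be an isomorphism.

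For the ``in particular'' clause, the direction from open embedding to strict is immediate, since an open embedding in $\LMS$ inherits its structure sheaf by restriction. For the converse I would assume $Q$ finitely generated and use the first part to write $P \cong T^{-1}Q$ with $T = h^{-1}(P^*)$; it then suffices to show that $\Spec T^{-1}Q \to \Spec Q$ is an open embedding. Its image in $\Spec Q$ is $\{\q : \q \cap T = \emptyset\} = \bigcap_{t \in T} U_t$, and because $\Spec Q$ is finite when $Q$ is finitely generated, only finitely many distinct basic opens appear in this intersection, so the image is open. A stalk comparison, using $F^{-1}Q = (F')^{-1}(T^{-1}Q)$ whenever a face $F$ of $Q$ contains $T$ (with $F'$ its image in $T^{-1}Q$), then identifies $\Spec T^{-1}Q$ with this open subspace of $\Spec Q$ as an object of $\LMS$.

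The main subtlety will be fixing the interpretation of ``strict'' for an $\LMS$-morphism; once that is pinned down as above, the lemma reduces to the stalk identification at $\m_P$ and the finiteness of $\Spec Q$ in the finitely generated case.
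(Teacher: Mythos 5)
Your proposal is correct and follows essentially the same route as the paper: take $T = h^{-1}(P^*)$, identify the factorization $T^{-1}Q \to P$ with the stalk map of $\Spec h$ at the point $\m_P = P\setminus P^*$ (where the stalk is $P$ itself), and let strictness force it to be an isomorphism. The only cosmetic difference is in the last step, where you justify openness of the image $\bigcap_{t\in T}U_t$ via finiteness of $\Spec Q$, while the paper notes that the face $T$ is finitely generated (Lemma~\ref{lem:faces}) so the intersection is already a finite one; both are fine.
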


\begin{proof} Suppose $f := \Spec h$ is strict.  Let $\eta = P \setminus P^*$ be the generic point of $\Spec P$, so that $\M_{P,\eta} = P$.  Then $f(\eta) = Q \setminus h^{-1}(P \setminus P^*) \in \Spec Q$ corresponds to the face $T := h^{-1}(P^*)$ of $Q$.  The stalk $\M_{Q,f(\eta)} \to \M_{P,\eta}$ of $f^\dagger : f^{-1} \M_Q \to \M_P$ at $\eta$ is the map $h' : T^{-1}Q \to P$ induced by $h$.  The map $h'$ is an isomorphism because $f$ is strict.  The map $f = \Spec h$ factors as $f = (\Spec l)(\Spec h')$ where $l : Q \to T^{-1}Q$ is the localization map because $h =h'l$.  When $Q$ is finitely generated, so is $T$ (Lemma~\ref{lem:faces}), so $\Spec l$ is an open embedding as discussed in \S\ref{section:Spec}---this is the nontrivial implication in the final statement of the lemma. \end{proof}

We conclude this section by pointing out some differences between the functor \bne{monoidSpecfunctor} \Spec : \Mon^{\rm op} & \to & \LMS \ene and the more familiar functor \bne{ringSpecfunctor} \Spec : \An^{\rm op} & \to & \LRS . \ene  First, the functor \eqref{ringSpecfunctor} takes surjections to closed embeddings (on the level of topological spaces, say), but \eqref{monoidSpecfunctor} does not (Example~\ref{example:Specsurjectionnotclosed}).  Second, the functor \eqref{ringSpecfunctor} preserves (finite) coproducts, while \eqref{monoidSpecfunctor} does not.  For one thing, it doesn't preserve initial objects: $0 = \{ 0 \}$ is the terminal object in $\Mon$, but $\Spec 0$ is the \emph{terminal} object in $\LMS$, not the initial object (which is the empty locally monoidal space).  For another thing, for \emph{any} monoids $P,Q$, $$\Spec P \coprod \Spec Q$$ is \emph{not} isomorphic to $\Spec R$ for any monoid $R$ (i.e.\ it isn't ``affine" so in particular it isn't isomorphic to $\Spec (P \times Q)$) because the former has precisely two points $x$ for which $\M_{X,x}$ is a group (two ``generic points"), while any $\Spec R$ has precisely one point $x$ for which $\M_{X,x}$ is a group (one ``generic point").  The fact that \eqref{ringSpecfunctor} happens to preserve finite coproducts might be viewed as something of a coincidence since it does not preserve infinite coproducts or general finite direct limits.  A closely-related fact is that the analog of Lemma~\ref{lem:strictfanmaps} does not hold for rings.  For example, $\Spec (A \times A) \to \Spec A$ is strict for any ring $A$, but is not an open embedding.  Similarly, \eqref{ringSpecfunctor} rarely takes localizations to open embeddings, but \eqref{monoidSpecfunctor} takes the localization of any finitely generated monoid at any finitely generated submonoid (e.g.\ any face) to an open embedding.  Finally, \eqref{monoidSpecfunctor} has the following ``good" property not enjoyed by \eqref{ringSpecfunctor}:

\begin{thm}  \label{thm:affineinverselimits} The functor $\Spec : \Mon^{\rm op} \to \MS$ commutes with arbitrary inverse limits.  In particular, so does the ``underlying space of $\Spec$" $\Mon^{\rm op} \to \Top$. \end{thm}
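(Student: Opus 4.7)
The plan is to recognise this theorem as a purely formal consequence of the adjunction \eqref{Specadjunction} together with Theorem~\ref{thm:LMSinverselimits}. First I would rewrite \eqref{Specadjunction} as
\[
\Hom_{\LMS}(X,\Spec P) \;=\; \Hom_{\Mon}(P,\M_X(X)) \;=\; \Hom_{\Mon^{\rm op}}(\M_X(X),P),
\]
which exhibits $\Spec \colon \Mon^{\rm op} \to \LMS$ as a right adjoint to the functor $\LMS \to \Mon^{\rm op}$ sending $X$ to $\M_X(X)$. Since right adjoints preserve all limits, $\Spec$ therefore carries inverse limits in $\Mon^{\rm op}$ (equivalently, direct limits in $\Mon$) to inverse limits in $\LMS$.

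Next I would invoke Theorem~\ref{thm:LMSinverselimits}, which tells us that the forgetful functors $\LMS \to \MS$ and $\MS \to \Top$ both preserve inverse limits. Composing, the functor $\Spec \colon \Mon^{\rm op} \to \MS$ preserves arbitrary inverse limits, and the composite $\Mon^{\rm op} \to \Top$ obtained by further forgetting the structure sheaf does as well. That gives both assertions of the theorem.

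There is essentially no new obstacle to overcome: the genuinely subtle content has already been absorbed into Theorem~\ref{thm:LMSinverselimits} and, ultimately, into Theorem~\ref{thm:directlimitlocal}. The role of the latter is crucial, because it is what guarantees that a filtered direct limit of local morphisms of monoids remains local; this is precisely the point that allows the inverse limit of an $\LMS$-diagram to be computed naively on topological spaces and stalks, and thereby allows us to transport the abstract $\LMS$-limit produced by right-adjointness of $\Spec$ across to $\MS$ (and then to $\Top$) without any correction. If one tried to prove the analogous statement in $\LRS$ instead, exactly this step would fail, which is consistent with the remark preceding the theorem that the monoidal and ringed theories diverge here.
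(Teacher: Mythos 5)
Your proposal is correct and follows essentially the same route as the paper: the paper's chain of natural isomorphisms $\Hom_{\LMS}(Y,X) = \invlim \Hom_{\LMS}(Y,X_i) = \invlim \Hom_{\Mon}(P_i,\M_Y(Y)) = \Hom_{\Mon}(P,\M_Y(Y)) = \Hom_{\LMS}(Y,\Spec P)$ followed by Yoneda is precisely the "right adjoints preserve limits" argument unrolled, and both arguments lean on Theorem~\ref{thm:LMSinverselimits} (hence ultimately Theorem~\ref{thm:directlimitlocal}) to transfer the limit between $\LMS$, $\MS$, and $\Top$. Your closing remark about why the analogous statement fails in $\LRS$ matches the paper's own comment as well.
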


\begin{proof}  Let $\{ P_i \}$ be a direct system of monoids with direct limit $P$.  Let $\{ X_i := \Spec P_i \}$ be corresponding inverse limit system in $\MS$ with inverse limit $X$.  We want to show that $X = \Spec P$.  By Theorem~\ref{thm:LMSinverselimits}, $X$ is also the inverse limit of $\{ X_i \}$ in $\LMS$.  For any $Y \in \LMS$, we the have natural isomorphisms  \be \Hom_{\LRS}(Y,X) & = & \invlim \Hom_{\LRS}(Y,X_i) \\ & = & \invlim \Hom_{\Mon}(P_i,\M_Y(Y)) \\ & = & \Hom_{\Mon}(P,\M_Y(Y)) \\ & = & \Hom_{\LMS}(Y,\Spec P) \ee using the universal property \eqref{Specadjunction} of $\Spec$, so the result follows from Yoneda.  \end{proof}

\begin{rem} Although the above result is basically an elementary statement about monoids, it does not seem so easy to prove without some sort of geometric setup of monoidal spaces and so forth.  As an interesting exercise, the reader may wish to try to prove the following very particular special case from scratch:  \be \Spec (P \oplus Q) & = & (\Spec P) \times (\Spec Q). \ee  This is an equality in $\MS$, but even to establish it in $\Top$ or $\Sets$ is a good exercise! \end{rem}

\subsection{Fans} \label{section:fans} Here we define the full subcategory $\Fans \subseteq \LMS$ of \emph{fans} which is analogous to the full subcategory of schemes inside the category $\LRS$ of locally ringed spaces.  It is useful to keep in mind \emph{Kato's analogy}:

\begin{center} monoids : fans :: rings : schemes. \end{center} 

See \cite{fans} for further discussion of fans.

\begin{defn} \label{defn:fan} A locally monoidal space $F$ isomorphic to $\Spec P$ for a monoid (resp.\ finitely generated monoid, fine monoid, fs monoid, \dots) $P$ is called an \emph{affine fan} (resp.\ \emph{finite type affine fan}, \emph{fine affine fan}, \emph{fs affine fan}, \dots).  A locally monoidal space $F$ locally isomorphic to an affine fan (resp.\ finite type affine fan, fine affine fan, fs affine fan, \dots) is called a \emph{fan} (resp.\ \emph{locally finite type fan}, \emph{fine fan}, \emph{fs fan}, \dots).  A sharp locally monoidal space $F \in \SMS$ isomorphic to $(\Spec P, \ov{\M}_P)$ in $\SMS$ for a monoid (resp.\ finitely generated monoid, \dots) $P$ will be called a \emph{sharp affine fan} (resp.\ \emph{finite type sharp affine fan}, \dots).  A sharp locally monoidal space $F$ locally isomorphic to a sharp affine fan (resp.\ finite type sharp affine fan, \dots) will be called a \emph{sharp fan} (resp.\ locally finite type sharp fan, \dots).  Let $\Fans \subseteq \LMS$ denote the full subcategory of fans and let $\SFans \subseteq \SMS$ denote the full subcategory of sharp fans.  \end{defn}

Given a fan $X$ and an open subspace $U$ of its underlying topological space, we can regard $U = (U,\M_X|U)$ as a locally monoidal space by restricting the structure sheaf from $X$.  In fact this $U$ is a fan (just as for schemes, the question is local and reduces to the fact that the basic opens in $\Spec P$ are the $\Spec P_p$ for $p \in P$) and represents the functor on $\Fans$ (or on $\LMS$) taking $Y$ to the set of $\LMS$ maps $Y \to X$ factoring through $U$ on the level of topological spaces.  By definition, any monoidal space with an open cover by fans (resp.\ sharp fans) is again a fan (resp.\ sharp fan).   This allows us to construct new fans from old by gluing along open subfans.  The sharpening of a fan is clearly a sharp fan---that, is the sharpening functor $\LMS \to \SMS$ restricts to a sharpening functor $\Fans \to \SFans$.  The functor \eqref{monoidSpecfunctor} can be viewed as a functor \bne{SpectoFans} \Spec : \Mon^{\rm op} & \to & \Fans. \ene

\begin{example} \label{example:classicalfans}  For clarity, we will refer to a fan in the usual sense of toric varieties \cite[1.4]{F} as a \emph{classical fan}.  A classical fan $(\Sigma,N)$ gives rise to a fan in the sense of Definition~\ref{defn:fan} through the usual ``dualization process:"  We let $M := \Hom(N,\ZZ)$ and for each cone $\sigma \in \Sigma$, we let \be P_{\sigma} & := & \{ m \in M : m|\sigma \geq 0 \}. \ee  This $P_{\sigma}$ is an fs monoid.  Let $U_{\sigma} := \Spec P_{\sigma}$.  If $\sigma, \tau$ are two cones in $\Sigma$ with $\sigma$ a face of $\tau$, then $P_{\sigma}$ is the localization of $P_{\tau}$ at the face \be \{ m \in P_{\tau} : m|\sigma = 0 \}, \ee so we have an open embedding $U_{\sigma} \subseteq U_{\tau}$.  We can then glue the affine fans $U_{\sigma}$ to form an fs fan $X$ in such a way that the $U_{\sigma}$ form an affine cover of $X$, and the aforementioned open embeddings become inclusions of open subspaces of $X$.  This process defines a fully faithful functor from classical fans to $\Fans$. \end{example}

The category $\Fans$ has many advantages over the category of classical fans---it is a better category in which to ``do geometry."  The analogy to keep in mind is, roughly, 

\begin{center} classical fans : fans :: varieties : schemes. \end{center} 

For example, the category of classical fans probably does not have finite inverse limits, and even if it did, those limits wouldn't commute with any kind of geometric realization because, say, a fibered product of a diagram of varieties (taken in schemes) won't generally be a variety, even if the varieties are toric varieties and the maps are toric maps.  In contrast, we have:

\begin{prop} \label{prop:Fansinverselimits} The category $\Fans$ (resp.\ $\SFans$) has finite inverse limits and coproducts preserved by the inclusion $\Fans \subseteq \LMS$ (resp.\ $\SFans \subseteq \SMS$).  A finite inverse limit of locally finite type fans or sharp fans is again locally finite type. \end{prop}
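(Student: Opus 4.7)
The plan is to reduce to the affine case and then glue. For the affine case, Theorem~\ref{thm:affineinverselimits} says that $\Spec : \Mon^{\op} \to \MS$ sends finite direct limits of monoids to finite inverse limits in $\MS$, and Theorem~\ref{thm:LMSinverselimits} says $\LMS \to \MS$ preserves inverse limits; combining these, for any pushout $P \oplus_R Q$ in $\Mon$, the induced square $\Spec(P \oplus_R Q) \to \Spec P, \Spec Q \to \Spec R$ is cartesian in $\LMS$, and similarly for any finite diagram of affine fans. The terminal object of $\LMS$ is $\Spec\{0\}$ by the adjunction \eqref{Specadjunction} (since $\{0\}$ is initial in $\Mon$), and this is an affine fan. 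Finite generation of monoids is preserved under finite direct limits: a pushout of finitely generated monoids is a quotient of $P \oplus Q$, which is generated by finite sets of generators of $P$ and $Q$. Hence finite inverse limits of finite type affine fans are again finite type affine fans.

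To handle general fans, I would glue. Given a pullback datum $f : X \to S$, $g : Y \to S$ in $\Fans$, form $X \times_S Y$ in $\LMS$ (which exists by Theorem~\ref{thm:LMSinverselimits}). For a point $(x,y)$ lying over $s := f(x) = g(y)$, choose an affine open subfan $S_\alpha \subseteq S$ containing $s$, and then affine open subfans $X_\alpha \subseteq f^{-1}(S_\alpha)$ and $Y_\alpha \subseteq g^{-1}(S_\alpha)$ containing $x$ and $y$ respectively. The affine case produces an affine fan $X_\alpha \times_{S_\alpha} Y_\alpha$ in $\LMS$, and I would identify it with the open subspace $\pi_X^{-1}(X_\alpha) \cap \pi_Y^{-1}(Y_\alpha) \subseteq X \times_S Y$, yielding an affine open neighborhood of $(x,y)$. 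This gives $X \times_S Y$ an open cover by affine fans, so it is a fan (and locally finite type if the $X_i$ are). General finite inverse limits reduce to iterated pullbacks together with the terminal object $\Spec\{0\}$.

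Coproducts are easy: a disjoint union of topological spaces with their structure sheaves is the coproduct in $\LMS$ by the universal property, and a disjoint union of (affine) fans is a fan with the obvious open cover, so the inclusion $\Fans \subseteq \LMS$ preserves coproducts. For the sharp variants, the sharpening functor is right adjoint to $\SMS \hookrightarrow \LMS$ (\S\ref{section:monoidalspacedefinitions}), so inverse limits in $\SMS$ are obtained by forming the inverse limit in $\LMS$ and then sharpening. In the affine case, the sharpening of $\Spec P$ is $\Spec \overline{P}$, and the pushout of sharp monoids is the sharpening of the pushout in $\Mon$; hence affine sharp fans are closed under finite inverse limits, and finite generation is preserved. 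The gluing argument carries over verbatim, and coproducts remain disjoint unions.

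The main obstacle is the gluing step, specifically verifying that the affine piece $X_\alpha \times_{S_\alpha} Y_\alpha$ (constructed as an $\LMS$-limit over the smaller affine data) is canonically identified with an open subspace of the global $\LMS$-limit $X \times_S Y$. This requires knowing that open embeddings of fans are stable under $\LMS$ base change and that $\LMS$-limits restrict well to open subspaces of the factors; these follow from the fact that $\LMS \to \Top$ preserves inverse limits (Theorem~\ref{thm:LMSinverselimits}) together with the local nature of the $\LMS$ structure sheaf construction. Once this compatibility is in place, the proposition is essentially a transcription of the analogous ``affine $+$ gluing'' argument for schemes into the monoidal setting, with Theorem~\ref{thm:affineinverselimits} playing the role of the fact that $\Spec : \An^{\op} \to \LRS$ turns pushouts of rings into pullbacks of affine schemes.
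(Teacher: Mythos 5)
Your proposal is correct and follows essentially the same route as the paper, which simply says the inverse-limit statement "is proved by working locally and comparing the universal properties of the inverse limit and $\Spec$" (citing the analogous scheme-theoretic argument), with the finiteness claim reduced to the fact that a finite direct limit of finitely generated monoids is finitely generated. Your affine case via Theorem~\ref{thm:affineinverselimits} plus Theorem~\ref{thm:LMSinverselimits}, the gluing step using that $\LMS \to \Top$ preserves limits, and the reduction of the sharp case to sharpening (a right adjoint) are exactly the details the paper leaves implicit.
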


\begin{proof}  The statement about coproducts is obvious.  The statement about inverse limits is proved by working locally and comparing the universal properties of the inverse limit and $\Spec$ as in \cite[Theorem~8]{loc}.  The finiteness statement corresponds to the fact that a finite direct limit of finitely generated monoids is again finitely generated. \end{proof}

\begin{lem} \label{lem:smallestngbd} Let $X$ be a locally finite type fan, $x \in X$.  There is a smallest open neighborhood $U_x$ of $x$ in $X$and we have isomorphisms \be U_x & = & \Spec \M_{X,x} \\ & = & \Spec \M_X(U_x) \ee natural in $(X,x)$.  If $f : X \to Y$ is a map of locally finite type fans and $x \in X$, we have a natural diagram \bne{fanmapdiagram} & \xym@C+20pt{ \Spec \M_{X,x} \ar[d] \ar[r]^-{\Spec f^\dagger_x} & \Spec \M_{Y,y} \ar[d] \\ X \ar[r]^-f & Y } \ene where the vertical arrows are open embeddings onto the smallest neighborhoods of $x,y$. \end{lem}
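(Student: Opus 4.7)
The strategy is to reduce to the affine case and then invoke the observation made in \S\ref{section:Spec} that, for a finitely generated monoid $P$, the smallest neighborhood of a prime $\p \in \Spec P$ is the affine open $\Spec F^{-1}P$, where $F := P \setminus \p$ is the complementary face.

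First I would choose an open $V \subseteq X$ containing $x$ with $V \cong \Spec P$ for some finitely generated monoid $P$; under this identification, $x$ corresponds to a prime ideal $\p \in \Spec P$ with complementary face $F$. By Lemma~\ref{lem:faces}, $F$ is itself finitely generated---this is the one combinatorial input that forces the ``locally finite type'' hypothesis---and the discussion following Example~\ref{example:Specgroup} in \S\ref{section:Spec} then shows that $U_x := \bigcap_{f \in F} U_f$ is a finite intersection of basic opens, that $U_x$ is the smallest open neighborhood of $\p$ in $V$, and that the natural map $\Spec F^{-1} P \to V$ is an open embedding with image $U_x$. Combining this with the stalk formula \eqref{stalkformula}, we identify $U_x = \Spec \M_{X,x}$. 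Since $U_x$ is the smallest open neighborhood of $x$, every section of $\M_X$ over $U_x$ is determined by (and determines) its germ at $x$, so $\M_X(U_x) = \M_{X,x}$ and hence $U_x = \Spec \M_X(U_x)$ as well.

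To upgrade $U_x$ to the smallest neighborhood in all of $X$, I would observe that any open neighborhood $W \subseteq X$ of $x$ meets $V$ in an open neighborhood $W \cap V$ of $x$ in $V$, which by the previous paragraph satisfies $U_x \subseteq W \cap V \subseteq W$. This settles the first part of the lemma.

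For the naturality statement, set $y := f(x)$ and apply the first part to $Y$ to obtain $U_y \cong \Spec \M_{Y,y}$. Since $f^{-1}(U_y)$ is an open neighborhood of $x$, the minimality of $U_x$ forces $U_x \subseteq f^{-1}(U_y)$, so $f$ restricts to an $\LMS$ morphism $f|U_x : U_x \to U_y$. As this is a map between affine fans, the adjunction \eqref{Specadjunction} identifies it with $\Spec g$ for a unique monoid homomorphism $g : \M_Y(U_y) \to \M_X(U_x)$; but the monoids in question are precisely the stalks $\M_{Y,y}$ and $\M_{X,x}$, and passing to stalks at $x$ in the definition of $g$ identifies $g$ with $f_x^\dagger$. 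Thus $f|U_x = \Spec f_x^\dagger$ and the diagram \eqref{fanmapdiagram} commutes. The only delicate point in the whole argument is the appeal to Lemma~\ref{lem:faces} to ensure finite generation of $F$---without the ``locally finite type'' hypothesis, the intersection defining $U_x$ need not be open and smallest neighborhoods need not exist; everything else is a formal consequence of results already in \S\ref{section:Spec} and \S\ref{section:Specrevisited}.
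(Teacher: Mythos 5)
Your proposal is correct and follows essentially the same route as the paper, which simply notes that the question is local and was already handled for $\Spec P$ with $P$ finitely generated in \S\ref{section:Spec}, leaving the naturality diagram as an exercise. You have merely written out the details the paper omits: the cofinality argument giving $\M_X(U_x)=\M_{X,x}$, and the identification of $f|U_x$ with $\Spec f_x^\dagger$ via the adjunction \eqref{Specadjunction}, both of which are carried out correctly.
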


\begin{proof} The question is local and was noted for $\Spec P$, $P$ a finitely generated monoid in \S\ref{section:Spec}.  The second statement is an exercise---one has an analogous diagram for schemes, but the vertical arrows are almost never open embeddings.  \end{proof}

\begin{lem} \label{lem:strictfanmaps2} A map $f : X \to Y$ of locally finite type fans is strict iff it is locally an isomorphism (every point $x$ has a neighborhood $(U,V)$ in $f$ such that $f|U : U \to V$ is an isomorphism). \end{lem}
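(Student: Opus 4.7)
The plan is to reduce to the affine (stalk) situation via the smallest open neighborhoods provided by Lemma~\ref{lem:smallestngbd}, and then invoke the affine version Lemma~\ref{lem:strictfanmaps}.

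The easy direction (``locally an isomorphism'' $\Rightarrow$ ``strict'') is immediate: if $f|U : U \to V$ is an isomorphism for some neighborhood $(U,V)$ of $x$ in $f$, then the stalk $f^\dagger_x$ is an isomorphism of monoids and in particular strict.

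For the converse, suppose $f$ is strict, and fix $x \in X$ with $y := f(x)$. Since $X$ and $Y$ are locally finite type, Lemma~\ref{lem:smallestngbd} gives open subfans $U_x \subseteq X$ and $V_y \subseteq Y$ which are the smallest open neighborhoods of $x$ and $y$, together with canonical identifications $U_x = \Spec \M_{X,x}$, $V_y = \Spec \M_{Y,y}$. Because $f^{-1}(V_y)$ is an open neighborhood of $x$, minimality forces $U_x \subseteq f^{-1}(V_y)$, so $f$ restricts to a map $f|U_x : U_x \to V_y$, and by the diagram \eqref{fanmapdiagram} this restriction is exactly $\Spec f^\dagger_x$.

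Now I want to apply Lemma~\ref{lem:strictfanmaps} to the strict monoid homomorphism $f^\dagger_x : \M_{Y,y} \to \M_{X,x}$, which requires the hypothesis that $\M_{Y,y}$ be finitely generated. This is the one place a small verification is needed: since $Y$ is locally finite type, a neighborhood of $y$ is isomorphic to some $\Spec Q$ with $Q$ finitely generated, and $\M_{Y,y} = F^{-1}Q$ for $F \subseteq Q$ the face complementary to the prime corresponding to $y$; by Lemma~\ref{lem:faces}, $F$ is itself finitely generated, so $F^{-1}Q$ is generated by the generators of $Q$ together with the negatives of the generators of $F$, hence is finitely generated.

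With this verified, Lemma~\ref{lem:strictfanmaps} implies $\Spec f^\dagger_x = f|U_x$ is an open embedding $U_x \hookrightarrow V_y$. Setting $V := f(U_x)$, which is open in $V_y$ and hence in $Y$, we obtain a neighborhood $(U_x, V)$ of $x$ in $f$ on which $f$ restricts to an isomorphism $U_x \to V$. This holds at every $x \in X$, proving that $f$ is locally an isomorphism. There is no real obstacle here beyond the reduction to stalks; the essential content lies in Lemma~\ref{lem:smallestngbd} and Lemma~\ref{lem:strictfanmaps}, which together pin down both $f$ and its source/target near $x$ in terms of the stalk homomorphism.
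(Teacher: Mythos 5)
Your proof is correct and follows the same route as the paper, which establishes this lemma precisely by combining Lemma~\ref{lem:strictfanmaps} with the diagram \eqref{fanmapdiagram} of Lemma~\ref{lem:smallestngbd}. One small simplification you could make: since strictness of $f$ in the sense of Definition~\ref{defn:monoidalspacechart} means each stalk map $f^\dagger_x$ is an \emph{isomorphism} of monoids, $\Spec f^\dagger_x = f|U_x$ is already an isomorphism onto $V_y$, so the detour through Lemma~\ref{lem:strictfanmaps} (and the finite-generation check it requires) is not actually needed for this direction.
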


\begin{proof} The nontrivial implication is a geometric restatement of Lemma~\ref{lem:strictfanmaps}; it is also clear from the diagram \eqref{fanmapdiagram}. \end{proof}

\begin{example} \label{example:P1} Recall that we studied the affine fan $\Spec \NN$ in Example~\ref{example:SpecN}.  The simplest non-affine fan is perhaps the fan $\PP^1$ obtained by gluing two copies $U$, $V$ of $\Spec \NN$ along the common open subspace $\Spec \ZZ$, \emph{using the involution} of $\Spec \ZZ$ to make the gluing.  The resulting topological space $\PP^1$ has three points: two closed points $x$, $y$, and one generic point $\eta$.  The structure sheaf $\M_{\PP^1}$ of $\PP^1$ is characterized by the diagram of generalization maps $$ \xym@C+20pt{ \M_{\PP^1,x} \ar[r] \ar@{=}[d] & \M_{\PP^1,\eta} \ar@{=}[d] & \ar[l] \ar@{=}[d] \M_{\PP^1,y} \\ \NN \ar[r]^-{n \mapsto n} & \ZZ & \ar[l]_-{n \mapsto -n} \NN, } $$ which is also the diagram of restriction maps $$ \xym{ \M_{\PP^1}(U) \ar[r] & \M_{\PP^1}(U \cap V) & \ar[l] \M_{\PP^1}(V) } $$ for the cover $\{ U,V \}$ of $\PP^1$, thus we see that $\Gamma(\PP^1,\M_{\PP^1}) = \{ 0 \}$.  If we glue \emph{without using the involution} of $\Spec \ZZ$, we obtain a fan which might be called \emph{the affine line with the origin doubled}.  We can similarly construct a fan $\PP^n$ by appropriately gluing $n+1$ copies of $\Spec(\NN^n)$, though we will give a more formal construction in Example~\ref{example:Pn}.  See Example~\ref{example:realizationoffans} (\S\ref{section:realizationoffans}) for a continuation of this example.  \end{example}

\section{Spaces} \label{section:spaces}  We first give an axiomatic setup for the category $\Esp$ of ``spaces" we will work with.  The reader uninterested in our general abstraction can just look at the list of examples at the end of this section and move on to \S\ref{section:logstructures}.

\begin{defn} \label{defn:spaces} A \emph{category of spaces} $(\Esp,\AA^1)$ consists of a category $\Esp$, a functor $\Esp \to \Top$, called the \emph{underlying space functor}, denoted $X \mapsto |X|$, and a monoid object $\AA^1$ of $\Esp$ satisfying the following axioms, explained in more detail in the next section: \begin{enumerate}[label=(S\arabic*), ref=S\arabic*] \item \label{finitelimits} {\bf (Finite limits)} $\Esp$ has all finite inverse limits.  \item \label{coproducts} {\bf (Coproducts)} $\Esp$ has arbitrary (small) coproducts.  \item \label{coproductscommute} {\bf (Coproducts commute)} Coproducts in $\Esp$ commute with $X \mapsto |X|$.  \item \label{openembeddings} {\bf (Open embeddings)} Open subspaces are representable. \item \label{Zariskitopology} {\bf (Zariski topology)} The Zariski topology on $\Esp$ is subcanonical.  \item \label{coproductsandopenembeddings} {\bf (Coproducts and open embeddings)} The structure maps $X_i \to \coprod_i X_i$ to any coproduct in $\Esp$ are open embeddings.  \item \label{Zariskigluing} {\bf (Zariski Gluing)}  Objects of $\Esp$ can be glued Zariski locally. \item \label{openunits} {\bf (Open Units)} The group of units $\GG_m \subseteq \AA^1$ is representable by an open subspace.  \end{enumerate} Objects of $\Esp$ will be called \emph{spaces}. \end{defn}

\begin{rem} \label{rem:spaces}  The axioms \eqref{finitelimits} and \eqref{coproducts} concern only the category $\Esp$, the next five axioms concern the underlying space functor $\Esp \to \Top$, and \eqref{openunits} concerns everything. \end{rem}

\begin{defn} \label{defn:morphismofspaces} A $1$-\emph{morphism} of categories of spaces \be (F,\eta) : (\Esp,\AA^1) & \to & (\Esp', (\AA^1)') \ee is a functor $F : \Esp \to \Esp'$ together with a natural transformation $ \eta :  |F(\slot)|  \to  | \slot | $ of functors $\Esp \to \Top$ such that \begin{enumerate}[label=(M\arabic*), ref=M\arabic*]  \item \label{limitpres} $F$ preserves finite inverse limits and coproducts. \item \label{A1toA1} $F(\AA^1) = (\AA^1)'$ \emph{as monoid objects}---meaning that $$ \xym{ F(\AA^1 \times \AA^1) \ar[r]^{F(+)} \ar[d]_{(F\pi_1,F\pi_2)} & F(\AA^1) = (\AA^1)' \\ (\AA^1)' \times (\AA^1)' \ar[ru]_-{+'} } $$ commutes.  \item \label{openembeddingspreserved} The image $FU \to FX$ of an open embedding $U \to X$ in $\Esp$ is ``the" open embedding corresponding to $\eta_X^{-1}(|U|) \subseteq |FX|$.  \end{enumerate} A $2$-\emph{morphism} $\gamma : (F,\eta)  \to  (F',\eta') $ between $1$-morphisms \be (F,\eta),(F',\eta') : (\Esp,\AA^1) & \rightrightarrows & (\Esp', (\AA^1)') \ee is a natural transformation of functors $\gamma : F \to F'$ such that $\gamma(\AA^1) = \Id$ and such that the diagram $$ \xym{ |FX| \ar[r]^-{\eta_X} \ar[d]_{\gamma(X)}^\cong & |X| \\ |F'X| \ar[ru]_{\eta_X'} } $$ in $\Top$ commutes for each $X \in \Esp$.   The $2$-category of categories of spaces is called the \emph{universe} and is denoted $\Univ$.  \end{defn}

\subsection{On the axioms for spaces} \label{section:axiomsforspaces}  The meaning of \eqref{finitelimits}-\eqref{coproductscommute} in Definition~\ref{defn:spaces} is clear.  Note that the axiom \eqref{coproducts} in particular implies that $\Esp$ has an initial object, which we denote $\emptyset$.  Aside from the final axiom \eqref{openunits}, the axioms concern only the category $\Esp$ and the ``underlying space functor" $\Esp \to \Top$, which we always denote $X \mapsto |X|$.

The open embeddings axiom \eqref{openembeddings} for $\Esp \to \Top$ means that for any $X \in \Esp$ and any open subset $|U| \subseteq |X|$, the presheaf \be Y & \mapsto & \{ f : Y \to X : |f|(|Y|) \subseteq |U| \} \ee is representable by some $U \in \Esp$ so that $| \slot |$ of the natural $\Esp$-morphism $U \to X$ is an open embedding with image $|U| \subseteq |X|$.  (We don't really require \emph{equality}, as the notation suggests, but any open embedding with image $|U|$ is uniquely isomorphic, as a space over $|X|$, to $|U|$.)  We call a morphism $U \to X$ representing such a presheaf an \emph{open embedding}.  Note that if $i : U \to X$ and $i' : U' \to X$ are open embeddings with $|i|(|U|) = |i'|(|U'|)$, then $U$ and $U'$ are uniquely isomorphic as spaces over $X$---the category of open embeddings into $X$ is equivalent to the category of open subspaces of $|X|$ (with inclusions as the morphisms).  As part of this axiom, we will also require that the open embedding associated to the empty set $\emptyset \subseteq |X|$ ``is" the unique map $\emptyset \to X$ from the initial object $\emptyset$ of $\Esp$.  This is equivalent to demanding that $X \in \Esp$ is an initial object whenever $|X| = \emptyset$.  This should be viewed as a very mild ``faithfulness" assumption on the forgetful functor $X \mapsto |X|$.

\begin{rem} \label{rem:EspTopnotation}  To ease notation and terminology, we often, especially in later sections, blur the distinction between $X$ and $|X|$.  For example, by a ``point $x \in X$" we mean ``a point of $x \in |X|$" and by a ``neighborhood of $x$ in $X$" we mean an open embedding $U \into X$ such that $|U| \subseteq |X|$ is a neighborhood of $x$ in $|X|$. \end{rem}

\begin{lem} \label{lem:Espfiberedproducts}  Suppose $\Esp \to \Top$ satisfies \eqref{openembeddings}.  Then: \begin{enumerate} \item \label{Espfiberedproducts1} A composition of open embeddings is an open embedding.  \item \label{Espfiberedproducts1a} If $f : X \to Y$ and $g : Y \to Z$ are $\Esp$ morphisms such that $g$ and $gf$ are open embeddings, then $f$ is an open embedding.  \item \label{Espfiberedproducts2} The base change of any open embedding in $\Esp$ exists and is itself an open embedding.  \item \label{Espfiberedproducts3} Base change along an open embedding in $\Esp$ commutes with $\Esp \to \Top$. \end{enumerate}  \end{lem}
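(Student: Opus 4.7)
The plan is to exploit a single simple principle throughout: an open embedding $i: U \to X$ is monic in $\Esp$ (since $\Hom(Y,U)$ is the subset of $\Hom(Y,X)$ picked out by the condition $|Y| \to |X|$ factors through $|U|$), and, by the uniqueness clause implicit in axiom (\ref{openembeddings}), two open embeddings into $X$ with the same image on underlying spaces are uniquely isomorphic as objects of $\Esp/X$. With this in hand all four parts reduce to unwinding the representability statement in (\ref{openembeddings}).

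For \eqref{Espfiberedproducts1}, given open embeddings $j: V \to U$ and $i: U \to X$, the map $|ij| = |i||j|$ is a composition of topological open embeddings, hence is an open embedding of topological spaces with image $|i|(|j|(|V|))$, which is open in $|X|$. To see $ij$ represents the corresponding open embedding into $X$ in $\Esp$, take any $f: Y \to X$ with $|f|(|Y|) \subseteq |i|(|j|(|V|))$. Because $|f|(|Y|) \subseteq |i|(|U|)$, the universal property of $i$ gives a unique factorization $f = i f'$; then $|f'|(|Y|) = |i|^{-1}(|f|(|Y|)) \subseteq |j|(|V|)$, so $f'$ factors uniquely through $j$.

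For \eqref{Espfiberedproducts1a}, since $|g|$ is a homeomorphism onto an open subset of $|Z|$ and $|gf|(|X|)$ is open in $|Z|$, we have $|f|(|X|) = |g|^{-1}(|gf|(|X|))$ open in $|Y|$, and $|f|$ is a homeomorphism onto this image. Let $V \hookrightarrow Y$ be the open embedding with $|V| = |f|(|X|)$ provided by (\ref{openembeddings}). By part \eqref{Espfiberedproducts1} the composition $V \to Y \to Z$ is an open embedding with the same image $|gf|(|X|)$ as $gf$, so by the uniqueness clause there is a unique isomorphism $\phi: X \to V$ over $Z$. Composing $\phi$ with $V \to Y$ gives a map $X \to Y$ which, after further composing with $g$, equals $gf$; since $g$ is monic, this map equals $f$, so $f$ is the open embedding $V \to Y$ (up to the isomorphism $\phi$).

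For \eqref{Espfiberedproducts2}, given an open embedding $i: U \to X$ and any $f: Y \to X$, let $V \hookrightarrow Y$ be the open embedding with $|V| = |f|^{-1}(|U|)$ obtained from (\ref{openembeddings}). The composition $V \to Y \to X$ has image in $|U|$, so factors uniquely through $i$, yielding a map $V \to U$. I claim $V$ with these two projections represents $Y \times_X U$: any $Z$ with compatible maps to $Y$ and $U$ satisfies $|Z| \to |Y|$ lands in $|f|^{-1}(|U|) = |V|$, giving the unique lift to $V$, and the other projection to $U$ is then determined by monicity of $i$. This simultaneously shows the fibered product exists without invoking (\ref{finitelimits}) and that the projection to $Y$ is the open embedding $V \hookrightarrow Y$. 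Part \eqref{Espfiberedproducts3} is then immediate, since the construction in \eqref{Espfiberedproducts2} gives $|Y \times_X U| = |f|^{-1}(|U|)$, which is the fibered product $|Y| \times_{|X|} |U|$ in $\Top$ because $|i|$ is a homeomorphism onto its image. The only nontrivial step in the whole argument is the use of monicity of open embeddings in \eqref{Espfiberedproducts1a} and \eqref{Espfiberedproducts2} to upgrade a universal property over a further target to a universal property over the intermediate space.
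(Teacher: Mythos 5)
Your proof is correct and follows essentially the same route as the paper's: all four parts are unwound directly from the representability statement in the open embeddings axiom, with base change along an open embedding constructed explicitly as the open embedding over the preimage. The only cosmetic difference is in part \eqref{Espfiberedproducts1a}, where you build an auxiliary open embedding $V\to Y$ and invoke monicity, whereas the paper chains together two natural bijections of Hom-sets; these are the same idea expressed differently.
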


\begin{proof}  For \eqref{Espfiberedproducts1}, suppose $U \to V$ and $V \to X$ are open embeddings.   Then the composition $|U| \to |V| \to |X|$ is an open embedding in $\Top$ and the universal properties of $U \to V$ and $V \to X$ combine to show that the composition $U \to X$ ``is" the open embedding corresponding to $|U| \subseteq |X|$.  For \eqref{Espfiberedproducts1a}, first note that $|f|(|X|)$ is open in $|Y|$ because the statement \eqref{Espfiberedproducts1a} holds in $\Top$ (exercise!).  Next, use the fact that $g$ is an open embedding to see that $h \mapsto gh$ defines a bijection, natural in $T \in \Esp$, from the set of $h \in \Hom_{\Esp}(T,Y)$ for which $|h|(|T|) \subseteq |f|(|X|)$ to the set of $k \in \Hom_{\Esp}(T,Z)$ for which $|k|(|T|) \subseteq |gf|(|X|)$.  Since $gf$ is an open embedding, the latter set is bijective with $\Hom_{\Esp}(T,X)$ naturally in $T$.  Putting these two natural bijections together shows that $f$ is an open embedding.  For \eqref{Espfiberedproducts2} and \eqref{Espfiberedproducts3} suppose $f : X \to Y$ is an $\Esp$-morphism and $U \subseteq Y$ is an open embedding.  Then it is immediate from the universal property of open embeddings that the open embedding of spaces $f^{-1}(U) \to X$ corresponding to the open subspace $|f|^{-1}(|U|)$ of $|X|$ ``is" the fibered product $U \times_Y X$ (i.e.\ the usual diagram is cartesian); \eqref{Espfiberedproducts2} and \eqref{Espfiberedproducts3} follow. \end{proof}

\begin{defn} \label{defn:EspZariskitopology} We say that an $\Esp$-morphism $f : X \to Y$ is \emph{surjective} iff the map of topological spaces $|f|$ is surjective.  We define a \emph{local isomorphism} and a \emph{Zariski cover} in terms of our notion of open embedding in ``the usual way" (Definition~\ref{defn:localisomorphism}). \end{defn}  

We can jazz up Lemma~\ref{lem:Espfiberedproducts} as follows:

\begin{prop} \label{prop:Espfiberedproducts} Suppose $\Esp \to \Top$ satisfies \eqref{finitelimits} and \eqref{openembeddings}.  Then: \begin{enumerate} \item \label{Espfib1} The map of topological spaces underlying a local isomorphism (resp.\ Zariski cover) is a local homeomorphism (resp.\ Zariski cover).  \item \label{Espfib2} A composition of local isomorphisms (resp.\ surjections, Zariski covers) is a local isomorphism (resp.\ surjection, Zariski cover).  \item \label{Espfib3} If $f : X \to Y$ and $g : Y \to Z$ are $\Esp$ morphisms such that $g$ and $gf$ are local isomorphisms, then $f$ is a local isomorphism.  \item \label{Espfib4} Local isomorphisms and Zariski covers are stable under base change in $\Esp$ and base change along a local isomorphism commutes with $\Esp \to \Top$. \end{enumerate}  \end{prop}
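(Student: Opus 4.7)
My overall strategy is to deduce each of (1)--(4) from the corresponding statement for open embeddings given in Lemma~\ref{lem:Espfiberedproducts}, by ``trivializing'' the local isomorphisms on suitable open neighborhoods and then invoking the axioms governing how $|\cdot|$ interacts with open embeddings, coproducts, and Zariski gluing. Part (1) is almost a tautology: by axiom \eqref{openembeddings}, an open embedding in $\Esp$ has an open embedding as its underlying map of spaces, and isomorphisms in $\Esp$ clearly have homeomorphisms as their underlying maps (since $|\cdot|$ is a functor). Unpacking Definition~\ref{defn:localisomorphism} then yields that $|f|$ is a local homeomorphism; adding the surjectivity clause covers Zariski covers.

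For part (2), the surjection case is immediate from $\Top$. For local isomorphisms, given $x\in X$ with $f:X\to Y$, $g:Y\to Z$ both local isos, I would first choose an open neighborhood $U\subseteq X$ of $x$ on which $f|U:U\xrightarrow{\sim} V$ is an isomorphism onto an open $V\subseteq Y$, then choose an open $V'\subseteq V$ containing $f(x)$ on which $g|V':V'\xrightarrow{\sim} W'\subseteq Z$. Setting $U':=(f|U)^{-1}(V')$ (open in $U$, hence in $X$ by Lemma~\ref{lem:Espfiberedproducts}\eqref{Espfiberedproducts1}), the restriction $(gf)|U'$ is a composition of isomorphisms onto $W'$. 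The Zariski-cover case combines this with surjectivity, which composes trivially in $\Top$. Part (3) is handled similarly: pick open $W\ni x$ with $(gf)|W$ an isomorphism onto $W'\subseteq Z$, and open $V\ni f(x)$ with $g|V$ an isomorphism onto $V'\subseteq Z$. Using Lemma~\ref{lem:Espfiberedproducts}\eqref{Espfiberedproducts2}, form the open subspace $f^{-1}(V)\subseteq X$ and let $W'':=W\cap f^{-1}(V)$, an open neighborhood of $x$. Then $f|W''$ factors through $V$, and the composite $V\hookrightarrow Y\xrightarrow{g} Z$ is an open embedding onto $V'$, while $(gf)|W''$ is a restriction of an isomorphism to an open subspace and hence an open embedding. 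Lemma~\ref{lem:Espfiberedproducts}\eqref{Espfiberedproducts1a} then forces $f|W''$ to be an open embedding, witnessing $f$ as a local iso at $x$.

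For part (4), stability is handled as follows: given $f:X\to Y$ a local iso and an arbitrary $h:Y'\to Y$, pick a point of $X\times_Y Y'$ projecting to $x\in X$. Trivialize $f$ on an open $U\ni x$ as above, with $f|U:U\xrightarrow{\sim} V\subseteq Y$. Then by Lemma~\ref{lem:Espfiberedproducts}\eqref{Espfiberedproducts2}, $U\times_Y Y'$ is an open subspace of $X\times_Y Y'$, and since $U\xrightarrow{\sim}V$ we get $U\times_Y Y'\cong V\times_Y Y'=h^{-1}(V)$, an open subspace of $Y'$. Hence the pullback is an open embedding locally, i.e., a local iso. Adding surjectivity yields the Zariski-cover case once commutation with $|\cdot|$ is known. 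For the commutation statement, the same local trivialization reduces the claim to base change along an open embedding, which commutes with $|\cdot|$ by Lemma~\ref{lem:Espfiberedproducts}\eqref{Espfiberedproducts3}.

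The main obstacle I anticipate is globalizing the commutation with $|\cdot|$ in part (4). The local trivialization gives isomorphisms $|U_i\times_Y Y'|\cong |U_i|\times_{|Y|}|Y'|$ over a Zariski cover $\{U_i\}$ of the local iso, but to conclude $|X\times_Y Y'|\cong |X|\times_{|Y|}|Y'|$ one must invoke the Zariski gluing axiom \eqref{Zariskigluing} together with \eqref{coproductsandopenembeddings} and \eqref{coproductscommute} to see that both sides are identified with the corresponding gluing of the local pieces. Here one must be careful to record that the axioms guarantee this compatibility on the nose rather than up to extra data; once that is verified, the commutation is automatic and in particular the surjectivity of the underlying map, needed for Zariski covers to be stable under base change, follows from surjectivity of Zariski covers in $\Top$.
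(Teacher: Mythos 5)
Your treatment of (1)--(3) and of the stability half of (4) matches the paper's proof, which likewise reduces everything to Lemma~\ref{lem:Espfiberedproducts} by trivializing the local isomorphism on an open neighborhood. The one place you diverge is the commutation statement in (4), and there your proposed fix does not quite work as stated: you want to invoke \eqref{Zariskigluing}, \eqref{coproductsandopenembeddings}, and \eqref{coproductscommute} to glue the local identifications $|U_i\times_Y Y'|\cong |U_i|\times_{|Y|}|Y'|$, but the proposition assumes only \eqref{finitelimits} and \eqref{openembeddings}, so those gluing axioms are not available here. Fortunately no gluing in $\Esp$ is needed. The fibered product $X\times_Y Y'$ already exists by \eqref{finitelimits}, and there is a canonical continuous comparison map $|X\times_Y Y'|\to |X|\times_{|Y|}|Y'|$ in $\Top$; the task is only to verify that this single, already-defined map is a homeomorphism, which is a purely topological and local check. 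This is exactly what the paper does: for each point it produces a square of open embeddings into the cartesian square on which $f$ is trivialized (using Lemma~\ref{lem:Espfiberedproducts}\eqref{Espfiberedproducts2} to see that $X'\times_X U$ and $Y'\times_Y U$ are open subspaces, and a small categorical argument to identify them), and then reads off injectivity, surjectivity, and the agreement of topologies from these local pictures via Lemma~\ref{lem:Espfiberedproducts}\eqref{Espfiberedproducts3}. So the ``obstacle'' you flag dissolves once you notice that both sides of the desired homeomorphism, and the map between them, exist before any local analysis begins; you should replace the appeal to the gluing axioms with this direct verification.
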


\begin{proof}  All of these statements are readily deduced from the corresponding statements in Lemma~\ref{lem:Espfiberedproducts}.  Establishing \eqref{Espfib4} is the most difficult, so we will give a sketch of the argument.  We need to show that the diagram of topological spaces \bne{isitcartesian} & \xym{ |X'| \ar[r] \ar[d] & |X| \ar[d]^{| f |} \\ |Y'| \ar[r] & |Y| } \ene underlying a cartesian $\Esp$ diagram \bne{cartEspdiagram} & \xym{ X' \ar[r] \ar[d] & X \ar[d]^f \\ Y' \ar[r] & Y } \ene is cartesian when $f$ is a local isomorphism.  To see this, we first show that for any $x \in |X|$, there is a cartesian square of the form \bne{boxd} &  \xym{ U' \ar[r]^-j \ar@{=}[d] & U \ar@{=}[d] \\ U' \ar[r]^-j & U } \ene mapping to \eqref{cartEspdiagram} such that each map from a corner of \eqref{boxd} to the corresponding corner of \eqref{cartEspdiagram} is an open embedding and $x \in |U| \subseteq |X|$.  To do this, first use the fact that $f$ is a local isomorphism to find an open embedding $i : U \to X$ with $x \in |U| \subseteq |X|$ such that $fi : U \to Y$ is also an open embedding.  Now define $U' := X' \times_X U$, $U'' := Y' \times_Y U$ using \eqref{finitelimits}.  The projections $U' \to X'$, $U'' \to Y'$ are open embeddings by Lemma~\ref{lem:Espfiberedproducts} and by a simple ``category theory" exercise, we in fact have a natural isomorphism $U'=U''$, so we get our diagram.  Now one uses this diagram (for appropriate $x$) to check, using Lemma~\ref{lem:Espfiberedproducts}, that \begin{enumerate} \item $|X'| \to |Y'| \times_{|Y|} |X|$ is injective, \item $|X'| \to |Y'| \times_{|Y|} |X|$ is surjective, and \item the topology on $|X'| \subseteq |Y'| \times |X|$ is the ``subspace of the product" topology. \end{enumerate}  \end{proof}  

Proposition~\ref{prop:Espfiberedproducts} shows that when $\Esp \to \Top$ satisfies \eqref{finitelimits} and \eqref{openembeddings}, Zariski covers form the covers (or, really, ``generating covers") for a topology on $\Esp$, called the \emph{Zariski topology}.  The assumption in the Zariski topology axiom \eqref{Zariskitopology} is that this topology be subcanonical.  Explicitly, this means that whenever $f : X \to Y$ is a Zariski cover of spaces, the diagram of sets $$ \xym{ \Hom_{\Esp}(Y,T) \ar[r]^-{f^*} & \Hom_{\Esp}(X,T) \ar@<0.5ex>[r]^-{\pi_1^*} \ar@<-0.5ex>[r]_-{\pi_2^*} & \Hom_{\Esp}(X \times_Y X, T) } $$ is an equalizer diagram for any $T \in \Esp$.  If we assume \eqref{finitelimits}-\eqref{coproductsandopenembeddings}, then we can see that this Zariski topology is very much like the one on $\Top$:

\begin{lem} \label{lem:Zariskicovers} Suppose $\Esp \to \Top$ satisfies \eqref{finitelimits}-\eqref{coproductsandopenembeddings}.  Then: \begin{enumerate} \item \label{Zcovers1} If $f : X \to Y$ is a local isomorphism such that $|f|$ is injective (resp.\ bijective), then $f$ is an open embedding (resp.\ isomorphism). \item \label{Zcovers2} If $\{ f_i : U_i \to X \}$ is a set of open embeddings such that the $|U_i|$ cover $|X|$, then the coproduct of the $f_i$ yields a Zariski cover $f : U \to X$.  Furthermore, $U \times_X U = \coprod_{(i,j)} U_{ij}$, where $U_{ij} = U_i \times_X U_j$ is the open embedding to $X$ with image $|U_i| \cap |U_j|$.  \item \label{Zcovers3} For any space $X$ and any Zariski cover $V \to X$ there is a Zariski cover $U \to X$, constructed as in the previous part, and a map $U \to V$ of spaces over $X$. \end{enumerate}  \end{lem}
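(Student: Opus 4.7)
The plan is to establish (2) first, deduce (3) as a direct consequence, and then use (2) to reduce (1) to a local-to-global gluing argument.

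For part (2), I form $U := \coprod_i U_i$ using \eqref{coproducts} and let $f : U \to X$ be the morphism induced by the $f_i$ via the universal property of the coproduct. By \eqref{coproductscommute} and \eqref{coproductsandopenembeddings}, $|U| = \coprod_i |U_i|$ and each structure map $\sigma_i : U_i \to U$ is an open embedding; since $f\sigma_i = f_i$ is also an open embedding, $f$ is a local isomorphism by Definition~\ref{defn:localisomorphism}, and surjectivity of $|f|$ promotes it to a Zariski cover. To identify $U \times_X U$ with $\coprod_{(i,j)} U_{ij}$, I check by Yoneda that both represent the same functor on $\Esp$: given $g_1, g_2 : T \to U$ with $fg_1 = fg_2$, the subsets $|g_1|^{-1}|U_i| \cap |g_2|^{-1}|U_j|$ define via \eqref{openembeddings} pairwise disjoint open subspaces $T_{ij} \hookrightarrow T$ covering $T$, on which $g_1$ and $g_2$ factor through $\sigma_i$ and $\sigma_j$ respectively, yielding morphisms $T_{ij} \to U_{ij}$; the Zariski gluing axiom \eqref{Zariskigluing} then assembles these (with trivially empty overlaps) into the required morphism $T \to \coprod_{(i,j)} U_{ij}$. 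Part (3) is then immediate: given a Zariski cover $V \to X$, pick for each $v \in |V|$ an open embedding $V_v \hookrightarrow V$ containing $v$ with $V_v \to V \to X$ also an open embedding; surjectivity of $|V| \to |X|$ forces the resulting open subspaces of $X$ to cover $|X|$, so (2) applied to $\{V_v \to X\}$ yields the desired cover $U \to X$ together with the canonical map $U \to V$ over $X$.

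For part (1), Proposition~\ref{prop:Espfiberedproducts} shows $|f|$ is a local homeomorphism. When $|f|$ is injective, it is therefore an open embedding in $\Top$, and the open subspace $W \hookrightarrow Y$ with $|W| = |f|(|X|)$ supplied by \eqref{openembeddings} factors $f$ as a local isomorphism $X \to W$ with bijective underlying map, reducing to the bijective case. Assume $|f|$ is bijective and choose an open cover $\{i_a : U_a \to X\}$ with each $fi_a$ an open embedding onto $W_a \hookrightarrow Y$; bijectivity forces the $W_a$ to cover $Y$. Inverting produces morphisms $\phi_a : W_a \to X$ obtained by composing $(fi_a)^{-1} : W_a \to U_a$ with $i_a$. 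On the overlap $W_{ab} \hookrightarrow Y$ (the open with $|W_{ab}| = |W_a| \cap |W_b|$), the restrictions $\phi_a|_{W_{ab}}$ and $\phi_b|_{W_{ab}}$ factor through $U_{ab} := U_a \times_X U_b$: using injectivity of $|f|$, one checks that both $(fi_a)^{-1}$ and $(fi_b)^{-1}$ restrict on $W_{ab}$ to the \emph{same} isomorphism $W_{ab} \to U_{ab}$, because each is inverse to the single morphism $U_{ab} \to W_{ab}$ obtained from the canonical projection $\pi : U_{ab} \to X$ composed with $f$; under this identification both restrictions equal $\pi : U_{ab} \to X$ and hence coincide. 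Axiom \eqref{Zariskigluing} then glues the $\phi_a$ into $\phi : Y \to X$, which is checked locally on the cover to be inverse to $f$.

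The main technical obstacle is the fibered product identification in (2): it is the step most directly using the full strength of the axioms connecting $\Esp$ to $\Top$, and it also underpins the overlap-agreement computation in (1). The rest of the argument reduces cleanly to the definition of local isomorphism together with this computation.
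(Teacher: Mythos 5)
There is a genuine gap: your argument repeatedly invokes the Zariski Gluing axiom \eqref{Zariskigluing}, but the lemma assumes only \eqref{finitelimits}--\eqref{coproductsandopenembeddings}, and \eqref{Zariskigluing} lies outside that range. This is not a mere citation slip in the fibered-product step of part \eqref{Zcovers2}. After producing the pairwise disjoint opens $T_{ij}$ covering $T$ and the maps $T_{ij} \to U_{ij}$, you must recognize $T$ as the coproduct of the $T_{ij}$ in $\Esp$ in order to assemble a single map $T \to \coprod_{(i,j)} U_{ij}$. Under the stated hypotheses that recognition is exactly Lemma~\ref{lem:coproductsofspaces} (equivalently, the statement that a local isomorphism which is a homeomorphism on underlying spaces is an isomorphism), i.e.\ it is a consequence of part \eqref{Zcovers1} --- which you have deferred until after \eqref{Zcovers2}. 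So your ordering is circular: the identification $U \times_X U = \coprod_{(i,j)} U_{ij}$ needs \eqref{Zcovers1} (or the unavailable axiom \eqref{Zariskigluing}), while your proof of \eqref{Zcovers1} leans on gluing machinery that presupposes \eqref{Zcovers2} in full. The paper escapes this by proving \eqref{Zcovers1} first, using only the ``Zariski cover'' half of \eqref{Zcovers2}: one shows $f$ is a monomorphism by base-changing the open embeddings $fi_x$ along $fg_1 = fg_2$ and using that the resulting Zariski cover of $T$ is an epimorphism; then $X \times_Y X = X$, so $\Id_X$ lies in the equalizer diagram for the cover $f$ itself, and subcanonicality \eqref{Zariskitopology} (which \emph{is} among the hypotheses) produces the inverse directly, with no gluing of morphisms at all.

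Your local-inverse construction in part \eqref{Zcovers1} is salvageable in spirit --- the overlap computation identifying $\phi_a|_{W_{ab}}$ and $\phi_b|_{W_{ab}}$ as the common inverse of the isomorphism $U_{ab} \cong W_{ab}$ is correct --- but the final gluing of the $\phi_a$ into $\phi : Y \to X$ again cannot be done by citing axiom \eqref{Zariskigluing}, which concerns the existence of colimits of abstract gluing data, not the descent of morphisms. Under \eqref{finitelimits}--\eqref{coproductsandopenembeddings} you would instead need Proposition~\ref{prop:Zariskigluing} together with a transfinite induction over a well-ordering of the cover, or else the sheaf property \eqref{Esppresheaf}, which presupposes the fibered-product half of \eqref{Zcovers2}. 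Either way the dependency order is essentially forced: first the cover half of \eqref{Zcovers2}, then \eqref{Zcovers1}, then the fibered-product half of \eqref{Zcovers2}, and finally \eqref{Zcovers3} (which you handle correctly).
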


\begin{proof} We first prove the first part of \eqref{Zcovers2}:   The map $f : U \to X$ is a local isomorphism because the structure map $U_i \to U$ is an open embedding by \eqref{coproductsandopenembeddings} whose composition with $f$ is the open embedding $f_i$ and the $|U_i|$ cover $|U|$ by \eqref{coproductscommute}.  The map $|f|$ is surjective because the $|f_i|(|U_i|)$ cover $|X|$ by assumption and each $|f_i|$ factors through $|f|$.  This proves that $f$ is a Zariski cover.

\eqref{Zcovers1}:  The map of topological spaces $|f|$ underlying a local isomorphism $f : X \to Y$ is a local homeomorphism by Proposition~\ref{prop:Espfiberedproducts}\eqref{Espfib1}.  In particular $|f|$ is open, so it is a homeomorphism iff it is bijective, and its image $|f|(|X|)=:|U|$ is open in $|Y|$, so we can consider (using \eqref{openembeddings}) the corresponding open embedding $i : U \to Y$.  By the universal property of $i$, we can write $f=ig$ for some map $g : X \to U$.  Clearly $|g|$ is surjective and $g$ is a local isomorphism by Proposition~\ref{prop:Espfiberedproducts}\eqref{Espfib3}.  If $|f|$ is injective, then $|g|$ must also be injective.  We thus reduce to proving that if $f : X \to Y$ is a Zariski cover for which $|f|$ is injective (equivalently a homeomorphism), then $f$ is an isomorphism.  We first prove that $f$ is a monomorphism.  Suppose $g_1,g_2 : T \rightrightarrows X$ are maps of spaces with $fg_1=fg_2$.  Since $f$ is a local homeomorphism, we can choose, for each $x \in |X|$, an open embedding $i_x : U_x \to X$ such that $x \in |U_x|$ and $fi_x$ is also an open embedding.  Since $|f|$ is a homeomorphism, we see from Lemma~\ref{lem:Espfiberedproducts} that the diagram $$ \xym{ U_x \ar@{=}[r] \ar[d]_{i_x} & U_x \ar[d]^{fi_x} \\ X \ar[r] & Y } $$ is cartesian (here we just need $|f|$ injective).  If we now let $p_x : V_x \to T$ denote the base change of the open embedding $fi_x : U_x \to Y$ along the map $fg_1=fg_2$, then we see from the cartesian diagram above that $g_1p_x=g_2p_x$.  By \eqref{coproducts}, we can form the coproduct $V := \coprod_{x \in |X|} V_x$.  Let $p : V \to T$ be the coproduct of the $p_x$.  Since $|f|$ is surjective, the $|fi_x|(|U_x|)$ cover $|Y|$, so the $p_x$ are open embeddings such that the $|V_x|$ cover $T$ by Lemma~\ref{lem:Espfiberedproducts}, hence $p : V \to T$ is a Zariski cover by the first part of \eqref{Zcovers2} established above.  Since $g_1p_x=g_2p_x$ for all $x \in |X|$, we have $g_1p=g_2p$ by the universal property of the coproduct, so, since a Zariski cover is, in particular, an epimorphism, we conclude that $g_1=g_2$.  Now, since $f : X \to Y$ is monic, we have $X \times_Y X = X$ (with the projections $\pi_1,\pi_2: X \times_Y X \rightrightarrows X$ given by the the identity map of $X$), hence $\Id_X$ is in the equalizer of $$ \xym{ \Hom_{\Esp}(X,X) \ar@<0.5ex>[r]^-{\pi_1^*} \ar@<-0.5ex>[r]_-{\pi_2^*} & \Hom_{\Esp}(X \times_Y X,X), } $$ so, since $f$ is a Zariski cover and we assume in \eqref{Zariskitopology} that the Zariski topology is subcanonical, there is a unique $\Esp$-morphism $g : Y \to X$ such that $gf=\Id_X$.  The Zariski cover $f$ is also an epimorphism and we have $fgf = f$, so we conclude that $fg = \Id_Y$, hence $f$ and $g$ are inverse isomorphisms.    

Now we prove the second part of \eqref{Zcovers2}:   For $(i,j) \in I \times I$, we have open embeddings $s_{ij} : U_{ij} \to U_i$ and $t_{ij} : U_{ij} \to U_j$ with $f_is_{ij} = f_j t_{ij}$ equal to the open embedding $U_{ij} \to X$.  Composing $s_{ij}$ and $t_{ij}$ with the structure maps $U_i \to U$ and $U_j \to U$ (these are open embeddings by \eqref{coproductsandopenembeddings}) we obtain open embeddings $a_{ij} : U_{ij} \to U$, $b_{ij} : U_{ij} \to U$ with $$fa_{ij} = f_i s_{ij} = f_j t_{ij} = fb_{ij}$$ and hence a map $g_{ij} := (a_{ij},b_{ij}): U_{ij} \to U \times_X U$.  The map $g_{ij}$ is a local isomorphism by Proposition~\ref{prop:Espfiberedproducts}\eqref{Espfib3} because its composition with $\pi_1 : U \times_X U \to U$ is the open embedding (in particular, local isomorphism) $a_{ij}$ and $\pi_1$ is a local isomorphism by Proposition~\ref{prop:Espfiberedproducts}\eqref{Espfib4} since it is a base change of $f$.  The coproduct of the $g_{ij}$, over all $(i,j) \in I \times I$, gives a map $g : \coprod_{(i,j)} U_{ij} \to U \times_X U$, which is a local isomorphism by the same argument used to establish the first part of \eqref{Zcovers2} above.  The computation \be | \coprod_{(i,j)} U_{ij} | & = & \coprod_{(i,j)} |U_{ij}| \\ & = & ( \coprod_i |U_i| ) \times_{|X|} (\coprod_i |U_i|) \\ & = & |U| \times_{|X|} |U| \\ & = & |U \times_X U| \ee using \eqref{coproductscommute}, an elementary topology exercise, \eqref{coproductscommute}, and Proposition~\ref{prop:Espfiberedproducts}\eqref{Espfib4} shows that $|g|$ is a homeomorphism, hence $g$ is an isomorphism by \eqref{Zcovers1}.

\eqref{Zcovers3} is proved by the same elementary argument one would make in the case $\Esp=\Top$.   \end{proof}

Lemma~\ref{lem:Zariskicovers} implies that coproducts in $\Esp$ behave ``as one would expect in any geometric category."

\begin{lem} \label{lem:coproductsofspaces} Suppose $\Esp \to \Top$ satisfies \eqref{finitelimits}-\eqref{coproductsandopenembeddings}.  Let $\{ Y_i : i \in I \}$ be an indexed family of spaces with coproduct $Y$.  Then for distinct $i,j \in I$, $Y_i \times_Y Y_j = \emptyset$ is ``the" initial object of $\Esp$ and if $X \to Y$ is any $\Esp$-morphism and we set $X_i := X \times_Y Y_i$, then the natural maps $X_i \to X$ make $X$ ``the" coproduct of the $X_i$.  It follows that to give a map $f : X \to Y$ is to give a coproduct decomposition $X = \coprod_{i \in I} X_i$ of $X$ and a map $f_i : X_i \to Y$ for each $i \in I$.  \end{lem}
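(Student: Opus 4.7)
The plan is to deduce everything directly from the structural results already established in Lemma~\ref{lem:Espfiberedproducts}, Proposition~\ref{prop:Espfiberedproducts}, and Lemma~\ref{lem:Zariskicovers}, together with the ``faithfulness'' clause of axiom \eqref{openembeddings} that forces any space with empty underlying space to be initial. Essentially no new geometric input is required; the proof is a short assembly of these ingredients.

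First I would settle the disjointness $Y_i \times_Y Y_j = \emptyset$ for $i \neq j$. By \eqref{coproductsandopenembeddings} each structure map $Y_i \to Y$ is an open embedding, and by \eqref{coproductscommute} we have $|Y| = \coprod_i |Y_i|$ in $\Top$, so the open subspaces $|Y_i| \subseteq |Y|$ are pairwise disjoint. Lemma~\ref{lem:Espfiberedproducts}\eqref{Espfiberedproducts3} then identifies the underlying space of $Y_i \times_Y Y_j$ with $|Y_i| \cap |Y_j| = \emptyset$, and the faithfulness clause of \eqref{openembeddings} upgrades this to the statement that $Y_i \times_Y Y_j$ is initial in $\Esp$.

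Next I would show that $X$ is the coproduct of the $X_i$. Since each $Y_i \to Y$ is an open embedding, Lemma~\ref{lem:Espfiberedproducts}\eqref{Espfiberedproducts2} makes each $X_i = X \times_Y Y_i \to X$ an open embedding with $|X_i| = |f|^{-1}(|Y_i|) \subseteq |X|$. As the $|Y_i|$ cover $|Y|$, the $|X_i|$ cover $|X|$, so the first half of Lemma~\ref{lem:Zariskicovers}\eqref{Zcovers2} tells us the induced map $p : \coprod_i X_i \to X$ is a Zariski cover. By \eqref{coproductscommute} we have $|\coprod_i X_i| = \coprod_i |X_i|$, and this set-theoretic coproduct maps bijectively onto $|X|$ because the $|X_i|$ are pairwise disjoint (since $|X_i| \cap |X_j| = |f|^{-1}(|Y_i| \cap |Y_j|) = \emptyset$ for $i \neq j$) and jointly cover $|X|$. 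Thus $|p|$ is a bijection, and Lemma~\ref{lem:Zariskicovers}\eqref{Zcovers1} promotes the bijective local isomorphism $p$ to an isomorphism.

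Finally, for the ``it follows'' sentence: from any $f : X \to Y$ the previous step extracts a canonical coproduct decomposition $X = \coprod_i X_i$ together with the composites $f_i : X_i \hookrightarrow X \to Y$ (which in fact factor through $Y_i \hookrightarrow Y$ by the definition of the fibered product); conversely, from a coproduct decomposition $X = \coprod_i X_i$ and maps $f_i : X_i \to Y$ the universal property of the coproduct produces a unique $f : X \to Y$ with $f|X_i = f_i$. These two assignments are visibly mutually inverse. I do not anticipate any real obstacle---the only point that requires a moment's reflection is why a bijective Zariski cover is an isomorphism, and this is precisely Lemma~\ref{lem:Zariskicovers}\eqref{Zcovers1}.
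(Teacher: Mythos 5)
Your proof is correct and follows essentially the same route as the paper's: disjointness via commutation of the underlying-space functor with base change along open embeddings plus the faithfulness clause of \eqref{openembeddings}, and the coproduct decomposition via Lemma~\ref{lem:Zariskicovers}\eqref{Zcovers1} applied to the bijective local isomorphism $\coprod_i X_i \to X$. Your explicit treatment of the final ``it follows'' sentence is a small addition the paper leaves implicit, but nothing differs in substance.
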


\begin{proof} By \eqref{coproductscommute} $|Y| = \coprod_i |Y_i|$ and by \eqref{coproductsandopenembeddings} $Y_i \to Y$ is an open embedding with underlying $\Top$-morphism given by the open embedding $|Y_i| \to |Y|$.  Since $\Esp \to \Top$ commutes with base change along an open embedding (Lemma~\ref{lem:Espfiberedproducts}), we find that $|Y_i \times_Y Y_j| = |Y_i| \times_{|Y|} |Y_j| = \emptyset$, hence $|Y_i \times_Y Y_j|=\emptyset$ by the ``mild faithfulness" of $\Esp \to \Top$ assumed in \eqref{openembeddings}.  For the next statement, note that the maps $X_i \to X$ are open embeddings with $|X| = \coprod_i |X_i|$ by Lemma~\ref{lem:Espfiberedproducts}.  Using \eqref{coproductscommute} and \eqref{coproductsandopenembeddings}, we then see that the natural map $\coprod_i X_i \to X$ is a local isomorphism and a homeomorphism on spaces, hence it is an isomorphism by Lemma~\ref{lem:Zariskicovers}\eqref{Zcovers1}.  \end{proof}

Lemma~\ref{lem:Zariskicovers} also implies that, for any $X,Y \in \Esp$, the presheaf \bne{Esppresheaf} |U| & \mapsto & \Hom_{\Esp}(U,Y) \ene on the topological space $|X|$ is a sheaf.  Here $|U|$ is an open subset of $|X|$ and $U \to X$ denotes ``the" corresponding open embedding.  As in Remark~\ref{rem:EspTopnotation} we will often blur the distinction between $X$ and $|X|$ and write ``$U$" instead of ``$|U|$" and say that this is a sheaf ``on $X$."

To motivate the Zariski Gluing axiom \eqref{Zariskigluing}, let us first make a simple observation:

\begin{prop} \label{prop:Zariskigluing} Suppose $\Esp \to \Top$ satisfies \eqref{finitelimits}-\eqref{coproductsandopenembeddings}.  Let $X$ be a space.  Then: \begin{enumerate} \item \label{pushoutsalongopenembeddings} Suppose $|U_1|$ and $|U_2|$ are open subsets of $|X|$ covering $|X|$.  Let $U_i \to X$ ($i=1,2$) be the open embedding corresponding to $|U_i|$ and let $U_{12} \to U_i$ ($i=1,2$) be the open embedding corresponding to $|U_1| \cap |U_2| \subseteq |U_i|$.  Then $$ \xym{ U_{12} \ar[r] \ar[d] & U_1 \ar[d] \\ U_2 \ar[r] & X } $$ is a pushout diagram in $\Esp$. \item \label{unionofopenembeddings} Let $i \mapsto |U_i|$ be a map of partially ordered sets from a well-ordered set $I$ to the set of open subsets of $|X|$, ordered by inclusion.  Assume that $|X| = \cup_i |U_i|$.  For $i \leq j$ in $I$, let $U_i \to U_j$ be the open embedding of spaces corresponding to the open embedding $|U_i| \subseteq |U_j|$ in $\Top$.  Then $X$, with the open embeddings $U_i \to X$ as the structure maps, is the direct limit, in $\Esp$, of $i \mapsto U_i$. \end{enumerate} \end{prop}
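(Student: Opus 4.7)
The plan is to deduce both parts from the subcanonicity of the Zariski topology (axiom \eqref{Zariskitopology}) applied to the explicit Zariski covers provided by Lemma~\ref{lem:Zariskicovers}\eqref{Zcovers2}. In both cases, the required $\Esp$-morphism $f : X \to T$ out of $X$ will be produced by assembling the given data on the cover and invoking the sheaf property of the presheaf $\Hom_\Esp(\slot, T)$.

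For \eqref{pushoutsalongopenembeddings}, I first verify that the displayed square commutes: both composites $U_{12} \to U_i \to X$ are open embeddings with image $|U_1| \cap |U_2|$, so they coincide by the uniqueness clause in \eqref{openembeddings} together with Lemma~\ref{lem:Espfiberedproducts}\eqref{Espfiberedproducts1}. To check the pushout property, fix a test object $T$ and a cocone consisting of morphisms $f_i : U_i \to T$ $(i=1,2)$ that agree after precomposition with $U_{12} \to U_i$. By Lemma~\ref{lem:Zariskicovers}\eqref{Zcovers2}, the map $V := U_1 \coprod U_2 \to X$ is a Zariski cover and $V \times_X V$ decomposes as the coproduct of $U_1$, $U_2$, and two copies of $U_{12}$ (the off-diagonal terms). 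The compatibility of the pair $(f_1,f_2)$ is exactly the statement that the two maps $V \times_X V \rightrightarrows V \to T$ coincide, so axiom \eqref{Zariskitopology} produces a unique $f : X \to T$ whose composition with each open embedding $U_i \to X$ is $f_i$, which is the desired universal property.

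Part \eqref{unionofopenembeddings} runs in parallel, now applied to the entire family $\{U_i \to X : i \in I\}$. The cocone identities $U_i \to U_j \to U_k = U_i \to U_k$ (for $i \leq j \leq k$) and $U_i \to U_j \to X = U_i \to X$ are immediate from the uniqueness of open embeddings representing a given open subspace. Fix a test object $T$ and a compatible family $(f_i : U_i \to T)_{i \in I}$, meaning $f_j$ precomposed with $U_i \to U_j$ equals $f_i$ whenever $i \leq j$. Because $I$ is well-ordered it is totally ordered, so for any $i,j \in I$ we have $|U_i| \cap |U_j| = |U_{\min(i,j)}|$, and uniqueness of open embeddings then identifies $U_i \times_X U_j$ with $U_{\min(i,j)}$. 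Lemma~\ref{lem:Zariskicovers}\eqref{Zcovers2} thus gives that $V := \coprod_i U_i \to X$ is a Zariski cover with $V \times_X V = \coprod_{(i,j) \in I \times I} U_{\min(i,j)}$. The family $(f_i)$ assembles into a morphism $V \to T$, and the direct-limit compatibility conditions translate precisely into the equalizer condition on $V \times_X V$, so subcanonicity produces the unique $f : X \to T$ representing the given cocone.

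The argument is essentially formal once the fiber products are identified explicitly, so there is no genuine obstacle. The only place where any care is needed is in \eqref{unionofopenembeddings}, where totality of the well-ordering on $I$ is what allows the uniform identification $U_i \times_X U_j = U_{\min(i,j)}$; without this, the equalizer datum coming from a direct-limit cocone would not literally match the sheaf-theoretic equalizer datum on $V \times_X V$.
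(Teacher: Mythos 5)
Your proof is correct and is precisely the argument the paper intends: its own ``proof'' is a one-line reference to carrying out the $\Esp=\Top$ argument using Lemma~\ref{lem:Zariskicovers}, and your write-up fills that in by computing $V\times_X V$ via Lemma~\ref{lem:Zariskicovers}\eqref{Zcovers2} and invoking subcanonicity \eqref{Zariskitopology} to descend the compatible family to $X$. The only cosmetic point is that two open embeddings out of the same object with the same image agree only up to a unique isomorphism over $X$, so the commutativity of the square and the matching of the cocone data with the equalizer data should be phrased via the canonical factorizations rather than literal equality; this is a formal adjustment and not a gap.
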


\begin{proof} Using the properties of the Zariski topology from Lemma~\ref{lem:Zariskicovers}, one can prove this in the same way it is proved in the case $\Esp=\Top$. \end{proof}

\begin{defn} \label{defn:Zariskigluingdatum} A \emph{Zariski gluing datum} is a direct limit system in $\Esp$ of one of the following two types: \begin{enumerate} \item Two open embeddings $j_i : X_0 \to X_i$ ($i=1,2$), viewed as a functor to $\Esp$ from the category $I$ with three objects $0$, $1$, $2$ and exactly two non-identity morphisms $0 \to i$ ($i=1,2$).  \item A functor $i \mapsto X_i$ from a well-ordered set $I$ to $\Esp$ such that, for all $i,j \in I$ with $i \leq j$, the ``transition function" $X_i \to X_j$ is an open embedding. \end{enumerate} \end{defn}

The Zariski Gluing axiom \eqref{Zariskigluing} asserts that every Zariski gluing datum has a direct limit $X$, that the structure maps $X_i \to X$ are open embeddings, and that this direct limit is preserved by the forgetful functor $\Esp \to \Top$.  Informally, this axiom asserts that one can form the pushout of two open embeddings and the union of a (possibly transfinite) sequence of open embeddings, and that these limits have ``the expected properties."  Notice that, in the case of a pushout along open embeddings, the assumptions that the structure maps $X_i \to X$ are open embeddings and that the pushout square is preserved by $\Esp \to \Top$ ensure that the pushout square is also a \emph{pullback} square (Lemma~\ref{lem:Espfiberedproducts}).

\begin{prop} \label{prop:Zariskigluingspreserved} Any $1$-morphism of spaces $(F,\eta)$ (Definition~\ref{defn:morphismofspaces}) preserves direct limits of Zariski gluing data.\end{prop}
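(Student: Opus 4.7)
The plan is to show that $FX$, together with the morphisms $F(s_i) : FX_i \to FX$ obtained by applying $F$ to the structure maps $s_i : X_i \to X$ of a given Zariski gluing datum, satisfies the universal property of the direct limit of $i \mapsto FX_i$ in $\Esp'$. The strategy is to reduce directly to Proposition~\ref{prop:Zariskigluing} applied inside the target category $\Esp'$.

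By \eqref{openembeddingspreserved}, $F$ sends each open embedding in the gluing datum (the two maps $X_0 \to X_i$ in case (a), or the transition maps $X_i \to X_j$ in case (b) of Definition~\ref{defn:Zariskigluingdatum}) to an open embedding in $\Esp'$, so that $i \mapsto FX_i$ is itself a Zariski gluing datum in $\Esp'$ and each $F(s_i)$ is an open embedding. By the same axiom, the open embedding $F(s_i) : FX_i \to FX$ corresponds, in the sense of \eqref{openembeddings}, to the open subspace $\eta_X^{-1}(|X_i|) \subseteq |FX|$ (where we identify $|X_i|$ with its image under $|s_i|$). Since $\eta_X^{-1}$ preserves unions and intersections of subsets, the covering relation $|X| = \bigcup_i |X_i|$ (and, in the pushout case, the identity $|X_0| = |X_1| \cap |X_2|$), which holds for $X$ by Proposition~\ref{prop:Zariskigluing}, transports immediately to the analogous relations for the subsets $\eta_X^{-1}(|X_i|) \subseteq |FX|$. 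These are precisely the topological hypotheses of Proposition~\ref{prop:Zariskigluing}, applied in $\Esp'$ to $FX$ with its open cover $\{\eta_X^{-1}(|X_i|)\}$; that proposition then yields $FX$ as the pushout, or respectively the direct limit, of the diagram $i \mapsto FX_i$ in $\Esp'$, which is exactly the conclusion.

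The main obstacle, albeit a mild one, is the bookkeeping needed to verify that the open embedding $F(s_i) : FX_i \to FX$ in $\Esp'$ really corresponds, in the sense of \eqref{openembeddings}, to the open subset $\eta_X^{-1}(|X_i|) \subseteq |FX|$ appearing in the hypotheses of Proposition~\ref{prop:Zariskigluing}. This follows from \eqref{openembeddingspreserved}, together with the naturality of $\eta$ and the functoriality of $F$, but warrants a careful check; once it is made, the reduction to the cited proposition is routine.
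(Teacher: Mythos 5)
Your proposal is correct and follows essentially the same route as the paper: use \eqref{openembeddingspreserved} to identify $F$ of the structure maps with the open embeddings corresponding to $\eta_X^{-1}(|X_i|)$, note that $\eta_X^{-1}$ preserves the relevant intersections and unions, and then invoke Proposition~\ref{prop:Zariskigluing} in the target category. The "careful check" you flag at the end is exactly what the axiom \eqref{openembeddingspreserved} delivers, so there is no gap.
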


\begin{proof}  Let $(F,\eta)$ be a $1$-morphism from $(\Esp,\AA^1)$ to $(\Esp',(\AA^1)')$.\footnote{The monoid objects are irrelevant here and the assumptions \eqref{A1toA1} and \eqref{limitpres} on $(F,\eta)$ are not needed.}  Consider a pushout square \bne{pushoutsq} & \xym{ X_0 \ar[r]^-{j_1} \ar[d]_{j_2} & X_1 \ar[d]^{k_1} \\ X_2 \ar[r]^-{k_2} & X } \ene in $\Esp$ where $j_1$ and $j_2$ are open embeddings and $|X_0| = |X_1| \cap |X_2|$.  By assumption \eqref{Zariskigluing}, $k_1$ and $k_2$ are also open embeddings and the underlying square in $\Top$ is also a pushout, so $|X| = |X_1| \cup |X_2|$.  By the property \eqref{openembeddingspreserved} of $(F,\eta)$ in Definition~\ref{defn:morphismofspaces}, the square \bne{pushoutsq2} & \xym{ FX_0 \ar[r]^-{Fj_1} \ar[d]_{Fj_2} & FX_1 \ar[d]^{Fk_1} \\ FX_2 \ar[r]^-{Fk_2} & FX } \ene is the diagram of open embeddings whose underlying $\Top$ diagram is \bne{pushoutsq3} & \xym{ \eta_X^{-1}|X_0| \ar[r] \ar[d] & \eta_X^{-1}|X_1| \ar[d] \\ \eta_X^{-1}|X_2| \ar[r] & \eta_X^{-1}|X| = |FX| } \ene where $\eta_X : |FX| \to |X|$ is the $\Top$ morphism given by evaluating the natural transformation $\eta : |F(\slot)| \to |\slot|$ on $X$.  Since we have $$ (\eta_X^{-1}|X_1|) \cap (\eta_X^{-1}|X_2|) = \eta_X^{-1}(|X_1| \cap |X_2|) = \eta_X^{-1}|X_0| $$ and $$(\eta_X^{-1}|X_1|) \cup (\eta_X^{-1}|X_2|) = \eta_X^{-1}(|X_1| \cup |X_2|) = \eta_X^{-1}(|X|) = |FX|,$$ the diagram \eqref{pushoutsq2} is a pushout by Proposition~\ref{prop:Zariskigluing}\eqref{pushoutsalongopenembeddings}.  The case of a ``union of open embeddings" is proved in the same way using Proposition~\ref{prop:Zariskigluing}\eqref{unionofopenembeddings}.  \end{proof}

To explain the axiom \eqref{openunits} for $(\Esp,\AA^1)$, recall that $\AA^1$ is a monoid object, so for any $U \in \Esp$, the set $\AA^1(U) = \Hom_{\Esp}(U,\AA^1)$ has a monoid structure, natural in $U \in \Esp$.  The axiom \eqref{openunits} says that the presheaf \be \GG_m : \Esp^{\rm op} & \to & \Sets \\ U & \mapsto & \AA^1(U)^* = \Hom_{\Esp}(U,\AA^1)^* \ee given by the group of units in this monoid is representable by some space (this actually follows from \eqref{finitelimits}, as we will see in \S\ref{section:monoidstospaces}), also denoted $\GG_m$, and that the induced map $\GG_m \to \AA^1$ should be an open embedding.  Note that $\GG_m$ is actually a presheaf of groups, so that the representing object $\GG_m$ will be a group object in $\Esp$.

For example, when $\Esp=\Top$, the monoid object $\AA^1$ is just a monoid $\AA^1$ equipped with a topology such that the addition map $+ : \AA^1 \times \AA^1 \to \AA^1$ is continuous.  The ``group of units presheaf" $\GG_m$ defined above is represented by the group of units $\GG_m$ in the monoid $\AA^1$, with the subspace topology from the inclusion $\GG_m \into \AA^1$.  The latter map is an open embedding iff $\GG_m$ is an \emph{open} subset of $\AA^1$.  So, for example, while any monoid $P$ can be regarded as a monoid object in $\Top$ by giving $P$ the anti-discrete topology, the subspace $P^* \subseteq P$ will not be open unless $P=P^*$ is a group.  On the other hand, any monoid $P$ can be regarded as a monoid object in $\Top$ by giving $P$ the \emph{discrete} topology and in that case $(\Top,P)$ will be a category of spaces (with the identity as the underlying space functor).  

We close this section with a few remarks about $1$- and $2$-morphisms in the $2$-category $\Univ$.

\begin{lem} \label{lem:1morphismofspaces} Suppose $(F,\eta)$ is a $1$-morphism of spaces as in Definition~\ref{defn:morphismofspaces}.  Then $F$ takes local isomorphisms (resp.\ Zariski covers) to local isomorphisms (resp. Zariski covers). \end{lem}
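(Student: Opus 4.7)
The plan is to unwind the definitions of ``local isomorphism'' and ``Zariski cover'' and feed everything through axiom \eqref{openembeddingspreserved}, which says that $F$ sends an open embedding $U \to X$ to the open embedding corresponding (via $\eta_X$) to $\eta_X^{-1}(|U|) \subseteq |FX|$. Concretely: let $f : X \to Y$ be a local isomorphism in $\Esp$, fix a point $x' \in |FX|$, and set $x := \eta_X(x') \in |X|$. Since $f$ is a local isomorphism, there exists an open embedding $i : U \to X$ with $x \in |i|(|U|)$ such that $fi : U \to Y$ is also an open embedding.

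Applying $F$, axiom \eqref{openembeddingspreserved} tells us $Fi : FU \to FX$ is the open embedding into $FX$ with image $\eta_X^{-1}(|i|(|U|))$, which contains $x'$ by construction. Likewise $F(fi) = (Ff)(Fi) : FU \to FY$ is an open embedding. Thus $x'$ lies in the image of an open embedding $Fi : FU \to FX$ whose composition with $Ff$ is an open embedding, which is exactly the condition for $Ff$ to be a local isomorphism at $x'$. Since $x' \in |FX|$ was arbitrary, $Ff$ is a local isomorphism.

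For the Zariski cover assertion, we must additionally verify that $|Ff|$ is surjective whenever $|f|$ is. Given $y' \in |FY|$, set $y := \eta_Y(y') \in |Y|$. By surjectivity of $|f|$, pick $x \in |X|$ with $|f|(x) = y$; by the local isomorphism property, pick an open embedding $i : U \to X$ with $x \in |i|(|U|)$ such that $fi$ is an open embedding. Then $y \in |fi|(|U|)$, so by naturality of $\eta$ applied to $fi$ we have $y' \in \eta_Y^{-1}(|fi|(|U|))$, which by \eqref{openembeddingspreserved} is precisely the image of the open embedding $|F(fi)| = |Ff| \circ |Fi|$. Thus $y'$ lies in the image of $|Ff|$, giving surjectivity.

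There is no real obstacle here: the axioms for a $1$-morphism were rigged so that open embeddings and their images are transported in a controlled way. The only point requiring care is the bookkeeping with the natural transformation $\eta_X : |FX| \to |X|$, namely remembering that the image of $Fi$ in $|FX|$ is $\eta_X^{-1}(|i|(|U|))$ rather than $|i|(|U|)$ itself; both arguments above hinge on this identification together with the naturality square for $\eta$ with respect to $f$.
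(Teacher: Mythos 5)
Your proof is correct and takes the same route as the paper, whose entire proof is the one-line observation that the lemma follows from property \eqref{openembeddingspreserved}; you have simply spelled out the bookkeeping that the paper leaves implicit. The one point worth noting is that in the surjectivity step the work is really done by \eqref{openembeddingspreserved} identifying the image of $F(fi)$ with $\eta_Y^{-1}(|fi|(|U|))$, rather than by naturality of $\eta$ per se, but your argument is sound as written.
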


\begin{proof} This follows from the property \eqref{openembeddingspreserved}. \end{proof}

\begin{lem} \label{lem:2morphismofspaces} Suppose that $(F,\eta)$, $(F',\eta')$ are $1$-morphisms of spaces from $(\Esp,\AA^1)$ to $(\Esp',(\AA^1)')$.  Assume that for every $X \in \Esp$ we have $x=y$ whenever $x,y \in |X|$ are specializations of each other.  Then any isomorphism of functors $\gamma$ from $F$ to $F'$ with $\gamma(\AA^1)=\Id$ is in fact an invertible $2$-morphism in $\Univ$. \end{lem}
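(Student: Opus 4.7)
The plan is to verify, for each $X \in \Esp$, the identity $\eta'_X \circ |\gamma(X)| = \eta_X$ of continuous maps $|FX| \to |X|$; once this is established, $\gamma$ qualifies as a $2$-morphism by Definition \ref{defn:morphismofspaces}, and invertibility is automatic since $\gamma^{-1}$ is also an isomorphism of functors satisfying $\gamma^{-1}(\AA^1) = \Id$, and the analogous compatibility for $\gamma^{-1}$ follows formally from the one for $\gamma$. So the whole content of the lemma is the pointwise verification of this single identity.

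The key step will be to observe that $\gamma$ respects open embeddings in a useful way. Fix $X \in \Esp$ and an open subset $|U| \subseteq |X|$, and let $U \to X$ be the corresponding open embedding. By axiom \eqref{openembeddingspreserved} applied to both $(F,\eta)$ and $(F',\eta')$, $FU \to FX$ is the open embedding of $\Esp'$ corresponding to $\eta_X^{-1}(|U|) \subseteq |FX|$, and $F'U \to F'X$ is the open embedding corresponding to $(\eta'_X)^{-1}(|U|) \subseteq |F'X|$. Naturality of $\gamma$ at the morphism $U \to X$ then gives a commutative square in $\Esp'$ with $\gamma(U)$ on the left and $\gamma(X)$ on the right. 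Since $\gamma(U)$ and $\gamma(X)$ are isomorphisms, running the same argument for $\gamma^{-1}$ shows that $|\gamma(X)|$ restricts to a bijection between $\eta_X^{-1}(|U|)$ and $(\eta'_X)^{-1}(|U|)$ for every open $|U| \subseteq |X|$.

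With this in hand, fix $p \in |FX|$ and set $x := \eta_X(p)$ and $y := \eta'_X(|\gamma(X)|(p))$. For any open neighborhood $|U|$ of $x$ in $|X|$, we have $p \in \eta_X^{-1}(|U|)$, hence $|\gamma(X)|(p) \in (\eta'_X)^{-1}(|U|)$ by the previous paragraph, hence $y \in |U|$; thus $y$ lies in every open neighborhood of $x$. The symmetric argument, starting from an open neighborhood of $y$, shows that $x$ lies in every open neighborhood of $y$. Therefore $x$ and $y$ are specializations of each other in $|X|$, and the standing hypothesis on $\Esp$ forces $x = y$, which is exactly the required compatibility. The only real subtlety I anticipate is recognizing that the specialization hypothesis is precisely what is needed to upgrade the ``preimages-of-opens are matched'' statement of the second paragraph to an actual equality of maps to $|X|$; everything else is direct bookkeeping with the axioms and with the naturality of $\gamma$.
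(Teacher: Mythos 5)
Your proof is correct and follows essentially the same route as the paper: establish via naturality of $\gamma$ and axiom (M3) that $|\gamma(X)|$ matches $\eta_X^{-1}(|U|)$ with $(\eta'_X)^{-1}(|U|)$ for every open $|U|$, then use the hypothesis on mutual specializations to upgrade this to the equality $\eta'_X\circ|\gamma(X)|=\eta_X$. The only difference is that you spell out the final specialization argument, which the paper leaves as an exercise to the reader.
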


\begin{proof}  The issue is to show that the diagram \bne{2mordia} & \xym{ |FX| \ar[r]^-{\eta_X} \ar[d]_{\gamma(X)}^\cong & |X| \\ |F'X| \ar[ru]_{\eta_X'} } \ene in $\Top$ commutes for each $X \in \Esp$.  First observe that it is ``essentially commutative" in the sense that the diagram of partially-ordered sets $$ \xym{ \Open(|FX|) & \ar[l]_-{\eta_X^{-1}}  \ar[ld]^{(\eta'_X)^{-1}} \Open(|X|) \\ \Open(|F'X|)  \ar[u]^{\gamma(X)^{-1}}_\cong } $$ commutes---here $\Open(Y)$ denotes the set of open subsets of a topological space $Y$.  To see this, take an open subset $|U| \subseteq |X|$ and let $U \to X$ be the corresponding open embedding in $\Esp$.  Then \bne{Espprimediagram} & \xym{ F U \ar[r]^-{\gamma(U)}_-\cong \ar[d] & F'U \ar[d] \\ FX \ar[r]^-{\gamma(X)}_-\cong & F'X } \ene commutes in $\Esp'$ since $\gamma$ is natural and the $\Top$-diagram underlying \eqref{Espprimediagram} is $$ \xym{ \eta^{-1}_X(|U|) \ar[d] \ar[r]^-{|\gamma(U)|}_-\cong & (\eta_X')^{-1}(|U|) \ar[d] \\ |FX| \ar[r]^-{|\gamma(X)|}_-\cong & |F'X| }$$  by the property \eqref{openembeddingspreserved} of the $1$-morphisms $(F,\eta)$, $(F',\eta')$.  Now one uses the assumption on $|X|$ to show that this ``essential commutativity" implies commutativity. \end{proof}

\subsection{Spaces to locally monoidal spaces} \label{section:spacestolocallymonoidalspaces}  The monoid object $\AA^1$ for a category of spaces $(\Esp,\AA^1)$ gives rise to a sheaf $\str_X$ of monoids on $X$ (really on $|X|$) for each $X \in \Esp$ defined by \bne{strX} \str_X(U) & := & \AA^1(U) := \Hom_{\Esp}(U,\AA^1). \ene  This is a special case of \eqref{Esppresheaf}.  We call $\str_X$ the \emph{structure sheaf} of $X$.  The sheaf of monoids $\str_X$ is functorial in $X \in \Esp$ in the sense that an $\Esp$-morphism $f : X \to Y$ induces a map of sheaves of monoids $f^\sharp : f^{-1} \str_Y \to \str_X$ on $X$ (really $X$ should be $|X|$ and $f^{-1}$ should be $|f|^{-1}$, but we want to start indulging in these sort of notational abuses).  We see in Lemma~\ref{lem:localstalks} below that the axiom \eqref{openunits} ensures that the map $f^\sharp$ is local, so that \be (f,f^\sharp) : (|X|,\str_X) & \to & (|Y|,\str_Y) \ee is a morphism of locally monoidal spaces (Definition~\ref{defn:monoidalspace}).  We thus obtain, for any category of spaces $(\Esp,\AA^1)$, a functor \bne{EsptoLMS} \Esp & \to & \LMS \\ \nonumber X & \mapsto & |X| = (|X|,\str_X). \ene

\begin{lem} \label{lem:localstalks} Let $f : X \to Y$ be a map of spaces.  For any $x \in X$, the stalk $f^\sharp_x : \str_{Y,f(x)} \to \str_{X,x}$ of $f^\sharp$ at $x$ is a local map of monoids (\S\ref{section:monoidbasics}). \end{lem}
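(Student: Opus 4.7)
The plan is to extract a purely topological criterion for being a unit in the stalks $\str_{Z,z}$ and then observe that this criterion transports trivially through $f^\sharp_x$ because it only sees the underlying continuous map.

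First I would establish the following local criterion: for any $Z \in \Esp$, any open $W \subseteq |Z|$, and any section $g \in \str_Z(W) = \Hom_\Esp(W,\AA^1)$, the element $g$ is a unit in the monoid $\str_Z(W)$ if and only if $|g|(|W|) \subseteq |\GG_m|$. This is essentially built into the axiom \eqref{openunits}: by definition $\GG_m$ represents the sub(pre)sheaf of units of $\str$, so $g$ is a unit in $\str_Z(W)$ iff $g$ factors through $\GG_m \to \AA^1$; and since the axiom asserts this map is an open embedding, factoring through it is the same as $|g|$ landing in $|\GG_m|$ by the universal property of open embeddings from \eqref{openembeddings}.

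Next I would pass to germs. A germ $[g] \in \str_{Z,z}$, represented by some $g : W \to \AA^1$ on a neighborhood $W$ of $z$, is a unit in the stalk iff $g|_{W'}$ is a unit in $\str_Z(W')$ for some smaller neighborhood $W'$ of $z$; combining this with the criterion above and the openness of $|\GG_m|$ in $|\AA^1|$, the stalk criterion becomes: $[g]$ is a unit iff $|g|(z) \in |\GG_m|$ (if $|g|(z)\in |\GG_m|$, take $W' := |g|^{-1}|\GG_m|$; the converse is immediate).

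Finally, for the lemma proper, suppose $t \in \str_{Y,f(x)}$ satisfies $f^\sharp_x(t) \in \str_{X,x}^*$; we must show $t \in \str_{Y,f(x)}^*$. Pick a representative $g : V \to \AA^1$ of $t$ on a neighborhood $V$ of $f(x)$. By Lemma~\ref{lem:Espfiberedproducts} the preimage $f^{-1}V \to X$ is an open embedding and a neighborhood of $x$, and by construction $f^\sharp_x(t)$ is represented by $g \circ (f|_{f^{-1}V})$, whose underlying topological map sends $x$ to $|g|(f(x))$. Applying the germ criterion to $f^\sharp_x(t)$ gives $|g|(f(x)) \in |\GG_m|$, and applying the criterion to $t$ then shows $t$ is a unit. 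The main obstacle is really just confirming the first step---that being a unit of $\AA^1(U)$ coincides with factoring through $\GG_m \hookrightarrow \AA^1$ topologically---but this is exactly the content of \eqref{openunits}, so after that step the rest is a one-line unpacking via $|g \circ f|(x) = |g|(f(x))$.
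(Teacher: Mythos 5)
Your proof is correct and follows essentially the same route as the paper's: both arguments reduce "unit in the stalk" to the pointwise condition that the underlying map sends the point into $|\GG_m|$ (using that $\GG_m \into \AA^1$ is an open embedding, i.e.\ axiom \eqref{openunits}), and then observe that this condition transports through $f$ via $|\ov{m}f|(x) = |\ov{m}|(f(x))$. The only cosmetic difference is that you isolate the pointwise unit criterion as a separate preliminary step, whereas the paper folds it into a single chain of equivalences.
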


\begin{proof} We need to show that $m \in \str_{Y,f(x)}^*$ assuming $f^\sharp_x(m) \in \str_{X,x}^*$.  This $m$ is the germ of a map from $Y$ to $\AA^1$ near $f(x)$, represented by a map $\ov{m} : U \to \AA^1$ defined on some neighborhood $U$ of $f(x)$ in $Y$.  Then $f^\sharp_x(m)$ is the germ of $\ov{m}f : f^{-1}(U) \to \AA^1$ at $x$.  To say that the latter germ is in $f^\sharp_x(m) \in \str_{X,x}^*$ is to say that $\ov{m}f$ factors through $\GG_m \into \AA^1$ after possibly shrinking $f^{-1}(U)$ to a smaller neighborhood of $x$.  But $\GG_m \into \AA^1$ is an open embedding, so this is the same thing as saying that $\ov{m}f(x) \in \GG_m$, which is similarly the same thing as saying that $\ov{m}$ factors through $\GG_m \subseteq \AA^1$ after replacing $U$ with a smaller neighborhood of $f(x)$, which is the same thing as saying $m \in \str_{Y,f(x)}^*$. \end{proof}

\begin{rem} \label{rem:EsptoLMS}  The functor \eqref{EsptoLMS} is not generally a $1$-morphism of spaces (Definition~\ref{defn:morphismofspaces}) because it does not generally preserve finite inverse limits.  It does, however, take open embeddings to open embeddings, as is clear from its definition.  It follows, as in the proof of Proposition~\ref{prop:Zariskigluingspreserved}, that \eqref{EsptoLMS} preserves direct limits of Zariski gluing data.   \end{rem}

\subsection{Examples} \label{section:spacesexamples}  Now we discuss the examples of ``categories of spaces" that we have in mind.  In most cases of serious interest to us, $\Esp$ will be a subcategory of the category $\LRS$ of locally ringed spaces, and the monoid object $\AA^1$ will be the space respresenting the presheaf \be \Esp^{\rm op} & \to & \Mon \\ X & \mapsto & \O_X(X) \ee (here $\O_X(X)$ is regarded as a monoid under multiplication), so that the sheaf of monoids $\str_X$ on $X$ defined in \eqref{strX} is nothing but the structure sheaf $\O_X$.  The major exception is that, when $\Esp = \DS$, the ``more useful" choice of monoid object is $\AA^1 = \RR_+$, so that the sheaf $\str_X$ is the sheaf of ``non-negative functions" on $X$, discussed in \S\ref{section:positivefunctions}.

We now list some categories of spaces (the functor $\Esp \to \Top$ is ``the obvious underlying space functor" unless otherwise mentioned) together a description of the structure sheaf $\str_X$ and the group of units $\GG_m$ in each case: \begin{enumerate}[label=(E\arabic*), ref=E\arabic*] \item \label{discretespaces} $(\Sets,P)$, $P$ any monoid.  Here $\str_X(U) = \Hom_{\Sets}(U,P) = P^U$, $\GG_m = P^*$.  The underlying space functor is the ``discrete topology" functor $\Sets \to \Top$.  \item $(\Top,\RR_+)$.  Spaces are topological spaces, $\str_X$ is the sheaf of continuous maps to $\RR_+$, viewing the latter as a monoid object in its usual metric topology, and $\GG_m = \RR_{>0}$ with its usual topological group structure.  More generally, one can use any topological monoid as $\AA^1$, provided that the group of units $\GG_m \subseteq \AA^1$ is open.  \item $(\LRS,\Spec \ZZ[x])$.  Spaces are locally ringed spaces, $\str_X = \O_X$, and $\GG_m = \Spec \ZZ[x,x^{-1}]$. \item $(\Sch,\Spec \ZZ[x])$.  Spaces are schemes, $\str_X = \O_X$, $\GG_m = \Spec \ZZ[x,x^{-1}]$.  \item There are variants of the above where we work over some fixed base and impose certain finiteness conditions.  In particular, we will often consider the case where $\Esp = \Sch_\CC$ (or $\Esp = \Sch_\RR$) is the category of locally finite type schemes over $\CC$ (resp.\ $\RR$), with $\AA^1 = \AA^1_{\CC} = \Spec \CC[x]$ (resp.\ $\Spec \RR[x]$).  \item $(\LMS,\Spec \NN)$.  Spaces are locally monoidal spaces, $\str_X = \M_X$, $\GG_m = \Spec \ZZ$ (the fan, not the scheme).  \item $(\Fans,\Spec \NN)$.  Spaces are fans, $\str_X = \M_X$, $\GG_m = \Spec \ZZ$. Again, we could impose certain finiteness conditions, such as taking $\Fans$ to be the category of locally finite type fans. \item $(\AS,(\CC,\cdot))$.  Spaces are analytic spaces, $\str_X = \O_X$, $\GG_m  = \CC^* = \GL_1(\CC)$ with its usual complex Lie group structure. \item $(\DS,\RR)$.  Spaces are differentiable spaces, $\str_X = \O_X$, $\GG_m = \RR^* = \GL_1(\RR)$ with its usual Lie group structure. \item $(\DS,\CC)$.  Spaces are differentiable spaces, $\str_X = \O_X \otimes_{\RR} \CC$.  Here $\CC \in \DS$ is isomorphic to $\RR^2$ as a differentiable space, but it has a different monoid structure.  \item $(\DS,\RR_+)$.  Spaces are differentiable spaces.  The monoid object $\AA^1$ is the non-negative reals under multiplication (\S\ref{section:positivefunctions}), so $\str_X = \O_X^{\geq 0}$.  \end{enumerate}

\begin{rem} \label{rem:spacesexamples}  Let us make a few comments concerning the validity of the axioms of Definition~\ref{defn:spaces} in the above examples.  The only conceivable difficulty is probably \eqref{Zariskigluing}, which is a statement about direct limits, and these are always a little touchy in categories like $\Sch$.  Here is a good way to proceed:  First, one has a very concrete description of direct limits in the category $\RS$ of ringed spaces and in the category $\MS$ of monoidal spaces (to form the direct limit of $i \mapsto X_i$ in, say, $\RS$, just form the corresponding direct limit $X$ in spaces and endow it with the structure sheaf $\O_X$ given by the inverse limits of the pushforwards of the $\O_{X_i}$ to $X$).  Using this, one checks \eqref{Zariskigluing} for $\RS$ and $\MS$ (in fact, one must check it in $\Top$ first of all!).  Now one uses the fact that \eqref{Zariskigluing} holds in $\RS$ to deduce it in, say, schemes, by noting that the property of being a scheme is local and an open embedding of ringed spaces is a map of schemes when the ringed spaces happen to be schemes, so one thus sees that the sort of direct limits needed for \eqref{Zariskigluing} in, say, schemes, are obtained simply by computing the corresponding direct limit in $\RS$ and noting that it is a scheme and the structure maps to it are maps of schemes.  The same argument allows one to check \eqref{Zariskigluing} in $\LRS$, $\DS$, $\AS$, \dots.  One similarly deduces \eqref{Zariskigluing} for $\LMS$ and for $\Fans$ from the $\MS$ case. \end{rem}

We have already encountered several $1$-morphisms in $\Univ$.  For example, the various differentialization and analytification functors of \S\ref{section:differentialization} yield $1$-morphisms \be (\Sch_{\RR},\Spec \RR[x]) & \to & (\DS,\RR) \\ (\Sch_{\CC},\Spec \CC[x]) & \to & (\AS,\CC) \\ (\AS,\CC) & \to & (\DS,\CC). \ee  Similarly, we have base change morphisms \be (\Sch_{\ZZ},\Spec \ZZ[x]) & \to & (\Sch_{\RR},\Spec \RR[x]) \\ (\Sch_{\RR}, \Spec \RR[x]) & \to & (\Sch_{\CC}, \Spec \CC[x]). \ee  In general the underlying space functor $X \mapsto |X|$ doesn't yield a $1$-morphism to $(\Top,|\AA^1|)$ in $\Univ$ because it doesn't preserve inverse limits.  But for differentiable and anlytic spaces, we do have such $1$-morphisms.

\subsection{Monoids to spaces} \label{section:monoidstospaces}  Fix a category of spaces $(\Esp,\AA^1)$.  We now explain how to associate a space $\AA(P) \in \Esp$ to every (finitely generated) monoid $P$ in a manner contravariantly functorial in $P$.  The space $\AA(P)$ is often called the \emph{realization} of $P$ in $\Esp$.  This realization construction is the cornerstone of toric geometry and log geometry.

\begin{lem} \label{lem:AP} For every finitely generated monoid $P$, the presheaf \be \AA(P) : \Esp^{\rm op} & \to & \Sets \\ X & \mapsto & \Hom_{\Mon}(P,\str_X(X)) = \Hom_{\Mon}(P,\Hom_{\Esp}(X,\AA^1)) \ee is representable by a space, also denoted $\AA(P)$.  The functor \be \AA : \Mon^{\rm op} & \to & \Esp \\ P & \mapsto & \AA(P) \ee preserves finite inverse limits (i.e.\ it takes a finite direct limit of monoids to a finite inverse limits of spaces). \end{lem}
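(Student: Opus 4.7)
The plan is to bootstrap from the defining property of $\AA^1$ using finite presentations of monoids and axiom \eqref{finitelimits}.

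First I would handle the free case. Because $\NN$ is the free monoid on one generator, $\Hom_{\Mon}(\NN,M)=M$ naturally in $M\in\Mon$; taking $M=\str_X(X)=\Hom_{\Esp}(X,\AA^1)$ shows that the presheaf $\AA(\NN)$ is represented by $\AA^1$ itself, by the very definition of $\AA^1$ (cf.\ \eqref{strX}). Since $\NN^n$ is the coproduct of $n$ copies of $\NN$ in $\Mon$, we get $\Hom_{\Mon}(\NN^n,M)=M^n$, so $\AA(\NN^n)$ is represented by the $n$-fold product $(\AA^1)^n$, which exists by axiom \eqref{finitelimits}.

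For general finitely generated $P$, I would use the fact established in \S\ref{section:monoidbasics} that $P$ is finitely presented: there is a coequalizer diagram $\NN^m\rightrightarrows\NN^n\to P$ in $\Mon$. For each $X\in\Esp$, applying the left-exact contravariant functor $\Hom_{\Mon}(\slot,\str_X(X))$ turns this into an equalizer of sets
\[
\AA(P)(X)\;=\;\mathrm{Eq}\Bigl(\AA(\NN^n)(X)\rightrightarrows\AA(\NN^m)(X)\Bigr)\;=\;\mathrm{Eq}\bigl((\AA^1)^n(X)\rightrightarrows(\AA^1)^m(X)\bigr),
\]
natural in $X$. By \eqref{finitelimits}, the equalizer of the two corresponding $\Esp$-morphisms $(\AA^1)^n\rightrightarrows(\AA^1)^m$ exists in $\Esp$, and by Yoneda it represents $\AA(P)$. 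This also shows $\AA(P)$ is independent, up to unique isomorphism, of the chosen presentation.

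For the limit-preservation statement, I would first note that a finite direct limit of finitely generated monoids is again finitely generated (it is a quotient of the finite coproduct of the terms, which is finitely generated). Now let $i\mapsto P_i$ be a finite diagram in the category of finitely generated monoids with direct limit $Q$. For any $X\in\Esp$,
\[
\AA(Q)(X)\;=\;\Hom_{\Mon}(\varinjlim_i P_i,\str_X(X))\;=\;\varprojlim_i\Hom_{\Mon}(P_i,\str_X(X))\;=\;\varprojlim_i\AA(P_i)(X),
\]
so $\AA(Q)$ represents the inverse limit, in presheaves on $\Esp$, of the representable presheaves $\AA(P_i)$. Since the inverse limit $\varprojlim_i \AA(P_i)$ exists in $\Esp$ by \eqref{finitelimits} and is preserved by the Yoneda embedding, we conclude $\AA(Q)=\varprojlim_i\AA(P_i)$ in $\Esp$, as desired. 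The only real substance is in Step~2 (the reduction to a presentation), which is essentially formal once one has the finite-presentation result from \S\ref{section:monoidbasics}; no serious obstacle is expected.
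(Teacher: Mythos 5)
Your proof is correct and follows essentially the same route as the paper's: represent $\AA(\NN^n)$ by $\AA^n$ using the defining property of $\AA^1$ and axiom (S1), realize a general finitely generated $P$ via a finite presentation as an equalizer of maps $\AA^n\rightrightarrows\AA^m$, and deduce limit preservation formally from the adjunction-style natural bijection. The only difference is that you spell out the Yoneda/equalizer bookkeeping and the independence of the presentation more explicitly than the paper does, which is harmless.
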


\begin{proof} Notice that \be \Hom_{\Mon}(\NN,\Hom_{\Esp}(X,\AA^1)) & = & \Hom_{\Esp}(X,\AA^1) \ee (naturally in $X$), so $\AA(\NN)$ is representable by $\AA^1$.  Similarly, one sees that $\AA(\NN^n)$ is representable by $(\AA^1)^n =: \AA^n$, which exists since we assume that $\Esp$ satisfies \eqref{finitelimits}.  In general, any finitely generated monoid $P$ has a finite presentation (\S\ref{section:monoidbasics})---i.e.\ $P$ sits in a coequalizer diagram \bne{presentationofP} \NN^n \rightrightarrows \NN^m \to P. \ene It follows from the universal property of inverse limits that ``the" inverse limit of \bne{inverselimitofpresentation} \AA^m \rightrightarrows \AA^n \ene (which is representable by \eqref{finitelimits}) represents $\AA(P)$.  The limit preservation statement follows formally from the ``adjointness" property of $P \mapsto \AA(P)$. \end{proof}

\begin{rem} Lemma~\ref{lem:AP} holds for any category $\Esp$ with finite inverse limits and any monoid object $\AA^1$ of $\Esp$. \end{rem}

\begin{example} \label{example:GmisAZ} When $P = \ZZ$, the presheaf \be \AA(Z) : \Esp^{\rm op} & \to & \Sets \\ X & \mapsto & \Hom_{\Mon}(\ZZ,\str_X(X)) \\ & = & \str_X^*(X) \ee is the one represented by the ``group of units" $\GG_m$, so we have a natural isomorphism $\AA(\ZZ) = \GG_m$.  The map of spaces $\AA(\ZZ) \to \AA(\NN)$ obtained by applying the functor $\AA$ of Lemma~\ref{lem:AP} to the map of monoids $\NN \to \ZZ$ is similarly identified with the natural inclusion $\GG_m \to \AA^1$ coming from the definition of $\GG_m$. \end{example}

\subsection{Fans to spaces} \label{section:realizationoffans} Let $(\Esp,\AA^1)$ be a category of spaces.  In Lemma~\ref{lem:AP} we associated a space $\AA(P) \in \Esp$ to each finitely generated monoid $P$, characterized up to unique isomorphism by the existence of a bijection \bne{affinebij} \Hom_{\Esp}(X,\AA(P)) & = & \Hom_{\Mon}(P,\str_X(X)) \ene natural in $P$ and $X \in \Esp$.  Here $\str_X(X)$ is the monoid of global section of the structure sheaf $\str_X$ of the locally monoidal space $|X| = (|X|,\str_X)$ given by the image of $X \in \Esp$ under the functor \eqref{EsptoLMS} of \S\ref{section:spacestolocallymonoidalspaces}.  Combining \eqref{affinebij} with the  natural bijection \be \Hom_{\Mon}(P,\str_X(X)) & = & \Hom_{\LMS}(|X|,\Spec P) \ee (\eqref{Specadjunction} in \S\ref{section:Specrevisited}) we obtain a bijection \bne{affinebij2} \Hom_{\Esp}(X,\AA(P)) & = & \Hom_{\LMS}(|X|,\Spec P) \ene natural in $X$ and $P$.  By taking $X = \AA(P)$ and considering the image of the identity map $\Id \in \Hom_{\Esp}(\AA(P),\AA(P))$ under this bijection, we obtain an $\LMS$-morphism $\tau_P : |\AA(P)| \to \Spec P$ (``the unit of the adjunction"), so that the bijection \eqref{affinebij2} is given explicitly by $f \mapsto \tau_P |f|$.

Our goal now is to generalize this by replacing the finite type affine fan $\Spec P$ with an arbitrary (locally finite type) fan $Y$.  That is, we are going to associate, to each such $Y$, a space $\AA(Y)$, and an $\LMS$-morphism $\tau_Y : |\AA(P)| \to Y$ such that $f \mapsto \tau_Y |f|$ yields a bijection \bne{EspFansadjunction} \Hom_{\Esp}(X,\AA(Y)) & = & \Hom_{\LMS}(|X|,Y) \ene for each $X \in \Esp$.  This will yield a functor \bne{FanstoEsp} \AA : \Fans & \to & \Esp \ene from the category $\Fans$ of \emph{locally finite type fans} to spaces.  The space $\AA(Y)$ will be called the \emph{realization} of $Y$ in $\Esp$ and the functor \eqref{FanstoEsp} will be called the \emph{realization functor} to $\Esp$.

\begin{rem} \label{rem:AYrepresentable} For a fan $Y$ and a space $X$, if we set \be \AA(Y)(X) & := & \Hom_{\LMS}(|X|,Y), \ee then we can view $\AA( \slot )$ as a functor from fans to the category of presheaves on $\Esp$.  To construct our functor \eqref{FanstoEsp} we just need to show that each presheaf $\AA(Y)$ is actually \emph{representable}, at least when $Y$ is locally finite type.  When $\AA(Y)$ is representable, we also abusively denote the representing space $\AA(Y)$. \end{rem}

\begin{rem} \label{rem:EspFansadjunction} For a fan $Y$, having a space $\AA(Y)$ and an $\LMS$-morphism $\tau_Y : |\AA(P)| \to Y$ such that $f \mapsto \tau_Y |f|$ yields a bijection \eqref{EspFansadjunction} is equivalent to having a space $\AA(Y)$ and \emph{some} bijection \eqref{EspFansadjunction} natural in $X \in \Esp$ because we can always construct $\tau_Y$ as the ``unit of the adjunction" as we did above.  The point is that it will be convenient in what follows to have an ``explicit formula" for the bijection \eqref{EspFansadjunction} so that we can more easily construct ``new such bijections from existing ones." \end{rem}

\begin{rem} \label{rem:FanstoEsp}  The functor \eqref{FanstoEsp} is characterized up to unique isomorphism of functors by the existence of the bijections \eqref{EspFansadjunction} natural in $X \in \Esp$, $Y \in \Fans$, but of course there are still lots of ``choices" that go into the construction.  For example, we can construct \eqref{FanstoEsp} in such a way that $\AA(\Spec \NN)$ is any space \emph{isomorphic} to $\AA^1$.  Since we want \eqref{FanstoEsp} to be a $1$-morphism of categories of spaces (Definition~\ref{defn:morphismofspaces}) we can and do arrange that $\AA(\Spec \NN)$ is \emph{equal} to $\AA^1$.  \end{rem}

Lemma~\ref{lem:AP} and the discussion above, prove that $\AA(Y)$ is representable for any finite type \emph{affine} fan $Y$.  Our goal now is to bootstrap up from this to see that $\AA(Y)$ is representable for any locally finite type fan $Y$.

\begin{rem} \label{rem:finitetypehypothesis}  In some categories of spaces, we can see directly that $\AA(\Spec P)$ is representable for \emph{all} monoids $P$, not just those of finite type.  For example, in $(\Sch,\AA^1 = \Spec \ZZ[x])$, we can take $\AA(\Spec P) := \Spec \ZZ[P]$ (which we could \emph{not} generally do if we insisted on working with locally finite type schemes).  We obtain the desired ``adjunction formula" in this case as a composition \be \Hom_{\Sch}(X,\AA(P)) & = & \Hom_{\An}(\ZZ[P], \O_X(X) ) \\ & = & \Hom_{\Mon}(P,\O_X(X)) \\ & = & \Hom_{\Mon}(P,\str_X(X)) \\ & = & \Hom_{\LMS}(|X|,\Spec P) \ee of the natural bijections \eqref{ringsSpecadjunction}, \eqref{ZPAnadjunction}, and \eqref{Specadjunction} (the third equality is just the meaning of the ``structure sheaf" $\str_X$ in this category of spaces---this $\AA^1$ represents $X \mapsto \O_X(X)$).  For such categories of spaces we will be able to construct the functor \eqref{FanstoEsp} with $\Fans$ equal to the category of \emph{all} fans---that is, we will prove that the presheaf $\AA(Y)$ of Remark~\ref{rem:AYrepresentable} is representable for \emph{any} fan $Y$. \end{rem}

\begin{lem} \label{lem:fanpresentation}  Let $Y$ be a fan.  Then there is a well-ordered set $I$ and a map of partially ordered sets $i \to Y_i$ from $I$ to the set of open subspaces of $Y$ (ordered by inclusion) such that: \begin{enumerate} \item For each successor $i = j+1$ in $I$ we have $Y_i = Y_j \cup U_i$, where $U_i$ is an affine fan. \item For each limit $i$ in $I$, we have $Y_i = \cup_{j<i} Y_j$. \item $Y = \cup_{i \in I} Y_i$. \end{enumerate}  If $Y$ is locally finite type, then we can arrange that the $U_i$ are finite type and if $Y$ is finite as a topological space (which holds, for example, if $Y$ is quasi-compact and locally finite type), we can take $I$ finite. \end{lem}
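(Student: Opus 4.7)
The plan is a transfinite recursion on a well-ordered affine cover of $Y$. By the definition of a fan, $Y$ admits an open cover $\{V_\beta : \beta \in B\}$ by affine subfans; if $Y$ is locally finite type, we may instead take the cover to consist of finite type affine fans, using Lemma~\ref{lem:smallestngbd} (which shows that each point of a locally finite type fan has a smallest open neighborhood of the form $\Spec P$ for $P$ a finitely generated monoid). Using the well-ordering principle, identify $B$ with an ordinal $\kappa$, writing the cover as $\{V_\beta : \beta < \kappa\}$.

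Set $I := \kappa + 1$ and define open subfans $Y_i \subseteq Y$ for $i \in I$ by transfinite recursion: $Y_0 := \emptyset$; for each successor $i = j+1 \in I$ with $j < \kappa$, put $U_i := V_j$ and $Y_i := Y_j \cup U_i$; for each positive limit $i \in I$, put $Y_i := \bigcup_{j<i} Y_j$. Each $Y_i$ is an open subfan of $Y$ because a union of open subfans is open, and a straightforward induction on $i$ gives $Y_i = \bigcup_{\beta<i} V_\beta$, so in particular $Y_\kappa = Y$, establishing condition~(3). Monotonicity of $i \mapsto Y_i$ is clear. The only subtlety is that $V_\beta$ is introduced at the successor stage $\beta+1$ (not at stage $\beta$), so that positive limit stages really are ``union-only'' and condition~(2) holds as stated; if one naively placed $V_\beta$ at stage $\beta$, then any $V_\beta$ indexed by a limit ordinal would be skipped.

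For the last sentence of the lemma: if $|Y|$ is finite, then the affine cover above has a finite subcover, so $\kappa$ (and hence $I$) may be taken finite. The parenthetical observation that $|Y|$ is finite whenever $Y$ is quasi-compact and locally finite type follows because $Y$ then has a finite open cover by affine fans $\Spec P$ with $P$ finitely generated, each of which has finite underlying space (\S\ref{section:Spec}).

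There is essentially no substantive obstacle: the argument is purely bookkeeping once a well-ordered affine cover is chosen, the only mild pitfall being the off-by-one indexing convention noted above.
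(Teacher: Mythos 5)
Your proof is correct and takes essentially the same approach as the paper's, which well-orders the points of $Y$, chooses an affine neighborhood $U_y$ of each point, and sets $Y_i := \cup_{j \leq i} U_j$. Your version is in fact slightly more careful: by introducing $V_\beta$ only at the successor stage $\beta+1$, you ensure condition~(2) holds literally at limit ordinals, whereas the paper's formula $Y_i = \cup_{j\leq i}U_j$ read strictly adjoins $U_i$ at limit stages as well.
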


\begin{proof} Choose, for each $y \in Y$, an open neighborhood $U_y$ of $y$ in $Y$ such that $(U_y,\O_Y|U_y)$ is an affine fan.  (If $Y$ is locally finite type, we can further arrange that $U_y$ is finite type.)  Choose a well-ordering of (the set underlying) $Y$.  Take $I := Y$, $Y_i := \cup_{j \leq i} U_j$.  \end{proof}

\begin{lem} \label{lem:Acoproducts} Let $\{ Y_i \}$ be a set of fans such that, for each $i$, the presheaf $\AA(Y_i)$ of Remark~\ref{rem:AYrepresentable} is representable.  Then $\coprod_i \AA(Y_i)$ represents the presheaf $\AA(\coprod_i Y_i)$. \end{lem}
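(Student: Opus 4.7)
The plan is to establish, for every $X \in \Esp$, a natural bijection
$$\Hom_{\Esp}(X, \coprod_i \AA(Y_i)) \;=\; \Hom_{\LMS}(|X|, \coprod_i Y_i)$$
by decomposing both sides in parallel along the coproduct structure and then invoking the representability hypothesis on each index separately.

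For the left-hand side, I would apply Lemma~\ref{lem:coproductsofspaces} to the codomain $\coprod_i \AA(Y_i)$: giving an $\Esp$-morphism $f: X \to \coprod_i \AA(Y_i)$ is equivalent to giving a coproduct decomposition $X = \coprod_i X_i$ in $\Esp$ together with $\Esp$-morphisms $f_i : X_i \to \AA(Y_i)$ for each $i$. By the representability assumption, each $f_i$ corresponds naturally to an $\LMS$-morphism $g_i : |X_i| \to Y_i$.

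For the right-hand side, I would use the explicit description of coproducts in $\LMS$: the underlying space of $\coprod_i Y_i$ is the disjoint union $\coprod_i |Y_i|$, with each $|Y_i|$ clopen, and the structure sheaf restricts to $\M_{Y_i}$ on each piece. Hence an $\LMS$-morphism $g : |X| \to \coprod_i Y_i$ is equivalent to a partition of $|X|$ into open subsets $U_i := |g|^{-1}(|Y_i|)$ together with $\LMS$-morphisms $g_i : (U_i, \str_X|U_i) \to Y_i$. Axiom (S4) promotes each $U_i$ to an open embedding $X_i \hookrightarrow X$ in $\Esp$ with $|X_i| = U_i$, and Lemma~\ref{lem:coproductsofspaces} (together with (S3), (S6), and Lemma~\ref{lem:Zariskicovers}\eqref{Zcovers1}) shows that the resulting map $\coprod_i X_i \to X$ is an isomorphism in $\Esp$. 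Moreover, the realization functor \eqref{EsptoLMS} carries coproducts to coproducts (by (S3) and (S6)), so that $(U_i, \str_X|U_i)$ agrees with $|X_i|$ as a locally monoidal space.

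The two descriptions now manifestly match: the pairs $(X = \coprod_i X_i, \{f_i\})$ and $(|X| = \coprod_i U_i, \{g_i\})$ correspond via $U_i = |X_i|$ and $g_i \leftrightarrow f_i$ under the representability of $\AA(Y_i)$. Naturality in $X$ is immediate. Equivalently, one can exhibit the bijection explicitly by $f \mapsto \tau \circ |f|$, where $\tau : |\coprod_i \AA(Y_i)| \to \coprod_i Y_i$ is obtained by assembling the units $\tau_{Y_i} : |\AA(Y_i)| \to Y_i$ across the coproduct. There is no real obstacle here; the only care required is verifying that both flavors of coproduct (in $\Esp$ via (S3), (S6), Lemma~\ref{lem:coproductsofspaces}; in $\LMS$ via the stalkwise construction) are compatible with the realization functor, so that the two partitioning procedures are genuinely inverse to one another.
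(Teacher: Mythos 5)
Your proof is correct and is exactly the argument the paper has in mind: its entire proof of this lemma reads ``This is a straightforward exercise with Lemma~\ref{lem:coproductsofspaces},'' and your write-up is a faithful execution of that exercise, decomposing both sides of the adjunction along the coproduct and matching the pieces via the representability of each $\AA(Y_i)$.
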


\begin{proof} This is a straightforward exercise with Lemma~\ref{lem:coproductsofspaces}.  \end{proof}

\begin{lem} \label{lem:Aopenembeddings} Let $Y$ be a fan for which the presheaf $\AA(Y)$ of Remark~\ref{rem:AYrepresentable} is representable and let $U \subseteq Y$ be an open subfan of $Y$.  Let $\AA(U) \to \AA(Y)$ be the open embedding of spaces corresponding to the open subset $\tau_Y^{-1}(U)$ of $|\AA(Y)|$.  Let $\tau_U$ be the restriction of $\tau_Y$ to $|\AA(U)|$.  Then $f \mapsto \tau_U |f|$ yields a bijection \be \Hom_{\Esp}(X,\AA(U)) & = & \Hom_{\LMS}(|X|,U) \ee for every $X \in \Esp$, hence the presheaf $\AA(U)$ of Remark~\ref{rem:AYrepresentable} is representable by the open subspace of $\AA(Y)$ corresponding to the open subset $\tau_Y^{-1}(U)$ of $|\AA(Y)|$. \end{lem}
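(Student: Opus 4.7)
The plan is to show that the natural bijection $f \mapsto \tau_Y |f|$ of \eqref{EspFansadjunction} restricts appropriately to yield the claimed bijection. First I would note that by Remark~\ref{rem:EsptoLMS}, the functor $\Esp \to \LMS$ of \S\ref{section:spacestolocallymonoidalspaces} takes open embeddings to open embeddings, so the open subspace $\AA(U) \subseteq \AA(Y)$ in $\Esp$ corresponds to an open sub-locally-monoidal-space of $|\AA(Y)|$ whose underlying topological space is $\tau_Y^{-1}(U)$. Since $\tau_Y$ is a morphism in $\LMS$ and $U \subseteq Y$ is open, restricting $\tau_Y$ to the preimage $\tau_Y^{-1}(U) = |\AA(U)|$ yields a well-defined $\LMS$-morphism $\tau_U : |\AA(U)| \to U$.

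Next I would invoke the universal property of the open embedding $\AA(U) \into \AA(Y)$ from axiom \eqref{openembeddings}: a morphism $f : X \to \AA(Y)$ factors uniquely through $\AA(U)$ if and only if $|f|(|X|) \subseteq \tau_Y^{-1}(U)$. Dually, for the open subfan $U \subseteq Y$, an $\LMS$-morphism $g : |X| \to Y$ factors uniquely through $U$ if and only if its underlying continuous map lands in $U$. Since the bijection \eqref{EspFansadjunction} sends $f$ to $g := \tau_Y |f|$, whose underlying continuous map is $|\tau_Y| \circ |f|$, the two conditions are manifestly equivalent: $|f|(|X|) \subseteq \tau_Y^{-1}(U)$ iff $|g|(|X|) \subseteq U$.

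Therefore the bijection $f \mapsto \tau_Y|f|$ restricts to a bijection between the subset of $\Hom_{\Esp}(X, \AA(Y))$ consisting of maps factoring through $\AA(U)$, namely $\Hom_{\Esp}(X, \AA(U))$, and the subset of $\Hom_{\LMS}(|X|, Y)$ consisting of maps factoring through $U$, namely $\Hom_{\LMS}(|X|, U)$. Unwinding the identifications, this restricted bijection is precisely $f \mapsto \tau_U|f|$, which establishes representability of the presheaf $\AA(U)$ by the open subspace specified. There is no substantive obstacle here: the argument is a formal unwinding of definitions, whose only genuinely nontrivial input is that $\Esp \to \LMS$ respects open embeddings, as recorded in Remark~\ref{rem:EsptoLMS}.
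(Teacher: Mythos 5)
Your proof is correct and is exactly the argument the paper has in mind: the paper's own proof simply says "this is a straightforward exercise with the universal property of the open embedding $\AA(U) \to \AA(Y)$," and your write-up carries out that exercise, matching the factorization condition for $f$ through $\AA(U)$ with the factorization condition for $\tau_Y|f|$ through $U$. No gaps.
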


\begin{proof} This is a straightforward exercise with the universal property of the open embedding $\AA(U) \to \AA(Y)$. \end{proof}

\begin{rem} \label{rem:openunits} Notice that Lemma~\ref{lem:Aopenembeddings} only makes use of axiom \eqref{openunits} for $\Esp$ through its consequence (Lemma~\ref{lem:localstalks}) that the functor \eqref{EsptoLMS} actually ``takes values" (on morphisms) in $\LMS \subseteq \MS$.  This proves that the conclusion of Lemma~\ref{lem:localstalks} is actually \emph{equivalent} to the axiom \eqref{openunits} (assuming all the other axioms).  In fact, it is a good exercise for the reader to check directly (assuming all axioms for spaces other than \eqref{openunits}) that the $\MS$ map $|i| : |\GG_m| \to |\AA^1|$ attached to the natural $\Esp$-morphism $i : \GG_m \to \AA^1$ is an $\LMS$ map iff $i$ is an open embedding. \end{rem}

\begin{rem} \label{rem:localizationstoopenembeddings} Suppose $P$ is a finitely generated monoid and $S$ is a finitely generated submonoid of $P$.  Then the localization map $P \to S^{-1}P$ induces an open embedding of (finite type affine) fans $\Spec S^{-1}P \to \Spec P$.\footnote{In fact, every open embedding of finite type affine fans is of this form.}  Lemma~\ref{lem:Aopenembeddings} then shows that the map of spaces $\AA(S^{-1}P) \to \AA(P)$ constructed in Lemma~\ref{lem:AP} is an open embedding.  We could also have seen this directly in \S\ref{section:monoidstospaces} as follows:  Let $p$ be the sum of a finite set of generators for $S$.  Then $S^{-1}P = P_p$ and we have a pushout diagram \bne{pushP} \xym{ \NN \ar[r]^-p \ar[d] & P \ar[d] \\ \ZZ \ar[r]^-p & P_p } \ene of monoids.  The functor $\AA$ of Lemma~\ref{lem:AP} preserves finite inverse limits, hence $\AA(S^{-1}P) \to \AA(P)$ is a base change of the map $\AA(\ZZ) \to \AA(\NN)$.  The latter map is just the natural inclusion $\GG_m \to \AA^1$  (Example~\ref{example:GmisAZ}), which is an open embedding by axiom \eqref{openunits}, hence its base change $\AA(S^{-1}P) \to \AA(P)$ is also an open embedding by Lemma~\ref{lem:Espfiberedproducts}. \end{rem}

\begin{lem} \label{lem:AZariskigluing} Let $i \mapsto Y_i$ be a Zariski gluing datum (Definition~\ref{defn:Zariskigluingdatum}) in the category of fans with direct limit $Y$.  Suppose that the presheaf $\AA(Y_i)$ of Remark~\ref{rem:AYrepresentable} is representable for all $i \in I$.  Then $i \mapsto \AA(Y_i)$ is a Zariski gluing datum in $\Esp$ and its direct limit represents the presheaf $\AA(Y)$.  \end{lem}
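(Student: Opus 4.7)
The plan is to construct the representing space of $\AA(Y)$ as a direct limit of the $\AA(Y_i)$ in $\Esp$. First, since each transition map $Y_i \to Y_j$ in the given gluing datum is an open embedding of fans, Lemma~\ref{lem:Aopenembeddings} implies that each induced map $\AA(Y_i) \to \AA(Y_j)$ is an open embedding of spaces, so $i \mapsto \AA(Y_i)$ is itself a Zariski gluing datum in $\Esp$. Let $Z$ denote its direct limit in $\Esp$, which exists by axiom \eqref{Zariskigluing}; its structure maps $\sigma_i : \AA(Y_i) \to Z$ are open embeddings whose images cover $|Z|$. Since the functor $\Esp \to \LMS$ of \S\ref{section:spacestolocallymonoidalspaces} preserves direct limits of Zariski gluing data (Remark~\ref{rem:EsptoLMS}), $|Z|$ is also the direct limit of $i \mapsto |\AA(Y_i)|$ in $\LMS$, and the compatible family of $\LMS$ morphisms obtained by composing each $\tau_{Y_i}$ with the open embedding $Y_i \into Y$ assembles into a canonical $\LMS$ morphism $\tau_Y : |Z| \to Y$.

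The remaining task is to verify that $f \mapsto \tau_Y |f|$ is a bijection $\Hom_\Esp(X, Z) \to \Hom_\LMS(|X|, Y)$ for every $X \in \Esp$. For surjectivity, given $g : |X| \to Y$ in $\LMS$, I would use axiom \eqref{openembeddings} to define open subspaces $X_i \into X$ with underlying sets $g^{-1}(|Y_i|)$; these cover $X$ since the $|Y_i|$ cover $|Y|$. The restriction of $g$ to $|X_i|$ lands in $Y_i$ and lifts, by the assumed representability of $\AA(Y_i)$, to a unique $\Esp$ morphism $f_i : X_i \to \AA(Y_i)$ with $\tau_{Y_i} |f_i|$ equal to this restriction. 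Composing with $\sigma_i$ yields maps $\sigma_i f_i : X_i \to Z$, which I would glue into a single $f : X \to Z$ using the sheaf property of $U \mapsto \Hom_\Esp(U, Z)$ on $|X|$, guaranteed by axiom \eqref{Zariskitopology} and Lemma~\ref{lem:Zariskicovers}. A direct check on each $X_i$ then gives $\tau_Y |f| = g$. Injectivity is symmetric: any $f : X \to Z$ is determined by its restrictions to the Zariski cover $\{ X_i \}$ associated to $g := \tau_Y |f|$, and each such restriction is determined by $g$ via the uniqueness clause of representability for $\AA(Y_i)$.

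The main technical point will be verifying the compatibility $\sigma_i f_i = \sigma_j f_j$ on $X_i \cap X_j$ needed to invoke the sheaf property. The key observation is that Lemma~\ref{lem:Aopenembeddings}, applied to the open subfan $Y_i \cap Y_j \subseteq Y_j$, identifies $\AA(Y_i \cap Y_j)$ both with an open subspace of $\AA(Y_j)$ via $\sigma_j$ and with an open subspace of $\AA(Y_i)$ via $\sigma_i$, compatibly with the structure maps into $Z$. Both restrictions $f_i|_{X_i \cap X_j}$ and $f_j|_{X_i \cap X_j}$ then provide lifts of the same restriction of $g$ along $\tau_{Y_i \cap Y_j}$, so they coincide by uniqueness of representability. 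In the pushout case of Definition~\ref{defn:Zariskigluingdatum} the overlap $Y_i \cap Y_j$ equals $Y_0$, and in the well-ordered case it equals $Y_{\min(i,j)}$, so in either case the compatibility takes place inside the given gluing datum and the argument closes.
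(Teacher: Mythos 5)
Your proposal is correct and follows essentially the same route as the paper's proof: form the direct limit of the Zariski gluing datum $i \mapsto \AA(Y_i)$, assemble $\tau_Y$ from the $\tau_{Y_i}$ using the compatibility from Lemma~\ref{lem:Aopenembeddings}, and verify the universal property by pulling the cover $\{Y_i\}$ back to a cover of $X$ and invoking uniqueness of the local lifts $f_i$. The only cosmetic difference is that you glue the $f_i$ via the sheaf property of $U \mapsto \Hom_{\Esp}(U,Z)$, whereas the paper glues them by observing that $X$ is itself the direct limit of the Zariski gluing datum $i \mapsto X_i$ (Proposition~\ref{prop:Zariskigluing}); these are interchangeable.
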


\begin{proof}  The direct limit system $i \mapsto \AA(Y_i)$ is a Zariski gluing datum in $\Esp$ by Lemma~\ref{lem:Aopenembeddings}.  Write $\tau_i : |\AA(Y_i)| \to Y_i$ instead of $\tau_{Y_i}$ to ease notation.  Lemma~\ref{lem:Aopenembeddings} also shows that $\tau_i = \tau_j | |\AA(Y_j)|$ for each map $i \to j$ in the indexing category $I$.  Since $\Esp$ satisfies \eqref{Zariskigluing} we can form the direct limit $\AA(Y)$ of the $\AA(Y_i)$.  The functor \eqref{EsptoLMS} preserves such direct limits (Remark~\ref{rem:EsptoLMS}), so $|\AA(Y)|$ is also the direct limit of $i \mapsto \AA(Y_i)$ and we can define the $\LMS$-morphism $\tau_Y : |\AA(Y)| \to Y$ as the direct limit of the $\tau_i$.  It remains to prove that $f \mapsto \tau_Y|f|$ yields a bijection \be \Hom_{\Esp}(X,\AA(Y)) & = & \Hom_{\LMS}(|X|,Y) \ee for each $X \in \Esp$.

To establish surjectivity, consider an $\LMS$-morphism $g : |X| \to Y$.  Define $X_i \to X$ to be the open embedding in $\Esp$ corresponding to $\tau_Y^{-1}(Y_i) \subseteq |X|$.  Note that the image $|X_i| \to |X|$ of $X_i \to X$ under \eqref{EsptoLMS} is also the open embedding in $\LMS$ corresponding to $\tau_Y^{-1}(Y_i) \subseteq |X|$.  Using the fact that $Y$ is the direct limit of the Zariski gluing datum $i \mapsto Y_i$ in $\Fans$, we see that $X$ is the direct limit of $i \mapsto X_i$ in $\Esp$ and $|X|$ is the direct limit of $i \mapsto |X_i|$ in $\LMS$ by Proposition~\ref{prop:Zariskigluing}.  Let $g_i : |X_i| \to Y$ be the restriction of $g$ to $|X_i|$.  By the universal property of $\AA(Y_i)$, we have unique $\Esp$-maps $f_i : X_i \to \AA(Y_i)$ such that $g_i = \tau_i |f_i|$ for each $i \in I$.  Using the uniqueness we check that $f_i = f_j|X_i$ for each map $i \to j$ in $I$, so we can define $f : X \to \AA(Y)$ to be the direct limit of the $f_i$.  We see that $\tau_Y |f| = g$ by observing that $(\tau_Y |f| ) | X_i = g_i = g|X_i$ and using the fact that $|X|$ is the direct limit of the $|X_i|$.

For injectivity, suppose $f_1,f_2 : X \to \AA(Y)$ are two $\Esp$ morphisms such that $\tau_Y|f_1| = \tau_Y |f_2| =: g$.  Then we define $X_i \to X$ as above and use the universal property of the $\AA(Y_i)$ to check that $f_1|X_i = f_2|X_2$ for all $i \in I$, hence $f_1=f_2$ because $X$ is the direct limit of $i \mapsto X_i$.    \end{proof}

\begin{thm} \label{thm:AY} For any category of spaces $(\Esp,\AA^1)$ and any locally finite type fan $Y$, the presheaf $\AA(Y)$ of Remark~\ref{rem:AYrepresentable} is representable.  If $\AA(\Spec P)$ is representable for every monoid $P$, then $\AA(Y)$ is representable for every fan $Y$.  The resulting functor \eqref{FanstoEsp} (taking $\Fans$ to be the category of locally finite type fans in general, or \emph{all} fans if $\AA(\Spec P)$ is representable for every monoid $P$) \begin{enumerate} \item \label{copres} preserves coproducts, \item \label{ilpres} preserves inverse limits, and \item \label{oe} takes an open embedding of fans $U \to Y$ to an open embedding of spaces $\AA(U) \to \AA(Y)$ with image $\tau_Y^{-1}(U)$. \end{enumerate} \end{thm}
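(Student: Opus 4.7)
The plan is to prove representability of $\AA(Y)$ by transfinite induction on a Zariski presentation of $Y$, and then derive the three listed properties as essentially formal consequences of the adjunction bijection \eqref{EspFansadjunction} together with the gluing lemmas already established.

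First I would apply Lemma~\ref{lem:fanpresentation} to write a locally finite type fan $Y$ as $\cup_{i \in I} Y_i$ for a well-ordered $I$, where $Y_{j+1} = Y_j \cup U_{j+1}$ with each $U_{j+1}$ a finite type affine fan, and $Y_i = \cup_{j<i} Y_j$ at limits. The base case is that $\AA(U)$ is representable for every finite type affine fan $U = \Spec P$; this is exactly Lemma~\ref{lem:AP} combined with the discussion in \S\ref{section:realizationoffans} that produced the unit $\tau_{\Spec P}$. (If we assume $\AA(\Spec P)$ is representable for \emph{every} monoid $P$ then we do the same induction allowing the $U_i$ to be arbitrary affine fans, giving the second sentence of the theorem.) For the successor step, the intersection $W := Y_j \cap U_{j+1}$ is an open subfan of the affine fan $U_{j+1}$, so $\AA(W)$ is representable by Lemma~\ref{lem:Aopenembeddings} as an open subspace of $\AA(U_{j+1})$; similarly $\AA(W)$ sits inside $\AA(Y_j)$ as an open subspace. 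The two open embeddings $W \hookrightarrow Y_j$ and $W \hookrightarrow U_{j+1}$ form a Zariski gluing datum in $\Fans$ with direct limit $Y_{j+1}$, so Lemma~\ref{lem:AZariskigluing} produces $\AA(Y_{j+1})$ as the corresponding pushout in $\Esp$, which exists by axiom \eqref{Zariskigluing}. For a limit ordinal $i$, the system $j \mapsto Y_j$ ($j<i$) is a transfinite-union Zariski gluing datum with direct limit $Y_i$, and Lemma~\ref{lem:AZariskigluing} again yields $\AA(Y_i)$. The transfinite induction then produces $\AA(Y)$, together with an $\LMS$-morphism $\tau_Y \colon |\AA(Y)| \to Y$ satisfying the natural bijection \eqref{EspFansadjunction} (the naturality is built into Lemma~\ref{lem:AZariskigluing} since $\tau_Y$ is the direct limit of the $\tau_{Y_i}$).

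Once representability is in hand, the three properties fall out easily. Property \eqref{oe}, preservation of open embeddings with the stated image, is literally the content of Lemma~\ref{lem:Aopenembeddings}. Property \eqref{copres}, preservation of coproducts, is Lemma~\ref{lem:Acoproducts}. For \eqref{ilpres}, preservation of inverse limits: if $Y = \lim_k Y_k$ is a finite inverse limit in $\Fans$, then Proposition~\ref{prop:Fansinverselimits} says this is also the inverse limit in $\LMS$, so by \eqref{EspFansadjunction} and the universal property of inverse limits in $\Sets$,
\[
\Hom_\Esp(X,\AA(Y)) = \Hom_\LMS(|X|,Y) = \lim_k \Hom_\LMS(|X|,Y_k) = \lim_k \Hom_\Esp(X,\AA(Y_k))
\]
naturally in $X \in \Esp$; by Yoneda, $\AA(Y) = \lim_k \AA(Y_k)$ in $\Esp$.

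The only real obstacle is bookkeeping: one must check that in the successor step the gluing datum $W \to Y_j$, $W \to U_{j+1}$ really does have direct limit $Y_{j+1}$ in $\Fans$ (which, via the forgetful functor to $\LMS$ and then to $\Top$, is the content of Proposition~\ref{prop:Zariskigluing}\eqref{pushoutsalongopenembeddings} combined with the fact that open subfans glue to a fan), and that the maps $\tau_{Y_i}$ produced at successive stages are compatible with the restrictions from each stage to the previous one so that the transfinite direct limits make sense. Both are routine given Lemmas~\ref{lem:Aopenembeddings} and~\ref{lem:AZariskigluing}, and no further input from the axioms is needed beyond \eqref{Zariskigluing}.
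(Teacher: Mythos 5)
Your proposal is correct and follows essentially the same route as the paper: transfinite induction over the presentation of Lemma~\ref{lem:fanpresentation}, with Lemma~\ref{lem:Aopenembeddings} handling the intersections and Lemma~\ref{lem:AZariskigluing} handling the successor and limit steps, and the three properties deduced from Lemmas~\ref{lem:Acoproducts} and \ref{lem:Aopenembeddings} and the adjunction \eqref{EspFansadjunction}. Your explicit Yoneda chain for inverse-limit preservation is just an unwinding of the paper's remark that a right adjoint preserves inverse limits.
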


\begin{proof} Let $i \mapsto Y_i$ be as in Lemma~\ref{lem:fanpresentation}.  By transfinite induction, it suffices to prove that $\AA(Y_i)$ is representable under the assumption that $\AA(Y_j)$ is representable for every $j<i$.  If $i=j+1$ is a limit, then we have $Y_i = Y_j \cup U_i$ with $U_i$ affine (and finite type if $Y$ is locally finite type).  Both $\AA(Y_j)$ and $\AA(U_i)$ are representable.  Furthermore, $\AA(Y_j \cap U_i)$ is representable and the maps $\AA(Y_j \cap U_i) \to \AA(Y_j)$ and $\AA(Y_j \cap U_i) \to \AA(U_i)$ are open embeddings by Lemma~\ref{lem:Aopenembeddings}, so Lemma~\ref{lem:AZariskigluing} shows that $\AA(Y_i)$ is representable by the pushout of the two aforementioned open embeddings.  If $i$ is a limit, then we have $Y_i = \cup_{j<i} Y_j$.  Since each $\AA(Y_j)$ is representable and each map $\AA(Y_j) \to \AA(Y_k)$ ($j \leq k < i$) is an open embedding by Lemma~\ref{lem:Aopenembeddings}, Lemma~\ref{lem:AZariskigluing} shows that $\AA(Y_i)$ is representable by $\dirlim \{ \AA(Y_j) : j < i \}$. 

Once the representability is known, \eqref{copres} follows from Lemma~\ref{lem:Acoproducts} and \eqref{oe} follows from \ref{lem:Aopenembeddings}.   The fact that \eqref{FanstoEsp} preserves inverse limits follows formally from the natural bijections \eqref{EspFansadjunction} by the usual argument showing that a right adjoint preserves inverse limits. \end{proof}

\begin{thm} \label{thm:fans2initial}  Let $\Fans$ be the category of locally finite type fans.  Then the category of spaces $(\Fans,\Spec \NN)$ is ``the" initial object in the $2$-category $\Univ$ of categories of spaces (Definition~\ref{defn:morphismofspaces}). \end{thm}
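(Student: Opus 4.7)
The plan is to prove existence and uniqueness (up to 2-isomorphism) of a 1-morphism from $(\Fans, \Spec \NN)$ to an arbitrary category of spaces $(\Esp, \AA^1)$. For existence, take $F = \AA : \Fans \to \Esp$ from Theorem~\ref{thm:AY} and let $\eta_Y$ be the continuous map underlying $\tau_Y : |\AA(Y)| \to Y$. Axiom (M1) is Theorem~\ref{thm:AY}(1)-(2), axiom (M3) is Theorem~\ref{thm:AY}(3), and axiom (M2) holds because $\AA(\Spec \NN) = \AA^1$ by the normalization in Remark~\ref{rem:FanstoEsp}, with monoid-structure compatibility inherited from preservation of finite inverse limits (the addition and zero maps on $\AA^1$ in $\Esp$ are the images under $\AA$ of the comultiplication and counit of $\Spec \NN$).

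For uniqueness, fix any 1-morphism $(F, \eta)$ and build a canonical 2-isomorphism to $(\AA, \tau)$. The first step is to upgrade each continuous map $\eta_Y : |F(Y)| \to |Y|$ to an $\LMS$-morphism $\tilde\eta_Y : |F(Y)| \to Y$. Since $\M_Y(U) = \Hom_{\Fans}(U, \Spec \NN)$ for each open $U \subseteq Y$, applying $F$ to a fan section $s : U \to \Spec \NN$ and using (M3) to identify $F(U)$ with the open subspace of $F(Y)$ over $\eta_Y^{-1}(U)$ produces a section $F(s) \in \str_{F(Y)}(\eta_Y^{-1}(U))$; functoriality of $F$ assembles these into a sheaf map $\eta_Y^{-1} \M_Y \to \str_{F(Y)}$. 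Localness on stalks follows because $F$ preserves the finite-limit description of the group of units of a monoid object, so $F(\Spec \ZZ) = \GG_m \subseteq \AA^1$ canonically; applying (M3) to the open embedding $\Spec \ZZ \hookrightarrow \Spec \NN$ then shows that a germ in $\M_{Y,y}$ is a unit iff its image under $\tilde\eta_Y^\sharp$ is. The universal property \eqref{EspFansadjunction} of $\AA(Y)$ now converts $\tilde\eta_Y$ into a unique $\gamma_Y : F(Y) \to \AA(Y)$ with $\tau_Y \circ |\gamma_Y| = \tilde\eta_Y$; naturality in $Y$ is inherited from $\tilde\eta$, and $\gamma_{\Spec \NN} = \Id$ because both $\tilde\eta_{\Spec \NN}$ and $\tau_{\Spec \NN}$ send the universal section $1 \in \M_{\Spec \NN}(\Spec \NN)$ to the tautological element of $\str_{\AA^1}(\AA^1)$.

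The main obstacle is showing each $\gamma_Y$ is an isomorphism; uniqueness of the 2-morphism itself is then automatic, since the compatibility $\tau_Y \circ |\gamma_Y| = \eta_Y$ together with the universal property of $\AA(Y)$ pins $\gamma_Y$ down. We proceed by transfinite induction using Lemma~\ref{lem:fanpresentation}. For $Y = \Spec \NN^n = (\Spec \NN)^n$, both $F$ and $\AA$ return $(\AA^1)^n$ canonically by (M1)-(M2). For a general affine finite type $Y = \Spec P$ with presentation $\NN^n \rightrightarrows \NN^m \to P$, the dual equalizer presents $Y$ as the inverse limit of $\Spec \NN^m \rightrightarrows \Spec \NN^n$ in $\Fans$; both $F$ and $\AA$ convert this into the same equalizer $\AA^m \rightrightarrows \AA^n$ in $\Esp$ because any monoid homomorphism between finitely generated free monoids is built from the addition and zero of $\NN$, which both functors send by (M2) to the addition and zero of $\AA^1$. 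For general locally finite type $Y$, combine a successor step (Zariski gluing of two open subfans along their intersection) and a limit step (transfinite union of open subfans); both $F$ and $\AA$ preserve the relevant direct limits by Proposition~\ref{prop:Zariskigluingspreserved} and Lemma~\ref{lem:AZariskigluing} respectively, reducing $\gamma_Y$ being an isomorphism to the same claim on the smaller pieces.
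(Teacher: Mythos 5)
Your proof is correct in substance but follows a genuinely different route from the paper's. The paper compares two arbitrary $1$-morphisms $(\AA,\eta)$, $(\AA',\eta')$ directly and builds the natural isomorphism $\gamma$ by hand: first on $\AA^n$ via products, then on maps $\AA^m \to \AA^1$ using the fact that $\Hom_{\Fans}(\AA^m,\AA^1)=\NN^m$ is generated under the monoid structure by the projections (this is where \eqref{A1toA1} enters essentially), then on finite type affine fans via equalizer presentations, then on arbitrary locally finite type fans via Zariski covers---with independence-of-choices verifications at every stage (the ``gigantic diagram''). You instead normalize against the specific $1$-morphism $(\AA,\tau)$ coming from Theorem~\ref{thm:AY} and observe that an arbitrary $1$-morphism $(F,\eta)$ canonically refines to a family of $\LMS$-morphisms $\tilde\eta_Y : |F(Y)| \to Y$: the assignment $s \mapsto F(s)$ for $s \in \M_Y(U)=\Hom_{\Fans}(U,\Spec\NN)$ is a monoid homomorphism by \eqref{A1toA1}, and localness of the stalk maps follows because $F$ preserves the finite-limit presentation of the group of units, so $F(\Spec\ZZ)\to F(\Spec\NN)$ is identified with $\GG_m \to \AA^1$. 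The adjunction \eqref{EspFansadjunction} then hands you the comparison map $\gamma_Y$ with naturality built in, and the only remaining issue is invertibility, which your induction via Lemma~\ref{lem:fanpresentation} handles with no coherence checks (indeed, once naturality of $\gamma$ is in hand, even the remark about maps of free monoids being generated by addition and zero becomes superfluous---the equalizer step follows from naturality alone). This is cleaner than the paper's argument and buys naturality for free where the paper has to fight for it. One point to tighten: your claim that uniqueness of the $2$-morphism is ``automatic'' from $\tau_Y\circ|\gamma_Y|=\eta_Y$ is too quick, since the universal property of $\AA(Y)$ identifies $\Hom_{\Esp}(F(Y),\AA(Y))$ with $\LMS$-morphisms $|F(Y)|\to Y$, not with continuous maps $|F(Y)|\to |Y|$; for an arbitrary $2$-morphism $\delta$ you must show $\tau_Y|\delta_Y|=\tilde\eta_Y$ as $\LMS$-morphisms, which follows from naturality of $\delta$ applied to each section $s : U \to \Spec\NN$ together with $\delta(\AA^1)=\Id$ (giving $\AA(s)\circ\delta_U = F(s)$ on structure sheaves). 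With that supplement the argument is complete.
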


\begin{proof}  Let $(\Esp,\AA^1)$ be a category of spaces.  As in Remark~\ref{rem:FanstoEsp}, we can assume that the functor $\AA$ in \eqref{FanstoEsp} obtained from Theorem~\ref{thm:AY} takes $\Spec \NN$ to $\AA^1$.  The formula \eqref{EspFansadjunction} then makes it clear that the equality $\AA(\Spec \NN) = \AA^1$ is an equality \emph{of monoid objects}.  For $Y \in \Fans$, let $\eta_Y : |\AA(Y)| \to |Y|$ be the map of topological spaces underlying the $\LMS$ morphism $\tau_Y : |\AA(Y)| \to Y$.  Then we see that \be (\AA,\eta) : (\Fans,\Spec\NN) & \to & (\Esp,\AA^1) \ee is a $1$-morphism of categories of spaces (Definition~\ref{defn:morphismofspaces}) by using the properties of \eqref{FanstoEsp} in Theorem~\ref{thm:AY}.

It remains to show that if we have two $1$-morphism of spaces \be (\AA,\eta),(\AA',\eta') : (\Fans,\Spec\NN) & \rightrightarrows & (\Esp,\AA^1) \ee then there is a unique invertible $2$-morphism in $\Univ$ from $(\AA,\eta)$ to $(\AA',\eta')$.  Since the topological space underlying any fan is ``sober," Lemma~\ref{lem:2morphismofspaces} says that such an invertible $2$-morphism is the same thing as an isomorphism of functors $\gamma$ from $\AA$ to $\AA'$ with $\gamma(\AA^1)=\Id$.  We will prove the existence and uniqueness of $\gamma$ simultaneously.  Recall from Definition~\ref{defn:morphismofspaces} that $\AA$ and $\AA'$ preserve finite inverse limits and coproducts and take Zariski covers to Zariski covers (Lemma~\ref{lem:1morphismofspaces}).

Set $\AA^n := \Spec \NN^n$.  Let \be \pi_i = \Spec(e_i : \NN \to \NN^n) : \AA^n & \to & \AA^1 \ee ($i=1,\dots,n$) be the projections.  Since $\AA$ and $\AA'$ preserve finite inverse limits (in particular finite products), we can define $\gamma(\AA^n) : \AA(\AA^n) \to \AA'(\AA^n)$ to be the unique isomorphism making the diagrams \bne{Acommute} & \xym{ \AA(\AA^n) \ar[r]^-{\AA(\pi_i)} \ar[d]_{\gamma(\AA^n)}^\cong & \AA(\AA^1) = \AA^1 \ar@{=}[d] \\ \AA'(\AA^n) \ar[r]^-{\AA'(\pi_n)} & \AA'(\AA^1) = \AA^1 } \ene commute for each $i$.  Notice that this ensures that $\gamma(\AA^1) = \Id$.  (For uniqueness of $\gamma$, note that any other isomorphism of functors $\gamma'$ from $\AA$ to $\AA'$ with $\gamma'(\AA^1)=\Id$ must also make the diagrams \eqref{Acommute}, so we must have $\gamma(\AA^n) = \gamma'(\AA^n)$ by the universal property of the product.)  Next we claim that \bne{Af} & \xym{ \AA(\AA^m) \ar[r]^-{\AA(f)} \ar[d]_{\gamma(\AA^m)}^\cong & \AA(\AA^1) = \AA^1 \ar@{=}[d]^{\gamma(\AA^1) = \Id} \\ \AA'(\AA^m) \ar[r]^-{\AA'(f)} & \AA'(\AA^1) = \AA^1 } \ene commutes for any map of fans $f : \AA^m \to \AA^1$.  For this we need to make use of a fact about $(\Fans,\Spec \NN)$ that does not hold for other categories of spaces, namely that our completely arbitrary map $f \in \Hom_{\Fans}(\AA^m,\AA^1)$ is in the submonoid of $\Hom_{\Fans}(\AA^m,\AA^1)$ generated by the elements $\pi_1,\dots,\pi_m \in \Hom_{\Fans}(\AA^m,\AA^1)$, so we can write $f = \sum_i a_i \pi_i$ for some $a_1,\dots,a_m \in \NN$ (this is just a geometric restatement of the fact that any $f \in \NN^n$ can be written $f = \sum_i a_i e_i$).  Using the commutativity of \eqref{Acommute} and the fact that the equalities $\AA(\AA^1)=\AA^1$, $\AA'(\AA^1)=\AA^1$ are equalities of \emph{monoid objects} we establish the commutativity of \eqref{Af} by the computation \be \AA(f) & = & \AA(\sum_i a_i \pi_i) \\ & = & \sum_i a_i \AA(\pi_i) \\ & = & \sum_i a_i( \AA'(\pi_i) \gamma(\AA^m) ) \\ & = & \left ( \sum_i a_i \AA'(\pi_i) \right ) \gamma(\AA^m) \\ & = & \AA'(f) \gamma(\AA^m). \ee Next we claim that for any map of fans $f : \AA^m \to \AA^n$, the diagram \bne{Af2} & \xym{ \AA(\AA^m) \ar[r]^-{\AA(f)} \ar[d]_{\gamma(\AA^m)}^\cong & \AA(\AA^n) \ar[d]^{\gamma(\AA^n)}_\cong \\ \AA'(\AA^m) \ar[r]^-{\AA'(f)} & \AA'(\AA^n)  } \ene commutes.  To see this, we again use the universal property of the product $\AA'(\AA^n)$ to see that it suffices to check that \be \AA'(\pi_i) \gamma(\AA^n) \AA(f) & = & \AA'(\pi_i) \AA'(f) \gamma(\AA^m), \ee which follows from the commutativity results established above.

Now let $Y$ be a finite type affine fan.  Using the fact that any finitely generated monoid has a presentation (\S\ref{section:monoidbasics}), we see that we can write $Y$ as an equalizer \bne{presentY} & Y \to \AA^m \rightrightarrows \AA^n \ene in $\Fans$.  By the commutativity result established above, the solid diagram \bne{AYA} & \xym{ \AA(Y) \ar@{.>}[d]_{\gamma(Y)}^\cong \ar[r] & \AA(\AA^m) \ar[d]_{\gamma(\AA^m)}^\cong \ar@<.5ex>[r] \ar@<-.5ex>[r] & \AA(\AA^n) \ar[d]_{\gamma(\AA^n)}^\cong \\ \AA'(Y) \ar[r] & \AA'(\AA^m) \ar@<.5ex>[r] \ar@<-.5ex>[r] & \AA'(\AA^n) } \ene commutes.  Since $\AA$ and $\AA'$ preserve equalizers, we obtain the isomorphism $\gamma(Y)$ making the resulting diagram commute.  Using the fact that any two presentations of a finitely generated monoid are dominated by a third (\S\ref{section:monoidbasics}), we check that the isomorphism $\gamma(Y)$ does not depend on the choice of the equalizer diagram \eqref{presentY}.  Using this and the fact that we can sit any map $f : Y \to Y'$ of finite type affine fans in a commutative diagram \bne{presentamap} & \xym{ Y \ar[d]_{f} \ar[r] & \AA^m \ar[d] \ar@<.5ex>[r] \ar@<-.5ex>[r] & \AA^n \ar[d] \\ Y' \ar[r] & \AA^k \ar@<.5ex>[r] \ar@<-.5ex>[r] & \AA^l } \ene where the rows are equalizers, we find that \bne{maincommutativity} & \xym{  \AA(Y) \ar[r]^-{\AA(f)} \ar[d]_{\gamma(Y)}^\cong & \AA(Y') \ar[d]^{\gamma(Y')}_\cong \\ \AA'(Y) \ar[r]^-{\AA'(f)} & \AA'(Y') } \ene commutes for any such $f$.  For uniqueness: If $\gamma'$ is another invertible $2$-morphism from $\AA$ to $\AA'$, we already argued above that $\gamma(\AA^n) = \gamma'(\AA^n)$ for every $n$; then naturality of $\gamma'$ ensures that $\gamma'(Y)$ would also complete the diagram \eqref{AYA}, hence we must have $\gamma(Y)=\gamma(Y')$.

Now we can finish the proof that there is at most one invertible natural transformation $\gamma: \AA \to \AA'$ with $\gamma(\AA^1)=\Id$.  Suppose $\gamma$ and $\gamma'$ are two such.  We need to show that $\gamma(Y) = \gamma'(Y)$ for an arbitrary locally finite fan $Y$.  We've argued above that this is true when $Y$ is finite type affine.  For a general such $Y$, \emph{choose} open embeddings $f_i : U_i \to Y$ with $U_i$ finite type affine such that the $U_i$ cover $Y$; let $f : U \to Y$ be the coproduct of the $f_i$, so that $f$ is a Zariski cover of $Y$ in $\Fans$.  Since both $\gamma(Y)$ and $\gamma'(Y)$ will complete the diagram \bne{AAA} & \xym{  \AA(U_i) \ar[r]^-{\AA(f_i)} \ar[d]_{\gamma(U) = \gamma'(U)}^\cong & \AA(Y) \ar@{.>}[d] \\ \AA'(U) \ar[r]^-{\AA'(f_i)} & \AA'(Y) } \ene and since $\AA$ ``preserves the coproduct $U$," we see that $\gamma \AA(f) = \gamma' \AA(f)$, hence $\gamma=\gamma'$ because $\AA(f)$ is a Zariski cover and $\Esp$ satisfies \eqref{Zariskitopology} (hence $\AA(f)$ is an epimorphism in $\Esp$). 

To finish our construction of $\gamma$ we now consider a fixed locally finite type fan $Y$.  \emph{Choose} $\{ f_i : i \in I \}$ and define $f$ as in the previous paragraph.  Since $\AA$ and $\AA'$ preserve coproducts, there is a unique isomorphism $\gamma(U) : \AA(U) \to \AA'(U)$ making the diagram \bne{gU} & \xym{ \AA(U_i) \ar[r] \ar[d]_{\gamma(U_i)}^\cong & \AA(U) \ar[r]^-{\AA(f)} \ar[d]_{\gamma(U)}^\cong & \AA(Y) \ar@{.>}[d]^\cong_{\gamma(Y)} \\ \AA'(U_i) \ar[r] & \AA(U) \ar[r]^-{\AA(f)} & \AA(Y) } \ene commute for every $i \in I$.  Here $\gamma(U_i)$ is the isomorphism constructed two paragraphs above for the finite type affine fan $U_i$.  We will now argue that there is a unique isomorphism $\gamma(Y)$ making the diagram commute as indicated.

For each $(i,j) \in I \times I$, set $U_{ij} = U_i \cap U_j = U_i \times_Y U_j$ and choose a set of open embeddings $\{ V_k \to U_{ij} : k \in K(i,j) \}$ covering $U_{ij}$ such that each $V_k$ is finite type affine.  Let $K$ be the coproduct (in sets) of the $K(i,j)$ and let $V$ be the coproduct of all the $V_k$, $k \in K$.  For $k \in K(I,J)$, the composition of $V_k \to U_{ij}$ and $U_{ij} \to U \times_Y U$ is an open embedding $V_k \to U \times_Y U$.  The coproduct $g$ of these open embeddings, over all $k \in K$, is a Zariski cover $g : V \to U \times_Y U$.  For $(i,j) \in I \times I$, $k \in K(i,j)$, the situation is summed up in the commutative diagram of $\Fans$ below.  \bne{bigD} & \xym{ V_k \ar[rrrd] \ar[rd] \ar[rddd] \\ & V \ar[rr] \ar[dd] \ar[rd]^g & & U_j \ar[d] \\ & & U \times_Y U \ar[r]^-{\pi_2} \ar[d]_{\pi_1} & U \ar[d]^f \\ & U_i \ar[r] & U \ar[r]^-f & Y} \ene  Now we will construct an isomorphism from $\AA$ applied to this diagram to $\AA'$ applied to this diagram to obtain a gigantic commutative diagram that we don't intend to draw.  For the finite type affine fans $V_k$, $U_i$, and $U_j$ we have the isomorphisms $\gamma(V_k)$, $\gamma(U_i)$, and $\gamma(U_j)$ constructed earlier.  These make two obvious squares in the ``gigantic diagram" commute (the instances of \eqref{maincommutativity} where $Y \to Y'$ is $V_k \to U_i$ or $V_k \to U_j$).  Since $\AA$ and $\AA'$ preserve finite inverse limits, the isomorphism $\gamma(U)$ appearing in \eqref{gU} gives rise to an isomorphism $\gamma(U \times_Y U) : \AA(U \times_Y U) \to \AA'(U \times_Y U)$ making two more squares in the ``gigantic diagram" commute.  Next we use the fact that $\AA$ and $\AA'$ preserve coproducts to get an isomorphism $\gamma(V) : \AA(V) \to \AA'(V)$ (``the coproduct of the $\gamma(V_k)$") making everything in the gigantic diagram constructed up to this point commute.  Now, in the part of the gigantic diagram we have constructed, we can look at the composition $$\AA(V_k) \to \AA(V) \to \AA(U \times_Y U),$$ followed by either $\AA(\pi_1)$ or $\AA(\pi_2)$, followed by $\gamma(U)$.  By chasing around our gigantic diagram we see that the two maps $\AA(V_k) \to \AA'(U)$ thus defined are equal.  Since this is true for all $k$, and $\AA(V)$ is ``the" coproduct of the $\AA(V_k)$, and $\AA(k)$ is a Zariski cover (hence epi), we find that \be \gamma(U) \AA(\pi_1) & = & \gamma(U) \AA(\pi_2). \ee  Now, since $\AA(f)$ is a Zariski cover and the lower right square in \eqref{bigD} is cartesian (as is $\AA$ or $\AA'$ of it) we obtain a unique morphism $\gamma(Y) : \AA(Y) \to \AA(Y')$ making the ``gigantic diagram" commute---of course $\gamma(Y)$ must be an isomorphism because we can exchange the roles of $\AA$, $\AA'$ (and exhange all the $\gamma$'s with their inverses) to get the inverse of $\gamma(Y)$.  In particular this $\gamma(Y)$ completes \eqref{gU} to a commutative diagram.

Next one argues that the isomorphism $\gamma(Y)$ constructed above does not depend on the \emph{choice} of cover $\{ f_i \}$ of $Y$ by finite type affines.  This is done by using the fact that any two open covers have a common refinement which is a cover by finite type affines (exercise!), much as we argued independence of the presentation in constructing $\gamma(Y)$ for affine $Y$.  (The detailed diagram chase is very much like the one above and is left to the reader.)  Finally, one proves that \eqref{maincommutativity} commutes for any map $f : Y \to Y'$ of locally finite type fans by taking a cover $\{ U_i \}$ of $Y'$ by finite type affines and a refinement $\{ V_j \}$ of $\{ f^{-1}U_i \}$ by finite type affines, much as we checked commutativity of \eqref{maincommutativity} in the affine case by choosing a ``presentation of $f$." \end{proof}

\section{Log spaces} \label{section:logspaces} The main purpose of this section is to explain the basic notions of \emph{logarithmic geometry} (\S\ref{section:logstructures}), in the sense of Fontaine, Illusie, and Kato.  We have made some effort to set things up in a fairly general way, so that we can speak of various kinds of ``log spaces" in a unified manner.  However, we should mention at the outset that our approach is not sufficiently general to incorporate the usual setup for log geometry in the algebraic setting, in that we have no way to systematically incorporate the \'etale site of a scheme into our framework.  

It probably would have been possible to arrange this by axiomatizing the notions of smooth and \'etale maps in our category of ``spaces," but we decided that that setup would be too abstract. (It would also have required us to work in some more abstract setting of ``monoidal topoi" rather than the monoidal spaces of \S\ref{section:monoidalspacesI} because we would be led naturally to consider log structures on the \'etale site of a given space.)  We can, however, speak about Zariski log schemes.  The main point is that our setup will be sufficiently general to allow us to treat, say, (positive) log differentiable spaces and log analytic spaces on the same footing.

\subsection{Log structures} \label{section:logstructures} We begin by recalling the basic notions of log geometry (c.f.\ \cite{Kat1}).  Let $X = (X,\str_X)$ be a monoidal space (\S\ref{section:monoidalspacesI})---that is, a topological space $X$ equipped with a sheaf of monoids $\str_X$.  One can keep in mind the case where $X$ is a ringed space and $\str_X = \O_X$, regarded as a sheaf of monoids under multiplication.  A \emph{prelog structure} on $X$ is a map $\alpha_X : \M_X \to \str_X$ of sheaves of monoids on $X$.  We often write $\M_X$ to denote a prelog structure, leaving $\alpha_X$ implicit.  Prelog structures on $X$ form a category $\PreLog(X)$, which is nothing but the category of sheaves of monoids over $\str_X$.  A prelog structure is called a \emph{log structure} iff \bne{logstr} \alpha_X^* : \M_X^* \to \str_X^* \ene is an isomorphism (of sheaves of abelian groups on $X$).  

For a point $x \in X$ and a prelog structure $\alpha_X : \M_X \to \str_X$, the submonoid $F_x := \alpha_{X,x}^{-1} \str_{X,x}^*$ of $\M_{X,x}$ is a face (\S\ref{section:idealsandfaces}) called the \emph{face of} $\M_X$ at $x$.  If $\M_X$ is a log structure, we often implicitly identify $\str_X^*$ and $\M_X^*$ via the isomorphism \eqref{logstr}, thereby regarding $\str_X^*$ as a submonoid of $\M_X$.   A morphism of log structures is, by definition, a morphism of prelog structures, so log structures form a full subcategory $\Log(X)$ of $\PreLog(X)$.  For example, the inclusion $\str_X^* \into \str_X$ defines a log structure called the \emph{trivial log structure}.  

For any prelog structure $\M_X$, there is an \emph{associated log structure} $\M_X^a$ defined by \be \M_X^a & := & \M_X \oplus_{ \alpha_X^{-1} \str_X^*  } \str_X^* , \ee where the pushout here is taken in the category of sheaves of monoids\footnote{The category $\Mon$ of monoids has all direct and inverse limits, but it is fair to say that pushouts are a bit tricky at times.} on $X$, and the map $\M_X^a \to \str_X$ is induced by the map $\alpha_X$ and the inclusion $\str_X^* \subseteq \str_X$.  There is a natural map $\M_X \to \M_X^a$ which is initial among maps from $\M_X$ to a log structure.  The functor $\M_X \mapsto \M_X^a$ is left adjoint to and retracts the inclusion $\Log(X) \into \PreLog(X)$.  The categories $\Log(X)$ and $\PreLog(X)$ have all direct and inverse limits.  For our purposes, it is enough to know that the direct limit of log structures is the same as the one in sheaves of monoids \emph{under} $\str_X^*$, and that the functor $\M_X \to \M_X^a$ preserves direct limits, because it is a left adjoint.\footnote{The associated log structure functor does \emph{not} preserve inverse limits.}

For a prelog structure $\M_X$, the \emph{characteristic monoid} of $\M_X$ is $\ov{\M}_X := \M_X / \alpha_X^{-1} \str_X^*$.  The stalk $\ov{\M}_{X,x}$ is the quotient of $\M_{X,x}$ by the face of $\M_X$ at $x$.  If $\M_X$ is a log structure, then $\ov{\M}_X = \M_X / \M_X^*$ is just the sharpening of $\M_X$.  Formation of the characteristic monoid commutes with taking associated log structures.  The characteristic monoid $\ov{\M}_X$ is a sheaf of sharp monoids (\S\ref{section:monoidbasics}).  A morphism $\M_X \to \N_{X}$ of prelog structures induces a morphism on characteristics \emph{with trivial kernel} (a local morphism in the sense of \S\ref{section:monoidbasics}).  For a point $x \in X$, the \emph{characteristic extension} is the extension \bne{characteristicextension} (0 \to \str_{X,x}^* \to \M_{X,x}^{\rm gp} \to \ov{\M}_{X,x}^{\rm gp} \to 0) & \in & \Ext^1( \ov{\M}_{X,x}^{\rm gp}, \str_{X,x}^*).\ene

If $f : X \to Y$ is a map of monoidal spaces (i.e.\ a map of topological spaces $f$ and a map $f^\sharp : f^{-1} \str_Y \to \str_X$ of sheaves of monoids on $X$) and $\M_Y$ is a prelog structure on $Y$, then $f^\sharp f^{-1}\alpha_Y : f^{-1} \M_Y \to \str_X$ is a prelog structure on $X$.  We let $f^* \M_Y$ denote the associated log structure and we call it the \emph{inverse image log structure}.  We have $f^*(\M_Y^a) = f^* \M_Y$.  If the morphism $f^\sharp$ is local (i.e.\ $f$ is a morphism of \emph{locally} monoidal spaces), then we have $\ov{f^* \M_Y} = f^{-1} \ov{\M}_Y$.  

\begin{example} \label{example:LMSmaps} A morphism of locally ringed spaces can be viewed as a morphism of locally monoidal spaces.  We will ultimately be interested only in maps of monoidal spaces arising from maps of ``spaces" as in \S\ref{section:spaces}.  Lemma~\ref{lem:localstalks} ensures that all such maps are maps of \emph{locally} monoidal spaces. \end{example}

\begin{defn} \label{defn:logmonoidalspace} A \emph{log monoidal space} $X$ is a monoidal space $(X,\str_X)$ equipped with a log structure $\M_X$.  A morphism of log monoidal spaces $f : X \to Y$ is a map of monoidal spaces $f : X \to Y$ together with a map of log structures $f^\dagger : f^* \M_Y \to \M_X$ on $(X,\str_X)$. \end{defn}

\subsection{Charts and coherence}  \label{section:chartsforlogstructures} One generally does not work with completely arbitrary log structures.  Certain ``coherence" conditions are imposed to obtain a well-behaved theory.

\begin{defn} \label{defn:chart} A \emph{chart} for a log structure $\M_X$ on a monoidal space $X$ is a monoid homomorphism $h : P \to \M_X(X)$ such that the corresponding map $\underline{P} \to \M_X$ induces an isomorphism on associated log structures: $h^a : \underline{P}^a \cong \M_X$.  A chart is called \emph{finitely generated} (resp.\ \emph{fine}, \dots) iff the monoid $P$ is finitely generated (resp.\ fine, \dots).  A log structure is called \emph{quasi-coherent} (resp.\ \emph{coherent}, \emph{fine}, \emph{fs}, \dots) iff it locally has a chart (resp.\ finitely generated chart, fine chart, fs chart, \dots). \end{defn}

If $h : P \to \M_X(X)$ is a chart, then it is easy to see that the inclusion $i : h(P) \into \M_X(X)$ of the image of $h$ is also a chart.  If $h : P \to \M_Y(Y)$ is a chart and $f : X \to Y$ is a morphism of ringed spaces, then the composition of $h$ and $\M_Y(Y) \to (f^*\M_X)(X)$ is a chart for $f^* \M_X$ because formation of associated log structures commutes with formation of inverse image log structures.  In particular, the inverse image of a quasi-coherent (resp.\  coherent, \dots) log structure is quasi-coherent (resp.\ coherent, \dots).  For the same reason, if $h : P \to \M_X(X)$ is a chart and $U$ is an open subset of $X$, then the composition of $h$ and the restriction map $\M_X(X) \to \M_X(U)$ is a chart for $\M_X|U$ which we will also abusively denote $h : P \to \M_X(U)$ and refer to as the \emph{restriction} of $h$ (to $U$).

\begin{defn} \label{defn:characteristicchart} A chart $h:P \to \M_X(X)$ is called a \emph{characteristic chart} at $x \in X$ iff the composition $P \to \M_X(X) \to \ov{\M}_{X,x}$ is an isomorphism. \end{defn}

Under fairly mild hypotheses, one can construct characteristic charts, but we should note that there are fine log structures on $\Spec \RR$ without characteristic charts (Example~\ref{example:nocharacteristicchart}).  A prelog structure is called \emph{integral} iff $\M_X$ is a sheaf of integral monoids (\S\ref{section:monoidbasics}).  It is easy to see that the log structure associated to an integral prelog structure is integral.

\begin{defn} \label{defn:chartformorphism} Let $f : X \to Y$ be a morphism of log monoidal spaces.  A \emph{chart (resp.\ fine chart, \dots) for} $f$ is a commutative diagram of monoids $$ \xym{ P \ar[r]^-a & \M_X(X) \\ Q \ar[u]^h \ar[r]^-b & \M_Y(Y) \ar[u]_{f^\dagger} }$$ where $a$ and $b$ are charts (resp.\ fine charts, \dots).  For $x \in X$, a \emph{chart for} $f$ \emph{near} $x$ is a chart for $f|U : U \to V$ for a neighborhood $(U,V)$ of $x$ in $f$. \end{defn}

We will see in Lemma~\ref{lem:chartformorphism1} that every map of coherent log monoidal spaces has finitely generated local charts---we will then give a slightly more refined statement when ``coherent" is replaced by ``fine" in Lemma~\ref{lem:secondchart}.  Given a chart for $f$ as in the above definition, an open subspace $V$ of $Y$ and an open subspace $U$ of $f^{-1}(V)$, one can compose $a$ (resp.\ $b$) with the restriction $\M_X(X) \to \M_X(U)$ (resp.\ $\M_Y(Y) \to \M_Y(V)$) to obtain a chart for $f|U :U \to V$ which we will systematically denote $$ \xym{ P \ar[r]^-a & \M_X(U) \\ Q \ar[u]^h \ar[r]^-b & \M_Y(V) \ar[u]_{f^\dagger} }$$ and refer to as the \emph{restriction} of the given chart to $(U,V)$.

The rest of this section is devoted to establishing some basic results about charts.  Most of these results can be found in \cite{Kat1}---we have made an effort here to give precise statements and proofs here as these results will be useful to us later in our study of log smooth maps (\S\ref{section:logsmoothness}).

Suppose we have a chart for $f$ as above.  For $x \in X$, let $F := a_x^{-1}\str_{X,x}^*$, $G := b_x^{-1} \str_{Y,f(x)}^*$ be the faces of $a$ and $b$ at $x$ and $f(x)$.  Assuming $F$ and $G$ are finitely generated (by Lemma~\ref{lem:faces}, this assumption holds when $P$ and $Q$ are finitely generated), we can pass to a neighborhood $(U,V)$ of $x$ in $f$ so that $a(F) \subseteq \O_X^*(U)$ and $b(G) \subseteq \O_Y^*(V)$.  We thus obtain a chart $$ \xym{ F^{-1}P \ar[r]^-a & \M_X(U) \\ G^{-1}Q \ar[u]^h \ar[r]^-b & \M_Y(V) \ar[u]_{f^\dagger} }$$ for $f|U : U \to V$ such that \be \ov{\M}_{X,x} & = & (F^{-1} P) / a_x^{-1}(\O_{X,x}) \\ & = & (F^{-1} P)/F^{\rm gp} \\ & = & P/F \\ & = & \ov{F^{-1}P} \ee and $\ov{\M}_{Y,f(x)} = \ov{G^{-1}Q}$ similarly.  Here we have used Lemma~\ref{lem:sharpening} and the fact that formation of characteristic monoids commutes with formation of associated log structures.  We will have occassion to use this argument at various points and will refer to it as the \emph{Shrinking Argument}.

\begin{lem} \label{lem:chart1} Let $f : \M_X \to \N_{X}$ be a map of integral prelog structures on a monoidal space $(X,\str_X)$.  Then $f$ induces an isomorphism on associated log structures iff it induces an isomorphism $\ov{f} : \ov{\M}_X \to \ov{\N}_{X}$ on characteristic monoids. \end{lem}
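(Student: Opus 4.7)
The plan is to check both implications at the level of stalks, since isomorphism of sheaves of monoids can be tested pointwise and all the relevant constructions ($\M_X \mapsto \M_X^a$, $\M_X \mapsto \ov{\M}_X$, pullback along inclusions of points) commute with stalks. The forward implication is formal: if $f^a : \M_X^a \to \N_X^a$ is an isomorphism then so is $\ov{f^a}$; but formation of characteristic monoids commutes with formation of associated log structures, so $\ov{f^a} = \ov{f}$.

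For the substantive converse, fix $x \in X$ and write $P := \M_{X,x}$, $Q := \N_{X,x}$, $R := \str_{X,x}^*$, with $f_x : P \to Q$ and faces $F := \alpha_{X,x}^{-1}(R) \subseteq P$, $G := \beta_{X,x}^{-1}(R) \subseteq Q$ (and $F = f_x^{-1}(G)$, since $\beta_x f_x = \alpha_x$). By construction, the stalks of the associated log structures are pushouts in $\Mon$: $P^a = P \oplus_F R$ and $Q^a = Q \oplus_G R$. I will use two standard facts (readily deduced from the pushout description): first, the structure map $R \to P^a$ is injective and identifies $R$ with $(P^a)^*$; second, $\ov{P^a} = P/F = \ov{\M}_{X,x}$, and similarly for $Q$.

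The key move is to contemplate the commutative square
\[ \xym{ R \ar[r] \ar@{=}[d] & P^a \ar[d]^{f_x^a} \\ R \ar[r] & Q^a } \]
whose horizontal arrows are the inclusions of units just described. Since $R \xleftarrow{\,\Id\,} R \xrightarrow{\,} P^a$ has tautological pushout $(P^a, \Id_{P^a}, \mathrm{incl})$, this square is a pushout if and only if $f_x^a$ is an isomorphism. Now apply Lemma~\ref{lem:pushout}: the hypotheses are met because $R$ (the two copies of $G,S$ in the lemma) is a group; $Q^a$ is integral (the associated log structure of an integral prelog structure is integral, as noted in \S\ref{section:logstructures}, relying on the fact that $Q$ is a stalk of $\N_X$, which is integral by hypothesis); and $R \hookrightarrow Q^a$ is monic because it identifies $R$ with $(Q^a)^*$. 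Therefore the square is a pushout iff the induced map on cokernels $\Cok(R \to P^a) \to \Cok(R \to Q^a)$ is an isomorphism; but this map is precisely $\ov{f_x} : \ov{\M}_{X,x} \to \ov{\N}_{X,x}$. Chaining the two equivalences yields: $f_x^a$ is an isomorphism iff $\ov{f_x}$ is.

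The main obstacle is purely bookkeeping: one must identify $(P^a)^*$ with $R$ and $\ov{P^a}$ with $P/F$ cleanly enough to recognize the square above, and then observe the tautology that its being a pushout is equivalent to $f_x^a$ being an iso. Once this is set up, the integrality hypothesis in Lemma~\ref{lem:pushout} is exactly what forces us to assume $\M_X$ and $\N_X$ integral, and the rest of the argument is a citation.
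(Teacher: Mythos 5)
Your proof is correct, and its skeleton is the same as the paper's: reduce to stalks, use that the characteristic commutes with taking associated log structures for the easy direction, and for the converse exploit the extension $0 \to \str_{X,x}^* \to \M_{X,x}^a \to \ov{\M}_{X,x} \to 0$ together with integrality. The difference is in how the converse is closed: the paper just performs ``the usual Five Lemma diagram chase'' in the map between the two such extensions, whereas you repackage that chase as a citation of Lemma~\ref{lem:pushout}, via the tautology that the square with identity left-hand edge and unit-inclusion horizontal edges is a pushout precisely when $f_x^a$ is an isomorphism, so that the cokernel criterion of that lemma hands you the equivalence with $\ov{f}_x$ being an isomorphism. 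Your verification of the hypotheses of Lemma~\ref{lem:pushout} is complete: the two source corners are the group $\str_{X,x}^*$, the unit inclusion into $\N_{X,x}^a$ is monic because it identifies $\str_{X,x}^*$ with $(\N_{X,x}^a)^*$, and $\N_{X,x}^a$ is integral because the associated log structure of an integral prelog structure is integral. What your route buys is that integrality is only invoked inside an already-proven lemma (whose proof is, in effect, the very diagram chase the paper alludes to), at the cost of the small bookkeeping identifying $\Cok(\str_{X,x}^* \to \M_{X,x}^a)$ with $\ov{\M}_{X,x}$.
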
 

\begin{proof} This can be checked on stalks.  Using the fact that the monoids in question are integral, one can carry out the usual ``Five Lemma" diagram chase in $$ \xym{ 0 \ar[r] & \str_{X,x}^* \ar@{=}[d] \ar[r] & \M_{X,x}^a \ar[d]^{f_x^a} \ar[r] & \ov{\M}_{X,x} \ar[d]^{\ov{f}_x} \ar[r] & 0 \\ 0 \ar[r] & \str_{X,x}^*  \ar[r] & \N_{X,x}^a  \ar[r] & \ov{\N}_{X,x}  \ar[r] & 0} $$ to show that $f_x^a$ is an isomorphism iff $\ov{f}_x$ is an isomorphism. \end{proof}

\begin{lem} \label{lem:characteristic} Let $\M_X$ be a coherent (resp.\ fine, fs) log structure on a monoidal space $(X,\str_X)$.  For any point $x \in X$, the monoid $\ov{\M}_{X,x}$ is finitely generated (resp.\ fine, fs). \end{lem}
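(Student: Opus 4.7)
The statement is local at $x$, so the plan is to pass to a neighborhood $U$ of $x$ on which $\M_X$ admits a chart $h : P \to \M_X(U)$ of the appropriate kind (finitely generated in the coherent case, fine in the fine case, fs in the fs case). This reduces the problem to an explicit computation of the stalk $\ov{\M}_{X,x}$ in terms of $P$ and the data of $h$.

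Concretely, I would show that $\ov{\M}_{X,x}$ is canonically isomorphic to $P/F$, where $F := h_x^{-1}(\str_{X,x}^*)$ is the face of $P$ at $x$ (which is indeed a face since $\str_{X,x}^*$ is). The computation goes as follows: the chart gives $\M_X|U = \underline{P}^a = \underline{P} \oplus_{\alpha^{-1}\str_X^*} \str_X^*$ where $\alpha$ is the map induced by $h$. Taking stalks at $x$ (direct limits, hence pushouts, commute with stalks) yields
\[
\M_{X,x} \; = \; P \oplus_F \str_{X,x}^*,
\]
and since $\M_X$ is a log structure, $\M_{X,x}^* = \str_{X,x}^*$ under the identification, so quotienting by units kills the $\str_{X,x}^*$-factor and identifies $f \in F$ with $0$, giving $\ov{\M}_{X,x} = P/F$. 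This is essentially the calculation carried out in the ``Shrinking Argument'' above, just phrased stalk-wise instead of after shrinking $U$.

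With the identification $\ov{\M}_{X,x} = P/F$ in hand, the three cases are immediate consequences of the monoid lemmas already established. In the coherent case, if $p_1,\dots,p_n$ generate $P$, then their images generate $P/F$, so $P/F$ is finitely generated. In the fine case, $P/F$ is further integral by Lemma~\ref{lem:integral}, hence fine. In the fs case, $P/F$ is in addition saturated by the second statement of Lemma~\ref{lem:saturated}, hence fs.

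The main obstacle is really only the identification $\ov{\M}_{X,x} \cong P/F$, and even this is routine once one is careful that (i) the associated-log-structure functor is a pushout in sheaves of monoids, (ii) taking stalks commutes with pushouts, and (iii) the log structure condition $\alpha_X^* : \M_X^* \cong \str_X^*$ lets one identify the units in the stalk pushout with $\str_{X,x}^*$. Everything else is a direct application of the lemmas on faces and quotients from \S\ref{section:idealsandfaces}.
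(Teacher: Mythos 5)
Your proof is correct and follows essentially the same route as the paper: reduce to a chart $h : P \to \M_X(U)$ near $x$, identify $\ov{\M}_{X,x}$ with $P/F$ for the face $F$ of the chart at $x$ (the paper cites the fact that formation of the characteristic commutes with taking associated log structures, which is exactly the pushout computation you spell out), and then invoke Lemma~\ref{lem:integral} and Lemma~\ref{lem:saturated} for the quotient properties.
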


\begin{proof} If $h : P \to \M_X(U)$ is a chart near $x$, then the characteristic $\ov{\M}_{X,x}$ coincides with the characteristic of the prelog structure $\alpha_X h : \underline{P} \to \O_X$, which is the quotient of $P$ by the face (\S\ref{section:idealsandfaces}) $ \alpha_{X,x} h^{-1}(\str_{X,x}^*)$ of $\alpha_X h$ at $x$, so any property of $P$ inherited by quotients by faces is inherited by $\ov{\M}_{X,x}$.  The properties of finite generation, integrality, and saturation are in fact inherited by all quotients (Lemma~\ref{lem:integral}, Lemma~\ref{lem:saturated}).\end{proof}

\begin{lem} \label{lem:chartformorphism1} Let $f : X \to Y$ be a morphism of coherent (resp.\ fine) log monoidal spaces.  For any $x \in X$ there exists a coherent (resp.\ fine) chart for $f$ near $x$. \end{lem}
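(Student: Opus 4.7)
The plan is to construct the chart for $f$ by combining a chart for $\M_Y$ near $f(x)$ with an auxiliary chart for $\M_X$ near $x$, absorbing into $P$ the finitely many units that appear because the pulled-back $\M_Y$-chart does not literally lie in the image of the $\M_X$-chart.

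By coherence of $\M_Y$ and $\M_X$ respectively, I can find finitely generated (resp.\ fine) charts $\beta : Q \to \M_Y(V)$ near $f(x)$ and $\alpha_0 : R \to \M_X(U_0)$ near $x$, with $U_0 \subseteq f^{-1}(V)$ after shrinking. Applying the Shrinking Argument of \S\ref{section:chartsforlogstructures} to both charts, I may assume they are characteristic charts: $\ov{Q} \cong \ov{\M}_{Y,f(x)}$ and $\ov{R} \cong \ov{\M}_{X,x}$. These localizations preserve finite generation (Lemma~\ref{lem:faces}) and integrality (Lemma~\ref{lem:integral}). Fixing a finite set of generators $q_1, \dots, q_k$ of $Q$ and choosing lifts $r_i \in R$ of the images of $\bar{q}_i$ under the composition $\ov{Q} \cong \ov{\M}_{Y,f(x)} \to \ov{\M}_{X,x} \cong \ov{R}$, the sections $\alpha_0(r_i)$ and $f^\dagger(\beta(q_i))$ of $\M_X$ have the same image in $\ov{\M}_X$ near $x$, so after further shrinking $U$ I can write $f^\dagger(\beta(q_i)) = \alpha_0(r_i) + u_i$ in $\M_X(U)$ for units $u_i \in \str_X^*(U)$. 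Let $A \subseteq \str_X^*(U)$ denote the finitely generated abelian subgroup generated by $u_1,\dots,u_k$.

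Now I construct $P$ as the monoid pushout $P := (R \oplus A) \oplus_T Q$, where $T := \NN^k$ and the maps $T \to R \oplus A$, $T \to Q$ send $e_i$ to $(r_i, u_i)$ and $q_i$ respectively; in the fine case I replace $P$ by its integralization $P^{\mathrm{int}}$, which has the same associated log structure since $\M_X$ is integral. The resulting $P$ is finitely generated (resp.\ fine) because $R$, $A$, and $Q$ are. The maps $(r, a) \mapsto \alpha_0(r) + a$ and $q \mapsto f^\dagger(\beta(q))$ agree on $T$ by the defining equation of the $u_i$, so they assemble to a monoid homomorphism $\alpha : P \to \M_X(U)$. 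Taking $h : Q \to P$ to be the structural map of the pushout yields $\alpha \circ h = f^\dagger \circ \beta$ by construction, producing a candidate chart for $f|_U : U \to V$ in the sense of Definition~\ref{defn:chartformorphism}.

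The main task is to verify that $\alpha$ is indeed a chart for $\M_X|U$, which by Lemma~\ref{lem:chart1} reduces to checking that $\bar{\alpha} : \ov{P}_x \to \ov{\M}_{X,x}$ is an isomorphism. The key observation is that in $\ov{P}$ the subgroup $A$ becomes trivial (its elements are units of $P$), so the pushout relation identifying $(r_i, u_i) \in R \oplus A$ with $q_i \in Q$ collapses, upon sharpening, to $\bar{r}_i = \bar{q}_i$ in $\ov{P}$. This forces the natural map $\ov{R} \to \ov{P}$ to be surjective, and since its composition with $\bar{\alpha}$ recovers the isomorphism $\ov{R} \cong \ov{\M}_{X,x}$ arranged in the second paragraph, it is also injective, giving $\bar{\alpha}$ an isomorphism. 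The principal obstacle is precisely this characteristic computation---confirming that the pushout, after sharpening, reduces to $\ov{R}$ and nothing larger, and simultaneously verifying that integrality survives the pushout in the fine case (which is why we may need to pass to $P^{\mathrm{int}}$).
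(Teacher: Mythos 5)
Your construction works, but it takes a genuinely different and noticeably heavier route than the paper's. The paper takes finitely generated (resp.\ fine) charts $b : Q \to \M_Y(V)$ and $a_0 : P_0 \to \M_X(U)$, makes them monic by passing to images, writes $f^\dagger(b(q_i)) = u_i\, a_0(p_i)$ for generators $q_i$ of $Q$, suitable $p_i \in P_0$ and units $u_i \in \str_X^*(U)$ (possible after shrinking, since $a_0$ is a chart and hence every section of $\M_X$ is locally a unit times a section coming from $P_0$), and then takes $P$ to be the \emph{image} of $P_0 \oplus G \to \M_X(U)$, where $G$ is the group generated by the $u_i$. Adjoining a group of units to a chart does not change the associated log structure, and the image of a chart is a chart, so $P$ is a chart through which $f^\dagger b$ visibly factors; this needs no Shrinking Argument, no pushout, no characteristic computation, and no integralization, and it treats the coherent and fine cases uniformly. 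Your pushout $(R \oplus A) \oplus_T Q$ achieves the same end, but be aware of two soft spots in the verification. First, Lemma~\ref{lem:chart1} concerns \emph{integral} prelog structures and compares characteristics as sheaves, so in the coherent case you cannot invoke it (nor Lemma~\ref{lem:localiso}); the fix is to argue directly that $\underline{R}^a \to \underline{P}^a$ is surjective (since $A$ lands in the units and $q_i = r_i + u_i$ in $P$) and that its composite with $\underline{P}^a \to \M_X$ is an isomorphism, which forces both maps to be isomorphisms. Second, even in the fine case, an isomorphism of characteristics at the single point $x$ only yields a chart on a neighborhood after a further application of Lemma~\ref{lem:localiso}, which you should cite; likewise the claim that $P^{\mathrm{int}}$ has the same associated log structure when $\M_X$ is integral is true but deserves the same one-line surjectivity-plus-splitting argument. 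None of this is fatal, but the paper's image-submonoid construction sidesteps all of it.
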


\begin{proof} Since $X$ and $Y$ are coherent (resp.\ fine) we can find a neighborhood $(U,V)$ of $x$ in $f$ and finitely generated (resp.\ fine) charts $Q \to \M_Y(V)$, $P \to \M_X(U)$ for $\M_Y|V$ and $\M_X|U$.  We can assume these charts are monic by passing to images if necessary.  Choose generators $q_1,\dots,q_m$ for $Q$.  The element $f^\dagger(q_i) \in \M_X(U)$ may not be in $P \subseteq \M_X(U)$, but, at least on a neighborhood $U_i$ of $x$ in $U$, we can write $f^\dagger(q_i)=u_ip_i$ for some $p_i \in P$, $u_i \in \str_X^*(U_i)$.  By replacing $U$ with the intersection of the $U_i$, we can assume $u_1,\dots,u_n \in \str_X^*(U)$.  Let $G \subseteq \str_X^*(U)$ be the subgroup generated by the $u_i$ and let $P' \subseteq \M_X(U)$ be the image of the chart $P \oplus G \to \M_X(U)$.  Then $P' \into \M_X(U)$ is a coherent (resp. fine) chart and $Q \to \M_X(U)$ factors through $P'$. \end{proof}

\begin{lem} \label{lem:localiso} Let $X$ be a monoidal space, $f^\dagger : \M_X \to \N_X$ a map of fine log structures on $X$ inducing an isomorphism $\ov{f}_x^\dagger : \ov{\M}_{X,x} \to \ov{\N}_{X,x}$ on stalks of characteristics at a point $x \in X$.  Then there is a neighborhood $U$ of $x$ so that $f|U$ is an isomorphism of log structures on $U$. \end{lem}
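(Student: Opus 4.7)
The plan is to reduce to an isomorphism of characteristic sheaves and then propagate a stalk isomorphism to a neighborhood using the finite presentation of fine monoids. By Lemma~\ref{lem:chart1}, since $\M_X$ and $\N_X$ are fine and hence integral, $f^\dagger$ is an isomorphism on a neighborhood of $x$ if and only if the induced characteristic map $\ov{f}^\dagger : \ov{\M}_X \to \ov{\N}_X$ is; the hypothesis gives this at the stalk at $x$, so the task is to propagate.

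The main construction uses fine charts to express both characteristic sheaves locally as quotients of a constant sheaf. Choose fine charts $a : P \to \M_X(V)$ and $b : Q \to \N_X(V)$ on a common open neighborhood $V$ of $x$; taking characteristics of the chart isomorphisms $\underline{P}^a \cong \M_X|V$ and $\underline{Q}^a \cong \N_X|V$ yields stalkwise-surjective sheaf maps $\ov{a} : \underline{P} \to \ov{\M}_X|V$ and $\ov{b} : \underline{Q} \to \ov{\N}_X|V$. Write $g := \ov{f}_x^\dagger$, and for generators $q_1, \dots, q_m$ of the fine monoid $Q$, lift each $g^{-1}(\ov{b}(q_j)_x) \in \ov{\M}_{X,x}$ to a section $\tilde q_j \in \ov{\M}_X(U)$ on a small neighborhood $U \subseteq V$ of $x$, shrinking $U$ so that $\ov{f}^\dagger(\tilde q_j) = \ov{b}(q_j)|U$. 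Since $Q$ is finitely presented (\S\ref{section:monoidbasics}) and the assignment $q_j \mapsto \tilde q_{j,x}$ is the well-defined monoid map $g^{-1} \circ \ov{b}_x$, the finitely many defining relations of $Q$ hold among the germs $\tilde q_{j,x}$, and therefore---after further shrinking $U$ to accommodate these finitely many section equalities---among the $\tilde q_j$ themselves. This produces a sheaf map $\ov{a}' : \underline{Q}|U \to \ov{\M}_X|U$ with $\ov{f}^\dagger \circ \ov{a}' = \ov{b}|U$.

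Stalkwise surjectivity of $\ov{f}^\dagger$ on $U$ is then immediate from that of $\ov{b}$. For injectivity on a smaller neighborhood $U' \subseteq U$, first observe that $\ov{a}'$ becomes stalkwise surjective near $x$: pick generators $p_i$ of $P$; since $\ov{a}'_x = g^{-1} \circ \ov{b}_x$ is surjective, write $\ov{a}(p_i)_x = \ov{a}'_x(r_i)$ for some $r_i \in Q$; lifting these finitely many germ equalities to a neighborhood places the generators $\ov{a}(p_i)$ of $\ov{\M}_X$ in the image of $\ov{a}'$ over $U'$. On $U'$, both $\ov{a}'$ and $\ov{b}|U'$ present their targets as quotients of $\underline{Q}$ by monoid congruences, with the former contained in the latter (from $\ov{f}^\dagger \ov{a}' = \ov{b}$) and equal at $x$ (since $g$ is an isomorphism). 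A finite-presentation argument---extracting finitely many generators of the congruence at $x$ and lifting the corresponding germ equalities in $\ov{\M}_X$ to a neighborhood---then forces equality of the two congruences on a sufficiently small neighborhood, giving $\ov{f}^\dagger$ injective, and hence an isomorphism, there.

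The main obstacle I anticipate is the final congruence-equality step, which requires careful use of finite presentation and of how the face of a chart grows under specialization. A cleaner alternative would be to lift $\ov{a}'$ to an actual monoid map $Q \to \M_X(U)$---i.e., a genuine chart for $\M_X$ near $x$---thereby reducing the equality of characteristic sheaves to the tautological statement that two log structures produced from the same chart coincide.
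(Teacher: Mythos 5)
Your reduction via Lemma~\ref{lem:chart1}, the construction of the lift $\ov{a}' : \underline{Q}|U \to \ov{\M}_X|U$ of $\ov{b}$ through $\ov{f}^\dagger$, and the surjectivity step are all sound. The gap is in the final injectivity step, exactly where you anticipated trouble, and the method you sketch there would fail. The kernel congruence of $\ov{b}_y : Q \to \ov{\N}_{X,y}$ is not controlled by the congruence at $x$: it is generated by the face $G_y := \ov{b}_y^{-1}(0)$, and this face grows under generization (already for the chart $\NN \to \M_X(X)$, $1 \mapsto t$, on $X=\RR$, the face jumps from $\{0\}$ at the origin to all of $\NN$ at every other point). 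So lifting finitely many generators of $\ker(\ov{b}_x)=\ker(\ov{a}'_x)$ to section equalities only yields $\ker(\ov{a}'_y) \supseteq \ker(\ov{b}_x)$, which is strictly weaker than the needed $\ker(\ov{a}'_y) \supseteq \ker(\ov{b}_y)$; at a point $y$ where $\N_X$ is trivial it gives nothing at all. The step can be rescued without further shrinking or finite presentation: since $b$ is a chart, $\ker(\ov{b}_y)$ is generated by the pairs $(g,0)$ with $g \in G_y$ (see the proof of Lemma~\ref{lem:characteristic}), and since $f^\dagger$ is a morphism of log structures over the same $\str_X$, its sharpening $\ov{f}^\dagger_y$ has trivial kernel at \emph{every} point of $X$ (\S\ref{section:logstructures}); hence $\ov{b}_y(g)=\ov{f}^\dagger_y(\ov{a}'_y(g))=0$ forces $\ov{a}'_y(g)=0$, so $G_y \subseteq (\ov{a}'_y)^{-1}(0)$ and therefore $\ker(\ov{b}_y) \subseteq \ker(\ov{a}'_y)$. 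Combined with the stalkwise surjectivity of $\ov{a}'$ you established, this gives injectivity of $\ov{f}^\dagger_y$ throughout $U'$.

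With that repair your argument is a legitimate alternative to the paper's, which proceeds quite differently: it applies Lemma~\ref{lem:chartformorphism1} to produce a fine chart for the \emph{morphism} $f^\dagger$ (compatible charts $G^{-1}Q \to \M_X(U)$ and $F^{-1}P \to \N_X(U)$ after the Shrinking Argument), notes that the hypothesis makes $Q/G \to P/F$ an isomorphism, deduces from Lemma~\ref{lem:sharpening} and Lemma~\ref{lem:pushout} that $F^{-1}P$ is the pushout of $G^{-1}Q$ along $G^{\rm gp} \to F^{\rm gp}$, and concludes because the associated-log-structure functor sends that pushout of units to the identity. That route keeps everything at the level of monoids and never propagates stalk identities of sheaves; your ``cleaner alternative'' of lifting $\ov{a}'$ to a genuine chart $Q \to \M_X(U)$ is essentially what Lemma~\ref{lem:chartformorphism1} delivers, but it is obtained there by enlarging the chart inside $\M_X(U)$ by units rather than by lifting through the sharpening, which is not possible in general.
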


\begin{proof} By the previous lemma and the Shrinking Argument, we can find a fine chart $$ \xym{ F^{-1}P \ar[r] & \N_X(U) \\ G^{-1}Q \ar[u] \ar[r] & \M_X(U) \ar[u]_{f^\dagger} }$$ for $f^\dagger$ on a neighborhood $U$ of $x$ such that $\ov{\N}_{X,x} = P/F$ and $\ov{\M}_{X,x} = Q/G$.  The assumption on $\ov{f}_x^\dagger$ hence implies that the indicated arrow in the diagram $$ \xym{ F^{\rm gp} \ar[r] & F^{-1} P \ar[r] & P/F \\ G^{\rm gp} \ar[u] \ar[r] & G^{-1} Q \ar[u] \ar[r] & G/Q \ar[u]_{\cong} } $$ is an isomorphism, hence the left square of this diagram is a pushout (using Lemma~\ref{lem:sharpening} and Lemma~\ref{lem:pushout}) and we compute \be \N_X|U & = & (F^{-1}P)^a \\ & = & (G^{-1}Q)^a \oplus_{(G^{\rm gp})^a} (F^{\rm gp})^a \\ & = & (G^{-1} Q)^a \oplus_{\str_X^*} \str_X^* \\ & = & (G^{-1}Q)^a \\ & = & \M_X|U \ee using the fact that formation of associated log structures commutes with direct limits.   \end{proof}

\begin{lem} \label{lem:fschart} Let $X$ be a monoidal space, $\M_X$ a fine log structure on $X$, $x \in X$.  Suppose the characteristic extension \eqref{characteristicextension} splits.\footnote{This hypothesis is satisfied, for example, if $X$ is fs, for then $\ov{\M}_{x,x}$ is fs by Lemma~\ref{lem:characteristic}, hence $\ov{\M}_{X,x}^{\rm gp}$ is free by Lemma~\ref{lem:saturated}.}  Then the natural map $\M_{X,x} \to \ov{\M}_{X,x} =: P$ admits a section $s$ lifting to a characteristic chart $h : P \to \M_X(U)$ on a neighborhood $U$ of $x$.  \end{lem}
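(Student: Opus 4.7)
The plan is to obtain $s$ from the given splitting via Lemma~\ref{lem:splitting}, then spread $s$ to a monoid homomorphism on a neighborhood using finite presentation of $P$, and finally invoke Lemma~\ref{lem:localiso} to upgrade this homomorphism to a characteristic chart.

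For the first step, since $\M_X$ is a log structure, the identification $\M_{X,x}^* = \str_{X,x}^*$ turns the characteristic extension \eqref{characteristicextension} into a split exact sequence $0 \to \M_{X,x}^* \to \M_{X,x}^{\rm gp} \to P^{\rm gp} \to 0$ of abelian groups. The stalk $\M_{X,x}$ is integral because $\M_X$ is fine, so Lemma~\ref{lem:splitting}, applied to the subgroup $\M_{X,x}^* \subseteq \M_{X,x}$ and the quotient map $\M_{X,x} \to \M_{X,x}/\M_{X,x}^* = P$, produces the desired section $s : P \to \M_{X,x}$.

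For the second step, note that $P$ is fine by Lemma~\ref{lem:characteristic}, hence finitely presented (\S\ref{section:monoidbasics}). Fix a finite set of generators $p_1,\dots,p_n \in P$ and a finite set of relations among them. Each $s(p_i) \in \M_{X,x}$ is represented by a section $\tilde p_i \in \M_X(U_i)$ on some neighborhood $U_i$ of $x$, and each relation in $P$, upon being transported to $\M_{X,x}$ by $s$, already holds on some neighborhood of $x$. Intersecting these finitely many neighborhoods yields an open $U$ on which the $\tilde p_i$ satisfy all the defining relations of $P$, so the universal property of the presentation yields a monoid homomorphism $h : P \to \M_X(U)$ with $h(p_i) = \tilde p_i$, and by construction the germ of $h$ at $x$ is $s$.

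For the third step, consider the induced morphism of log structures $h^a : \underline{P}^a \to \M_X|U$. Both sides are fine (for $\underline P^a$, because the chart $P$ is fine). At $x$, the face of $\underline P^a$ is $\{p \in P : s(p) \in \M_{X,x}^*\}$, which equals $\{0\}$ since $P$ is sharp and $s$ is a section of the sharpening map; hence the characteristic of $\underline P^a$ at $x$ is $P$ and the induced characteristic map $P \to \ov{\M}_{X,x} = P$ is the identity. Lemma~\ref{lem:localiso} then supplies a smaller neighborhood on which $h^a$ is an isomorphism of log structures, i.e.\ on which $h$ is a chart; the identity computation above shows it is characteristic at $x$.

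The only mildly technical point is the routine simultaneous gluing in the second step; everything else is assembled directly from previously established lemmas, so there is no serious obstacle.
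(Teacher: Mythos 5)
Your proposal is correct and follows essentially the same route as the paper's (terse) proof: obtain the section $s$ from the split characteristic extension using integrality of the stalk (which is exactly the content of Lemma~\ref{lem:splitting}), spread $s$ out to a neighborhood via finite presentation of $P$, and conclude with Lemma~\ref{lem:localiso} applied to $h^a$. The extra details you supply — the identification $\M_{X,x}^* = \str_{X,x}^*$, the gluing over finitely many relations, and the verification that the face of $\underline{P}^a$ at $x$ is trivial so the chart is characteristic — are all accurate fillings-in of steps the paper leaves to the reader.
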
  

\begin{proof}  It is a straightforward exercise using integrality of $P$ (Lemma~\ref{lem:characteristic}) to show that a splitting $\ov{s} : \ov{\M}_{X,x}^{\rm gp} \to \M_{X,x}^{\rm gp}$ gives rise to a section $s = \ov{s}|P$ of  $\M_{X,x} \to P$.  Using finite presentation of $P$, one can lift $s$ to $h : P \to \M_X(V)$.  Conclude by applying Lemma~\ref{lem:localiso} to $h^a : \underline{P}^a \to \M_X|V$. \end{proof}

\begin{lem} \label{lem:firstchart} Let $X$ be a monoidal space, $\M_X$ a fine log structure on $X$, $x \in X$.  Suppose $G$ is a finitely generated abelian group and $f : G \to \M_{X,x}^{\rm gp}$ is a group homomorphism whose composition with $\M_{X,x}^{\rm gp} \to \ov{\M}_{X,x}^{\rm gp}$ is surjective.\footnote{Such a map $f$ can always be chosen because $\ov{\M}_{X,x}$ is fine by Lemma~\ref{lem:characteristic}.}  Let $Q := f^{-1}( \M_{X,x})$.  Then $f|Q : Q \to \M_{X,x}$ lifts to a fine chart $Q \to \M_X(U)$ on a neighborhood of $x$. \end{lem}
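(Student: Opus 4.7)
The plan is to reduce the statement to Lemma~\ref{lem:chartproduction} for the algebraic content, then to lift from the stalk to a neighborhood, and finally to apply Lemma~\ref{lem:localiso} to upgrade a prelog map to a chart.

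First I would apply Lemma~\ref{lem:chartproduction} to $P := \M_{X,x}$ and the homomorphism $f : G \to \M_{X,x}^{\rm gp}$. The monoid $P$ is integral since $\M_X$ is fine; and by Lemma~\ref{lem:characteristic}, $\ov{P} = \ov{\M}_{X,x}$ is finitely generated. The hypothesis that $\pi^{\rm gp}f$ surjects onto $\ov{\M}_{X,x}^{\rm gp}$ certainly implies its image contains $\ov{P}$, so Lemma~\ref{lem:chartproduction} gives that $Q = f^{-1}(\M_{X,x})$ is fine, that $Q^* = f^{-1}(\M_{X,x}^*) = f^{-1}(\str_{X,x}^*)$ (using the log-structure identification $\M_{X,x}^* = \str_{X,x}^*$), and that $\ov{f|Q} : \ov{Q} \to \ov{\M}_{X,x}$ is an isomorphism.

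Next I would lift the monoid homomorphism $f|Q : Q \to \M_{X,x}$ to a homomorphism $h : Q \to \M_X(U)$ on a neighborhood $U$ of $x$. Since $Q$ is fine it admits a finite presentation $\NN^m \rightrightarrows \NN^n \to Q$ (\S\ref{section:monoidbasics}). Choose generators $q_1,\dots,q_n \in Q$ and represent each germ $f(q_i) \in \M_{X,x}$ by a section of $\M_X$ on a neighborhood; after intersecting finitely many such neighborhoods we obtain a monoid map $\NN^n \to \M_X(U)$. The two compositions $\NN^m \rightrightarrows \M_X(U)$ from the presentation agree at the stalk $\M_{X,x}$, and since $\NN^m$ is finitely generated, they agree after shrinking $U$ further. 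This produces the desired $h : Q \to \M_X(U)$ with $h_x = f|Q$.

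Finally I would upgrade $h$ to a chart. The associated log map $h^a : \underline{Q}^a|U \to \M_X|U$ is a morphism of fine log structures (the left side being fine because $Q$ is fine). Its induced map on characteristics at $x$ is $Q/(h_x^{-1}\str_{X,x}^*) \to \ov{\M}_{X,x}$. By step one, $h_x^{-1}\str_{X,x}^* = (f|Q)^{-1}(\str_{X,x}^*) = Q^*$, so the source is $\ov{Q}$ and the map is exactly the isomorphism $\ov{f|Q}$ from Lemma~\ref{lem:chartproduction}. Lemma~\ref{lem:localiso} then yields a neighborhood of $x$ on which $h^a$ is an isomorphism, so $h$ is a fine chart.

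The main obstacle is the lifting step: one must confirm both that the generators lift and that the finitely many relations of a presentation of $Q$ can be made to hold simultaneously on a single neighborhood; this is what forces the finite-generation (in fact finite-presentation) of $Q$, which is precisely what Lemma~\ref{lem:chartproduction} supplied. Everything else is formal once that lift is in hand.
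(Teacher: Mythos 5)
Your proof is correct and follows essentially the same route as the paper's: apply Lemma~\ref{lem:chartproduction} with $P=\M_{X,x}$, lift $f|Q$ to a neighborhood using finite presentation of $Q$, and conclude via Lemma~\ref{lem:localiso}. The only difference is that you spell out the lifting step (generators and relations) that the paper compresses into a single sentence.
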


\begin{proof} Apply Lemma~\ref{lem:chartproduction} with $P=\M_{X,x}$ to see that $Q^* = f^{-1}(\O_{X,x}^*)$, $\ov{Q} \to \ov{\M}_{X,x}$ is an isomorphism, and $Q$ is fine (note $\ov{\M}_{X,x}$ is fine by Lemma~\ref{lem:characteristic}).  Since $Q$ is finitely generated (hence finitely presented---\S\ref{section:monoidbasics}), $Q \to \M_{X,x}$ lifts to $Q \to \M_X(U)$ for a neighborhood $U$ of $x$.  The corresponding map $\underline{Q} \to \M_X|U$ of prelog structures induces an isomorphism on characteristics at $x$ by what we just proved, so, after possibly shrinking $U$, $Q \to \M_X(U)$ is a chart by Lemma~\ref{lem:localiso}. \end{proof}

\begin{lem} \label{lem:secondchart} Let $f : X \to Y$ be a map of fine log monoidal spaces, $x \in X$.  Suppose $a : P \to \M_X(X)$ and $b : Q \to \M_Y(Y)$ are fine charts.  Then, after possibly shrinking $X$ to a neighborhood of $x$, we can find a commutative diagram of monoids $$ \xym{ P \ar[r]^-g & P' \ar[r]^-{a'} & \M_X(X) \\ & Q \ar[u] \ar[r]^-b & \M_Y(Y) \ar[u]_{f^\dagger} } $$ where $a=a'g$ and the square is a fine chart for $f$. \end{lem}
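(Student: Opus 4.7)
The plan is to construct $P'$ by a direct application of Lemma~\ref{lem:firstchart}, taking the group $G$ to be $P^{\rm gp} \oplus Q^{\rm gp}$ with the map into $\M_{X,x}^{\rm gp}$ built from the two given charts and $f^\dagger$. This way $P'$ will come equipped with canonical maps from $P$ and from $Q$ as coordinate inclusions, and verifying the commutativity of the desired diagram reduces to checking equalities at the stalk at $x$ and then shrinking.

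In detail, I would define $\phi : G := P^{\rm gp} \oplus Q^{\rm gp} \to \M_{X,x}^{\rm gp}$ by $\phi(p,q) := a_x(p) + (f^\dagger b)_x(q)$, where $a_x$, $b_x$ denote the composites of the charts with the stalk maps. Since $a$ is a chart and $P$ is integral, the description of $\ov{\M}_{X,x}$ as the pushout quotient gives $\ov{\M}_{X,x} = P/F_P$ where $F_P := a_x^{-1}(\str_{X,x}^*)$, so the composite $P \to \ov{\M}_{X,x}$ is surjective. Hence the composite $\phi^{\rm gp}$ with $\M_{X,x}^{\rm gp} \to \ov{\M}_{X,x}^{\rm gp}$ is already surjective restricted to the first factor $P^{\rm gp}$, verifying the hypothesis of Lemma~\ref{lem:firstchart}. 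Let $P' := \phi^{-1}(\M_{X,x}) \subseteq G$. By Lemma~\ref{lem:firstchart}, $\phi|_{P'}$ lifts to a fine chart $a' : P' \to \M_X(U)$ on some neighborhood $U$ of $x$.

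The coordinate inclusions $g : P \to P'$, $p \mapsto (p,0)$, and $h : Q \to P'$, $q \mapsto (0,q)$, are well-defined since $\phi(p,0) = a_x(p) \in \M_{X,x}$ and $\phi(0,q) = (f^\dagger b)_x(q) \in \M_{X,x}$. By construction the stalks at $x$ of $a' g$ and $a|_X$ both equal $a_x$, and those of $a' h$ and $f^\dagger \circ b$ both equal $(f^\dagger b)_x$. The two maps $a' g$ and $a|_U$ from the finitely generated monoid $P$ to $\M_X(U)$ therefore agree on stalks at $x$, so, by finite generation, they agree on a (possibly smaller) neighborhood of $x$; the same argument applies to $a' h$ and $(f^\dagger b)|_U$. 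Shrinking $X$ to a common such neighborhood, we obtain the commutative diagram with $a = a' g$ and $a' h = f^\dagger \circ b$. The resulting square $(h,b,a',f^\dagger)$ is a fine chart for $f$ because $a'$ is a fine chart for $\M_X$ and $b$ is a fine chart for $\M_Y$.

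The only non-routine step is producing $P'$ itself; once Lemma~\ref{lem:firstchart} is invoked, everything else is bookkeeping. The main thing to watch is the ``shrinking'' step at the end, which is harmless because $P$ and $Q$ are finitely generated, so only finitely many equalities in $\M_X$ must be arranged and each holds on a neighborhood of $x$ by virtue of holding at the stalk.
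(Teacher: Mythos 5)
Your proposal is correct and takes essentially the same route as the paper, which constructs $P'$ by applying Lemma~\ref{lem:firstchart} to the groupification of $Q \oplus P \to \M_X(X)$ composed with the stalk map at $x$; you have simply spelled out the bookkeeping (the surjectivity hypothesis via the first factor, the coordinate inclusions, and the finite-generation shrinking argument) that the paper leaves implicit.
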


\begin{proof}  Construct $P'$ by applying Lemma~\ref{lem:firstchart} to the groupification of $Q \oplus P \to \M_X(X)$ (or rather, its composition with $\M_X(X)^{\rm gp} \to \M_{X,x}^{\rm gp}$). \end{proof}

In particular, any two fine charts map to a third:

\begin{lem} \label{lem:thirdchart} Suppose $a_i : Q_i \to \M_X(X)$ ($i=1,2$) are fine charts for a log monoidal space $X$ and $x \in X$.  Then, after possibly replacing $X$ with a neighborhood of $x$, we can find a fine chart $a : Q \to \M_X(X)$ and monoid homomorphisms $g_i : Q_i \to Q$ such that $a_i = ag_i$. \end{lem}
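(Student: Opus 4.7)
The plan is to deduce Lemma~\ref{lem:thirdchart} as an essentially immediate corollary of Lemma~\ref{lem:secondchart} by applying the latter to the identity map of $X$. Concretely, view $X$ as a fine log monoidal space via the chart $a_2$ on the ``target'' copy of $X$ and via the chart $a_1$ on the ``source'' copy, and consider the identity morphism $f = \Id_X : X \to X$ as a morphism of fine log monoidal spaces (which is trivially compatible with either choice of chart since $f^\dagger = \Id$).

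With $a := a_1$ and $b := a_2$ in the notation of Lemma~\ref{lem:secondchart}, that lemma produces, after shrinking $X$ to a neighborhood of $x$, a commutative diagram
$$ \xym{ Q_1 \ar[r]^-{g_1} & Q \ar[r]^-{a} & \M_X(X) \\ & Q_2 \ar[u]_{g_2} \ar[r]^-{a_2} & \M_X(X) \ar@{=}[u] } $$
in which $Q$ is a fine monoid, $a : Q \to \M_X(X)$ is a fine chart, and $a_1 = a \, g_1$. The commutativity of the square (with $f^\dagger = \Id$) gives simultaneously $a_2 = a \, g_2$. This is exactly the data requested in Lemma~\ref{lem:thirdchart}.

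The only thing to double-check is that Lemma~\ref{lem:secondchart} really does yield both desired factorizations at once, rather than requiring two separate applications; I expect no obstacle here because the ``second chart'' map $b$ in Lemma~\ref{lem:secondchart} is by construction factored through the new chart $P'$ via the vertical arrow, which in our setting becomes $g_2$. There is consequently no interesting technical difficulty: all the real content (constructing a common fine refinement $Q$, using Lemma~\ref{lem:chartproduction} and Lemma~\ref{lem:firstchart}) has already been absorbed into Lemma~\ref{lem:secondchart}. The main ``hard part,'' such as it is, is simply bookkeeping to make sure we do not need to shrink $X$ twice (once for each $a_i$), but a single application of Lemma~\ref{lem:secondchart} already handles this.
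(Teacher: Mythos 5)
Your proposal is correct and is exactly the paper's own proof: the paper disposes of Lemma~\ref{lem:thirdchart} with the single line ``Apply the previous lemma with $f=\Id_X$,'' which is precisely your application of Lemma~\ref{lem:secondchart} to the identity morphism with $a := a_1$ and $b := a_2$. Your check that both factorizations come out of a single application (via the vertical arrow of the chart square for $f^\dagger = \Id$) is the right reading of that lemma.
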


\begin{proof} Apply the previous lemma with $f=\Id_X$. \end{proof}

\begin{example} \label{example:nocharacteristicchart}  The characteristic extension \eqref{characteristicextension} need not split, even for a fine log structure on $\Spec \RR$.  Let $P$ be the submonoid of $\ZZ \oplus \ZZ/ 4 \ZZ$ generated by $(1,1)$, $(1,0)$, and $(0,2)$.  This $P$ is manifestly fine.  We have $P^* = \ZZ/2\ZZ$ generated by $(0,2)$ and $P^{\rm gp} = \ZZ \oplus \ZZ / 4 \ZZ$.  The sharpening $\ov{P} = P/P^*$ is the submonoid of $\ZZ \oplus \ZZ / 2 \ZZ$ generated by $(1,1)$ and $(1,0)$ and we have $\ov{P}^{\rm gp} = \ZZ \oplus \ZZ / 2 \ZZ$.  Define a monoid homomorphism $h : P \to \RR$ by setting $h(p) = 0$ if $p \in P \setminus P^*$, $h(0,2) = -1$, and $h(0,0)=1$.  We have a commutative diagram $$ \xym{ 0 \ar[r] & P^* \ar@{=}[d] \ar[r] & P \ar[d] \ar[r] & \ov{P} \ar[d] \ar[r] & 0 \\ 0 \ar[r] & \ZZ/2\ZZ \ar[r] \ar[d]_i & \ZZ \oplus \ZZ/4\ZZ \ar[d] \ar[r] & \ZZ \oplus \ZZ / 2 \ZZ \ar[r] \ar@{=}[d] & 0 \\ 0 \ar[r] & \RR^* \ar[r] & M^{\rm gp} \ar[r] & \ZZ \oplus \ZZ /  2 \ZZ \ar[r] & 0 } $$ where the vertical arrows on top are groupifications and the bottom two rows are exact sequences of abelian groups.  The map $i$ includes $\ZZ/ 2\ZZ$ as $\{ \pm 1 \} \subseteq \RR^*$.  The bottom row is the characteristic sequence of the (fine!) log structure $M \to \RR$ associated to the prelog structure $h$ (note $M = P \oplus_{P^*} \RR^*$ by definition of the associated log structure, hence $M^{\rm gp} = P^{\rm gp} \oplus_{P^*} \RR^*$ because groupification commutes with direct limits).  The middle extension is non-trivial, hence the bottom extension must also be non-trivial because the inclusion $i$ admits a retract $r : \RR^* \to \{ \pm 1 \}$ given by $r(u) := u/|u|$. \end{example}

\subsection{Definition of log spaces} \label{section:definitionoflogspace} Fix some category of spaces $(\Esp,\u{\AA}^1)$ as in \S\ref{section:spaces}.  A \emph{log space} is a pair $X = (\u{X},\M_X)$ consisting of a space $\u{X}$ equipped with a log structure $\alpha_X : \M_X \to \str_X$, where $\str_X$ is the structure sheaf of monoids on $|\u{X}|$ ``represented by $\u{\AA}^1$," as in \S\ref{section:spaces}.  A log space is called \emph{coherent} (resp.\ \emph{fine}, \dots) iff the log structure $\M_X$ is coherent (resp.\ fine, \dots).  Log spaces form a category $\LogEsp$ where a morphism $f : X \to Y$ is an $\Esp$-morphism $\u{f} : \u{X} \to \u{Y}$ (which induces a map of locally monoidal spaces $(|\u{X}|,\str_X) \to (|\u{Y}|,\str_Y)$ as in \S\ref{section:spaces}) together with a morphism of log structures $f^\dagger : f^* \M_Y \to \M_X$ on the locally monoidal space $\u{X}$.  

There is an obvious forgetful functor \bne{LogEsptoEsp} \LogEsp & \to & \Esp \\ \nonumber X & \mapsto & \u{X} \\ \nonumber (f : X \to Y) & \mapsto & (\u{f} : \u{X} \to \u{Y}). \ene  Consequently, $\LogEsp$ inherits an ``underlying topological space" functor \bne{LogEsptoTop} \LogEsp & \to & \Top \\ \nonumber X & \mapsto & |\u{X}|, \ene which we also denote by $X \mapsto |X|$ by abuse of notation.

We typically denote a log space by $X$, reserving $\u{X}$ for its underlying space, not to be confused with its underlying \emph{topological} space $|X|=|\u{X}|$.  We will say ``point of $X$," ``sheaf on $X$," etc.\ as abuse of notation for ``point of $|\underline{X}|$," ``sheaf on $|\underline{X}|$," etc.

The forgetful functor \eqref{LogEsptoEsp} has a right adjoint \bne{EsptoLogEsp} \Esp & \to & \LogEsp \ene obtained by mapping a space $\underline{X}$ to the log space abusively denoted $\underline{X}$ obtained by equipping $\underline{X}$ with the trivial log structure (\S\ref{section:logstructures}).  If $f : \underline{X} \to \underline{Y}$ is a map of spaces, then a $\LogEsp$ morphism $g$ with $\underline{g} = f$ will be called a \emph{lift} of $f$.

\begin{prop} \label{prop:LogEsplimits} The category $\LogEsp$ has finite inverse limits and arbitrary coproducts, both commuting with the forgetful functor $X \mapsto \u{X}$ of \eqref{LogEsptoEsp}. \end{prop}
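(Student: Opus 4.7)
The plan is to construct both types of limits explicitly on the underlying space and then produce the log structure separately, exploiting that $\Log(\underline{X})$ is complete and cocomplete for any fixed space $\underline{X}$, and that pullback of log structures preserves direct limits (because $f^{-1}$ and the associated-log-structure functor both do).

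For coproducts, let $\{X_i\}_{i\in I}$ be a family in $\LogEsp$. By axiom \eqref{coproducts}, $\underline{X} := \coprod_i \underline{X}_i$ exists in $\Esp$, and by axiom \eqref{coproductsandopenembeddings} the structure maps $j_i : \underline{X}_i \to \underline{X}$ are open embeddings. By Lemma~\ref{lem:coproductsofspaces} they are pairwise disjoint and cover $\underline{X}$ topologically (using \eqref{coproductscommute}), so the open subsets $|X_i| \subseteq |X|$ form a partition. Thus specifying a log structure on $\underline{X}$ is the same as specifying one on each $\underline{X}_i$; we take $\M_X|_{|X_i|} := \M_{X_i}$. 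The universal property for maps $X \to Y$ then reduces, via Lemma~\ref{lem:coproductsofspaces}, to giving an $\Esp$-coproduct decomposition of $\underline{X}$ together with log morphisms from each $X_i$.

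For finite inverse limits, consider a functor $i \mapsto X_i$ from a finite category $\mathcal{I}$ to $\LogEsp$. By axiom \eqref{finitelimits}, form $\underline{X} := \invlim_i \underline{X}_i$ in $\Esp$ with projections $\pi_i : \underline{X} \to \underline{X}_i$. Each $\pi_i$ induces an inverse image log structure $\pi_i^* \M_{X_i}$ on $\underline{X}$, and the structure maps of the diagram assemble these into a functor $\mathcal{I} \to \Log(\underline{X})$. Define
\[
  \M_X \;:=\; \dirlim_{i \in \mathcal{I}} \pi_i^* \M_{X_i}
\]
in $\Log(\underline{X})$, which exists because $\Log(\underline{X})$ has all direct limits (\S\ref{section:logstructures}). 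The canonical maps $\pi_i^* \M_{X_i} \to \M_X$ combine with the $\pi_i$ to give morphisms $X \to X_i$ compatible with the diagram.

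To verify the universal property, let $\{f_i : Y \to X_i\}$ be a cone. The underlying maps $\underline{f}_i$ give a unique $\underline{f} : \underline{Y} \to \underline{X}$ with $\pi_i \underline{f} = \underline{f}_i$ by the universal property in $\Esp$. The log parts $f_i^\dagger : \underline{f}^* \pi_i^* \M_{X_i} = \underline{f}_i^* \M_{X_i} \to \M_Y$ form a cocone under the diagram $i \mapsto \underline{f}^* \pi_i^* \M_{X_i}$ in $\Log(\underline{Y})$. Since $\underline{f}^* = (\underline{f}^{-1}(\slot))^a$ is a composition of left adjoints, it preserves direct limits, so $\underline{f}^* \M_X = \dirlim_i \underline{f}^* \pi_i^* \M_{X_i}$; the universal property of this colimit yields a unique $f^\dagger : \underline{f}^* \M_X \to \M_Y$ through which all $f_i^\dagger$ factor. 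The terminal object is the terminal $\Esp$-object with trivial log structure. Compatibility of both constructions with the forgetful functor \eqref{LogEsptoEsp} is immediate from the construction, since in each case the underlying space is literally the corresponding limit in $\Esp$.

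The main obstacle is the clean bookkeeping in the inverse limit step: one must ensure that pullback of log structures along $\underline{f}$ commutes with the formation of $\M_X$ as a direct limit; this is the key lemma that makes the universal property work, and it reduces to the adjointness of associated-log-structure formation together with the standard fact that $\underline{f}^{-1}$ preserves direct limits of sheaves.
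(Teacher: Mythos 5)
Your proof is correct and follows essentially the same route as the paper: the coproduct is the coproduct of underlying spaces with the glued log structure, and the finite inverse limit is the inverse limit of underlying spaces equipped with $\dirlim_i \pi_i^*\M_{X_i}$ taken in $\Log(\u{X})$. The paper states this construction without spelling out the universal-property check; your verification via the fact that $\u{f}^* = (\u{f}^{-1}(\slot))^a$ preserves direct limits is exactly the implicit justification.
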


\begin{proof} If $\{ X_i \}$ is a finite inverse limits system of log spaces, then we can construct an inverse limit $X$ of $\{ X_i \}$ by endowing the inverse limit space $\u{X}$ of $\{ \u{X}_i \}$ with the log structure given by the direct limit (in the category of log structures) of the log structures $\pi_i^* \M_{X_i}$, where $\pi_i : \u{X} \to \u{X}_i$ is the projection map for the inverse limit $\u{X}$.  To form the coproduct of log spaces $\{ X_i \}$, we first form the coproduct $\u{X}$ of $\{ \u{X}_i \}$ in spaces.  Since the underlying space functor for spaces commutes with coproducts, $|\u{X}| = \coprod |\u{X}_i|$ and since representable presheaves are sheaves we have $\str_X = \coprod \str_{X_i}$ and we can endow $\u{X}$ with the log structure $\M_X := \coprod_i \M_{X_i}$.  This $X$ will serve as the coproduct of $\{ X_i \}$.   \end{proof}

\begin{lem} \label{lem:inverselimits}  A finite inverse limit $X$ of coherent log spaces $X_i$ is coherent. \end{lem}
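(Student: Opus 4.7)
The plan is to exhibit, locally near any point $x \in X$, a finitely generated chart for $\M_X$ assembled from charts for the $\M_{X_i}$.  By Proposition~\ref{prop:LogEsplimits}, $\M_X$ is the direct limit $\dirlim_i \pi_i^* \M_{X_i}$ in the category of log structures on $\u{X}$, where $\pi_i : X \to X_i$ are the projections.  Every finite inverse limit in any category with finite limits is built from terminal objects, finite products, and fibered products, and all three of these are preserved by $X \mapsto \u{X}$ and by the functor $i \mapsto \pi_i^*\M_{X_i}$ above.  By an inductive reduction, it therefore suffices to treat two cases: (a) $X$ is a finite product of the $X_i$; (b) $X$ is a fibered product $X_1 \times_{X_0} X_2$.

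For case (a), $\M_X$ is the coproduct of the $\pi_i^* \M_{X_i}$ in log structures.  Choose finitely generated charts $a_i : P_i \to \M_{X_i}(V_i)$ near $\pi_i(x)$, and shrink to a common open neighborhood $U \subseteq \bigcap_i \pi_i^{-1}(V_i)$ of $x$ to get charts $P_i \to (\pi_i^* \M_{X_i})(U)$.  The monoid coproduct $P := P_1 \oplus \cdots \oplus P_n$ is finitely generated (a finite coproduct of finitely generated monoids is finitely generated, \S\ref{section:monoidbasics}), and the induced map $P \to \M_X(U)$ is a chart: the associated-log-structure functor $(-)^a$ is a left adjoint (\S\ref{section:logstructures}), as is the constant-sheaf functor $Q \mapsto \u{Q}$, so both commute with the finite colimit computing $\M_X|U$, yielding $P^a = \M_X|U$.

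Case (b) is analogous, with a monoid pushout replacing the coproduct.  Since $\M_X = \pi_1^* \M_{X_1} \oplus_{\pi_0^* \M_{X_0}} \pi_2^* \M_{X_2}$ in log structures, the key step is producing \emph{compatible} finitely generated charts near the relevant points: a chart $Q \to \M_{X_0}$ near $\pi_0(x)$, charts $P_i \to \M_{X_i}$ near $\pi_i(x)$, and monoid maps $Q \to P_i$ making the squares for $f_i : X_i \to X_0$ commute.  Granted these, the finitely generated monoid $P := P_1 \oplus_Q P_2$ (a finite colimit of finitely generated monoids) gives a chart $P \to \M_X(U)$ on a common neighborhood $U$ of $x$ by the same left-adjoint argument: $P^a = P_1^a \oplus_{Q^a} P_2^a = \M_X|U$.

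The principal obstacle is thus the construction of compatible charts in case (b).  Fix an arbitrary finitely generated chart $Q \to \M_{X_0}$ near $\pi_0(x)$, together with arbitrary finitely generated charts $P_i \to \M_{X_i}$ near $\pi_i(x)$.  The construction in the proof of Lemma~\ref{lem:chartformorphism1}, applied to the morphism $f_i$ with the prescribed $Q$ on the target and $P_i$ on the source, enlarges $P_i$ to a finitely generated chart $P_i' \supseteq P_i$ on a possibly smaller neighborhood, together with a monoid homomorphism $Q \to P_i'$ making the resulting square commute.  Nothing in that argument uses fineness---it only adjoins to $P_i$ finitely many new generators witnessing the images of the (finitely many) generators of $Q$---so it applies verbatim in the coherent setting.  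Applying this once for each $i=1,2$ supplies the desired compatible charts and completes the proof.
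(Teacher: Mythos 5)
Your proof is correct and uses essentially the same mechanism as the paper's: produce finitely generated charts compatible with the transition maps via Lemma~\ref{lem:chartformorphism1}, then conclude by noting that the associated-log-structure functor, being a left adjoint, commutes with the direct limit computing $\M_X$. The only difference is organizational---the paper skips your reduction to products and fibered products and directly asserts a compatible system of charts for the whole finite diagram---so your version just makes that step explicit.
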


\begin{proof} The question and the construction of inverse limits are local in nature, so we can assume, using Lemma~\ref{lem:chartformorphism1} (or its proof) that there are finitely generated charts $a_i : Q_i \to \M_{X_i}(X_i)$ for all the $X_i$ lifting to charts $Q_i \to Q_j$ for all the transition functions $X_i \to X_j$.  Since formation of associated log structures commutes with direct limits and pullbacks, the natural map $\dirlim Q_i \to  \M_X(X)$ is a finitely generated chart. \end{proof}

Unfortunately, a finite inverse limit of \emph{fine} log spaces is not necessarily fine (i.e.\ is not necessarily integral), though we will see in \S\ref{section:integration} that the category $\FineLogEsp$ of fine log spaces \emph{does} have finite inverse limits, though they will \emph{not} commute with \eqref{LogEsptoEsp}.

There is a tautological monoid homomorphism \bne{tautmap} \NN & \to & \M_{\u{\AA}^1}(\u{\AA}^1) = \Hom_{\Esp}(\u{\AA}^1,\u{\AA}^1) \ene given by mapping $1 \in \NN$ to $\Id \in \Hom_{\Esp}(\u{\AA}^1,\u{\AA}^1)$.  We let $\AA^1$ denote the log space whose underlying space is $\u{\AA}^1$ and whose log structure $\M_{\AA^1}$ is the one associated to \eqref{tautmap}.   Using the fact that $\u{\AA}^1$ (tautologically) represents the functor \be \Esp & \to & \Mon \\ \u{X} & \mapsto & \str_{ \u{X} }( \u{X} ) \ee we see that $\AA^1$ represents the functor \be \LogEsp & \to & \Mon \\ X & \mapsto & \M_X(X). \ee  (We will prove a more general statement along these lines in Proposition~\ref{prop:APdescription}.)  This endows $\AA^1$ with the structure of a monoid object in $\LogEsp$.

\begin{prop} \label{prop:logspacesarespaces}  For any category of spaces $(\Esp,\u{\AA}^1)$, the pair $(\LogEsp,\AA^1)$ consisting of the category of log spaces and the monoid object $\AA^1$ defined above is itself a category of spaces and the functor \eqref{LogEsptoEsp} is a $1$-morphism of categories of spaces (Definition~\ref{defn:morphismofspaces}).  The association of ``log spaces" to ``spaces" defines an endomorphism \be \Log : \Univ & \to & \Univ \ee of the $2$-category $\Univ$.  \end{prop}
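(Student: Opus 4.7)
The plan is to verify axioms \eqref{finitelimits}--\eqref{openunits} of Definition~\ref{defn:spaces} for $(\LogEsp,\AA^1)$ one-by-one, reducing most of them to the corresponding axioms for $(\Esp,\u{\AA}^1)$ via the forgetful functor $X\mapsto\u{X}$ of \eqref{LogEsptoEsp} plus routine facts about log structures from \S\ref{section:logstructures}. Axioms \eqref{finitelimits} and \eqref{coproducts} are Proposition~\ref{prop:LogEsplimits}. Axiom \eqref{coproductscommute} holds because $\LogEsp\to\Top$ factors as $\LogEsp\to\Esp\to\Top$, and both these functors preserve coproducts (the first by Proposition~\ref{prop:LogEsplimits}, the second by \eqref{coproductscommute} for $\Esp$). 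For \eqref{openembeddings}, given $X\in\LogEsp$ and an open $|U|\subseteq|\u{X}|$, take the $\Esp$-open embedding $\u{U}\to\u{X}$ and equip $\u{U}$ with the restricted log structure $\M_X|U$; it is immediate that this represents the required presheaf, and the image of $\emptyset\subseteq|X|$ is the initial object by the corresponding property of $\Esp$. Axiom \eqref{coproductsandopenembeddings} holds because coproducts in $\LogEsp$ are constructed so that their underlying $\Esp$-structure maps are open embeddings, and the log structure of the coproduct restricts (tautologically) to the log structure of each summand. For \eqref{openunits}, identify $\GG_m$ in $\LogEsp$ with the open embedding $\u{\GG}_m\hookrightarrow\u{\AA}^1$ equipped with the trivial log structure: the log structure axiom $\M_X^*=\str_X^*$ implies that global units in $\M_X(X)$ coincide with global units in $\str_X(X)$, and the pullback of $\M_{\AA^1}$ to $\u{\GG}_m$ is trivial because the tautological generator $1\in\NN$ pulls back to the tautological map $\u{\GG}_m\to\u{\AA}^1$, which factors through $\u{\GG}_m\subseteq\u{\AA}^1$ and hence lies in $\str_{\GG_m}^*$.

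Axioms \eqref{Zariskitopology} and \eqref{Zariskigluing} are the most substantial and both rely on the fact that log structures form a sheaf (indeed, a stack) for the Zariski topology. For \eqref{Zariskitopology}, given a Zariski cover $f\colon X\to Y$ in $\LogEsp$ and $T\in\LogEsp$, the underlying $\u{f}$ is a Zariski cover in $\Esp$, so we obtain an equalizer diagram for $\Hom_{\Esp}(\slot,\u{T})$. To lift it to $\Hom_{\LogEsp}$, note that a $\LogEsp$-morphism to $T$ is an $\Esp$-morphism together with a morphism of log structures, and the latter data (maps $\u{g}^{*}\M_T\to\M_Y$) is equivalent to descent data on $X$ because the association $U\mapsto\Hom_{\Log(U)}(\u{g}|_U^{*}\M_T,\M_Y|U)$ is a sheaf on $Y$ in the Zariski topology. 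For \eqref{Zariskigluing}, given a Zariski gluing datum $i\mapsto X_i$ in $\LogEsp$, form the corresponding datum in $\Esp$ and take its direct limit $\u{X}$ via axiom \eqref{Zariskigluing} for $\Esp$; since the transition maps are open embeddings (along which pullback of log structures is just restriction), the log structures $\M_{X_i}$ glue to a log structure $\M_X$ on $\u{X}$ by the same sheaf property, and one verifies that $X:=(\u{X},\M_X)$ is the direct limit of $i\mapsto X_i$ in $\LogEsp$ and is preserved by $\LogEsp\to\Top$.

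This completes the verification that $(\LogEsp,\AA^1)$ is a category of spaces. The forgetful functor $\LogEsp\to\Esp$ satisfies \eqref{limitpres} by Proposition~\ref{prop:LogEsplimits}, \eqref{A1toA1} by construction of $\AA^1$, and \eqref{openembeddingspreserved} by construction of open embeddings in $\LogEsp$, so it is a $1$-morphism in $\Univ$. For the endomorphism $\Log\colon\Univ\to\Univ$: on objects it sends $(\Esp,\u{\AA}^1)$ to $(\LogEsp,\AA^1)$; on a $1$-morphism $(F,\eta)\colon(\Esp,\u{\AA}^1)\to(\Esp',\u{\AA}'^1)$ it produces $\Log(F,\eta)\colon\LogEsp\to\LogEsp'$ sending $(X,\M_X)$ to $(F(\u{X}),\eta_{\u{X}}^{*}\M_X)$. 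Here the pullback log structure is taken along the morphism of locally monoidal spaces $(|F(\u{X})|,\str_{F(\u{X})})\to(|\u{X}|,\str_{\u{X}})$ obtained as follows: for each open $V\subseteq|\u{X}|$, functoriality of $F$ and the equality $F(\u{\AA}^1)=\u{\AA}'^1$ of monoid objects induces a monoid map $\str_{\u{X}}(V)=\Hom_{\Esp}(V,\u{\AA}^1)\to\Hom_{\Esp'}(\eta_{\u{X}}^{-1}V,\u{\AA}'^1)=\str_{F(\u{X})}(\eta_{\u{X}}^{-1}V)$, and this map is local by Lemma~\ref{lem:localstalks}. On $2$-morphisms $\gamma\colon(F,\eta)\to(F',\eta')$, define $\Log(\gamma)$ on $(X,\M_X)$ by the isomorphism $\gamma(\u{X})$ on underlying spaces, together with the canonical identification of pulled-back log structures induced by $\gamma$.

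The main obstacle in the last step is verifying $2$-functoriality of $\Log$: that it respects vertical and horizontal composition of $2$-morphisms and sends identity $1$-morphisms to identities. These checks reduce to functoriality of the inverse-image operation on log structures, i.e., to the natural identifications $f^{*}g^{*}\M=(gf)^{*}\M$ and $\Id^{*}\M=\M$, which are standard (see \S\ref{section:logstructures}). Verifying the axioms \eqref{limitpres}--\eqref{openembeddingspreserved} for each $\Log(F,\eta)$ reduces similarly: \eqref{limitpres} because $F$ preserves them and pullback of log structures preserves direct limits (it is a left adjoint) together with Proposition~\ref{prop:LogEsplimits}; \eqref{A1toA1} because $F(\u{\AA}^1)=\u{\AA}'^1$ carries the tautological generator of $\NN$ to the tautological generator of $\NN$, so the associated log structures match; and \eqref{openembeddingspreserved} because pullback of log structures along an open embedding is restriction, which matches the open embedding in $\LogEsp'$ produced by \eqref{openembeddingspreserved} for $(F,\eta)$.
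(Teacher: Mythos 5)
Your proposal is correct and follows essentially the same route as the paper's proof: verify axioms \eqref{finitelimits}--\eqref{openunits} one at a time by reducing to the corresponding axioms for $(\Esp,\u{\AA}^1)$ via the forgetful functor, construct open embeddings by restricting the log structure, and identify $\GG_m$ with $\u{\GG}_m$ carrying the trivial log structure. You supply considerably more detail than the paper does on the construction of $\Log$ on $1$- and $2$-morphisms (via pullback of log structures along the locally monoidal space morphism induced by $\eta$), which the paper leaves implicit; that material is consistent with the paper's framework and fills a real gap in its exposition.
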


\begin{proof}  Proposition~\ref{prop:LogEsplimits} shows that $\LogEsp$ satisfies \eqref{finitelimits} and \eqref{coproducts} and the \eqref{LogEsptoEsp} preserves finite inverse limits and coproducts.  By construction of $\AA^1$, we see that \eqref{LogEsptoEsp} takes $\AA^1$ to $\u{\AA}^1$.  Because of the way we defined the underlying space functor for $\LogEsp$, that proposition also shows that the axiom \eqref{coproductscommute} for $\Esp$ implies \eqref{coproductscommute} for $\LogEsp$.  For the open embeddings axiom \eqref{openembeddings}, suppose $X$ is a log space and $|\u{U}|$ is an open subset of $|X| = |\u{X}|$.  Let $\u{U} \to \u{X}$ be the open embedding in $\Esp$ corresponding to $|\u{U}|$.  Then if we set $U := (\u{U},\M_X|U)$, then it is clear that $U \to X$ ``is the" open embedding in $\LogEsp$ corresponding to $|\u{U}|$.  This discussion also shows that \eqref{LogEsptoEsp} satisfies the condition \eqref{openembeddingspreserved} in Definition~\ref{defn:morphismofspaces}.  The axioms \eqref{Zariskitopology}-\eqref{Zariskigluing} for $\LogEsp$ are established easily from the fact that they hold in $\Esp$ and in the category of monoidal spaces.  The final axiom \eqref{openunits} is also clearly inherited from $\Esp$---in fact, the group object $\GG_m \in \LogEsp$ is nothing but $\u{\GG}_m$ with the trivial log structure. \end{proof}

\begin{rem} \label{rem:spacesandlogspaces} The above proposition in some sense puts $(\LogEsp,\AA^1)$ back on the same footing as $(\Esp,\u{\AA}^1)$ because ``log spaces" are just the ``spaces" for another category of spaces, so it now becomes superfluous to handle ``log spaces" differently from ``spaces."  Nevertheless, we will often maintain a somewhat artificial distinction between the two, especially if we want to discuss them at the same time. \end{rem}

\begin{rem} One can talk about ``log log spaces" by iterating the functor $\Log$, but in practice there is no use for the category of ``log log spaces." \end{rem}

\subsection{Strict maps of log spaces} \label{section:strictmapsoflogspaces}  

\begin{defn} \label{defn:strict} A morphism $f: X \to Y$ of log spaces is called \emph{strict} iff $f^\dagger : f^* \M_Y \to \M_X$ is an isomorphism. \end{defn}

\begin{lem} \label{lem:strictmorphisms} Strict morphisms are closed under base change.  For a $\LogEsp$ morphism $f : X \to Y$ and a strict $\LogEsp$ morphism $g : Y \to Z$, $gf$ is strict iff $f$ is strict.  A morphism $f : X \to Y$ of integral log spaces is strict iff $\ov{f}^\dagger : f^{-1} \ov{\M}_Y \to \ov{\M}_X$ is an isomorphism.  \end{lem}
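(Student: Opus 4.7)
The three claims will be handled independently, each reducing to a short input: the construction of inverse limits in $\LogEsp$ (Proposition~\ref{prop:LogEsplimits}) for the base change claim, a formal diagram chase for the composition claim, and Lemma~\ref{lem:chart1} for the integral case. None of the three is genuinely hard; the only real work is bookkeeping in the last one.

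For base change, I would fix a strict $f \colon X \to Y$ and an arbitrary $g \colon Y' \to Y$, form the pullback $X' = X \times_Y Y'$ with projections $\pi_X \colon X' \to X$ and $f' \colon X' \to Y'$ (so that $\pi_Y := f\pi_X = gf'$), and then unwind Proposition~\ref{prop:LogEsplimits}: $\M_{X'}$ is the direct limit, in log structures on $\u{X'}$, of the span $\pi_X^\ast \M_X \leftarrow \pi_Y^\ast \M_Y \to (f')^\ast \M_{Y'}$, where the left-hand arrow is $\pi_X^\ast(f^\dagger)$. Strictness of $f$ makes $\pi_X^\ast(f^\dagger)$ an isomorphism, and a pushout over an isomorphism collapses to the other leg, giving $\M_{X'} = (f')^\ast \M_{Y'}$; this identifies $(f')^\dagger$ with an isomorphism, so $f'$ is strict.

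The composition claim I would dispatch in one line: $(gf)^\dagger$ factors canonically as $f^\ast(g^\ast \M_Z) \xrightarrow{f^\ast(g^\dagger)} f^\ast \M_Y \xrightarrow{f^\dagger} \M_X$, and since $g$ strict forces $f^\ast(g^\dagger)$ to be an isomorphism, $(gf)^\dagger$ is an isomorphism if and only if $f^\dagger$ is.

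For the integral case, the plan is to apply Lemma~\ref{lem:chart1} to the composite map of prelog structures $\varphi \colon f^{-1}\M_Y \to f^\ast \M_Y \xrightarrow{f^\dagger} \M_X$. Three items need to be checked: (i) both $f^{-1}\M_Y$ and $\M_X$ are integral prelog structures --- the former because $\M_Y$ is integral and the stalks of $f^{-1}\M_Y$ coincide with stalks of $\M_Y$, the latter by assumption; (ii) the log structure associated to the prelog structure $f^{-1}\M_Y \to \str_X$ is by definition $f^\ast \M_Y$, so ``$\varphi$ induces an isomorphism on associated log structures'' is exactly the assertion that $f$ is strict; and (iii) by Lemma~\ref{lem:localstalks} each $\Esp$-morphism gives a local map of monoidal spaces, so the identification $\ov{f^\ast\M_Y} = f^{-1}\ov{\M}_Y$ noted at the end of \S\ref{section:logstructures} applies, and the map induced by $\varphi$ on characteristics is precisely $\ov{f}^\dagger$. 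Lemma~\ref{lem:chart1} then delivers the required equivalence, and the only (modest) obstacle is keeping the identifications in (i)--(iii) straight so that Lemma~\ref{lem:chart1} is applied with the correct prelog structures.
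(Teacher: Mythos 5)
Your proposal is correct and follows essentially the same route as the paper: the first two claims are the ``straightforward exercises with the definitions'' (your pushout-collapse argument for base change and the factorization $(gf)^\dagger = f^\dagger \circ f^*(g^\dagger)$ are the intended ones), and the third claim is obtained, exactly as in the paper, by applying Lemma~\ref{lem:chart1} to the prelog morphism $f^{-1}\M_Y \to \M_X$ together with the identification $\ov{f^*\M_Y} = f^{-1}\ov{\M}_Y$ coming from locality of $f$.
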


\begin{proof} The first two statements are straightforward exercises with the definitions.  The third statement follows from Lemma~\ref{lem:chart1} and the fact that $\ov{f^* \M_Y} = f^{-1} \ov{\M}_Y$ because $f$ is local (\S\ref{section:logstructures}). \end{proof}

\begin{rem} \label{rem:logspacesstack} The forgetful functor $\LogEsp \to \Esp$ from log spaces to spaces is a \emph{fibered category} in the sense of \cite[3.1]{Vis}, \cite[2.1]{minimality}.  The \emph{cartesian arrows} in $\LogEsp$, in the stack-theoretic sense, are exactly the strict morphisms.  See \cite{minimality} for further discussion. \end{rem}

\subsection{Log spaces to locally monoidal spaces} \label{section:logspacestomonoidalspaces} We saw in Proposition~\ref{prop:logspacesarespaces} that $(\LogEsp,\AA^1)$ is a category of spaces, so we have a functor \bne{LogEsptoLMS} \LogEsp & \to & \LMS \\ \nonumber X & \mapsto & |X| \ene as a special case of the functor \eqref{EsptoLMS} of \S\ref{section:spacestolocallymonoidalspaces}.  Recall from \S\ref{section:definitionoflogspace} that the monoid object $\AA^1 \in \LogEsp$ represents the functor $X \mapsto \M_X(X)$ so that the ``structure sheaf" of the locally monoidal space $|X|$ \emph{is} the sheaf of monoids $\M_X$ given by the domain of the log structure $\alpha_X : \M_X \to \str_X$ on $X$.  One can view the maps $\alpha_X$ as a natural transformation of functors from \eqref{LogEsptoLMS} to the composition of the forgetful functor \eqref{LogEsptoEsp} and the functor \eqref{EsptoLMS}.

Our interest in the category $\LMS$ is mainly due to the fact that it is the target of the functor \eqref{LogEsptoLMS}.  We can compose \eqref{LogEsptoLMS} with the sharpening functor $\LMS \to \SMS$ (\S\ref{section:monoidalspacedefinitions}) to obtain a functor \bne{LogEsptoSMS} \LogEsp & \to & \SMS. \ene  We will usually denote this functor by $X \mapsto \ov{X}$ on objects and $f \mapsto \ov{f}$ on morphisms, to avoid cumbersome notation.

\begin{lem} \label{lem:chart2} Let $X$ be a log space with integral log structure $\M_X$.  Suppose $h : P \to \M_X(X)$ is a map of monoids and $$f :\ov{X} \to (\Spec P,\ov{\M}_P)$$ is the induced map of sharp monoidal spaces.  Then $h$ is a chart iff $\ov{f}^\dagger : f^{-1} \ov{\M}_P \to \ov{\M}_X$ is an isomorphism (i.e.\ $f$ is strict). \end{lem}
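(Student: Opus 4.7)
The plan is to show that both ``$h$ is a chart'' and ``$\ov{f}^\dagger$ is an isomorphism'' are equivalent to the same stalkwise condition: for each $x \in X$, the natural map $P/F_x \to \ov{\M}_{X,x}$ induced by $h_x$ is an isomorphism, where $F_x := h_x^{-1}(\M_{X,x}^*)$ is the face of the prelog structure $\alpha_X h : \u{P} \to \str_X$ at $x$. The bridge between the two halves is Lemma~\ref{lem:chart1}.

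First I would unpack $f$. By the Spec adjunction \eqref{Specadjunction}, the map $\ov{X}\to(\Spec P,\ov{\M}_P)$ is determined by the global section $P\xrightarrow{h}\M_X(X)\to\ov{\M}_X(X)$; because $\ov{\M}_{X,x}$ is sharp (its only unit is $0$), the underlying map sends $x$ to the prime $P\setminus F_x$. Using the stalk description of $\M_P$ from \S\ref{section:Spec} together with Lemma~\ref{lem:sharpening}, we obtain $(\ov{\M}_P)_{f(x)}=\ov{F_x^{-1}P}=P/F_x$, and the stalk $\ov{f}^\dagger_x:P/F_x\to\ov{\M}_{X,x}$ is the sharpening of $h_x$.

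Next I would identify the characteristic of the prelog structure $\u{P}\to\str_X$ with the ``same'' sheaf. Since formation of characteristics commutes with passage to the associated log structure (\S\ref{section:logstructures}), the stalk $(\overline{\u{P}^a})_x$ equals $P/F_x$, and the stalk at $x$ of the induced map $\overline{h^a}:\overline{\u{P}^a}\to\ov{\M}_X$ is again the sharpening of $h_x$. Hence the stalkwise condition ``$\ov{f}^\dagger_x$ is an isomorphism for all $x$'' is the same as ``$\overline{h^a}$ is an isomorphism on stalks.'' Applying Lemma~\ref{lem:chart1} to the map $\u{P}\to\M_X$ of integral prelog structures, the latter is equivalent to $h^a$ being an isomorphism, i.e.\ to $h$ being a chart. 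Both implications follow.

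The only delicate point, and the main potential obstacle, is the integrality hypothesis in Lemma~\ref{lem:chart1}: $\M_X$ is integral by assumption, but $\u{P}$ need not be. To handle this I would reduce to the case $P$ integral by factoring $h$ uniquely as $P\to P^{\rm int}\to\M_X(X)$ (possible since $\M_X(X)$ is integral); one checks that (i) the induced map $\u{P}^a\to\u{P^{\rm int}}^a$ is an isomorphism of log structures, because associated log structures are left adjoints and both prelog structures have the same preimage of $\str_X^*$ in terms of the local faces, and (ii) the $\LMS$-map $f$ factors through the homeomorphism $\Spec P^{\rm int}\to\Spec P$ induced by the surjection $P\to P^{\rm int}$, along which $\ov{\M}_P$ pulls back to $\ov{\M}_{P^{\rm int}}$ at every relevant stalk. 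Once this reduction is in place, the argument above applies verbatim and the lemma is proved.
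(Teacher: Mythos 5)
Your main argument is the paper's argument: identify the stalk $\ov{f}^\dagger_x : P/F_x \to \ov{\M}_{X,x}$ with the map induced by $h$ on characteristic monoids of the prelog structure $\u{P} \to \str_X$, then invoke Lemma~\ref{lem:chart1} to convert ``isomorphism on characteristics'' into ``isomorphism on associated log structures,'' i.e.\ ``$h$ is a chart.'' The paper phrases the identification as a factorization $\ov{h} = \ov{f}^\dagger \circ f^{-1}\ov{\tau}^\dagger$ with $f^{-1}\ov{\tau}^\dagger$ an isomorphism on stalks rather than working stalk by stalk, but the content is identical, and this part of your proof is correct.

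The one place you overreach is the closing reduction to $P$ integral. You are right to notice that Lemma~\ref{lem:chart1} formally requires the \emph{source} prelog structure to be integral, and that the paper's proof elides this (in every application $P$ is fine, hence integral). But your patch does not work as stated. Claim (i), that $\u{P}^a \to \u{P^{\rm int}}^a$ is an isomorphism, is false in general: if, say, $\str_X^*$ is trivial and $F_x = h_x^{-1}(\str_{X,x}^*) = \{0\}$, then these associated log structures are just $\u{P}$ and $\u{P^{\rm int}}$ themselves. The adjunction argument only shows that the two prelog structures corepresent the same functor on \emph{integral} log structures, not on all log structures, so it does not give the isomorphism. (What is true, and would suffice for one direction, is that if $h$ \emph{is} a chart then $\u{P}^a \cong \M_X$ is integral, which forces $\u{P}^a \to \u{P^{\rm int}}^a$ to be surjective and injective, hence an isomorphism.) Claim (ii) fails for the same reason: the stalk $\ov{\M}_{P,\p} = P/F$ need not coincide with $P^{\rm int}/F'$ when $P$ is not integral, so $\ov{\M}_P$ does not pull back to $\ov{\M}_{P^{\rm int}}$. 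If you want the lemma in this generality you would need a direct diagram chase in place of Lemma~\ref{lem:chart1}; otherwise, simply add the (harmless, and implicitly assumed by the paper) hypothesis that $P$ is integral.
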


\begin{proof}  In the proof we will write $P_X$ for the sheaf of locally constant functions to $P$ on $X$.  By Lemma~\ref{lem:chart1}, $h : P_X \to \M_X$ is a chart iff $\ov{h} : \ov{P}_X \to \ov{\M}_{X}$ is an isomorphism, where $\ov{P}_X = P_X / (\alpha_X h)^{-1}(\str_{X,x}^*)$ is the characteristic monoid of the prelog structure $\alpha_X h : P_X \to \str_X$.  On $Y = \Spec P$, recall that there is a natural map $\tau^\dagger : \u{P} \to \M_P$ of sheaves of monoids on $Y$ (here $\u{P}$ is the constant sheaf of monoids on $Y$ associated to $P$), corresponding to the tautological map $P \to \M_P(Y)$ under the adjunction between ``constant sheaf" and ``global sections" (\S\ref{section:Specrevisited}).  Composing $\tau^\dagger$ with the sharpening, we obtain a natural map $\ov{\tau}^\dagger : \u{P} \to \ov{\M}_P$.  The original $h$ factors as the composition of $f^{-1}n : P_X \to f^{-1} \M_P$ and the map $f^{-1} \M_P \to \M_X$ corresponding to the map from $X$ to the locally monoidal space $(\Spec P,  \M_P)$.  In particular, $\ov{h}$ factors as the composition of $f^{-1} \ov{n} : \ov{P}_X \to f^{-1} \ov{\M}_P$ and $\ov{f}^\dagger$, so it is enough to prove that $f^{-1} \ov{\tau}^\dagger$ is an isomorphism, which is clear from the way $f$ is constructed from $h$ (check on stalks, say). \end{proof}

\begin{prop} If $X$ is a fine log space then the sharp monoidal space $\sms{X}$ is fine in the sense of Definition~\ref{defn:coherentmonoidalspace}. \end{prop}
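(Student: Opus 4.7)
The plan is to reduce the statement to a local question on $X$ and then use the existence of fine charts (available since $X$ is fine) to produce, locally, a strict map from $\ov{X}$ to a fine sharp affine fan $(\Spec P, \ov{\M}_P)$. This should be exactly the local structure required by Definition~\ref{defn:coherentmonoidalspace}.

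In more detail: since the property of being fine for a sharp monoidal space is local on its underlying topological space, I fix an arbitrary point $x \in X$ and aim to produce an open neighborhood $U$ of $x$ such that $\ov{U} = (|U|, \ov{\M}_X|U)$ admits a strict $\SMS$ map to some fine sharp affine fan. Because $X$ is fine, Lemma~\ref{lem:chartformorphism1} supplies a fine chart $h : P \to \M_X(U)$ on some neighborhood $U$ of $x$, with $P$ a fine monoid.

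Next, I feed $h$ into the machinery of \S\ref{section:Specrevisited}. Via the adjunction \eqref{Specadjunction}, the homomorphism $h$ determines a $\LMS$ morphism $f : |U| \to \Spec P$, where $|U|$ carries the sheaf $\M_X|U$. Applying the sharpening functor $\LMS \to \SMS$ (see \S\ref{section:monoidalspacedefinitions}), I obtain a morphism of sharp monoidal spaces
\[
\ov{f} : \ov{U} \;\longrightarrow\; (\Spec P, \ov{\M}_P).
\]
The target $(\Spec P, \ov{\M}_P)$ is a fine sharp affine fan in the sense of Definition~\ref{defn:fan} because $P$ is fine. Now I invoke Lemma~\ref{lem:chart2}, applied to the integral (in fact fine) log structure $\M_X|U$ and the homomorphism $h$: the chart condition on $h$ is equivalent to the assertion that $\ov{f}$ is strict, i.e.\ that $\ov{f}^\dagger : f^{-1}\ov{\M}_P \to \ov{\M}_X|U$ is an isomorphism of sheaves of sharp monoids on $|U|$. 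Thus $\ov{U}$ admits, locally around the arbitrary point $x$, a strict morphism to a fine sharp affine fan, which is the required local structure.

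The main thing to check is that this local condition is the one codified by Definition~\ref{defn:coherentmonoidalspace}: a sharp monoidal space should be called fine precisely when each point has a neighborhood that admits a strict $\SMS$ morphism to $(\Spec P, \ov{\M}_P)$ for some fine monoid $P$ (equivalently, possesses a ``fine chart'' in the monoidal-space sense, parallel to how fine log structures are defined via fine charts). Granting this, no further work is required; the real content of the proof is the production of fine charts on $X$ plus the translation, via Lemma~\ref{lem:chart2}, between the chart condition for log structures and strictness of the associated sharp monoidal-space map. The only mildly delicate point is keeping track of the distinction between the chart as a monoid homomorphism into global sections and its incarnation as a morphism of (sharp) monoidal spaces; both viewpoints are already developed in \S\ref{section:chartsforlogstructures} and \S\ref{section:Specrevisited} and line up cleanly via \eqref{Specadjunction}.
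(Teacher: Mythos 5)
Your proof is correct and follows exactly the route the paper takes: its own proof is simply ``clear from the definitions using Lemma~\ref{lem:chart2},'' i.e.\ a local fine chart $P \to \M_X(U)$ induces, by that lemma, a strict $\SMS$ morphism $\ov{U} \to (\Spec P,\ov{\M}_P)$, which is precisely the defining condition in Definition~\ref{defn:coherentmonoidalspace}. (One tiny remark: you do not need Lemma~\ref{lem:chartformorphism1} to produce the local fine chart --- that is what ``$X$ is fine'' means by Definition~\ref{defn:chart}.)
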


\begin{proof} This is clear from the definitions using Lemma~\ref{lem:chart2}. \end{proof}

\begin{rem} \label{rem:strictmorphisms} If $f : X \to Y$ is a strict morphism of log spaces in the sense of Definition~\ref{defn:strict}, then $\lms{f}$ need not be a strict $\LMS$ morphism in the sense of Definition~\ref{defn:monoidalspacechart} because $f^{-1} \M_Y$ and $f^* \M_Y$ may not coincide.  However, if $f$ is strict, then the $\SMS$ morphism $\ov{f}$ will be strict, and the converse holds as long as $X$ and $Y$ are integral---see Lemma~\ref{lem:strictmorphisms}.  \end{rem}

\subsection{Fans to log spaces} \label{section:fanstologspaces} Let $(\Esp,\u{\AA}^1)$ be a category of spaces, $(\AA^1,\LogEsp)$ the associated category of log spaces (\S\ref{section:definitionoflogspace}).  Since $(\AA^1,\LogEsp)$ is itself a category of spaces (Proposition~\ref{prop:logspacesarespaces}), the functor \eqref{FanstoEsp} of \S\ref{section:realizationoffans} may be viewed as a functor \bne{FanstoLogEsp} \AA : \Fans & \to & \LogEsp  \ene (in fact a $1$-morphism of spaces).  To avoid confusion, we will usually denote the analogous functor (in fact: $1$-morphism of spaces) for the category of spaces $(\Esp,\u{\AA}^1)$ by \bne{FanstoEsp2} \u{\AA} : \Fans & \to & \Esp. \ene As the notation suggests, the functor \eqref{FanstoEsp2} ``is" (isomorphic by a unique $2$-isomorphism to) the composition of \eqref{FanstoLogEsp} and the forgetful functor \eqref{LogEsptoEsp}.  Indeed, the latter composition is a $1$-morphism of spaces (Proposition~\ref{prop:logspacesarespaces}) so the two $1$-morphisms of spaces in question are uniquely $2$-isomorphic in $\Univ$ because $\Fans$ is ``the" $2$-initial object in $\Univ$ by Theorem~\ref{thm:fans2initial}.

Let us now make the relationship between \eqref{FanstoLogEsp} and \eqref{FanstoEsp} a little more concrete.  We can compose $\Spec : \Mon^{\rm op} \to \Fans$ with \eqref{FanstoEsp2} or \eqref{FanstoLogEsp} to obtain functors \bne{MontoEsp} \u{\AA} : \Mon^{\rm op} & \to & \Esp \\ \nonumber P & \mapsto & \u{\AA}(P) \\ \label{MontoLogEsp} \AA : \Mon^{\rm op} & \to & \LogEsp \\ \nonumber P & \mapsto & \AA(P). \ene  To ease notation, we let $\str_P$ denote the structure sheaf of $\u{\AA}(P)$ (the sheaf of monoids on $|\u{\AA}(P)|$ represented by $\u{\AA}^1$).  According to Lemma~\ref{lem:AP}, $\u{\AA}(P)$ represents the presheaf \bne{uAP} \Esp & \to & \Sets \\ \nonumber \u{X} & \mapsto & \Hom_{\Mon}(P,\str_{\u{X}}(\u{X}). \ene From this, we have a tautological map of monoids \bne{tautchart2} P & \to & \str_P(\u{\AA}(P)) \ene  by considering the image of the identity map under the natural bijection \be \Hom_{\Esp}(\u{\AA}(P),\u{\AA}(P)) & = & \Hom_{\Mon}(P,\str_P(\u{\AA}(P))). \ee

\begin{prop} \label{prop:APdescription} The log space $\AA(P)$ represents the presheaf \be \LogEsp & \to & \Sets \\ X & \mapsto & \Hom_{\Mon}(P,\M_X(X)) \ee and is naturally isomorphic to the space $\u{\AA}(P)$ equipped with the log structure $\M_P$ associated to \eqref{tautchart2}.  \end{prop}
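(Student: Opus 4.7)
The plan is to prove both claims simultaneously via Yoneda. First I would deduce the representability statement directly from machinery already in place: by Theorem~\ref{thm:AY} applied to the category of spaces $(\LogEsp, \AA^1)$ (which is a category of spaces by Proposition~\ref{prop:logspacesarespaces}), the functor $\AA$ is characterized by natural bijections $\Hom_{\LogEsp}(X, \AA(Y)) = \Hom_{\LMS}(|X|, Y)$ for $Y \in \Fans$ and $X \in \LogEsp$. Specializing to $Y = \Spec P$ and composing with \eqref{Specadjunction} yields $\Hom_{\LogEsp}(X, \AA(P)) = \Hom_{\LMS}(|X|, \Spec P) = \Hom_{\Mon}(P, \M_X(X))$. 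The key point here is that the structure sheaf of monoids on the locally monoidal space $|X|$ attached to $X$ via \eqref{LogEsptoLMS} is the log structure $\M_X$ itself, as discussed in \S\ref{section:logspacestomonoidalspaces}; this establishes the first assertion.

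For the second assertion, I would set $X_0 := (\u{\AA}(P), \M_P)$ and show that it represents the same presheaf, after which Yoneda supplies the claimed natural isomorphism $X_0 \cong \AA(P)$. Unwinding definitions, a morphism $Y \to X_0$ in $\LogEsp$ consists of an $\Esp$-morphism $\u{f}: \u{Y} \to \u{\AA}(P)$ together with a morphism of log structures $\u{f}^*\M_P \to \M_Y$ on $\u{Y}$. By Lemma~\ref{lem:AP} (in the form \eqref{uAP}), the first datum is the same as a monoid map $P \to \str_{\u{Y}}(\u{Y})$. Because $\M_P$ is the log structure associated to the tautological prelog structure $\underline{P} \to \str_P$ of \eqref{tautchart2}, and formation of associated log structures commutes with inverse image, $\u{f}^*\M_P$ is the log structure associated to the prelog structure $\underline{P} \to \str_Y$ obtained from the first datum. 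By the adjunction between log and prelog structures (\S\ref{section:logstructures}), the second datum is therefore equivalent to a monoid map $P \to \M_Y(Y)$ whose composition with $\alpha_Y$ recovers the map $P \to \str_Y(Y)$ coming from $\u{f}$.

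The two pieces of data are thus jointly encoded by a single monoid homomorphism $P \to \M_Y(Y)$, with the underlying $\Esp$-morphism $\u{f}$ forced by post-composition with $\alpha_Y$. This gives a natural bijection between $\Hom_{\LogEsp}(Y, X_0)$ and $\Hom_{\Mon}(P, \M_Y(Y))$, matching the functor represented by $\AA(P)$, so Yoneda delivers the desired natural isomorphism $X_0 \cong \AA(P)$. The only point requiring real care is the bookkeeping between the ``underlying space'' datum and the ``log structure'' datum via $\alpha_Y$ — verifying that the two conditions really do combine into the single unconstrained datum of a map $P \to \M_Y(Y)$ — but this is entirely formal and I do not anticipate any serious obstacle to carrying it out in detail.
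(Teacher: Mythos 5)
Your proposal is correct and follows essentially the same route as the paper: the first assertion is extracted from the general representability of $\AA(\Spec P)$ in the category of spaces $(\LogEsp,\AA^1)$ together with the identification of the structure sheaf of $|X|$ with $\M_X$, and the second is proved by Yoneda, using that formation of associated log structures commutes with inverse images so that a morphism to $(\u{\AA}(P),\M_P)$ unwinds to a single monoid homomorphism $P \to \M_Y(Y)$ whose composite with $\alpha_Y$ determines the underlying $\Esp$-morphism. The bookkeeping step you flag at the end is exactly the content of the paper's own argument and goes through as you describe.
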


\begin{proof} The first statement holds by construction of $\AA(P)$ (Lemma~\ref{lem:AP}, \S\ref{section:realizationoffans}) because $\M_X$ is the structure sheaf of the log space $X$, as discussed above.  For the next statement, Yoneda's Lemma reduces us to showing that $(\u{\AA}(P), \M_P)$ represents the same presheaf.

Suppose $X$ is a log space, $Y$ is a \emph{pre}log space, and $\u{f} :\u{X} \to \u{Y}$ is a map of spaces.  By the adjointness property of associated log structures, formation of associated log structures commutes with inverse images and giving a map of log structures $f^\dagger : f^* (\M_Y^a) \to \M_X$ is the same thing as giving a map of prelog structures $f^\dagger_{\rm pre} : f^{-1} \M_Y \to \M_X$.  In particular, if \bne{globalchartT} P & \to & \M_Y(Y) \ene is a global chart for a log structure $\M_Y$, then giving a map of log structures $f^\dagger : f^* \M_Y \to \M_X$ is nothing but the data of a monoid homomorphism $f^\dagger : P \to \M_X(X)$ making the following diagram of monoids commute: $$ \xym{ P \ar[d] \ar[rr]^-{f^\dagger} & & \M_X(X) \ar[d] \\ \M_Y(Y) \ar[r]^-{\alpha_Y} & \str_Y(Y) \ar[r] & \str_X(X).  }$$  

By combining the above discussion (in the case where $\u{Y} = \u{\AA}(P)$ and \eqref{globalchartT} is obtained from \eqref{tautchart2}) and the fact that $\u{\AA}(P)$ represents \eqref{uAP}, we see that $(\u{\AA}(P), \M_P)$ represents the same presheaf as $\AA(P)$. \end{proof}

\subsection{Examples} \label{section:logspacesexamples} Now that we have the basic theory of spaces and log spaces, it is time for some examples.  The categories of log spaces associated to some of the categories of spaces mentioned in \S\ref{section:spaces} are tabulated below.  In the left column, we give the category of spaces, in the middle column we give the name of the corresponding ``log spaces," and in the right column we introduce alternative notation for the functor \eqref{MontoLogEsp} of the previous section.

\begin{tabular}{lll} $(\Top,\RR_+)$ & positive log topological spaces & $\RR_+( \slot )$ \\ $(\LRS,\Spec \ZZ[x])$ & log locally ringed spaces & $\Spec \ZZ[\slot]$ \\ $(\Sch,\Spec \ZZ[x])$ & (Zariski) log schemes & $\Spec \ZZ[ \slot ]$ \\ $(\DS,\RR)$ & log differentiable spaces & $\RR(\slot)$ \\ $(\DS,\RR_+)$ & positive log differentiable spaces & $\RR_+(\slot)$ \\ $(\DS,\CC)$ & complex log differentiable spaces & $\CC(\slot)$ \\ $(\AS,\CC)$ & log analytic spaces & $\CC(\slot)$ \end{tabular}

We will make a specialized study of (positive) log differentiable spaces in \S\ref{section:lds}.

In the next example, we describe the images of some fans under \eqref{FanstoLogEsp} for various categories of log spaces.

\begin{example} \label{example:realizationoffans} Recall the fan $\PP^1$ from Example~\ref{example:P1}.  The realizations $\AA(\PP^1)$ of this fan in various categories of log spaces are as follows: \begin{enumerate} \item In $(\LRS,\AA^1)$ or $(\Sch,\AA^1)$, $\AA(\PP^1)$ is the usual scheme $\PP^1$ (over $\ZZ$) with ``log structure at $0$ and $\infty$." \item In complex analytic spaces, $\AA(\PP^1)$ is of course the analytification of (the complexification of) the previous example---i.e.\ the Riemann Sphere. \item In $(\LDS,\RR)$, $\AA(\PP^1) = \RR \PP^1 \in \LDS$ is a circle, with log structure at ``opposite poles." \item In $(\LDS,\RR_+)$, $\AA(\PP^1) \in \PLDS$ is the closed interval $[0,\infty]$ with log structure ``at the endpoints." \end{enumerate} The realizations of $\PP^n$ in these categories of spaces should also be clear to the reader, though the realization in $\PLDS$ may be less obvious.

Let $\AA^n$ denote the affine fan $\Spec ( \NN^n)$ and let $U_n$ denote the ``quasi-affine" fan obtained from $\AA^n$ by removing the unique closed point.  Then one can construct (as in algebraic geometry or elsewhere) a map of fans $f : U_{n+1} \to \PP^n$ which is a locally trivial $\GG_m := \Spec \ZZ$ bundle.  In fact we claim that $f$ is a $\GG_m$-torsor under the action of the group fan $\GG_m$ on $U_n$ inherited from the coordinatewise scaling action of $\GG_m$ on $\AA^{n+1}$.  Recall (\S\ref{section:inverselimits}) that $\Spec \ZZ \to \Spec \{ 0 \}$ is a universal homeomorphism in $\LMS$, so we are claiming that $f$ is a (universal) homeomorphism in $\LMS$ and that $\M_{U_n}$ is locally (on opens pulled back from $\PP^n$ and in fact on the preimage of each standard open in the usual cover of $\PP^n$) isomorphic, as a sheaf of monoids under $f^{-1} \M_{\PP^n}$, to $f^{-1} \M_{\PP^n} \oplus \underline{\ZZ}$.  Incidentally, the spaces $U_{n+1}$ and $\PP^n$ both have \be 2^{n+1}-1 & = & 1 + 2 + 2^2 + \cdots + 2^n \ee points.  If we realize $f$ in $\PLDS$ we find that \be \RR_+(f) : \RR_+(U_n) = \RR_+^{n+1} \setminus \{ 0 \} & \to & \RR_+(\PP^n) \ee is an $\RR_+(\GG_m) = \RR_{>0}$ torsor under the coordinate-wise scaling action, so that we have \be \RR_+(\PP^n) & = & (\RR_+^{n+1} \setminus \{ 0 \}) / \RR_{>0}. \ee  Each orbit for this action has a unique representative $t \in \RR_+^{n+1}$ with $\sum_i t_i = 1$, thus we see that \be \RR_+(\PP^n) & = & \{ t \in \RR_+^{n+1} : \sum_i t_i = 1 \} \\ & = & \{ t \in [0,1]^{n+1} : \sum_i t_i =1 \} \ee is the standard $n$-simplex, appropriately viewed as a manifold with corners.  

The reader may also want to think about the maps $\tau$ and $\u{\tau}$ in these examples.  \end{example}

Recall that the differentiable spaces $\u{\RR}(P)$ and $\u{\RR}_+(P)$ were described ``explicitly" in \S\ref{section:RP}.

\subsection{Properties of realizations} \label{section:propertiesofrealizations}  Most results in log geometry rely on a careful understanding of the log spaces $\AA(P)$ (and the underlying spaces $\u{\AA}(P)$) and the maps between these spaces induced by monoid homomorphisms.  The rest of this section is devoted to some elementary results along these lines.

\begin{lem} \label{lem:saturationhomeo} Let $h : Q \to P$ be a map of finitely generated monoids so that $h$ makes $P$ a finitely generated $Q$-module (\S\ref{section:modules}).  Then: \begin{enumerate} \item \label{U1} $\ZZ[Q] \to \ZZ[P]$ is a finite map of rings. \item \label{U2} The induced map of analytic spaces $\CC(P) \to \CC(Q)$ is finite (proper with finite fibers). \item \label{U3} The induced map of differentiable spaces $\RR(P) \to \RR(Q)$ is finite. \item \label{U3b} The induced map of differentiable spaces $\RR_+(P) \to \RR_+(Q)$ is finite. \item \label{U4} If we assume furthermore that $h$ is injective and $P$ and $Q$ are fine, then the induced map of differentiable spaces $\RR_+(P) \to \RR_+(Q)$ is a homeomorphism. \end{enumerate} \end{lem}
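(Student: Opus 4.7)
The plan is to prove the five statements in sequence, using each to bootstrap to the next, with (4) as the culminating assertion.

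For (1), we simply note that if $p_1, \ldots, p_n$ generate $P$ as a $Q$-module, then $[p_1], \ldots, [p_n]$ generate $\ZZ[P]$ as a $\ZZ[Q]$-module---this is the observation from \S\ref{section:modules} that $\ZZ[\slot] : \Mod(Q) \to \Mod(\ZZ[Q])$ preserves finitely generated modules. Base change to $\CC$ or $\RR$ then yields that $\CC[Q] \to \CC[P]$ and $\RR[Q] \to \RR[P]$ are finite ring maps, so the corresponding maps of affine schemes are scheme-theoretically finite. Statement (2) then follows from the standard GAGA-type fact that analytification of a finite map of finite-type $\CC$-schemes is a finite map of analytic spaces (proper with finite fibers).

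For (3), I would argue concretely rather than appeal to a general functoriality of ``properness'' under differentialization (which does not hold). A finite $\RR$-algebra map $\RR[Q] \to \RR[P]$ presents $\RR[P]$ as $\RR[Q][x_1, \ldots, x_n]/I$ where, by a Cayley--Hamilton style argument, we may assume $I$ contains a monic polynomial relation in each $x_i$ with coefficients in $\RR[Q]$. Passing to $\RR$-points identifies $\RR(P)$ with a closed subspace of $\RR(Q) \times \RR^n$ in which each coordinate $x_i$ is bounded in terms of the coefficients of its defining monic polynomial; hence on compact subsets of $\RR(Q)$ the $x_i$ are uniformly bounded, yielding properness of $\RR(P) \to \RR(Q)$, while the monic polynomials have only finitely many roots over $\RR$, yielding finite fibers. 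For (3b), we observe that $\RR_+(P) \into \RR(P)$ and $\RR_+(Q) \into \RR(Q)$ are closed embeddings and that $\RR(P) \to \RR(Q)$ carries $\RR_+(P)$ into $\RR_+(Q)$ (the restriction of a nonnegative monoid homomorphism stays nonnegative). The resulting restricted map is proper with finite fibers, since the preimage in $\RR_+(P)$ of a compact $K \subseteq \RR_+(Q)$ is the intersection of $\RR_+(P)$ with the compact preimage of $K$ in $\RR(P)$.

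For (4), the key input is Gordan's Lemma (Theorem~\ref{thm:dense}\eqref{dense3}): since $h$ is finite, $Q$ is finitely generated, and $P$ is integral, $h$ is automatically dense, meaning for every $p \in P$ there exists $n \geq 1$ with $np \in h(Q)$. Given injectivity of $h$, we construct an inverse to $\RR_+(P) \to \RR_+(Q)$ pointwise: for $\phi : Q \to \RR_+$, set $\tilde{\phi}(p) := \phi(q)^{1/n}$ where $np = h(q)$, using the unique nonnegative $n$-th root in $\RR_+$. Injectivity of $h$ ensures well-definedness: if $n_1 p = h(q_1)$ and $n_2 p = h(q_2)$, then $h(n_2 q_1) = n_1 n_2 p = h(n_1 q_2)$ forces $n_2 q_1 = n_1 q_2$ in $Q$, giving $\phi(q_1)^{n_2} = \phi(q_2)^{n_1}$ and hence $\phi(q_1)^{1/n_1} = \phi(q_2)^{1/n_2}$; a parallel computation confirms $\tilde{\phi}$ is a monoid homomorphism. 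Combining bijectivity with the properness established in (3b) and the fact that $\RR_+(P)$ and $\RR_+(Q)$ are Hausdorff (being closed subspaces of Euclidean spaces), we conclude: a proper continuous bijection between Hausdorff spaces is closed and hence a homeomorphism.

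The main obstacle will be step (3)---converting ``finiteness of a ring map'' into ``properness of the map of differentiable spaces of $\RR$-points.'' Differentialization does not preserve properness in general, so (3) cannot be a formal consequence of (1); one must exploit the specific monic-polynomial structure of finite ring maps to obtain the boundedness estimate that yields properness. Once (3) is in hand, the remaining statements are relatively routine, with (4) reducing to the extraction of $n$-th roots in $\RR_+$ afforded by density.
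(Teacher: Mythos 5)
Your proposal is correct, and parts (1), (2), (3b), and (4) follow essentially the same route as the paper: monoid-module generators give ring-module generators, GAGA gives (2), restriction to closed subspaces gives (3b), and density (Gordan's Lemma) plus unique non-negative $n$-th roots gives bijectivity in (4), with finiteness/closedness upgrading this to a homeomorphism. The one genuinely different step is (3). The paper deduces real properness from complex properness: it uses the closed embeddings $\RR(P) \into \CC(P)$ and $\RR(Q) \into \CC(Q)$ (fixed loci of conjugation) and observes that the preimage in $\RR(P)$ of a compact subset of $\RR(Q)$ is a closed subset of the compact preimage in $\CC(P)$ --- exactly the restriction argument you use for (3b), applied one level up. You instead argue directly over $\RR$: finiteness of $\RR[Q] \to \RR[P]$ gives integral dependence, hence a monic polynomial for each generator $[p_i]$, hence a uniform bound on fibers over compacta and finiteness of fibers from the finitely many real roots. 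Both arguments are valid (and both correctly recognize that properness is not a formal consequence of differentialization). Your route makes (3) independent of (2) and avoids the GAGA input for the real statement; the paper's route is shorter given that (2) is needed anyway. One small point worth making explicit in your version: ``preimage of compact is compact'' yields closedness (and hence the paper's notion of proper, and the homeomorphism conclusion in (4)) because $\RR(P)$ and $\RR(Q)$ are closed subspaces of Euclidean space, hence locally compact Hausdorff.
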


\begin{proof} For \eqref{U1} just note that if $p_1,\dots,p_n \in P$ generate $P$ as a $Q$-module, then the corresponding elements $[p_1],\dots,[p_n] \in \ZZ[P]$ generate $\ZZ[P]$ as a $\ZZ[Q]$-module.  This implies that $\Spec \CC[P] \to \Spec \CC[Q]$ is a finite (i.e.\ proper and quasi-finite) map of finite type (affine) $\CC$-schemes, so \eqref{U2} follows from standard GAGA results since the only issue is to show that the map of analytic spaces corresponding to a proper map of finite type $\CC$-schemes is proper (quasi-finiteness obviously passes to associated analytic spaces).  For \eqref{U3}, notice that we have a commuative diagram of topological spaces $$ \xym{ \RR(P) \ar[r] \ar[d] & \CC(P) \ar[d] \\ \RR(Q) \ar[r] & \CC(Q) } $$ where the horizontal arrows are closed embeddings (they are the fixed loci of the automorphisms induced by complex conjugation).  Although the diagram may not be cartesian, one can still conclude finiteness (or properness) of the left vertical arrow from finiteness (or properness) of the right vertical arrow, which we know by \eqref{U2}.  We see that \eqref{U3} implies \eqref{U3b} by a similar argument using the closed embeddings $\RR_+(P) \into \RR(P)$.  For \eqref{U4}: First recall that Theorem~\ref{thm:dense} says that $h$ is dense, so for every $p \in P$ there is an $n > 0$ such that $np \in Q$.  Since every element of $\RR_+$ has a unique $n^{\rm th}$ root, we see easily that the continuous map $\RR_+(P) \to \RR_+(Q)$ is bijective.  Since it is finite (hence a closed map) by \eqref{U3b}, it is a homeomorphism. \end{proof}

For our later purposes we will need to know some circumstances under which a monoid homomorphism $h$ induces a surjection $\underline{\RR}(h)$ on ``$\RR$ points" and/or a surjection $\underline{\RR}_+(h)$ on ``$\RR_+$ points."

\begin{lem} \label{lem:surjectivity} Let $h : Q \to P$, $x : Q \to \RR_{\geq 0}$ be monoid homomorphisms and let $x_{\CC} : Q \to \CC$ be the monoid homomorphism obtained by composing $x$ with the inclusion $\RR_{\geq 0} \into \CC$ of (multiplicative) monoids.  If there is a monoid homomorphism $y : P \to \CC$ such that $yh=x_{\CC}$, then there is a monoid homomorphism $z : P \to \RR_{\geq 0}$ such that $zh=x$.  In particular, surjectivity of \be h^* : \Hom_{\Mon}(P,\CC) & \to & \Hom_{\Mon}(Q,\CC) \ee implies surjectivity of \be h^* : \Hom_{\Mon}(P,\RR_{\geq 0}) & \to & \Hom_{\Mon}(Q,\RR_{\geq 0}). \ee The same statements hold with ``$\RR_{\geq 0}$" replaced everywhere by ``$\RR$" when $Q$ is fs and $h^{\rm gp}$ is an isomorphism. \end{lem}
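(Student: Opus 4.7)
The plan is to handle the two assertions separately, the first being essentially immediate and the second requiring the saturation hypothesis in an essential way.

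For the $\RR_{\geq 0}$ case, observe that the absolute value $|\cdot| : \CC \to \RR_{\geq 0}$ is a homomorphism of multiplicative monoids restricting to the identity on $\RR_{\geq 0}$. Given any $y : P \to \CC$ with $yh = x_\CC$, simply set $z := |\cdot| \circ y$; this is a monoid homomorphism $P \to \RR_{\geq 0}$ with $zh = |\cdot| \circ x_\CC = x$. The surjectivity statement for $\RR_{\geq 0}$-valued points is then immediate from the assumed surjectivity for $\CC$-valued points.

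For the $\RR$ case, I would first observe that since $Q$ is fine (hence integral) and $h^{\rm gp}$ is an isomorphism, $h$ itself is injective, so one may identify $Q$ with a submonoid of $P$ sharing the same groupification $G_0 := Q^{\rm gp} = P^{\rm gp}$. The strategy is to modify a given $\CC$-valued lift $y : P \to \CC$ of $x_\CC$ by a character, so as to make it $\RR$-valued without altering its restriction to $Q$. Let $F := y^{-1}(\CC^*) \subseteq P$ and $G := x^{-1}(\RR^*) \subseteq Q$ be the corresponding faces; since $y|_Q = x_\CC$ one has $G = F \cap Q$. The goal becomes constructing a character $\chi : F^{\rm gp} \to \CC^*$ which is trivial on $G^{\rm gp}$ and such that $\chi(p) y(p) \in \RR^*$ for all $p \in F$.

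To produce $\chi$, consider the composition $F^{\rm gp} \to \CC^* \to \CC^*/\RR^*$; it kills $G^{\rm gp}$ because $x$ is real-valued on $G$, so it factors through $F^{\rm gp}/G^{\rm gp}$. This quotient is the crux: by Lemma~\ref{lem:saturated}, the fs hypothesis on $Q$ ensures that $Q^{\rm gp}/G^{\rm gp}$ is torsion-free; identifying $F^{\rm gp}$ with a subgroup of $G_0 = Q^{\rm gp}$ via $h^{\rm gp}$ realizes $F^{\rm gp}/G^{\rm gp}$ as a finitely generated subgroup of this torsion-free group, hence as a free abelian group. Projectivity of free abelian groups then permits lifting the inverse of $\bar{y}$ through the surjection $\CC^* \to \CC^*/\RR^*$, producing $\chi$. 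Finally define $z : P \to \RR$ by $z(p) := \chi(p) y(p)$ for $p \in F$ and $z(p) := 0$ otherwise; a short case analysis using that $F$ is a face shows $z$ is a monoid homomorphism, and the identity $zh = x$ follows by checking separately on $G$ (where $\chi|_{G^{\rm gp}} = 1$) and on $Q \setminus G$ (where both sides vanish). The main subtle point to get right is the interplay of the two hypotheses: fs-ness of $Q$ provides freeness of $F^{\rm gp}/G^{\rm gp}$, while the isomorphism hypothesis on $h^{\rm gp}$ is what permits the identification realizing this quotient inside the torsion-free group $Q^{\rm gp}/G^{\rm gp}$ in the first place.
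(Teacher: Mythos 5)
Your proof is correct and follows essentially the same route as the paper's: the $\RR_{\geq 0}$ case via the absolute-value retraction of $\RR_{\geq 0} \into \CC$, and the $\RR$ case by reducing to a character-extension problem on $F^{\rm gp}$ relative to $G^{\rm gp}$, whose obstruction vanishes because $F^{\rm gp}/G^{\rm gp}$ embeds (via $h^{\rm gp}$) in the free group $(Q/G)^{\rm gp}$, using exactly the fs-ness of $Q$ and the isomorphism hypothesis on $h^{\rm gp}$. The only cosmetic difference is that the paper extends the sign character $G^{\rm gp} \to \{\pm 1\}$ and sets $z(p) = t(p)|y(p)|$, whereas you correct $y$ itself by a character lifted against $\CC^* \to \CC^*/\RR^*$; both hinge on the identical freeness statement.
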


\begin{proof} The first statement is trivial: The absolute value map $| \slot | : \CC \to \RR_{\geq 0}$ is a monoid homomorphism retracting the inclusion $\RR_{\geq 0} \into \CC$, so one can take $z = |y|$.  For the last statement, let $G := x^{-1}(\RR^*) = x_{\CC}^{-1}(\CC^*)$ and $F := y^{-1}(\CC^*)$ be the faces of $Q$ and $P$ determined by $x$ and $y$, respectively, so we have a diagram of abelian groups  \bne{abgpdiagram} & \xym{ G^{\rm gp} \ar[d]_{\sign x} \ar[r]^-{f} & F^{\rm gp} \ar@{.>}[ld]^t \\ \{ \pm 1 \} = \ZZ / 2 \ZZ } \ene where we set $f := h^{\rm gp}|G^{\rm gp}$.  If \eqref{abgpdiagram} has a completion $t$ as indicated, then since $F$ is a face, \be z : P & \to & \RR \\ p & \mapsto & \left \{ \begin{array}{lll} 0, & \quad & p \notin F \\ t(p)|y(p)|, & & p \in F \end{array} \right . \ee is a well-defined monoid homomorphism with $zh=x$.  Since $h^{\rm gp}$ is an isomorphism, $f$ is injective, and the obstruction to completing \eqref{abgpdiagram} lies in $\Ext^1(\Cok f,\ZZ/2\ZZ)$.  Applying the Snake Lemma to the exact diagram $$ \xym{ 0 \ar[r] & G^{\rm gp} \ar[r] \ar[d]^f & Q^{\rm gp} \ar[r] \ar[d]^{h^{\rm gp}} & (Q/G)^{\rm gp} \ar[r] \ar[d]^g & 0 \\ 0 \ar[r] & F^{\rm gp} \ar[r] & P^{\rm gp} \ar[r] & (P/F)^{\rm gp} \ar[r] & 0 }$$ defining $g$ we find that $\Cok f  =  \Ker g .$ Since $Q$ is fs and $G$ is a face of $Q$, $Q/G$ is sharp (Lemma~\ref{lem:sharpening}) and fs (Lemma~\ref{lem:saturated}), hence $(Q/G)^{\rm gp}$ is free (Lemma~\ref{lem:saturated}), hence so is its subgroup $\Cok f = \Ker g$, hence the aforementioned $\Ext$ group vanishes. \end{proof}

\begin{example} \label{example:nosurjectivity} One cannot replace ``fs" with ``fine" in Lemma~\ref{lem:surjectivity}, even when $P=Q^{\rm sat}$.  Let $Q$ be the (fine!) submonoid of $\ZZ \oplus \ZZ/4\ZZ$ generated by $(1,0)$, $(0,2)$, and $(1,3)$.  Then $Q^{\rm gp} = \ZZ \oplus \ZZ/4\ZZ$ and $Q^{\rm sat} = \NN \oplus \ZZ/4\ZZ$.  The monoid homomorphism \be x : Q & \to & \RR \\ (a,b) & \mapsto & \left \{ \begin{array}{lll} 0, & \quad & a > 0 \\ -1, & & (a,b)=(0,2) \\ 1, & & (a,b)=(0,0)  \end{array} \right . \ee does not extend to $z : Q^{\rm sat} \to \RR$ because $z(0,1)$ would have to be a square root of $-1$. \end{example}

\subsection{Groups to group spaces} \label{section:groupstogroupspaces} Since the functor $\AA$ of \eqref{MontoLogEsp} preserves inverse limits, it takes group objects to group objects and actions to actions.  Any \emph{group} (all groups are assumed finitely generated abelian) $G$ is a group object in $\Mon^{\rm op}$ with comultiplication $G \to G \oplus G$ given by $g \mapsto (g,g)$.  If $P$ is any monoid, the group $P^{\rm gp}$ acts on the monoid $P$ via the coaction map $P \to P \oplus P^{\rm gp}$ given by $p \mapsto (p,p)$.  This group action is ``universal" in the sense that any coaction map $P \to P \oplus G$ determines an obvious monoid map $P \to G$ (which is equivalent to a group map $P^{\rm gp} \to G$) so that the action of $G$ on $P$ specified by the coaction map coincides with the $G$ action on $P$ obtained from the universal action and the map $P^{\rm gp} \to G$. The notation $\GG(P) := \AA(P^{\rm gp})$ is often convenient.  The log structure on $\GG(P)$ is the trivial one because the chart $P^{\rm gp} \to \str_{P^{\rm gp}}(\GG(P))$ defining this log structure factors through the units.  Hence there is little need to distinguish between $\GG(P)$ and $\u{\GG}(P)$.

It will be important to understand various geometric realizations of such group objects and group actions.  Let us begin with a few examples concerning the structure of the group objects $\GG(G)$ in the differentiable setting.

\begin{example} \label{example:RG}  If $G \cong \ZZ^r \oplus \ZZ / a_1 \ZZ \oplus \cdots \oplus \ZZ/a_k \ZZ$ is a finitely generated abelian group, then $\underline{\RR}(G)$ is the disjoint union of $\epsilon(a_1) \cdots \epsilon(a_k)$ copies of $(\RR^*)^k$, where $\epsilon(a_i)$ is the number of $a_i^{\rm th}$ roots of unity in $\RR$ (one if $a_i$ is odd, two if $a_i$ is even).  \end{example}

\begin{example} \label{example:RGplus} If $G$ is a finitely generated abelian group of rank $r$, then $\underline{\RR}_+(G) \cong \RR_{>0}^r$ and we will usually write $\RR_{>0}(G)$ instead of $\underline{\RR}_+(G)$.  The torsion of $G$ is irrelevant because $1$ is the only non-negative root of unity in $\RR$.  The differentiable space $\RR_{>0}(G)$ represents the presheaf \bne{RGplusmodular} X & \mapsto & \Hom_{\Ab}(G,\O_X^{>0}(X)), \ene where $\O_X^{>0}$ is the sheaf of positive functions (\S\ref{section:positivefunctions}).  The natural splitting of $\O_X^*$ in \eqref{unitsplitting} gives a coproduct decomposition \bne{coproductdecomp} \underline{\RR}(G) & = & \coprod_g \RR_{>0}(G), \ene where $g$ runs over $\Hom_{\Ab}(G,\{ \pm 1 \})$.  Explicitly, the component of $\underline{\RR}(G)$ indexed by $g$ represents the subpresheaf of $X \mapsto \Hom_{\Ab}(G,\O_X^*(X))$ consisting of group homomorphisms $G \to \O_X^*(X)$ which can be written as $fg$ (this juxtaposition is a product, not a composition) for some group homomorphism $f : G \to \O_X^{>0}(X)$.  Since $\RR_{>0}(G)$ is connected, \eqref{coproductdecomp} is nothing but the decomposition of $\underline{\RR}(G)$ into its connected components.\end{example}

It is clear from Examples~\ref{example:RG} and \ref{example:RGplus} that $\underline{\RR}(G)$ and $\RR_{>0}(G)$ are smooth differentiable spaces for any finitely generated abelian group $G$.

Suppose we have an exact sequence of groups \bne{groupsequence} 0 \to A \to B \to G \to 0. \ene  Pushing out along $A \to B$ we obtain a map of exact sequences \bne{groupsequence2} \xym{ 0 \ar[r] & A \ar[d] \ar[r] & B \ar[d] \ar[r] & G \ar@{=}[d] \ar[r] & 0 \\ 0 \ar[r] & B \ar[r] & B \oplus_A B \ar[r] & G \ar[r] & 0.} \ene  The sequence on the bottom of \eqref{groupsequence2} splits canonically:  The surjection has a section given by $g \mapsto [-b,b]$, where $b \in B$ is any lift of $g \in G$ (the choice is irrelevant).  Another way of saying this is to note that we have a map of exact sequences \bne{groupsequence3} \xym{ 0 \ar[r] & A \ar[d] \ar[r] & B \ar[d] \ar[r] & G \ar@{=}[d] \ar[r] & 0 \\ 0 \ar[r] & B \ar[r] & B \oplus G \ar[r] & G \ar[r] & 0} \ene where the left square is a pushout because giving group homomorphisms $f,g : B \to D$ with $f|A=g|A$ is the same thing as given a group homomorphism $B \oplus G \to D$ (the bijection is $(f,g) \mapsto (f,f-g)$).  A geometric interpretation of this discussion goes as follows: the map $B \to G$ in the sequence \eqref{groupsequence} together with the tautological action of $B$ on itself induce an action of $G$ on $B$ in $\Mon^{\rm op}$, which is a fiberwise action for the map $B \to A$ in $\Mon^{\rm op}$ corresponding to the map $A \to B$.  This fiberwise action of $G$ on $B$ makes $B$ a $G$-torsor over $A$ in $\Mon^{\rm op}$ which becomes trivial when pulled back to itself.   

\begin{lem} \label{lem:groupsmooth} Let $A \to B$ be an injective map of finitely generated abelian groups with cokernel $G$.  Let $N$ denote the order of the torsion subgroup of $G$.  Then: \begin{enumerate} \item \label{groupsmooth1} The map $\Spec \ZZ[B] \to \Spec \ZZ[A]$ is faithfully flat and is a $\Spec \ZZ[G]$-torsor, locally trivial in the flat topology.  \item \label{groupsmooth2} $\Spec \ZZ[1/N][B] \to \Spec \ZZ[1/N][A]$ is a $\Spec \ZZ[1/N][G]$-torsor, locally trivial in the \'etale topology.  The scheme $\Spec \ZZ[1/N][G]$ is smooth over $\Spec \ZZ[1/N]$, hence $\Spec \ZZ[1/N][B] \to \Spec \ZZ[1/N][A]$ is a smooth map of schemes.  \item \label{groupsmooth3} The map $\RR^*(B) \to \RR^*(A)$ is a smooth $\DS$-morphism.  It is \'etale when $G$ is finite.  \item  \label{groupsmooth4} The map $\RR_{>0}(B) \to \RR_{>0}(A)$ is a (globally) trivial $\RR_{>0}(G)$-bundle.  In particular it is smooth.  It is a diffeomorphism when $G$ is finite.  The results of this part hold even when the hypothesis that $h$ is injective is replaced with the hypothesis that $h$ has torsion kernel. \end{enumerate} \end{lem}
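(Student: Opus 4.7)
The plan is to apply the realization functors $\Spec \ZZ[\slot]$, $\RR^*(\slot)$, and $\RR_{>0}(\slot)$ (instances of $\AA$ for the various relevant categories of spaces) to the canonical pushout diagram \eqref{groupsequence3} from \S\ref{section:groupstogroupspaces}, which exhibits $B \oplus_A B \cong B \oplus G$ in $\Mon$. Since these realization functors preserve finite inverse limits (Lemma~\ref{lem:AP}, Theorem~\ref{thm:AY}), each yields a cartesian square identifying the self-fibered-product of $\AA(B) \to \AA(A)$ with $\AA(B) \times \AA(G)$; this ``torsor after self-pullback'' structure organizes all four parts. The remaining work is to establish, in each category, that $\AA(B) \to \AA(A)$ is a cover (faithfully flat or surjective) and that $\AA(G)$ has enough regularity for the desired regularity of $\AA(B) \to \AA(A)$ to follow by descent.

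For \eqref{groupsmooth1}: Example~\ref{example:freemodules} exhibits $B$ as a free $A$-module with basis a set of coset representatives of $B/A$; the functor $\ZZ[\slot]: \Mod(A) \to \Mod(\ZZ[A])$ of \S\ref{section:modules} preserves freeness, yielding $\ZZ[B]$ free and nonzero over $\ZZ[A]$, hence faithfully flat. Combined with the cartesian square above, this gives the fppf-local torsor structure. For \eqref{groupsmooth2}: writing $G \cong \ZZ^r \oplus \bigoplus_i \ZZ/a_i\ZZ$ with each $a_i$ dividing $N$, the algebra $\ZZ[1/N][G]$ decomposes as a tensor product of Laurent polynomial rings and rings $\ZZ[1/N][x]/(x^{a_i}-1)$; each factor is smooth over $\ZZ[1/N]$ since $a_ix^{a_i-1}$ is a unit after inverting $N$. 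Smoothness of $\Spec \ZZ[1/N][B] \to \Spec \ZZ[1/N][A]$ then follows from fpqc descent: its pullback along the faithfully flat self-cover of \eqref{groupsmooth1} is the projection $\Spec \ZZ[1/N][B] \times \Spec \ZZ[1/N][G] \to \Spec \ZZ[1/N][B]$, smooth as a base change of a smooth scheme. \'Etale-local triviality of the torsor is then a consequence of the existence of \'etale-local sections of smooth morphisms.

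For \eqref{groupsmooth4}: because $\RR_{>0}$ is torsion-free under multiplication (isomorphic to $(\RR,+)$ via $\log$), any abelian group homomorphism to $\RR_{>0}$ kills torsion, so $\RR_{>0}(\slot) = \Hom_{\Ab}(\slot,\RR_{>0})$ depends only on the torsion-free quotient; this also subsumes the relaxed hypothesis that $h$ merely has torsion kernel. Dividing $0 \to A \to B \to G \to 0$ by torsion produces a short exact sequence $0 \to A' \to B' \to G' \to 0$ of finitely generated free abelian groups, which splits, giving $B' \cong A' \oplus G'$. Applying $\RR_{>0}$ then yields the global trivialization $\RR_{>0}(B) \cong \RR_{>0}(A) \times \RR_{>0}(G)$; when $G$ is finite, $G'=0$ and $\RR_{>0}(G)$ is a point, so the bundle is a diffeomorphism. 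For \eqref{groupsmooth3}: the natural splitting $\RR^* = \RR_{>0} \times \{\pm 1\}$ from \S\ref{section:positivefunctions} induces, for each finitely generated abelian $M$, a functorial product decomposition $\RR^*(M) = \RR_{>0}(M) \times F_M$, where $F_M := \Hom_{\Ab}(M,\{\pm 1\})$ is a finite discrete differentiable space. Hence $\RR^*(B) \to \RR^*(A)$ is the product of the smooth trivial bundle of \eqref{groupsmooth4} and a map $F_B \to F_A$ of finite discrete spaces, which is automatically \'etale; such a product is smooth, and \'etale when the bundle factor is a diffeomorphism, i.e., when $G$ is finite.

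The main obstacle will be the torsion handling in \eqref{groupsmooth4}, since the sequence $0 \to A \to B \to G \to 0$ need not split as abelian groups (e.g., $0 \to \ZZ \xrightarrow{\times 2} \ZZ \to \ZZ/2\ZZ \to 0$). The key observation circumventing this is that $\RR_{>0}$, being both divisible and torsion-free, sees only the torsion-free quotient of the sequence, which is split; the rest of the lemma then reduces to descent arguments and routine manipulations with realizations.
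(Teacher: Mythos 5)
Your parts (1)--(3) are structurally sound, and in part (2) you take a genuinely different route from the paper: where the paper exhibits an explicit \'etale cover by passing to the saturation $B'$ of $A$ in $B$ (adjoining $n$-th roots of units with $n \mid N$) and trivializes the torsor there, you verify smoothness of $\ZZ[1/N][G]$ directly and then descend smoothness of $\Spec\ZZ[1/N][B] \to \Spec\ZZ[1/N][A]$ along its own faithfully flat self-cover via the identification $B \oplus_A B \cong B \oplus G$, getting \'etale-local triviality from \'etale-local sections of smooth surjections. That is a correct and arguably cleaner alternative; the paper's version buys an explicit trivializing cover, yours avoids constructing one.

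Part (4), however, has a genuine gap. The claim that dividing $0 \to A \to B \to G \to 0$ by torsion subgroups yields a short exact sequence of free abelian groups is false: for $0 \to \ZZ \xrightarrow{\times 2} \ZZ \to \ZZ/2\ZZ \to 0$, the groups $A$ and $B$ are already torsion-free while $G' = 0$, so the ``quotient sequence'' is $0 \to \ZZ \xrightarrow{\times 2} \ZZ \to 0 \to 0$, which is not exact (and the original sequence does not split). In that example $\RR_{>0}(B) \to \RR_{>0}(A)$ is $x \mapsto x^2$ on $\RR_{>0}$, which \emph{is} a diffeomorphism---but only because positive reals admit smooth $n$-th roots, not because of any splitting of torsion-free quotients. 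So your argument does not establish the global trivialization, and since you reduce (3) to (4), the gap propagates. The fix is the paper's: after discarding the torsion of $A$ and $B$ (which $\RR_{>0}$ cannot see---this reduction is legitimate), put the injection of free groups in Smith Normal Form, so the map becomes $(x_1,\dots,x_{s+r}) \mapsto (x_1^{e_1},\dots,x_s^{e_s})$, and write down the trivialization explicitly using $y \mapsto y^{1/e_i}$; alternatively, interpose the saturation $B'$ of $A$ in $B$, show $\RR_{>0}(B') \to \RR_{>0}(A)$ is a diffeomorphism (bijective by divisibility and torsion-freeness of $\RR_{>0}$, with smooth inverse by root extraction), and only then split the genuinely free quotient $B/B'$.
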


\begin{proof}  For \eqref{groupsmooth1}, note that $A \to B$ makes $B$ a free $A$-module (of rank equal to $|G|$) (Example~\ref{example:freemodule}).  Since $\ZZ[ \slot ]$ takes free modules to free modules (\S\ref{section:modules}), $\ZZ[B]$ is a free $\ZZ[A]$-module of positive rank, hence $\ZZ[A] \to \ZZ[B]$ is faithfully flat.  Let us take as the meaning of ``torsor" in this algebraic context the statement that the group \be \Hom_{\Sch}(\Spec k, \Spec \ZZ[G]) & = & \Hom_{\Ab}(G,k^*) \ee acts simply transitively on the set of $k$-points of $\Spec \ZZ[B]$ lying over a given $k$-point of $\Spec \ZZ[A]$ for each algebraically closed field $k$.  This is just the statement that $$0 \to A \to B \to G \to 0 $$ remains exact after applying $\Hom_{\Ab}(\slot,k^*)$, which holds since $k^*$ is an injective abelian group (it is divible because $X^n = u$ can be solved for each $u \in k^*$ since $k$ is algebraically closed).  Since $\ZZ[ \slot ]$ preserves direct limits, our discussion above shows that this torsor can be trivialized by pulling it back to itself; since it is a flat map, this torsor is trivial in the flat topology.

For \eqref{groupsmooth2}, let $B'$ be the subgroup of $B$ consisting of those $b \in B$ such that $nb \in A$ for some positive integer $n$.  The smallest such $n$ always divides $N$, hence $\ZZ[1/N][A] \to \ZZ[1/N][B']$ is an \'etale cover because it can be presented by adjoining various $n^{\rm th}$ roots of units $[a] \in \ZZ[1/N][A]^*$ with $n$ invertible in $\ZZ[1/N]$.  The torsor can be trivialized after pulling back to this \'etale cover, since the sequence $$0 \to A \to B \to G \to 0 $$ splits after pushing out along $A \to B'$.  The map $\ZZ[1/N] \to \ZZ[1/N][G]$ is smooth for similar reasons. 

For \eqref{groupsmooth4}, first note that the torsoriality (simple transitivity on fibers) results immediately from the fact that $\RR_{>0}$ is a divisible abelian group, as in the proof of \eqref{groupsmooth1}.  For the other statements, note that the functor $\RR_{>0}$ is insensitive to torsion, so we can assume $A$ and $B$ are free.  We can choose bases of $A \cong \ZZ^s$ and $B \cong \ZZ^{s+r}$ so that the matrix representation of $A \to B$ is in Smith Normal Form---in particular, it is a diagonal matrix with diagonal entries $e_1,\dots,e_s,0,\dots,0$, where the $e_i$ are positive integers and there are $r$ zeros ($r=0$ when $G$ is finite).  Again, since $\RR_{>0}$ is insensitive to torsion, $\RR_{>0}(G) = \RR_{>0}^r$.  The map in question is then given by \be \RR_{>0}^{s+r} & \to & \RR_{>0}^{s} \\ (x_1,\dots,x_{r+s}) & \mapsto & (x_1^{e_1},\dots,x_s^{e_s})  \ee with $\RR_{>0}(G)$ acting by rescaling the last $r$ coordinates.  The map \be f : \RR_{>0}^s \times \RR_{>0}^r & \to & \RR_+^{s+r} \\ (y_1,\dots,y_s,z_1,\dots,z_r) & \mapsto & (y_1^{1/e_1},\dots,y_s^{1/e_s},z_1,\dots,z_r) \ee is then an isomorphism $\RR_{>0}(A) \times \RR_{>0}^r \to \RR_{>0}(B)$ commuting with the maps to $\RR_{>0}(A)$ and respecting the $\RR_{>0}(G)$-action. 

Statement \eqref{groupsmooth3} follows from \eqref{groupsmooth4} using the component decomposition discussed in Example~\ref{example:RGplus}. \end{proof}

\begin{lem} \label{lem:monoidsmooth}  Suppose $h : Q \to P$ is a map of fine monoids such that $\ov{h} : \ov{Q} \to \ov{P}$ is an isomorphism and $\Ker h^{\rm gp}$ (resp.\ $\Ker h^{\rm gp}$ and $\Cok h^{\rm gp}$) is (resp.\ are) torsion.  Then the $\DS$ morphisms ${\RR}_+(h) : \underline{\RR}_+(P) \to \underline{\RR}_+(Q)$ and $\underline{\RR}(h) : \underline{\RR}(P) \to \underline{\RR}(Q)$ are smooth (resp.\ \'etale). \end{lem}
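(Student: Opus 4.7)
The plan is to realize $\u{\RR}_+(h)$ and $\u{\RR}(h)$ as base changes of the maps between group realizations $\RR_{>0}(h^*)$ and $\u{\RR}^*(h^*)$, and then to invoke Lemma~\ref{lem:groupsmooth} together with the stability of smoothness and \'etaleness under base change (\S\ref{section:smoothmorphisms}).

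First I will observe that, since $\ov{h}$ is an isomorphism, $h$ is strict, so Lemma~\ref{lem:pushout} (which applies because $P$ is fine, hence integral) shows that the square
$$\xym{
Q^* \ar[r] \ar[d]_{h^*} & Q \ar[d]^h \\
P^* \ar[r] & P
}$$
is a pushout in $\Mon$. The functors $\u{\RR}_+(\slot)$ and $\u{\RR}(\slot)$ send finite direct limits of monoids to finite inverse limits of differentiable spaces (\S\ref{section:RP}), and on a group $G$ they restrict to $\RR_{>0}(G)$ and $\u{\RR}^*(G)$ respectively (any monoid map from a group into $\RR_+$ or $\RR$ must factor through the respective unit groups $\RR_{>0}$ or $\RR^*$). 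Applying these functors therefore yields cartesian diagrams
$$\xym{
\u{\RR}_+(P) \ar[r] \ar[d]_{\u{\RR}_+(h)} & \RR_{>0}(P^*) \ar[d]^{\RR_{>0}(h^*)} \\
\u{\RR}_+(Q) \ar[r] & \RR_{>0}(Q^*)
} \qquad \xym{
\u{\RR}(P) \ar[r] \ar[d]_{\u{\RR}(h)} & \u{\RR}^*(P^*) \ar[d]^{\u{\RR}^*(h^*)} \\
\u{\RR}(Q) \ar[r] & \u{\RR}^*(Q^*)
}$$
in $\DS$.

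Next I will analyze $h^*$ via the Snake Lemma applied to the diagram of short exact sequences $0 \to M^* \to M^{\gp} \to \ov{M}^{\gp} \to 0$ for $M=Q,P$, linked vertically by $h^*$, $h^{\gp}$, $\ov{h}^{\gp}$. Since the rightmost vertical is an isomorphism, this produces $\Ker h^* \cong \Ker h^{\gp}$ and $\Cok h^* \cong \Cok h^{\gp}$, so the hypotheses translate into: $h^*$ has torsion kernel (resp.\ torsion kernel and torsion cokernel). By Lemma~\ref{lem:groupsmooth}\eqref{groupsmooth4}, $\RR_{>0}(h^*)$ is then a trivial $\RR_{>0}(\Cok h^*)$-bundle and therefore smooth; when $\Cok h^*$ is also torsion (equivalently finite, since $P^*$ is finitely generated), the fiber $\RR_{>0}(\Cok h^*)$ is trivial and $\RR_{>0}(h^*)$ is a diffeomorphism, hence \'etale. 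For the $\u{\RR}^*$ variant, the component decomposition $\u{\RR}^*(G) = \coprod_{g \in \Hom(G, \{\pm 1\})} \RR_{>0}(G)$ of Example~\ref{example:RGplus} realizes $\u{\RR}^*(h^*)$ as a disjoint union (indexed by source components) of copies of $\RR_{>0}(h^*)$, each landing in a single target component; so it inherits smoothness (resp.\ \'etaleness) under the same hypotheses.

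Finally, because smoothness and \'etaleness are stable under base change in $\DS$ (\S\ref{section:smoothmorphisms}), the cartesian squares above immediately transfer these properties from the right vertical arrows to $\u{\RR}_+(h)$ and $\u{\RR}(h)$. The only mildly delicate step is the $\u{\RR}^*$ bookkeeping: one must check that smoothness/\'etaleness of $\u{\RR}^*(h^*)$ survives possible non-surjectivity or non-injectivity of the induced map of finite sets $\Hom(P^*,\{\pm 1\}) \to \Hom(Q^*,\{\pm 1\})$ on connected components. This is harmless, since $\u{\RR}^*(h^*)$ is literally a disjoint union of copies of $\RR_{>0}(h^*)$ and both smoothness and \'etaleness are local on the source.
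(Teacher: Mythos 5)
Your proof follows exactly the paper's own argument: use Lemma~\ref{lem:pushout} to exhibit $h$ as a pushout of $h^*:Q^*\to P^*$, so that $\u{\RR}_+(h)$ and $\u{\RR}(h)$ are base changes of the corresponding maps of group realizations, then transfer the hypotheses to $h^*$ via the Snake Lemma and conclude with Lemma~\ref{lem:groupsmooth}. Your extra bookkeeping reducing the $\u{\RR}^*$ case to the $\RR_{>0}$ case via the component decomposition is a welcome bit of care (since $h^*$ need only have torsion kernel rather than be injective), but it does not change the route.
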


\begin{proof}  Since $\ov{h}$ is an isomorphism, $h :Q  \to P$ is a pushout of $h^* : Q^* \to P^*$ by Lemma~\ref{lem:pushout}, so the maps ${\RR}_+(h)$ and ${\RR}(h)$ are pullbacks of ${\RR}_+(h^*)$ and ${\RR}(h^*)$, hence it suffices to prove that the latter maps are smooth (resp.\ \'etale).  Since $\ov{h}$ is an isomorphism, so is $\ov{h}^{\rm gp}$ and the Snake Lemma applied to $$ \xym{ 0 \ar[r] & Q^* \ar[d]_{h^*} \ar[r] & Q^{\rm gp} \ar[d]^{h^{\rm gp}} \ar[r] & \ov{Q}^{\rm gp} \ar[d]^{\ov{h}^{\rm gp}} \ar[r] & 0 \\ 0 \ar[r] & P^* \ar[r] & P^{\rm gp} \ar[r] & \ov{P}^{\rm gp} \ar[r] & 0 } $$ yields isomorphisms \be \Ker h^* & = & \Ker h^{\rm gp} \\ \Cok h^* & = & \Cok h^{\rm gp}, \ee so the result follows from Lemma~\ref{lem:groupsmooth}. \end{proof}

\subsection{Integration} \label{section:integration} Let $X$ be a log space (\S\ref{section:logspaces}).  We would like to produce a map of log spaces $X^{\rm int} \to X$ which is terminal among all maps from an integral log space to $X$.  That is, we would like to produce a right adjoint to the inclusion \be \IntLogEsp & \to & \LogEsp \ee of the full subcategory of integral log spaces.  If $X$ is coherent with a global chart $P \to \M_X(X)$ (using a finitely generated monoid $P$), then it is easy to see from the modular interpretation of $\AA(P^{\rm int}) \to \AA(P)$ that \be X^{\rm int} & := & X \times_{\AA(P)} \AA(P^{\rm int}) \ee will do the job.  More generally, if $X$ is coherent, then we can produce the desired $X^{\rm int}$ by choosing local charts $P_i \to \M_X(X)$ (using finitely generated $P_i$) then gluing the $U_i^{\rm int}$ together using our gluing axiom \eqref{Zariskigluing} for space (\S\ref{section:axiomsforspaces}) and the fact that the ``preimage" of $U_{ij}$ in $U_i^{\rm int}$ is canonically identified with that in $U_j^{\rm int}$ since both satisfy the universal property of $U_{ij}^{\rm int}$.  This produces a right adjoint to the inclusion \be \FineLogEsp & \to & \CohLogEsp \ee in any category of log spaces.  Since it is a right adjoint, the functor $X \mapsto X^{\rm int}$ preserves inverse limits.  We see immediately that $\FineLogEsp$ has all finite inverse limits:  We first form the inverse limit in $\CohLogEsp$, then we apply $X \mapsto X^{\rm int}$ to obtain the inverse limit in $\FineLogEsp$.  We will revisit this ``integration" construction in \S\ref{section:integrationrevisited}.

There is an analogous \emph{saturation} construction, but we find it best to delay this construction until \S\ref{section:saturation}, at which point it can be obtained from general nonsense.

\section{Log differentiable spaces}  \label{section:lds} In this section we give a specialized study of log differentiable spaces.  Recall (\S\ref{section:definitionoflogspace}) that we made the following:

\begin{defn} \label{defn:LDS} The category $\LDS$ (resp.\ $\PLDS$) of \emph{log differentiable spaces} (resp.\ \emph{positive log differentiable spaces}) is the category of log spaces associated to the category of spaces $(\DS,\RR)$ (resp.\ $(\DS,\RR_+)$) as in \S\ref{section:logspaces}. \end{defn}  

To spell it out, a log differentiable space (resp.\ positive log differentiable space) $X = (\underline{X},\M_X)$ is a differentiable space $\underline{X}$ equipped with a \emph{log structure} $\M_X \to \O_X$ (resp.\ $\M_X \to \O_X^{\geq 0}$).  Recall that a log structure is a map of sheaves of monoids inducing an isomorphism on groups of units.  A morphism $f : X \to Y$ in $\LDS$ (or $\PLDS$) is a map $\u{f} : \u{X} \to \u{Y}$ of differentiable spaces, together with a map $f^\dagger : f^* \M_Y \to \M_X$ of log structures on $\underline{X}$.  We will adhere to the notational conventions introduced in \S\ref{section:definitionoflogspace}.  We also refer the reader to \S\ref{section:logspaces} for basic notions such as inverse limits, strict maps, and integration.  Recall (\S\ref{section:fanstologspaces}) that we have inverse-limit-preserving functors \be \RR : \Mon^{\rm op} & \to & \LDS \\ \RR_+ : \Mon^{\rm op} & \to & \PLDS. \ee  Here $\Mon$ is the category of finitely generated monoids.  The object $\RR(P)$ (resp.\ $\RR_+(P)$) is characterised up to unique isomorphism by the existence of a natural bijection \be \Hom_{\LDS}(X,\RR(P)) & = & \Hom_{\Mon}(P,\M_X(X)) \\  \Hom_{\PLDS}(X,\RR_+(P)) & = & \Hom_{\Mon}(P,\M_X(X)) \ee for each $X \in \LDS$ (resp.\ $X \in \PLDS$).  For more on $\RR(P)$ and $\RR_+(P)$, see \S\ref{section:examples}.

After developing a theory of \emph{log smoothness} in \S\ref{section:logsmoothness} we will see how manifolds with corners fit naturally into the picture (\S\ref{section:manifoldswithcorners}).  We will digress in \S\ref{section:integrationrevisited} and \S\ref{section:boundary} to describe some general ``log space" constructions that we preferred to defer until now for reasons of context.  We also give a treatment of the \emph{Kato-Nakayama space} construction, which, from our point of view, is an important functor between log analytic spaces and our category $\PLDS$.

\subsection{Positive log differentiable spaces}  \label{section:PLDS} Here we make some brief remarks about the relationship between $\PLDS$ and $\LDS$.  For a differentiable space $X$, we will often refer to a (pre)log structure $\M_X \to \O_X^{\geq 0}$ as a \emph{positive (pre)log structure}, to distinguish it from a (pre)log structure $\M_X \to \O_X$ \emph{in the usual sense}.  Given a positive log structure, we can view it as a prelog structure in the usual sense by composing with the inclusion $\O_X^{\geq 0} \into \O_X$.  We can then take the associated log structure---concretely this is given by \be \M_X^a & = & \M_X \oplus_{\O_X^{>0}} \O_X^*. \ee  In fact we have an isomorphism \be \M_X \oplus_{\O_X^{>0}} \O_X^* & \to & \M_X \oplus \u{\ZZ/2 \ZZ} \\ (m,u) & \mapsto & (|u|m, u/|u|), \ee with inverse $(m,\pm 1) \mapsto (m,\pm 1)$.  Here $\u{\ZZ/2 \ZZ}$ denotes the sheaf of locally constant functions to $\ZZ / 2 \ZZ = \{ \pm 1 \}$.  Thus we obtain a functor $\PLDS \to \LDS$, which is easily seen to be faithful in light of the aforementioned isomorphism, though it is not full.

\begin{example}  The automorphism group of the positive log structure \be \NN \oplus \RR_{>0} & \to & \RR_{>0} \\ (n,u) & \mapsto & \left \{ \begin{array}{lll} u, & \quad & n=0 \\ 0, & & n>0 \end{array} \right . \ee is $\RR_{>0}$ via the map sending $r \in \RR_{>0}$ to the isomorphism $(n,u) \mapsto (n,r^n u)$.  The associated log structure is \be \NN \oplus \RR^* & \to & \RR^* \\ (n,u) & \mapsto & \left \{ \begin{array}{lll} u, & & n=0 \\ 0, & & n>0,\end{array} \right . \ee which has automorphism group $\RR^*$. \end{example}

For a differentiable space $X$ and a point $x \in X$, the group $\O_{X,x}^{>0}$ is divisible (injective).\footnote{The local ring of any differentiable space is always a quotient of the ring $A_n$ of germs of smooth functions at the origin of $\RR^n$ (for some $n$), so it suffices to note that the groups $A_n^*$ are divisible.}  Consequently, the characteristic extension \eqref{characteristicextension} of a positive log structure always splits and one easily proves the following variant of Lemma~\ref{lem:fschart}:

\begin{lem} \label{lem:positivechart} Let $\M_X$ be a fine positive log structure on a differentiable space $X$.  Then for any $x \in X$, there exists a neighborhood $U$ of $x$ in $X$ and a characteristic chart $\ov{\M}_{X,x} \to \ov{\M}_X(U)$ for $\M_X|U$. \end{lem}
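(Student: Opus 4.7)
The plan is to invoke Lemma~\ref{lem:fschart} after observing that in the positive setting its splitting hypothesis on the characteristic extension is automatic. Recall that in the category of spaces $(\DS,\RR_+)$, the structure sheaf of a differentiable space $X$ is $\O_X^{\geq 0}$, whose sheaf of units is $\O_X^{>0}$. Thus the characteristic extension at a point $x \in X$ for our positive log structure $\M_X$ reads
\[ 0 \to \O_{X,x}^{>0} \to \M_{X,x}^{\gp} \to \ov{\M}_{X,x}^{\gp} \to 0. \]
Since $\M_X$ is fine, $\ov{\M}_{X,x}$ is finitely generated by Lemma~\ref{lem:characteristic}, hence $\ov{\M}_{X,x}^{\gp}$ is a finitely generated abelian group and the splitting obstruction lies in $\Ext^1(\ov{\M}_{X,x}^{\gp}, \O_{X,x}^{>0})$, which vanishes as soon as $\O_{X,x}^{>0}$ is divisible (equivalently, injective as a $\ZZ$-module).

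To verify divisibility of $\O_{X,x}^{>0}$, I would recall that the local ring $\O_{X,x}$ of a differentiable space is a quotient of the ring $A_n$ of germs at the origin of smooth functions on $\RR^n$ for some $n$; under this quotient, positive units map surjectively onto positive units, so it is enough to show that $A_n^{>0}$ is divisible. Given $f \in A_n^{>0}$ (equivalently, a germ with $f(0) > 0$), any representative is strictly positive on some open neighborhood of the origin, and since $t \mapsto t^{1/k}$ is smooth on $(0,\infty)$ for every positive integer $k$, the composition $f^{1/k}$ defines a germ in $A_n^{>0}$ with $(f^{1/k})^k = f$. Thus $\O_{X,x}^{>0}$ is divisible, so the characteristic extension splits and Lemma~\ref{lem:fschart} then produces, on some neighborhood $U$ of $x$, a characteristic chart lifting the identity of $\ov{\M}_{X,x}$. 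I do not anticipate any serious obstacle: the substantive work is already carried out in Lemma~\ref{lem:fschart} and Lemma~\ref{lem:characteristic}, and this lemma simply records that the splitting hypothesis of Lemma~\ref{lem:fschart} is automatically satisfied for fine positive log structures on differentiable spaces.
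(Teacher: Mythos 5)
Your proposal is correct and is exactly the paper's argument: the paper notes (with the same justification — the local ring is a quotient of $A_n$ and positive germs admit smooth $k$-th roots) that $\O_{X,x}^{>0}$ is divisible, hence the characteristic extension splits, and then derives the lemma as the corresponding variant of Lemma~\ref{lem:fschart}. Nothing to add.
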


\subsection{Examples} \label{section:examples}  This section contains some basic examples of log differentiable spaces.  

\begin{example} \label{example:RPlog}  We should emphasize that $\RR(\NN^n)$ (resp.\ $\RR_+(\NN^n)$) is just $X := \RR^n$ (resp.\ the positive orthant $X := \RR_+^n$) equipped with its usual differentiable structure.  The log structure on $X$ is the one associated to the chart $\NN^n \to \O_X(X)$ taking the standard basis element $e_i$ to the $i^{\rm th}$ coordinate function $x_i$.  Whenever we view $\RR^n$ (resp.\ $\RR_+^n$) as a log differentiable space (resp.\ positive log differentiable space) it is understood to have this \emph{usual log structure}, unless stated otherwise.  \end{example}

\begin{example} We will typically be interested in $\RR(P)$ and $\RR_+(P)$ when $P$ is fs, but let us note the following:  If $P$ is an arbitrary fine monoid and $P^{\sat}$ is its saturation, then $P \to P^{\rm sat}$ is an injective finite map of fine monoids (Theorem~\ref{thm:dense}), hence the induced map of differentiable spaces $\RR_+(P^{\sat}) \to \RR_+(P)$ is a homeomorphism on underlying topological spaces (Lemma~\ref{lem:saturationhomeo}).  It is not generally an isomorphism of differentiable spaces, however.  \end{example}

\begin{example} If $P$ is the submonoid of $\NN$ generated by $2$ and $3$, then $P^{\sat}= \NN$, and we have an isomorphism of topological spaces $\RR_+ = \RR_+(P)$.  The structure sheaf of $\RR_+(P)$ is the subsheaf of the structure sheaf of $\RR_+$ consisting of those $f$ with $f'(0)=0$. \end{example}

\begin{example}  \label{example:teardrop}  Let $x,y$ be the usual coordinates on $\RR_{+}^2 = \{ (x,y) \in \RR^2 : x,y \geq 0 \}$ and let $u,v$ be the usual coordinates on $\RR \times \RR_+ = \{ (u,v) \in \RR^2 : v \geq 0 \}$.   Consider the \emph{continuous} maps \be f : \RR_+^2 & \to & \RR \times \RR_+ \\ (x,y) & \mapsto & \left ( \frac{x^2-y^2}{\sqrt{x^2+y^2}} , \frac{ 2xy }{\sqrt{x^2+y^2}} \right ) \ee (this extends continuously to $(0,0)$ by setting $f(0,0)=(0,0)$) and \be g : \RR \times \RR_+ & \to &  \RR_+^2 \\ (u,v) & \mapsto & \left ( \sqrt{ \frac{u^2+v^2+u\sqrt{u^2+v^2} }{2} } , \sqrt{ \frac{u^2+v^2-u\sqrt{u^2+v^2} }{2} } \right ). \ee  One checks easily that $fg= \Id$ and $gf = \Id$, so $f$ and $g$ provide a \emph{homeomorphism} $\RR_{+}^2 \cong \RR \times \RR_+$, which is not really the point of this example.  The point is that $f$ and $g$ are smooth away from $(0,0)$, so they provide a \emph{diffeomorphism} \bne{gluing} \RR_+^2 \setminus \{ (0,0) \} & \cong & (\RR \times \RR_+) \setminus \{ (0,0) \}. \ene  

In fact, we claim that \eqref{gluing} is an isomorphism of log differentiable spaces when $\RR_+^2$ is given the usual log structure as in Example~\ref{example:RPlog}) and $\RR \times \RR_+$ is given $\pi_2^*$ of the usual log structure on $\RR_+$.  That is, $\RR_+^2$ is given the log structure associated to the prelog structure \be \NN^2 & \to & \C^\infty(\RR_+^2) \\ (1,0) & \mapsto & x \\ (0,1) & \mapsto & y, \ee and $\RR \times \RR_+$ is given the log structure associated to the map $\NN \to \C^\infty(\RR \times \RR_+)$ given by $1 \mapsto v$.  To say that $f$ is a morphism of log differentiable spaces away from $(0,0)$ then amounts to the claim that $f^*v = 2xy(x^2+y^2)^{-1/2}$ is in the subsheaf of monoids of $\C^\infty(\RR_+^2 \setminus (0,0))$ generated by $x,y$ and the invertible functions.  Indeed, this is clear from the formula for $f^* v$ because $2(x^2+y^2)^{-1/2}$ is an invertible smooth function away from $(0,0)$.  Similarly, to say that $g$ is a morphism of log differentiable spaces away from $(0,0)$ is equivalent to saying that $g^*x$ and $g^*y$ are in the submonoid of $\C^\infty(\RR \times \RR_+ \setminus (0,0))$ generated by $v$ and the units (away from $(0,0)$).  On the open subspace $\{ v > 0 \}$, this is obvious because $g^*x$ and $g^* y$ are invertible smooth functions.  On the open subspace $\{ u > 0 \}$, for example, the function $g^* x$ is again an invertible smooth function.  The key point is to prove that, for example, $v^{-1} g^*y$ extends as a smooth function to $\{ v=0 \}$ on the region $\{ u >0 \}$, which is a basic exercise with L'Hospital's Rule.

The log differentiable space \be X & := & \RR_+^2 \coprod_{ \RR_+^2 \setminus ( 0,0) \cong \RR \times \RR_+ \setminus (0,0) } \RR \times \RR_+ \ee obtained by gluing $\RR_+^2$ to $\RR \times \RR_+$ along the complement of the origin via \eqref{gluing} is called the \emph{teardrop}.  Let $x_0 \in X$ denote the origin in the chart $\RR_+^2 \subseteq X$.  The idempotent automorphism $\phi(x,y) := (y,x)$ of $\RR_+^2$ extends to an idempotent automorphism $\phi : X \to X$ given by $(u,v) \mapsto (-u,v)$ on the other chart $\RR \times \RR_+$.  This $\phi$ is an automorphism of log differentiable spaces, so its mapping torus $T$ has a natural log differentiable space structure.  The log differentiable space $T$ is called the \emph{teared Klein body}.  The mapping torus $T$ comes with a fiber bundle structure $\pi : T \to S^1$ with fiber $X$ and monodromy $\phi$.  Since $\phi$ fixes $x_0$, $\pi$ comes with a section $s : S^1 \to T$ given by $x_0 \in X$ in each fiber.  As a topological space, $T$ is a 3-manifold with boundary given by the Klein bottle.  The boundary $\boundary T$ of $T$ (in the ``manifolds with corners" sense of \S\ref{section:boundary}), however, will be an interval bundle $\boundary T \to S^1$ with monodromy given by ``flipping the interval" ($t \mapsto 1-t$).  The map $\boundary : \boundary T \to T$ is 2-to-1 over $s$.  The double boundary $\boundary : \boundary^2 T \to \boundary T$ is the connected double cover of the circle.      \end{example}

\subsection{Integration revisited} \label{section:integrationrevisited}  Recall that in \S\ref{section:integration} we constructed a right adjoint $X \mapsto X^{\rm int}$ to the inclusion of fine log spaces into integral log spaces, for \emph{any} category of spaces.

For certain categories of spaces we can do even better.  Suppose $(X,\O_X)$ is a locally ringed space and $\alpha_X : \M_X \to \O_X$ is a prelog structure on $X$.  Let $I(\M_X) \subseteq \O_X$ denote the ideal generated by local sections of the form $\alpha_X(m)-\alpha_X(n)$ where $m,n$ are local sections of $\M_X$ with the same image in $\M_X^{\rm int}$.  We would like to construct $X^{\rm int}$ as follows:  Let $i : X^{\rm int}(\M_X) \into X$ be the zero locus of $I(\M_X)$, so that $X^{\rm int} := X^{\rm int}(\M_X)$ and we have a natural map $i^{-1} \M_X^{\rm int} \to \O_{X^{\rm int}}$ and, in fact, a natural map \bne{intmap} X^{\rm int} = (X^{\rm int}(\M_X) ,\M_X^{\rm int}) & \to & (X,\M_X) \ene of \emph{pre}log locally ringed spaces.  By taking associated log structures everywhere we may view \eqref{intmap} as a map of log locally ringed spaces.  In particular, this construction defines a functor $X \mapsto X^{\rm int}$ from log locally ringed spaces to integral log locally ringed spaces, which is easily seen to be right adjoint to the inclusion the other way.  By explicitly examining the pushout construction of the associated log structure $\M_X^a$, one sees that $I(\M_X^a) = I(\M_X)$, hence $X^{\rm int}(\M_X) = X^{\rm int}(\M_X^a)$ as locally ringed spaces.  Similarly, it is clear from the pushout construction of $\M_X^a$ that $(\M_X^{\rm int})^a = (\M_X^a)^{\rm int}$ as log structures on this common space.  In particular, if $a : P \to \M_X(X)$ is a finitely generated chart for a log structure $\M_X$ on a locally ringed space $X$, then $I(\M_X)  =  I(P)$ is the (Taylor closure of the) ideal generated by the  global sections $\alpha_X(ap) - \alpha_X(aq)$ where $p,q \in P$ have the same image in $P^{\rm int}$ and $P^{\rm int} \to \M_X^{\rm int}(X^{\rm int})$ is a chart.  The ideal $I(P)$ here coincides with the inverse image of the ideal of the closed embedding $\underline{\RR}(P^{\rm int}) \into \underline{\RR}(P)$, hence we see that \bne{Xint} X^{\rm int} & = & X \times_{\AA(P)} \AA(P^{\rm int}) \ene as log locally ringed spaces.  We also see, in a new way, that the functor $X \mapsto X^{\rm int}$ takes coherent log locally ringed spaces to fine log locally ringed spaces.

One would like to make the above construction in other categories of spaces, but the trouble is that the zero locus $X^{\rm int}(\M_X)$ might not be a ``space."  For example, if $X = (X,\O_X)$ is a scheme, the ideal $I(\M_X) \subseteq \O_X$ defined above will not be quasi-coherent for an arbitrary log structure.  There are similar issues with analytic spaces.  But with differentiable spaces, there is no problem at all:  We need only make sure that we take the \emph{closure} of the ideal $I(\M_X)$, to ensure that its zero locus is a differentiable space (\S\ref{section:differentiablespaces}).  We thus see that both functors \be \IntLDS & \into & \LDS \\ \IntPLDS & \into & \PLDS \ee admit right adjoints.

\subsection{Boundary} \label{section:boundary}  A manifold with corners $X$ has a kind of \emph{boundary} which is ``more refined" than its usual topological boundary (c.f.\ \cite{Joyce}).  In the model case $X = \RR_+^n$, the boundary of $X$, denoted $\boundary X$, is the disjoint union of the $n$ coordinate hyperplanes of $X$, regarded as a manifold with corners as usual.  There is a natural map of differentiable spaces $\u{\boundary} X \to \u{X}$, but this is \emph{not} a morphism of manifolds with corners.  At least, it is not a morphism of manifolds with corners in \emph{our} setup.  Kottke and Melrose \cite{KM} do consider this a ``$b$-map," but not an ``interior $b$-map."

In this section, we give a general construction of ``boundary" for various kinds of log spaces.  This is \emph{not} a completely general construction defined for all log spaces (it won't make sense for $\Fans$, for example), though it will make sense in all other categories of log spaces mentioned in \S\ref{section:spaces}.  

Let us give the algebraic construction first.  Suppose $P$ is a monoid, $F \subseteq P$ is a face.  Then $I := P \setminus F$ is a prime ideal of $P$ and $\ZZ[I]$ is an ideal of $\ZZ[P]$.  It is not generally a prime ideal, even when $\ZZ$ is replaced by a field $k$ (though it \emph{will} be prime in that case if $P$ is integral and $P^{\rm gp}$ is torsion free).  For more details on these sort of ideals, see \cite[7.7]{logflatness} (we do not need any of those results for the present constructions).  The quotient ring $\ZZ[P]/\ZZ[I]$ is identified with $\ZZ[F]$ by the obvious map, so that the quotient projection $\ZZ[P] \to \ZZ[F]$ retracts the natural map $\ZZ[F] \to \ZZ[P]$.  This quotient projection is a sort of ``\emph{wrong-way map}."  We define the \emph{boundary} of the log scheme $X := \Spec \ZZ[P]$ to be \be \boundary X & := & \coprod_{F} \Spec \ZZ[F], \ee where the disjoint union runs over maximal proper faces of $P$.  The boundary $\boundary X$ is also a log scheme and there is a natural map of schemes $\u{\boundary} X \to \u{X}$ given by the disjoint union of the closed embeddings corresponding to the aforementioned quotient projections.  The map $\u{\boundary} X \to \u{X}$ does not (generally) lift to a map of \emph{log} schemes because, for one thing, the inclusion $F \subseteq P$ of monoids does not generally have a retract.  We can of course repeat this discussion replacing $\ZZ$ with another base ring, such as $\RR$ or $\CC$.

We can describe $\Delta X$ in a more intrinsic way, on the level of \emph{pre}log locally ringed spaces.  Suppose $(X,\M_X \to \O_X)$ is a prelog locally ringed space.  As a set, $\Delta X$ is the set of pairs $(x,F)$ consisting of a point $x$ of $X$ and a maximal proper face $F \subseteq \ov{\M}_{X,x}$.  (The sharpening map $\M_{X,x} \to \ov{\M}_{X,x}$ induces an order-preserving bijection on faces by Lemma~\ref{lem:sharpeninghomeo}, so this choice of $F$ is the same thing as the choice of a maximal proper face of $\M_{X,x}$.)  For an open subset $U \subseteq X$ and $m \in \ov{\M}_X(U)$, let \be U_m & := & \{ (x,F) \in \boundary X : x \in U, \, m_x \in F \}. \ee  The formula \be U_m \cap V_n & = & (U \cap V)_{m+n} \ee implies that the sets $U_m$ form a basis for a topology on $\boundary X$.  The map \bne{boundary} \boundary : \boundary X & \to & X \\ \nonumber (x,F) & \mapsto & x \ene is continuous because $\boundary^{-1}(U) = U_0$.  The sheaf of monoids $\boundary^{-1} \ov{\M}_X$ on $\boundary X$ comes with a tautological subsheaf of monoids $\ov{\M}_{\boundary X}$ given by \be \ov{\M}_{\boundary X}(V) & = & \{ m \in (\boundary^{-1} \ov{\M}_X)(V) : m_x \in F {\rm \; for \; all \; } (x,F) \in V \} . \ee  Let $\M_{\boundary X} \subseteq \boundary^{-1} \M_X$ be the subsheaf of monoids consisting of sections of $\boundary^{-1} \M_X$ whose image in $\Delta^{-1} \ov{\M}_X$ lies in $\ov{\M}_{\Delta X}$.  Let $I \subseteq \boundary^{-1}\O_X$ be the ideal generated by the image of \be \boundary^{-1} \alpha_X : \boundary^{-1} \M_X \setminus \M_{\boundary X} & \to & \boundary^{-1} \O_X \ee and let $\O_{\boundary X} := \boundary^{-1} \O_X / I$ so that the quotient map $\boundary^{-1} \O_X \to \O_{\Delta X}$ lifts the map \eqref{boundary} to a map of locally ringed spaces $\boundary : \boundary X \to X$.  The composition $$ \M_{\boundary X} \into \boundary^{-1} \M_X \to \boundary^{-1} \O_X \to \O_{\Delta X} $$ determines a prelog structure on $\boundary X$.  We view $\boundary X$ as a log locally ringed space by giving it the log structure associated to $\M_{\boundary X}$.

We leave it to the reader to check that: \begin{enumerate}[label=(B\arabic*), ref=B\arabic*] \item The log locally ringed space $\boundary X$, and the map of locally ringed spaces $\boundary : \u{\boundary} X \to \u{X}$ depend only on the log structure $\M_X^a$ associated to the prelog structure $\M_X$. \item If we specialize the above general construction to $X = \Spec \ZZ[P]$ we recover the earlier notion of boundary. \item If the log structure on $X$ is coherent (or fine, fs, or any other property of monoids inherited by faces), then so is the log structure on $\boundary X$. \end{enumerate}  Assuming the log structure on $X$ is coherent, we also have: \begin{enumerate}[label=(B\arabic*), ref=B\arabic*] \setcounter{enumi}{3} \item \label{Bface} For each $x \in \boundary X$, $\ov{\M}_{\boundary X,x}$ is a proper face of $\ov{\M}_{X,\boundary(x)}$.  \item \label{Bfinite} Locally on $X$, the boundary map $\boundary : \u{\boundary} X \to \u{X}$ is a finite disjoint union of closed embeddings.  In particular $\boundary$ is \emph{finite} (proper with finite fibers).  \item \label{Bbasechange} Formation of the boundary commutes with \emph{strict} base change of log locally ringed spaces.  That is, if $X$ is a coherent log locally ringed space and $f : X' \to X$ is a strict map of log locally ringed spaces, then we have a cartesian diagram \be & \xym{ \boundary X' \ar[r] \ar[d] & \boundary X \ar[d] \\ \u{X}' \ar[r] & \u{X} } \ee of log locally ringed spaces.  \end{enumerate} For \eqref{Bbasechange} it helps to use \eqref{Bfinite} to know that $\boundary : \u{\boundary} X \to \u{X}$ is close enough to being a closed embedding that any base change of it, calculated in $\LRS$, is the same as the one calculated in $\RS$.

\begin{rem} \label{rem:boundary} Boundary is \emph{not} functorial for general maps of log locally ringed spaces. \end{rem}

Having defined the boundary of an arbitrary log locally ringed space, we can specialize our construction to define the boundary of other kinds of log spaces.  For log differentiable spaces or positive log differentiable spaces, we need only make sure that we take the closure of the ideal $I$ defined in the general construction.  The usual issues arise for schemes: for an \emph{arbitrary} log structure on a scheme $X$, we do not know that the ideal $I$ from the general construction is quasi-coherent, so we do not know in general that the log locally ringed space $\boundary X$ is actually a log scheme.  However, as long as the log structure is coherent, we can use the formal properties above to see that $\boundary X$ is a coherent log scheme.

The following set-theoretic description of the boundary map might be helpful.  Let $F$ be a face of a (finitely generated) monoid $P$.  Let $\AA(P)$ denote one of the usual log spaces we attach to $P$ ($\Spec \ZZ[P]$, $\Spec \RR[P]$, $\RR(P)$, $\RR_{\geq 0}(P)$, $\CC(P)$, etc.)  The points of $\AA(P)$ are described in terms of monoid homomorphisms $x : P \to k$, where $k$ is a field (in the algebraic cases we run over \emph{all} fields and impose an equivalence relation, for $\AA(P) = \CC(P)$, we use $k = \CC$, for $\RR_+(P)$ we use $k = \RR_+$, which is not really a field, but the discussion will make sense, \dots).  Now, if $x : F \to k$ is a point of $\AA(F)$, then the fact that $F$ is a face implies that \be \ov{x} : P & \to & k \\ p & \mapsto & \left \{ \begin{array}{lll} x(p), & \quad & p \in F \\ 0, & & p \notin F \end{array} \right . \ee is a monoid homomorphism extending $x$.  On the level of sets, the boundary map $\u{\boundary} \AA(P) \to \u{\AA}(P)$ is the disjoint union, over maximal proper faces $F \subseteq P$, of the maps $x \mapsto \ov{x}$ described above.

\begin{example} \label{example:boundaryofRn}  When $P = \NN^n$, the maximal proper faces of $P$ are the $n$ ``coordinate hyperplanes" $\NN^{n-1} \into \NN^n$.  The boundary map $\boundary : \u{\boundary} \AA(P) \to \u{\AA}(P)$ is the disjoint union  \be \Delta : \coprod_{i=1}^n \underline{\AA}^{n-1} & \to & \underline{\AA}^n \ee of the inclusions of these hyperplanes. \end{example}

\begin{example} If $X$ is a manifold with corners (free log smooth positive log differentiable space), then it follows from Example~\ref{example:boundaryofRn} that $\underline{ \boundary} X \to \underline{X}$ is a finite-to-one proper map whose image is the topological boundary of $\underline{X}$. \end{example}

\begin{lem} \label{lem:boundary}  Suppose $X$ is a coherent log locally ringed space.  Then, at least locally on $X$, there is a positive integer $n$ such that the iterated boundary $\Delta^n X$ is empty. \end{lem}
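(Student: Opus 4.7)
The plan is to work locally in a finitely generated chart and bound the depth of iterated boundary by counting faces. Fix $x_0 \in X$. By coherence, I can choose a neighborhood $U$ of $x_0$ admitting a finitely generated chart $h : P \to \M_X(U)$. Since $P$ is finitely generated, the set of faces of $P$ is finite; call its cardinality $k$ (this is essentially the finiteness of $\Spec P$ noted at the start of \S\ref{section:Spec}). I claim $\boundary^k(X|_U) = \emptyset$.

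Since open embeddings are strict, the base change property (B\ref{Bbasechange}) inductively gives $\boundary^n(X|_U) = (\boundary^n X)|_{\boundary^{-n}(U)}$, so I may replace $X$ by $X|_U$ and assume the chart $h$ is global. Now iterate property (B\ref{Bface}): for any $y \in \boundary^n X$ with image $x := \boundary^n(y) \in X$, writing $y_i := \boundary^{n-i}(y) \in \boundary^i X$, one obtains by induction a strictly decreasing chain of faces inside $\ov{\M}_{X,x}$,
\[ \ov{\M}_{X,x} \supsetneq \ov{\M}_{\boundary X, y_1} \supsetneq \ov{\M}_{\boundary^2 X, y_2} \supsetneq \cdots \supsetneq \ov{\M}_{\boundary^n X, y_n}, \]
of length $n+1$. (Each step is a proper face of the previous by (B\ref{Bface}), and a face of a face is a face, so the whole chain lies in the face lattice of $\ov{\M}_{X,x}$.)

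On the other hand, as in the proof of Lemma~\ref{lem:characteristic}, $\ov{\M}_{X,x}$ is the quotient of $P$ by the face $F_x := (\alpha_X h)_x^{-1}(\O_{X,x}^*)$. The faces of $P/F_x$ correspond bijectively to faces of $P$ containing $F_x$, so $\ov{\M}_{X,x}$ has at most $k$ faces. Therefore no strict chain of length $k+1$ can exist in its face lattice, ruling out any point of $\boundary^k X$ above $x$; since this applies for all $x$, we conclude $\boundary^k X = \emptyset$.

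The only part of the argument that is not entirely formal is the identification of the iterated-boundary characteristics with an honest descending chain of faces \emph{inside the single monoid} $\ov{\M}_{X,x}$, but this is exactly what (B\ref{Bface}) delivers after observing inductively that a face of a face of $P$ is again a face. Past that, the result is a pure combinatorial consequence of the finiteness of the face lattice of a finitely generated monoid.
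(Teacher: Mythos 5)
Your proof is correct and follows essentially the same route as the paper's: reduce to a global finitely generated chart and iterate property \eqref{Bface}. The only difference is the choice of strictly decreasing invariant --- the paper bounds the minimal number of generators of the characteristic monoid (which drops on passing to a proper face by Lemma~\ref{lem:faces}), while you bound chain lengths in the finite face lattice of $P$; both are immediate consequences of finite generation of the chart.
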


\begin{proof} Since the quesiton is local we can assume $X$ has a global chart using a finitely generated monoid $P$.  Then the characteristic monoid $\ov{\M}_{X,x}$ at any point of $X$ is some quotient of $P$, so there is a global bound on the minimal number of generators of $\ov{\M}_{X,x}$.  By Lemma~\ref{lem:faces}, the minimal number of generators must decrease on passing to a proper face, so the result follows from property \eqref{Bface} of the boundary. \end{proof}

The following ``combinatorial condition" on a log locally ringed space often arises in the literature in some form:

\begin{defn} \label{defn:tame} A log locally ringed space $X$ is called \emph{tame} iff $\underline{\boundary} : \underline{ \boundary X} \to \underline{X}$ is \emph{globally} a finite disjoint union of closed embeddings. \end{defn}

\subsection{Log smoothness} \label{section:logsmoothness}  In this section we introduce \emph{log smooth} maps and establish some of their basic properties.  Our definition is modelled on Kato's \emph{Chart Criterion for Log Smoothness} in log algebraic geometry \cite{Kat1}---see \S\ref{section:scholium} for further discussion.  While it would be possible to develop a general theory of log smoothness for log spaces (\S\ref{section:logspaces}) by starting with an appropriate class of maps of spaces deemed ``smooth," we will restrict ourselves in this section to the cases of log differentiable spaces and positive log differentiable spaces.

\begin{defn} \label{defn:smoothchart} Let $f : X \to Y$ be a map of fine $\LDS$ (resp.\ $\PLDS$).  A fine chart $$\xym{ P \ar[r]^-a & \M_X(X) \\ Q \ar[u]^-h \ar[r]^-b & \M_Y(Y) \ar[u]_{f^\dagger} }$$ for $f$ is called \emph{smooth} iff $h$ is monic and the $\DS$ morphism $\underline{X} \to \underline{Y} \times_{\underline{\RR}(Q)} \underline{\RR}(P)$ (resp.\ $\underline{X} \to \underline{Y} \times_{\underline{\RR}_+(Q)} \underline{\RR}_+(P)$) is smooth. \end{defn}

If $x \in X$ and $(U,V)$ is a neighborhood of $x$ in $f$, then the restriction (\S\ref{section:chartsandcoherence}) of a smooth chart for $f$ to $(U,V)$ is again a smooth chart.  The Shrinking Argument of \S\ref{section:chartsandcoherence} works equally well for smooth charts, as follows.  Suppose one has a smooth chart for a map $f$ as in Definition~\ref{defn:smoothchart}.  Let $F := a_x^{-1} \O_{X,x}^*$ and $G := b_y^{-1} \O_{Y,y}^*$ be the faces of $a$ and $b$ at $x \in X$ and $y := f(x)$.  Then after replacing $(X,Y)$ with a neighborhood of $x$ in $f$, we can assume our smooth chart factors as $$\xym{ P \ar[r] & F^{-1}P \ar[r]^-a & \M_X(X) \\ Q \ar[r] \ar[u]^-h & G^{-1}Q \ar[u]^-h \ar[r]^-b & \M_Y(Y) \ar[u]_{f^\dagger} }$$ and it is easy to see that the right square above is also a smooth chart (use the fact that $\underline{\RR}(F^{-1}P) \to \underline{\RR}(P)$ is an open embedding and similarly with $\underline{\RR}$ replaced by $\underline{\RR}_+$ and/or $P,F$ replaced by $Q,G$).

\begin{thm} \label{thm:logsmooth} For a map $f : X \to Y$ of fine $\LDS$ or $\PLDS$ and a point $x \in X$, the following are equivalent: \begin{enumerate} \item \label{anychart} For any neighborhood $W$ of $f(x)$ in $Y$ and any fine chart $b : Q \to \M_Y(W)$ for $\M_Y|W$ there is a neighborhood $(U,V)$ of $x$ in $f|f^{-1}(W) : f^{-1}(W) \to W$ and a smooth chart for $f : U \to V$ extending the restriction of $b$ to $V$. \item \label{somechart} There is a neighborhood $(U,V)$ of $x$ in $f$ and a smooth chart for $f : U \to V$. \end{enumerate} \end{thm}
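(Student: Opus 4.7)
The implication (\ref{anychart}) $\Rightarrow$ (\ref{somechart}) will be immediate once we produce at least one fine chart on $Y$ near $f(x)$: since $\M_Y$ is fine, such a chart $b : Q \to \M_Y(W)$ exists on some neighborhood $W$ of $f(x)$ (by Lemma~\ref{lem:chartformorphism1} applied to $\Id_Y$, or by the very definition of fineness), and applying (\ref{anychart}) to this particular $b$ yields a smooth chart for $f$ near $x$.

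For the converse (\ref{somechart}) $\Rightarrow$ (\ref{anychart}), suppose we have a smooth chart $(P', Q', h', a', b')$ for $f : U' \to V'$ witnessing (\ref{somechart}), together with a fine chart $b : Q \to \M_Y(W)$ as in (\ref{anychart}). After replacing $V'$ by $V' \cap W$ and $U'$ by $U' \cap f^{-1}(V' \cap W)$, we may assume $W = V'$. The plan is to apply Lemma~\ref{lem:secondchart} to the map $f : U' \to V'$ with source chart $a'$ and target chart $b$: after possibly shrinking $U'$ to a neighborhood $U$ of $x$, this produces a fine chart $(P, Q, h, a, b)$ for $f : U \to V'$ extending $b$, together with a monoid homomorphism $g : P' \to P$ satisfying $a g = a'$. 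Because the construction in the proof of Lemma~\ref{lem:secondchart} passes through Lemma~\ref{lem:firstchart}, the resulting $P$ will moreover be \emph{characteristic} at $x$, i.e.\ $\ov{P} \cong \ov{\M}_{X,x}$.

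The substantive remaining task will be to verify that this new chart is smooth---that the induced $\DS$-morphism $\pi : \underline{X} \to \underline{Y} \times_{\underline{\RR}(Q)} \underline{\RR}(P)$ (and its $\underline{\RR}_+$-analogue in $\PLDS$) is smooth, given smoothness of $\pi' : \underline{X} \to \underline{Y} \times_{\underline{\RR}(Q')} \underline{\RR}(P')$. The strategy is to introduce a common refinement. First apply the Shrinking Argument of \S\ref{section:chartsforlogstructures} to the charts $a', b', b$ so that $P', Q', Q$ all become characteristic at their respective points. Then, after further shrinking, apply Lemma~\ref{lem:thirdchart} to produce a fine chart $c : R \to \M_Y(V')$ on $Y$ receiving maps $Q \to R$ and $Q' \to R$ compatible with $b$ and $b'$. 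A second application of Lemma~\ref{lem:secondchart} (with source chart $a$ and target chart $c$) then yields a fine chart $(\widetilde{P}, R)$ for $f$, together with monoid maps $P \to \widetilde{P}$ and---via composition with $g$---$P' \to \widetilde{P}$, all commuting with the charts on $Y$. In this three-chart setup every intermediate monoid homomorphism, once the characteristic reductions are taken into account, differs from an isomorphism only by a change of its unit group, so Lemma~\ref{lem:monoidsmooth} shows that the induced $\DS$-morphisms between the various $\underline{\RR}(-)$ realizations are smooth (indeed étale in the positive case). Smoothness of $\pi'$ will then transfer through these smooth comparison maps to yield smoothness of the ``dominant'' map $\widetilde{\pi} : \underline{X} \to \underline{Y} \times_{\underline{\RR}(R)} \underline{\RR}(\widetilde{P})$, and thence by composition with the smooth projection onto the $(P, Q)$-fiber product to smoothness of $\pi$.

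The principal obstacle will be the descent step inside the paragraph above: $\pi'$ factors as a smooth projection composed with $\widetilde{\pi}$, and smoothness of the composition does not formally imply smoothness of $\widetilde{\pi}$. To overcome this I plan to identify $\widetilde{\pi}$ as a base change of $\pi'$ along a smooth $\DS$-morphism, relying on an explicit description of $\underline{\RR}(\widetilde{P})$ as $\underline{\RR}(P')$ times a group factor built from the cokernel of the unit-group inclusion $(P')^* \to \widetilde{P}^*$---a description afforded by Lemma~\ref{lem:fssplitting} (and its $Y$-side counterpart for $\underline{\RR}(R)$ versus $\underline{\RR}(Q')$) once the characteristic reductions are in place. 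A secondary but unavoidable nuisance will be coordinating the successive shrinking steps so that all the charts, monoid maps, and comparison morphisms live simultaneously on a single sufficiently small neighborhood $(U, V)$ of $x$ in $f$; this will be a routine but careful bookkeeping exercise.
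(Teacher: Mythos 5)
Your direction (\ref{anychart})$\Rightarrow$(\ref{somechart}) is fine, but the converse has a genuine gap at its first substantive step: the chart produced by Lemma~\ref{lem:secondchart} from the pair (source chart $a'$, target chart $b$) is in general \emph{not} smooth, and the subsequent comparison argument cannot repair this. The monoid that Lemma~\ref{lem:secondchart} (via Lemma~\ref{lem:firstchart}) returns is $P=t^{-1}(\M_{X,x})$ for $t : Q^{\rm gp}\oplus (P')^{\rm gp}\to \M_{X,x}^{\rm gp}$, so $P^{\rm gp}=Q^{\rm gp}\oplus (P')^{\rm gp}$ and the fiber product $\u{Y}\times_{\u{\RR}(Q)}\u{\RR}(P)$ is too big by a factor of rank $\mathrm{rk}\,\mathrm{im}((Q')^{\rm gp}\to (P')^{\rm gp})$. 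Concretely, take $f=\Id : \RR(\NN)\to\RR(\NN)$ with $P'=Q'=Q=\NN$ all tautological charts: the original chart is smooth, but Lemma~\ref{lem:secondchart} returns $P=\{(a,b)\in\ZZ^2 : a+b\geq 0\}\cong \ZZ\oplus\NN$ (which \emph{is} characteristic at the origin, so that property does not save you), the fiber product $\u{Y}\times_{\u{\RR}(\NN)}\u{\RR}(P)$ is isomorphic to $\RR^*\times\RR$, and the structure map $\u{X}=\RR\to\RR^*\times\RR$ is $x\mapsto (1,x)$ --- a codimension-one closed embedding, not smooth. The same rank count defeats the descent step you flagged as the principal obstacle: the comparison maps between your three fiber products are governed by monoid homomorphisms whose groupifications have cokernel of positive rank (e.g.\ $\Cok((P')^{\rm gp}\to P^{\rm gp})\cong Q^{\rm gp}$), so the hypotheses of Lemma~\ref{lem:monoidsmooth} fail for them, and $\u{X}$ sits inside the enlarged fiber product as the graph of a section of a torsor under $\u{\RR}(\ZZ^s)$ with $s>0$, i.e.\ as a positive-codimension closed embedding rather than a smooth map.

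The missing idea is a \emph{controlled} construction of the new chart monoid. One must adjoin to $Q$ only a free piece $\NN^r$ with $r=\dim_{\QQ}\bigl(((P')^{\rm gp}/(Q')^{\rm gp})\otimes\QQ\bigr)$, chosen to map to a rational basis of that quotient, and then enlarge the group $Q^{\rm gp}\oplus\ZZ^r$ by finitely many $m$-th roots until the resulting $t : A\to (P')^{\rm gp}$ becomes surjective modulo $(P')^*$; setting $P:=t^{-1}(P')$ then forces the comparison map $\u{\RR}(P')\to\u{\RR}(P\oplus_Q Q')$ to be \'etale (its groupification has torsion kernel and cokernel by construction), and smoothness of the new chart follows because $\u{X}\to\u{Y}\times_{\u{\RR}(Q)}\u{\RR}(P)$ is the old smooth map followed by a base change of this \'etale map --- no descent is needed. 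This is Kato's argument and is precisely the content of the paper's Lemma~\ref{lem:logsmooth}; without it the new fiber product simply has the wrong dimension. (Your preliminary use of Lemma~\ref{lem:thirdchart} to put $T$ and $Q$ under a common refinement $Q'$, and the pushout $P'=S\oplus_T Q'$, do match the paper's first reductions; it is only the production of the final $P$ over $Q$ that needs Kato's root-adjunction.)
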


The proof of Theorem~\ref{thm:logsmooth} is rather technical and lengthy---it will be relegated to \S\ref{section:proof}.

\begin{defn} \label{defn:logsmooth} A map $f : X \to Y$ of fine $\LDS$ or $\PLDS$ is called \emph{log smooth at} $x \in X$ iff it satisfies the equivalent conditions of Theorem~\ref{thm:logsmooth}.  The map $f$ is called \emph{log smooth} iff it is log smooth at $x$ for every $x \in X$.  A fine $\LDS$ or $\PLDS$ $X$ is called \emph{log smooth} iff it is log smooth over $\Spec \RR$ (a point with trivial log structure).  \end{defn}
 
\begin{example} \label{example:RPlogsmooth} If $h : P \into Q$ is an injective map of fine monoids, then the $\LDS$ morphism $\RR(h) : \RR(Q) \to \RR(P)$ (c.f.\ Example~\ref{example:RPlog}) is log smooth because we can use $h$ as a chart for $\RR(h)$ to check \eqref{somechart} in Theorem~\ref{thm:logsmooth}.  In particular, $\RR(P)$ is log smooth.  Similarly, $\RR_+(Q) \to \RR_+(P)$ is a log smooth map of $\PLDS$ and $\RR_+(P)$ is a log smooth $\PLDS$. \end{example}
 
We will spend the rest of this section working out the basic properties of log smooth maps.

\begin{prop} \label{prop:logsmooth} Let $f : X \to Y$ be a map of fine $\LDS$ (resp.\ $\PLDS$).  Suppose $Y$ has the trivial log structure.  Then $X$ is log smooth at $x \in X$ iff there is a neighborhood $(U,V)$ of $x$ in $f$ and a fine chart $P \to \M_X(U)$ for $\M_X|U$ such that the corresponding $\DS$ morphism $\underline{U}  \to  \underline{V} \times \underline{\RR}(P)$ (resp.\ $\underline{U}  \to  \underline{V} \times \underline{\RR}_+(P)$) is smooth.  \end{prop}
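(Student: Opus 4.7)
The plan is to deduce the proposition directly from Theorem~\ref{thm:logsmooth} by using the trivial monoid $Q = \{0\}$ as the fine chart for the trivial log structure on $Y$. Two preliminary observations set everything up. First, for every open $V \subseteq Y$ the unique monoid homomorphism $b : 0 \to \M_Y(V)$ is a fine chart for $\M_Y|V$: its image consists only of the identity of $\str_Y^*(V)$, so the preimage of $\str_Y^*$ is all of $\underline{0}$ and the associated log structure is $\underline{0} \oplus_{\underline{0}} \str_Y^* = \str_Y^*$, which is precisely $\M_Y|V$. Second, $\underline{\RR}(0) = \underline{\RR}_+(0) = \Spec \RR$ is a single point (since $\RR[\,0\,] = \RR$), so for any map $\underline{V} \to \underline{\RR}(0)$ the fibered product $\underline{V} \times_{\underline{\RR}(0)} \underline{\RR}(P)$ collapses to $\underline{V} \times \underline{\RR}(P)$, and analogously with $\underline{\RR}_+$.

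For the $(\Rightarrow)$ direction, assume $f$ is log smooth at $x$. Applying Theorem~\ref{thm:logsmooth}(\ref{anychart}) with the chart $b : 0 \to \M_Y(Y)$ produces a neighborhood $(U,V)$ of $x$ in $f$ and a smooth chart for $f|U : U \to V$ extending the restriction of $b$. By Definition~\ref{defn:smoothchart}, such a chart consists of a fine monoid $P$, the (vacuously monic) map $h : 0 \to P$, and a fine chart $a : P \to \M_X(U)$ for $\M_X|U$ satisfying the smoothness requirement that $\underline{U} \to \underline{V} \times_{\underline{\RR}(0)} \underline{\RR}(P)$ be a smooth $\DS$-morphism; by the second observation, this is the same as requiring $\underline{U} \to \underline{V} \times \underline{\RR}(P)$ to be smooth, which is exactly the conclusion of the proposition (the $\PLDS$ case is identical with $\underline{\RR}$ replaced by $\underline{\RR}_+$).

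For the $(\Leftarrow)$ direction, given a neighborhood $(U,V)$ and a fine chart $a : P \to \M_X(U)$ for which $\underline{U} \to \underline{V} \times \underline{\RR}(P)$ is smooth, I would assemble a smooth chart for $f|U : U \to V$ by taking $Q = 0$, the trivial chart $b : 0 \to \M_Y(V)$ from the first preliminary observation, the (unique and automatically monic) map $h : 0 \to P$, and the given $a$. The fiber-product identification from the second observation shows that this data satisfies Definition~\ref{defn:smoothchart}, so it is indeed a smooth chart for $f|U : U \to V$. Theorem~\ref{thm:logsmooth}(\ref{somechart}) then yields that $f$ is log smooth at $x$.

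The only points requiring any care are the two preliminary observations above; these are essentially bookkeeping, and I do not expect any serious obstacle. The proposition is really just the specialization of the general chart criterion of Theorem~\ref{thm:logsmooth} to the case where the base has trivial log structure, all the substantive work having been absorbed into that theorem.
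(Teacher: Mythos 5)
Your proof is correct and is exactly the paper's argument: the paper's proof consists of the single line ``Unravel the statement of Theorem~\ref{thm:logsmooth} with $W=Y$, $Q=0$,'' and your two preliminary observations (that $0 \to \M_Y(V)$ is a fine chart for the trivial log structure and that $\underline{\RR}(0)$ is a point, collapsing the fibered product to a product) are precisely the unravelling the paper leaves to the reader.
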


\begin{proof}  Unravel the statement of Theorem~\ref{thm:logsmooth} with $W=Y$, $Q=0$. \end{proof}

\begin{thm} \label{thm:strictlogsmooth} A strict morphism $f : X \to Y$ of fine $\LDS$ or $\PLDS$ is log smooth iff $\underline{f} : \underline{X} \to \underline{Y}$ is a smooth $\DS$ morphism. \end{thm}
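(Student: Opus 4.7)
The plan is to prove the two implications separately, with the reverse implication being almost a tautology and the forward implication reducing, after localization, to an application of Lemma~\ref{lem:monoidsmooth}.

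For the ``if'' direction, suppose $f$ is strict and $\underline{f}$ is smooth. Fix $x \in X$. By Lemma~\ref{lem:chartformorphism1}, near $f(x)$ there is a fine chart $b : Q \to \M_Y(V)$. Strictness of $f$ means $f^\dagger : f^*\M_Y \to \M_X$ is an isomorphism, so the composition of $b$ with this isomorphism defines a fine chart $a : Q \to \M_X(f^{-1}(V))$ lifting $b$ with $h := \Id_Q$. The resulting chart square for $f$ is smooth in the sense of Definition~\ref{defn:smoothchart} precisely when the $\DS$-morphism $\underline{f^{-1}(V)} \to \underline{V} \times_{\underline{\RR}(Q)} \underline{\RR}(Q) = \underline{V}$ is smooth, but this map is just the restriction of $\underline{f}$, which is smooth by hypothesis. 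Thus $f$ satisfies condition \eqref{somechart} of Theorem~\ref{thm:logsmooth} at $x$.

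For the ``only if'' direction, suppose $f$ is strict and log smooth at $x \in X$. By Theorem~\ref{thm:logsmooth}\eqref{somechart}, there exists a smooth chart for $f$ near $x$ given by some monic $h : Q \to P$, with $\phi : \underline{U} \to \underline{V} \times_{\underline{\RR}(Q)} \underline{\RR}(P)$ smooth (we write $\underline{\RR}$ but the argument works verbatim with $\underline{\RR}_+$ in the $\PLDS$ case). Apply the Shrinking Argument of \S\ref{section:chartsforlogstructures}: after replacing $(U,V)$ by a smaller neighborhood of $x$ in $f$ and replacing $(P,Q)$ by $(F^{-1}P, G^{-1}Q)$ where $F = a_x^{-1}\O_{X,x}^*$ and $G = b_{f(x)}^{-1}\O_{Y,f(x)}^*$, we may assume $F = P^*$ and $G = Q^*$, so that $\ov{P} = \ov{\M}_{X,x}$ and $\ov{Q} = \ov{\M}_{Y,f(x)}$; the chart remains smooth because $\underline{\RR}(F^{-1}P) \to \underline{\RR}(P)$ and its analogue for $Q$ are open embeddings (Remark~\ref{rem:localizationstoopenembeddings}), so $\phi$ factors through the evident open subspace and is still smooth after this replacement.

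Strictness of $f$ together with Lemma~\ref{lem:strictmorphisms} now implies that $\ov{h} : \ov{Q} \to \ov{P}$ is an isomorphism. Because $h$ is monic and $P,Q$ are integral (so $Q \hookrightarrow Q^{\mathrm{gp}}$ and similarly for $P$), a short diagram chase shows that $h^{\mathrm{gp}}$ is also injective, hence $\Ker h^{\mathrm{gp}} = 0$. Thus $h$ satisfies the hypotheses of Lemma~\ref{lem:monoidsmooth}, which yields that $\underline{\RR}(h) : \underline{\RR}(P) \to \underline{\RR}(Q)$ is a smooth $\DS$-morphism. Smoothness is stable under base change (\S\ref{section:smoothmorphisms}), so the projection $\pi_1 : \underline{V} \times_{\underline{\RR}(Q)} \underline{\RR}(P) \to \underline{V}$ is smooth, and hence $\underline{f}|_U = \pi_1 \circ \phi$ is smooth. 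Since $x$ was arbitrary and smoothness is local on the domain, $\underline{f}$ is smooth, completing the proof. The main technical obstacle is arranging, via the Shrinking Argument, that the smooth chart can be taken with $\ov{h}$ an isomorphism and $h^{\mathrm{gp}}$ injective, after which Lemma~\ref{lem:monoidsmooth} does the geometric work.
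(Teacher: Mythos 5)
Your proof is correct and follows essentially the same route as the paper: the ``if'' direction is identical, and in the ``only if'' direction the paper likewise shrinks so that $\ov{h}$ becomes an isomorphism and then deduces smoothness of $\underline{\RR}(P)\to\underline{\RR}(Q)$ from the pushout-of-units argument (Lemma~\ref{lem:pushout} plus Lemma~\ref{lem:groupsmooth}), which is exactly the content of the Lemma~\ref{lem:monoidsmooth} you cite. Your observation that monicity of $h$ together with integrality forces $h^{\rm gp}$ to be injective is a valid (if slightly stronger than needed) way to meet that lemma's torsion-kernel hypothesis.
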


\begin{proof} We will give the proof for fine $\LDS$.  The argument for $\PLDS$ is identical.  Suppose $f : X \to Y$ is strict and $\underline{f}$ is smooth.  We want to show that $f$ is log smooth.  Fix $x \in X$ and suppose $V$ is a neighborhood of $f(x)$ in $Y$ and $b : Q \to \M_Y(V)$ is a fine chart for $\M_Y|V$.  Set $U := f^{-1}(V)$.  Since $f$ is strict, the composition of $b$ and $f^\dagger : \M_Y(V) \to \M_X(U)$ is a fine chart for $\M_X|U$.  The natural map $\underline{X} \to \underline{Y} \times_{ \underline{\RR}(Q) } \underline{\RR}(Q)$ is just the map $\underline{f}$, thus we see that $f$ is log smooth.  

Conversely, suppose $f$ is strict and log smooth.  Fix any $x \in X$.  We want to show that $\underline{f}$ is smooth at $x$.  This is local, so we can freely replace $(X,Y)$ with any neighborhood of $x$ in $f$.  Hence, by definition of log smooth and the Shrinking Argument, we can assume we have a smooth chart $$ \xym{ P \ar[r]^-a & \M_X(X) \\ Q \ar[u]^h \ar[r]^-b & \M_Y(X) \ar[u]_{f^\dagger} } $$ where $P = F^{-1}P$ and $Q = G^{-1}Q$ (so $F$ and $G$ are the groups of units in $P$, $Q$ respectively), where $F = a_x^{-1} \O_{X,x}^*$, $G = b_y^{-1} \O_{Y,y}^*$, $y = f(x)$ as usual.  Since $f$ is strict, $\ov{f}^\dagger_x : \ov{\M}_{Y,y} \to \ov{\M}_{X,x}$ is an isomorphism, hence $P/F \to Q/G$ is also an isomorphism because $P/F \to \ov{\M}_{X,x}$ and $Q/G \to \ov{\M}_{Y,y}$ are isomorphisms since $a$ and $b$ are charts, hence $Q \to P$ is the pushout of $G=Q^* \to F=P^*$, hence $\underline{\RR}(P) \to \underline{\RR}(Q)$ is smooth since it is a pullback of $\underline{\RR}(F) \to \underline{\RR}(G)$, which is smooth by Lemma~\ref{lem:groupsmooth}.  But then $\underline{f}$ is smooth because it is a composition of $\underline{X} \to \underline{Y} \times_{\underline{\RR}(Q)} \underline{\RR}(P)$, which is smooth since the chart above is smooth, and the projection $\underline{Y} \times_{\underline{\RR}(Q)} \underline{\RR}(P) \to \underline{Y}$, which is smooth because it is a base change of $\underline{\RR}(P) \to \underline{\RR}(Q)$. \end{proof}

\begin{thm} \label{thm:logsmoothness}  Log smoothness is a local property (\S\ref{section:localproperties}) of $\LDS$ and $\PLDS$ morphisms.  Log smooth morphisms are stable under composition and base change in the category of fine $\LDS$ / $\PLDS$ (\S\ref{section:integration}). \end{thm}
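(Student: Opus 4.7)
Locality is essentially a tautology: Definition~\ref{defn:logsmooth} asks only for a smooth chart on \emph{some} neighborhood of each point $x \in X$ in $f$, and restricting to a smaller neighborhood of $x$ on either side preserves this condition. For composition, let $f : X \to Y$ and $g : Y \to Z$ be log smooth and fix $x \in X$. Apply Theorem~\ref{thm:logsmooth}(\ref{somechart}) to $g$ at $f(x)$ to obtain a smooth chart $(k : R \to Q,\, c : R \to \M_Z(W),\, b : Q \to \M_Y(V))$ for $g|V : V \to W$, and then feed $b$ into Theorem~\ref{thm:logsmooth}(\ref{anychart}) for $f$ at $x$ to obtain a smooth chart $(h : Q \to P,\, a : P \to \M_X(U),\, b)$ for $f|U : U \to V_0$ on a neighborhood $(U, V_0)$ of $x$ in $f|f^{-1}(V)$. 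The composition $kh : R \to P$ is monic, and the induced $\DS$ morphism $\u{U} \to \u{W} \times_{\u{\RR}(R)} \u{\RR}(P)$ factors as
\[
\u{U} \,\to\, \u{V_0} \times_{\u{\RR}(Q)} \u{\RR}(P) \,\to\, \u{W} \times_{\u{\RR}(R)} \u{\RR}(P),
\]
where the first map is smooth by the smooth chart for $f$, and the second is the base change along $\u{\RR}(P) \to \u{\RR}(Q)$ of the smooth map $\u{V_0} \to \u{W} \times_{\u{\RR}(R)} \u{\RR}(Q)$. Stability of smooth $\DS$ morphisms under composition and base change (\S\ref{section:smoothmorphisms}) completes the argument.

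For base change, let $f : X \to Y$ be log smooth, $g : Y' \to Y$ an arbitrary morphism of fine $\LDS$/$\PLDS$, and $f' : X' \to Y'$ the base change. Per \S\ref{section:integration} and \S\ref{section:integrationrevisited}, $X' = (X \times_Y Y')^{\rm int}$ is obtained by integration from the fibered product $\tilde X := X \times_Y Y'$ formed in coherent log spaces, and $\u{X'} \hookrightarrow \u{\tilde X} = \u{X} \times_{\u{Y}} \u{Y'}$ is a closed embedding. Given $x' \in X'$ with images $x \in X$, $y' \in Y'$, $y = g(y') = f(x) \in Y$, choose fine charts $b : Q \to \M_Y(V)$ near $y$ and $c_0 : Q_0 \to \M_{Y'}(V')$ near $y'$, and apply Lemma~\ref{lem:secondchart} to $g$ (enlarging $Q_0$ to some $Q'$) to arrange that $(b, c : Q' \to \M_{Y'}(V'))$ together with a compatibility map $Q \to Q'$ is a fine chart for $g$. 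Then Theorem~\ref{thm:logsmooth}(\ref{anychart}) applied to $f$ with $b$ produces a smooth chart $h : Q \to P$ for $f$ near $x$. Setting $P' := (P \oplus_Q Q')^{\rm int}$ (fine, as a pushout of fine monoids followed by integration), the natural map $Q' \to P'$ gives a fine chart for $X'$ near $x'$, using that $\u{\RR}(\slot)$ and $\u{\RR}_+(\slot)$ take finite pushouts of monoids to finite fibered products of differentiable spaces (\S\ref{section:RP}) and that the integration map $\u{\RR}(P') \hookrightarrow \u{\RR}(P \oplus_Q Q') = \u{\RR}(P) \times_{\u{\RR}(Q)} \u{\RR}(Q')$ is a closed embedding. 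Smoothness of $\u{X'} \to \u{Y'} \times_{\u{\RR}(Q')} \u{\RR}(P')$ then follows by identifying it as a double base change: the smooth map $\u{X} \to \u{Y} \times_{\u{\RR}(Q)} \u{\RR}(P)$ is base changed along $\u{Y'} \to \u{Y}$ to the smooth map $\u{\tilde X} \to \u{Y'} \times_{\u{\RR}(Q')} \u{\RR}(P \oplus_Q Q')$, which is then base changed along the closed embedding $\u{\RR}(P') \hookrightarrow \u{\RR}(P \oplus_Q Q')$ to give the desired morphism.

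The main technical point will be verifying that $Q' \to P'$ is monic, as required by Definition~\ref{defn:smoothchart}. The key observation is that for integral monoids, $h: Q \to P$ is injective iff $h^{\rm gp}: Q^{\rm gp} \to P^{\rm gp}$ is injective: every element of $Q^{\rm gp}$ has the form $q_1 - q_2$ with $q_i \in Q$, and $h^{\rm gp}(q_1 - q_2) = 0$ in $P^{\rm gp}$ forces $h(q_1) = h(q_2)$ in $P$ (since $P$ is integral), which by injectivity of $h$ gives $q_1 = q_2$. Since our $h$ is monic, $h^{\rm gp}$ is injective; pushouts of injective maps of abelian groups remain injective, so $(Q')^{\rm gp} \to (P \oplus_Q Q')^{\rm gp} = P^{\rm gp} \oplus_{Q^{\rm gp}} (Q')^{\rm gp}$ is injective. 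Since $P'$ embeds into its groupification, $Q' \to P'$ is injective, as required. The argument is uniform for $\LDS$ and $\PLDS$, as $\u{\RR}_+(\slot)$ shares the inverse-limit-preservation property used above.
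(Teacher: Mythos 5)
Your proof is correct and follows essentially the same route as the paper's: locality is immediate from criterion \eqref{somechart}, composition is handled by chaining smooth charts via \eqref{anychart} and factoring the resulting $\DS$ morphism through a base change, and base change is handled by forming the pushout chart $P \oplus_Q Q'$, integrating, and recognizing the relevant $\DS$ morphism as a double base change of a smooth map. Your explicit check that $Q' \to P'$ remains monic (via groupification and the fact that pushouts of injections of abelian groups are injections) is a detail the paper's proof passes over silently, and it is a worthwhile addition.
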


\begin{proof} It is clear from criterion \eqref{somechart} in Theorem~\ref{thm:logsmooth} that log smoothness is local.  The proofs of the other statements are the same in $\LDS$ and $\PLDS$ so we will just give the $\LDS$ proofs.

For stability under composition, suppose $f : X \to Y$ and $g : Y \to Z$ are log smooth and $x \in X$.  Since $g$ is log smooth at $f(x)$, we can find a neighborhood $(V,W)$ of $f(x)$ in $g$ and a smooth chart \bne{firstsmoothchart} & \xym{ P \ar[r]^-a & \M_Y(V) \\ Q \ar[u]^h \ar[r] & \M_Z(W) \ar[u]_{g^\dagger} } \ene for $g : V \to W$.  Since $f$ is log smooth at $x$ we can find a neighborhood $(U,V)$ of $x$ in $f$ and a smooth chart \bne{secondsmoothchart} & \xym{ R \ar[r] & \M_X(U) \\ P \ar[u]^k \ar[r]^-a & \M_Y(V) \ar[u]_{f^\dagger} } \ene for $f : U \to V$ (we can assume this $V$ is the same as the old $V$ after shrinking the old $V$ if necessary).  We then have a cartesian $\DS$ diagram \bne{bigcartesiandiagram} & \xym{ \underline{U} \ar[r] & U'' \ar[d] \ar[r] & U' \ar[r] \ar[d] & \underline{\RR}(R) \ar[d] \\ & \underline{V} \ar[r] & V' \ar[r] \ar[d] & \underline{\RR}(P) \ar[d] \\ & & \underline{W} \ar[r] & \underline{\RR}(Q) } \ene where the cartesian squares define $U''$, $U'$, and $V'$.  The map $\underline{V} \to V'$ is smooth since \eqref{firstsmoothchart} is smooth, hence $U'' \to U'$ is smooth by stability of smooth $\DS$ morphisms under base change.  The map $\underline{U} \to U''$ is smooth since \eqref{secondsmoothchart} is a smooth, hence the composition $\underline{U} \to U'$ is also smooth, hence $$ \xym{ R \ar[r] & \M_X(U) \\ Q \ar[r] \ar[u]^{kh} & \M_Z(W) \ar[u]_{(gf)^\dagger} } $$ is a smooth chart for $gf : U \to W$ so $gf$ is smooth at $x$. 

For stability under base change, suppose $$ \xym{ X \ar[r]^{p_2} \ar[d]_{p_1} & X_2 \ar[d]^{f_2} \\ X_1 \ar[r]^{f_1} & Y } $$ is a cartesian diagram in the category of fine $\LDS$ and $f_2$ is log smooth.  Fix a point $x \in X$ and set $x_i := p_i(x)$, $y := f_1(x_1) = f_2(x_2)$.  We want to prove $p_1$ is log smooth at $x$.  Since $X_1$ and $Y$ are fine and $f_2$ is log smooth at $x_2$, we can find (using Lemma~\ref{lem:secondchart}) a neighborhood $(U_1,V)$ of $x_1$ in $f_1$, a neighborhood $(U_2,V)$ of $x_2$ in $f_2$, and fine charts $$ \xym{ P_1 \ar[r] & \M_{X_1}(U_1) \\ Q \ar[r]^-b \ar[u]^{h_1} & \M_Y(V) \ar[u]_{f_1^\dagger} } \quad \quad {\rm and} \quad \quad \xym{ P_2 \ar[r] & \M_{X_2}(U_2) \\ Q \ar[r]^-b \ar[u]^{h_2} & \M_Y(V) \ar[u]_{f_2^\dagger} } $$ for $f_1 : U_1 \to V$ and $f_2 : U_2 \to V$ so that the chart on the right is smooth.  Set $P' := P_1 \oplus_Q P_2$.  If we let $U'$ denote the fiber product $U_1 \times_V U_2$ taken in $\LDS$ (not in $\fLDS$) then $$ \xym{ P' \ar[r] & \M_{U'}(U') \\ P_1 \ar[u] \ar[r] & \M_{X_1}(U_1) \ar[u]_{\pi_1^\dagger} } $$ is a finitely generated (but not necessarily fine) chart for $\pi_1 : U' \to U_1$ and the $\DS$ morphism $$g : \underline{U}' \to \underline{U}_1 \times_{\underline{\RR}(P_1)} \underline{\RR}(P') = \underline{U}_1 \times_{\underline{\RR}(Q)} \underline{\RR}(P_2) $$ is smooth because it is a base change of $\underline{U}_2 \to \underline{V} \times_{\underline{\RR}(Q)} \underline{\RR}(P_2). $  Let $P := (P')^{\rm int}$.  Then $U := (U')^{\rm int} \cong U' \times_{\RR(P')} \RR(P)$  (c.f.\ \eqref{Xint} in \S\ref{section:integration}) is a neighborhood of $x$ in $X$ and $$ \xym{ P \ar[r] & \M_X(U) \\ P_1 \ar[u] \ar[r] & \M_{X_1}(U_1) \ar[u]_{p_1^\dagger} } $$  is a smooth chart for $p_2 : U \to U_1$ because $\underline{U} \to \underline{U}_1 \times_{\underline{\RR}(P_1)} \underline{\RR}(P)$ is nothing but the base change $g \times_{\underline{\RR}(P')} \underline{\RR}(P)$ of $g$.  \end{proof}

\begin{lem} \label{lem:logsmoothcharchart} Let $X$ be a log smooth differentiable space, $x$ a point of $X$ such that $P = \ov{\M}_{X,x}^{\rm gp}$ is torsion free (every point of $X$ has this property if $X$ is fs).  Then there is a neighborhood $U$ of $x$ and a characteristic chart $P \to \M_X(U)$ at $x$ (Definition~\ref{defn:chart}) such that the corresponding $\DS$ morphism $\underline{U} \to \underline{\RR}(P)$ is smooth. \end{lem}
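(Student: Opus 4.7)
The plan is to start from an arbitrary fine smooth chart provided by log smoothness and modify it, using the torsion-freeness hypothesis, into a characteristic chart whose induced $\DS$ morphism to $\underline{\RR}(P)$ remains smooth. (I read the statement as $P := \ov{\M}_{X,x}$ with the hypothesis that $P^{\rm gp}$ is torsion-free, which is the only sensible reading given that a characteristic chart is supposed to restrict to an isomorphism onto $\ov{\M}_{X,x}$.) Since $X$ is log smooth over $\Spec \RR$, Proposition~\ref{prop:logsmooth} supplies a neighborhood $U$ of $x$ and a fine chart $a : Q \to \M_X(U)$ such that the induced $\DS$ morphism $\underline{U} \to \underline{\RR}(Q)$ is smooth (the $\PLDS$ case is identical). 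By the Shrinking Argument of \S\ref{section:chartsforlogstructures}, after replacing $U$ with a smaller neighborhood I may assume $Q = F^{-1}Q$ for $F = a_x^{-1}(\O_{X,x}^*)$; this is harmless for smoothness because $\underline{\RR}(F^{-1}Q) \to \underline{\RR}(Q)$ is an open embedding (Remark~\ref{rem:localizationstoopenembeddings}). Under this reduction $F = Q^*$ and $\ov{Q}$ is canonically identified with $P = \ov{\M}_{X,x}$.

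Next, because $P^{\rm gp}$ is finitely generated (Lemma~\ref{lem:characteristic}) and torsion-free, it is free, so the extension
\[ 0 \to Q^* \to Q^{\rm gp} \to P^{\rm gp} \to 0 \]
splits. By Lemma~\ref{lem:splitting} (applied with ``$A$''$= Q^*$ and $Q$ integral since it is fine), every splitting on groups descends to a monoid section $s : P \to Q$ of the sharpening map and produces an isomorphism $Q \cong Q^* \oplus P$ compatible with the projection $Q \to P$. Set $h := a \circ s : P \to \M_X(U)$. By construction the induced map on characteristics at $x$ is the identity $P \to P$, so by Lemma~\ref{lem:localiso} (shrinking $U$ further if necessary) $h$ really is a chart, and it is characteristic at $x$ by design.

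Finally, the $\DS$ morphism $\underline{U} \to \underline{\RR}(P)$ attached to $h$ factors as
\[ \underline{U} \longrightarrow \underline{\RR}(Q) \;=\; \underline{\RR}(Q^*) \times \underline{\RR}(P) \;\xrightarrow{\ \pi_2\ }\; \underline{\RR}(P), \]
where the first arrow is smooth by the choice of $a$ and the second arrow is the projection (since $s : P \to Q^* \oplus P$ is the inclusion into the second summand, and $\AA$ turns coproducts of monoids into products of spaces). The space $\underline{\RR}(Q^*)$ is a disjoint union of copies of $(\RR^*)^r$ (Example~\ref{example:RG}), in particular a smooth manifold, so $\pi_2$ is smooth in the sense of \S\ref{section:smoothmorphisms}; composition of smooth $\DS$ morphisms is smooth, giving the required conclusion. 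There is no real obstacle: the hypothesis ``$P^{\rm gp}$ torsion-free'' is used exactly once, to guarantee the splitting $Q \cong Q^* \oplus P$ that converts the smooth chart $a$ into the desired smooth characteristic chart $h$.
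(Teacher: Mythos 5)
Your proof is correct and follows essentially the same route as the paper's: start from a smooth fine chart given by Proposition~\ref{prop:logsmooth}, localize at the face at $x$ so the chart monoid splits as (units)$\,\oplus\,P$ via Lemma~\ref{lem:splitting} and torsion-freeness, and compose with the projection to $\underline{\RR}(P)$, which is smooth because $\underline{\RR}$ of a finitely generated abelian group is a smooth manifold (Example~\ref{example:RG}). The only cosmetic differences are that the paper realizes the localization as the preimage of the open subspace $\RR(G^{-1}Q)\subseteq\RR(Q)$ rather than invoking the Shrinking Argument up front, and closes with Lemma~\ref{lem:fschart} where you use Lemma~\ref{lem:localiso}.
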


\begin{proof}  By Proposition~\ref{prop:logsmooth} we can find a neighborhood $V$ of $x$ and a fine chart $h : Q \to \M_X(V)$ so that the corresponding $\DS$ morphism $\underline{h} : \underline{V} \to \underline{\RR}(Q)$ is smooth.  Set $G := h_x^{-1} \O_{X,x}$.  Since $h$ is a chart, the map $Q/G \to P = \ov{\M}_{X,x}$ is an isomorphism, hence $Q^{\rm gp} / G^{\rm gp} \to P^{\rm gp}$ is an isomorphism.  By Lemma~\ref{lem:splitting} and the hypothesis on $P^{\rm gp}$ we in fact have a splitting of monoids $G^{-1} Q = G^{\rm gp} \oplus P$.  Since $Q \to G^{-1} Q$ is a localization, $\RR(G^{-1} Q) \subseteq \RR(Q)$ is an open subspace, whose preimage $U$ under $h : V \to \RR(Q)$ still contains $x$.  The restriction $\underline{h}|\underline{U} : \underline{U} \to \underline{\RR}(G^{-1} Q)$ is still a smooth $\DS$ morphism because it is a base change of $\underline{h}$.  As discussed in Example~\ref{example:RG}, the differentiable space $\underline{\RR}(G^{\rm gp})$ is a smooth manifold in the usual sense (it is not merely \emph{log} smooth), so the projection $\pi : \RR(G^{-1} Q ) = \RR(G^{\rm gp}) \times \RR(P) \to \RR(P)$ is not only log smooth, but $\underline{\pi}$ is a smooth $\DS$ morphism.  The composition $k : U \to \RR(P)$ of $h|U$ and $\pi$ also has $\underline{k}$ smooth.  The map $k$ corresponds to a monoid homomorphism $k : P \to \M_X(U)$ whose composition with $\M_X(U) \to \ov{\M}_{X,x}=P$ is the identity.  Consequently, we can assume (Lemma~\ref{lem:fschart}), after possibly shrinking $U$ to a smaller neighborhood of $x$ (which won't destroy smoothness), that $k$ is a chart as desired. \end{proof} 

The following ``positive" variant of Lemma~\ref{lem:logsmoothcharchart} is proved in the same manner (replace Example~\ref{example:RG} with Example~\ref{example:RGplus}).

\begin{lem} \label{lem:positivelogsmoothcharchart} Let $X$ be a positive log smooth differentiable space, $x$ a point of $X$ such that $P = \ov{\M}_{X,x}^{\rm gp}$ is torsion free (every point of $X$ has this property if $X$ is fs).  Then there is a neighborhood $U$ of $x$ and a characteristic chart $P \to \M_X(U)$ at $x$ (Definition~\ref{defn:chart}) such that the corresponding $\DS$ morphism $\underline{U} \to \underline{\RR}_+(P)$ is smooth. \end{lem}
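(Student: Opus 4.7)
The plan is to essentially transcribe the proof of Lemma~\ref{lem:logsmoothcharchart}, replacing $\underline{\RR}$ by $\underline{\RR}_+$ throughout and substituting each ingredient with its $\PLDS$ analogue. First, I would apply Proposition~\ref{prop:logsmooth}---which is already stated for both $\LDS$ and $\PLDS$---to produce a neighborhood $V$ of $x$ and a fine chart $h : Q \to \M_X(V)$ for $\M_X|V$ such that the corresponding $\DS$ morphism $\underline{h} : \underline{V} \to \underline{\RR}_+(Q)$ is smooth. I would then set $G := h_x^{-1}(\O_{X,x}^{>0})$, the face of $Q$ at $x$ (where in the positive setting the role of $\O_X^*$ is played by $\O_X^{>0} = (\O_X^{\geq 0})^*$). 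Since $h$ is a chart, $Q/G \to \ov{\M}_{X,x} = P$ is an isomorphism of monoids, hence $Q^{\rm gp}/G^{\rm gp} \cong P^{\rm gp}$. The hypothesis that $P^{\rm gp}$ is torsion free then lets me invoke Lemma~\ref{lem:splitting} to split this sequence, obtaining a monoid isomorphism $G^{-1}Q \cong G^{\rm gp} \oplus P$.

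Next, I would pass to the open neighborhood $U := \underline{h}^{-1}(\underline{\RR}_+(G^{-1}Q)) \subseteq V$ of $x$. The inclusion $\underline{\RR}_+(G^{-1}Q) \subseteq \underline{\RR}_+(Q)$ is an open embedding: $G$ is finitely generated by Lemma~\ref{lem:faces}, so Remark~\ref{rem:localizationstoopenembeddings}---which applies in any category of spaces, in particular $(\DS,\RR_+)$---supplies the open embedding. The restricted map $\underline{h}|\underline{U} : \underline{U} \to \underline{\RR}_+(G^{-1}Q)$ remains smooth as a base change of $\underline{h}$. Using the splitting above and the fact that the realization functor $\RR_+$ converts finite direct limits of monoids to finite inverse limits of spaces (\S\ref{section:RP}), I identify $\underline{\RR}_+(G^{-1}Q) = \underline{\RR}_+(G^{\rm gp}) \times \underline{\RR}_+(P)$. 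By Example~\ref{example:RGplus}, $\underline{\RR}_+(G^{\rm gp}) = \RR_{>0}(G^{\rm gp}) \cong \RR_{>0}^r$ is a smooth manifold in the usual sense, so the projection $\underline{\pi} : \underline{\RR}_+(G^{-1}Q) \to \underline{\RR}_+(P)$ is a smooth $\DS$ morphism; hence the composition $\underline{k} := \underline{\pi} \circ (\underline{h}|\underline{U}) : \underline{U} \to \underline{\RR}_+(P)$ is smooth.

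Finally, by the universal property \eqref{RplusPadjunction}, $\underline{k}$ corresponds to a monoid homomorphism $k : P \to \M_X(U)$ whose composition with the stalk map $\M_X(U) \to \ov{\M}_{X,x} = P$ is the identity, i.e.\ $k$ lifts a section of $\M_{X,x} \to \ov{\M}_{X,x}$. Since $\ov{\M}_{X,x}$ is fine and $\ov{\M}_{X,x}^{\rm gp}$ is torsion free, the characteristic extension splits (indeed, in the positive setting this is automatic, as noted in \S\ref{section:PLDS}), so Lemma~\ref{lem:fschart}---whose hypotheses are purely monoidal---applies: after possibly shrinking $U$ to a smaller neighborhood of $x$, the induced map of log structures $\underline{P}^a \to \M_X|U$ is an isomorphism. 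This exhibits $k$ as a characteristic chart, and smoothness of $\underline{k}$ survives the shrinking since smoothness is local on the domain.

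I do not expect any real obstacle in this transcription. The only points to verify are that (i)~Proposition~\ref{prop:logsmooth}, (ii)~the open-embedding property of realized localizations, (iii)~the preservation of products by $\RR_+$, (iv)~the smoothness of $\RR_+(G^{\rm gp})$, and (v)~Lemma~\ref{lem:fschart} all carry over to the positive setting---and each of these has either already been established in exactly this form earlier in the paper, or is underwritten by the general framework of \S\ref{section:logspaces}.
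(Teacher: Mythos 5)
Your proposal is correct and is precisely the argument the paper intends: the paper's own "proof" is the one-line remark that one repeats the proof of Lemma~\ref{lem:logsmoothcharchart} verbatim, replacing Example~\ref{example:RG} with Example~\ref{example:RGplus}, and your transcription carries out exactly that substitution (with the correct positive analogues, e.g.\ $G = h_x^{-1}(\O_{X,x}^{>0})$ and the smoothness of $\RR_{>0}(G^{\rm gp})$). No gaps.
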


\subsection{Proof of Theorem~\ref{thm:logsmooth}} \label{section:proof}  Clearly \eqref{anychart}$\implies$\eqref{somechart}.  The difficulty is to prove \eqref{somechart}$\implies$\eqref{anychart}.  Suppose \eqref{somechart} holds so we have a smooth chart $$ \xym{ S \ar[r]^-c & \M_X(U) \\ T \ar[u] \ar[r]^-d & \M_Y(V) \ar[u]_{f^\dagger} } $$ for $f : U \to V$ for a neighborhood $(U,V)$ of $x$ in $f$.  We can (and will) shrink $(U,V)$ to any smaller neighborhood of $x$ in $f$ and the restricted chart continues to be smooth on the smaller neighborhood.   We want to show that \eqref{anychart} holds, so consider a neighborhood $W$ of $f(x)$ in $Y$ and a fine chart $b : Q \to \M_Y(W)$.  The conclusion we want to make allows us to shrink $(f^{-1}(W),W)$ to smaller neighborhoods of $x$ in $f$, so we can shrink $(f^{-1}(W),W)$ and $(U,V)$ if necessary to assume $(f^{-1}(W),W) = (U,V)$.  Shrinking further if necessary, Lemma~\ref{lem:thirdchart} implies that we can assume there is a fine chart $b' : Q' \to \M_Y(V)$ and monoid homomorphisms $g : Q \to Q'$ and $t : T \to Q'$ with $d=b't$ and $b = b'g$.  By the Shrinking Argument, we can assume, after possibly shrinking again, that the monoids $T$, $Q'$, and $Q$ are all equal to their localizations at the preimage of $\O_{Y,f(x)}^*$ and that $S$ is equal to its localization at the preimage of $\O_{X,x}^*$ so that $\ov{S} \to \ov{\M}_{X,x}$ is an isomorphism.  In particular, we can assume the sharpening $\ov{t} : \ov{T} \to \ov{Q}'$ of $t$ is an isomorphism.  Set $P' := S \oplus_T Q'$.  The monoid $P'$ is fine: it is finitely generated because $S$ and $Q'$ are finitely generated; it is integral because $t$ is the pushout of $T^* \to (Q')^*$ by Lemma~\ref{lem:pushout} so $P' = S \oplus_{T^*} (Q')^*$ and $S$ is integral.  We have a commutative diagram \bne{proofdiagram} \xym{ S \ar[r]^-s & P' \ar[r]^-{a'} & \M_X(U) \\ T \ar[u] \ar[r]^t & Q' \ar[u]^h \ar[r]^-{b'} & \M_Y(V) \ar[u]_{f^\dagger} } \ene with $a's=c$.  Since $s$ is a pushout of $t$, the map $\ov{s} : \ov{S} \to \ov{P}'$ is an isomorphism, hence $\ov{P}' \to \ov{\M}_{X,x} \cong \ov{S}$ is an isomorphism, hence we can assume $a'$ is a chart after possibly shrinking again (Lemma~\ref{lem:localiso}).  Using the fact that the big square in \eqref{proofdiagram} is a smooth chart and the left square is pushout, we see easily that the right square in \eqref{proofdiagram} is a smooth chart using stability of smooth $\DS$ morphisms under composition and base change.  Now we forget about the left square in \eqref{proofdiagram} and obtain the desired conclusion via the following:

\begin{lem} \label{lem:logsmooth} Suppose $f : X \to Y$ is a map of fine $\LDS$ or fine $\PLDS$, $x \in X$, the square in the solid diagram $$\xym{ P \ar@{.>}[r]^t & P' \ar[r]^-{a'} & \M_X(X) \\ Q \ar[r]^g \ar@{.>}[u]^k & Q' \ar[u]^-h \ar[r]^-{b'} & \M_Y(Y) \ar[u]_{f^\dagger} }$$ is a smooth chart for $f$, and the composition $b := b'g$ is a fine chart for $\M_Y$.  Then, after possibly replacing $X$ with a neighborhood of $x$, the diagram can be completed as indicated using some fine monoid $P$ so that: \begin{enumerate} \item \label{smoothness} The maps $\underline{\RR}(P') \to \underline{\RR}(P \oplus_Q Q')$ and $\underline{\RR}_+(P') \to \underline{\RR}_+(P \oplus_Q Q')$ are \'etale $\DS$ morphisms. \item \label{newchart} The ``big" square is also a smooth chart for $f$. \end{enumerate} \end{lem}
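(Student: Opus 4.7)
The lemma reduces to a construction problem thanks to a fibered-product identity. Since
\[ \underline{\RR}(P)\times_{\underline{\RR}(Q)}\underline{Y} \;=\; \underline{\RR}(P\oplus_Q Q')\times_{\underline{\RR}(Q')}\underline{Y}, \]
the morphism $\underline{X}\to\underline{Y}\times_{\underline{\RR}(Q)}\underline{\RR}(P)$ required by (2) factors through the given smooth $\underline{X}\to\underline{Y}\times_{\underline{\RR}(Q')}\underline{\RR}(P')$ composed with the base change of $\underline{\RR}(P')\to\underline{\RR}(P\oplus_Q Q')$ along $\underline{Y}\to\underline{\RR}(Q')$, and likewise for $\underline{\RR}_+$. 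So once property (1) is established, property (2) follows by stability of smooth morphisms under composition and base change. The proof thus reduces to constructing fine $P$ together with maps $k,t$ satisfying $tk=hg$ and property (1).

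By Lemma~\ref{lem:monoidsmooth}, the étaleness in (1) amounts to showing $P\oplus_Q Q'\to P'$ is a map of fine monoids with isomorphic sharpening and torsion kernel and cokernel on groupification. The Shrinking Argument, together with the fact that both $b$ and $b'$ are charts, furnishes $\ov g:\ov Q\to\ov{Q'}$ as an isomorphism and identifications $\ov Q\cong\ov{\M}_{Y,f(x)}$, $\ov{P'}\cong\ov{\M}_{X,x}$; consequently
\[ \ov{P\oplus_Q Q'}\;=\;\ov P\oplus_{\ov Q}\ov{Q'}\;=\;\ov P \]
will equal $\ov{P'}$ as soon as $\ov P\cong\ov{P'}$, which is automatic once $a'\circ t:P\to\M_X(U)$ is a chart. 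So the sharpening condition is automatic and the task reduces to controlling the groupification.

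The construction applies Lemma~\ref{lem:firstchart} to a finitely generated abelian group $G$ that fits into a commutative square
\[ \xym{ Q^{\rm gp}\ar[r]^{g^{\rm gp}}\ar[d]_{k^{\rm gp}} & (Q')^{\rm gp}\ar[d]^{h^{\rm gp}} \\ G\ar[r]^{t^{\rm gp}} & (P')^{\rm gp} } \]
that is a pushout of abelian groups, equipped with a homomorphism $\phi:G\to\M_{X,x}^{\rm gp}$ compatible with the existing stalk maps $Q^{\rm gp}\to\M_{X,x}^{\rm gp}$ (via $b$ and $f^\dagger$) and $(P')^{\rm gp}\to\M_{X,x}^{\rm gp}$ (via $a'$), and surjecting onto $\ov{\M}_{X,x}^{\rm gp}$. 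Lemma~\ref{lem:firstchart} then produces a fine monoid $P:=\phi^{-1}(\M_{X,x})$ lifting to a fine chart $P\to\M_X(U)$ on a neighborhood $U$ of $x$. The maps $k:Q\to P$ and $t:P\to P'$ are the restrictions of $k^{\rm gp}$ and $t^{\rm gp}$; they land where they should by the chart property of $b$ (for $k$) and the Shrinking-Argument localization of $P'$ which ensures $(a')^{-1}(\M_{X,x})=P'$ (for $t$). The equality $tk=hg$ holds by construction. With the square above a pushout of abelian groups, Lemma~\ref{lem:pushout} gives $G\oplus_{Q^{\rm gp}}(Q')^{\rm gp}\cong(P')^{\rm gp}$, and since $P^{\rm gp}=G$ (it contains both $k(Q)$ and lifts of $(P')^{\rm gp}$), the map $(P\oplus_Q Q')^{\rm gp}\to(P')^{\rm gp}$ is an isomorphism—better than the torsion condition required. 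Finally, the big square is a chart because $a'\circ t$ and the chart $c:P\to\M_X(U)$ supplied by Lemma~\ref{lem:firstchart} agree on stalks at $x$, so agree on a possibly smaller neighborhood by finite generation of $P$.

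\textbf{Main obstacle.} The delicate point is the construction of $G$: there is no natural group homomorphism $(Q')^{\rm gp}\to Q^{\rm gp}$, so the pushout cannot be formed formally as a coequalizer. One must use the Shrinking-Argument identification $\ov g\cong\mathrm{id}$ to reduce the problem to controlling the group of units, and then build $G$ by adjoining to $Q^{\rm gp}$ the appropriate elements of $(P')^{\rm gp}$ modulo the relation $t^{\rm gp}\circ k^{\rm gp}=h^{\rm gp}\circ g^{\rm gp}$, verifying that the chart maps from $Q$ and $P'$ into $\M_{X,x}^{\rm gp}$ extend consistently across these relations. This is closely parallel to the chart-production arguments of Lemma~\ref{lem:secondchart} and of the proof setup preceding the lemma statement.
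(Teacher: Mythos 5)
Your overall skeleton matches the paper's: you reduce \eqref{newchart} to \eqref{smoothness} by the same fibered-product factorization, you correctly identify Lemma~\ref{lem:monoidsmooth} as the tool for \'etaleness, and you rightly plan to produce $P$ as the preimage of $\M_{X,x}$ under a group homomorphism \`a la Lemma~\ref{lem:firstchart}. But the central step of your construction --- finding a finitely generated abelian group $G$ making the square with $g^{\rm gp}$, $h^{\rm gp}$, $k^{\rm gp}$, $t^{\rm gp}$ a \emph{pushout} of abelian groups --- is a ``pushout complement'' problem, and such complements do not exist in general. For instance, if $Q^{\rm gp}=0$, $(Q')^{\rm gp}=\ZZ/4\ZZ$ and $h^{\rm gp}$ is an injection $\ZZ/4\ZZ \into \ZZ/8\ZZ = (P')^{\rm gp}$, a pushout would force $G \oplus \ZZ/4\ZZ \cong \ZZ/8\ZZ$, which is impossible. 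You sense this in your ``Main obstacle'' paragraph, but ``adjoining the appropriate elements of $(P')^{\rm gp}$ modulo the relation'' is not a construction: the whole content of the lemma is in saying exactly which elements and verifying what the resulting map does.

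What actually works --- and what the paper does --- is strictly weaker than a pushout and is calibrated to what Lemma~\ref{lem:monoidsmooth} needs, namely a map with isomorphic sharpening and \emph{torsion} kernel and cokernel on groupification. Concretely: after localizing $P'$ at the face of $a'$ at $x$, choose $p_1,\dots,p_r \in P'$ whose images form a basis of $\bigl((P')^{\rm gp}/(Q')^{\rm gp}\bigr)\otimes\QQ$, form $s^{\rm gp} : Q^{\rm gp}\oplus\ZZ^r \to (P')^{\rm gp}$, and then repeatedly adjoin $m$-th roots (pushouts along $\ZZ \xrightarrow{\cdot m} \ZZ$) to enlarge $Q^{\rm gp}\oplus\ZZ^r$ to a group $A$ with $A\otimes\QQ \cong (Q^{\rm gp}\oplus\ZZ^r)\otimes\QQ$ so that $A \to \ov{\M}_{X,x}^{\rm gp}$ becomes surjective --- this surjectivity is what Lemma~\ref{lem:chartproduction} needs to make $P := t^{-1}(P')$ a chart, and it is precisely the step your pushout hypothesis was silently doing for you. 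One then only gets that $(P\oplus_Q Q')^{\rm gp} \to (P')^{\rm gp}$ is a rational isomorphism after passing to the open sublocus $T = P\oplus_{G^{-1}Q}(G')^{-1}Q'$ (the localizations at the faces over $y$), a dimension count over $\QQ$ in the pushout of $\QQ$-vector spaces; this suffices for \'etaleness but is genuinely weaker than your claimed integral isomorphism. Your appeal to the identity $\ov{P\oplus_Q Q'}=\ov{P}$ likewise only holds after this localization, which is why the paper routes the \'etaleness check through $T$ rather than through $P\oplus_Q Q'$ itself.
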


\begin{proof}  First of all, \eqref{smoothness} (plus the knowledge that $a := a't$ is a fine chart for $\M_X$ and $k$ is monic) implies \eqref{newchart} because the only issue is then to show that the $\DS$ morphism \bne{maptobesmooth} \underline{X} & \to & \underline{Y} \times_{\underline{\RR}(Q)} \underline{\RR}(P) \ene is smooth.  But \eqref{maptobesmooth} is a composition of \be  \underline{X} & \to & \underline{Y} \times_{\underline{\RR}(Q')} \underline{\RR}(P') , \ee which is smooth because the original chart witnesses log smoothness, and a base change \be \underline{Y} \times_{\underline{\RR}(Q')} \underline{\RR}(P') & \to & \underline{Y} \times_{\underline{\RR}(Q')} \underline{\RR}(P \oplus_Q Q') = \underline{Y} \times_{\underline{\RR}(Q)} \underline{\RR}(P) \ee of the map in \eqref{smoothness}, hence \eqref{maptobesmooth} is smooth.  The same argument of course applies in the ``positive" context using the positive version of \eqref{smoothness}.

We now prove \eqref{smoothness} by carefully constructing $P$, $k$, $t$---we use roughly the same argument Kato uses to produce the charts in his Chart Criterion for Log Smoothness \cite[3.13]{Kat1}. Let $F' := (a'_x)^{-1} \O_{X,x}^*$ be the face of the chart $a'$ at $x$, so we have an isomorphism $P'/F' \cong \ov{\M}_{X,x}$.  By the Shrinking Argument, we can assume, after possibly replacing $X$ with a neighborhood of $x$, that $P' = (F')^{-1} P'$, so that $F' = (P')^*$ (Lemma~\ref{lem:sharpening}).  Choose $p =(p_1,\dots,p_r) : \NN^r \to P'$ so that the images of the $p_i$ in the $\QQ$ vector space $((P')^{\rm gp}/ (Q')^{\rm gp}) \otimes \QQ$ form a basis (this can be done because the image of $P'$ certainly spans this vector space).  Consider the monoid homomorphism $s:=(hg,p) : Q \oplus \NN^r \to P'$ and its groupification $s^{\rm gp} : Q^{\rm gp} \oplus \ZZ^r \to (P')^{\rm gp}$.  Set $y := f(x)$.  Since $b$ and $b'$ are charts, $b_y^{\rm gp} : Q^{\rm gp} \to \ov{\M}_{Y,y}^{\rm gp}$ is surjective and similarly for $b'$, $Q'$, so $Q^{\rm gp}$ and $(Q')^{\rm gp}$ have the same image in $\ov{\M}_{X,x}^{\rm gp} = (P')^{\rm gp} / F'$ (namely the image of $(\ov{f}^\dagger_x)^{\rm gp}$).  It follows from this and our choice of $p$ that the composition of $s^{\rm gp}$ and the projection $(P')^{\rm gp} \to (P')^{\rm gp} /F'$ has (finitely generated) torsion cokernel $K$.  Suppose $p' \in (P')^{\rm gp}$ is a lift of one of the generators of $K$, so that we can write $mp' = s^{\rm gp}(q,v)$ (modulo $F'$) for some $(q,v) \in Q^{\rm gp} \oplus \ZZ^r$ and some integer $m > 1$.  Then we can sit $Q^{\rm gp} \oplus \ZZ^r$ inside a ``larger" abelian group $A$ defined by the pushout square $$ \xym@C+10pt{ \ZZ \ar[r]^-{(q,v)} \ar[d]_{\cdot m} & Q^{\rm gp} \oplus \ZZ^r \ar[d]^i \\ \ZZ \ar[r] & A } $$ and extend $s^{\rm gp}$ to $t := (s^{\rm gp},p') : A \to (P')^{\rm gp}$ so that $p'$ is in the image of $t$ modulo $F'$.  Note that $\cdot m$ becomes an isomorphism after applying $\otimes \QQ$, hence so does its pushout $i$.  Repeating this process for each generator of $K$, we eventually produce an inclusion $i : Q^{\rm gp} \oplus \ZZ^r \into A$ of finitely generated abelian groups with $i \otimes \QQ$ an isomorphism, and a group homomorphism $t : A \to (P')^{\rm gp}$ extending $s^{\rm gp}$ such that the composition of $t$ and $(P')^{\rm gp} \to (P')^{\rm gp}/F'$ is surjective.  Set $P := t^{-1}(P')$, so we have a monoid homomorphism $t : P \to P'$, and an inclusion $k : Q \into P$ of monoids (and even an inclusion $Q \oplus \NN^r \into P$).  Set $a := a'k$, and let $F := t^{-1}(F') = a_x^{-1} \O_{X,x}^*$.  By Lemma~\ref{lem:chartproduction}, $P$ is a fine monoid, $F=P^*$, and $\ov{P} \to \ov{P}' \cong \ov{\M}_{X,x}$ is an isomorphism, hence Lemma~\ref{lem:localiso} implies that, after possibly shrinking $X$ to a neighborhood of $x$, $a$ is a chart for $\M_X$.  It is clear that $P^{\rm gp}$ contains $Q^{\rm gp} \oplus \ZZ^r$ and is contained in $A$, so the inclusions $Q^{\rm gp} \oplus \ZZ^r \into P^{\rm gp} \into A$ all become $\QQ$ vector space isomorphisms upon application of $\otimes \QQ$.

We now show that the maps in \eqref{smoothness} are \'etale.  Let $G' := (b'_y)^{-1}\O_{Y,y}^*$, $G := b_y^{-1} \O_{Y,y}^*$ be the faces of $b'$ and $b$ at $y$.  We have a commutative diagram of monoids \bne{diagramA} & \xym{  P \ar[r] & T \ar[r]^e & P' \\ G^{-1}Q \ar[r] \ar[u]^k & (G')^{-1}Q' \ar[u]_h \\ Q \ar[u] \ar[r] & Q' \ar[u] } \ene where the top square is a pushout defining the monoid $T$.  We have $Q/G \cong Q'/G' \cong \ov{\M}_{Y,y}$ because $b$ and $b'$ are charts, so the map of monoids $G^{-1} Q \to (G')^{-1}Q'$ induces an isomorphism on sharpenings, hence so does its pushout $P \to T$, hence $e : T \to P'$ also induces an isomorphism $\ov{e} : \ov{T} \to \ov{P}'$ because $\ov{P} \to \ov{P}'$ is an isomorphism.    The map $\underline{\RR}(G^{-1}Q) \into \underline{\RR}(Q)$ is an open embedding (and similarly with $\underline{\RR}$ replaced by $\underline{\RR}_+$ and/or $G$, $Q$ replaced by $G'$, $Q'$), so $\underline{\RR}(T) \into \underline{\RR}(Q' \oplus_Q P)$ is also an open subspace through which the first map in \eqref{smoothness} factors (and similarly with $\underline{\RR}$ replaced by $\underline{\RR}_+$), so we reduce to showing the the maps $\underline{\RR}(P') \to \underline{\RR}(T)$ and $\underline{\RR}_+(P') \to \underline{\RR}_+(T)$ are \'etale.  Since $\ov{e}$ is an isomorphism, Lemma~\ref{lem:monoidsmooth} reduces us to proving that $e^{\rm gp}$ has torsion cokernel and cokernel.  This is the same thing as proving $e^{\rm gp} \otimes \QQ$ is an isomorphism of $\QQ$ vector spaces.  Groupification and tensoring with $\QQ$ preserve direct limits, so \eqref{diagramA} yields a pushout diagram \bne{diagramB} & \xym@C+15pt{  P^{\rm gp} \otimes \QQ \ar[r] & T^{\rm gp} \otimes \QQ \ar[r]^-{e^{\rm gp} \otimes \QQ} & (P')^{\rm gp} \otimes \QQ \\ Q^{\rm gp} \otimes \QQ \ar[u]^{k^{\rm gp} \otimes \QQ} \ar[r] & (Q')^{\rm gp} \otimes \QQ \ar[u] } \ene of $\QQ$ vector spaces.  Pick a basis $v_1,\dots,v_m$ for $(Q')^{\rm gp} \otimes \QQ$.  Since $P^{\rm gp} \otimes \QQ = (Q^{\rm gp} \oplus \ZZ^r) \otimes \QQ$, the standard basis vectors $e_i \in \ZZ^r$ map to a basis for the cokernel of $k^{\rm gp} \otimes \QQ$, hence, since the square is a pushout, their images $e_i$ in $T^{\rm gp} \otimes \QQ$, together with the $v_i$, form a basis for $T^{\rm gp} \otimes \QQ$.  But by construction, this basis for $T^{\rm gp} \otimes \QQ$ maps to the basis $v_1,\dots,v_m,p_1,\dots,p_r$ for $(P')^{\rm gp} \otimes \QQ$ under $e^{\rm gp} \otimes \QQ$.

\end{proof}

\subsection{Manifolds with corners} \label{section:manifoldswithcorners} Let us now briefly explain how manifolds with corners arise naturally in the setting of (positive) log differentiable spaces.

\begin{defn} \label{defn:mfdwcorners1} {(\bf Provisional)}  A \emph{manifold with corners} is a differentiable space locally isomorphic to an open subspace of $\underline{\RR}_+^n$ (\S\ref{section:RP}) for various $n$. \end{defn}

Notice that any differentiable space which is smooth over a manifold with corners is itself a manifold with corners.

\begin{defn} \label{defn:free} A fine log structure $\M_X$ on a space $X$ is called \emph{free} iff $\ov{\M}_{X,x}$ is a free monoid (\S\ref{section:monoidbasics}) for every $x \in X$. \end{defn}

Lemma~\ref{lem:fschart} implies that a free log structure admits a characteristic chart $\NN^r \to \M_X(U)$ near any given point, hence every free log structure is fs.  Observe that the question of whether a fine log structure is free depends only on the characteristic $\ov{\M}_{X}$.  In other words, freeness is really a property of the (fine) sharp monoidal space $\sms{X}$ underlying a fine log space $X$ (the image of $X$ under the functor \eqref{LogEsptoSMS} of \S\ref{section:logspacestomonoidalspaces}).

\begin{prop} \label{prop:mfdcorners} Let $X$ be a log smooth differentiable space (resp.\ positive log smooth differentiable space) with free log structure.  Then the underlying differentiable space $\underline{X}$ is a smooth manifold\footnote{We remind the reader that our sense of ``smooth manifold" is purely local: We do not demand that the underlying topological space be paracompact, nor even Hausdorff.} (resp.\ a manifold with corners in the sense of Definition~\ref{defn:mfdwcorners1}). \end{prop}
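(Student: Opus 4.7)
The plan is to reduce to the local model by producing, at each point, a characteristic chart using the free monoid $\ov{\M}_{X,x} \cong \NN^r$ and exploiting log smoothness to control the underlying differentiable space.

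First, I fix a point $x \in X$. Since the log structure is free, $P := \ov{\M}_{X,x}$ is a free monoid, say $P \cong \NN^r$, and in particular $P^{\rm gp} \cong \ZZ^r$ is torsion-free. This torsion-freeness is exactly the hypothesis needed to invoke Lemma~\ref{lem:logsmoothcharchart} in the non-positive case (resp.\ Lemma~\ref{lem:positivelogsmoothcharchart} in the positive case), yielding a neighborhood $U$ of $x$ in $\u{X}$ and a characteristic chart $P \to \M_X(U)$ for which the corresponding $\DS$ morphism $f : \u{U} \to \u{\RR}(P)$ (resp.\ $\u{U} \to \u{\RR}_+(P)$) is smooth. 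Under the identification $P = \NN^r$, the target of $f$ is simply $\RR^r$ (resp.\ $\RR_+^r$) as described in Example~\ref{example:RPlog}.

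Next, I unwind the definition of smooth $\DS$ morphism from \S\ref{section:smoothmorphisms}: after possibly shrinking $U$, the map $f$ factors as an open embedding $\u{U} \hookrightarrow V \times \RR^n$ followed by the projection to $V$, where $V$ is an open subspace of $\RR^r$ (resp.\ $\RR_+^r$) and $n \geq 0$. In the non-positive case $V \times \RR^n$ is an open subspace of $\RR^{r+n}$, so $\u{U}$ is an open subspace of $\RR^{r+n}$ and hence $\u{X}$ is a smooth manifold.

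For the positive case I need one more step: I observe that $\RR^n$ is diffeomorphic to $\RR_{>0}^n$ via, say, coordinate-wise exponentiation, and $\RR_{>0}^n$ is an open subspace of $\RR_+^n$. Consequently $V \times \RR^n \cong V \times \RR_{>0}^n$ sits as an open subspace of $\RR_+^r \times \RR_+^n = \RR_+^{r+n}$, so $\u{U}$ is locally isomorphic to an open subspace of $\RR_+^{r+n}$ and $\u{X}$ is a manifold with corners in the sense of Definition~\ref{defn:mfdwcorners1}. The real content of the argument lies in Lemma~\ref{lem:logsmoothcharchart}/Lemma~\ref{lem:positivelogsmoothcharchart}, which convert the chart criterion for log smoothness into a chart over \emph{the} characteristic monoid at $x$; once those are in hand, no serious obstacle remains beyond the small diffeomorphism $\RR \cong \RR_{>0}$ used to absorb the ``smooth fiber'' $\RR^n$ into additional corner directions.
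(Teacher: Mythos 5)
Your proposal is correct and follows exactly the paper's route: the paper's own proof is the one-line observation that $(\NN^r)^{\rm gp}=\ZZ^r$ is free so Lemma~\ref{lem:logsmoothcharchart} (resp.\ Lemma~\ref{lem:positivelogsmoothcharchart}) applies, and you have simply spelled out the unwinding of the smoothness definition and the small diffeomorphism $\RR^n\cong\RR_{>0}^n\subseteq\RR_+^n$ that the paper leaves implicit.
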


\begin{proof}  Since $(\NN^n)^{\rm gp} = \ZZ^n$ is free, this follows easily from Lemma~\ref{lem:logsmoothcharchart} (or Lemma~\ref{lem:positivelogsmoothcharchart} in the positive case).   \end{proof}

Motivated by the above result, we make the following

\begin{defn} \label{defn:mfdwcorners} A \emph{manifold with corners} is a positive log smooth differentiable space $X$ with free log structure (Definition~\ref{defn:free}). \end{defn}

For example, $\RR_+(\NN^n)$ (Example~\ref{example:RPlog}) is a manifold with corners by this definition.  This definition is also local in nature, so anything locally isomorphic in $\PLDS$ to an open subspace of $\RR_+(\NN^n)$ is again a manifold with corners.  Our next task is to reconcile Definitions~\ref{defn:mfdwcorners1} and \ref{defn:mfdwcorners}.  We first need an ``invariance of domain" result.

\begin{lem} \label{lem:idealofcoordinatehyperplanes} Let $Z \subseteq \RR^n$ be the closed subspace given by the union of the first $k$ coordinate hyperplanes.  The small ideal of $Z$ (i.e.\ the ideal of smooth functions vanishing on $Z$ as in \S\ref{section:differentiablespaces}) is generated by $x_1 \cdots x_k$. \end{lem}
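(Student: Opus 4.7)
My plan is to reduce the problem to iterated application of Hadamard's Lemma via induction on $k$. The containment $(x_1 \cdots x_k) \subseteq I^{\rm small}(Z)$ is immediate since $x_1 \cdots x_k$ vanishes identically on $Z$, so the entire content of the lemma is the reverse containment: any smooth $f \in C^\infty(\RR^n)$ vanishing on $Z$ factors as $f = g \cdot x_1 \cdots x_k$ for some $g \in C^\infty(\RR^n)$.

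The key inductive statement I will establish, for any subset $S \subseteq \{1,\ldots,n\}$, is: if $f \in C^\infty(\RR^n)$ vanishes on $\bigcup_{i \in S} \{x_i = 0\}$, then there exists $g \in C^\infty(\RR^n)$ with $f = \bigl(\prod_{i \in S} x_i\bigr) g$. The base case $|S|=1$ is the classical Hadamard Lemma, which I would prove (or recall) by writing $g(x) := \int_0^1 (\partial f / \partial x_i)(x_1,\ldots,tx_i,\ldots,x_n)\,dt$; this is manifestly smooth, and the fundamental theorem of calculus combined with $f|_{x_i = 0} = 0$ gives $f = x_i g$.

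For the inductive step, assume the result for all subsets of size $< k$ and take $|S|=k$; after relabeling I may assume $S = \{1,\ldots,k\}$. Since $f$ vanishes on $\{x_1 = 0\}$, Hadamard's Lemma yields $f = x_1 h$ with $h \in C^\infty(\RR^n)$. For each $i \in \{2,\ldots,k\}$, the hypothesis $f|_{x_i = 0} = 0$ forces $x_1 h = 0$ on $\{x_i = 0\}$, whence $h = 0$ on the open dense subset $\{x_i = 0\} \cap \{x_1 \neq 0\}$ of $\{x_i = 0\}$, and then on all of $\{x_i = 0\}$ by continuity of $h$. Thus $h$ vanishes on $\bigcup_{i=2}^k \{x_i = 0\}$, and the inductive hypothesis applied to the subset $\{2,\ldots,k\}$ (of size $k-1$) produces $g \in C^\infty(\RR^n)$ with $h = x_2 \cdots x_k \cdot g$, so that $f = x_1 \cdots x_k \cdot g$ as desired.

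The main subtlety I anticipate is purely notational: keeping track of the factorizations produced by Hadamard's Lemma cleanly enough that the continuity-to-vanishing step (``$h$ vanishes on $\{x_i = 0\} \cap \{x_1 \neq 0\}$, hence everywhere on $\{x_i = 0\}$'') is manifestly valid. No subtlety about Taylor-closure of the ideal arises here, because the factorization produces a genuine smooth function $g$, showing that the principal ideal $(x_1 \cdots x_k)$ already contains every element of $I^{\rm small}(Z)$ -- in particular the principal ideal is automatically closed in this case, so there is no need to take a closure.
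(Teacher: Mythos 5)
Your proof is correct and is essentially the argument the paper gives: one application of Hadamard's Lemma (via the same fundamental-theorem-of-calculus integral formula) to peel off one coordinate, then the density-plus-continuity observation to see that the quotient vanishes on the remaining hyperplanes, then induction. The only cosmetic differences are that the paper factors out $x_k$ first rather than $x_1$ and phrases the reduction stalk-wise at the origin, neither of which changes the substance.
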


\begin{proof} The proof is by induction on $k$.  The case $k=0$ is trivial, so we now assume $k>0$.  The result can be checked on stalks and follows from the induction hypothesis away from the origin, so we can concentrate on the stalk of this ideal at the origin.  Consider a smooth function $f$ defined on a neighborhood $U$ of the origin and vanishing on $Z \cap U$.  We must show that $f = x_1 \cdots x_k v$ for a smooth function $v$ after possibly restricting to a smaller neighborhood of the origin.  Set \be g(t,\ov{x}) & := & f(x_1,\dots,x_{k-1}, tx_k, x_{k+1},\dots,x_n). \ee Using the fact that $f$ vanishes on the $k^{\rm th}$ coordinate hyperplane, together with the Fundamental Theorem of Calculus and the Chain Rule, we compute \be f(\ov{x}) & = & g(1,\ov{x}) - g(0,\ov{x}) \\ & = & \int_0^1 \frac{\partial g}{\partial t} (t,\ov{x}) dt \\ & = & \int_0^1 \frac{\partial(tx_k)}{\partial t} \frac{\partial f}{\partial x_k}(x_1,\dots,tx_k,\dots,x_n) dt \\ & = & x_k \int_0^1 \frac{\partial f}{\partial x_k}(x_1,\dots,tx_k,\dots,x_n) dt, \ee which shows that we can write $f = x_k h$ for a smooth function $h$ defined on $U$.  Since $f$ vanishes on the first $k-1$ coordinate hyperplanes, $h$ must vanish on the complement $W$ of the $k^{\rm th}$ coordinate hyperplane in the union $Z'$ of the first $k-1$ coordinate hyperplanes.  But $W$ is dense in $Z'$, so by continuity $h$ vanishes on $Z'$ and hence by induction we can write $h = x_1 \cdots x_{k-1} v$ for some smooth function $v$ on some neighborhood of the origin.  Then we have $f = x_1 \cdots x_k v$ as desired.  \end{proof}

\begin{lem} The differentiable spaces $X = \RR^k_+ \times \RR^{n-k}$ (with coordinates $x_1,\dots,x_n$) and $Y = \RR^l_+ \times \RR^{n-l}$ (with coordinates $y_1,\dots,y_n$) are diffeomorphic iff $k=l$.  For any diffeomorphism $f : X \to Y$, there is a permutation $\sigma$ of $\{ 1, \dots, k \}$ such that $f^* y_i = u_i x_{\sigma(i)}$ for positive units $u_i \in \Gamma(X,\O_X^{>0})$ for $i=1,\dots,l$. \end{lem}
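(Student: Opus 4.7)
The plan is to show that a diffeomorphism $f : X \to Y$ matches the boundary hyperplanes of $X$ with those of $Y$ via a permutation, and to pin down the coordinates up to positive units using Lemma~\ref{lem:idealofcoordinatehyperplanes}. First I would recognize the topological boundary intrinsically: a point $p \in X$ admits a neighborhood in $X$ diffeomorphic to an open subset of $\RR^n$ precisely when $x_j(p) > 0$ for all $j \leq k$. The nontrivial direction is topological invariance of boundary applied to $\RR_+^k \times \RR^{n-k}$. So ``being a manifold point'' is intrinsic to the differentiable space structure, and $f$ takes $\partial X$ bijectively to $\partial Y$. Likewise, a boundary point is \emph{depth-$1$} if it has a neighborhood diffeomorphic to an open subset of $\RR_+ \times \RR^{n-1}$, and the connected components of the depth-$1$ locus in $X$ are the open faces $H_j^X \setminus \bigcup_{j' \neq j} H_{j'}^X$ for $j = 1, \ldots, k$, each diffeomorphic to $\RR_{>0}^{k-1} \times \RR^{n-k}$. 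Taking closures recovers the boundary hyperplanes $H_j^X = \{x_j = 0\}$ intrinsically, so $f$ must map $\{H_1^X, \ldots, H_k^X\}$ bijectively onto $\{H_1^Y, \ldots, H_l^Y\}$, forcing $k = l$ and producing a permutation $\sigma$ of $\{1, \ldots, k\}$ with $f(H_{\sigma(i)}^X) = H_i^Y$.

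Next, for each $i$ I would show $f^* y_i = u_i \, x_{\sigma(i)}$ for a smooth $u_i$ on $X$. The function $f^* y_i \in \O_X^{\geq 0}(X)$ vanishes on $H_{\sigma(i)}^X$ and is strictly positive elsewhere (since $f^{-1}(H_i^Y) = H_{\sigma(i)}^X$). Viewing $X$ as a closed subspace of $\RR^n$ with the big induced structure, Lemma~\ref{lem:idealofcoordinatehyperplanes} in the case $k = 1$ says that the small ideal of the coordinate hyperplane $\{x_{\sigma(i)} = 0\} \subseteq \RR^n$ is the principal ideal $(x_{\sigma(i)})$; pushing down to $\O_X(X)$ gives that the small ideal of $H_{\sigma(i)}^X$ in $X$ is $(x_{\sigma(i)})$. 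Hence $f^* y_i = u_i \, x_{\sigma(i)}$ for some $u_i \in \O_X(X)$.

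The final and main technical step is to verify $u_i \in \O_X^{>0}(X)$. Away from $H_{\sigma(i)}^X$ one has $x_{\sigma(i)} > 0$ and $f^* y_i > 0$, immediately giving $u_i > 0$. On $H_{\sigma(i)}^X$, differentiating $f^* y_i = u_i x_{\sigma(i)}$ and evaluating at $p$ yields $u_i(p) = dy_i\bigl(df_p(\partial/\partial x_{\sigma(i)}|_p)\bigr)$. Because $f$ restricts to a diffeomorphism $H_{\sigma(i)}^X \to H_i^Y$, the isomorphism $df_p$ sends $T_p H_{\sigma(i)}^X$ onto $T_{f(p)} H_i^Y = \ker dy_i$; since $\partial/\partial x_{\sigma(i)}|_p \notin T_p H_{\sigma(i)}^X$, injectivity of $df_p$ forces $df_p(\partial/\partial x_{\sigma(i)}|_p) \notin \ker dy_i$, so $u_i(p) \neq 0$. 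The sign is pinned down by considering the curve $\gamma(t) = p + t\, e_{\sigma(i)}$, which lies in $X$ with $\gamma(t) \notin H_{\sigma(i)}^X$ for $t > 0$; then $y_i(f(\gamma(t))) > 0$ for small $t > 0$ while $y_i(f(\gamma(0))) = 0$, and $y_i(f(\gamma(t)))/t = u_i(\gamma(t))$ by the factorization, giving $u_i(p) = \lim_{t \to 0^+} u_i(\gamma(t)) \geq 0$, hence $> 0$. I expect the main difficulty to be the rigorous intrinsic identification of the depth filtration and the corresponding boundary hyperplanes in the first step, which grounds the entire argument.
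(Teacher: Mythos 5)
Your overall architecture (match the boundary hyperplanes by a permutation, then factor $f^*y_i$ through $x_{\sigma(i)}$ using Lemma~\ref{lem:idealofcoordinatehyperplanes}) parallels the paper's proof, and your final positivity argument for $u_i$ is fine. But there is a genuine gap at the step you yourself flag as the crux: the intrinsic identification of the depth-$1$ locus. You propose to get it from ``topological invariance of boundary,'' and that tool provably cannot work here. Topological invariance of domain does identify the \emph{topological} boundary (points with some $x_j=0$, $j\le k$), which is why $f(\partial X)=\partial Y$; but it cannot distinguish a corner of depth $2$ from a point of depth $1$, because $\RR_+^2$ and $\RR\times\RR_+$ are \emph{homeomorphic} --- this is exactly Example~\ref{example:teardrop} in the paper. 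So the assertion that the connected components of your depth-$1$ locus are precisely the open faces $H_j^X\setminus\bigcup_{j'\ne j}H_{j'}^X$ requires knowing that a neighborhood of a depth-$\ge 2$ point is not \emph{diffeomorphic} (as a differentiable space) to an open subset of $\RR_+\times\RR^{n-1}$, and that is essentially the local form of the lemma you are trying to prove. No purely topological invariant can supply it; you need an invariant of the differentiable space structure, and you never produce one.

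The paper fills this hole as follows: a diffeomorphism preserves small ideals of closed subsets, so by Lemma~\ref{lem:idealofcoordinatehyperplanes} it restricts to a $\DS$-isomorphism $\Z(x_1\cdots x_k)\to\Z(y_1\cdots y_l)$ with the small induced structures; then the fiber of the module of differentials $\Omega_{\partial X}$ (presented by $dx_1,\dots,dx_n$ modulo $d(x_1\cdots x_k)=0$) has dimension $n-1$ at points lying in exactly one coordinate hyperplane and dimension $n$ at points lying in two or more. This is the differentiable-space invariant that separates the depth strata; once you have it, your component count gives $k=l$ and the permutation $\sigma$, and the rest of your argument goes through. To repair your proof you should either import such an invariant or give a direct argument that no germ of $\RR_+^2\times\RR^{n-2}$ at a corner is $\DS$-isomorphic to a germ of $\RR_+\times\RR^{n-1}$ at a boundary point; as written, the reduction to topology is a step that fails.
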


\begin{proof} First of all, if $f : X \to Y$ is any isomorphism of differentiable spaces, and $Z \subseteq X$ is any closed subset of the space underlying $X$, then it is clear that $f$ restricts to an isomorphism of differentiable spaces $Z \to f(Z)$ when $Z$ (resp.\ $f(Z)$) is given the small induced structure from $X$ (resp.\ $Y$) (\S\ref{section:differentiablespaces}).  Equivalently, the ideal $I^{\rm small}(Z)$ of $\O_X$ coincides with the ideal $f^{-1} I^{\rm small}(f(Z))$.  

Suppose $f : X \to Y$ is a diffeomorphism as in the lemma.  Since $f$ is, in particular, a homeomorphism, it must take the topological boundary $\partial X$ of $X$ homeomorphically onto the topological boundary $\partial Y$ of $Y$.  By Lemma~\ref{lem:idealofcoordinatehyperplanes}, the small ideal of $\partial X$ is generated by $x_1 \cdots x_k$ and the small ideal of $\partial Y$ is generated by $y_1 \cdots y_l$, so $f$ restricts to a $\DS$ isomorphism $\partial f : \Z(x_1 \cdots x_k) \to \Z(y_1 \cdots y_l)$.  (Although Lemma~\ref{lem:idealofcoordinatehyperplanes} concerns $\RR^n$, it implies the analogous results for $X$ and $Y$ because the structure sheaves of the latter are quotients of the structure sheaf of $\RR^n$.)  It is understood in the remainder of the proof that $\partial X = \Z(x_1 \cdots x_k)$ and $\partial Y = \Z(y_1 \cdots y_l)$ are given the small induced differentiable space structures from $X$ and $Y$.  The module of differentials $\Omega_{\partial X}$ (relative to $\RR$ of course) is generated by $dx_1,\dots,dx_n$ subject to the relation \be 0 & = & d(x_1 \cdots x_n) \\ & = & x_2 \cdots x_k dx_1 + x_1x_3 \cdots x_k dx_2+ \cdots + x_1 \cdots x_{k-1} dx_k.\ee  The points $x$ of $\partial X$ contained in only one of the coordinate hyperplanes $\Z(x_1), \dots , \Z(x_k)$ have $\Omega_{\partial X,x} \cong \RR^{n-1}$, while the points in two or more of these coordinate hyperplanes have $\Omega_{X,x} \cong \RR^n$.  Consequently, $\partial f$ must take the open subspace $W \subseteq \partial X$ of points $x$ of the former type homeomorphically onto the analogous open subspace $W' \subseteq \partial Y$.  But $W$ is the disjoint union, over $i=1,\dots,k$, of the subspaces \be \Z(x_i)' & := & \Z(x_i) \setminus ( \Z(x_1) \cup \cdots \Z(x_{i-1}) \cup \Z(x_{i+1}) \cup \cdots \cup \Z(x_k)) \ee of $X$ and $W'$ is the disjoint union, over $i=1,\dots,l$ of the analogous subspaces $\Z(y_i)'$ of $Y$, hence we must have $k=l$ and there is a permutation $\sigma$ of $\{ 1, \dots, k \}$ such that $f^{-1}(\Z(y_i)') = \Z(x_{\sigma(i)})'$ for $i=1,\dots,k$.  The homeomorphism $f$ also preserves closures, and the closure of $\Z(x_i)'$ in $X$ is $\Z(x_i)$, so we also have $f^{-1}(\Z(y_i)) = \Z(x_{\sigma(i)})$ for $i=1,\dots,k$.  But $y_i$ generates the small ideal of the closed subpace $\Z(y_i)$ and $x_{\sigma(i)}$ generates the small ideal of the closed subspace $\Z(x_{\sigma(i)})$, so the ideal of $\O_X$ generated by $f^* y_i$ coincides with the one generated by $x_{\sigma(i)}$, which implies the desired result.  The units $u_i$ are necessarily positive since $f^*y_i$ and $x_{\sigma(i)}$ are non-negative. \end{proof}

\begin{lem} \label{lem:mfdcornerslifting} Let $X$ and $Y$ be open subspaces of the positive log differentiable space $\RR_+(\NN^n)$.  Any $\DS$ isomorphism $\underline{f} : \underline{X} \to \underline{Y}$ lifts uniquely to an $\LDS$ isomorphism $f : X \to Y$. \end{lem}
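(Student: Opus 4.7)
The plan is to use the preceding lemma's explicit local formula for $\underline{f}$ to construct a lift of the log structure locally, and then to deduce uniqueness of such a lift, which will allow the local constructions to be glued into a global one.

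First, I would reduce to a local model around each point. Fix $x \in X$ and let $J := \{i : x_i = 0\}$, so that a sufficiently small neighborhood $X'$ of $x$ in $X$ has the form $\RR_+^J \times \RR^{J^c}$; similarly let $Y'$ be the corresponding neighborhood of $\underline{f}(x)$ in $Y$. The preceding lemma (applied to $\underline{f}|_{X'} : X' \to Y'$), after possibly shrinking and relabeling, provides a permutation $\sigma$ of $\{1,\ldots,n\}$ preserving the ``$\RR_+$-indices'' $J$ and the ``$\RR$-indices'' $J^c$, together with positive units $u_1,\ldots,u_n \in \O_{X'}^{>0}(X')$, such that $\underline{f}^\sharp y_i = u_i \cdot x_{\sigma(i)}$ on $X'$.

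Next, I would construct the local lift. Since $\M_{Y}$ is associated to the prelog structure $\NN^n \to \O_Y^{\geq 0}$ sending $e_i \mapsto y_i$, by the universal property of associated log structures (which commutes with pullback), giving $f^\dagger : f^*\M_Y|_{X'} \to \M_X|_{X'}$ is the same as giving a monoid homomorphism $\varphi : \NN^n \to \M_X(X')$ whose composition with $\alpha_X$ equals $\underline{f}^\sharp$ composed with the chart $\NN^n \to \O_Y^{\geq 0}(Y')$. I would define $\varphi(e_i) := \tilde{u}_i + \tilde{x}_{\sigma(i)}$, where $\tilde{x}_{\sigma(i)} \in \M_X(X')$ is the image of $e_{\sigma(i)}$ under the tautological chart of $\M_X$ and $\tilde{u}_i \in \M_X^*(X')$ is the unique lift of $u_i$ under the log structure isomorphism $\alpha_X^* : \M_X^* \cong \O_X^{>0}$. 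Then $\alpha_X\varphi(e_i) = u_i\cdot x_{\sigma(i)} = \underline{f}^\sharp y_i$, so $\varphi$ defines the desired local $f^\dagger$, which is an isomorphism of log structures since $\sigma$ is a bijection and the $u_i$ are units.

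For uniqueness and gluing, I would show that $\alpha_X : \M_X \to \O_X^{\geq 0}$ is \emph{injective} as a sheaf map. On stalks, $\M_{X,x} \cong \NN^J \oplus \O_{X,x}^{>0}$ via the associated log structure formula, with structural map $(\nu, u) \mapsto u \cdot \prod_{i \in J} x_i^{\nu_i}$. If two such pairs $(\nu, u), (\nu', u')$ have the same image, then dividing by $u$ and restricting to the set of nearby points where all $x_i > 0$ yields $x^{\nu-\nu'} = u'/u$; since the right-hand side is bounded above and below near $x$, we must have $\nu = \nu'$, and then $u = u'$ because each coordinate function $x_i$ is a non-zero-divisor in $\O_{X,x}$ (a smooth function on $\RR_+^n$ that vanishes on the dense open set $\{x_i > 0\}$ vanishes identically). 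With $\alpha_X$ injective, any two lifts of $\underline{f}^\sharp$ to a map of log structures must coincide, so the local $f^\dagger$'s agree on overlaps and assemble into a global morphism $f : X \to Y$ in $\LDS$. Running the construction on $\underline{f}^{-1}$ and invoking the same uniqueness produces the two-sided inverse, giving the $\LDS$ isomorphism.

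The main obstacle I expect is the injectivity of $\alpha_X$: this is where the ``analytic'' content of the lemma sits, and it expresses the key fact that the log structure on $\RR_+(\NN^n)$ faithfully records the vanishing orders of non-negative smooth functions along the coordinate hyperplanes. Everything else in the argument is a formal consequence of the preceding lemma and the standard universal properties of associated log structures.
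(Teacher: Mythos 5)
Your proposal is correct and follows essentially the same route as the paper's proof: uniqueness via injectivity of the structure map $\alpha$ (which the paper asserts as "the log structure is just the inclusion of the subsheaf generated by the units and the coordinate functions," and which you verify on stalks), followed by a local construction from the explicit form $\underline{f}^\sharp y_i = u_i x_{\sigma(i)}$ given by the preceding lemma, with gluing and the inverse handled by the same uniqueness. The only cosmetic imprecision is that for indices $i$ with $y_i$ invertible near $\underline{f}(x)$ the preceding lemma supplies no $\sigma(i)$, and one should simply send $e_i$ to the lift of the unit $\underline{f}^\sharp y_i$; this does not affect the argument.
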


\begin{proof}  Such a lift $f$, if it exists, is certainly unique since the structure map $\alpha_Y$ for the log structure on $Y$ is monic (it is just the inclusion of the subsheaf of monoids generated by the units and the coordinate functions).  It therefore suffices to construct a lift locally (the unique lift of $\underline{f}^{-1}$ then serves as an inverse for the lift $f$).  Locally, we are in the situation of the previous lemma and it is clear from that lemma and the description of the log structure on $\RR_+(\NN^n)$ that we have the desired lift.  \end{proof}

\begin{thm} \label{thm:mfdcorners} If $\underline{X} \in \DS$ is a manifold with corners in the sense of Definition~\ref{defn:mfdwcorners1}, then there is a unique positive log structure $\M_X$ on $\underline{X}$ which restricts to the usual one (Example~\ref{example:RPlog}) on any chart of $\underline{X}$.  The positive log differentiable space $X := (\underline{X},\M_X)$ is a manifold with corners in the sense of Definition~\ref{defn:mfdwcorners}. \end{thm}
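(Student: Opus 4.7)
The plan is to establish existence of $\M_X$ by transporting the standard positive log structure along charts and gluing via Lemma~\ref{lem:mfdcornerslifting}, then to verify freeness and log smoothness pointwise on the charts. Concretely, I fix a cover $\{\phi_i : U_i \xrightarrow{\sim} V_i\}_{i \in I}$ of $\underline{X}$ by $\DS$-isomorphisms with $V_i \subseteq \underline{\RR}_+^{n_i}$ open, and define $\M_i$ on $U_i$ to be the pullback along $\phi_i$ of the restriction of the tautological positive log structure on $\RR_+(\NN^{n_i})$ (Example~\ref{example:RPlog}). By construction each $X_i := (U_i, \M_i)$ is a positive log differentiable space and $\phi_i$ promotes to a $\PLDS$-isomorphism between $X_i$ and the corresponding open subspace of $\RR_+(\NN^{n_i})$.

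The heart of the argument is to check compatibility on overlaps. On $U_{ij} := U_i \cap U_j$, both $\M_i|U_{ij}$ and $\M_j|U_{ij}$ become, via $\phi_i$ and $\phi_j$, the restrictions of the standard positive log structures on open subspaces of $\RR_+(\NN^{n_i})$ and $\RR_+(\NN^{n_j})$. By invariance of domain applied to interior points, $n_i = n_j$ at each point of $U_{ij}$, and after refining the cover to respect the locally constant function $x \mapsto n_{i(x)}$ I may assume $n_i = n_j$ throughout any fixed overlap. Then the $\DS$-transition $\phi_j \phi_i^{-1}$ is a $\DS$-isomorphism between open subspaces of $\underline{\RR}_+(\NN^{n_i})$, and Lemma~\ref{lem:mfdcornerslifting} provides a unique $\PLDS$-lift. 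This unique lift is the identity on the underlying spaces $U_{ij}$, so it forces $\M_i|U_{ij} = \M_j|U_{ij}$ as log structures. The Zariski Gluing axiom \eqref{Zariskigluing}, valid in $(\PLDS,\RR_+)$ by Proposition~\ref{prop:logspacesarespaces}, then yields a positive log differentiable space $X = (\underline{X}, \M_X)$ restricting to each $X_i$. For uniqueness, any $\M_X'$ satisfying the stated restriction property must agree with $\M_i$ on each $U_i$, and agreement of sheaves is a local condition.

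It remains to verify that $X$ is a manifold with corners in the sense of Definition~\ref{defn:mfdwcorners}. Both log smoothness and freeness of the log structure are local in nature (the former by Theorem~\ref{thm:logsmoothness}, the latter by the definition of freeness on stalks of $\ov{\M}_X$), so it suffices to check them for $\RR_+(\NN^n)$. Log smoothness follows from Example~\ref{example:RPlogsmooth} applied to the inclusion $0 \hookrightarrow \NN^n$. For freeness, a point $x \in \underline{\RR}_+^n$ has characteristic $\ov{\M}_{\RR_+(\NN^n),x} = \NN^n / F_x$ where $F_x$ is the face generated by $\{e_i : x_i > 0\}$, which is the free monoid $\NN^{k(x)}$ with $k(x) = \#\{i : x_i = 0\}$.

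The main obstacle is the overlap compatibility step; once Lemma~\ref{lem:mfdcornerslifting} is invoked, the gluing and verification steps are formal consequences of the category-of-spaces framework and the explicit description of $\RR_+(\NN^n)$. The only subsidiary nuisance is that Lemma~\ref{lem:mfdcornerslifting} is stated for fixed $n$, so one has to first match local dimensions on overlaps via invariance of domain (for interior points) before applying it; this is a harmless refinement of the cover since $x \mapsto n_{i(x)}$ is locally constant on $\underline{X}$.
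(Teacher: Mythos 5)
Your proposal is correct and follows essentially the same route as the paper: pull back the standard positive log structure along charts, use the unique-lifting statement of Lemma~\ref{lem:mfdcornerslifting} to handle transition functions (with the cocycle condition coming for free from uniqueness), glue, and deduce uniqueness of $\M_X$ from the same lemma. Your explicit verification of log smoothness and freeness on the local model $\RR_+(\NN^n)$, and the remark that one must first match local dimensions on overlaps, merely spell out details the paper leaves implicit.
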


\begin{proof}  Choose an atlas of open subspaces $\underline{U}_i \subseteq \RR^+_n$ (the $n$ can vary with $i$) and diffeomorphisms $\underline{f}_i : \underline{U}_i \to \underline{X}$ onto open subspaces of $\underline{X}$ whose images cover $\underline{X}$.  View $\underline{U}_i$ as a positive log differentiable space $U_i$ by pulling back the usual log structure on $\RR_+^n = \underline{\RR}(\NN^n)$.  By Lemma~\ref{lem:mfdcornerslifting}, the gluing automorphisms (transition functions) $\underline{f}_{ij} \in \Aut_{\DS}(\underline{U}_{ij})$ lift (uniquely) to $\PLDS$ isomorphisms $f_{ij}$ which inherit the cocycle condition from the $\underline{f}_{ij}$ in light of the uniqueness.  The $U_i$ then form an atlas for a positive log structure on $X$ which is clearly as desired.  The uniqueness follows easily from the uniqueness statement in Lemma~\ref{lem:mfdcornerslifting}.   \end{proof}

Proposition~\ref{prop:mfdcorners} and Theorem~\ref{thm:mfdcorners} show that manifolds with corners in the sense of Definitions~\ref{defn:mfdwcorners1} and \ref{defn:mfdwcorners} are ``the same thing."  However, we view Definition~\ref{defn:mfdwcorners1} as being ``inferior" in that it does not give the right notion of \emph{morphisms} of manifolds with corners.  It should be emphasized here that the log structure in Theorem~\ref{thm:mfdcorners} is not functorial in $\underline{X} \in \DS$---thought it is functorial under \emph{isomorphisms}.

\begin{prop} \label{prop:boundarylogsmooth} Let $X$ be a log smooth log differentiable space (resp.\ positive log differentiable space).\footnote{The conclusion and proof will make sense in many other contexts.}  Then the boundary $\boundary X$ (\S\ref{section:boundary}) is also log smooth.  The boundary of a manifold with corners is a manifolds with corners. \end{prop}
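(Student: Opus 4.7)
The plan is to exploit two fundamental properties of the boundary construction from \S\ref{section:boundary}: its compatibility with strict base change (property \ref{Bbasechange}) and the explicit description $\boundary \AA(P) = \coprod_F \AA(F)$ indexed by maximal proper faces $F$ of $P$.  Combined with the characterization of log smoothness via smooth charts (Proposition~\ref{prop:logsmooth}), the strict-smooth criterion (Theorem~\ref{thm:strictlogsmooth}), and the stability of log smoothness under composition and base change (Theorem~\ref{thm:logsmoothness}), this should reduce the statement to a local computation at an arbitrary point of $\boundary X$.

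Since log smoothness is a local property, I will work near an arbitrary point $y \in \boundary X$ lying over some $x \in X$.  By Proposition~\ref{prop:logsmooth}, after shrinking $X$ to a neighborhood $U$ of $x$, there is a fine chart $P \to \M_X(U)$ such that the induced $\DS$ morphism $\u{c} : \u{U} \to \u{\RR}_+(P)$ (or $\u{\RR}(P)$ in the $\LDS$ case) is smooth.  The resulting morphism of log spaces $c : U \to \AA(P)$ is strict by definition of a chart.  Applying property \ref{Bbasechange}, the square
$$ \xym{ \boundary U \ar[r] \ar[d] & \boundary \AA(P) \ar[d] \\ \u{U} \ar[r]^-{\u{c}} & \u{\AA}(P) } $$
will be cartesian.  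Using the decomposition $\boundary \AA(P) = \coprod_{F'} \AA(F')$, each connected component $W$ of $\boundary U$ then takes the form $\AA(F') \times_{\AA(P)} U$ for some maximal proper face $F'$, with projection $\pi : W \to \AA(F')$ strict (as a base change of $c$) and with $\u{\pi}$ smooth (as a base change of $\u{c}$).

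At that point Theorem~\ref{thm:strictlogsmooth} will show that each such $\pi$ is log smooth, and since $\AA(F')$ is log smooth over a point by Example~\ref{example:RPlogsmooth} ($F'$ being fine by Lemma~\ref{lem:faces}), Theorem~\ref{thm:logsmoothness} will give that $W$, and hence each component of $\boundary U$, is log smooth.  Letting $x$ vary exhausts $\boundary X$.  For the final assertion, if $X$ is a manifold with corners then each $\ov{\M}_{X,x}$ is isomorphic to some $\NN^r$, and any face of $\NN^r$ is again free (of the form $\NN^S$ for $S \subseteq \{1,\dots,r\}$); by property \ref{Bface} the stalk $\ov{\M}_{\boundary X, y}$ is precisely such a face, so $\boundary X$ has free log structure, and combined with the log smoothness just established this makes $\boundary X$ a manifold with corners.

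The main technical obstacle will be the careful bookkeeping around property \ref{Bbasechange}: it is stated for log locally ringed spaces, whereas we are working in $\PLDS$ or $\LDS$, so one must verify that the cartesian square it produces really does compute $\boundary U$ in the log-differentiable-space sense, and that the log structure on each component is genuinely the pullback from $\AA(F')$.  This should be routine once one notes that strict base change of a fine log space is again fine (making the log-LRS fibered product coincide with the $\fLDS$ one) and that the boundary map $\boundary X \to X$, though only a morphism of locally ringed spaces (Remark~\ref{rem:boundary}), is sufficiently close to a finite disjoint union of closed embeddings to play well with the restricted ideal closure that defines the boundary in the differentiable setting.
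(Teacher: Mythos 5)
Your proposal is correct and follows essentially the same route as the paper: reduce to a local smooth chart $P \to \M_X(U)$, apply the strict-base-change property \ref{Bbasechange} of the boundary together with the decomposition $\boundary \AA(P) = \coprod_F \AA(F)$ over maximal proper faces, and observe that the resulting strict map to $\AA(F)$ with smooth underlying $\DS$ morphism exhibits log smoothness. The only cosmetic difference is that the paper reads off a smooth chart directly via Proposition~\ref{prop:logsmooth}, whereas you route through Theorem~\ref{thm:strictlogsmooth} and composition with the log smooth $\AA(F)$; these are interchangeable.
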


\begin{proof} The question is local, so we can assume $X$ has a smooth global chart $P \to \M_X(X)$ with $P$ fine.  By general properties of the boundary construction (\eqref{Bbasechange} of \S\ref{section:boundary}) we have a cartesian diagram $$ \xym{ \boundary X \ar[d] \ar[r] & \boundary \AA(P) \ar[d] \\ \u{X} \ar[r] & \u{\AA}(P) }$$ in $\LDS$ (resp.\ $\PLDS$) where $\AA(P)$ denotes $\RR(P)$ (resp.\ $\RR_+(P)$).  Since the horizontal arrows are strict, the underlying diagram in $\DS$ is also cartesian, so, since smooth $\DS$ morphisms are stable under base change and the bottom horizontal arrow is smooth by definition of ``smooth chart", the top horizontal arrow is smooth on the underlying differentiable spaces---but we also have \be \boundary \AA(P) & = & \coprod_F \AA(F) , \ee where $F$ runs over maximal proper faces of $P$, so, near any point of $\boundary X$ we obtain a smooth chart using one of these $F$.  In the manifolds with corners case we can take $P = \NN^n$, hence each $F$ is $\NN^{n-1}$. \end{proof} 

\subsection{Kato-Nakayama space}  \label{section:KNspace}  In \cite{KN}, Kato and Nakayama explained how to associate, to a fine log analytic space $X$, a topological space $X^{\KN}$, which we call the \emph{Kato-Nakayama space} (or just the \emph{KN space}) of $X$.  This space comes with a natural proper, surjective map of topological spaces $\tau : X^{\KN} \to X$.  In fact, Kato and Nakayama explain how to endow $X^{\KN}$ with a natural sheaf of $\CC$-algebras so that $\tau$ becomes a map of ringed spaces over $\CC$ (the ringed space $X^{\KN}$ is not \emph{locally} ringed in general).  The purpose of this section is to revisit this construction from the point of view of differential geometry.  We will interpret the KN space as a functor \bne{KNspace} \LAS & \to & \PLDS \\ \nonumber X & \mapsto & X^{\KN} \ene from fine log analytic spaces to fine positive log differentiable spaces.

The topological space $X^{\KN}$ can be constructed for any log analytic space (fine or not) as follows:  Points of $X^{\KN}$ are pairs $(x,f)$ consisting of a point $x$ of $X$ and a group homomorphism $f : \M_{X,x}^{\rm gp} \to S^1$ satisfying $f(u) = u(x) /|u(x)|$ for each $u \in \O_{X,x}^* \subseteq \M_{X,x}$.  Define a function $\tau : X^{\KN} \to X$ by $\tau(x,f) := x$.  Given an open subspace $U$ of $X$ and a section $m \in \M_X(U)$, we obtain a tautological function $\phi_m$ from $\tau^{-1}(U)$ to $S^1$ by setting $\phi_m(x,f) := f(m_x)$.  We endow the set $X^{\KN}$ with the smallest topology so that $\tau$ and the maps $\phi_m$ are continuous (for the usual metric topology on $S^1$).

It is worth noting that the above construction can be made for a \emph{pre}log analytic space $X$ and that the resulting topological space $X^{\KN}$ depends only on the associated log analytic space of $X$.

If one runs through the above construction in the case where $X = \CC(P)$ is the fine log analytic space associated to a fine monoid $P$, then one finds the following equality on the level of topological spaces: \bne{KNformula} \CC(P)^{\KN} & = & \RR_+(P) \times S^1(P), \ene where $$S^1(P) := \Hom_{\Mon}(P,S^1) = \Hom_{\Ab}(P^{\rm gp},S^1) $$ has its usual smooth manifold structure---it is a finite disjoint union of tori and represents the functor $X \mapsto \Hom_{\Ab}(P^{\rm gp},S^1(X))$, where $S^1(X)$ is the group of $\DS$-morphisms $X \to S^1$ for the usual group object structure of $S^1$.  The point is that a pair $(x,y) \in \RR_+(P) \to S^1(P)$ determines a point $\tau(x,y) = xy \in \AA(P)$ (i.e.\ a monoid homomorphism $P \to \CC$) by composing $(x,y) : P \to \RR_+ \times S^1$ with the multiplication map $\RR_+ \times S^1 \to \CC$ (thinking of $\RR_+ = \RR_{\geq 0}$ and $S^1$ as multiplicative submonoids of $\CC$ in the usual way).   (The $y$ in the pair $(x,y)$ can serve as the $f$ in the general Kato-Nakayama construction, so that $(x,y) \mapsto (xy,y)$ yields the homeomorphism from the RHS of \eqref{KNformula} to the usual construction of the KN space of $\AA(P)$.)  The point is to promote the equality \eqref{KNformula} on the level of topological spaces to the level of positive log differentiable spaces, endowing $\RR_+(P)$ with its usual positive log structure.

\begin{thm} \label{thm:KNspace}  There exists a functor $F : \LAS \to \PLDS$ from fine log analytic spaces to fine positive log differentiable spaces satisfying the following properties: \begin{enumerate} \item \label{KNspace1} $F$ preserves finite inverse limits.  \item \label{KNspace2} If $\u{X} \in \LAS$ has trivial log structure, then $F(\u{X}) = \u{X}^{\DS}$ is the differentialization of $\u{X}$ (\S\ref{section:differentialization}) with the trivial positive log structure. \item If $P$ is a fine monoid and $\CC(P)$ is the associated fine log analytic space, then $F(\CC(P)) = \RR_+(P) \times S^1(P)$ in $\PLDS$. \end{enumerate} Any other functor satisfying these properties is isomorphic, via a unique isomorphism, to $F$. \end{thm}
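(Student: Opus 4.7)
The plan is to construct $F$ via an explicit pullback formula in $\PLDS$, guided by the following observation: if $X \in \LAS$ admits a global fine chart $a : P \to \M_X(X)$, then the corresponding strict morphism $\phi_a : X \to \CC(P)$ realizes $X$ as the fibered product $\CC(P) \times_{\u{\CC}(P)} \u{X}$ in $\LAS$ (where $\u{X}$ and $\u{\CC}(P)$ carry trivial log structure), since the underlying analytic space of this pullback is $\u{X}$ and strictness of $\phi_a$ identifies the pulled-back log structure with $\M_X$. Accordingly, I would set
\[
F(X, a) := \bigl(\RR_+(P) \times S^1(P)\bigr) \times_{\u{\CC}(P)^{\DS}} \u{X}^{\DS}
\]
as a fibered product in $\PLDS$, where $\u{X}^{\DS}$ and $\u{\CC}(P)^{\DS}$ carry trivial positive log structure and the map $\RR_+(P) \times S^1(P) \to \u{\CC}(P)^{\DS}$ is the differentialization (\S\ref{section:differentialization}) of the morphism $\RR_+(P) \times S^1(P) \to \u{\CC}(P)$ induced coordinatewise by the monoid multiplication $\RR_+ \times S^1 \to \CC$. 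This formula immediately gives $F(\CC(P)) = \RR_+(P) \times S^1(P)$ (tautological chart) and $F(\u{X}) = \u{X}^{\DS}$ (chart $P=0$), covering properties (2) and (3).

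For chart independence, given two fine charts $a_1, a_2$ of $\M_X$ near a point, Lemma~\ref{lem:thirdchart} provides locally a dominating chart $a_3$ together with monoid maps $P_i \to P_3$; these induce canonical $\PLDS$-isomorphisms $F(U, a_3) \to F(U, a_i)$ via the functoriality of $\RR_+(\slot)$, $S^1(\slot)$, and $\u{\CC}(\slot)^{\DS}$ together with the universal property of the defining pullback. These local isomorphisms are compatible on further restriction and can be glued using the Zariski gluing axiom for $(\PLDS, \RR_+)$ guaranteed by Proposition~\ref{prop:logspacesarespaces}, producing a well-defined $F(X)$ for any fine $X$. Functoriality is established analogously by arranging matching charts via Lemma~\ref{lem:secondchart}.

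Property (1) is the main technical point. Using local charts and Lemma~\ref{lem:inverselimits}, one reduces to the case of diagrams of affine log analytic spaces with compatible fine charts $\{(X_i, a_i : P_i \to \M_{X_i}(X_i))\}$. The inverse limit in fine log analytic spaces is computed by taking the inverse limit in log analytic spaces (which inherits a chart from $\dirlim P_i$) and then integrating; a similar description holds in fine $\PLDS$. The compatibility of $F$ with these operations reduces, via the pullback formula, to the fact that each of $\RR_+(\slot)$, $S^1(\slot)$, and $\u{\CC}(\slot)^{\DS}$ carries direct limits of monoids to inverse limits in their respective categories (Lemma~\ref{lem:AP} and its analogues), together with the commutation of differentialization with fibered products. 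For uniqueness, any functor $F'$ satisfying (1)--(3) must, by (1) applied to the pullback description $X \cong \CC(P) \times_{\u{\CC}(P)} \u{X}$ and by (2)--(3) pinning down the values on the three factors, agree with $F$ on any $X$ with a global chart; Zariski gluing then forces agreement globally, and the isomorphism is unique by the universal property of the pullback.

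The hard part will be making property (1) fully rigorous, because finite inverse limits in $\FineLogEsp$ and in fine $\PLDS$ are \emph{not} the ambient inverse limits in $\LogEsp$ and $\PLDS$ but involve an additional integration step (\S\ref{section:integration}); one must check that these two integration procedures are intertwined by the pullback construction defining $F$. The key is a careful analysis of how the integration $(\slot)^{\rm int}$ in each category interacts with the multiplication map $\RR_+ \times S^1 \to \CC$ at the level of charts. A reduction to the universal affine case (diagrams of $\CC(P_i)$'s with compatible monoid maps) is the natural route, since there both sides become explicit in terms of direct-limit monoids and saturation/integration of those monoids, after which the remaining combinatorial statement can be attacked directly.
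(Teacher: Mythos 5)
Your overall strategy coincides with the paper's: define $F$ on a charted space by the pullback $F(X,a) = (\RR_+(P) \times S^1(P)) \times_{\u{\CC}(P)^{\DS}} \u{X}^{\DS}$, prove chart-independence by dominating two charts with a third via Lemma~\ref{lem:thirdchart}, and glue. But there is a genuine gap at exactly the point you dispatch with the phrase ``via the functoriality of $\RR_+(\slot)$, $S^1(\slot)$, and $\u{\CC}(\slot)^{\DS}$ together with the universal property of the defining pullback.'' Functoriality and the universal property only hand you a canonical \emph{morphism} $F(U,a_3) \to F(U,a_i)$; to get an \emph{isomorphism} you must show that for a map of fine charts $h : P_i \to P_3$ the square with horizontal arrows $\RR_+(P_3) \times S^1(P_3) \to \RR_+(P_i) \times S^1(P_i)$ and $\u{\CC}(P_3)^{\DS} \to \u{\CC}(P_i)^{\DS}$ is \emph{cartesian} in $\PLDS$. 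This is false for an arbitrary monoid homomorphism and is the real content of the theorem. The paper proves it by first handling the case where $h$ is a localization (both horizontal arrows are then open embeddings and cartesianness is a set-level check), then using the Shrinking Argument to reduce to the case where $\ov{h} : \ov{P}_i \to \ov{P}_3$ is an isomorphism; there $P_3 = P_i \oplus_{P_i^*} P_3^*$ is a pushout (Lemma~\ref{lem:pushout}), and the cartesianness comes down to the fact that the multiplication map $\RR_{>0} \times S^1 \to \CC^*$ is an \emph{isomorphism} of group objects in $\DS$, so that the extra data on units is uniquely determined. Without this argument your ``canonical isomorphisms'' are not known to be isomorphisms and the gluing does not go through.

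A secondary remark on emphasis: you identify property (1) as the main technical point and worry about the interaction of the two integration functors. In fact, once the cartesianness above is established, $F$ is locally a fibered product of limit-preserving ingredients and (1) follows along the lines you sketch; the paper treats this as routine. Conversely, your reduction of uniqueness to the pullback description $X \cong \CC(P) \times_{\u{\CC}(P)} \u{X}$ is fine, but it again presupposes that any functor satisfying (1)--(3) sends that square to a cartesian square whose three known corners pin down the fourth --- which is the same cartesianness issue in disguise. So the missing lemma is load-bearing for both existence and uniqueness.
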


\begin{proof} Many details in the proof are routine tedium; we will highlight the main points.  For a fine monoid $P$, we set $FP := \RR_+(P) \times S^1(P)$ to ease notation.  Say $X$ is a fine log analytic space.  Then, at least locally, $X$ has a fine chart $P \to \M_X(X)$.  This chart gives a cartesian diagram $$ \xym{ X \ar[r] \ar[d] & \CC(P) \ar[d] \\ \u{X} \ar[r] & \u{\CC}(P) } $$ in $\LAS$.  If the three axioms are to hold, $FX$ is determined by the cartesian diagram $$ \xym{ FX \ar[r] \ar[d] & FP \ar[d] \\ \u{X}^{\DS} \ar[r] & \u{\CC}(P)^{\DS} } $$ in $\PLDS$.  The main claim is that $FX$, thus \emph{defined}, does not depend (up to canonical isomorphism) on the chosen chart.  If we assume this claim, then we can construct such a functor $F$ as follows:  \emph{Choose} an open cover $\{ U_i \}$ of each $X \in \LAS$ on which $X$ has a fine chart.  \emph{Choose} such a chart $P_i \to \M_X(U_i)$.  \emph{Choose} a cartesian product $FU_i$ as above.  Then, assuming the claim, we can glue the $F U_i$ (defined by our chosen cartesian diagrams) along the canonical isomorphisms to define $FX$.  Assuming we made the obvious \emph{choices} when $X = \u{X}$ and when $X = \CC(P)$, the resulting functor will satisfy the three axioms.  The essential uniqueness of the functor $F$ is established similarly from the claim.

Now we prove the claim.  Suppose we have two global charts $P,Q \rightrightarrows \M_X(X)$.  We want to show that the (fine) positive log differentiable spaces $F_P(X)$ and $F_Q(X)$ defined by the above recipe are canonically isomorphic.  Since we are going to show that the isomorphism is canonical anyway, we can fix some point $x \in X$ and work locally near $x$.  By Lemma~\ref{lem:thirdchart} we can therefore assume that $P$ and $Q$ both map to a third chart $R$ (we have introduced here the \emph{choice} of such a third chart $R$ receiving maps from $P$ and $Q$).

This gives us a commutative diagram in $\PLDS$ \bne{PLDSdia} & \xym{ F_R(X) \ar[r] \ar[d] & FR \ar[r] \ar[d] & FP \ar[d] \\ \u{X}^{\DS} \ar[r] & \u{\CC}(R)^{\DS} \ar[r] & \u{\CC}(P)^{\DS} } \ene where the left square is cartesian by definition of $F_R(X)$.  (We have a similar diagram with $P$ replaced by $Q$.)  Now, if we knew the right square was cartesian, then the ``big square" would be cartesian, and we would have a canonical isomorphism $F_R(X) \cong F_P(X)$ resulting from the fact that both spaces make the big square cartesian.  We first check that this right square is cartesian when $P \to R$ is a localization.  In this case, the horizontal arrows in the right square are open embeddings and the cartesianness can hence be checked on the level of underlying sets, which is easily done.  Now by the Shrinking Argument and the case just handed, we reduce to treating the case where $h : P \to R$ induces an isomorphism on sharpenings $\ov{P} \to \ov{R}$.  By Lemma~\ref{lem:sharpening}, this means we have a pushout diagram \bne{RPpush} & \xym{ P^* \ar[d] \ar[r]^-{h^*} & R^* \ar[d] \\ P \ar[r]^-h & R } \ene of fine monoids.  Thinking in terms of the ``modular" interpretations of the $\PLDS$ involved, the cartesianness of the right square in \eqref{PLDSdia} is equivalent to the following:  Suppose $Y \in \PLDS$ and we have monoid homomorphisms \be (a,b) : P & \to & \M_Y(Y) \times S^1(Y) \\ g : R & \to & \CC(Y) = \O_Y(Y) \times_{\RR} \CC \ee so that \bne{sothat} \ov{a} \cdot b = gh : P & \to & \CC(Y) = \O_Y(Y) \otimes_{\RR} \CC . \ene  Here $S^1(Y)$ is the group of $\DS$-morphisms $Y \to S^1$, $\CC(Y)$ is the monoid of $\DS$-morphisms $Y \to \CC$, $\ov{a} : P \to \RR_+(Y)$ is the map induced by $a$, and the $\cdot$ in $\ov{a} \cdot b$ makes use of the map \bne{multy} \RR_+ \times S^1 & \to & \CC \ene of monoid objects in $\DS$ given by $(\lambda,u) \mapsto \lambda u$.  We need to show that there is a unique monoid homomorphism \be (c,d) : R & \to & \M_Y(Y) \times S^1(Y) \ee so that $(a,b) = (c,d) h$ and $g = \ov{c} \cdot d$.  Since we have the pushout diagram \eqref{RPpush}, such a $(c,d)$ is the same thing as a monoid homomorphism \be (c^*,d^*) : R^* & \to & \M_Y^*(Y) \times S^1(Y) = \RR_{>0}(Y) \times S^1(Y) \ee satisfying $(a^*,b^*) = (c^*,d^*) h^*$ and $g^* = c^* \cdot d^*$.  The map \eqref{multy} induces an isomorphism \bne{multystar} \RR_{>0} \times S^1 & \to & \CC^* \ene of group objects in $\DS$, and hence an isomorphism of groups \be \RR_{>0}(Y) \times S^1(Y) & \to & \CC^*(Y), \ee so that $(c^*,d^*)$ is uniquely determined by $g^*$ and the requirement that $g^* = c^* \cdot d^*$; the other necessary equality $(a^*,b^*) = (c^*,d^*)h^*$ then results from \eqref{sothat}.
   
The result above yields an isomorphism $F_P(X) \cong F_R(X)$ and a similar isomorphism $F_Q(X) \cong F_R(X)$, hence an isomorphism $F_P(X) \cong F_Q(X)$.  To know that this isomorphism is truly canonical, we have to argue that it doesn't depend on the chosen chart $R$ receiving maps from $Q$ and $P$.  But this argument is routine:  If we chose a different such $R$, say $R'$, then we find some $R''$ to which both $R$ and $R'$ map, then we argue that the isomorphism $F_P(X) \cong F_Q(X)$ constructed using $R$ (or $R'$) coincides with the one constructed using $R''$ (all of these isomorphisms ultimately result from the essential uniqueness of cartesian products). \end{proof}

\subsection{Scholium}  \label{section:scholium}  The purpose of this section is to clear up some issues with our approach that might be in the minds of readers familiar with log geometry in the algebraic setting.  This discussion is not logically necessary for the comprehension of the rest of the paper and can safely be skipped by the uninterested reader.

If $X$ is a scheme and $U \subseteq X$ is an open subscheme, one often considers the log structure \be \N_{U/X} & := & \{ f \in \O_X : f|U \in \O_U^* \} \ee on $X$ given by functions ``invertible on $U$".  In general this will be very ill-behaved (or uninteresting by Hartog's Theorem) unless the complementary closed subset $D$ is a divisor.  If the pair $(U,X)$ is locally isomorphic to $(T,X(\Delta))$ for some toric variety $X(\Delta)$ with torus $T$, then a foundational result of Kato ensures that $\N_{U/X}$ is a fine log structure.  However, even if $X$ is a toric variety and $D \subseteq X$ is a torus-invariant Cartier divisor, the log structure $\N_{U/X}$ need not be quasi-coherent (e.g.\ take $X = \Spec \CC[x,y,u,v]/(xy-uv)$, $D = \Z(x)$).  Thus Ogus was motivated to study ``mildly incoherent" (or ``psuedo-coherent"... there is no accepted terminology) log structures.  The log structures $\N_{U/X}$ have a certain appeal in light of their ``topological nature" (for example, it is easy to understand their pullbacks, and hence it is often easy to produce maps between them), but one must be a bit careful with their usage.

In the setting of differential geometry, the log structure $\N_{U/X}$ as defined above is very poorly behaved and probably should never be considered.  For example, even in the ``simplest possible case" $X=\RR$ (with coordinate $x$), $U= \RR^*$, $D = \Z(x)$, the log structure $\N_{U/X}$ contains the fine log structure \be \M_X & := & \{ x^n u : n \in \NN, \, u \in \O_X^* \} \ee that we would normally use, but it is ``much bigger" because $\N_{U/X}$ will contain functions with zero Taylor series at the origin, and such functions cannot be in $\M_X$ because the $n^{\rm th}$ derivative of $x^n u$ at the origin is $n! u(0) \neq 0$.  In fact, one can show that the stalk at the origin $\N_{U/X,0}$ defines a log structure on the ring $\O_{X,0}$ of germs of smooth functions on $\RR$ at the origin which is not finitely generated (not of the form $P^a$ for any monoid homomorphism $P \to \O_{X,0}$ with $P$ finitely generated).  For reasons of this nature, we will never make use of the log structures $\N_{U/X}$ in this paper.

Our definition of ``log smooth" (Definition~\ref{defn:logsmooth}) is a rather subtle variation on Kato's chart criterion for log smoothness.  In log algebraic geometry one begins by defining \emph{formally log smooth} maps to be the class of maps with the (local) right lifting property with respect to strict square zero closed embeddings of fine log schemes.  One then defines \emph{log smooth} maps to be formally log smooth maps of fine log schemes where the underlying map of schemes is of locally finite presentation.   This definition of ``log smooth" has the advantage that it is clearly closed under composition and base change.   Kato then proves that this notion of log smooth map is equivalent to another criterion in terms of charts, much like our definition.  Of course, since the two notions are equivalent, one could define log smooth maps in terms of the chart criterion, but then it is rather challenging to prove that such maps are closed under composition and base change.  For differentiable spaces, one cannot define the expected notion of smooth maps (\S\ref{section:smoothmorphisms}) in terms of differential or infinitesimal lifting criteria, so one cannot define log smoothness in this manner either, thus we have resorted to working directly with the chart criterion, which is not particularly easy.  For example, one thing that concerned us early on in this work is the following subtlety in the proof of the chart criterion: in log algebraic geometry, one always works \'etale locally, and, \'etale locally, one can always extract $n^{\rm th}$ roots of units in the structure sheaf as long as $n$ is invertible.  There is no analogous statement for differentiable spaces (there is no hope of extracting a square root of a negative unit).  In particular, there are various points in the proof of the chart criterion where Kato performs this \'etale local unit extraction, so we were concerned that it might be necessary to place various additional hypotheses on, say, the charts we would be willing to consider in the definition of log smooth.  In the end, we got everything to work without any such restrictions, but this somewhat explains the delicacy of the proof of Theorem~\ref{thm:logsmooth}.

\section{Monoidal spaces II} \label{section:monoidalspacesII}

\subsection{Proj} \label{section:Proj} Let $P = \coprod_{n} P_n$ be an $\NN$ \emph{graded monoid}.  The coproduct here is the direct sum of $P_0$ modules, which is the disjoint union on the level of sets (\S\ref{section:modules}).  We require $p+q \in P_{m+n}$ for $p \in P_m$, $q \in P_n$.  If $p \in P_n$, then we say $p$ is of \emph{degree} $n$.  Observe that an $\NN$ grading on a monoid $P$ is the same thing as a monoid homomorphism $P \to \NN$.  By convention, we agree that all $\NN$ graded monoids are generated as monoids under $P_0$ by $P_1$.  This will be clear in the applications (blowup) that we have in mind.  

From $P$, we can form a locally monoidal space $\Proj P$, equipped with an $\LMS$ map $\Proj P \to \Spec P_0$, in the same way one defines the locally ringed space $\Proj A$ associated to a graded ring (\cite[Page~76]{H}).  Points of $\Proj P$ are prime ideals\footnote{There is no notion of ``homogeneous" for ideals in a graded monoid.  All elements of $P$ and ideals of $P$ are of course ``homogeneous" in the naive sense of being contained in one of the $P_n$.} $\p \subseteq P$ not containing the \emph{irrelevant prime} $P_{>0} := P_1 \coprod P_2 \coprod \cdots$.  Basic open sets in $\Proj P$ are given by \bne{Up} U_p := \{ \p \in \Proj P : p \notin \p \}, \ene where $p \in P$.  The structure sheaf $\M_X$ of $X = \Proj P$ is defined so that, for an open subset $V \subseteq \Proj P$, a section of $\M_X(V)$ is an element $$ s = (s(\p)) \in \prod_{\p \in V} P_{(\p)} $$ satisfying an evident local consistency condition.  Here $P_{(\p)}$ denotes the monoid of degree zero elements in the $\ZZ$ graded localized monoid $P_{\p}$.  Formation of $\Proj P$ is contravariantly functorial in the graded monoid $P$.  

For a monoid $P$, let $P \oplus \NN$ be the graded monoid with the ``obvious" grading \be P \oplus \NN & = & \coprod_n P \times \{ n \} \ee (the grading corresponding to projection on the second factor).  Suppose $\p \subseteq P \oplus \NN$ is a prime not containing the irrelevant prime.  Then it is easy to check that $\p = \q \times \NN$ where $\q = \{ p \in P : (p,0) \in \p \}$.  As in the case of graded rings, we have a natural isomorphism \be \Proj (P \oplus \NN) & = & \Spec P \ee of locally monoidal spaces.  For $i \in P_1$, there is a natural map of graded monoids $P \to P_{(i)} \oplus \NN$ given by $p \mapsto ([p,ni],n)$ for $p \in P_n$.  Here $P_{(i)}$ is the monoid of degree zero elements in the $\ZZ$-graded localized monoid $P_i = (\NN i)^{-1}P$.  Explicitly, elements of $P_{(i)}$ are equivalence classes $[p,ni]$ of pairs with $p \in P_n$, where $(p,ni) \sim (q,mi)$ iff \be p+(m+a)i & = & q+(n+a)i \ee for some $a \in \NN$.  If $P$ is integral, then $P_{(i)}$ can be viewed as the submonoid of $P^{\rm gp}$ consisting of elements of the form $p-ni$ with $p \in P_n$.  If $P$ is fine and $p_1,\dots,p_k$ are generators for $P$, then the elements $p_j - n_j i$, where $n_j$ is the degree of $p_j$, generate $P_{(i)}$, thus we see that $P_{(i)}$ is fine.  As in the case of graded rings, the induced $\LMS$ morphisms \bne{coverofProj}  \Spec P_{(i)} = \Proj (P_{(i)} \oplus \NN) & \to & \Proj P \ene are the inclusions of the open subspaces $U_i$; these cover $\Proj P$ as $i$ runs through a set of generators for $P_1$ as a $P_0$ module.  (Because of our convention, any prime ideal of $P$ not containing $P_{>0}$ fails to contain $i$ for some $i \in P_1$.)  In particular, we see that $\Proj P$ is a fan in the sense of Definition~\ref{defn:fan}, fine if $P$ is fine.

If $h : Q \to P$ is a map of graded monoids and $q \in Q$, then it is clear from definition \eqref{Up} that $(\Proj h)^{-1}(U_q) = U_{h(q)}$.  That is, the diagram $$ \xym{ \Proj P \ar[d] \ar[r] & \Proj Q \ar[d] \\ \Spec P_{(f(q))} \ar[r] & \Spec Q_{(q)} } $$ is cartesian, so the map $h$ is ``affine".  In particular, if $h$ is surjective, then so is $Q_q \to P_{f(q)}$ and so is the degree zero part of this map, so, locally on the base, $\Proj h$ is given by $\Spec$ of a surjective map of monoids.

\begin{example} \label{example:Pn} In Example~\ref{example:freesymmetricmonoid}, we encountered the $\NN$-graded monoid $\NN^S$ of functions $f : S \to \NN$ associated to a finite set $S$ with grading $|f| := \sum_{s \in S} f(s)$.  Taking $S = \{ 0, \dots, n \}$, we can define the fan $\PP^n$ using the Proj construction via the formula \be \PP^n & := & \Proj (\NN^S). \ee  The reader may which to review the $\Proj$ construction by showing that $\PP^n$ can be built by gluing $n+1$ copies of $\AA^n = \Spec \NN^n$ in ``the usual way." \end{example} 

\subsection{Blowup} \label{section:blowup} For any monoid $P$, we define the \emph{blowup} of an ideal $I \subseteq P$ to be the locally monoidal space $\Bl_I P$ over $\Spec P$ given by $\Proj$ of the graded \emph{Rees monoid} \bne{Reesmonoid} R & := & P \coprod I \coprod I^2 \coprod \cdots. \ene Here $I^n$ is the ideal of $P$ consisting of elements of $P$ of the form $i_1+\cdots+i_n$ where $i_1,\dots,i_n \in I$.  When $P$ is integral and $i \in I = R_1$,  $R_{(i)}$ can be viewed as the submonoid of $P^{\rm gp}$ consisting of elements of the form $t-ni$, where $t \in I^n$ and $n \in \NN$, and the structure map $P \to (R_i)_0$ is just the natural inclusion $p \mapsto p-0i$.  Since $I$ is finitely generated when $P$ is fine (\S\ref{section:idealsandfaces}), it is clear that $R$ is fine when $P$ is fine, hence $\Bl_I P$ is a fine fan covered by finitely many opens $\Spec R_{(i)}$.  The structure map $P \to R_{(i)}$ is an injective map of monoids inducing an isomorphism on associated groups.  We have proved:

\begin{prop} \label{prop:blowup} For a fine monoid $P$ and an ideal $I \subseteq P$, the blowup $\Bl_I P$ is a fine fan over $\Spec P$ which can be covered by finitely many opens $\Spec Q$ such that $P \to Q$ is an injective map of fine monoids inducing an isomorphism on associated groups. \end{prop}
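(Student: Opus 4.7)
The plan is to assemble the pieces already established in the discussion of $\Proj$ and the Rees monoid just preceding the statement, rather than to prove anything substantively new; everything reduces to verifying three things about $R$ and its affine charts $R_{(i)}$.

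First I would construct $R$ as in \eqref{Reesmonoid} with the grading $R_n := I^n$ (with $I^0 := P$) and show $R$ is fine. Fineness breaks into two claims: finite generation and integrality. For finite generation, use that $I$ is finitely generated as a $P$-module when $P$ is finitely generated (this was observed in \S\ref{section:modules} via the Noetherianity of $\ZZ[P]$ and the functor $\ZZ[\slot]$). Choose a finite set $i_1,\dots,i_n \in I = R_1$ generating $I$ as a $P$-module; then any element of $I^m = R_m$ is a sum of $m$ elements each of the form $p + i_k$, so $R$ is generated as a monoid by $i_1,\dots,i_n$ together with a finite set of generators for $P = R_0$. For integrality, note that the degree map identifies $R$ with a submonoid of the integral monoid $P^{\rm gp} \oplus \ZZ$ via $r \in R_m \mapsto (r,m)$ (using integrality of $P$), so $R$ is integral.

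Second, I would apply the $\Proj$ discussion of \S\ref{section:Proj} verbatim. The opens $U_i = \Spec R_{(i)}$ cover $\Bl_I P = \Proj R$ as $i$ ranges over a generating set of $R_1 = I$ as a $P$-module, so the chosen $i_1,\dots,i_n$ give a finite affine cover by $\Spec R_{(i_k)}$. Each $R_{(i_k)}$ is fine by the explicit description given in \S\ref{section:Proj}: if $r_1,\dots,r_N$ generate $R$ with $r_j \in R_{m_j}$, then $r_j - m_j i_k$ generate $R_{(i_k)}$, and $R_{(i_k)}$ inherits integrality from $R$.

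Third, I would verify the properties of the structure map $P \to R_{(i_k)}$. Since $P = R_0$ and $R$ is integral, the map $P \to R_{(i_k)} \subseteq R^{\rm gp}_{i_k}$ is the inclusion $p \mapsto p - 0\cdot i_k$, which is injective. For the group claim, use that $R$ is integral to identify $R_{(i_k)}$ with the submonoid of $P^{\rm gp}$ consisting of elements $t - m i_k$ with $t \in I^m$, $m \geq 0$. The induced map $P^{\rm gp} \to R_{(i_k)}^{\rm gp}$ is then the identity on $P^{\rm gp}$: it is injective because $R_{(i_k)}^{\rm gp}$ sits inside $P^{\rm gp}$, and surjective because each generator $t - m i_k$ of $R_{(i_k)}$ already lies in $P^{\rm gp}$.

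The only step requiring any care is fineness of $R$, and even that is immediate once one remembers that ideals in fine monoids are finitely generated as modules; every other claim is either a quotation of the $\Proj$ discussion in \S\ref{section:Proj} or a direct computation inside $P^{\rm gp}$. I do not expect any serious obstacle.
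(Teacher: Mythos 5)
Your proposal is correct and follows the same route as the paper, which establishes the proposition precisely by the discussion preceding it: finite generation of $I$ as a $P$-module gives fineness of the Rees monoid $R$, the $\Proj$ discussion of \S\ref{section:Proj} gives the finite affine cover by the $\Spec R_{(i)}$, and the identification of $R_{(i)}$ with a submonoid of $P^{\rm gp}$ containing $P$ gives injectivity and the isomorphism on groupifications. The extra details you supply (integrality of $R$ via the embedding into $P^{\rm gp}\oplus\ZZ$, the explicit generators of $R$ and of $R_{(i)}$) are exactly the ones the paper leaves implicit.
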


\begin{example}  Consider the case where $P = \NN^m$ and $I = \NN^m \setminus \{ 0 \}$ is the unique maximal ideal of $P$.  Then \be I^n & = & \{ (a_1,\dots,a_m) \in P : \sum_{i=1}^m a_i \geq n  \} . \ee  Let $e_1,\dots,e_n \in P$ be the standard basis vectors.  For $i = 1, \dots, m$, the monoid $(R_{e_i})_0$ is the free submonoid of $P^{\rm gp} = \ZZ^m$ generated by $$ e_1-e_i, \dots, e_{i-1}-e_i, e_i, e_{i+1}-e_i, \dots, e_m-e_i, $$ and the monoid homomorphism $P \to (R_{e_i})_0$ is the natural inclusion.  This will ring a bell for anyone who has blown up the origin in $\AA^n$.  \end{example}

\subsection{Charts and coherence} \label{section:chartsandcoherence} Here we introduce various coherence conditions on locally monoidal spaces and sharp monoidal spaces.  

\begin{defn} \label{defn:monoidalspacechart} An $\LMS$ morphism $f : X \to Y$ is called \emph{strict} iff $f^\dagger : f^{-1} \M_Y \to \M_X$ is an isomorphism.  A strict morphism from $X \in \LMS$ to $(\Spec P,\M_P)$ for some monoid $P$, is called a \emph{chart} for $X$.  For $X \in \SMS$, a strict morphism $f : X \to (\Spec P,\ov{\M}_P)$ will also be called a \emph{chart}.  \end{defn}

\begin{defn} \label{defn:coherentmonoidalspace} A locally monoidal space or sharp monoidal space is called \emph{quasi-coherent} (resp.\ locally finite type, fine, fs, free, \dots) iff it locally admits a chart (resp.\ where the monoids $P$ can be taken finitely generated, fine, fs, free, \dots).  We denote the full subcategory of $\LMS$ (resp.\ $\SMS$) consisting of fine locally monoidal spaces (resp.\ fine sharp fans) by $\fLMS$ (resp.\ $\fSMS$). \end{defn}

If $(X,\M_X)$ is a quasi-coherent monoidal space (resp.\ locally finite type, \dots) then the sharpening $(X,\ov{\M}_X)$ is a quasi-coherent (resp.\ coherent, \dots) sharp monoidal space.  This is clear from the definitions and the fact (Lemmas~\ref{lem:integral}, \ref{lem:saturated}) that the sharpening of a finitely generated (resp.\ fine, fs, \dots) monoid is finitely generated (resp.\ fine, fs, \dots).

\begin{example} \label{example:LMSgroup} Suppose $X$ is a topological space equipped with a sheaf of (abelian) groups $\M_X$.  Since a group is a monoid, we can view $X=(X,\M_X)$ as a locally monoidal space.  First observe that \be \Hom_{\MS}(Y,X) & = & \Hom_{\LMS}(Y,X) \ee for any monoidal space $Y$ because all monoid homomorphisms out of groups are local (Definition~\ref{defn:local}).  Recall (Example~\ref{example:Specgroup}) that $\Spec A$ is a point when $A$ is a group.  It follows that $X$ is quasi-coherent iff the sheaf $\M_X$ is locally constant.  If this is the case, then $X$ is coherent iff it is fs iff the stalks of $\M_X$ are finitely generated.  Similarly, $X$ is a fan iff the space underlying $X$ is discrete. \end{example}

It might help to keep in mind the following extensions of Kato's analogy (\S\ref{section:fans}):

\begin{tabular}{rcl} monoids : rings & :: & monoidal spaces : ringed spaces \\ & :: & locally monoidal spaces : locally ringed spaces \\ & :: & fans : schemes \\ & :: & locally finite type fans : locally finite type schemes \\ & :: & quasi-coherent l.m.s. : ``quasi-coherent l.r.s." \end{tabular}

Note that ``quasi-coherent locally ringed space" has an obvious definition, but, to our knowledge, there has never been a study of such locally ringed spaces.  It might therefore seem strange to introduce the notion of, say, quasi-coherent locally monoidal spaces, but variants of this, particularly, say, ``fine sharp monoidal spaces," arise quite naturally in logarithmic geometry (c.f.\ \S\ref{section:logspaces}) and it is certainly worth studying them systematically, as we will soon see.

\subsection{Coherent sheaves} \label{section:coherentsheaves} For $X \in \MS$, an $\M_X$ \emph{module} is a sheaf $M$ on $X$ equipped with the data of an $\M_X(U)$ module structure on $M(U)$ for each open $U \subseteq X$ making the restriction maps $M(U) \to M(V)$ linear for $\M_X(U) \to \M_X(V)$ in the obvious sense (c.f.\ \cite[Page~109]{H}).  These modules form a category denoted $\Mod(\M_X)$, or just $\Mod(X)$.  The \emph{tensor product} of $M,N \in \Mod(X)$, denoted $M \otimes_{\M_X} N$, is defined to be the sheaf associated to the presheaf \be U & \mapsto & M(U) \otimes_{\M_X(U)} N(U). \ee  (See \S\ref{section:tensorproduct} for the tensor product of modules over a monoid.)  It carries a natural $\M_X$ module structure.

Given a monoid $P$ and a $P$ module $M$, one can form an $\M_P$ module $M^{\sim}$ on $(\Spec P,\M_P)$ in much the same way one would form the quasi-coherent sheaf $M^{\sim}$ on the affine scheme $\Spec A$ from a module $M$ over a ring $A$ (\cite[Page~110]{H}).  The fastest way to define $M^{\sim}$ is to set \be M^\sim & := & \underline{M} \otimes_{\underline{P}} \M_P, \ee where the underlines denote constant sheaves and we make implicit use of the map \eqref{tau}.  The stalk of $M^{\sim}$ at a prime $\p \in \Spec P$ with complementary face $F$ is given by the localization of $M$ at $F$ (\S\ref{section:tensorproduct}): \bne{stalksofMsim} M^\sim_{\p} & = & M \otimes_P \M_{P,\p} \\ \nonumber & = & M \otimes_P F^{-1} P \\ \nonumber & = & F^{-1} M. \ene

\begin{defn} \label{defn:coherentsheaf} For a quasi-coherent locally monoidal space $X$, $M \in \Mod(X)$ is called \emph{quasi-coherent} (or \emph{a quasi-coherent sheaf} on $X$) iff, locally on $X$, there is a chart $f : X \to \Spec P$ for $X$ and a $P$ module $N$ so that $M \cong f^{-1}(N^\sim)$ as an $f^{-1} \M_P \cong \M_X$ module.  We reserve the word \emph{coherent} for the case where $X$ is fine, $P$ can be taken fine, and $N$ can be taken finitely generated.  We then have an evident notion of \emph{coherent ideals} for a fine locally monoidal space. \end{defn}

\begin{example} \label{example:coherentsheaves} For any monoid $P$, the quasi-coherent sheaf $(P^{\rm gp})^{\sim}$ on $\Spec P$ is the constant sheaf $\underline{P^{\rm gp}}$.  Indeed, the natural map \be \underline{P^{\rm gp}} & \to & \underline{P^{\rm gp}} \otimes_{\underline{P}} \M_P \ee is an isomorphism because its stalk at a face $F$ of $\Spec P$ is the natural map \be P^{\rm gp} & \to & P^{\rm gp} \otimes_P F^{-1} P, \ee which is an isomorphism because $$ \xym{ P \ar[r] \ar[d] & F^{-1} P \ar[d] \\ P^{\rm gp} \ar@{=}[r] & P^{\rm gp} } $$ is a pushout diagram of monoids (equivalently, $P$ modules) in light of the universal property of localization. \end{example}

Given a locally monoidal space $X$ and $\M_X$ module $M$, we obtain an $\ov{\M}_X$ module $\ov{M}$ on the monoidal space $(X,\ov{\M}_X)$ obtained by sharpening $X$ by setting \bne{sharpeningmodule} \ov{M} & := & M \otimes_{\M_X} \ov{\M}_X. \ene  In particular, for a monoid $P$ and a $P$ module $M$, we obtain a module $\ov{M}^\sim$ on $\Spec P$.

\begin{defn} For a quasi-coherent sharp monoidal space $X$, $M \in \Mod(X)$ is called \emph{quasi-coherent} iff, locally on $X$, there is a chart $f : X \to (\Spec P,\ov{\M}_P)$ for $X$ and a $P$ module $N$ so that $M \cong f^{-1}(\ov{N}^\sim)$ as an $f^{-1} \ov{\M}_P \cong \ov{\M}_X$ module.  If $P$ can be taken fine and $N$ can be taken finitely generated, then $M$ will be called \emph{coherent}.  \end{defn}

It is clear from these definitions that $M \mapsto \ov{M}$ takes quasi-coherent sheaves on $X$ to quasi-coherent sheaves on the sharpening $\ov{X}$.

\begin{lem} \label{lem:coherentsheaves2} Let $X$ be a fine fan.  An $\M_X$ module $M$ is coherent (Definition~\ref{defn:coherentsheaf}) iff locally on $X$, there is a fine monoid $P$, a finitely generated $P$ module $N$, an \emph{isomorphism} $f :X \to \Spec P$, and an isomorphism $M \cong f^{-1}(N^\sim)$ of $f^{-1} \M_P \cong \M_X$ modules. \end{lem}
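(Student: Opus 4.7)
The ``if'' direction is immediate from Definition~\ref{defn:coherentsheaf}, since an isomorphism $f : X \to \Spec Q$ is in particular a chart. For the ``only if'' direction, the plan is to take a coherent sheaf datum, invoke Lemma~\ref{lem:strictfanmaps2} to upgrade the chart to a local isomorphism, then shrink to a basic affine open inside the target $\Spec P$ and verify that restricting $N^{\sim}$ to this basic open produces a sheaf of the desired form.

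In detail, fix $x \in X$. By Definition~\ref{defn:coherentsheaf} I can shrink $X$ to a neighborhood of $x$ on which there is a chart $f : X \to \Spec P$ with $P$ fine and an isomorphism $M \cong f^{-1}(N^{\sim})$ for some finitely generated $P$-module $N$. Being a chart means $f$ is strict, and since $X$ and $\Spec P$ are both locally finite type fans, Lemma~\ref{lem:strictfanmaps2} implies $f$ is locally an isomorphism. Shrinking $X$ again, I may assume $f$ is an open embedding onto an open subspace of $\Spec P$. By the discussion of smallest open neighborhoods in \S\ref{section:Spec} (or Lemma~\ref{lem:smallestngbd}), I may shrink once more so that $f$ is an isomorphism $f : X \xrightarrow{\sim} \Spec F^{-1}P$, where $F \subseteq P$ is the face complementary to the image $f(x)$. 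Set $Q := F^{-1}P$. By Lemma~\ref{lem:faces} the face $F$ is finitely generated, so $Q$ is finitely generated; it is integral as a submonoid of $P^{\rm gp}$; hence $Q$ is fine.

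It remains to rewrite $N^{\sim}|_{\Spec Q}$ as $(N')^{\sim}$ for some finitely generated $Q$-module $N'$. The natural candidate is $N' := N \otimes_P Q = F^{-1}N$, which is finitely generated over $Q$ because the images of any generating set for $N$ over $P$ serve as a generating set. Using the stalk formula \eqref{stalksofMsim}, for a prime $\p \in \Spec Q$ with complementary face $G$ of $Q$ (which automatically satisfies $F \subseteq G$), both $N^{\sim}|_{\Spec Q}$ and $(N')^{\sim}$ have stalk at $\p$ canonically isomorphic to $G^{-1}N$, the natural map being an isomorphism because $G^{-1}(F^{-1}N) = G^{-1}N$. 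Thus the evident map $(N')^{\sim} \to N^{\sim}|_{\Spec Q}$ is a stalkwise isomorphism, hence an isomorphism, and pulling back along the isomorphism $f$ gives the desired presentation of $M$.

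The only non-bookkeeping step is the stalk computation, which is essentially immediate from \eqref{stalksofMsim} together with the transitivity of localization $G^{-1}(F^{-1}N) = G^{-1}N$ when $F \subseteq G$; I expect no real obstacle, only care in tracking how a prime in $\Spec F^{-1}P$ corresponds to one in $\Spec P$ and how its complementary face transforms under localization.
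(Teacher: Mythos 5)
Your proof is correct and takes essentially the same route as the paper's, whose entire argument is to cite Lemma~\ref{lem:strictfanmaps2} and observe that a strict map of fine fans is a local isomorphism, ``so there is no difference.''  The only thing you add is the explicit verification that restriction to a basic open is compatible with $(\,\cdot\,)^\sim$, i.e.\ that $N^\sim|_{\Spec F^{-1}P} \cong (F^{-1}N)^\sim$ with $F^{-1}N$ still finitely generated, a point the paper leaves implicit.
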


\begin{proof} The condition in question is the same as in the definition of ``coherent," except that here ``strict map" is replaced by ``isomorphism."  In any case, the condition is local, but a strict map of fine fans is a local isomorphism anyway (Lemma~\ref{lem:strictfanmaps2}) so there is no difference. \end{proof}

\begin{lem} \label{lem:coherentsheaves} Let $P$ be a monoid, $M$ a $P$ module, $X := \Spec P$.  Then $\Gamma(X,M^\sim) = M$.  For any quasi-coherent sheaf $M$ on $X$, we have $M = \Gamma(X,M)^\sim$. \end{lem}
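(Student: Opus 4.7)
For the first statement, I would exploit the fact that the maximal ideal $\m_P = P \setminus P^*$ is contained in every nonempty open of $\Spec P$, so the global sections functor on $X = \Spec P$ coincides with the stalk functor at $\m_P$. Combining this observation with the stalk formula \eqref{stalksofMsim}, one computes
\[
\Gamma(X, M^\sim) \;=\; (M^\sim)_{\m_P} \;=\; (P^*)^{-1}M.
\]
The localization on the right is trivial: elements of $P^*$ are already units in $P$ and therefore act invertibly on any $P$-module $M$, so by the universal property of localization (\S\ref{section:monoidbasics}), the canonical map $M \to (P^*)^{-1}M$ is an isomorphism. This gives $\Gamma(X, M^\sim) = M$.

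For the second statement, I would first construct a natural comparison morphism. Given a quasi-coherent $M$ and $N := \Gamma(X, M)$, the restriction maps $N \to M(U)$ assemble into a $\underline{P}$-linear morphism $\underline{N} \to M$ of sheaves on $X$, where the $\underline{P}$-action on $M$ is induced by $\underline{P} \to \M_X$. By the universal property of the tensor product, this factors through an $\M_X$-linear map $\phi: N^\sim = \underline{N} \otimes_{\underline{P}} \M_P \to M$, which is what I must show is an isomorphism.

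The plan is to verify $\phi$ is an isomorphism on stalks, which reduces to showing that for each $\p \in X$ with complementary face $F$, the induced map $F^{-1}N \to M_\p$ (obtained by localizing the specialization map $N = M_{\m_P} \to M_\p$) is an isomorphism. I would refine the given open cover witnessing quasi-coherence to a cover of $X$ by basic opens $U_s = \Spec P_s$ on each of which $M|_{U_s}$ admits a description as $N_s^\sim$ for a $P_s$-module $N_s$; applying the first statement of the lemma with $P$ replaced by $P_s$ identifies $N_s$ with $\Gamma(U_s, M|_{U_s})$, and the restriction $N \to N_s$ must coincide with the natural map $N \to P_s \otimes_P N$ by naturality of $\sim$. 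Expressing $M_\p = \dirlim_{s \in F} M(U_s)$ as a colimit of basic-open sections and comparing with $F^{-1}N = \dirlim_{s \in F}(P_s \otimes_P N)$ then yields the desired identification.

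The main obstacle is the translation between the local description of $M|_{U_s}$ via an arbitrary strict chart $g: U_s \to \Spec Q$ (as furnished by the definition of quasi-coherence) and the canonical description as $N_s^\sim$ for some $P_s$-module $N_s$. This translation requires Lemma~\ref{lem:strictfanmaps} to recognize the strict $\LMS$ map $g: \Spec P_s \to \Spec Q$ as corresponding to $P_s$ being a localization of $Q$, combined with the compatibility of the functor $N \mapsto N^\sim$ with pullback along such localization maps (which in turn rests on the base change formula for $P$-modules from \S\ref{section:tensorproduct}). Once this reduction is in place, the stalkwise identification of $\phi_\p$ with the identity map on $F^{-1}N$ is essentially formal.
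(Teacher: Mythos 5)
Your proof of the first statement is exactly the paper's: the paper likewise observes that $\Gamma(X,\slot)$ is the stalk functor at the maximal ideal $\m_P$ and reads off $(P^*)^{-1}M=M$ from \eqref{stalksofMsim}; for the second statement the paper simply defers to the standard affine-scheme argument, and your stalkwise verification is a correct rendering of it. One remark that would shorten your second half considerably: since the only open neighborhood of the closed point $\m_P$ is $X$ itself, the chart witnessing quasi-coherence near $\m_P$ is automatically \emph{global}, so Lemma~\ref{lem:strictfanmaps} and compatibility of $\sim$ with localization give $M\cong (T^{-1}N')^{\sim}$ outright, and the first statement then identifies $T^{-1}N'$ with $\Gamma(X,M)$ without any covering or colimit argument.
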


\begin{proof} This can be proved in the same manner as the analogous statement for rings, though the first statement is even easier here:  The global section functor on $\Spec P$ is just the stalk functor at the maximal ideal $P \setminus P^*$ with complementary face $P^*$ (see \S\ref{section:Spec}), and it is clear from the description of the stalks of $M^\sim$ in \eqref{stalksofMsim} that this stalk is nothing but $M = (P^*)^{-1} M$. \end{proof}

\subsection{Relative Spec} \label{section:relativeSpec}  Let $X$ be a monoidal space, $f : \M_X \to Q$ a map of sheaves of monoids on $X$.  Let $f^* L_X$ be the inverse image of the local prime system of $X$ under the corresponding map $f : (X,Q) \to (X,\M_X)$ of monoidal spaces.  Explicitly, \be (f^* L_X)_x & := & \{ \p \in \Spec Q_x : f_x^{-1}(\p) = \m_x \}. \ee Set \be \Spec_X Q & := & (X,Q,f^*L_X)^{\rm loc} . \ee  It is clear the the construction of $(X,Q,f^*L_X) \in \PMS$ is contravariantly functorial in $Q \in \M_X / \Mon(X)$, so we obtain a functor \bne{SpecX} \Spec_X : (\M_X / \Mon(X))^{\rm op} & \to & \LMS/X \ene called the \emph{relative Spec functor}.  Note that $\Spec_X \M_X = X$ because localization retracts $L$.  The usual $\Spec$ functor is compatible with the relative one:

\begin{lem} \label{lem:relativeSpec} Let $h : P \to Q$ be a morphism of monoids and let $X := \Spec P$.  Then we have a natural isomorphism of locally monoidal spaces over $X$ \be \Spec_X (Q^\sim) & = & \Spec Q. \ee  \end{lem}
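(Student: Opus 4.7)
The strategy is to construct an explicit $\LMS$ morphism $\phi : \Spec Q \to \Spec_X(Q^\sim)$ over $X = \Spec P$, and then verify that it is an isomorphism by checking it on underlying spaces and on stalks of structure sheaves. The crux of the whole argument is the stalk computation
\begin{equation*}
(Q^\sim)_{\p} \;=\; Q \otimes_P F_{\p}^{-1}P \;=\; h(F_{\p})^{-1}Q,
\end{equation*}
where $F_{\p} = P \setminus \p$ is the face of $P$ at $\p \in \Spec P$, and where $Q$ is viewed as a $P$-module via $h$. This equality holds because tensoring over $P$ with $F_{\p}^{-1}P$ is the same as localizing at the image of $F_{\p}$, by the universal property of localization of $P$-modules (\S\ref{section:modules}).

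\textbf{Construction of $\phi$.}  By Theorem~\ref{thm:localization}, to give an $\LMS$ morphism $\Spec Q \to (X, Q^\sim, f^*L_X)^{\rm loc}$ is the same as to give a $\PMS$ morphism $(\Spec Q, L_{\Spec Q}) \to (X, Q^\sim, f^*L_X)$. For the underlying $\MS$ morphism, take $\Spec h : \Spec Q \to \Spec P = X$ on topological spaces, and on stalks of structure sheaves take the natural monoid map
\begin{equation*}
h(F_{\p})^{-1}Q \;\longrightarrow\; (Q \setminus \q)^{-1}Q, \qquad \p := h^{-1}(\q),
\end{equation*}
which is well defined because $h(F_{\p}) \subseteq Q \setminus \q$ (since $F_{\p} \cap h^{-1}(\q) = \emptyset$). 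The prime-system condition amounts to checking that, for each $\q \in \Spec Q$, the preimage of $\m_{\q}$ under $\M_{X,\p} = F_{\p}^{-1}P \to (Q^\sim)_{\p} = h(F_{\p})^{-1}Q$ is $\m_{\p}$; this is immediate from $(\Spec h)(\q) = h^{-1}(\q) = \p$.

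\textbf{Homeomorphism on underlying spaces.}  By definition, a point of $\Spec_X(Q^\sim)$ is a pair $(\p, \p')$ with $\p \in \Spec P$ and $\p' \in \Spec (Q^\sim)_{\p}$ such that the preimage of $\p'$ in $\M_{X,\p}$ is $\m_{\p}$. Primes of $h(F_{\p})^{-1}Q$ correspond to primes $\q \subseteq Q$ with $h^{-1}(\q) \subseteq \p$; the preimage-in-$\M_{X,\p}$ condition then forces $h^{-1}(\q) = \p$. This gives a set-theoretic bijection $\Spec Q \leftrightarrow \Spec_X(Q^\sim)$, under which $\phi$ is the identity. To upgrade this to a homeomorphism, note that global sections of $Q^\sim$ are computed by the stalk at the closed point, giving $Q^\sim(X) = Q$; more generally, sections on a basic open $U_p \subseteq X$ are $h(p)^{-1}Q$. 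A routine check then shows that the basic open $U_s$ of $\Spec_X(Q^\sim)$ cut out by $s = q - n h(p) \in h(p)^{-1}Q$ corresponds, under the bijection, to the basic open $V_{q + h(p)} \subseteq \Spec Q$, and conversely every $V_q$ arises from $q \in Q = Q^\sim(X)$ viewed as a global section.

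\textbf{Isomorphism on stalks and conclusion.}  Finally, the stalk of the structure sheaf of $\Spec_X(Q^\sim)$ at the point $(\p, \p')$ corresponding to $\q$ is the localization of $h(F_{\p})^{-1}Q$ at the face complementary to $\p'$, which is $(Q \setminus \q)/h(F_{\p})$. Since $h(F_{\p}) \subseteq Q \setminus \q$, this localization collapses to $(Q \setminus \q)^{-1}Q = \M_{Q,\q}$, and $\phi^\dagger$ is precisely the identity on this stalk. Hence $\phi$ is an $\LMS$ isomorphism, clearly natural in $h$ and compatible with the maps to $X$. The main obstacle throughout is not conceptual but rather the careful bookkeeping of faces, localizations, and pullbacks; once the stalk identity $(Q^\sim)_{\p} = h(F_{\p})^{-1}Q$ is established, all three verifications are elementary.
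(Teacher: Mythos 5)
Your proof is correct, but it takes a genuinely different route from the paper's. The paper's argument is purely formal: it observes that $(X,Q^\sim,(h^\sim)^*L_X)$ is the fibered product of $(X,\M_P,L_X) \to (*,P,T) \leftarrow (*,Q,T)$ in $\PMS$, then invokes the fact that localization is a right adjoint (Theorem~\ref{thm:localization}) so the square stays cartesian after localizing, and finally uses Lemma~\ref{lem:Spec} to identify the localization of the right-hand arrow with the identity of $\Spec P$ --- whence the left-hand arrow localizes to the desired isomorphism $\Spec Q \cong \Spec_X(Q^\sim)$. You instead build the map by hand and verify it is an isomorphism on points, on topologies, and on stalks. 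Your key identity $(Q^\sim)_\p = Q\otimes_P F_\p^{-1}P = h(F_\p)^{-1}Q$ (which follows from \eqref{stalksofMsim} together with the pushout square \eqref{localizationpushout}) is precisely the stalk-level shadow of the paper's cartesian square, and all of your subsequent checks --- the correspondence between points of $\Spec_X(Q^\sim)$ and primes $\q \subseteq Q$ with $h^{-1}(\q)=\p$, the matching of basic opens $U_s \leftrightarrow V_{q+h(p)}$, and the collapse of the iterated localization to $(Q\setminus\q)^{-1}Q$ --- are accurate. What the paper's approach buys is brevity and immunity from bookkeeping, at the cost of leaning on the $\PMS$ machinery; what yours buys is a completely explicit description of the isomorphism (on points, opens, and stalks), which is the kind of concrete information one actually uses when working with $\Spec_X$ later. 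Both are legitimate proofs of the lemma.
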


\begin{proof} It is clear from the construction of inverse limits in $\PMS$ (\S\ref{section:inverselimits}) and the definition of $Q^\sim$ (\S\ref{section:coherentsheaves}) that $$ \xym{ (X,Q^\sim, (h^\sim)^* L_X ) \ar[r] \ar[d] & (X,\M_P,L_X) \ar[d] \\ (*,Q,T) \ar[r] & (*,P,T) } $$ is cartesian in $\PMS$.  Since localization commutes with inverse limits (it is a right adjoint), the diagram stays cartesian after localizing, in which case the right vertical arrow becomes an isomorphism (Lemma~\ref{lem:Spec}), hence the localization of the left vertical arrow furnishes the desired isomorphism. \end{proof}

\begin{prop} \label{prop:relativeSpecadjointness} The relative Spec functor \eqref{SpecX} is right adjoint to the functor \be \LMS/X & \to & (\M_X / \Mon(X))^{\rm op} \\ (f : Y \to X) & \mapsto & (f^\dagger : \M_X \to f_* \M_Y). \ee \end{prop}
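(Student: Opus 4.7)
The plan is to establish the adjunction by chasing the definition of $\Spec_X Q$ through Theorem~\ref{thm:localization} and the standard $(\bar f^{-1}, \bar f_*)$ adjunction for sheaves. Fix $Y \in \LMS/X$ with structure morphism $f : Y \to X$ having underlying continuous map $\bar f : Y \to X$. By Theorem~\ref{thm:localization} applied to the defining presentation $\Spec_X Q = (X, Q, f^*L_X)^{\rm loc}$, together with the fact that $L_Y$ makes $Y$ an object of $\PMS$ corresponding to itself under the localization/$L$ adjunction, I would write
\[
\Hom_{\LMS}(Y, \Spec_X Q) \;=\; \Hom_{\PMS}((Y,L_Y),\,(X,Q,f^*L_X)).
\]
A $\PMS$-morphism on the right is the data of an $\MS$-morphism $g : Y \to (X,Q)$---that is, a continuous $\bar g : Y \to X$ and a map of sheaves $g^\dagger : \bar g^{-1} Q \to \M_Y$---satisfying the prime-system condition that $(g^\dagger_y)^{-1}(\m_y) \in (f^*L_X)_{\bar g(y)}$ for every $y \in Y$.

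Next I would impose the ``over $X$'' condition. The structure map $\pi : \Spec_X Q \to X$ is the composition of $\tau : (X,Q,f^*L_X)^{\rm loc} \to (X,Q)$ with the $\MS$-morphism $(X,Q) \to (X,\M_X)$ given on structure sheaves by $f : \M_X \to Q$; so on underlying spaces $\pi$ is given by the first projection. Demanding $\pi g = f$ as $\LMS$-morphisms forces $\bar g = \bar f$, and on structure sheaves forces the diagram
\[
\xymatrix{ \bar f^{-1} \M_X \ar[r]^{\bar f^{-1} f} \ar[rd]_{f^\dagger} & \bar f^{-1} Q \ar[d]^{g^\dagger} \\ & \M_Y }
\]
to commute. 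Applying the $(\bar f^{-1}, \bar f_*)$ adjunction, giving $g^\dagger : \bar f^{-1} Q \to \M_Y$ is the same as giving $\tilde g : Q \to \bar f_* \M_Y = f_* \M_Y$; and the above commutativity is exactly the assertion that $\tilde g$ is a morphism in $\M_X/\Mon(X)$ from $Q$ to $f_*\M_Y$, where $f_* \M_Y$ inherits its $\M_X$-algebra structure from $f^\dagger$ via adjunction.

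It remains to check that the prime-system condition is \emph{automatic} under the ``over $X$'' requirement; this is the one step with any content. For $y \in Y$, the composition
\[
\M_{X,\bar f(y)} \xrightarrow{\;f_{\bar f(y)}\;} Q_{\bar f(y)} \xrightarrow{\;g^\dagger_y\;} \M_{Y,y}
\]
equals $f^\dagger_y$ by the commutativity established above. Since $f \in \LMS$, $f^\dagger_y$ is local, i.e.\ $(f^\dagger_y)^{-1}(\m_y) = \m_{\bar f(y)}$. Taking preimages through the factorization gives $(f_{\bar f(y)})^{-1}\bigl( (g^\dagger_y)^{-1}(\m_y)\bigr) = \m_{\bar f(y)}$, which is precisely the statement that $(g^\dagger_y)^{-1}(\m_y) \in (f^*L_X)_{\bar f(y)}$. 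Thus every $\PMS$-morphism over $X$ automatically satisfies the prime-system condition. Assembling these steps produces a bijection
\[
\Hom_{\LMS/X}(Y, \Spec_X Q) \;=\; \Hom_{\M_X/\Mon(X)}(Q, f_*\M_Y),
\]
and naturality in $Y$ and $Q$ is formal: naturality in $Y$ follows from functoriality of $\bar f^{-1} \dashv \bar f_*$ and of the localization adjunction, while naturality in $Q$ follows from the functoriality of $\Spec_X$ recorded in the construction of \eqref{SpecX}. The main (only) subtle step is the automatic verification of the prime-system condition in the third paragraph; everything else is bookkeeping with previously established adjunctions.
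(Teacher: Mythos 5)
Your proof is correct and follows essentially the same route as the paper's: both reduce the statement to the localization adjunction of Theorem~\ref{thm:localization} combined with the $(f^{-1},f_*)$ adjunction for sheaves of monoids, the only substantive point being that the prime-system condition is automatic for morphisms over $X$ because the stalks of $f^\dagger$ are local. You spell out that last verification explicitly (the paper leaves it implicit in passing from the $\MS$ diagram to the $\PMS$ diagram); the only blemish is the double use of the letter $f$ for both the structure morphism $Y \to X$ and the sheaf map $\M_X \to Q$, which the paper avoids by renaming the latter $k$.
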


\begin{proof} We want to establish a natural bijection \be \Hom_{\LMS/X}(f:Y \to X, \Spec_X P) & = & \Hom_{\M_X/\Mon(X)}(k : \M_X \to P,f_* \M_Y) . \ee  By the adjointness $(f^{-1},f_*)$, an element $g \in \Hom_{\M_X/\Mon(X)}(k : \M_X \to P,f_* \M_Y)$ is the same thing as a completion of $$ \xym{ & \ar@{.>}[ld]_g (Y,\M_Y) \ar[d]^f \\ (X,P) \ar[r]^k & (X,\M_X) } $$ in $\MS$, which is the same thing as a completion $g$ of the solid diagram $$ \xym{ \Spec_X P = (X,P,k^*L_X)^{\rm loc} \ar[d]_{\tau} & \ar@{.>}[l]_-h \ar@{.>}[ld]_g (Y,\M_Y,L_Y) \ar[d]^{L(f)} \\ (X,P,k^*L_X) & \ar[l]^k (X,\M_X,L_X) } $$ in $\PMS$.  By the universal property of localization (Theorem~\ref{thm:localization}) and the fact that $L(f)^{\rm loc} = f$ (same theorem), the map $g \mapsto g^{\rm loc}$ yields the desired bijection, with inverse $h \mapsto \tau h$. \end{proof}

\begin{rem} The analog of Proposition~\ref{prop:relativeSpecadjointness} also holds for locally \emph{ringed} spaces, by the same proof, though this statement is curiously absent in \cite{loc}. \end{rem}

\begin{cor} The functor \eqref{SpecX} preserves inverse limits, so that, in particular, \be \Spec_X (P \otimes_{\M_X} Q) & = & (\Spec_X P) \times_X (\Spec_X Q). \ee \end{cor}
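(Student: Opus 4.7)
The plan is to deduce this corollary as a purely formal consequence of the adjunction established in Proposition~\ref{prop:relativeSpecadjointness}, using the standard categorical fact that right adjoints preserve all inverse limits that exist in the source category. Since $\Spec_X$ was just exhibited as the right adjoint to the functor $(f : Y \to X) \mapsto (f^\dagger : \M_X \to f_* \M_Y)$ from $\LMS/X$ to $(\M_X/\Mon(X))^{\op}$, the first sentence of the corollary follows immediately, provided we interpret ``inverse limit'' in $(\M_X/\Mon(X))^{\op}$ correctly, i.e.\ as a direct limit in $\M_X/\Mon(X)$.

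For the second, more explicit assertion, I would first recall that the tensor product $P \otimes_{\M_X} Q$ of sheaves of monoids under $\M_X$ is, by construction (and by the universal property of the tensor product of modules over a monoid from \S\ref{section:tensorproduct} applied sheafwise), the pushout of $P \leftarrow \M_X \rightarrow Q$ in $\Mon(X)$, equivalently the coproduct of the two objects $(\M_X \to P)$ and $(\M_X \to Q)$ in the slice category $\M_X/\Mon(X)$. Passing to the opposite category, this coproduct becomes a product in $(\M_X/\Mon(X))^{\op}$.

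Applying the right adjoint $\Spec_X$ to this product, and using that right adjoints preserve products, we obtain that $\Spec_X(P \otimes_{\M_X} Q)$ is the product of $\Spec_X P$ and $\Spec_X Q$ in the category $\LMS/X$. But a product in a slice category $\LMS/X$ is, by definition, a fibered product over $X$ in $\LMS$, which exists by Theorem~\ref{thm:LMSinverselimits}. Hence $\Spec_X(P \otimes_{\M_X} Q) = (\Spec_X P) \times_X (\Spec_X Q)$, as claimed.

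There is no real obstacle here: the entire argument is abstract nonsense, and the only thing to be careful about is the direction of arrows when passing between $\M_X/\Mon(X)$ and its opposite. One could alternatively give a more ``hands-on'' proof by unraveling the construction of $(X,P,k^*L_X)^{\op}$ and using that localization commutes with inverse limits (Theorem~\ref{thm:localization}), much as in the proof of Lemma~\ref{lem:relativeSpec}, but the adjoint-functor argument is shorter and cleaner, and is the natural way to present the result.
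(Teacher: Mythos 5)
Your proof is correct and is exactly the argument the paper intends: the corollary is stated without proof precisely because it is the formal consequence of Proposition~\ref{prop:relativeSpecadjointness} (right adjoints preserve inverse limits), with the displayed formula being the special case of the coproduct $P \otimes_{\M_X} Q$ in $\M_X/\Mon(X)$ becoming a product in the opposite category and hence a fibered product over $X$ in $\LMS/X$. (Minor typo: in your closing remark you wrote $(X,P,k^*L_X)^{\op}$ where you meant $(X,P,k^*L_X)^{\rm loc}$.)
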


\begin{cor} \label{cor:pullbackSpec} Let $Y$ be a locally monoidal space, $\M_Y \to P$ a map of sheaves of monoids on $Y$, $f : X \to Y$ an $\LMS$ morphism, $f^* P = f^{-1} P \otimes_{f^{-1} \M_Y} \M_X$ the pullback of $P$.  Then the $\LMS$ diagram $$ \xym{ \Spec_X (f^*P) \ar[d]_{\tau} \ar[r] & \Spec_Y P \ar[d]^\tau \\ X \ar[r]^f & Y } $$ is cartesian. \end{cor}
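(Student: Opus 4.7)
The plan is to verify the universal property of the pullback $X \times_Y \Spec_Y P$ by applying the adjunction of Proposition~\ref{prop:relativeSpecadjointness} twice and using the pushout description of $f^*P$. Concretely, for any $\LMS$ morphism $g : T \to X$, I will chase the identification
\begin{eqnarray*}
 \Hom_{\LMS/X}(T,\Spec_X f^*P) & = & \Hom_{\M_X/\Mon(X)}(f^*P,\,g_*\M_T) \\
 & = & \Hom_{f^{-1}\M_Y/\Mon(X)}(f^{-1}P,\,g_*\M_T) \\
 & = & \Hom_{\M_Y/\Mon(Y)}(P,\,(fg)_*\M_T) \\
 & = & \Hom_{\LMS/Y}(T,\Spec_Y P),
\end{eqnarray*}
and then observe that the composite bijection sends a map $T \to \Spec_X f^*P$ to the composition $T \to \Spec_X f^*P \to \Spec_Y P$ (by the naturality of the adjunctions), so, by Yoneda, the square is cartesian.

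First I would establish the top equality as an instance of Proposition~\ref{prop:relativeSpecadjointness} applied to the space $X$ and the map $\M_X \to f^*P$. Next I would expand the definition $f^*P = f^{-1}P \otimes_{f^{-1}\M_Y} \M_X$ (really a pushout of sheaves of monoids on $X$ under $f^{-1}\M_Y$, computed via the tensor product of \S\ref{section:tensorproduct}) to get the second equality: a monoid homomorphism out of a pushout $A \oplus_C B$ under $B$ is exactly a homomorphism out of $A$ under $C$ that is compatible with the given map $B \to$ target, and here $g_*\M_T$ is a sheaf of monoids under $f^{-1}\M_Y$ via $f^{-1}\M_Y \to \M_X \xrightarrow{g^\dagger} g_*\M_T$. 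Then I would use the adjunction $(f^{-1},f_*)$ (at the level of sheaves of monoids, in the categories under $\M_Y$ and $f^{-1}\M_Y$ respectively) to obtain the third equality, together with $f_*g_* = (fg)_*$. Finally, another application of Proposition~\ref{prop:relativeSpecadjointness}, this time to $Y$ and $\M_Y \to P$, gives the fourth equality.

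To finish, I would check that under the composite bijection a morphism $h : T \to \Spec_X f^*P$ over $X$ is sent to $\tau_Y^P \circ h' : T \to \Spec_Y P$ over $Y$, where $h'$ is the morphism of topological spaces / locally monoidal spaces obtained from the canonical map $\Spec_X f^*P \to \Spec_Y P$ induced by the diagram. This is a direct unwinding of the units and counits of the two adjunctions and the naturality of the tensor product; I do not expect any surprise here. The main obstacle, and the only step requiring real care, is the middle identification with the pushout: one must check that ``under $\M_X$'' on the left translates correctly, via the pushout structure of $f^*P$, into ``under $f^{-1}\M_Y$ with the distinguished compatibility with $\M_X \to g_*\M_T$'' on the right, so that the subsequent $(f^{-1},f_*)$ adjunction lands in morphisms under $\M_Y$ and not merely in morphisms of sheaves of monoids. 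Once this bookkeeping is done, Yoneda yields the cartesian square.
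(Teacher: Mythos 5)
Your proposal is correct and is essentially the paper's own proof run in the opposite direction: the paper starts from $\Hom_{\LMS/Y}(T,\Spec_Y P)$ and converts it, via Proposition~\ref{prop:relativeSpecadjointness}, the $(f^{-1},f_*)$ adjunction, and the universal property of the tensor product (pushout), into $\Hom_{\LMS/X}(T,\Spec_X f^*P)$, using exactly the same three ingredients as your chain of bijections. The only point you flag as delicate (tracking the ``under'' structures through the pushout and the adjunction) is handled the same way in the paper, so no further comment is needed.
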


\begin{proof}  We want to show that for each $\LMS$ morphism $g : Z \to X$, the set of $\LMS$ morphisms $h : Z \to \Spec_Y P$ with $fg=\tau h$ is in bijective correspondence with the set of $\LMS /X$ morphisms $k : Z \to \Spec_X (f^*P)$.  By the Proposition, the set of such $h$ is bijective with the set of completions $$ \xym{ f_* \M_X \ar[r] & f_*g_* \M_Z \\ \M_Y \ar[u] \ar[r] & P \ar@{.>}[u]^h } $$ in $\Mon(Y)$, which, by the adjointness $(f^{-1},f_*)$ is bijective with the set of completions  $$ \xym{  \M_X \ar[r] & g_* \M_Z \\ f^{-1} \M_Y \ar[u] \ar[r] & f^{-1} P \ar@{.>}[u]^h } $$ in $\Mon(X)$, which, by the universal property of tensor product, is bijective with the set of maps $h : f^* P \to g_* \M_Z$, which, by the Proposition, is bijective with the set of such $k$. \end{proof}

The relative Spec construction is compatible with sharpening in the following sense:

\begin{cor} \label{cor:sharpeningSpec} Let $X$ be a locally monoidal space, $\M_X \to P$ a map of sheaves of monoids on $X$.  Then the $\LMS$ diagram $$ \xym{ \Spec_{\ov{X}} \ov{P} \ar[d] \ar[r] & \Spec_X P \ar[d] \\ \ov{X} \ar[r] & X } $$ is cartesian and there is a natural $\SMS$ isomorphism \be \ov{ \Spec_{\ov{X}} \ov{P} } & = & \ov{ \Spec_X P }. \ee \end{cor}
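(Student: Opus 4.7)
The plan is to reduce the first assertion to Corollary~\ref{cor:pullbackSpec} applied to the sharpening morphism $\sigma : \ov{X} \to X$, then to identify the pullback sheaf $\sigma^*P$ with $\ov{P}$ after passing to $\Spec$. First I would note that $\sigma$ is an $\LMS$ morphism: it is the identity on the underlying topological space, and $\sigma^\dagger = (\M_X \to \ov{\M}_X)$ is the sharpening map, which is a benign (in particular local) map of sheaves of monoids by Lemma~\ref{lem:benign} and the discussion before it. Corollary~\ref{cor:pullbackSpec} then yields a cartesian square
\[
\xym{ \Spec_{\ov{X}} \sigma^*P \ar[d] \ar[r] & \Spec_X P \ar[d] \\ \ov{X} \ar[r]^\sigma & X, }
\]
where $\sigma^*P = P \otimes_{\M_X} \ov{\M}_X$, so the first claim reduces to producing a natural $\LMS$-isomorphism $\Spec_{\ov{X}} \ov{P} \cong \Spec_{\ov{X}} \sigma^*P$ over $\ov{X}$.

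The key step, and where most of the work lies, is a relativization of Lemma~\ref{lem:sharpeninghomeo}: for any sheaf of monoids $Q$ on $Y \in \LMS$ equipped with a structure map $\M_Y \to Q$, the natural surjection $Q \to \ov{Q} := Q/Q^*$ (which automatically factors through $\ov{\M}_Y$, and is thus a morphism in $\ov{\M}_Y/\Mon(Y)$) induces an $\LMS$-isomorphism
\[
\Spec_{\ov{Y}} \ov{Q} \;\longrightarrow\; \Spec_{Y} Q.
\]
This is established by working in the construction of \S\ref{section:relativeSpec}: the underlying topological spaces are given stalk-wise by primes of $Q_y$ (resp.\ $\ov{Q}_y$) whose preimage in $\M_{Y,y}$ (resp.\ $\ov{\M}_{Y,y}$) is $\m_y$ (resp.\ $\ov{\m}_y$), and Lemma~\ref{lem:sharpeninghomeo} gives a bijection of such primes under which the two local conditions correspond; the basic opens $U_s$ match up after replacing $s$ with its image in $\ov{Q}$, and the structure sheaves agree because localizing $Q_y$ at a prime $\p$ and then sharpening gives the same result as sharpening first and then localizing at the corresponding prime of $\ov{Q}_y$ (this is a small stalk-level computation using that sharpening commutes with localization at a face, as in Lemma~\ref{lem:sharpening}).

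Applying this with $Y = \ov{X}$ and $Q = \sigma^*P$ gives $\Spec_{\ov{X}} \ov{\sigma^*P} \cong \Spec_{\ov{X}} \sigma^*P$. It then remains to identify $\ov{\sigma^*P}$ with $\ov{P}$. Stalk-wise, $(\sigma^*P)_x = P_x \otimes_{\M_{X,x}} \ov{\M}_{X,x} = P_x/\M_{X,x}^*\!\cdot P_x$, whose units are exactly the image of $P_x^*$, so sharpening collapses this to $P_x/P_x^* = \ov{P}_x$; the induced map $\sigma^*P \to \ov{P}$ is the universal map in $\ov{\M}_X/\Mon(\ov{X})$ to a sheaf of sharp monoids, and is therefore canonically the sharpening of $\sigma^*P$. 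Combining these isomorphisms with the cartesian square of the first paragraph yields the first assertion.

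For the second assertion, I would apply the sharpening functor $\LMS \to \SMS$ to the cartesian square just established. By \S\ref{section:monoidalspacedefinitions} sharpening is right adjoint to the inclusion $\SMS \hookrightarrow \LMS$, and by Theorem~\ref{thm:LMSinverselimits} it preserves inverse limits, so the sharpened square is still cartesian in $\SMS$. Since $\ov{\ov{X}} = \ov{X}$ (sharpening retracts itself), the bottom edge becomes the identity of $\ov{X}$, and the top edge becomes the required natural $\SMS$-isomorphism $\ov{\Spec_{\ov{X}} \ov{P}} \cong \ov{\Spec_X P}$. The main obstacle is the stalk-level bookkeeping in the relativized sharpening lemma above; everything else is formal from the adjunctions already in hand.
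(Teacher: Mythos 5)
Your first and last paragraphs are essentially the paper's own argument, but the middle of your proof rests on a misreading of the notation $\ov{P}$ and on a lemma that is false as stated. In this corollary $\ov{P}$ is the module-theoretic sharpening of \eqref{sharpeningmodule}, i.e.\ $\ov{P} = P \otimes_{\M_X} \ov{\M}_X = \sigma^*P$ (compare also the notation $\ov{M} := M \otimes_P \ov{P}$ of \S\ref{section:tensorproduct}), \emph{not} the sharpening $P/P^*$ of $P$ as a sheaf of monoids. With the correct reading, the cartesian square is literally the instance of Corollary~\ref{cor:pullbackSpec} you wrote down in your first paragraph and no further identification is needed; with your reading $\ov{P} = P/P^*$ the first assertion is actually false (take $X$ a point with $\M_X = 0$ and $P = \ZZ$: then $\Spec_X P = (\ast,\ZZ)$ while $\Spec_{\ov{X}}(P/P^*) = (\ast,0)$, and the square is not cartesian since the fibered product has stalk $\ZZ$).

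The ``relativized sharpening lemma'' you introduce to bridge this gap --- that $Q \to Q/Q^*$ induces an $\LMS$-\emph{isomorphism} $\Spec_{\ov{Y}}\ov{Q} \to \Spec_Y Q$ --- is false. It is a homeomorphism of underlying spaces (Lemma~\ref{lem:sharpeninghomeo} applied stalkwise), but the structure sheaves differ: the stalk of $\Spec_Y Q$ at $(y,z)$ is the localization $(Q_y)_z$, whereas the stalk of $\Spec_{\ov{Y}}(Q/Q^*)$ at the corresponding point is $(\ov{Q}_y)_{\ov{z}}$, and these agree only after a further sharpening (already for $\M_Y = 0$, $Q = \ZZ$, $z = \emptyset$ one gets $\ZZ$ versus $0$). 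The commutation you invoke, ``localize then sharpen equals sharpen then localize,'' also fails on the nose (for $Q = \NN^2$ and the face $F = \NN \times \{0\}$ one gets $\NN$ versus $\ZZ \times \NN$); what is true is that the two orders of operation agree after sharpening both results, and that is precisely the content of the \emph{second} assertion of the corollary, obtained as in your last paragraph by applying the limit-preserving sharpening functor to the cartesian square --- it is not available as an input to the proof of the first assertion.
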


\begin{proof}  Let $f : \ov{X} \to X$ be the sharpening map.  The $\ov{X}$ module $\ov{P}$ is nothing but $f^* \M_X$---c.f.\ \eqref{sharpeningmodule}, so the cartesian diagram is just the one from the previous corollary.  The natural isomorphism is obtained from this cartesian diagram by noting that the sharpening functor $\SMS \to \LMS$ is a right adjoint (to the inclusion the other way), so it preserves inverse limits. \end{proof}

\begin{thm} \label{thm:relativeSpec} Let $X$ be a fan (resp.\ quasi-coherent locally monoidal space), $Q$ a quasi-coherent monoid under $\M_X$.  Then $\Spec_X Q$ is a fan (resp.\ quasi-coherent locally monoidal space).  If $X$ is fine and $Q$ is integral and locally finite type, then $\Spec_X Q$ is fine.  \end{thm}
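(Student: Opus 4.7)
\medskip

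\noindent\textbf{Proof proposal for Theorem~\ref{thm:relativeSpec}.}

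The plan is to reduce everything to the affine case via the base-change compatibility of relative Spec (Corollary~\ref{cor:pullbackSpec}). The construction of $\Spec_X Q$ (\S\ref{section:relativeSpec}) is plainly local on $X$, so I may freely pass to open subspaces of $X$. By Corollary~\ref{cor:pullbackSpec}, if $U \subseteq X$ is open with inclusion $j \colon U \to X$, then
\[
\Spec_U(Q|_U) \;=\; \Spec_U(j^*Q) \;=\; (\Spec_X Q) \times_X U,
\]
so the open embedding $U \to X$ pulls back to an open embedding $\Spec_U(Q|_U) \to \Spec_X Q$, and the collection of these opens as $U$ ranges over a cover of $X$ covers $\Spec_X Q$.

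Using this, I reduce to the situation where there is a \emph{strict} (chart) morphism $f \colon X \to \Spec P$ together with a map of monoids $P \to N$ and an identification $Q \cong f^{-1}(N^\sim)$ as monoids under $\M_X$ (this is the local content of the quasi-coherence of $Q$; in the fan setting, I may arrange for $f$ to be an open embedding onto $\Spec P$ by Lemma~\ref{lem:strictfanmaps2}). Since $Q = f^*(N^\sim)$ (strictness of $f$ makes $f^{-1}$ and $f^*$ agree here), Corollary~\ref{cor:pullbackSpec} gives a cartesian square
\[
\xym{
\Spec_X Q \ar[d] \ar[r] & \Spec_{\Spec P}(N^\sim) \ar[d] \\
X \ar[r]^-f & \Spec P,
}
\]
and Lemma~\ref{lem:relativeSpec} identifies the upper right corner with $\Spec N$. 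Hence $\Spec_X Q = X \times_{\Spec P} \Spec N$ in $\LMS$.

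From this cartesian description the theorem follows by assembling existing results. In the fan case, $X$, $\Spec P$, and $\Spec N$ are fans, so the fibered product is a fan by Proposition~\ref{prop:Fansinverselimits}; varying the local cover, $\Spec_X Q$ is Zariski-locally a fan, hence a fan. In the quasi-coherent l.m.s.\ case, the projection $\Spec_X Q \to \Spec N$ is a base change of the strict map $f$, hence strict by Corollary~\ref{cor:benignstrictmorphisms}, and therefore serves as a chart for $\Spec_X Q$. For the final assertion, the assumption that $X$ is fine and $Q$ is integral and locally finite type allows me (after shrinking) to take $P$ fine and $N$ integral and finitely generated, i.e.\ fine; then $\Spec N$ is a fine affine fan, and the fibered product $X \times_{\Spec P} \Spec N$ is fine by the finiteness clause of Proposition~\ref{prop:Fansinverselimits}.

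The main obstacle I anticipate is bookkeeping in the very first step: ensuring that a chart $f \colon X \to \Spec P$ witnessing quasi-coherence of $X$ and a local presentation $N$ witnessing quasi-coherence of $Q$ can be chosen compatibly on a common open $U$ (so that $Q|_U \cong f^{-1} N^\sim$ for the \emph{same} $f$), and, in the fine case, ensuring that $N$ can simultaneously be taken fine. The first point follows by intersecting the two local domains and using Lemma~\ref{lem:thirdchart}-style dominance (passing $P \to N$ through a larger finitely generated monoid when the two chart choices disagree); the second point uses that any finitely generated monoid admits a map to its integralization, whose associated $\sim$-sheaf represents the same module structure up to refining the cover. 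Once this alignment is secured, the cartesian square above does all the remaining work.
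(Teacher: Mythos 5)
Your proof is correct and follows essentially the same route as the paper: localize, use Corollary~\ref{cor:pullbackSpec} together with Lemma~\ref{lem:relativeSpec} to exhibit $\Spec_X Q$ as the base change of $\Spec N \to \Spec P$ along a chart, and conclude via strictness of base changes of strict maps. The only differences are cosmetic: in the fan case the paper simply takes the local chart to be an isomorphism, so the base-changed map $\Spec_X Q \to \Spec N$ is itself an isomorphism and no appeal to Proposition~\ref{prop:Fansinverselimits} is needed, and your closing worry about aligning the chart for $X$ with the presentation of $Q$ is moot, since quasi-coherence of $Q$ is by definition witnessed by a single chart that does both jobs at once.
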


\begin{proof}  The question is local, so we can assume there is an isomorphism (resp.\ a strict map) $h : X \to \Spec P =: Y$ for some monoid $P$, and a monoid homomorphism $P \to R$ such that $Q = h^{-1}(R^\sim)$.  We have $\Spec_Y (R^{\sim}) = \Spec R$ by Lemma~\ref{lem:relativeSpec} and we have a cartesian diagram $$ \xym{ \Spec_X Q \ar[r]^f \ar[d] & \Spec R \ar[d] \\ X \ar[r]^h & Y } $$ by Corollary~\ref{cor:pullbackSpec}.  When $X$ is a fan ($h$ is an isomorphism), so is $f$ so $\Spec_X Q \cong \Spec R$ is a fan, and when $h$ is strict, so is its base change $f$ (Corollary~\ref{cor:benignstrictmorphisms}), so $\Spec_X Q$ is quasi-coherent.  If $X$ is fine and $Q$ is integral and locally finite type, then we can take $P$ fine and $R$ finitely type (under $P$, hence also absolutely) and integral.     \end{proof}

\subsection{Integration and saturation} \label{section:integrationandsaturation} Let $X$ be a locally monoidal space.  Let $\M_X^{\rm int}$ be the sheaf associated to the presheaf $U \mapsto \M_X(U)^{\rm int}$.  The functor $P \mapsto P^{\rm int}$ is a left adjoint (to the inclusion of integral monoids in monoids) so it commutes with direct limits, hence with stalks, hence we have \be (\M_{X}^{\rm int})_x & = & (\M_{X,x})^{\rm int} \ee so we can simply write $\M_{X,x}^{\rm int}$ without ambiguity.  There is a natural map $\M_X \to \M_X^{\rm int}$ of sheaves of monoids on $X$.  Set \be X^{\rm int} & := & \Spec_X \M_{X}^{\rm int} . \ee  We obtain a functor $X \mapsto X^{\rm int}$ which is right adjoint to the inclusion of \emph{integral} locally monoidal spaces (stalks are integral monoids) into $\LMS$.  In particular, $X \mapsto X^{\rm int}$ commutes with inverse limits in $\LMS$.

If $X$ is coherent (resp.\ a locally finite type fan), then locally on $X$ there is a strict map (resp.\ an isomorphism) $f : X \to \Spec P$ for a finitely generated monoid $P$.  It is straightforward to see that $\M_P^{\rm int} = (P^{\rm int})^\sim$, so from Lemma~\ref{lem:relativeSpec} and Corollary~\ref{cor:pullbackSpec} we obtain a cartesian diagram $$ \xym{ X^{\rm int} \ar[d] \ar[r] & \Spec P^{\rm int} \ar[d] \\ X \ar[r]^f & \Spec P } $$ (locally on $X$) where the horizontal arrows are strict (resp.\ isomorphisms), thus we see that $X^{\rm int}$ is a fine monoidal space (resp. a fine fan).

The same constructions can be repeated with integration replaced by saturation to obtain a functor $X \mapsto X^{\rm sat}$ which takes fine locally monoidal spaces (resp.\ fine fans) to fs locally monoidal spaces (resp.\ fs fans).

\subsection{Relative Proj} In the context of locally monoidal spaces, we will make use of the relative version of the $\Proj$ construction of \S\ref{section:Proj}, which we now summarize.  To a locally monoidal space $X$, an $\NN$ graded sheaf of monoids on $X$ and a map of sheaves of graded graded monoids $f : \M_X \to P$, we can associate a monoidal space $\Proj_X P$ over $X$.  Points of $\Proj_X P$ are pairs $(x,z)$ consisting of a point $x \in X$ and a prime ideal $z \subseteq P_x$  not containing the irrelevant prime $(P_x)_{>0}$ and such that $f_x^{-1}(z) = \m_x \subseteq \M_{X,x}$.  For an open subset $U \subseteq X$ and a section $p \in P(U)$, the set \be U_p & := & \{ (x,z) \in \Proj_X P : x \in U, p \notin z \} \ee is open in $\Proj_X P$ and such opens form a basis for its topology.  The structure sheaf of $\Proj_X P$ is defined similarly to the structure sheaf for usual $\Proj$ and the structure sheaf for relative $\Spec$.  We assume for convenience that $P$ is locally generated, as a sheaf of monoids under $P_0$, by global sections of $P_1$.  For a section $i \in P_1(U)$, we have an open embedding \bne{Projcovering} \Spec_U (P_{(i)}) = \Proj_U  (P_{(i)} \oplus \underline{\NN}) & \subseteq & \Proj_X P. \ene  As $U$ runs over opens in $X$ and $i$ runs over $P_1(U)$ these opens cover $\Proj_X P$.  

The following basic properties of the relative Proj construction are obtained from the analogous properties of the relative Spec construction by using the natural coverings \eqref{Projcovering}.

\begin{lem} \label{lem:relativeProj} Let $P=\coprod_n P_n$ be an $\NN$ graded monoid, $X := \Spec P_0$.  There is a natural isomorphism \be \Proj P & = & \Proj_X (P^{\sim}) \ee of locally monoidal spaces over $X$. \end{lem}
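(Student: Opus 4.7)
The plan is to construct a natural $\LMS$-morphism $\varphi : \Proj P \to \Proj_X(P^\sim)$ over $X$ and show it is an isomorphism by working locally on the standard affine covers of both sides and invoking Lemma~\ref{lem:relativeSpec}.

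First I would construct $\varphi$ on the level of points: send a prime $\p \subseteq P$ not containing $P_{>0}$ to the pair $(x,z)$ with $x := \p \cap P_0 \in X$ (complementary face $F := P_0 \setminus x$) and $z := F^{-1}\p \subseteq F^{-1}P = (P^\sim)_x$. One checks routinely that $z$ is a prime of $(P^\sim)_x$, does not contain the irrelevant prime $(F^{-1}P)_{>0}$, and pulls back to $\m_x$ under $(P_0)_x = F^{-1}P_0 \to F^{-1}P$, so $(x,z) \in \Proj_X(P^\sim)$. Continuity follows from the identity $\varphi^{-1}(U_p) = U_p$ on basic opens (for $p \in P$ interpreted on the relative side as the global section of $P^\sim$ corresponding to $p$), and the structure sheaf map $\varphi^\dagger$ is obtained by unraveling the definitions of the two structure sheaves in \S\ref{section:Proj} and the relative Proj section and matching them on basic opens.

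Second, I would verify $\varphi$ is an isomorphism by comparing affine covers. Fix a set $I \subseteq P_1$ of generators of $P_1$ as a $P_0$-module. By \eqref{coverofProj}, the opens $U_i = \Spec P_{(i)}$ for $i \in I$ cover $\Proj P$; by \eqref{Projcovering}, the opens $\Spec_X((P^\sim)_{(i)})$ for $i \in I$ (viewed as global sections of $P_1 \subseteq (P^\sim)_1(X)$) cover $\Proj_X(P^\sim)$, because $P_1$ generates $(P^\sim)_1$ stalk-wise (at the stalk $x$ with face $F$, the image of $P_1$ generates the positive-degree part of $F^{-1}P$ over $F^{-1}P_0$, so any $(x,z)$ in the relative $\Proj$ avoids $i$ for some $i \in I$). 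The key sheaf-theoretic identification is
\[(P^\sim)_{(i)} = (P_{(i)})^\sim\]
as sheaves of monoids on $X$, which on the stalk at $x$ with face $F$ asserts $(F^{-1}P)_{(i)} = F^{-1}(P_{(i)})$. Granting this, Lemma~\ref{lem:relativeSpec} yields
\[\Spec_X((P^\sim)_{(i)}) = \Spec_X((P_{(i)})^\sim) = \Spec P_{(i)} = U_i,\]
and one checks that $\varphi$ restricts to this identification on each chart, hence is an isomorphism.

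The main obstacle will be the careful verification of $(F^{-1}P)_{(i)} = F^{-1}(P_{(i)})$ as a monoidal identity: explicitly, an equivalence class $[p,ni] \in (F^{-1}P)_{(i)}$ with $p \in F^{-1}P$ of degree $n$ must be rewritten as an element of $F^{-1}(P_{(i)})$, and the two equivalence relations (coming from the two orders of localizing at $F \subseteq P_0$ and at $i \in P_1$, then passing to degree zero) must be shown to coincide. This reduces to the commutativity of localization at $F$ with the $\ZZ$-grading and with the degree-zero projection, which is a straightforward but tedious diagram chase using the universal properties of the localizations involved. Once this is settled, matching $\varphi$ on the overlaps $U_i \cap U_j = \Spec P_{(ij)}$ is automatic because both constructions agree on these degree-zero double localizations, so the local isomorphisms glue to the desired global isomorphism.
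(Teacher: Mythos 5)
Your proof is correct and is exactly the argument the paper intends: the paper only remarks that Lemma~\ref{lem:relativeProj} is ``obtained from the analogous properties of the relative Spec construction by using the natural coverings \eqref{Projcovering},'' and your reduction to the stalk-wise identity $(F^{-1}P)_{(i)} = F^{-1}(P_{(i)})$, i.e.\ $(P^\sim)_{(i)} = (P_{(i)})^\sim$, followed by an application of Lemma~\ref{lem:relativeSpec} on the charts $U_i$, is precisely the fleshed-out version of that remark. The point-level description of the map and the covering argument via the face property of $P \setminus \p$ are both sound, so nothing further is needed.
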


\begin{lem} \label{lem:pullbackProj} Let $Y$ be a locally monoidal space, $P$ a graded sheaf of monoids under $\M_Y$, $f : X \to Y$ an $\LMS$ morphism, $f^* P = f^{-1} P \otimes_{f^{-1} \M_Y} \M_X$ the pullback of $P$.  Then the $\LMS$ diagram $$ \xym{ \Proj_X (f^*P) \ar[d]_{\tau} \ar[r] & \Proj_Y P \ar[d]^\tau \\ X \ar[r]^f & Y } $$ is cartesian. \end{lem}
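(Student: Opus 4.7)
The plan is to reduce to the already-established relative Spec case (Corollary~\ref{cor:pullbackSpec}) by exploiting the open covering of relative $\Proj$ by relative $\Spec$ opens described in \eqref{Projcovering}. Since inverse limits in $\LMS$ commute with passage to open subspaces (Remark~\ref{rem:localizationisloca}, or equivalently Lemma~\ref{lem:Espfiberedproducts}\eqref{Espfiberedproducts3}--\eqref{Espfiberedproducts2} applied to the category of spaces $(\LMS,\Spec\NN)$), checking that the square in question is cartesian is local on $\Proj_Y P$, and hence we may test the claim one $\Spec$-open at a time.

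First, I would recall that $\Proj_Y P$ is covered by the open subspaces $\Spec_V(P_{(i)})$ as $V$ ranges over opens of $Y$ and $i$ over sections in $P_1(V)$, and similarly $\Proj_X(f^*P)$ is covered by the opens $\Spec_{V'}((f^*P)_{(j)})$ for $V'\subseteq X$ and $j\in (f^*P)_1(V')$. Next, for each $(V,i)$, I would establish a natural isomorphism of sheaves of monoids on $f^{-1}(V)$,
$$f^*(P_{(i)}) \;\cong\; (f^*P)_{(f^\dagger i)},$$
where $f^\dagger i$ is the image of $i$ under $f^{-1}P_1 \to (f^*P)_1$. Since $f^*$ is a left adjoint on sheaves of monoids (it is defined as a tensor product and $f^{-1}$ preserves colimits), and since both localization and taking the degree-zero summand $P\mapsto P_{(i)}$ are colimit constructions in the category of ($\NN$-graded) monoids (\S\ref{section:monoidbasics}, \S\ref{section:Proj}), the two operations commute up to canonical isomorphism; the content is routine once unravelled at the stalks, using the universal property of localization. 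Combining this with Corollary~\ref{cor:pullbackSpec} gives canonical identifications
$$X\times_Y \Spec_V(P_{(i)}) \;\cong\; \Spec_{f^{-1}(V)}\bigl(f^*(P_{(i)})\bigr) \;\cong\; \Spec_{f^{-1}(V)}\bigl((f^*P)_{(f^\dagger i)}\bigr),$$
and the last space sits as an open subspace of $\Proj_X(f^*P)$.

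Third, I would verify that these opens actually cover $\Proj_X(f^*P)$. Given a point $(x,z) \in \Proj_X(f^*P)$, the prime $z \subseteq (f^*P)_x$ does not contain the irrelevant ideal $(f^*P)_{x,>0}$, so there is some degree-one element outside $z$. But $(f^*P)_1 = f^{-1}P_1 \otimes_{f^{-1}\M_Y} \M_X$ is generated as an $\M_X$-module by the images $f^\dagger i$ of sections of $P_1$; since the complement of $z$ is a face (hence closed under the $(f^*P)_0$-action and containing no nontrivial summands in $z$), some $(f^\dagger i)_x$ must itself lie outside $z$, so $(x,z)$ lies in the corresponding open $\Spec_{f^{-1}(V)}((f^*P)_{(f^\dagger i)})$. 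Finally, assembling the local identifications along this covering, the canonical morphism
$$\Proj_X(f^*P) \longrightarrow X\times_Y \Proj_Y P$$
induced by the $\LMS$ maps $\tau$ and $\Proj_X(f^*P)\to\Proj_Y P$ is a local isomorphism that is bijective on the underlying sets, hence an isomorphism.

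The main obstacle will be the verification of the natural isomorphism $f^*(P_{(i)})\cong (f^*P)_{(f^\dagger i)}$ in the second step: morally this just says that pullback of sheaves of graded monoids commutes with the ``degree-zero part of the localization at a degree-one element'' construction, but carrying this out requires one to unwind the definition of $f^*$ (which mixes $f^{-1}$ with a tensor product over $f^{-1}\M_Y$) and check compatibility with the pushout presentation of graded localization. Once this compatibility is in hand, both the open-cover identification and the covering argument of Step~3 are essentially formal, and the reduction of the $\Proj$ statement to the $\Spec$ statement of Corollary~\ref{cor:pullbackSpec} goes through without further difficulty.
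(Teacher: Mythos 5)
Your proposal is correct and is exactly the argument the paper intends: the lemma is stated right after the remark that the basic properties of relative $\Proj$ ``are obtained from the analogous properties of the relative Spec construction by using the natural coverings \eqref{Projcovering},'' and your reduction to Corollary~\ref{cor:pullbackSpec} via the compatibility $f^*(P_{(i)})\cong (f^*P)_{(f^\dagger i)}$ and the covering check is precisely that, worked out in full. The extra details you supply (commuting the degree-zero localization with the pushout defining $f^*$, and the face argument showing the pulled-back opens cover $\Proj_X(f^*P)$) are the right ones and are left implicit in the paper.
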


\begin{lem} \label{lem:sharpeningProj} Let $X$ be a locally monoidal space, $P$ a graded sheaf of monoids under $\M_X$.  Then the $\LMS$ diagram $$ \xym{ \Proj_{\ov{X}} \ov{P} \ar[d] \ar[r] & \Proj_X P \ar[d] \\ \ov{X} \ar[r] & X } $$ is cartesian and hence the top horizontal arrow induces an $\SMS$ isomorphism upon sharpening. \end{lem}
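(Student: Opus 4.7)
The plan is to imitate the argument used for Corollary~\ref{cor:sharpeningSpec} (relative $\Spec$), which was reduced to Corollary~\ref{cor:pullbackSpec}. Here the analogous tool is Lemma~\ref{lem:pullbackProj}, which says that relative $\Proj$ is compatible with arbitrary $\LMS$ base change.

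Concretely, let $f : \ov{X} \to X$ be the sharpening morphism. Since $f$ is the identity on the underlying topological space and $\M_{\ov{X}} = \ov{\M}_X$, the pullback of the graded sheaf of monoids $P$ under $\M_X$ is naturally identified with $\ov{P}$:
\begin{equation*}
f^* P \;=\; f^{-1} P \otimes_{f^{-1} \M_X} \M_{\ov{X}} \;=\; P \otimes_{\M_X} \ov{\M}_X \;=\; \ov{P},
\end{equation*}
exactly as in the module case \eqref{sharpeningmodule}. This identification is grading-preserving. First I would verify this identification degree by degree, noting that the constructions of $\ov{\cdot}$ and of pullback both commute with direct sums and that both respect the $\NN$-grading on $P$. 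With this identification in hand, Lemma~\ref{lem:pullbackProj} applied to $f$ immediately gives the cartesian square
\begin{equation*}
\xym{ \Proj_{\ov{X}} \ov{P} = \Proj_{\ov{X}} (f^* P) \ar[d] \ar[r] & \Proj_X P \ar[d] \\ \ov{X} \ar[r]^f & X. }
\end{equation*}

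For the second assertion, apply the sharpening functor $\LMS \to \SMS$ to this cartesian square. Sharpening is right adjoint to the inclusion $\SMS \hookrightarrow \LMS$ (\S\ref{section:monoidalspacedefinitions}), so it preserves inverse limits, and in particular fibered products. Since $\ov{X}$ is already sharp, the sharpening of $f : \ov{X} \to X$ is the identity $\ov{X} \to \ov{X}$. The base change of an isomorphism is an isomorphism, so the sharpening of the top horizontal arrow is an isomorphism in $\SMS$, giving the natural $\SMS$-isomorphism $\ov{\Proj_{\ov{X}} \ov{P}} \cong \ov{\Proj_X P}$.

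The only step with any real content is the identification $f^* P = \ov{P}$; the rest is formal from Lemma~\ref{lem:pullbackProj} and the adjointness property of sharpening. The identification is essentially the same calculation that appears in the proof of Corollary~\ref{cor:sharpeningSpec}, now carried out for a sheaf of monoids under $\M_X$ rather than an $\M_X$-module, and respecting the grading; there is no difficulty since the tensor product / pushout of sheaves of monoids commutes with the relevant direct limits.
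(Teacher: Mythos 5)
Your proof is correct. The paper itself gives no written-out argument for this lemma: it simply states that the three relative-$\Proj$ lemmas ``are obtained from the analogous properties of the relative Spec construction by using the natural coverings \eqref{Projcovering},'' i.e.\ the intended route is to cover $\Proj_X P$ by the open subspaces $\Spec_U(P_{(i)})$, apply Corollary~\ref{cor:sharpeningSpec} on each piece, and glue. You instead take Lemma~\ref{lem:pullbackProj} as the key input and specialize it to the sharpening morphism $f:\ov{X}\to X$, after checking the (grading-preserving) identification $f^*P=P\otimes_{\M_X}\ov{\M}_X=\ov{P}$; this is precisely the way the paper derives Corollary~\ref{cor:sharpeningSpec} from Corollary~\ref{cor:pullbackSpec} one level down, transported to $\Proj$. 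The two routes are logically equivalent (Lemma~\ref{lem:pullbackProj} is itself proved via the coverings), but yours avoids redoing the covering-and-gluing argument for this particular statement and isolates the only nontrivial point, namely that $f^{-1}P=P$ on the underlying space and that the pushout along $\M_X\to\ov{\M}_X$ respects the $\NN$-grading because $\M_X^*$ acts within each degree. Your second paragraph --- sharpening is a right adjoint, hence preserves the cartesian square, and the sharpened bottom arrow is an isomorphism since $\ov{X}$ is already sharp --- is verbatim the paper's argument for the $\Spec$ version and is exactly what is needed here.
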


\section{Algebraic considerations} \label{section:algebraicconsiderations}  Now that we have the basic theory of monoidal spaces in place, our goal in the next section (\S\ref{section:monoidalspacesIII}) will be to prove a kind of ``resolution of singularities" theorem for (reasonable) locally monoidal spaces with variants for fans and for sharp locally monoidal spaces.  To do this, we will bootstrap up from a similar resolution result for (reasonable) monoids.  In order to make this work, we will need to know that the complex monoid algebra functor $P \mapsto \CC[P]$ is ``faithful" in various senses, so that we can use resolution of singularities for $\Spec \CC[P]$ to get it for $P$.  The purpose of this section is mainly to establish the necessary ``faithfulness" results.

\subsection{Scheme realization of fans} \label{section:schemetheoreticrealizationoffans}  Let $\Sch$ denote the category of schemes, viewed as a category of spaces (\S\ref{section:spaces}) with the monoid object given by $\AA^1 = \Spec \ZZ[x]$, under multiplication.  The realization functors of \S\ref{section:fanstologspaces} will be denoted \bne{FanstoSch} \u{\AA} : \Fans & \to & \Sch \\ \label{FanstoLogSch} \AA : \Fans & \to & \LogSch. \ene The scheme $\u{\AA}(F)$ (resp.\ the log scheme $\AA(F)$) will be called the \emph{scheme realization} (resp.\ \emph{log scheme realization}) of the fan $F$.   While it is not necessary to assume that $F$ is locally finite type, that assumption does ensure that $\u{\AA}_{\ZZ}(F)$ is a locally finite type scheme (over $\ZZ$).

These functors can be constructed by alternative ``general nonsense."  In fact we can construct functors \bne{LMStoLRS} \u{\AA}_{\ZZ} : \LMS & \to & \LRS \\ \label{LMStoLogLRS} \AA_{\ZZ} : \LMS & \to & \LogLRS \ene satisfying the adjointess relations \bne{adj} \Hom_{\LRS}(Y,\u{\AA}_{\ZZ}(X)) & = & \Hom_{\LMS}((Y,\O_Y),X) \\ \label{adj2} \Hom_{\LogLRS}(Y,\AA_{\ZZ}(X)) & = & \Hom_{\LMS}((Y,\M_Y),X). \ene  Since $\Sch \subseteq \LRS$ is a full subcategory, these adjointness relations imply that \eqref{LMStoLRS} and \eqref{LMStoLogLRS}, when restricted to $\Fans \subseteq \LMS$, must agree with the functors \eqref{FanstoSch} and \eqref{FanstoLogSch}.  To construct these general functors, consider an arbitrary $X=(X,\M_X) \in \LMS$.  Letting $\ZZ[\M_X]$ denote the sheaf associated to the presheaf \be U & \mapsto & \ZZ[\M_X(U)], \ee we obtain a ringed space $(X,\ZZ[\M_X]) \in \RS$.  This ringed space in fact comes with a natural prime system $S$ defined by letting $S_x$ be the set of prime ideals of $(\ZZ[\M_X])_x = \ZZ[\M_{X,x}]$ whose intersection with $\M_{X,x} \subseteq \ZZ[\M_{X,x}]$ is the unique maximal ideal of the monoid $\M_{X,x}$.  The ringed space $(X,\ZZ[\M_X])$ also comes with an obvious log structure, namely the one associated to the natural map $\M_X \to \ZZ[\M_X]$.  Let $\u{\AA}_{\ZZ}(X)$ be the locally ringed space obtained by ``localizing $(X,\ZZ[\M_X])$ at $S$" (c.f.\ \cite{loc} or the analogous construction in \S\ref{section:localization}) and let $\AA_{\ZZ}(X)$ be the log locally ringed space obtained by pulling back the log structure on $(X,\ZZ[\M_X])$ along the structure map $\u{\AA}_{\ZZ}(X) \to (X,\ZZ[\M_X])$.  Now suppose that $Y = (Y,\O_Y) \in \LRS$ is a locally ringed space.  If we regard $Y$ as a primed ring space $(Y,L) \in \PRS$ by giving it the ``local prime system" $L$ (with $L_y = \{ \m_y \}$ for each $y \in Y$), then it is clear from our definition of $S$ that \bne{PRSLMS} \Hom_{\PRS}((Y,L),(X,\ZZ[\M_X],S)) & = & \Hom_{\LMS}((Y,\O_Y),X), \ene where $(Y,\O_Y)$ is the locally monoidal space whose structure sheaf of monoids is the multiplicative monoid $\O_Y$.  Combining \eqref{PRSLMS} with the universal property of localization \be \Hom_{\LRS}(Y,\AA_{\ZZ}(X)) & = & \Hom_{\PRS}((Y,L),(X,\ZZ[\M_X],S)) \ee we obtain the adjunction formula \eqref{adj}.  The formula \eqref{adj2} is obtained similarly; one must work with the category of ``log primed ring spaces."

For the log scheme realization $\AA(F)$ of a fan $F$, we will often refer to the $\LMS$ map \bne{orbitmap} \tau : \AA(F)^\dagger & \to & F \ene as the \emph{orbit map}, for reasons that will be made clear in Lemma~\ref{lem:orbitmap2}.  In the case of an affine fan $F = \Spec P$, the orbit map \bne{affineorbitmap} \tau : \Spec \ZZ[P] & \to & \Spec P \ene is given as follows.  We view $\Spec P$ as the set of faces of the monoid $P$.  We view the points of $\Spec \ZZ[P]$ as ring homomorphisms $x : \ZZ[P] \to k$ ($k$ a field), up to the usual notion of equivalence.  Such an $x$ yields a monoid homomorphism $x|P : P \to k$ by restriction to $P \subseteq \ZZ[P]$.  Then $\tau(x) = x_P^{-1}(k^*)$.

\begin{lem} \label{lem:orbitmap1} For any fan $F$, the orbit map \eqref{orbitmap} is surjective and quasi-compact\footnote{This means that $\tau^{-1}(U)$ is quasi-compact for every quasi-compact open subspace $U$ of $F$.} (as a map of topological spaces). \end{lem}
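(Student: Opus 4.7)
The plan is to reduce to the affine case, where an explicit construction handles surjectivity and quasi-compactness comes for free. The key reduction tool is the observation that for any open subfan $U \subseteq F$, the natural $\LRS$-morphism $\u{\AA}_{\ZZ}(U) \to \u{\AA}_{\ZZ}(F)$ is an open embedding whose image is exactly $\tau^{-1}(U)$. This follows immediately from the adjunction \eqref{adj}: for any $Y \in \LRS$, an $\LRS$-morphism $Y \to \u{\AA}_{\ZZ}(F)$ whose composition with $\tau$ factors set-theoretically through $U$ corresponds, via \eqref{adj} and the universal property of the open embedding $U \into F$ in $\LMS$, to an $\LMS$-morphism $(Y,\O_Y) \to U$, which by \eqref{adj} again corresponds to an $\LRS$-morphism $Y \to \u{\AA}_{\ZZ}(U)$. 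In particular, for each affine open $\Spec P \subseteq F$ we obtain $\tau^{-1}(\Spec P) = \u{\AA}_{\ZZ}(\Spec P) = \Spec \ZZ[P]$, and $\tau$ restricts over this affine to the affine orbit map \eqref{affineorbitmap}.

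Next I would establish affine surjectivity. Given a prime $\p \subseteq P$ with complementary face $G := P \setminus \p$, define $x : P \to \ZZ$ (multiplicative monoid) by $x(p) := 1$ for $p \in G$ and $x(p) := 0$ for $p \in \p$. This is a monoid homomorphism: $x(0)=1$ since $0 \in G$; if $p,q \in G$ then $p+q \in G$ (submonoid), so $x(p+q)=1=x(p)x(q)$; and if $p \in \p$ then $p+q \in \p$ (ideal), so $x(p+q)=0=x(p)x(q)$ whatever $x(q)$ is. Extending $\ZZ$-linearly gives a ring homomorphism $\tilde{x} : \ZZ[P] \to \ZZ$; its kernel is a prime of $\ZZ[P]$ (as $\ZZ$ is a domain), hence a point of $\Spec \ZZ[P]$. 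The explicit description of the orbit map immediately preceding the lemma gives $\tau(\tilde{x}) = P \setminus x^{-1}(\ZZ \setminus \{0\}) = P \setminus G = \p$, as desired. Combined with the first paragraph and any affine cover of $F$, this yields surjectivity of $\tau$ in general.

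For quasi-compactness, fix a quasi-compact open $V \subseteq F$. Since $F$ is a fan, the affine open subfans of $F$ contained in $V$ form an open cover of $V$; by quasi-compactness, a finite subcover $\Spec P_1, \dots, \Spec P_n$ suffices, and then
\be
\tau^{-1}(V) & = & \tau^{-1}(\Spec P_1) \cup \cdots \cup \tau^{-1}(\Spec P_n) \; = \; \Spec \ZZ[P_1] \cup \cdots \cup \Spec \ZZ[P_n]
\ee
by the first paragraph, a finite union of affine schemes, hence quasi-compact. The only real content in the argument is the reduction of the first paragraph via \eqref{adj}; the affine surjectivity step is essentially tautological once one notices that the ``$0$--$1$ valued'' character of the face $G$ is available, which makes the argument work uniformly for arbitrary fans $F$ with no finite generation hypotheses on the local monoids. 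There is thus no serious obstacle.
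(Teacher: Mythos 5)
Your proof is correct and takes essentially the same approach as the paper's: reduce to the affine case, where $\tau^{-1}(\Spec P) = \Spec \ZZ[P]$, obtain surjectivity from the ``indicator'' homomorphism of a face (the paper sends $P$ directly into a field $k$, $1$ on the face and $0$ off it, rather than routing through $\ZZ$ and taking the kernel), and deduce quasi-compactness from quasi-compactness of affine spectra. The extra detail you supply for the reduction step via \eqref{adj} is exactly what the paper leaves implicit in its appeal to the construction of $\tau$.
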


\begin{proof}  It is quasi-compact because it is ``affine:"  The preimage of an open affine $U = \Spec P$ of $F$ under $\tau$ is $\Spec \ZZ[P]$, by the construction of $\tau$ (\S\ref{section:realizationoffans}), so the result follows from the fact that $\Spec$ of any monoid or ring is quasi-compact (same proof for monoids as for rings).  Surjectivity is local on the base, so we can assume $F = \Spec P$ is affine.  We need to show that for any face $F \subseteq P$ there is a field $k$ and a monoid homomorphism $x : P \to k$ with $x^{-1}(k^*)=F$.  Since $F$ is a face, we can just define $x(p)$ to be $1$ if $p \in F$ and zero otherwise, using any field we want.  \end{proof}

\begin{lem} \label{lem:orbitmap2} Let $P$ be a monoid, $k$ an algebraically closed field, $X := \Spec \ZZ[P]$, $G := \Spec \ZZ[P^{\rm gp}]$, so that $G$ acts on $X$ as in \S\ref{section:groupstogroupspaces}.  Two $k$-points $x,y \in X(k)$ have the same image under $$ \xym{ X(k) \ar[r] & X \ar[r]^-{\tau} & \Spec P } $$ iff they lie in the same orbit of the $G(k)$-action.  If we assume $P$ is fs, this result still holds without the assumption that $k$ is algebraically closed.  \end{lem}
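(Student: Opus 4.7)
The plan is to translate both sides of the equivalence into monoid-theoretic statements and then to exhibit an element of $G(k)$ relating $x$ and $y$ by extending a character from a subgroup of $P^{\rm gp}$.

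A $k$-point of $X$ is the same datum as a monoid homomorphism $x : P \to k$ (with $k$ regarded multiplicatively), and likewise $G(k) = \Hom_{\Ab}(P^{\rm gp}, k^*)$; the action is $(g \cdot x)(p) := g(p) x(p)$. By the description of $\tau$ preceding Lemma \ref{lem:orbitmap1}, the image of $x$ in $\Spec P$ is the face $F_x := x^{-1}(k^*)$. The implication ``same orbit $\Rightarrow$ same image'' is immediate: if $y = g \cdot x$ with $g \in G(k)$, then $g(p) \in k^*$, so $y(p) \neq 0 \iff x(p) \neq 0$, whence $F_x = F_y$.

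For the forward implication, assume $F := F_x = F_y$. The formula $h(f) := y(f)/x(f)$ defines a monoid homomorphism $F \to k^*$, hence a unique group homomorphism $\tilde h : F^{\rm gp} \to k^*$. If I can produce a group homomorphism $g : P^{\rm gp} \to k^*$ whose restriction (via the natural map $F^{\rm gp} \to P^{\rm gp}$) agrees with $\tilde h$, then $g \cdot x = y$ pointwise: for $p \in F$, $(g \cdot x)(p) = g(p)x(p) = h(p)x(p) = y(p)$, and for $p \notin F$ both sides vanish. So the problem reduces to an extension question for characters.

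To carry out this extension, I first reduce to the case where $P$ is integral. Since $k$ (as a multiplicative monoid) is itself integral, any monoid homomorphism $P \to k$ factors through $P \to P^{\rm int}$, so $X(k) = X^{\rm int}(k)$ where $X^{\rm int} := \Spec \ZZ[P^{\rm int}]$. Similarly $G$ is unchanged since $(P^{\rm int})^{\rm gp} = P^{\rm gp}$, and a straightforward argument shows that the surjection $P \to P^{\rm int}$ induces a bijection on faces (so $\Spec P = \Spec P^{\rm int}$ as sets). Hence we may assume $P$ is integral, in which case $F^{\rm gp} \hookrightarrow P^{\rm gp}$ is an honest subgroup inclusion, and the problem becomes to extend a character defined on a subgroup of $P^{\rm gp}$ to all of $P^{\rm gp}$.

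Now the extension argument handles both regimes cleanly. When $k$ is algebraically closed, $k^*$ is a divisible (and therefore injective) abelian group, since $z^n = c$ is solvable for every $c \in k^*$; hence any $\tilde h$ extends automatically. When $P$ is fs, Lemma \ref{lem:fssplitting} gives that $P^{\rm gp}/F^{\rm gp} = (P/F)^{\rm gp}$ is free, so the short exact sequence $0 \to F^{\rm gp} \to P^{\rm gp} \to (P/F)^{\rm gp} \to 0$ splits, and $\tilde h$ extends along any retraction to yield $g$. The main technical point I expect to require care is the reduction to integral $P$ — specifically, verifying that $X(k)$, $G(k)$, and the underlying set of $\Spec P$ are all unaffected when $P$ is replaced by $P^{\rm int}$; once that is in hand, the remainder is a short exercise in the universal properties of groupification combined with the two homological inputs above.
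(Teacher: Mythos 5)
Your argument is, in its core, exactly the paper's: translate $k$-points into monoid homomorphisms $P\to k$, observe that the easy direction is immediate, form the character $y/x$ on the common face $F$, groupify it to $F^{\rm gp}\to k^*$, and extend to $P^{\rm gp}$ --- using divisibility (injectivity) of $k^*$ when $k$ is algebraically closed, and freeness of $P^{\rm gp}/F^{\rm gp}=(P/F)^{\rm gp}$ (Lemma~\ref{lem:fssplitting}) when $P$ is fs. The one place you go beyond the paper is the preliminary reduction to integral $P$, and that step is where the problem lies.

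The reduction is based on a false premise: $(k,\cdot)$ is \emph{not} an integral monoid. Since $0\cdot a=0\cdot b$ for all $a,b$, the multiplicative monoid $k$ is not cancellative (indeed $(k,\cdot)^{\rm gp}$ is trivial), and a monoid homomorphism $x:P\to k$ need \emph{not} factor through $P^{\rm int}$. Concretely, let $P=\langle f_1,f_2,r\mid f_1+r=f_2+r\rangle$ and set $x(f_1)=x(f_2)=1$, $x(r)=0$; this is well defined on $P$ but does not factor through $P^{\rm int}$, where $f_1=f_2$. Worse, this example shows the issue you were (rightly) worried about is fatal for non-integral $P$: taking $y(f_1)=1$, $y(f_2)=2$, $y(r)=0$, both $x$ and $y$ have face $F=\langle f_1,f_2\rangle\cong\NN^2$, but every $g\in\Hom(P^{\rm gp},k^*)$ satisfies $g(f_1)=g(f_2)$, so $y\notin G(k)\cdot x$ --- the statement itself fails for this (non-integral) $P$, so no reduction to $P^{\rm int}$ can succeed. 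The paper's proof silently treats $F^{\rm gp}\to P^{\rm gp}$ as an inclusion, i.e.\ implicitly assumes $P$ integral; in the only situations where the lemma is invoked (fine, indeed fs, monoids) this is harmless, and for integral $P$ your argument with the bogus reduction deleted is complete and coincides with the paper's. So: credit for noticing the genuine subtlety that the extension problem along $F^{\rm gp}\to P^{\rm gp}$ requires injectivity, but the fix you propose does not work, and the honest repair is to add an integrality hypothesis rather than to reduce to it.
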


\begin{proof} An element $g \in G(k)$ is the same thing as a group homomorphism $g : P^{\rm gp} \to k^*$; similarly $x,y$ correspond to monoid homomorphisms $x,y : P \to k$, and $g \cdot x$ corresponds to the monoid homomorphism $p \mapsto x(p)g(p)$, so it is clear that $y = g \cdot x$ implies $x^{-1} k^* = y^{-1} k^*$ as faces of $P$.  For the converse: having the same image under that map means $x$ and $y$ determine the same face $x^{-1} k^* = y^{-1} k^* =: F$ of $P$.  Then $\ov{g} := y/x : F^{\rm gp} \to k^*$ is a group homomorphism.  If we could extend $\ov{g}$ to $g : P^{\rm gp} \to k^*$ that we'd have $g \cdot x = y$.  Such an extension can always be found if $k$ is algebraically closed, for then $k^*$ is injective (i.e.\ divisible) so $\Hom(\slot,k^*)$ is exact.  In the fs case, we can also find such an extension $g$ because $P^{\rm gp} / F^{\rm gp}$ is free (Lemma~\ref{lem:fssplitting}). \end{proof}

The next result says that ``blowup commutes with scheme realization."

\begin{lem} \label{lem:blowuprealization} Let $P$ be a monoid, $I \subseteq P$ an ideal.  There is a natural isomorphism of schemes \be \Bl_{\ZZ[I]} \Spec \ZZ[P] & = & \u{\AA}_{\ZZ}( \Bl_I \Spec P).\ee \end{lem}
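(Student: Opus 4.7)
The approach is to unpack both sides as $\Proj$ constructions and match them chart-by-chart. First I would observe that the Rees algebra of $\ZZ[I] \subseteq \ZZ[P]$ agrees with $\ZZ[R]$, where $R$ is the Rees monoid \eqref{Reesmonoid}: since the monoid algebra functor $\ZZ[\slot]$ takes coproducts of $P$-modules to direct sums of $\ZZ[P]$-modules, one has $\ZZ[R] = \bigoplus_n \ZZ[I^n]$ as $\NN$-graded $\ZZ[P]$-algebras, and the equality $\ZZ[I^n] = \ZZ[I]^n$ inside $\ZZ[P]$ is immediate from the fact that $I^n$ is the set of sums $i_1 + \cdots + i_n$ with $i_k \in I$. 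Thus $\Bl_{\ZZ[I]} \Spec \ZZ[P] = \Proj \ZZ[R]$, while by definition $\Bl_I \Spec P = \Proj R$, so the goal becomes identifying $\Proj \ZZ[R]$ with $\u{\AA}_{\ZZ}(\Proj R)$.

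Next I would cover both spaces by compatible affine charts indexed by elements $i \in I = R_1$. On the monoidal side, the cover is by $U_i = \Spec R_{(i)} \subseteq \Proj R$ as recalled in \eqref{coverofProj}, so by the definition of the realization functor on affine fans one has $\u{\AA}_{\ZZ}(U_i) = \Spec \ZZ[R_{(i)}]$, and these glue along the corresponding $U_i \cap U_j = \Spec R_{(i)}[ij - i^2] = \Spec R_{(ij)}$. On the scheme side, $\Proj \ZZ[R]$ is covered by the standard affines $D_+([i]) = \Spec (\ZZ[R]_{[i]})_0$. The key algebraic step is to construct a natural ring isomorphism
\[
\varphi_i : \ZZ[R_{(i)}] \;\longrightarrow\; (\ZZ[R]_{[i]})_0, \qquad [r, ni] \longmapsto [r]/[i]^n
\]
for $r \in I^n$. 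Surjectivity of $\varphi_i$ is clear since the right-hand side is generated as a $\ZZ$-algebra by such fractions. For well-definedness and injectivity one checks that the monoidal equivalence relation defining $R_{(i)}$ (namely $(r, ni) \sim (s, mi)$ iff $r + (m+a)i = s + (n+a)i$ for some $a \in \NN$) is exactly the relation cut out by the localization $\ZZ[R] \to \ZZ[R]_{[i]}$ in degree zero; this amounts to the standard fact that $[r]/[i]^n = [s]/[i]^m$ iff $[i]^{m+a}[r] = [i]^{n+a}[s]$ for some $a \geq 0$, combined with $\ZZ$-linear independence of distinct monoid elements in $\ZZ[R]$.

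Finally I would verify that the isomorphisms $\varphi_i$ are compatible with the gluings on both sides: restricting $\varphi_i$ to the open $U_i \cap U_j$ (which on the monoidal side corresponds to inverting $j - i$ in $R_{(i)}$, and on the scheme side to inverting $[j]/[i]$ in $(\ZZ[R]_{[i]})_0$) produces the same map as the restriction of $\varphi_j$, so the $\varphi_i$ glue to the desired isomorphism. The resulting isomorphism will automatically sit over $\Spec \ZZ[P] = \u{\AA}_{\ZZ}(\Spec P)$ in the correct way because both structure maps come from the inclusion $P = R_0 \hookrightarrow R_{(i)}$.

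The main obstacle, I expect, is the careful comparison of the equivalence relation in $R_{(i)}$ with the localization relation in $(\ZZ[R]_{[i]})_0$: when $P$ is not assumed integral, one cannot simply embed $R_{(i)}$ into $P^{\rm gp}$ as in the paragraph after Proposition~\ref{prop:blowup}, so the bookkeeping requires tracking the $a \in \NN$ appearing in the equivalence relation against the exponent needed to clear denominators in the localization. Once this one identification is pinned down, surjectivity, functoriality in $i$, and compatibility with gluing are formal consequences of the universal properties of localization and $\Proj$.
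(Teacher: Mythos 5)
Your proposal is correct and follows essentially the same route as the paper: identify $\ZZ[R]$ with the Rees algebra of $\ZZ[I]$ via $\ZZ[I]^n=\ZZ[I^n]$, cover both sides by the affine charts indexed by $i\in I$, and match the charts $\Spec\ZZ[R_{(i)}]$ with $\Spec(\ZZ[R]_{[i]})_0$. The only difference is that where you verify the chart isomorphism by an explicit comparison of the equivalence relations (which does work, and correctly handles the non-integral case), the paper obtains $(\ZZ[R]_{[i]})_0=\ZZ[(R_i)_0]$ directly as a special case of the compatibility \eqref{tensorformula} of $\ZZ[\slot]$ with tensor products, localization being a tensor product.
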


\begin{proof}  Let $R = P \coprod I \coprod I^2 \coprod \cdots$ be the Rees monoid.  First note that $\ZZ[I] \subseteq \ZZ[P]$ is the ideal generated by $\{ [i] \in \ZZ[P] : i \in I \}$, so $\ZZ[I]^n$ is the ideal of $\ZZ[P]$ generated by the elements $$[i_1]\cdots[i_n] = [i_1+\cdots+i_n] ,$$ where $i_1,\dots,i_n \in I$, which makes it clear that $\ZZ[I]^n = \ZZ[I^n]$, and hence that \be \ZZ[R] & = & \ZZ[P] \oplus \ZZ[I] \oplus \ZZ[I]^2 \oplus \cdots \ee is the Rees algebra of the ideal $\ZZ[I] \subseteq \ZZ[P]$.  The fan $\Bl_I \Spec P$ is covered by affine opens $\Spec (R_i)_0$ as $i$ runs over $I$, hence its scheme realization $\u{\AA}_{\ZZ}( \Bl_I \Spec P)$ is covered by affine opens $\Spec \ZZ[(R_i)_0]$.  Similarly, $\Bl_{\ZZ[I]} \Spec \ZZ[P]$ is covered by affine opens $\Spec (\ZZ[R]_{[i]})_0$.  As a special case of the compatibility \eqref{tensorformula} between $\ZZ[ \slot ]$ and tensor products, we see that $(\ZZ[R]_{[i]})_0 = \ZZ[(R_i)_0]$.  Both schemes in question are covered by the same open affines and it is clear that the gluing data is the same. \end{proof}

The results of this section hold equally well, by the same proofs, with $\ZZ$ replaced everywhere by $\CC$.  

\subsection{Complex monoid algebras}  \label{section:CP}  Let $P$ be a fine monoid.  Later in the paper we will use results about the $\CC$ scheme $\Spec \CC[P]$ to establish results (particularly resolution of singularities) about the monoid $P$.  To do this, we need to know a few basic relationships between properties of $P$ and properties of $\CC[P]$.  For the purposes of this paper, the following definition of ``complex variety" is convenient.

\begin{defn} \label{defn:complexvariety} A \emph{complex variety} is a separated scheme of finite type over $\CC$ each of whose connected components is integral (reduced and irreducible). \end{defn}

Any (abelian) group $A$ is an abelian group object in $\Mon^{\rm op}$ with comultiplication given by the diagonal map $\Delta : A \to A \oplus A$.  The monoid algebra functor \be \CC[ \slot ] : \Mon & \to & (\CC / \An) \ee preserves direct limits, so it takes (abelian) group objects in $\Mon^{\rm op}$ to (abelian) group objects in $(\CC / \An)^{\rm op}$ (affine, abelian group schemes over $\CC$).  For any monoid $P$, the (co)group $P^{\rm gp}$ acts on $P$ via the ``diagonal" map \be P & \to & P \oplus P^{\rm gp}  \\ p & \mapsto & (p,p), \ee and hence $\CC[P^{\rm gp}]$ acts on $\CC[P]$.  On the level of $\CC$ points, we obtain, for each $\CC$-algebra map $h : \CC[P^{\rm gp}] \to \CC$, a $\CC$-algebra automorphism abusively denoted $h : \CC[P] \to \CC[P]$.  The $\CC$-algebra map $h$ corresponds to a monoid homomorphism $h : P^{\rm gp} \to \CC$ (with $\CC$ regarded as a monoid under multiplication), which is the same thing as a monoid homomorphism $h : P^{\rm gp} \to \CC^*$.  The corresponding $\CC$-algebra automorphism $h : \CC[P] \to \CC[P]$ is given by \bne{aut} h \left ( \sum_{p} a_p [p] \right ) & \mapsto & \sum_p a_p h(p) [p]. \ene

\begin{lem} \label{lem:enoughmapstoCstar} Let $k$ be a field, $A$ a finitely generated abelian group such that the order of the torsion subgroup of $A$ is prime to the characteristic of $k$.  Then for all $a,b \in A$, there exists a finite field extension $k \subseteq K = K(a,b)$ and a group homomorphism $h : A \to K^*$ with $h(a) \neq h(b).$  Variant: If $k$ is infinite (or $A$ is torsion), then we can find one finite extension $k \subseteq K$ that will work for all $a,b \in A$.  In particular, if $k$ is algebraically closed of characteristic zero, then for any finitely generated abelian group $A$ and any distinct $a,b \in A$, there is a group homomorphism $h : A \to k^*$ with $h(a) \neq h(b).$\end{lem}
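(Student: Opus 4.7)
The first reduction is to observe that giving $h$ with $h(a) \neq h(b)$ is the same as giving $h$ with $h(c) \neq 1$ for $c := a - b \in A$, so throughout we may fix a nonzero $c \in A$ and try to find $h : A \to K^*$ (for suitable finite $K/k$) with $h(c) \neq 1$. By the structure theorem for finitely generated abelian groups, write $A \cong \ZZ^r \oplus T$ with $T$ finite, and decompose $c = (n_1,\dots,n_r,t)$. The hypothesis that $|T|$ is prime to $\operatorname{char} k$ is what will make the torsion part behave well.

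The argument splits into two cases. If $t \neq 0$, pick any finite extension $K/k$ containing a primitive $|T|$-th root of unity (such an extension exists because $|T|$ is invertible in $k$, so the separable polynomial $X^{|T|}-1$ has $|T|$ distinct roots in $\bar k$, and a splitting field is finite over $k$). Then the cyclic group $\mu_{|T|}(K) \subset K^*$ has order $|T|$, so by Pontryagin duality for finite abelian groups of order invertible in $k$, the pairing $T \times \Hom(T,K^*) \to K^*$ is perfect; in particular there is a character $\chi : T \to K^*$ with $\chi(t) \neq 1$. Extend $\chi$ to $h : A \to K^*$ by letting $h$ be trivial on the free part. If instead $t = 0$, then some $n_i \neq 0$; fix such an $i$, let $\pi : A \to \ZZ$ be projection onto that coordinate, and seek $\alpha \in K^*$ with $\alpha^{n_i} \neq 1$, defining $h(x) := \alpha^{\pi(x)}$. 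Such an $\alpha$ exists in any sufficiently large finite extension $K/k$: the equation $X^{n_i} = 1$ has at most $|n_i|$ solutions, so any $K$ with $|K^*| > |n_i|$ suffices, and such a $K$ exists because $k$ has finite extensions of arbitrarily large degree (take, e.g., a splitting field of an irreducible polynomial of sufficiently large degree, or enlarge by adjoining further roots of unity of order coprime to $\operatorname{char} k$).

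For the variant, the task is to produce a single $K$ that serves every pair $(a,b)$. If $A$ is torsion, take $K/k$ finite containing all $|A|$-th roots of unity; then characters of $A$ into $K^*$ separate points by the Pontryagin duality argument above. If $k$ is infinite, take $K/k$ finite containing a primitive $|T|$-th root of unity; then $K$ is still infinite, and the torsion case above already works with this $K$, while in the non-torsion case $K^*$ is infinite and so contains an $\alpha$ with $\alpha^{n_i} \neq 1$ (for whichever $n_i \neq 0$ arises). The final clause is immediate: an algebraically closed field $k$ of characteristic $0$ is infinite and contains all roots of unity, so $K = k$ works in the variant, giving $h : A \to k^*$.

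The only mildly delicate point is the Pontryagin duality step on the finite torsion subgroup $T$; everything else is elementary once one separates the torsion and free parts of $c$. In particular there is no real obstacle, only the bookkeeping of ensuring the chosen $K$ simultaneously contains enough roots of unity for $T$ and (in the non-torsion case) admits an element of $K^*$ whose $n_i$-th power is not $1$, which is automatic once $K$ is either infinite or chosen with $|K^*| > |n_i|$.
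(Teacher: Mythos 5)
Your proof is correct and follows essentially the same route as the paper: both arguments rest on the structure theorem for $A$, obtain roots of unity from the separability of $X^{n}-1$ when $n$ is prime to $\operatorname{char} k$, and handle the free part by producing an element of $K^*$ whose relevant power is nontrivial (the paper projects onto a single cyclic coordinate where $a$ and $b$ differ rather than phrasing things via $c=a-b$ and duality on the whole torsion subgroup, but this is cosmetic). The only blemish is the parenthetical claim that $k$ always has finite extensions of arbitrarily large degree, which fails for algebraically closed $k$ — but in that case $k$ is infinite and your $|K^*|>|n_i|$ criterion is already met with $K=k$, so the argument stands.
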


\begin{proof} The hypotheses on $A$ ensure that $A$ is isomorphic to a (finite) product of $\ZZ$'s and $\ZZ / n \ZZ$'s for various positive integers $n$ not divisible by the characteristic $p$ of $k$.  Any two distinct elements of $A$ have at least one different coordinate, so by projecting on that coordinate, we reduce to proving the lemma in the case $A=\ZZ$ and the case $A = \ZZ / n \ZZ$, $n$ not divisible by $p$.  The key point is that if $n$ is a positive integer prime to $p$, then the polynomial $x^n - 1 \in k[x]$ has $n$ \emph{distinct} roots in $k$ because its derivative $n$ is nonzero.  Any finite subgroup of the multiplicative group $k^*$ of a field $k$ is cyclic \cite[Lemma~7.6]{Herstein}, so we can find a primitive $n^{\rm th}$ root of unity $\zeta \in k^*$.  When $A = \ZZ$, choose an integer $n$ larger than $|a|+|b|$ and not divisible by $p$ and adjoin a primite $n^{\rm th}$ root of unity $\zeta$ to $k$ to obtain $K$.  The choice of $n$ ensures that the map $\ZZ \to K^*$ sending $1$ to $\zeta$ separates $a$ and $b$.  When $A = \ZZ / n \ZZ$, the same procedure yields an injective group homomorphism $A \to K^* = k(\zeta)^*$.  For the variant where $k$ is infinite, it is enough to show that in the case $A = \ZZ$, we already have enough group homomorphisms $A \to k^*$ to separate points.  Indeed, if $k$ is infinite, then either 1) $k$ contains primitive $n^{\rm th}$ roots of unity for arbitrarily large $n$, or 2) there is $u \in k^*$ not a root of unity.  In the first case we can separate points by the same procedure as above and in the second case we can separate points with the single injective group homomorphism $\ZZ \to k^*$ sending $1$ to $u$. \end{proof}

\begin{thm} \label{thm:torusinvariantideals}  Let $P$ be a fine monoid.  An ideal $J \subseteq \CC[P]$ is invariant under every automorphism \eqref{aut} of $\CC[P]$ iff $J = \CC[I]$ for some ideal $I \subseteq P$. \end{thm}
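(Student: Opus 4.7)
The forward (``if'') direction is immediate: for $J = \CC[I]$ with $I \subseteq P$ an ideal, if $f = \sum_p a_p[p] \in \CC[I]$ then $a_p = 0$ for $p \notin I$, and this property is preserved under $h(f) = \sum_p a_p h(p)[p]$ since $h(p) \in \CC^*$.

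For the converse, I will set $I := \{p \in P : [p] \in J\}$; this is an ideal of $P$ in the sense of \S\ref{section:idealsandfaces} (if $p \in I$ and $q \in P$, then $[p+q] = [p][q] \in J$), and the containment $\CC[I] \subseteq J$ is automatic. The task reduces to showing $J \subseteq \CC[I]$, i.e., that for every $f \in J$ written uniquely as a finite sum $f = \sum_{p \in S} a_p[p]$ with $a_p \in \CC^*$, each summand $[p]$ ($p \in S$) lies in $J$. My plan is to interpret this as a grading problem. Writing $G := \Spec \CC[P^{\rm gp}]$, the automorphisms \eqref{aut} are exactly the action of $G(\CC) = \Hom(P^{\rm gp},\CC^*)$ on $\CC[P]$ coming from the $P^{\rm gp}$-grading $\CC[P] = \bigoplus_{g \in P^{\rm gp}} \CC[P]_g$, where $\CC[P]_g = \CC\cdot[g]$ if $g \in P$ (using that $P$ is fine, hence integral, so $P \hookrightarrow P^{\rm gp}$) and vanishes otherwise. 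Thus the claim becomes: any $G(\CC)$-stable ideal is homogeneous with respect to this grading.

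The extraction of homogeneous components is then a Vandermonde argument. For each $h \in G(\CC)$ the element $h(f) = \sum_{p \in S} a_p h(p)[p]$ lies in $J$. Consider the characters $\chi_p : G(\CC) \to \CC^*$ defined by $\chi_p(h) := h(p)$, as $p$ ranges over $S$. Lemma~\ref{lem:enoughmapstoCstar} guarantees that $\chi_p \neq \chi_{p'}$ as functions on $G(\CC)$ whenever $p \neq p'$ in $S$, since then $p - p' \neq 0$ in $P^{\rm gp}$ and some $h$ realizes $h(p-p') \neq 1$. By the classical theorem of Dedekind on the linear independence of distinct characters of a group into a field, the family $\{\chi_p\}_{p \in S}$ is $\CC$-linearly independent in $\CC^{G(\CC)}$. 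Consequently one can choose $h_1,\ldots,h_n \in G(\CC)$ (with $n = |S|$) such that the matrix $(h_i(p))_{i, p \in S}$ is invertible; composing with the diagonal $\mathrm{diag}(a_p) \in \GL_n(\CC)$, the vectors $h_i(f) \in J$ express each $a_p[p]$ as a $\CC$-linear combination, so $a_p[p] \in J$, and hence $[p] \in J$ since $a_p \in \CC^*$.

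The principal obstacle is securing ``enough characters'' to run the Vandermonde. A naive attempt to separate $S$ with a single character $h_0$ (and then use its powers $h_0^k$) fails in general: for instance if $P = P^{\rm gp} = (\ZZ/2\ZZ)^2$, no character of $P^{\rm gp}$ takes four distinct values in $\CC^*$. The remedy is to combine Lemma~\ref{lem:enoughmapstoCstar} (ensuring the $\chi_p$ are pairwise distinct) with Dedekind's linear independence (ensuring they are actually $\CC$-linearly independent), which lets us simultaneously produce several characters whose joint evaluation matrix on $S$ is invertible.
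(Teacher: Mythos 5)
Your proof is correct, and it reaches the conclusion by a genuinely different extraction mechanism than the paper's. Both arguments start identically: set $I := \{p \in P : [p] \in J\}$, observe $\CC[I] \subseteq J$, and reduce to showing that every monomial appearing in an element of $J$ already lies in $J$; and both ultimately rest on Lemma~\ref{lem:enoughmapstoCstar} to separate distinct elements of $P^{\rm gp}$ by characters. Where you diverge is in how the monomials are isolated. The paper runs an induction on the size $|S|$ of the support: given distinct $p,q \in S$, it picks a single $h$ with $h(p) \neq h(q)$ and replaces $j$ by $j - h(q)^{-1}h(j) \in J$, which kills the $q$-term while keeping the $p$-coefficient nonzero, so induction applies. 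You instead extract all homogeneous components simultaneously: the functions $\chi_p(h) = h(p)$ on $G(\CC)$ are pairwise distinct characters (here integrality of $P$ is needed, exactly as in the paper, so that distinct $p,p' \in S$ remain distinct in $P^{\rm gp}$), Dedekind's linear independence makes them $\CC$-linearly independent, and an invertible evaluation matrix $(h_i(p))$ lets you solve for each $a_p[p]$ as a $\CC$-linear combination of the $h_i(f) \in J$. Your route imports one extra classical ingredient (linear independence of characters) but in exchange makes the structural content transparent --- a $G(\CC)$-stable $\CC$-subspace is homogeneous for the $P^{\rm gp}$-grading --- and avoids the induction entirely; the paper's route is more self-contained and elementary, needing only the two-point separation statement applied one pair at a time. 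Your remark that a single character and its powers cannot work (e.g.\ $P = (\ZZ/2\ZZ)^2$) correctly identifies why either several characters (your version) or an inductive cancellation (the paper's version) is genuinely required.
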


\begin{proof} The ideals $\CC[I]$ are clearly invariant under such automorphisms; the difficulty is to prove that an ideal $J \subseteq \CC[P]$ invariant under all such automorphisms is of the form $J = \CC[I]$ for an ideal $I \subseteq P$.  Given such an ideal $J$ (or any ideal of $\CC[P]$ at all), the subset \be I & := & \{ p \in P : [p] \in J \} \ee is clearly an ideal of $P$ and we clearly have a containment $\CC[I] \subseteq J$.  The issue is to prove the containment $J \subseteq \CC[I]$.  A typical element $j \in J$ can be uniquely written $j = \sum_{p \in S} a_p [p]$ for a finite subset $S \subseteq P$ and non-zero complex numbers $a_p$.  We prove by induction on $|S|$ that for any such $j$, $S \subseteq I$, which implies in particular that $j \in \CC[I]$.  This is trivial when $|S|=0$ or $|S|=1$.  When $|S|>1$, choose distinct $p,q \in S$.  Since $P$ is finitely generated, $P^{\rm gp}$ is a finitely generated abelian group, so by Lemma~\ref{lem:enoughmapstoCstar} there is a group homomorphism $h : P^{\rm gp} \to \CC^*$ with $h(q) \neq h(p)$.  (We are implicitly using integrality here to know that the images of $p$ and $q$ in $P^{\rm gp}$ are distinct.)  By invariance of $J$ under $h$, we have $h(j) \in J$ and hence $h(q)^{-1} h(j) \in J$ and $j' := j - h(q)^{-1} h(j) \in J$.  But \be j' & = & \sum_{p \in S} a_p( 1-h(q)^{-1} h(p)) [p] \\ & = & \sum_{p \in S \setminus \{ q \} } a_p( 1-h(q)^{-1} h(p))[p] , \ee so by induction, and the fact that the coefficient of $[p]$ in this sum is nonzero, we have $p \in I$.  But this argument works for any $p \in S$ so $S \subseteq I$ as desired. \end{proof}

\begin{thm} \label{thm:smoothifffree} Let $P$ be a fine monoid.  The $\CC$-algebra $\CC[P]$ is smooth iff $\ov{P}$ is free. \end{thm}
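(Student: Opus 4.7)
The plan is to handle the two directions separately, with the forward implication being a routine consequence of the structure theorems already established and the converse requiring a careful tangent-space computation at a distinguished point of $\Spec \CC[P]$.

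For the ``if'' direction, suppose $\ov{P}$ is free, so $\ov{P} \cong \NN^n$. Then $\ov{P}^{\rm gp} \cong \ZZ^n$ is free, so the sharpening sequence $0 \to P^* \to P^{\rm gp} \to \ov{P}^{\rm gp} \to 0$ splits, and Lemma~\ref{lem:splitting} (applied with $A = P^*$) gives a monoid isomorphism $P \cong P^* \oplus \ov{P}$. Consequently $\CC[P] \cong \CC[P^*] \otimes_\CC \CC[\NN^n] \cong \CC[P^*][x_1,\ldots,x_n]$. Now $P^* \cong \ZZ^r \oplus T$ for a finite group $T$ whose order is prime to $\mathrm{char}\,\CC = 0$, so $\CC[P^*] \cong \CC[x_1^{\pm 1},\ldots,x_r^{\pm 1}] \otimes_\CC \CC[T]$, and $\CC[T]$ is a finite product of copies of $\CC$ by Lemma~\ref{lem:enoughmapstoCstar} (or Maschke). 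Hence $\CC[P^*]$ is smooth, and so is $\CC[P]$.

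For the ``only if'' direction, the plan is to exhibit a single closed point $x_0 \in \Spec \CC[P]$ whose Zariski tangent space pins down the structure of $\ov{P}$. Since $P^* \subseteq P$ is a face, the assignment $x_0(u) := 1$ for $u \in P^*$ and $x_0(p) := 0$ for $p \in P \setminus P^*$ is a monoid homomorphism $P \to \CC$; let $\m \subseteq \CC[P]$ be the corresponding maximal ideal, generated by $\{[u]-1 : u \in P^*\} \cup \{[p] : p \in P \setminus P^*\}$. I will compute $\dim_\CC \m/\m^2$ using the $\ov{P}$-grading $\CC[P] = \bigoplus_{\bar p \in \ov{P}} C_{\bar p}$, where $C_{\bar p} = \CC\{[q] : \pi(q) = \bar p\}$. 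Each $C_{\bar p}$ is a free $C_0 = \CC[P^*]$-module of rank one (pick any lift of $\bar p$), so $C_{\bar p}/(\m_{C_0} C_{\bar p}) \cong \CC$. Tracking the graded pieces of $\m^2$, the $C_0$-piece contributes $\m_{C_0}/\m_{C_0}^2$, of dimension $\mathrm{rank}\,P^*$ (since $\Spec \CC[P^*]$ is a smooth group scheme of that dimension); the piece $C_{\bar p}$ for $\bar p \neq 0$ contributes $\CC$ if $\bar p$ is irreducible and $0$ if $\bar p = \bar q + \bar r$ with $\bar q,\bar r \neq 0$ (because in that case $C_{\bar p}$ is generated over $C_0$ by $[q][r] \in \m^2$, so $C_{\bar p} \subseteq \m^2$). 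This yields
\[
\dim_\CC \m/\m^2 \;=\; \mathrm{rank}\,P^* \,+\, \#\{\text{irreducibles of }\ov{P}\}.
\]

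Now invoke smoothness. The Krull dimension of $\CC[P]$ is $\mathrm{rank}\,P^{\rm gp}$ (since $\CC[P^{\rm gp}]$ is a localization), so regularity at $x_0$ forces $\dim \m/\m^2 = \mathrm{rank}\,P^{\rm gp} = \mathrm{rank}\,P^* + \mathrm{rank}\,\ov{P}^{\rm gp}$. Hence $n := \#\{\text{irreducibles of }\ov{P}\} = \mathrm{rank}\,\ov{P}^{\rm gp}$. Since $\ov{P}$ is fine and sharp, its finitely many irreducibles generate it (as recalled in \S\ref{section:monoidbasics}), giving a surjection $\NN^n \twoheadrightarrow \ov{P}$, whence a surjection $\ZZ^n \twoheadrightarrow \ov{P}^{\rm gp}$ of finitely generated abelian groups of the same rank $n$; its kernel is torsion, but being a subgroup of $\ZZ^n$ it is also torsion-free, so the kernel vanishes and $\ZZ^n \to \ov{P}^{\rm gp}$ is an isomorphism. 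The map $\NN^n \to \ov{P}$ is then a surjection which becomes injective after groupification; integrality of $\ov{P}$ (Lemma~\ref{lem:integral}) therefore makes it injective, so $\ov{P} \cong \NN^n$ is free.

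The main obstacle is the tangent-space computation itself: the bookkeeping of the $\ov{P}$-grading on $\m^2$ must be done carefully to justify that reducibility of $\bar p$ forces $C_{\bar p} \subseteq \m^2$ while irreducibility leaves exactly a one-dimensional contribution. Once this graded calculation is in hand, the passage from ``correct number of irreducibles'' to ``$\ov{P}$ free'' is the short combinatorial argument above, which crucially uses integrality of $\ov{P}$ and the fact that a rank-preserving surjection onto $\ov{P}^{\rm gp}$ from a free group must be an isomorphism.
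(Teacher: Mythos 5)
Your proof is correct, and the forward direction is identical to the paper's. For the converse, however, you take a genuinely different route. The paper first argues that smoothness forces normality, hence (via the fact that the integral closure of $\CC[P]$ in $\CC[P^{\rm gp}]$ is $\CC[P^{\rm sat}]$) that $P$ is saturated, so that $\ov{P}^{\rm gp}$ is free and $P$ splits as $P^* \oplus \ov{P}$; it then reduces to sharp $P$ and computes $\Omega|_x$ at the cone point in two ways, once from local freeness on the dense torus and once from a finite presentation $\NN^m \rightrightarrows \NN^k \to P$ using the Leibniz rule to kill $dr_j$. You instead compute the dual object $\m/\m^2$ directly at the same distinguished point, using the $\ov{P}$-grading of $\CC[P]$ to isolate exactly one tangent direction per irreducible of $\ov{P}$ plus $\operatorname{rank} P^*$ directions from $C_0=\CC[P^*]$; this bypasses both the saturation/integral-closure input and the need for a presentation, and it handles non-sharp $P$ without invoking the splitting (indeed freeness of $\ov{P}^{\rm gp}$ falls out at the end from your rank count rather than being established up front). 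The graded bookkeeping is sound: $\m$ is a graded ideal, the degree-$\bar p$ piece of $\m^2$ is $C_{\bar p}$ for reducible $\bar p\neq 0$ and $\m_{C_0}C_{\bar p}$ for irreducible $\bar p$ (sharpness of $\ov{P}$ rules out cancellation into degree $0$), and each $C_{\bar p}$ is free of rank one over $C_0$ by integrality of $P$. The one spot to tighten is the assertion that regularity at $x_0$ gives $\dim\m/\m^2=\operatorname{rank}P^{\rm gp}$: being a localization only yields $\dim\CC[P^{\rm gp}]\le\dim\CC[P]$, and what you actually need is that the \emph{local} dimension of $\CC[P]$ at $x_0$ equals $\operatorname{rank}P^{\rm gp}$. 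This follows because $\Spec\CC[P^{\rm gp}]$ is a dense open (it is $D([p_1+\cdots+p_k])$ for generators $p_i$, and $\CC[P]\into\CC[P^{\rm gp}]$ is injective) which is smooth of pure dimension $\operatorname{rank}P^{\rm gp}$, so every irreducible component of $\Spec\CC[P]$ has that dimension -- essentially the same density observation the paper makes when it places the cone point in the closure of the torus. With that sentence added, your argument is complete and somewhat more self-contained than the paper's.
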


\begin{proof} First observe that for any finitely generated abelian group $A$, the $\CC$-algebra $\CC[A]$ is smooth.  Indeed, $\CC[A]$ is isomorphic to a finite tensor product of $\CC$-algebras of the form \be \CC[\ZZ] & = & \CC[t,t^{-1}] \\ \CC[\ZZ/n\ZZ] & = & \CC[t]/(t^n-1) \\ & \cong & \CC^n. \ee 

Now suppose $\ov{P} \cong \NN^r$ is free.  The $\ov{P}^{\rm gp} \cong \ZZ^r$ is also free and we obtain an isomorphism $P \cong P^* \oplus \ov{P}$ from Lemma~\ref{lem:splitting}.  Note that $P^*$ is a finitely generated abelian group as it is a subgroup of $P^{\rm gp}$, so $\CC[P]$ is the tensor product of the smooth $\CC$-algebra $\CC[\ov{P}] \cong \CC[x_1,\dots,x_n]$ and the smooth $\CC$-algebra $\CC[P^*]$.

Now suppose $\CC[P]$ is smooth.  It can be shown (c.f.\ \cite[1.13.4]{loggeometry}, \cite{Ogus}) that the integral closure of $\CC[P]$ in $\CC[P^{\rm gp}]$ is $\CC[P^{\rm sat}]$, so, since a smooth $\CC$-algebra is normal, $P$ must be saturated, hence fs, hence $\ov{P}^{\rm gp} \cong \ZZ^r$ is free by Lemma~\ref{lem:saturated} and we have $P \cong P^* \oplus \ov{P}$ by Lemma~\ref{lem:splitting}, and hence the smoothness of $\CC[P]$ implies that $\CC[\ov{P}]$ is also smooth.  

We are thus reduced to proving that $P \cong \NN^r$ when $P$ is a sharp fs monoid with $\CC[P]$ smooth and $P^{\rm gp} \cong \ZZ^r$.  Let $p_1,\dots,p_k$ be the irreducible elements of $P$ (\S\ref{section:monoidbasics}).  These $p_i$ generate $P$ and hence also $P^{\rm gp}$, so we must have $k \geq r$.  If $k=r$ then there cannot be any nontrivial relations among the $p_i$ because such a relation would imply $P^{\rm gp}$ has rank $< r$.  We thus reduce to showing $k=r$.  By standard finiteness results, we can lift the surjection $\NN^k \to P$ taking $e_i \to p_i$ to a presentation of $P$ as a coequalizer $\NN^m \rightrightarrows \NN^k \to P$ (the ``standard finiteness results" are only needed to ensure that $m$ is finite, which is not actually necessary in the present proof).  Pick a standard basis vector $e_j \in \NN^m$.  The images of $e_j$ under the parallel arrows are of the form $\sum_{i=1}^k a_i e_i$ and $\sum_{i=1}^k b_i e_i$ for $a_i,b_i \in \NN$, and the relation \be \sum_{i=1}^k a_i p_i & = & \sum_{i=1}^k b_i p_i \ee holds in $P$.  Since $P$ is sharp and the $p_i$ are irreducible, we must have $\sum_i a_i, \sum_i b_i > 1$.  Let $\Omega$ denote the module of K\"ahler differentiables of $\CC[P]$ relative to $\CC$; $\Omega$ is locally free since $\CC[P]$ is smooth.  Since $P$ is sharp, there is a unique monoid homomorphism $x : P \to \CC$ mapping $0 \in P$ to $1 \in \CC$ and mapping every non-zero element of $P$ to zero.  The corresponding $\CC$-algebra surjection $x : \CC[P] \to \CC$ is called the \emph{cone point} ($x$ is a $\CC$ point of $\Spec \CC[P]$).  The kernel of the $\CC$-algebra map $x$ is $\m_x = \CC[P \setminus \{ 0 \}] = \CC[\m_P]$.  Notice that $\Spec \CC[P^{\rm gp}] \cong \Spec \CC[\ZZ^r]$ is an open subscheme of $\Spec \CC[P]$ which is smooth, connected, and of constant dimension $r$.  The point $x$ is in the closure of this open subscheme because $x$ is the limit as $t \to 0$ of the $\CC$-points $y_t$ of $\Spec \CC[P^{\rm gp}]$ corresponding to the unique monoid homomorphism $y : P^{\rm gp} \cong \ZZ^r \to \CC^*$ taking each $e_i \in \ZZ^r$ to $t \in \CC^*$.  Since $\Omega$ is locally free and clearly of rank $r$ on $\Spec \CC[P^{\rm gp}]$, we conclude that $$\Omega|_x := \Omega \otimes_{\CC[P]} (\CC[P]/\m_x) = \Omega / \m_x \Omega$$ is isomorphic to $\CC^r$.  But on the other hand, the presentation of $P$ above gives rise to a presentation of $\CC[P]$ as a coequalizer and hence also to a presentation of $\Omega$ as the quotient of the free $\CC[P]$ module on $d[p_1], \dots, d[p_k]$ by $m$ relations of the form $d r_j = 0$, where \be r_j & = &  \prod_{i=1}^k [p_i]^{a_i} - \prod_{i=1}^k [p_i]^{b_i} . \ee  But $\sum_i a_i, \sum_i b_i > 1$ implies that $r_j \in \m_x^2$, hence $dr_j \in \m_x \Omega$ by Liebnitz, hence $\Omega|_x$ has $\CC$ vector space basis $d[p_1],\dots,d[p_k]$, hence $r=k$ and we're done. \end{proof}

\begin{thm} \label{thm:fsmonoidsarevarieties} Let $P$ be a fine monoid such that $\ov{P}^{\rm gp}$ is torsion-free.  Then $\Spec \CC[P]$ is a complex variety in the sense of Definition~\ref{defn:complexvariety}.  The variety $\Spec \CC[P]$ is connected iff $P^*$ is torsion-free and normal iff $\ov{P}$ is saturated. \end{thm}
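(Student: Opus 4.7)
The plan is to reduce everything to an explicit product decomposition of $\CC[P]$ and then read off the three claims. Since $\ov{P}^{\rm gp}$ is torsion-free and $P^{\rm gp}$ is finitely generated, the exact sequence
\[ 0 \to P^* \to P^{\rm gp} \to \ov{P}^{\rm gp} \to 0 \]
splits, so Lemma~\ref{lem:splitting} gives a monoid isomorphism $P \cong P^* \oplus \ov{P}$. Applying $\CC[\slot]$ (which converts $\oplus$ of monoids to $\otimes_\CC$ of $\CC$-algebras) yields
\[ \CC[P] \cong \CC[P^*] \otimes_\CC \CC[\ov{P}]. \]
Next, I would write $P^* \cong \ZZ^a \oplus T$ where $T$ is the (finite) torsion subgroup of $P^*$, so $\CC[P^*] \cong \CC[\ZZ^a] \otimes_\CC \CC[T]$. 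The key algebraic input is that in characteristic zero $\CC[T] \cong \CC^{|T|}$ as $\CC$-algebras: $T$ is a finite product of cyclic groups $\ZZ/n\ZZ$, and $\CC[\ZZ/n\ZZ] = \CC[t]/(t^n-1) \cong \CC^n$ by the distinctness of $n$th roots of unity (which was already invoked in Lemma~\ref{lem:enoughmapstoCstar}). Combining, I obtain a product decomposition
\[ \CC[P] \;\cong\; \bigl(\CC[\ZZ^a] \otimes_\CC \CC[\ov{P}]\bigr)^{|T|}. \]

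Each factor is a domain: $\ZZ^a \oplus \ov{P}$ embeds into $\ZZ^a \oplus \ov{P}^{\rm gp}$, which is torsion-free, so $\CC[\ZZ^a \oplus \ov{P}^{\rm gp}]$ is a Laurent polynomial ring and its subalgebra $\CC[\ZZ^a] \otimes_\CC \CC[\ov{P}] \cong \CC[\ZZ^a \oplus \ov{P}]$ is an integral domain of finite type over $\CC$. Thus $\Spec \CC[P]$ is the disjoint union of $|T|$ integral, finite type, separated (being affine) $\CC$-schemes, hence a complex variety in the sense of Definition~\ref{defn:complexvariety}. In particular, the connected components are precisely these $|T|$ factors, so $\Spec\CC[P]$ is connected iff $|T|=1$ iff $P^*$ is torsion-free.

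For the normality statement, normality of a finite product of domains is equivalent to normality of each factor (as the total ring of fractions splits accordingly and integral closure respects finite products). So $\Spec\CC[P]$ is normal iff $\CC[\ZZ^a \oplus \ov{P}]$ is normal. I would then invoke the fact (used in the proof of Theorem~\ref{thm:smoothifffree}) that for a fine monoid $Q$ the integral closure of $\CC[Q]$ in $\CC[Q^{\rm gp}]$ equals $\CC[Q^{\rm sat}]$, so $\CC[Q]$ is normal iff $Q$ is saturated. Applied to $Q = \ZZ^a \oplus \ov{P}$: a direct check shows $\ZZ^a \oplus \ov{P}$ is saturated in $\ZZ^a \oplus \ov{P}^{\rm gp}$ iff $\ov{P}$ is saturated (the $\ZZ^a$ factor is irrelevant since any $n(g,p) = (ng, np) \in \ZZ^a \oplus \ov{P}$ forces $np \in \ov{P}$, which forces $p \in \ov{P}$ exactly when $\ov{P}$ is saturated). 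This gives the last equivalence.

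The only nontrivial step is the character-theoretic splitting $\CC[T] \cong \CC^{|T|}$, which is genuinely the reason torsion in $P^*$ produces disconnectedness rather than nilpotents or an irreducible ramified cover; everything else is a bookkeeping exercise assembling Lemma~\ref{lem:splitting}, Lemma~\ref{lem:saturated}, and the normalization statement borrowed from the proof of Theorem~\ref{thm:smoothifffree}.
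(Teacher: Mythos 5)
Your proof is correct and follows essentially the same route as the paper's: split $P \cong P^* \oplus \ov{P}$ via Lemma~\ref{lem:splitting} (using freeness of $\ov{P}^{\rm gp}$), decompose $P^* \cong \ZZ^a \oplus T$ so that $\Spec\CC[T]$ contributes $|T|$ points, observe $\CC[\ov{P}]$ is a domain as a subring of a Laurent polynomial ring, and reduce normality to saturation via the identification of the integral closure of $\CC[Q]$ in $\CC[Q^{\rm gp}]$ with $\CC[Q^{\rm sat}]$. The only cosmetic difference is that you carry the $\ZZ^a$ factor into the normality check and then strip it off, whereas the paper reduces directly to $\CC[\ov{P}]$.
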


\begin{proof} Certainly $\Spec \CC[P]$ is separated and of finite type over $\CC$ since it is affine and $P$ is finitely generated.  Since $\ov{P}^{\rm gp}$ is torsion-free, we can find an isomorphism of monoids $P = P^* \oplus \ov{P}$ and hence an isomorphism of complex varieties \bne{productdecomp} \Spec \CC[P] & = & \Spec \CC[P^*] \times \Spec \CC[\ov{P}] . \ene  Since $P$ is fine, $P^*$ is a finitely generated abelian group, so we can pick an isomorphism $P \cong \ZZ^r \oplus T$ with $T$ finite.  Since $\CC$ is algebraically closed of characteristic zero, $\Spec \CC[T]$ is isomorphic as a $\CC$ scheme to a disjoint union of $|T|$ copies of $\Spec \CC$.  Since $\ov{P}^{\rm gp} \cong \ZZ^s$ is torsion-free, \be \Spec \CC[\ov{P}^{\rm gp}] & \cong & \Spec \CC[x_1,x_1^{-1}, \dots , x_s,x_x^{-1}] \ee is an integral domain, hence $\CC[\ov{P}]$ is an integral domain because it is a subring of $\CC[\ov{P}^{\rm gp}]$ since $\ov{P}$ is integral by Lemma~\ref{lem:splitting}.  The first two statements now follow from the product decomposition \eqref{productdecomp}, which also shows that $\Spec \CC[P]$ is normal iff $\Spec \CC[\ov{P}]$ is normal iff $\CC[\ov{P}]$ is integrally closed in $\CC[\ov{P}^{\rm gp}]$.  By \cite[1.13.4]{loggeometry} the latter is equivalent to saturation of $\ov{P}$. \end{proof}

\begin{example} If $P$ is a fine monoid, then $\Spec \CC[P]$ need not be a complex variety in the sense of Definition~\ref{defn:complexvariety}.  The ring $\CC[P]$ is always reduced because it is contained in the reduced ring $\CC[P^{\rm gp}]$, but it need not be irreducible.  In Example~\ref{example:nocharacteristicchart} we encountered the submonoid $P$ of $\ZZ \oplus \ZZ / 2 \ZZ$ generated by $x = (1,0)$ and $y = (1,1)$.  This monoid $P$ is presented as the free monoid on $x,y$ subject to the single relation $2x=2y$, so \be \CC[P] & \cong & \CC[x,y]/(x^2-y^2) \\ & \cong & \CC[u,v]/(uv), \ee which is reducible. \end{example} 

\begin{rem}  Theorem~\ref{thm:torusinvariantideals}, Theorem~\ref{thm:smoothifffree}, and Theorem~\ref{thm:fsmonoidsarevarieties} continue to hold (by essentially the same proofs) when $\CC$ is replaced by an arbitrary algebraically closed field $k$, provided one always assumes that the torsion subgroup of $P^{\rm gp}$ (and hence also the torsion subgroup of $P^* \subseteq P^{\rm gp}$) has order prime to the characteristic of $k$. \end{rem}

\section{Monoidal spaces III} \label{section:monoidalspacesIII}

\subsection{Refinements and group isomorphisms} \label{section:LMSrefinements}  Here we define various types of \emph{refinements}; these refinements are analogous to refinements of fans in toric geometry and will play a central role in the geometric realization constructions of \S\ref{section:geometricrealization}.

\begin{defn} \label{defn:groupisomorphism} An $\LMS$ (or $\MS$) morphism $f : X \to Y$ is called a \emph{group isomorphism} iff $(f^\dagger)^{\rm gp} : f^{-1} \M_Y^{\rm gp} \to \M_X^{\rm gp}$ is an isomorphism (of sheaves of abelian groups) on $X$. \end{defn}

\begin{lem} \label{lem:groupisomorphism} Group isomorphisms are closed under composition and base change (in $\MS$, or, equivalently, in $\LMS$).  A map $f : X \to Y$ of locally finite type (resp.\ fine) fans is a group isomorphism iff $f$ is locally isomorphic to $\Spec h$ for a map $h : Q \to P$ of finitely generated (resp.\ fine) monoids such that $h^{\rm gp}$ is an isomorphism. \end{lem}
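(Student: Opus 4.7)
The plan is to dispatch the two statements separately: the stability properties follow from formal manipulations with stalks together with the fact that groupification is a left adjoint, while the local characterization is extracted from Lemma~\ref{lem:smallestngbd}.

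First I would handle stability. The key observation is that $P \mapsto P^{\rm gp}$ is left adjoint to the inclusion of groups into monoids, so it preserves direct limits, and in particular commutes with stalks. Thus $f : X \to Y$ is a group isomorphism iff $(f^\dagger_x)^{\rm gp} : \M_{Y,f(x)}^{\rm gp} \to \M_{X,x}^{\rm gp}$ is an isomorphism for every $x \in X$. Stability under composition is then immediate, since $(gf)^\dagger_x$ factors through $g^\dagger_{f(x)}$ and $f^\dagger_x$ in the evident way. For base change, given a cartesian square
$$\xym{X' \ar[r] \ar[d] & X \ar[d]^f \\ Y' \ar[r] & Y}$$
in $\MS$ (equivalently in $\LMS$, by Theorem~\ref{thm:LMSinverselimits}), the construction of inverse limits in $\MS$ given in the proof of that theorem identifies the stalk $\M_{X',x'}$ as the pushout of $\M_{X,x} \leftarrow \M_{Y,f(x)} \to \M_{Y',y'}$ in $\Mon$. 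Since groupification preserves pushouts, groupifying yields a pushout in $\Ab$ of $(f^\dagger_x)^{\rm gp}$ along $\M_{Y,y}^{\rm gp} \to \M_{Y',y'}^{\rm gp}$, and the pushout of an isomorphism is an isomorphism.

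Next I would prove the local characterization. The ``if'' direction is essentially the same computation: if on a neighborhood $(U,V)$ of $x$ the restriction of $f$ is isomorphic to $\Spec h : \Spec P \to \Spec Q$, then each stalk $(f^\dagger_{x'})^{\rm gp}$ for $x' \in U$ is identified with the groupification of a localization of $h$, and this is an isomorphism because $h^{\rm gp}$ is. For the ``only if'' direction, fix $x \in X$ and set $y := f(x)$. By Lemma~\ref{lem:smallestngbd}, there are smallest open neighborhoods $U_x \subseteq X$ and $U_y \subseteq Y$ of $x$ and $y$, natural isomorphisms $U_x \cong \Spec \M_{X,x}$ and $U_y \cong \Spec \M_{Y,y}$, and the diagram \eqref{fanmapdiagram} shows that $f$ restricts to $\Spec(f^\dagger_x) : U_x \to U_y$. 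Thus taking $Q := \M_{Y,y}$, $P := \M_{X,x}$, and $h := f^\dagger_x$ gives the required local model, and $h^{\rm gp}$ is an isomorphism by hypothesis on $f$.

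The only remaining point is that $h$ can be taken finitely generated (resp.\ fine) when $X$ and $Y$ are locally finite type (resp.\ fine). This follows because the stalks of such fans are themselves finitely generated (resp.\ fine): locally $X \cong \Spec R$ for a finitely generated (resp.\ fine) $R$, and the stalk at a prime $\p$ with complementary face $F$ equals $F^{-1}R$, which is finitely generated using Lemma~\ref{lem:faces} (adjoining the negatives of a finite set of generators of $F$ to a finite set of generators of $R$), and remains integral (hence fine) in the fine case. I expect the main obstacle to be no more than carefully keeping track of which opens to restrict to in the base change argument; all the substantive content is already encoded in Theorem~\ref{thm:LMSinverselimits} and Lemma~\ref{lem:smallestngbd}.
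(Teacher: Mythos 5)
Your proposal is correct and follows essentially the same route as the paper: composition is formal, base change follows because groupification commutes with (stalks of) pushouts, the "if" direction reduces to the observation that localizing $h$ does not change $h^{\rm gp}$, and the "only if" direction takes $h = f^\dagger_x$ via the diagram \eqref{fanmapdiagram} of Lemma~\ref{lem:smallestngbd}. Your extra paragraph verifying that the stalks of locally finite type (resp.\ fine) fans are finitely generated (resp.\ fine) just makes explicit a point the paper leaves implicit.
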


\begin{proof} It is trivial to see that group isomorphisms are closed under composition.  Group isomorphisms are closed under base change in $\MS$ because groupification commutes with direct limits of (sheaves of) monoids.

For the ``if" in the second statement, the question of whether $(f^\dagger)^{\rm gp}$ is an isomorphism is local on $f$, so we reduce to proving that it is an isomorphism when $f = \Spec h$ with $h : Q \to P$ a map of monoids such that $h^{\rm gp}$ is an isomorphism (we don't need the finiteness or integrality for this implication).  For $\p \in \Spec P$ with corresponding face $F$, the stalk of $f^\dagger$ at $\p$ is just the map $(h^{-1}(F))^{-1} Q \to F^{-1} P$ obtained by localizing $h$, which induces the isomorphism $h^{\rm gp}$ on groupifications, so that stalks of $(f^\dagger)^{\rm gp}$ are isomorphisms.

The ``only if" follows easily from the commutative diagram \eqref{fanmapdiagram} of \S\ref{section:fans}---one may take $h = f^\dagger_x$ to produce a neighborhood of $x$ in $f$ with the desired property. \end{proof}

\begin{defn} \label{defn:LMSrefinement} \emph{Good refinements} are the smallest class of $\LMS$ morphisms which are local, closed under composition and base change, and contain the maps $\Spec h : \Spec P \to \Spec Q$ for each good refinement of fine monoids (Definition~\ref{defn:monoidrefinement}) $h : Q \to P$. \emph{Refinements} (resp.\ \emph{strong refinements}) are the smallest class of $\SMS$ morphisms which are local, closed under composition and base change, and contain the maps $\Spec h : (\Spec P,\ov{\M}_P) \to (\Spec Q,\ov{\M}_Q)$ for each refinement (resp.\ strong refinement) of fine monoids (Definition~\ref{defn:monoidrefinement}) $h : Q \to P$. \end{defn}

Since $\LMS \to \SMS$ preserves fibered products and the notion of ``local" is ``the same" in both categories it is easy to see that $\sms{f}$ is a strong refinement for each good refinement $f$ in $\LRS$ (c.f.\ Lemma~\ref{lem:refinements}\eqref{sharpening}).

\begin{lem} \label{lem:goodrefinement} A group isomorphism of fine fans is a good refinement. \end{lem}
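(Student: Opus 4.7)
The plan is to reduce to the affine setting and then invoke Lemma~\ref{lem:refinements}\eqref{groupiso}, which supplies exactly the combinatorial input needed.

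First I would observe that the class of good refinements in $\LMS$ is local (by Definition~\ref{defn:LMSrefinement}), so it suffices to show that every point $x \in X$ admits a neighborhood $(U,V)$ of $x$ in $f$ such that $f|U : U \to V$ is a good refinement. Next, since $f$ is a group isomorphism of fine fans, the second statement of Lemma~\ref{lem:groupisomorphism} produces such a neighborhood $(U,V)$ together with an isomorphism (over the identity of $f|U$) to $\Spec h : \Spec P \to \Spec Q$ for some map $h : Q \to P$ of fine monoids with $h^{\rm gp}$ an isomorphism.

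Now I would apply Lemma~\ref{lem:refinements}\eqref{groupiso} to the monoid map $h$ to conclude that $h$ is a good refinement of fine monoids, and hence $\Spec h$ is a good refinement in $\LMS$ by the very definition of the class (Definition~\ref{defn:LMSrefinement}). To finish I would observe that the class of good refinements contains all isomorphisms of fine fans: identity morphisms $\Id_Q : Q \to Q$ satisfy $\Id_Q^{\rm gp} = \Id$, hence are good refinements of monoids by Lemma~\ref{lem:refinements}\eqref{groupiso}, so $\Spec \Id_Q = \Id_{\Spec Q}$ is a good refinement, and by the local/base-change clauses of Definition~\ref{defn:LMSrefinement} any isomorphism $U \cong \Spec P$ of fine fans is a good refinement. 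Then closure under composition assembles $f|U$ (as an isomorphism followed by $\Spec h$ followed by an isomorphism) into a good refinement, and locality yields the same conclusion for $f$.

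The main potential obstacle is a purely bookkeeping one: making sure that ``local'' in Definition~\ref{defn:LMSrefinement} really does let us pass from an affine-local witness to $f$ itself, and that being ``isomorphic to $\Spec h$'' in the sense produced by Lemma~\ref{lem:groupisomorphism} can be re-expressed (via closure under composition with isomorphisms) as $f|U$ itself lying in the class. Both are formal consequences of the closure properties built into Definition~\ref{defn:LMSrefinement}, so no genuinely new argument is needed beyond citing Lemma~\ref{lem:groupisomorphism} and Lemma~\ref{lem:refinements}\eqref{groupiso}.
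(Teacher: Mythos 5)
Your proposal is correct and follows the paper's own proof exactly: use locality of the class of good refinements together with the local description of group isomorphisms in Lemma~\ref{lem:groupisomorphism} to reduce to $\Spec h$ with $h^{\rm gp}$ an isomorphism, then invoke Lemma~\ref{lem:refinements}\eqref{groupiso}. The extra bookkeeping about isomorphisms being good refinements is harmless but already implicit in the paper's notion of a class of maps that is local and closed under composition.
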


\begin{proof} The question is local by definition, so by the local description of such group isomorphisms in Lemma~\ref{lem:groupisomorphism} we reduce to proving that $\Spec h$ is a good refinement whenever $h$ is a map of fine monoids such that $h^{\rm gp}$ is an isomorphism---but then $h$ is a good refinement of monoids by Lemma~\ref{lem:refinements}\eqref{groupiso}. \end{proof}

\subsection{Blowup revisited} \label{section:blowup2} Let $P$ be a fine monoid, $I \subseteq P$ an ideal.  Recall (\S\ref{section:blowup}) that the blowup $\Bl_I P \to P$ is a map of fine fans.  In fact:

\begin{lem} \label{lem:blowup} For a fine monoid $P$ and an ideal $I \subseteq P$, the blowup $\Bl_I P \to P$ is a group isomorphism of fine fans.  In particular it is a good refinement (Lemma~\ref{lem:goodrefinement}). \end{lem}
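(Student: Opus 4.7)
The plan is to read this lemma as essentially a repackaging of Proposition~\ref{prop:blowup} through the local characterization of group isomorphisms given in Lemma~\ref{lem:groupisomorphism}. Proposition~\ref{prop:blowup} already tells us that $\Bl_I P$ is a fine fan over $\Spec P$ and that it admits a finite affine cover by opens of the form $\Spec R_{(i)}$, with each structure morphism $\Spec R_{(i)} \to \Spec P$ arising from an injective map $P \to R_{(i)}$ of fine monoids that induces an isomorphism on groupifications. In particular this is a finite affine cover of the map $\Bl_I P \to \Spec P$ by morphisms of the form $\Spec h$, where $h$ is a monoid homomorphism with $h^{\rm gp}$ an isomorphism.

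The next step is to observe that ``group isomorphism'' in the sense of Definition~\ref{defn:groupisomorphism} is a local property of an $\LMS$ morphism: it is the condition that a certain map of sheaves of abelian groups on the source be an isomorphism, which can be checked on any open cover of the source. Applying the ``if'' direction of Lemma~\ref{lem:groupisomorphism} to each map $\Spec R_{(i)} \to \Spec P$ in the cover above, we conclude that each such map is a group isomorphism, and therefore so is the original morphism $\Bl_I P \to \Spec P$.

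At that point the lemma is complete: we have a group isomorphism between fine fans, so Lemma~\ref{lem:goodrefinement} immediately upgrades it to a good refinement. There is no real obstacle here; the only thing worth spelling out in the writeup is the (purely formal) assertion that being a group isomorphism is local on the source, so that the affine cover produced by Proposition~\ref{prop:blowup} is enough to conclude. The content of the lemma is really absorbed into the work already done in \S\ref{section:blowup} and in the proof of Lemma~\ref{lem:groupisomorphism}.
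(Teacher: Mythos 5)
Your proposal is correct and matches the paper's own (one-line) proof, which likewise just cites Proposition~\ref{prop:blowup} for the affine cover by $\Spec R_{(i)}$ with $P \to R_{(i)}$ inducing an isomorphism on groupifications, and Lemma~\ref{lem:groupisomorphism} for the local characterization of group isomorphisms. You have merely spelled out the (correct) observation that the group-isomorphism condition is local on the source, which the paper leaves implicit.
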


\begin{proof} This is clear from the description of the blowup in Proposition~\ref{prop:blowup} and the characterization of group isomorphisms in Lemma~\ref{lem:groupisomorphism}. \end{proof}

Let $X$ be a locally monoidal space, $J \subseteq \M_X$ an ideal sheaf.  We define the \emph{blowup} $\Bl_J X \to X$ of $X$ along $J$ by \be \Bl_J X & := & \Proj_X \left ( \M_X \coprod J \coprod J^2 \coprod \cdots \right ) \ee (recall the abusive notation $J^n$ from \S\ref{section:blowup}).  Assume now that $X$ is fine (resp.\ a fine fan) and $J$ is coherent.  Then, locally on $X$, there is a fine monoid $P$, an ideal $I \subseteq P$, and a strict map (resp.\ an isomorphism---c.f.\ Lemma~\ref{lem:coherentsheaves2}) $f : X \to \Spec P$ with $f^{-1}I^\sim \cong J$ as $f^{-1} \M_P \cong \M_X$ modules.  By Lemmas~\ref{lem:relativeProj} and \ref{lem:pullbackProj}, we have (locally on $X$) a cartesian diagram \bne{blowupdiagram} &  \xym{ \Bl_J X \ar[d] \ar[r] & \Bl_I P \ar[d] \\ X \ar[r] & \Spec P } \ene in $\LMS$ (the horizontal arrows are strict so the diagram is also cartesian in $\MS$).  The map $\Bl_I P \to \Spec P$ is a good refinement (resp.\ a group isomorphism of fine fans) by Lemma~\ref{lem:blowup}.  Good refinements are local and closed under base change (by definition).  Similarly, group isomorphisms of fine fans are also local, so we have: 

\begin{thm} \label{thm:blowup} Let $X$ be a fine locally monoidal space (resp.\ fine fan), $J \subseteq \M_X$ a coherent ideal.  Then $\Bl_J X \to X$ is a good refinement of fine locally monoidal spaces (resp.\ a group isomorphism of fine fans).  \end{thm}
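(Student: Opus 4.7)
The plan is to reduce the assertion to the affine monoid case (Lemma~\ref{lem:blowup}) via the cartesian diagram~\eqref{blowupdiagram} and then invoke the closure properties of the two classes of morphisms under consideration.

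First I would work locally on $X$. Since $X$ is a fine locally monoidal space (resp.\ fine fan) and $J$ is coherent, the definitions of coherent monoidal space and of coherent sheaf (together with Lemma~\ref{lem:coherentsheaves2} in the fan case) let us cover $X$ by open subspaces on each of which there is a fine monoid $P$, an ideal $I \subseteq P$, and a strict $\LMS$-morphism (resp.\ an open embedding into an affine fan, hence locally an isomorphism) $f \colon X \to \Spec P$ such that $f^{-1} I^{\sim} \cong J$ as $f^{-1}\M_P \cong \M_X$-modules. Since good refinements (resp.\ group isomorphisms of fine fans) are local in nature, it suffices to prove the statement after passing to such a local model.

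Next I would combine Lemma~\ref{lem:relativeProj} (which identifies $\Proj$ of the Rees monoid with $\Proj_{\Spec P}$ of its sheafification) with Lemma~\ref{lem:pullbackProj} (pullback-compatibility of relative $\Proj$) to obtain the cartesian $\LMS$-square \eqref{blowupdiagram}
\[
\xymatrix{ \Bl_J X \ar[d] \ar[r] & \Bl_I P \ar[d] \\ X \ar[r]^-{f} & \Spec P. }
\]
Thus $\Bl_J X \to X$ is, locally on $X$, the base change of $\Bl_I P \to \Spec P$ along $f$.

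Finally I would invoke Lemma~\ref{lem:blowup}: the morphism $\Bl_I P \to \Spec P$ is a group isomorphism of fine fans, and in particular a good refinement. In the general locally monoidal case, good refinements are, by Definition~\ref{defn:LMSrefinement}, closed under base change, so $\Bl_J X \to X$ is a good refinement in a neighbourhood of each point; since good refinements form a local class, this gives the result for fine locally monoidal spaces. In the fan case, Lemma~\ref{lem:groupisomorphism} tells us that group isomorphisms of fine fans are closed under base change (they are so in $\MS$, hence in $\LMS$, and fineness is inherited by base change of fine fans since $h^{\rm gp}$ an isomorphism between finitely generated abelian groups is preserved under pushout along maps of fine monoids). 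Consequently the local pullback $\Bl_J X \to X$ is a group isomorphism of fine fans at each point, and the class of such morphisms is manifestly local, completing the proof.

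The main point of delicacy — which is less an obstacle than a bookkeeping check — is verifying that in the fine-fan case the base change $\Bl_J X \to X$ of $\Bl_I P \to \Spec P$ is again a morphism of \emph{fine fans}, so that Lemma~\ref{lem:groupisomorphism}'s local characterization applies; this uses that strict base changes of fine fans remain fine fans (Proposition~\ref{prop:Fansinverselimits} and its proof), together with the fact that $f$ is locally an isomorphism in the fan setting.
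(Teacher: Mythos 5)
Your proposal is correct and follows essentially the same route as the paper's own argument: localize using coherence of $J$ to get a strict map (resp.\ isomorphism) to $\Spec P$ with $J \cong f^{-1}I^\sim$, apply Lemmas~\ref{lem:relativeProj} and \ref{lem:pullbackProj} to obtain the cartesian square \eqref{blowupdiagram}, and conclude from Lemma~\ref{lem:blowup} together with the locality and base-change stability of good refinements (resp.\ group isomorphisms). The extra bookkeeping check you flag in the fan case is harmless and resolves exactly as you indicate, since the local model is literally $\Bl_I P \to \Spec P$ there.
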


There are analogous results for the blowup of a fine sharp monoidal space $X$ along a coherent ideal $J$.  Recall that we let $\ov{\Bl}_J X$ denote the sharpening of $\Bl_J X$.  By definition of coherent ideal, we can find, locally on $X$, a fine monoid $P$, an ideal $I \subseteq P$, and a strict morphism $f : X \to (\Spec P,\ov{\M}_P)$ such that $J \cong f^* \ov{I}^{\sim}$.  By the same lemmas used above, we have a cartesian diagram \bne{sharpenedblowupdiagram} &  \xym{ \Bl_J X \ar[d] \ar[r] & \Bl_{\ov{I}^\sim} (\Spec P,\ov{\M}_P) \ar[d] \ar[r] & \Bl_I P \ar[d] \\ X \ar[r]^-f & (\Spec P,\ov{\M}_P) \ar[r] & (\Spec P,\M_P) } \ene in $\LMS$.  The sharpening functor $\LMS \to \SMS$ preserves inverse limits so the big square from the above diagram sharpens to a cartesian square: $$ \xym{ \ov{\Bl}_J X \ar[r] \ar[d] & \ov{\Bl}_I P \ar[d] \\ X \ar[r] & (\Spec P, \ov{\M}_P) } $$ is cartesian in both $\LMS$ and $\SMS$.  The right vertical arrow is just the sharpening of the blowup map discussed above, hence it is a strong refinement.  Since strong refinements are local and stable under base change, we have:

\begin{thm} \label{thm:sharpenedblowup} Let $X$ be a fine sharp monoidal space (resp.\ sharp fine fan), $J \subseteq \M_X$ a coherent ideal.  Then $\ov{\Bl}_J X \to X$ is a strong refinement of $\SMS$ (resp.\ of sharp fine fans). \end{thm}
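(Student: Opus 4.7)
The plan is to follow essentially verbatim the cartesian-diagram strategy already set up in the excerpt just before the theorem statement, exploiting the locality and base-change stability built into the definition of \emph{strong refinement}. Since the assertion to be proved is local on $X$, I would reduce immediately to the case where there exists a fine monoid $P$, an ideal $I \subseteq P$, and a strict $\SMS$ morphism $f : X \to (\Spec P,\ov{\M}_P)$ such that $J \cong f^*\ov{I}^{\sim}$; this is possible by the definition of coherent ideal together with Lemma~\ref{lem:coherentsheaves2} in the case of sharp fine fans.

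The key step is to recognize that the diagram \eqref{sharpenedblowupdiagram} displayed in the excerpt, once sharpened, produces a cartesian square
$$ \xymatrix{ \ov{\Bl}_J X \ar[r] \ar[d] & \ov{\Bl}_I P \ar[d] \\ X \ar[r]^-f & (\Spec P, \ov{\M}_P). } $$
Here I use that the sharpening functor $\LMS \to \SMS$ is a right adjoint to the inclusion (\S\ref{section:monoidalspacedefinitions}) and so preserves the relevant fibered products, in combination with Lemmas~\ref{lem:pullbackProj} and~\ref{lem:sharpeningProj} that govern the behavior of relative Proj under pullback and sharpening. The right vertical arrow is by construction the sharpening of the blowup map $\Bl_I P \to \Spec P$, which is a good refinement by Lemma~\ref{lem:blowup} (equivalently by Theorem~\ref{thm:blowup} in the affine case); by the remark immediately following Definition~\ref{defn:LMSrefinement}, sharpenings of good refinements are strong refinements, so $\ov{\Bl}_I P \to (\Spec P,\ov{\M}_P)$ is strong.

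To finish, I would invoke the fact that strong refinements are closed under base change (by Definition~\ref{defn:LMSrefinement}) to conclude that $\ov{\Bl}_J X \to X$ is a strong refinement locally on $X$, and then invoke locality (again built into the definition) to conclude globally. In the sharp-fine-fan case, one needs the additional observation that $\ov{\Bl}_I P$ is itself a sharp fine fan, which is immediate because it is the sharpening of the fine fan $\Bl_I P$ produced in Proposition~\ref{prop:blowup}; this keeps the whole construction inside the category of sharp fine fans, so the assertion for that category follows without change.

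I do not anticipate any serious obstacle: the entire content of the proof is bookkeeping with the cartesian square, the fact that sharpening preserves the relevant limits, and the definitional properties of strong refinements. The only point that requires even minor care is confirming that strict descent of a coherent ideal along $f$ really does give $J \cong f^*\ov{I}^{\sim}$ as sheaves of $\M_X$-modules (so that Lemma~\ref{lem:pullbackProj} applies directly), but this is exactly the definition of coherence in the sharp setting together with the strictness of $f$.
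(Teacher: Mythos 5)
Your proposal is correct and follows essentially the same route as the paper: reduce locally to a strict chart $f : X \to (\Spec P,\ov{\M}_P)$ with $J \cong f^*\ov{I}^{\sim}$, sharpen the cartesian diagram \eqref{sharpenedblowupdiagram} using that sharpening preserves inverse limits, identify the right vertical arrow as the sharpening of the good refinement $\Bl_I P \to \Spec P$ (hence a strong refinement), and conclude by the locality and base-change stability built into Definition~\ref{defn:LMSrefinement}. No gaps.
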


\subsection{Differential realization of blowups}  We will need some results about realization of blowups in the differential setting.

\begin{lem} \label{lem:blowuprealizationlogsmooth} Let $f : X \to Y$ be a map of fine fans locally isomorphic to $\Spec h$ for monic $h$ (for example, the blowup of a fine fan along a coherent ideal---Proposition~\ref{prop:blowup}).  Then the realization $\AA(f)$ of $f$ in $\LDS$ or $\PLDS$ (or $\Sch_{\QQ}$) is log smooth. \end{lem}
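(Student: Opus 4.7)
The plan is to reduce to the affine case by locality and then exhibit the tautological chart as a smooth chart.

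First, log smoothness is local on source and target (Theorem~\ref{thm:logsmoothness}), and the realization functor $\AA$ preserves open embeddings (Theorem~\ref{thm:AY}\eqref{oe}), so I would pass to neighborhoods where $f$ becomes $\Spec h : \Spec P \to \Spec Q$ for some monic $h : Q \to P$ of fine monoids.  It then suffices to check that $\AA(\Spec h)$ is log smooth in each of $\LDS$, $\PLDS$, and $\Sch_{\QQ}$.

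Next, in the differentiable setting, I would take as a chart for $\AA(\Spec h)$ the tautological data provided by Proposition~\ref{prop:APdescription}: the natural maps $P \to \M_{\AA(\Spec P)}(\AA(\Spec P))$ and $Q \to \M_{\AA(\Spec Q)}(\AA(\Spec Q))$ are (finitely generated, hence fine) charts, and $h$ together with the functorially induced $\AA(\Spec h)^\dagger$ makes the corresponding square of Definition~\ref{defn:chartformorphism} commute.  To verify smoothness of this chart (Definition~\ref{defn:smoothchart}), $h$ is monic by hypothesis, and the only remaining condition is that the induced $\DS$-morphism $\u{\AA}(\Spec P) \to \u{\AA}(\Spec Q) \times_{\u{\RR}(Q)} \u{\RR}(P)$ (with $\u{\RR}$ replaced by $\u{\RR}_+$ in the $\PLDS$ case) be smooth.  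But by construction of $\AA$ from $\RR$ or $\RR_+$ (\S\ref{section:fanstologspaces}, \S\ref{section:RP}) we have $\u{\AA}(\Spec P) = \u{\RR}(P)$ and $\u{\AA}(\Spec Q) = \u{\RR}(Q)$, so the fibered product collapses canonically to $\u{\RR}(P)$ and the induced map is literally the identity --- which is certainly smooth.

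For $\Sch_{\QQ}$, I would instead invoke Kato's algebraic chart criterion for log smoothness (not developed in this paper, but standard in log algebraic geometry): integrality of $Q,P$ together with monicity of $h$ forces $\Ker h^{\gp} = 0$, while the torsion part of $\Cok h^{\gp}$ is a finite abelian group, hence of order invertible in $\QQ$; the map to the algebraic fibered product $\Spec \QQ[P] \to \Spec \QQ[Q] \times_{\Spec \QQ[Q]} \Spec \QQ[P]$ is again the identity by exactly the same reasoning as above.  I do not expect any genuine obstacle: the entire argument is formal once one recognizes that $\AA(\Spec P)$ is by definition the universal recipient of a monoid homomorphism out of $P$, which is exactly what forces the map to the fibered product to be the identity.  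The mild subtlety is really just bookkeeping in identifying $\u{\AA}(\Spec P) = \u{\RR}(P)$ (resp.\ $\u{\RR}_+(P)$, resp.\ $\Spec \QQ[P]$) inside the expression for the fibered product.
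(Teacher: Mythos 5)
Your proposal is correct and follows essentially the same route as the paper: reduce to the affine case $\Spec h$ using locality of log smoothness and the fact that $\AA$ preserves open embeddings, then observe that the tautological charts from Proposition~\ref{prop:APdescription} give a smooth chart because the induced map to the fibered product is the identity (this is the content of Example~\ref{example:RPlogsmooth}, to which the paper's proof defers). Your extra remarks on the $\Sch_{\QQ}$ case via Kato's chart criterion are consistent with the paper's own (footnoted) treatment of log smoothness outside the differentiable setting.
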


\begin{proof} Log smoothness is local (Proposition~\ref{prop:logsmooth}) and the realization preserves open embeddings, so we reduce to the situation discussed in Example~\ref{example:RPlogsmooth}. \end{proof}

\begin{lem} \label{lem:blowuprealizationproper} Let $P$ be a fine monoid, $I \subseteq P$ an ideal.  The realization $\AA(\Bl_I P) \to \AA(P)$ of the blowup $\Bl_I P \to \Spec P$ in $\LDS$ is projective (that is, the underlying $\DS$ morphism is projective) and the realization of this blowup in $\PLDS$ is proper Euclidean (Definition~\ref{defn:properEuclidean}). \end{lem}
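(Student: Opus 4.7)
The plan is to realize the blowup as a closed embedding into a projective-space bundle over $\AA(\Spec P)$. I will first choose finitely many generators $i_1, \dots, i_n$ of $I$ (possible since $P$ is fine) and form the surjective $\NN$-graded monoid homomorphism
\[
\phi : P \oplus \NN^n \to R = P \coprod I \coprod I^2 \coprod \cdots, \qquad (p, v) \mapsto p + \sum\nolimits_j v_j i_j,
\]
where $P$ sits in degree zero and $\NN^n$ is graded by total degree; surjectivity in degree $d$ holds because every element of $I^d$ has the form $p + v_1 i_1 + \cdots + v_n i_n$ with $|v|=d$. On the standard affine charts, $(P \oplus \NN^n)_{(e_j)} \cong P \oplus \NN^{n-1}$ with the standard gluing, so $\Proj(P \oplus \NN^n) \cong \Spec P \times \PP^{n-1}$ in $\Fans$. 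The induced morphism $\Proj \phi : \Bl_I P \to \Spec P \times \PP^{n-1}$ is locally $\Spec$ of the surjective monoid maps $(P \oplus \NN^n)_{(e_j)} \to R_{(i_j)}$, and hence is a closed embedding of fans, giving the factorization
\[
\Bl_I P \;\hookrightarrow\; \Spec P \times \PP^{n-1} \;\longrightarrow\; \Spec P
\]
of the blowup map through the projection from projective space.

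Next, I will apply the realization functor $\AA$, which preserves finite inverse limits (Theorem~\ref{thm:AY}), to obtain in $\Esp \in \{\LDS, \PLDS\}$ the corresponding factorization
\[
\AA(\Bl_I P) \;\longrightarrow\; \AA(\Spec P) \times \AA(\PP^{n-1}) \;\longrightarrow\; \AA(\Spec P).
\]
On underlying differentiable spaces, $\u{\AA}$ factors through scheme realization followed by differentialization: scheme realization converts the surjective monoid maps $(P \oplus \NN^n)_{(e_j)} \to R_{(i_j)}$ into surjective $\RR$-algebra maps, and differentialization sends the resulting closed embeddings of locally finite type $\RR$-schemes to closed embeddings of differentiable spaces (\S\ref{section:differentialization}). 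Hence the first arrow in the display is locally, and therefore globally, a closed embedding of differentiable spaces.

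Finally, in the $\LDS$ case, $\AA(\PP^{n-1}) = \u{\RR}\PP^{n-1}$, so $\AA(\Spec P) \times \u{\RR}\PP^{n-1}$ is the projectivization $\PP(\O^n_{\u{\RR}(P)})$ of the trivial rank-$n$ bundle, establishing projectivity. In the $\PLDS$ case, $\AA(\PP^{n-1}) = \Delta^{n-1}$ is the standard $(n{-}1)$-simplex (Example~\ref{example:realizationoffans}), which is compact, so the factorization exhibits the map as proper Euclidean with the compact differentiable space $E = \Delta^{n-1}$.

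The main obstacle I expect is the passage from the combinatorial closed embedding of fans $\Proj \phi$ to a genuine closed embedding of differentiable spaces. This requires unpacking the realization functor as scheme realization composed with differentialization, verifying that each operation preserves closed embeddings, and gluing the local closed embeddings over the affine cover indexed by $e_1, \ldots, e_n$ to a global closed embedding, using that the property of being a closed embedding is local on the base.
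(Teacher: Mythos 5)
Your proposal is correct and follows essentially the same route as the paper: a surjection from the graded monoid $\Sym^*_P(\coprod_S P) \cong P \oplus \NN^{n}$ (graded by total degree) onto the Rees monoid, $\Proj$ of which embeds $\Bl_I P$ into $\Spec P \times \PP^{n-1}$ over $\Spec P$; locally on the base this is $\Spec$ of a surjection of monoids, whose realization is a closed embedding, and then projectivity (resp.\ proper Euclideanness) follows from $\AA(\PP^{n-1})$ being $\RR\PP^{n-1}$ (resp.\ the compact simplex). The only difference is bookkeeping of the number of generators.
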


\begin{proof} Since $I$ is finitely generated (\S\ref{section:idealsandfaces}), we can find a finite subset $S \subseteq I$ so that $\coprod_S P \to I$ is a surjection of $P$ modules.  This induces a surjection \be h : \Sym^*_P ( \coprod_S P) & \to & P \coprod I \coprod I^2 \coprod \cdots \ee of graded monoids (under $P$).  Applying $\Proj$ yields a map of fans \be \Proj h : \Bl_I P & \to & (\Spec P) \times \PP^n \ee over $\Spec P$, where $n := |S|-1$.  It is enough to prove that the realization $\AA(\Proj h)$ is a closed embedding because \be \AA( \Spec P \times \PP^n) & = & \AA(P) \times \AA(\PP^n) \ee (since \eqref{FanstoLogEsp} preserves products) is projective / proper Euclidean over $\AA(P)$ (since $\AA(\PP^n)$ is a compact differentiable space, as we saw in Example~\ref{example:realizationoffans}).  Being a closed embedding is local on the base and $\AA$ preserves inverse limits, so it is enough to prove that, locally on the base, $\Proj h$ is a map of fans whose realization is a closed embedding.  But when $h$ is surjective, we saw in \S\ref{section:blowup} that, locally on the base, $\Proj h$ is $\Spec$ of a surjection of monoids, hence its realization is a closed embedding. \end{proof}

\begin{lem} \label{lem:surjectivity2} Suppose $f : X \to Y$ is a map of fine fans whose $\CC$ scheme realization is surjective.  Then the $\PLDS$ realization of $f$ is also surjective.  Suppose, furthermore, that $f$ is a group isomorphism and $Y$ is fs.  Then the $\LDS$ realization of $f$ is also surjective. \end{lem}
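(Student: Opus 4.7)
The plan is to reduce to the affine case and then appeal to Lemma~\ref{lem:surjectivity}, which is the pointwise monoid-theoretic version of the claim. Both the relevant differential realizations and $\u{\AA}_\CC$ commute with open embeddings (Theorem~\ref{thm:AY}), and surjectivity on underlying topological spaces is local on the target. So, given a point $\eta$ of the target $\u{\RR}_+(Y)$ (or $\u{\RR}(Y)$), I would first restrict $Y$ to the smallest open neighborhood $\Spec Q$ of $y_0 := \tau_Y(\eta) \in Y$ (Lemma~\ref{lem:smallestngbd}), where $Q = \M_{Y,y_0}$ is fine (resp.\ fs). After finding an appropriate lift below, I would likewise restrict $X$ to the smallest open affine neighborhood $\Spec P$ of that lift's image, so that $f$ is identified with $\Spec h$ for a monoid homomorphism $h : Q \to P$ of fine monoids, and $\eta$ itself is just a monoid homomorphism $\eta : Q \to \RR_{\geq 0}$ (resp.\ $\eta : Q \to \RR$).

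For the $\PLDS$ claim, composition with the multiplicative inclusion $\RR_{\geq 0} \hookrightarrow \CC$ turns $\eta$ into a $\CC$-point $\eta_\CC : Q \to \CC$ of $\u{\AA}_\CC(Y) = \Spec \CC[Q]$. The hypothesis that $\u{\AA}_\CC(f)$ is surjective on underlying topological spaces, combined with the fact that the fiber over the closed point $\eta_\CC$ is a nonempty closed subscheme of the finite-type $\CC$-scheme $\u{\AA}_\CC(X)$, produces a closed point $\xi_\CC$ in that fiber. By the Nullstellensatz this closed point has residue field $\CC$ and so corresponds to a monoid homomorphism $\xi_\CC : P \to \CC$ (after shrinking $X$ around $x_0 := \tau_X(\xi_\CC)$, which lies over $y_0$ by naturality of the orbit maps applied to the functorial $\u{\AA}_\CC$) satisfying $\xi_\CC h = \eta_\CC$. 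Lemma~\ref{lem:surjectivity} applied to $h$, $\eta$, and $\xi_\CC$ yields a monoid homomorphism $\xi : P \to \RR_{\geq 0}$ with $\xi h = \eta$, which is the desired preimage of $\eta$ in $\u{\RR}_+(X)$.

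For the $\LDS$ claim the argument is identical with $\RR_{\geq 0}$ replaced by $\RR$, invoking the second assertion of Lemma~\ref{lem:surjectivity}. Its hypotheses are met because $Q = \M_{Y,y_0}$ is fs (inherited from $Y$ via Lemma~\ref{lem:characteristic}) and $h^{\rm gp} = (f^\dagger_{x_0})^{\rm gp}$ is an isomorphism by the group-isomorphism hypothesis together with the local description in Lemma~\ref{lem:groupisomorphism}.

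The main (and only) nontrivial step is the transfer from ``$\u{\AA}_\CC(f)$ is surjective on underlying topological spaces'' to ``every $\CC$-point of the target lifts to a $\CC$-point of the source.''  This rests on two standard facts about finite-type $\CC$-schemes: $\CC$-points coincide with closed points (Nullstellensatz), and any nonempty closed subset contains a closed point. Once this translation is in hand, the monoid-theoretic Lemma~\ref{lem:surjectivity} does the rest, and the proof is really just a matter of unpacking definitions.
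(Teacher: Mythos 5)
Your proof is correct and follows the same route as the paper, whose entire argument is ``By working locally, the result follows from Lemma~\ref{lem:surjectivity}''; you have simply supplied the details that the paper leaves implicit, in particular the Nullstellensatz step converting topological surjectivity of the $\CC$-scheme realization into the existence of a $\CC$-point lift of $\eta_\CC$, and the use of the orbit maps' naturality to land in the right affine charts. (The only quibble: the fs-ness of $Q=\M_{Y,y_0}$ follows from $Y$ being an fs fan via Lemmas~\ref{lem:faces} and \ref{lem:saturated} applied to localizations at faces, rather than from Lemma~\ref{lem:characteristic}, which concerns log structures.)
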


\begin{proof} By working locally, the result follows from Lemma~\ref{lem:surjectivity}. \end{proof}

\subsection{Resolution of singularities} \label{section:functorialresolution}  In this section we use functorial resolution for complex varieties (c.f.\ Definition~\ref{defn:complexvariety}) to obtain a functorial resolution for fs locally monoidal spaces.

\begin{defn} \label{defn:freelocus} Let $X$ be a monoidal space.  The \emph{free locus} in $X$ is \be X^{\rm free} & := & \{ x \in X : \ov{\M}_{X,x} \; {\rm is \; free} \, \}. \ee  If $X^{\rm free}=X$, then we say that $X$ is \emph{free}.  \end{defn}

Note that the free locus of $X$ is defined in terms of the underlying sharp monoidal space $\sms{X}$.

\begin{lem} For any monoid $P$, the free locus in $\Spec P$ is open.  If $f : X \to Y$ is a strict $\LMS$ morphism, then $X^{\rm free} = f^{-1}(Y^{\rm free})$.  For any quasi-coherent locally monoidal space $X$, the free locus in $X$ is open. \end{lem}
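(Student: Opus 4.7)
Given $\p \in \Spec P$ lying in the free locus with complementary face $F$, so that $\ov{\M}_{\Spec P,\p} = P/F$ is free, my plan is to exhibit an open neighborhood of $\p$ contained in $(\Spec P)^{\rm free}$. The algebraic key is that for every face $G$ of $P$ containing $F$, the projection $P \twoheadrightarrow P/G$ factors through $P/F$, realizing $P/G$ as the quotient of the free monoid $P/F$ by the face $\ov{G} \subseteq P/F$ given by the image of $G$. Faces of a free monoid $\bigoplus_I \NN$ are the coordinate submonoids $\bigoplus_J \NN$ for $J \subseteq I$, and their quotients $\bigoplus_{I \setminus J} \NN$ are again free. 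Hence $P/G$ is free, so the ``generization set'' $V_F := \{\q \in \Spec P : F \subseteq P \setminus \q\} = \bigcap_{f \in F} U_f$ lies inside the free locus.

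To conclude that $V_F$ contains an open neighborhood of $\p$, I invoke Lemma~\ref{lem:faces}: in the finitely generated setting relevant to this paper, $F$ is itself a finitely generated monoid, say by $f_1, \dots, f_k$. Any face of $P$ containing all the $f_i$ then contains the face they generate, namely $F$ itself, so $V_F = U_{f_1} \cap \cdots \cap U_{f_k}$ is a finite intersection of basic opens, hence open. This settles part~(1). Part~(2) is then straightforward: sharpening commutes with the inverse-image functor $f^{-1}$ for sheaves of monoids (both being left adjoints, preserving stalks and units), so when $f^\dagger : f^{-1}\M_Y \to \M_X$ is an isomorphism, $f^{-1}\ov{\M}_Y \to \ov{\M}_X$ is also an isomorphism, giving $\ov{\M}_{X,x} \cong \ov{\M}_{Y,f(x)}$ on stalks; so $x \in X^{\rm free}$ iff $f(x) \in Y^{\rm free}$.

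For part~(3), openness is local, so I work on a chart: by quasi-coherence each $x \in X$ has a neighborhood $U$ admitting a strict morphism $f : U \to \Spec P$, and part~(2) gives $U^{\rm free} = f^{-1}((\Spec P)^{\rm free})$, which is open in $U$ by part~(1) and continuity of $f$; these opens cover $X^{\rm free}$. The main substantive step in the whole argument is the finite-generation bookkeeping in part~(1): one must reduce the a priori infinite intersection $V_F = \bigcap_{f \in F} U_f$ to a finite one, which is exactly what Lemma~\ref{lem:faces} affords under a finite-generation hypothesis on the monoids appearing in charts.
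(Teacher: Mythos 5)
Your proof is correct and follows essentially the same route as the paper's: the key algebraic input in both is that the quotient of a free monoid by a face is again a free monoid (faces of $\oplus_S\NN$ being the coordinate submonoids), which shows the free locus is stable under generization, and parts (2) and (3) are handled identically (strictness passes to characteristic stalks; locality plus a chart). If anything you are more explicit than the paper about the topological step — the paper simply asserts that stability under generization yields openness, whereas you justify it by cutting the generization set $\bigcap_{f \in F} U_f$ down to a finite intersection of basic opens via Lemma~\ref{lem:faces}, so the finite-generation caveat you flag is implicitly present in the paper's argument as well and is not a real divergence.
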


\begin{proof} To prove the free locus of $\Spec P$ is open, it suffices to prove that is is stable under generalization, so let $\p \subseteq \q$ be primes of $P$ with complementary faces $F \supseteq H$.  We need to prove that $P/F$ is free when $P/H$ is free.  It is straightforward to see that $P/H \to P/F$ is the quotient of $P/H$ by the \emph{face} $F/H$, so it suffices to prove that the quotient of a free monoid by a face is again free, which is easy: the faces of $\oplus_S \NN$ are the submonoids $\oplus_T \NN$, where $T \subseteq S$, and the corresponding quotient is the free monoid $\oplus_{S \setminus T} \NN$ (this is clear from the proof of Lemma~\ref{lem:faces}).  The second statement is clear from the definitions and the third statement follows from the first two because the question is local on $X$. \end{proof}

\begin{defn} \label{defn:idealsequence} Let $X$ be a (fine) locally monoidal space, sharp monoidal space, or a (locally noetherian) scheme.  An \emph{ideal sequence} in $X$ is a sequence $I=(I_1,I_2,\dots,I_m)$ consisting of a coherent ideal $I_1$ of $X_1 := X$, a coherent ideal $I_2$ of $X_2 := \Bl_{I_1} X_1$, a coherent ideal $I_3$ of $X_3 := \Bl_{I_2} X_2$, and so forth.  We often write $\Bl_I X$ for $\Bl_{I_m} X_m$.\footnote{In the context of sharp monoidal spaces, the blowup denoted $\Bl$ is replaced everywhere by the sharpened blowup $\ov{\Bl}$ of \S\ref{section:blowup2}.  Accordingly, we write $\ov{\Bl}_I X$ instead of $\Bl_I X$.}  Two ideal sequences are called \emph{equivalent} iff they become equal after possibly inserting the unit ideal at several points in each sequence. \end{defn}

If $f : X \to Y$ is a map of such spaces and $I$ is an ideal sequence in $Y$, then we define an ideal sequence $f^{-1}I =: J$ in $Y$, called the \emph{inverse image ideal sequence} as follows.  We first set $J_1 := f^{-1}I_1$, so there is a natural map $f_2 : Y_2 := \Bl_{J_1} Y \to X_2$; we then set $J_2 := f^{-1}_2 I_2$, and so forth.  It is clear that the inverse images of equivalent ideal sequences are equivalent and that formation of inverse images is compatible with composition in the usual sense.  We will be mostly interested in this construction when $f : U \into X$ is the inclusion of an open subspace, in which case we write $I|U$ instead of $f^{-1} I$; in this situation the aforementioned natural maps $f_2,f_3,\dots$ are also inclusions of open subspaces.

If $I$ and $J$ are two equivalent ideal sequences, then it is clear that $\Bl_I X = \Bl_J X$.  In fact, it is clear that one also has such a natural isomorphism as long as $X$ can be covered by opens $U_i$ such that $I|U_i$ and $J|U_i$ are equivalent for every $i$.  It is natural then to consider the sheaf on $X$ associated to the presheaf taking an open subset $U \subseteq X$ to the set of equivalence classes of ideal sequences on $U$.

\begin{defn} \label{defn:generalizedidealsequence} A global section $I$ of the sheaf defined immediately above will be called a \emph{generalized ideal sequence} on $X$.  The \emph{blowup} $\Bl_I X$ of a generalized ideal sequence is defined by taking local representative ideal sequences $I_i$ for $I$ on the opens $U_i$ in a cover $\{ U_i \}$ of $X$ and gluing the blowups $\Bl_{I_i} U_i$. \end{defn}

We now recall the main results on functorial resolution of complex varieties (see \cite{BM}, \cite{W}, \cite{Kollar}, and the references therein).  There is a way of assigning, to every complex variety $X$, an ideal sequence $I=I(X)$ in $X$ in a manner satisfying the following properties: \begin{enumerate}[label=FR\theenumi., ref=FR\theenumi] \item \label{normalblowups} The blowups $$X_2 = \Bl_{I_1} X, \; X_3 := \Bl_{I_2} X_2, \; \dots, \; \Bl_I X $$ are again complex varieties, normal if $X$ is normal, and the maps $$\cdots \to X_3 \to X_2 \to X$$ are surjective.  \item \label{resolution} For any complex variety $X$, the blowup $\Bl_{I(X)} X$ is smooth and $\Bl_{I(X)} X \to X$ is an isomorphism over the smooth locus of $X$.  \item \label{automorphisminvariance} For any isomorphism $f : X \to Y$ of complex varieties, $f^{-1}I(Y) = I(X)$.  \item \label{varietiessmoothinvariance} For any smooth map $f : X \to Y$ of complex varieties, $f^{-1} I(Y)$ is equivalent to $I(X)$. \end{enumerate}  Note that smooth maps are flat, so the inverse image ideal $f^{-1} I(Y)$ is identified with $f^* I(Y)$ as an $\O_X$ module.

We are now going to propagate the above functorial resolution into the world of monoids.

\begin{thm} \label{thm:resolutionofmonoids} There is a way of assigning an ideal sequence $I(P)$ in $F := \Spec P$ for each fs monoid $P$ satisfying the following properties: \begin{enumerate}[label=MR\theenumi., ref=MR\theenumi] \item \label{saturatedblowups} The monoidal spaces $$F := \Spec P, \; F_2 := \Bl_{I(P)_1} F, \; F_3 := \Bl_{I(P)_2} F_2, \; \dots, \; \Bl_{I(P)} F $$ are fs fans and the $\CC$-scheme realizations of the maps $$\cdots \to F_3 \to F_2 \to F$$ are surjective.  \item \label{freeresolution} $\Bl_{I(P)} F$ is a free fs fan and $\Bl_{I(P)} F \to F$ is an isomorphism over the free locus of $F$.  \item \label{isomorphisminvariance} If $h : Q \to P$ is an isomorphism of fs monoids, then $I(P) = (\Spec h)^{-1} I(Q)$.  \item \label{smoothinvariance} If $h : Q \to P$ is a map of fs monoids such that $\Spec \CC[P] \to \Spec \CC[Q]$ is a smooth morphism of complex varieties, then $I(P)$ is equivalent to $(\Spec h)^{-1} I(Q)$. \end{enumerate} \end{thm}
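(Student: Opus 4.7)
The plan is to define $I(P)$ by pulling back, through the monoid algebra functor, the ideal sequence produced by functorial resolution of the complex variety $\Spec \CC[P]$. First I would note that $\Spec \CC[P]$ is a complex variety in the sense of Definition~\ref{defn:complexvariety}: since $P$ is fs, $\ov{P}^{\rm gp}$ is torsion-free (Lemma~\ref{lem:saturated}), so Theorem~\ref{thm:fsmonoidsarevarieties} applies. Let $J = I(\Spec \CC[P]) = (J_1,\dots,J_m)$ be the resolution sequence given by FR, and let $X_1 = \Spec\CC[P]$, $X_{j+1} = \Bl_{J_j} X_j$.

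I would then prove by induction on $j$ that there is an fs fan $F_j$ with $\u{\AA}_\CC(F_j) = X_j$ and a coherent monoid ideal $I_j \subseteq \M_{F_j}$ with $J_j$ identified under this isomorphism with $\CC[I_j]$ (glued from the local $\CC[\cdot]$ construction on affine charts). The base case is $F_1 = \Spec P$. For the descent of $J_1$: every $\CC$-point $h$ of $\GG(P) = \Spec\CC[P^{\rm gp}]$ gives an automorphism of $\Spec\CC[P]$ of the form \eqref{aut}; by FR3 applied to these isomorphisms, $J_1$ is invariant under all such automorphisms, so Theorem~\ref{thm:torusinvariantideals} yields $J_1 = \CC[I_1]$ for a coherent ideal $I_1 \subseteq P$. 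For the inductive step, Lemma~\ref{lem:blowuprealization} identifies $X_{j+1} = \Bl_{\CC[I_j]} \u{\AA}_\CC(F_j)$ with $\u{\AA}_\CC(\Bl_{I_j} F_j) = \u{\AA}_\CC(F_{j+1})$. Cover $F_{j+1}$ by affines $\Spec Q_\alpha$ with $Q_\alpha$ fs and $Q_\alpha^{\rm gp} = P^{\rm gp}$ (using Proposition~\ref{prop:blowup} together with saturation if needed). The torus action of $\GG(P)$ lifts from $X_j$ to $X_{j+1}$ because $J_j$ is torus-invariant, and restricts on each $\Spec\CC[Q_\alpha]$ to the standard action \eqref{aut} coming from $Q_\alpha^{\rm gp} = P^{\rm gp} \to \CC^*$. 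By FR3 once more, $J_{j+1}|_{\Spec\CC[Q_\alpha]}$ is torus-invariant, so Theorem~\ref{thm:torusinvariantideals} gives $J_{j+1}|_{\Spec\CC[Q_\alpha]} = \CC[I_{j+1,\alpha}]$; these local pieces glue to the desired $I_{j+1} \subseteq \M_{F_{j+1}}$. Setting $I(P) := (I_1,\dots,I_m)$ completes the construction.

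Finally I would verify MR1--MR4. MR1 is immediate: surjectivity of the $X_{j+1} \to X_j$ from FR1 transfers via Lemma~\ref{lem:blowuprealization} to surjectivity of the scheme realizations of $F_{j+1} \to F_j$. For MR2, the final $X_m$ is smooth by FR2, so $\CC[Q]$ is smooth on every affine piece, whence by Theorem~\ref{thm:smoothifffree} each such $Q$ is free (equivalently $\ov{Q}$ is free), giving freeness of $\Bl_{I(P)} F$; conversely a point $x \in F^{\rm free}$ with face $F_x$ corresponds to an open affine $\Spec F_x^{-1}P = \Spec(F_x^{\rm gp}\oplus P/F_x)$ (Lemma~\ref{lem:fssplitting}), whose scheme realization is smooth by Theorem~\ref{thm:smoothifffree}, so FR2 guarantees that $\Bl_J X \to X$ is an isomorphism over the preimage of $F^{\rm free}$, and this isomorphism of torus-equivariant $\CC$-schemes descends via Lemma~\ref{lem:orbitmap2} to an isomorphism of fans. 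MR3 and MR4 follow directly from FR3 and FR4: an isomorphism (resp.\ a monoid homomorphism inducing a smooth map of complex varieties) $h : Q \to P$ yields an isomorphism (resp.\ smooth map) of schemes $\Spec\CC[h]$, along which the $J_\bullet$ pull back equivariantly by hypothesis of FR, and equivariance forces the pulled-back ideals to again be monoid ideals, with the uniqueness part of Theorem~\ref{thm:torusinvariantideals} ensuring that the pullback of $I(Q)$ (resp.\ its equivalence class) coincides with $I(P)$.

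The main obstacle is the inductive propagation of fs-ness and of the torus action through successive blowups: Theorem~\ref{thm:torusinvariantideals} must be applied locally on each $X_j$, which requires each affine piece to be of the form $\Spec\CC[Q]$ with $Q$ fs so that the induction hypothesis, Theorem~\ref{thm:fsmonoidsarevarieties}, and the torus-invariance descent all continue to apply. I would handle this by exploiting the splitting $P = P^* \oplus \ov{P}$ (Lemma~\ref{lem:fssplitting}) to reduce to the sharp fs case, where iterated blowups along coherent ideals correspond to successive refinements of rational polyhedral cones whose lattice-point monoids remain fs. A secondary subtlety lies in MR3, where one must upgrade a torus-equivariant $\CC$-scheme isomorphism to a fan isomorphism; this is handled by Lemma~\ref{lem:orbitmap2}, which identifies points of the fan with torus orbits in the scheme realization, making the recovery of the fan structure from the scheme-with-action automatic.
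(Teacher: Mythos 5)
Your overall strategy is the paper's: descend the functorial resolution of $\Spec \CC[P]$ through the monoid algebra functor, using invariance under the automorphisms \eqref{aut} together with Theorem~\ref{thm:torusinvariantideals} to recognize each $J_j$ as $\CC[I_j]$, and then transfer smoothness/surjectivity statements back via Theorem~\ref{thm:smoothifffree}, Lemma~\ref{lem:blowuprealization} and Lemma~\ref{lem:orbitmap2}. Most of the verification of \eqref{saturatedblowups}--\eqref{smoothinvariance} is as in the paper.

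There is, however, one genuine gap: your argument that the intermediate fans $F_j$ are \emph{fs}. You propose to get this either by ``saturation if needed'' on the charts $\Spec Q_\alpha$, or by reducing to the sharp case and asserting that iterated blowups of fs monoids along coherent ideals have fs charts. The second claim is false: the blowup of an fs monoid along a monoid ideal need not be saturated. For instance, for $P = \NN^2$ and $I$ the ideal generated by $2e_1$ and $2e_2$, the chart $R_{(2e_1)}$ of $\Bl_I P$ is generated by $e_1$, $e_2$, $2e_2-2e_1$; it contains $2(e_2-e_1)$ but not $e_2-e_1$, so it is not saturated. And the first option is not available either: if you replace a chart $Q_\alpha$ by $Q_\alpha^{\rm sat}$ you change the fan, and the identification $\u{\AA}_{\CC}(F_{j+1}) = X_{j+1}$ (on which the whole inductive descent of $J_{j+1}$ rests) is lost, since $\Spec \CC[Q_\alpha^{\rm sat}] \neq \Spec\CC[Q_\alpha]$ in general. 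The correct source of saturation is the normality clause of \eqref{normalblowups}: the functorial resolution guarantees that each $X_{j+1}$ is a \emph{normal} complex variety (because $X$ is normal, by Theorem~\ref{thm:fsmonoidsarevarieties}), and then the ``normal iff $\ov{Q}$ saturated'' direction of Theorem~\ref{thm:fsmonoidsarevarieties} applied to each chart $\Spec\CC[Q_\alpha]$ of $X_{j+1}$ forces every $Q_\alpha$ to be saturated, hence $F_{j+1}$ to be fs, with no modification of the fan. Your proposal never invokes the normality statement of \eqref{normalblowups}, and without it the induction does not close.
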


\begin{proof}  Let $P$ be an fs monoid, $F := \Spec P$ the associated fs fan.  By Theorem~\ref{thm:fsmonoidsarevarieties}, $X := \Spec \CC[P]$ is a normal complex variety, so the functorial resolution for complex varieties yields an ideal sequence $J=(J_1,J_2,\dots,J_m)$ on $X$ such that $\Bl_J X$ is a smooth complex variety and $$X_1 := X, \; X_2 := \Bl_{J_1} X_1, \; \dots, \; \Bl_J X $$ are normal complex varieties.  By \eqref{automorphisminvariance}, the ideal $J_1$ of $X=X_1$ is invariant under all $\CC$ algebra automorphisms of $\CC[P]$.  In particular, $J_1$ is invariant under the automorphisms \eqref{aut}, hence $J_1 = \CC[I_1]$ for an ideal $I_1 \subseteq P$ by Theorem~\ref{thm:torusinvariantideals}.  Then $F_2 = \Bl_{I_1} P$ is a fine fan by Theorem~\ref{thm:blowup} and $X_2 = \AA( \Bl_{I_1} P)$ is the $\CC$ scheme realization of $F_2$ (its image under the functor from fine fans to locally finite type $\CC$ schemes discussed in \S\ref{section:realizationoffans}) by Lemma~\ref{lem:blowuprealization}.  Cover $F_2$ by $\Spec Q_i$ for various fine monoids $Q_i$.  Then we have a corresponding cover $\Spec \CC[Q_i]$ of $X_2$.  Since $X_2$ is normal, $Q_i$ is saturated, thus we see that $F_2$ is in fact an fs fan.  Repeating the same argument, we see that $X_3 = \AA(F_3)$ and so forth, and that the $F_i$ are fs fans.  Since $\Bl_J X = \AA( \Bl_I P)$ is smooth, it follows from Theorem~\ref{thm:smoothifffree} that the fs fan $\Bl_{I(P)} P$ is free.  Similarly, the map $\Bl_I P \to \Spec P$ is an isomorphism over $(\Spec P)^{\rm free}$ because $\Bl_J X \to X$ is an isomorphism over the smooth locus of $X$.  The properties \eqref{isomorphisminvariance} and \eqref{smoothinvariance} are clearly inherited from the properties \eqref{automorphisminvariance} and \eqref{varietiessmoothinvariance} of the functorial resolution for complex varieties. \end{proof} 

\begin{defn} \label{defn:canonicalideal} An assignment of an ideal sequence $I(P)$ to each fine monoid $P$ satisfying the properties of the above theorem will be called a \emph{canonical resolution} of fs monoids.  The ideal sequence $I(P)$ on $\Spec P$ will be called the \emph{canonical ideal sequence}. \end{defn}

\begin{lem} \label{lem:resolutionofmonoids} Let $P$ be an fs monoid, $F$ a face of $P$, $l : P \to F^{-1} P$ the localization map, $\pi : F^{-1} P \to P/F$ the sharpening map.  Then: \begin{enumerate} \item \label{localizationinvariance} $I(F^{-1} P)$ is equivalent to $(\Spec l)^{-1} I(P)$.  \item \label{sharpeninginvariance} $I(P/F)$ is equivalent to $(\Spec \pi)^{-1} I(F^{-1} P)$.  \end{enumerate} \end{lem}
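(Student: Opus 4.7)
For both parts, the plan is to invoke the smooth invariance property MR\ref{smoothinvariance} of the canonical resolution, after checking that the relevant monoid homomorphism (or a convenient variant of it) induces a smooth morphism on the associated complex scheme realizations. A preliminary remark: the contravariant formation of inverse image ideal sequences is associative in the sense that $(gf)^{-1} = f^{-1} g^{-1}$ on ideal sequences, and pulling back preserves equivalence (equivalences just insert unit ideals, whose pullbacks are unit ideals). I will use these formal properties freely.

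For part (1), since $P$ is fs, the face $F$ is itself fs by Lemma~\ref{lem:faces}, and in particular $F$ is finitely generated as a monoid. On monoid algebras, $\CC[l] : \CC[P] \to \CC[F^{-1}P]$ is then the localization of $\CC[P]$ at the multiplicative subset generated by $\{[f_1],\dots,[f_n]\}$, where $f_1,\dots,f_n$ generate $F$; equivalently, at the single element $[f_1]\cdots[f_n]$. Hence $\Spec \CC[l] : \Spec\CC[F^{-1}P] \to \Spec\CC[P]$ is an open immersion, in particular a smooth morphism of complex varieties (both sides are complex varieties by Theorem~\ref{thm:fsmonoidsarevarieties} since $F^{-1}P$ and $P$ are fs). Property MR\ref{smoothinvariance} applied to $l$ then gives $I(F^{-1}P) \sim (\Spec l)^{-1}I(P)$ directly.

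For part (2), the sharpening map $\pi$ does not itself induce a smooth morphism on complex scheme realizations — under the splitting $F^{-1}P \cong F^{\mathrm{gp}} \oplus P/F$ from Lemma~\ref{lem:fssplitting}, $\Spec\CC[\pi]$ is the closed embedding of $\{e\} \times \Spec\CC[P/F]$ into $\Spec\CC[F^{\mathrm{gp}}] \times \Spec\CC[P/F]$. The trick is to argue via a section instead. Lemma~\ref{lem:fssplitting} (which needs $P$ fs, via Lemma~\ref{lem:saturated}, so that $(P/F)^{\mathrm{gp}}$ is free) yields a monoid homomorphism $s : P/F \to F^{-1}P$ with $\pi s = \id_{P/F}$, under which $F^{-1}P \cong F^{\mathrm{gp}} \oplus P/F$ compatibly. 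Then $\CC[s] : \CC[P/F] \to \CC[F^{-1}P] \cong \CC[F^{\mathrm{gp}}] \otimes_{\CC} \CC[P/F]$ is the base change of $\CC \to \CC[F^{\mathrm{gp}}]$ along $\CC \to \CC[P/F]$. Since $F^{\mathrm{gp}}$ is a finitely generated abelian group, $\CC[F^{\mathrm{gp}}]$ is smooth over $\CC$ (as recalled at the start of the proof of Theorem~\ref{thm:smoothifffree}), so $\Spec\CC[s]$ is a smooth morphism of complex varieties. Property MR\ref{smoothinvariance} applied to $s$ then yields $I(F^{-1}P) \sim (\Spec s)^{-1} I(P/F)$.

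To conclude (2), I pull back this equivalence along $\Spec \pi$:
\[
(\Spec \pi)^{-1} I(F^{-1}P) \; \sim \; (\Spec \pi)^{-1}(\Spec s)^{-1} I(P/F) \; = \; \bigl(\Spec s \circ \Spec \pi\bigr)^{-1} I(P/F) \; = \; \Spec(\pi s)^{-1} I(P/F) \; = \; I(P/F),
\]
using $\pi s = \id_{P/F}$. The main obstacle, as suggested above, is just the observation that MR\ref{smoothinvariance} must be invoked on the section $s$ rather than on $\pi$ itself; once one realizes this, the proof is a short diagram chase that is only possible because $P$ is fs (which guarantees that the splitting $s$ exists via Lemma~\ref{lem:fssplitting}).
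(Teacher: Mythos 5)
Your proof is correct and follows essentially the same route as the paper's: part (1) via $\Spec\CC[l]$ being an open immersion hence smooth, and part (2) by applying the smooth-invariance property to the section $s$ of $\pi$ furnished by Lemma~\ref{lem:fssplitting} and then pulling back along $\Spec\pi$ using $\pi s = \mathrm{id}$. The extra remarks you include (why $\pi$ itself won't do, and why the localization is at a single element) are accurate but not needed beyond what the paper records.
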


\begin{proof} For \eqref{localizationinvariance}, just note that $\Spec \CC[l]$ is an open embedding, hence smooth, so this is immediate from \eqref{smoothinvariance}.  For \eqref{sharpeninginvariance}, Lemma~\ref{lem:fssplitting} yields an isomorphism $F^{-1}P \cong F^{\rm gp} \oplus P/F$ compatible with the projection $\pi$, so that $\pi$ has a section $s : P/F \to F^{-1} P$ yielding the above splitting.  Then $\Spec \CC[s]$ is a smooth morphism of complex varieties because $\Spec \CC[F^{\rm gp}]$ is a smooth complex variety, hence $(\Spec s)^{-1} I(P/F)$ is equivalent to $I(F^{-1}P)$ by \eqref{smoothinvariance}.  But \be I(P/F) & = & (\Spec \pi)^{-1} (\Spec s)^{-1} I(P/F) \ee because the composition $(\Spec s)(\Spec \pi)$ is the identity, so the result follows.  \end{proof}

\begin{thm} \label{thm:LMSresolution}  Let $X$ be an fs locally monoidal space.  Fix an open cover $\{ U_i \}$ of $X$, fs monoids $P_i$, and strict $\LMS$ morphisms $a_i : U_i \to \Spec P_i$.  Let $I_i := a_i^{-1} I(P_i)$.  Then the ideal sequences $I_i$ glue to a generalized ideal sequence $I$ on $X$ and we have cartesian $\LMS$ diagrams $$ \xym{ \Bl_I X \ar[d] & \ar[l] \Bl_{I_i} U_i \ar[r] \ar[d] & \Bl_{I(P_i)} \Spec P_i \ar[d] \\ X & \ar[l] U_i \ar[r]^-{a_i} & \Spec P_i } $$ with strict horizontal arrows for every $i$.  In particular, $\Bl_I X$ is a free fs locally monoidal space and $\Bl_I X \to X$ is a strong refinement which is an isomorphism over the free locus of $X$. \end{thm}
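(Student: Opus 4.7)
The plan is to proceed in four stages: (i) establish local equivalence of the $I_i$ on overlaps $U_i \cap U_j$ and glue to produce the generalized ideal sequence $I$; (ii) verify the cartesian diagrams; (iii) deduce freeness of $\Bl_I X$; (iv) deduce the refinement property. Stages (ii)--(iv) are fairly formal consequences of previously established compatibilities; the substantive work lies in stage (i).

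For stage (i), fix $x \in U_i \cap U_j$. Since $a_i$ is strict, the stalk map identifies $\M_{X,x}$ with $F^{-1} P_i$, where $F := (a_{i,x}^\dagger)^{-1}(P_i^*)$ is the face of $P_i$ at $a_i(x)$; analogously $\M_{X,x} \cong G^{-1} P_j$ for some face $G \subseteq P_j$. By Lemma~\ref{lem:resolutionofmonoids}(\ref{localizationinvariance}), near $a_i(x)$ the pullback $(\Spec l_i)^{-1} I(P_i)$ is equivalent, as an ideal sequence on $\Spec F^{-1} P_i$, to $I(F^{-1} P_i)$; analogously for $j$. Under the induced isomorphism of monoids $F^{-1} P_i \cong \M_{X,x} \cong G^{-1} P_j$, property MR\ref{isomorphisminvariance} identifies the two canonical ideal sequences. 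Thus $I_i$ and $I_j$ have equivalent stalks at $x$. The main obstacle is to spread this stalk-wise equivalence to a genuine equivalence on some open neighborhood of $x$ inside $U_i \cap U_j$: since each ideal sequence has only finitely many terms and each of those ideals is coherent (hence locally finitely generated), one can pick finitely many local generators whose stalk-level coincidence extends to a neighborhood after shrinking, while the freedom to insert unit ideals absorbs any mismatch in the lengths of the two sequences. Once this local equivalence is established for every $x$, Definition~\ref{defn:generalizedidealsequence} produces the desired global generalized ideal sequence $I$.

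For stage (ii), apply Lemma~\ref{lem:pullbackProj} to the Rees monoid (compare diagram \eqref{blowupdiagram}) to obtain the cartesian square for a single blowup along a coherent ideal, and iterate along the ideal sequence $I(P_i)$ to get the stated cartesian diagram for $\Bl_{I(P_i)} \Spec P_i$. Strictness of the horizontal arrows at each stage is preserved because strictness is stable under base change (Corollary~\ref{cor:benignstrictmorphisms}).

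For stages (iii)--(iv), by MR\ref{freeresolution} each $\Bl_{I(P_i)} \Spec P_i$ is a free fs fan and the map to $\Spec P_i$ is an isomorphism over its free locus. Freeness and the ``isomorphism over the free locus'' condition are stalk-wise conditions on characteristics that descend through the strict horizontal arrows of the cartesian diagrams from stage (ii); hence $\Bl_I X$ is a free fs locally monoidal space and $\Bl_I X \to X$ is an isomorphism over $X^{\mathrm{free}}$. Finally, each individual blowup comprising $\Bl_{I(P_i)} \Spec P_i \to \Spec P_i$ is a group isomorphism of fine fans (Lemma~\ref{lem:blowup}), hence a good refinement (Lemma~\ref{lem:goodrefinement}), whose sharpening is a strong refinement. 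Since strong refinements are local and stable under base change by Definition~\ref{defn:LMSrefinement}, the cartesian diagrams from stage (ii) imply that $\Bl_I X \to X$ is a strong refinement, completing the proof.
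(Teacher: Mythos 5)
Your overall strategy coincides with the paper's: use strictness of $a_i$ to identify $\M_{X,x}$ with a localization $F_i^{-1}P_i$, invoke Lemma~\ref{lem:resolutionofmonoids}\eqref{localizationinvariance} together with \eqref{isomorphisminvariance} to compare the canonical ideal sequences on overlaps, glue, and then read off the cartesian squares from \eqref{blowupdiagram} and the remaining assertions from base-change stability and Theorem~\ref{thm:resolutionofmonoids}. Your stages (ii)--(iv) match the paper essentially verbatim.

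The one place where you diverge --- and where your argument has a genuine gap as written --- is the spreading step in stage (i). You propose to spread a ``stalk-level coincidence of the two ideal sequences'' to a neighborhood of $x$ by choosing finitely many local generators. For the first ideals $I_{i,1}$ and $I_{j,1}$, which are coherent ideals on $U_{ij}$ itself, this works. But an ideal sequence is not a sheaf on $X$: its second term lives on $\Bl_{I_1}U_{ij}$, its third on the next blowup, and so on. Agreement ``over $x$'' for these later terms means agreement at every point of the fiber of the blowup tower over $x$, and to promote an open neighborhood of that fiber to a tube of the form $\pi^{-1}(W')$ for some neighborhood $W'$ of $x$ you would need some topological properness of the blowup maps of monoidal spaces --- which you neither state nor prove, and which the paper never needs. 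The paper's device is to do the spreading once, at the level of charts rather than ideals: setting $b := a_{j,x}^{-1}a_{i,x} : F_i^{-1}P_i \to F_j^{-1}P_j$, the triangle expressing $a_i = (\Spec b)\circ a_j$ commutes on stalks at $x$, hence, since $F_i^{-1}P_i$ is finitely generated, on an honest neighborhood $W$ of $x$. Once the maps of spaces themselves agree on $W$, the chain
$$I_i|W \;\sim\; (a_i|W)^{-1}I(F_i^{-1}P_i) \;=\; (a_j|W)^{-1}(\Spec b)^{-1}I(F_i^{-1}P_i) \;=\; (a_j|W)^{-1}I(F_j^{-1}P_j) \;\sim\; I_j|W$$
is purely formal, by compatibility of inverse-image ideal sequences with composition together with Lemma~\ref{lem:resolutionofmonoids} and \eqref{isomorphisminvariance}; no further propagation of agreement up the blowup tower is required. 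You should restructure stage (i) along these lines.
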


\begin{proof} Consider a point $x \in U_{ij}$.  We want to show that $I_i$ and $I_j$ are equivalent on a neighborhood of $x$.  Let $\p_i := a_i(x) \in \Spec P_i$ and let $F_i \subseteq P_i$ be the complementary face.  The stalk of $\M_{P_i}$ at $\p_i$ is $F_i^{-1} P_i$ (c.f.\ \eqref{stalkformula}), so, since $a_i$ is strict, we have an isomorphism $a_{i,x} : F_i^{-1} P_i  \to  \M_{X,x}. $  Let $V_i := a_i^{-1}(\Spec (F_i^{-1} P_i))$ so we have a commutative $\LMS$ diagram $$ \xym{ U_i \ar[r]^-{a_i} & \Spec P_i \\ V_i \ar[u] \ar[r] & \Spec F_i^{-1} P_i \ar[u] } $$ where the vertical arrows are open embeddings (it is important here that $F_i$ is finitely generated---\S\ref{section:Spec}) and the horizontal arrows are strict.  Make the same definitions with $i$ replaced by $j$.  Then we have an isomorphism $a_{j,x} : F_j^{-1} P_j \to \M_{X,x}$ and hence an isomorphism $b := a_{j,x}^{-1} a_{i,x} : F_i^{-1}P_i \to F_j^{-1}P_j$.  The diagram $$ \xym{ F_i^{-1}P_i \ar[r]^-{a_i} \ar[rd]_b & \M_X(V_{ij}) \\ & F_j^{-1}P_j \ar[u]_{a_j} } $$ may not commute, but it does commute after composing with $\M_X(V_{ij}) \to \M_{X,x}$, so, since $F_i^{-1}P_i$ is finitely generated, it will commute after replacing $V_{ij}$ with some smaller neighborhood $W$ of $x$.  We thus obtain a commutative $\LMS$ diagram $$ \xym@C+20pt@R-10pt{ U_i \ar[r]^{a_i} & \Spec P_i \\ & \Spec F_i^{-1}P_i \ar[u] \\ W \ar[ru]^{a_i|W} \ar[rd]_{a_j|W} \ar[dd] \ar[uu] \\ & \Spec F_j^{-1}P_j \ar[d] \ar[uu]^{\cong}_{\Spec b} \\ U_j \ar[r]^{a_j} & \Spec P_j } $$ from which we compute \be I_i|W & = & (a_i^{-1} I(P_i))|W  \\ & = & (a_i|W)^{-1}(I(P_i)| \Spec F_i^{-1}P_i) \\ & \sim & (a_i|W)^{-1}(I(F_i^{-1}P_i)) \\ & = & (a_j|W)^{-1}(\Spec b)^{-1}(I(F_i^{-1}P_i)) \\ & = & (a_j|W)^{-1}(I(F_j^{-1}P_j)) \\ & \sim & I_j|W, \ee using Lemma~\ref{lem:resolutionofmonoids}. 

The left square in the diagram is cartesian by construction of $\Bl_I X$.  The right square is an instance of the cartesian square \eqref{blowupdiagram} of \S\ref{section:blowup2}. \end{proof}

We leave it to the reader to state the analog of Theorem~\ref{thm:LMSresolution} for fs \emph{fans}.  Our next result is the analog of Theorem~\ref{thm:LMSresolution} for \emph{sharp} monoidal spaces.  It is proved in the same way as Theorem~\ref{thm:LMSresolution}.

\begin{thm} \label{thm:SMSresolution} Let $X$ be an fs sharp monoidal space.  Fix an open cover $\{ U_i \}$ of $X$, fs monoids $P_i$ (which may as well be sharp), and strict $\LMS$ morphisms $a_i : U_i \to (\Spec P_i,\ov{\M}_{P_i})$.  Let $I_i := a_i^{-1} \ov{I(P_i)}$.  Then the ideal sequences $I_i$ glue to a generalized ideal sequence $I$ on $X$ and we have diagrams (cartesian in $\LMS$ and $\SMS$) $$ \xym{ \ov{\Bl}_I X \ar[d] & \ar[l] \ov{\Bl}_{I_i} U_i \ar[r] \ar[d] & \ov{\Bl}_{\ov{I(P_i)}} (\Spec P_i,\ov{\M}_{P_i}) \ar[d] \\ X & \ar[l] U_i \ar[r]^-{a_i} & (\Spec P_i,\ov{\M}_{P_i}) } $$ with strict horizontal arrows for every $i$.  In particular, $\ov{\Bl}_I X$ is a free fs sharp monoidal space and $\ov{\Bl}_I X \to X$ is a strong refinement which is an isomorphism over the free locus of $X$. \end{thm}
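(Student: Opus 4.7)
The plan is to mirror the proof of Theorem~\ref{thm:LMSresolution} mutatis mutandis, with the only substantive change being that, at each point $x$ of $X$, the relevant stalk $\ov{\M}_{X,x}$ is now identified (via strictness of $a_i$) with a \emph{sharpened} localization $P_i/F_i$ rather than with $F_i^{-1}P_i$. The $\SMS$ versions of the tools we need are all in place: Theorem~\ref{thm:sharpenedblowup} provides the cartesian square on the right-hand side of the displayed diagram and tells us that this right vertical map is a strong refinement in $\SMS$; strong refinements are by definition stable under base change and local on the target, so once the $I_i$ are glued this gives the left square, the strong-refinement conclusion, and the strictness of the horizontal arrows.

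The core step is the gluing of the $I_i$ into a generalized ideal sequence. Fix $x \in U_{ij}$, set $\p_i := a_i(x)$ and let $F_i \subseteq P_i$ be the complementary face, so $\ov{\M}_{P_i,\p_i} = P_i/F_i$ (c.f.\ the stalk formula at the end of \S\ref{section:Spec}). Since $a_i$ is strict in $\SMS$, it induces an isomorphism $a_{i,x}^\dagger : P_i/F_i \to \ov{\M}_{X,x}$, and similarly for $j$, so $b := a_{j,x}^{-1}a_{i,x} : P_i/F_i \to P_j/F_j$ is an isomorphism of fs monoids. Using that $F_i \subseteq P_i$ is finitely generated (Lemma~\ref{lem:faces}), one can shrink to a neighborhood $W$ of $x$ on which the map $a_i|W$ factors through $(\Spec P_i/F_i, \ov{\M}_{P_i/F_i})$ (just as in the proof of Theorem~\ref{thm:LMSresolution}, using that $\Spec$ of the localization is an open subspace and that monoid homomorphisms from a finitely generated monoid that agree at a stalk agree on a neighborhood), and likewise for $j$, with $\Spec b$ making the obvious diagram commute on $W$.

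On $W$ one then computes, using Lemma~\ref{lem:resolutionofmonoids} parts \eqref{localizationinvariance} and \eqref{sharpeninginvariance} (whose combination gives that $I(P/F)$ is equivalent to the pullback of $I(P)$ along $P \to F^{-1}P \to P/F$), together with \eqref{isomorphisminvariance} applied to $b$, that
\[
I_i|W \;\sim\; (a_i|W)^{-1}\ov{I(P_i/F_i)} \;=\; (a_j|W)^{-1}(\Spec b)^{-1}\ov{I(P_i/F_i)} \;=\; (a_j|W)^{-1}\ov{I(P_j/F_j)} \;\sim\; I_j|W.
\]
(Here $\ov{I(P)}$ denotes the image of $I(P)$ under the sharpening $\Spec P \to (\Spec P,\ov{\M}_P)$, which is a homeomorphism by Lemma~\ref{lem:sharpeninghomeo}.) Hence the $I_i$ descend to a generalized ideal sequence $I$ in the sense of Definition~\ref{defn:generalizedidealsequence}.

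Finally, freeness of $\ov{\Bl}_I X$ and the assertion that $\ov{\Bl}_I X \to X$ is an isomorphism over $X^{\rm free}$ are local on $X$, so they follow from the corresponding properties of $\ov{\Bl}_{\ov{I(P_i)}}(\Spec P_i,\ov{\M}_{P_i}) \to (\Spec P_i,\ov{\M}_{P_i})$ via the (strict, hence free-locus-preserving) cartesian squares. For these local statements we invoke property \eqref{freeresolution} of the canonical resolution, noting that freeness of an fs monoidal space and of its sharpening coincide since they depend only on the characteristic monoids. The principal obstacle, as in Theorem~\ref{thm:LMSresolution}, is the gluing step: everything else is a formal combination of Theorem~\ref{thm:sharpenedblowup}, Theorem~\ref{thm:resolutionofmonoids}, and the cartesian square \eqref{sharpenedblowupdiagram}.
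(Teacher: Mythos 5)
Your proposal is correct and follows essentially the same route as the paper, which itself proves Theorem~\ref{thm:SMSresolution} by repeating the argument of Theorem~\ref{thm:LMSresolution} with sharpened localizations $P_i/F_i$ in place of $F_i^{-1}P_i$, combining parts \eqref{localizationinvariance} and \eqref{sharpeninginvariance} of Lemma~\ref{lem:resolutionofmonoids} exactly as you do, and invoking Theorem~\ref{thm:sharpenedblowup} for the cartesian squares and the strong-refinement conclusion. No gaps.
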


The right vertical $\SMS$ morphism in the diagram of Theorem~\ref{thm:SMSresolution} is the sharpening of the $\LMS$ morphism \be \Bl_{I(P_i)} P_i & \to & \Spec P_i. \ee  Indeed, the canonical ideal (sequence) $\ov{I(P_i)}$ on $\Spec P_i,\ov{\M}_{P_i}$ is the pullback of the canonical ideal (sequence) $I(P_i)$ on $\Spec P_i$ under the sharpening map (Lemma~\ref{lem:resolutionofmonoids}), so the diagram $$ \xym{ \Bl_{\ov{I(P_i)}} (\Spec P_i,\ov{\M}_{P_i}) \ar[d] \ar[r] & \Bl_{I(P_i)} (\Spec P_i, \M_{P_i}) \ar[d] \\ (\Spec P_i,\ov{\M}_{P_i}) \ar[r] & (\Spec P_i,\M_{P_i}) } $$ is cartesian (Lemma~\ref{lem:pullbackProj}).  This diagram stays cartesian after sharpening since sharpening preserves inverse limits, thus we obtain the desired identification.  (Actually one makes a similar argument at each step in the sequence of blowups defining the blowup of an ideal sequence.)

\section{Geometric realization} \label{section:geometricrealizationmain} In \cite{KM}, Kottke and Melrose introduced a kind of ``generalized blowup" which---in our language---associates a $\PLDS$-morphism $X' \to X$ to a positive log differentiable spaces $X$ (arising as fibered products of manifolds with corners) equipped with certain ``refinement data" closely related to the usual notion of refinement of fans in toric geometry.  In \cite[\S9]{Kat2}, Kato introduced a similar ``subdivision" procedure and explained how it could be used to resolve certain log smooth schemes \cite[\S10]{Kat2}.  The construction was revisited by Nizio\l \, in \cite{Niz} who used it to prove resolution of singularities for \emph{all} log regular schemes \cite[5.3]{Niz}.  In fact, she only uses a special case of Kato's subdivision construction which she calls ``log blowup."  She explains how this is related to Kato's construction in between Remark~4.6 and Theorem~4.7 of \cite{Niz}.

The purpose of this section is to adapt the constructions and results of Kato and Nizio\l \, to the setting of log differentiable spaces, and in fact to the setting of general log spaces (\S\ref{section:logspaces}).  This will yield a general version of the Kottke-Melrose blowup which can be used to resolve the singularities of any log smooth space (\S\ref{section:resolution}).

Here is a sketch of our construction.  We will work in the categories $\fLogEsp$ and $\fSMS$ of \emph{fine} log spaces (\S\ref{section:logspaces}) and \emph{fine} sharp monoidal spaces (\S\ref{section:chartsandcoherence}), though many of our constructions use only integrality.  We make heavy use of the functor \bne{fLogEsptofSMS} \fLogEsp & \to & \fSMS \\ \nonumber X & \mapsto & \sms{X} = (|\u{X}|,\ov{\M}_X) \ene introduced in \S\ref{section:logspacestomonoidalspaces}.  We introduce the subcategories of $\fSMS$ consisting of \emph{refinable} and \emph{strongly refinable} morphisms and show that a composition of (coherent) sharpened blowups is a strongly refinable $\fSMS$ morphism.  For a fine log space $X$ and a refinable $\fSMS$ morphism $r : F \to \sms{X}$, we construct---following Kato--- a log smooth map of log spaces $f : X' \to X$ together with an $\SMS$ morphism $g : \sms{X}' \to F$.  If $r$ is strongly refinable, then $g$ is strict.  The construction can be applied, for example, to the strong refinement given by the saturation map $\sms{X}^{\rm sat} \to \sms{X}$ (\S\ref{section:integrationandsaturation}).  If $\sms{X}$ is saturated (i.e.\ $X$ is saturated), then our construction can be applied to a resolution of singularities $F \to \sms{X}$ as in Theorem~\ref{thm:SMSresolution} to yield a locally projective (hence ``proper" in any category of spaces where that word has a reasonable meaning), log smooth map of log spaces $X' \to X$ which is an isomorphism over the free locus of $X$.  In particular, if $X$ itself is log smooth, then $X'$ will be log smooth and free---i.e. $X'$ will be a manifold, manifold with corners, etc, depending on the category of spaces where we work.  Our construction is a mild generalization of Kato's subdivision construction.  

\subsection{Kato category} \label{section:Katocategory}  Fix a fine log space $X$ and an $\fSMS$ morphism $r : F \to \sms{X}$.  Then we have a functor \bne{Katofunctor1} K : \fLogEsp/X & \to & \fSMS / \sms{X} \\ \nonumber (f : Y \to X ) & \mapsto & (\sms{f} : \sms{Y} \to \sms{X}) \ene and an object $r : F \to \sms{X}$ in the codomain of $K$.  Following Kato, we consider the comma category \be \A(X,r) & := & (K \downarrow r : F \to \sms{X}) \ee in the sense of MacLane \cite[II.6]{Mac}.  Explicitly, an object of $\A(X,r)$ is a pair $(f,g)$ consisting of a $\LogEsp$ morphism $f : Y \to X$ and an $\SMS$ morphism $g : \sms{Y} \to F$ such that $rg = \sms{f}$.  A morphism from $(f,g)$ to $(f',g')$ in $\A(X,r)$ is an $\fLogEsp/X$ morphism $h : Y \to Y'$ such that $g = g' \sms{h}$.  We will be interested in the question of whether the category $\A(X,r)$ has a terminal object $(f,g)$ and, if so, whether the map $g$ is strict; the maps $f,g$ defining such a terminal object will then serve as the $f$, $g$ mentioned in the introductory sketch above.  The categorical framework introduced here will allow us to more easily address gluing issues, facilitating the construction of such terminal objects.

Although we will ultimately be interested in the situation above, it will be convenient to introduce a slightly more general version of $\A(X,r)$.  Fix a fine log space $X$ and $\fSMS$ morphisms $a : \sms{X} \to G$ and $r : F \to G$.  Then we have a functor \bne{Katofunctor2} K : \fLogEsp /X & \to & \fSMS /G \\ \nonumber (f : Y \to X) & \mapsto & (a\sms{f} : \sms{Y} \to G). \ene  By definition, the \emph{Kato category} $\A(X,a,r)$ is the comma category $(K \downarrow r : F \to G)$.  An object of $\A(X,a,r)$ is a pair $$(f : Y \to X, \, g : \sms{Y} \to F)$$ (which we often denote simply by $(f,g)$) consisting of a map of log spaces $f : Y \to X$ and a morphism $g : \sms{Y} \to F$ of sharp monoidal spaces over $G$---i.e.\ making the diagram $$ \xym{ \sms{Y} \ar[d]_{\sms{f}} \ar[r]^-g & F \ar[d]^r \\ \sms{X} \ar[r]^a & G} $$ commute.   A morphism $$( f :Y \to X, \, g : \sms{Y} \to F) \to (f' : Y' \to X, \, g' : \sms{Y}' \to F)$$ in $\A(X,a,h)$ is an $\fLogEsp/X$ morphism $h : Y \to Y'$ such that $g = g' \sms{h}$.  We recover the old $\A(X,r)$ via the formula \be \A(X,r) & = & \A(X,\Id : \sms{X} \to \sms{X},r).\ee  Again we will ask whether $\A(X,a,r)$ admits a terminal object $(f:X' \to X, g : \sms{X}' \to F)$ and, if so, whether the map \bne{terminalobjectmap} & \sms{f} \times g : \sms{X}' \to \sms{X} \times_G F \ene is strict.

Formation of the Kato category is functorial in the input data $X,a,r$ as follows:  Suppose $ z : X' \to X$ is an $\fLogEsp$ morphism and $$ \xym@C+10pt{ F' \ar[r]^-j \ar[d]_{r'} & F \ar[d]^r \\ G' \ar[r]^-i & G \\ \sms{X}' \ar[u]^{a'} \ar[r]^-{\sms{z}} & \sms{X} \ar[u]_a } $$ is a commutative $\fSMS$ diagram.  Then we have a functor \bne{firstfunctor} \A(X',a',r') & \to & \A(X,a,r) \\ \nonumber (f',g') & \mapsto & (zf',jg'). \ene  There are various circumstances under which \eqref{firstfunctor} admits a right adjoint.  First, if $z,i,j$ are all open embeddings, then it is easy to see that \bne{functor1} \A(X,a,r) & \to & \A(X',a',r') \\ \nonumber (f,g) & \mapsto & (f | (f^{-1}(X') \cap g^{-1}(F')), g| (f^{-1}(X') \cap g^{-1}(F')) ) \ene is right adjoint to \eqref{firstfunctor}.  Second, if $F' = F \times_G G'$, then it is easy to see that \bne{functor2} \A(X,a,r) & \mapsto & \A(X',a',r') \\ \nonumber (f,g) & \mapsto & (f \times_X X', (g \pi_1, a'  \pi_2)) \ene is right adjoint to \eqref{firstfunctor}.  Since right adjoints preserve inverse limits, we have:

\begin{lem} \label{lem:Katocategory1} The functors \eqref{functor1} and \eqref{functor2} preserve inverse limits, in particular terminal objects.\end{lem}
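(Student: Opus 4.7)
The plan is to invoke the general categorical principle that a right adjoint functor preserves all (small) limits that exist in its domain. The two paragraphs immediately preceding the lemma statement have already asserted the relevant adjunctions: \eqref{functor1} is a right adjoint to \eqref{firstfunctor} when $z$, $i$, $j$ are open embeddings, and \eqref{functor2} is a right adjoint to \eqref{firstfunctor} when $F' = F \times_G G'$. Granting these, the lemma is immediate, so the only substantive task is to make those adjunctions precise; I would do so as hom-set bijections in the comma categories $\A(X,a,r)$ and $\A(X',a',r')$, natural in both variables.

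For the first adjunction, fix $(f',g') \in \A(X',a',r')$ and $(f,g) \in \A(X,a,r)$. A morphism from the image $(zf',jg')$ of $(f',g')$ under \eqref{firstfunctor} to $(f,g)$ is an $\fLogEsp/X$-morphism $h : Y' \to Y$ satisfying $g\,\sms{h} = jg'$. Because $z$ is an open embedding, requiring $fh = zf'$ forces $h$ to factor through the open subspace $f^{-1}(X') \subseteq Y$; because $j$ is an open embedding, the equation $g\sms{h} = jg'$ forces $\sms{h}$ to factor through $g^{-1}(F')$. Hence $h$ factors uniquely through $U := f^{-1}(X') \cap g^{-1}(F')$, producing a morphism $(f',g') \to (f|U,g|U)$ in $\A(X',a',r')$. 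This bijection is natural and yields the adjunction.

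For the second adjunction, with $F' = F \times_G G'$, a morphism $(zf',jg') \to (f,g)$ in $\A(X,a,r)$ is an $X$-morphism $h : Y' \to Y$ with $g\,\sms{h} = jg'$. The condition $f h = z f'$ together with the universal property of the pullback $Y \times_X X'$ in $\fLogEsp$ (which exists by \S\ref{section:integration} and Proposition~\ref{prop:LogEsplimits}) gives a unique factorization $Y' \to Y \times_X X'$ over $X'$. The condition $g \sms{h} = jg'$ together with the universal property of $F' = F \times_G G'$ in $\fSMS$ (Proposition~\ref{prop:Fansinverselimits} gives existence of such pullbacks; $\fSMS$ inherits them from $\SMS$ by Theorem~\ref{thm:LMSinverselimits}) identifies this factorization with a morphism $(f',g') \to (f \times_X X',(g\pi_1,a'\pi_2))$ in $\A(X',a',r')$; naturality is automatic. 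Both adjunctions established, \eqref{functor1} and \eqref{functor2} preserve all inverse limits that exist, in particular terminal objects.

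No serious obstacle is expected here; the work is essentially bookkeeping in comma categories. The one point that requires care is to check that the proposed right-adjoint formulas land in $\A(X',a',r')$ (i.e.\ that the commutativity condition $r'g_{\text{new}} = a'\sms{f_{\text{new}}}$ really holds for the restricted/pulled-back pair), but this is forced by the hypotheses on $(z,i,j)$ and $F' = F \times_G G'$ respectively.
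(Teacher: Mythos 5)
Your proposal is correct and takes essentially the same route as the paper: the lemma there is stated as an immediate consequence of the two adjunctions asserted (as "easy to see") in the preceding paragraph, together with the fact that right adjoints preserve inverse limits. You simply fill in the verification of those adjunctions, which is exactly the bookkeeping the paper leaves to the reader.
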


\begin{lem} \label{lem:Katocategory2} Let $X$ be a fine log space, $a : \sms{X} \to G$, $r : F \to G$ two $\fSMS$ morphisms, $\{ U_i \}$ an open cover of $X$.  Then $\A(X,a,r)$ has a terminal object $(f,g)$ iff $\A(U_i,a|\sms{U}_i,r)$ has a terminal object $(f_i,g_i)$ for each $i$.  When this is the case, \eqref{terminalobjectmap} is strict iff the analogous map is strict for each $i$. \end{lem}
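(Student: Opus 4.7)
The plan is to exploit the adjointness observation of Lemma~\ref{lem:Katocategory1} applied to the open embeddings $U_i \into X$, then glue terminal objects using the Zariski gluing axiom for $\LogEsp$. For the forward direction, if $(f,g)$ is terminal in $\A(X,a,r)$, then applying the right adjoint functor \eqref{functor1} to the open embedding $z : U_i \into X$ (taking $i = \Id_G$, $j = \Id_F$, so $G' = G$ and $F' = F$) produces a terminal object $(f|f^{-1}(U_i),\, g|\sms{f}^{-1}(\sms{U_i}))$ of $\A(U_i, a|\sms{U_i}, r)$, since right adjoints preserve terminal objects.

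For the converse, suppose each $\A(U_i, a|\sms{U_i}, r)$ has a terminal object $(f_i : Y_i \to U_i,\, g_i : \sms{Y_i} \to F)$. On each overlap $U_{ij} := U_i \cap U_j$ the two restrictions of $(f_i, g_i)$ and $(f_j, g_j)$ are terminal objects of $\A(U_{ij}, a|\sms{U_{ij}}, r)$ by the forward direction just established, so there is a unique $\fLogEsp/U_{ij}$ isomorphism $\phi_{ij} : f_i^{-1}(U_{ij}) \to f_j^{-1}(U_{ij})$ compatible with the $g_i$ and $g_j$. Uniqueness forces the cocycle condition $\phi_{jk}\phi_{ij} = \phi_{ik}$ on triple overlaps and $\phi_{ii} = \Id$, so $(Y_i, \phi_{ij})$ is a Zariski gluing datum (Definition~\ref{defn:Zariskigluingdatum}) in $\LogEsp$, which admits a direct limit $X'$ by the gluing axiom \eqref{Zariskigluing} and Proposition~\ref{prop:logspacesarespaces}. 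The maps $f_i$ then glue to $f : X' \to X$, and since the sharpening functor \eqref{fLogEsptofSMS} preserves underlying topological spaces and the $g_i$ agree on overlaps via the sharpenings of the $\phi_{ij}$, they glue to a morphism $g : \sms{X'} \to F$ lying over $a$ via $\sms{f}$.

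Terminality of $(f,g)$ in $\A(X,a,r)$ is then verified by taking any $(f' : Y \to X, g')$, restricting it to $Y_i' := (f')^{-1}(U_i)$, applying terminality of $(f_i, g_i)$ to produce unique morphisms $h_i : Y_i' \to Y_i$ over $U_i$ compatible with $F$, and gluing these (using the uniqueness on overlaps) to a unique map $Y \to X'$ giving the required factorization. The strictness claim follows because the restriction of $\sms{f} \times g : \sms{X'} \to \sms{X} \times_G F$ to $\sms{f}^{-1}(\sms{U_i})$ is naturally identified with $\sms{f_i} \times g_i : \sms{Y_i} \to \sms{U_i} \times_G F$, and strictness of a morphism of sharp monoidal spaces can be checked locally on the source.

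The main obstacle is the mild bookkeeping needed to verify that the $g_i$ genuinely descend to a morphism of sharp monoidal spaces on the glued space $X'$. This amounts to checking that the sharpening functor \eqref{fLogEsptofSMS} commutes with the particular Zariski gluing we perform---concretely, that $|X'|$ is obtained by the expected topological pushout of the $|Y_i|$ and that $\ov{\M}_{X'}$ is obtained by gluing the $\ov{\M}_{Y_i}$ via the sharpenings of the $\phi_{ij}$. Both facts follow routinely from the construction of inverse and direct limits in $\LogEsp$ recorded in Proposition~\ref{prop:LogEsplimits}, reducing the lemma entirely to the adjointness machinery of Lemma~\ref{lem:Katocategory1}.
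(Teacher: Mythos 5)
Your proposal is correct and follows essentially the same route as the paper's own proof: the forward direction via the right adjoint \eqref{functor1} of Lemma~\ref{lem:Katocategory1}, the converse by gluing the local terminal objects along the unique isomorphisms on overlaps (whose cocycle condition is automatic from uniqueness), terminality checked by producing the unique map locally, and strictness by its local nature. Your extra remarks on the compatibility of sharpening with the Zariski gluing only make explicit what the paper leaves implicit.
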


\begin{proof}  Suppose $\A(X,a,r)$ has a terminal object $(f:X' \to X, g: \sms{X}' \to F)$.  Set $U_i' := f^{-1}(U_i)$, $f_i := f|U_i' : U_i' \to U_i$, $g_i := g | \sms{U}'_i$.  By Lemma~\ref{lem:Katocategory1} for \eqref{functor1} (with $i=\Id$, $j=\Id$, $X' = U_i$, $a' = a|\sms{U}_i$), $(f_i,g_i)$ is a terminal object of $\A(U_i,a|\sms{U}_i,r)$ for each $i$.  Conversely, suppose $\A(U_i,a|\sms{U}_i,r)$ has a terminal object $$(f_i : U_i' \to U_i, g_i : \sms{U}'_i \to F)$$ for each $i$.  Then by the same Lemma, for each $i,j$ the objects $$(f_i | U'_{ij} : U'_{ij} \to U_{ij}, g_i | \sms{U}'_{ij} : \sms{U}'_{ij} \to F)$$ and $$(f_j | U'_{ij} : U'_{ij} \to U_{ij}, g_j | \sms{U}'_{ij} : \sms{U}'_{ij} \to F)$$ are both terminal objects of $\A(U_{ij},a|\sms{U}_{ij},r)$, so they are identified via a unique isomorphism.  These isomorphisms trivially satisfy the cocycle condition on triple overlaps because there is again only one way to identify two terminal objects, so we can glue the $(f_i,g_i)$ to form an object $(f,g)$ of $\A(X,a,r)$.  This object is easily seen to be terminal because the unique map to it can be produced locally.  The statement about strictness is clear from the relationship between $(f,g)$ and the $(f_i,g_i)$ above and the local nature of strictness. \end{proof}

\subsection{Geometric realization} \label{section:geometricrealization}  We need to place hypotheses on the $\fSMS$ morphism $r : F \to G$ ensuring the existence of a terminal object in $\A(X,a,r)$.  It is natural to make the following

\begin{defn} \label{defn:realizable} A $\fSMS$ morphism $r : F \to G$ is called \emph{realizable} (resp.\ \emph{strongly realizable}) iff, for any fine log space $X$ and any $\fSMS$ morphism $a : \sms{X} \to G$, the category $\A(X,a,r)$ has a terminal object $(f:X' \to X, g : \ov{X}' \to F)$ (resp.\ and the morphism \eqref{terminalobjectmap} is strict). \end{defn}

Although this definition is not particularly concrete, it ensures that realizable morphisms are rather well behaved:

\begin{lem} \label{lem:realizablemorphisms}  Realizable morphisms (and strongly realizable morphisms) are closed under composition and base change in $\fSMS$ and ``realizable" and ``strongly realizable" are local properties of $\fSMS$ morphisms. \end{lem}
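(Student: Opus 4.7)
The plan is to verify the three closure properties by direct manipulation of terminal objects in Kato categories $\A(X,a,r)$, exploiting Lemma~\ref{lem:Katocategory2} as the main gluing tool. For closure under composition, suppose $r : F \to G$ and $s : G \to H$ are both realizable, and fix input data $X, b : \sms{X} \to H$. I would iterate: first apply realizability of $s$ to obtain a terminal $(f_1, g_1) \in \A(X, b, s)$, then apply realizability of $r$ on the input $(X_1, g_1)$ to obtain a terminal $(f_2, g_2) \in \A(X_1, g_1, r)$, and assemble $(f_1 f_2, g_2)$ as the candidate terminal object of $\A(X, b, sr)$. Universality follows by chasing any $(f,g) \in \A(X,b,sr)$ through the two universal properties in sequence: the pair $(f, rg) \in \A(X, b, s)$ factors through $(f_1, g_1)$ via a unique $h : Y \to X_1$ with $rg = g_1 \sms{h}$, and then $(h, g) \in \A(X_1, g_1, r)$ factors uniquely through $(f_2, g_2)$. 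For strong realizability, the candidate strict map factors as $\sms{X}_2 \to \sms{X}_1 \times_G F \to (\sms{X} \times_H G) \times_G F = \sms{X} \times_H F$, where the first arrow is strict by strong realizability of $r$ and the second is the base change of the strict map $\sms{f}_1 \times g_1$ (strong realizability of $s$) along $F \to G$; strictness of the composite then follows from Corollary~\ref{cor:benignstrictmorphisms}.

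For closure under base change, let $r' : F' := F \times_G G' \to G'$ be the base change of $r : F \to G$ along $i : G' \to G$, with projection $j : F' \to F$. Given $X$ and $a' : \sms{X} \to G'$, set $a := ia'$. The universal property of the fiber product $F'$ shows that an $\fSMS$ morphism $g' : \sms{Y} \to F'$ with $r'g' = a'\sms{f}$ is the same data as $g := jg' : \sms{Y} \to F$ with $rg = a\sms{f}$, giving a canonical equivalence $\A(X, a', r') \simeq \A(X, a, r)$; terminal objects therefore correspond, and strong realizability transfers via the identification $\sms{X} \times_{G'} F' = \sms{X} \times_G F$.

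For locality, the trivial direction is immediate. For the nontrivial direction, I would first verify the auxiliary fact that if $r : F \to G$ is realizable and $V \subseteq G$ is open with $U := r^{-1}(V)$, then $r|U : U \to V$ is realizable, because for any $a : \sms{X} \to V$ the condition $rg = a\sms{f}$ forces $g(\sms{Y}) \subseteq r^{-1}(V) = U$, yielding an equivalence $\A(X, a, r) \simeq \A(X, a, r|U)$. Assuming realizability at every $y \in F$, I would shrink the witnessing pairs to obtain an open cover $\{(U_i, V_i)\}$ of $F$ with $U_i = r^{-1}(V_i)$. Given $X$ and $a : \sms{X} \to G$, the open subspaces $X_i := a^{-1}(V_i)$ cover $a^{-1}(r(F)) \subseteq X$, and on each $X_i$ the auxiliary fact produces a terminal object of $\A(X_i, a|\sms{X}_i, r)$. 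Lemma~\ref{lem:Katocategory2} then glues these into a terminal object of $\A(X, a, r)$, and strong realizability is inherited from the local nature of strictness. The main obstacle will be handling points $x \in X$ with $a(x) \in G \setminus r(F)$, which need not be covered by any $X_i$: since any object of $\A(X,a,r)$ has $\sms{f}(\sms{Y}) \subseteq a^{-1}(r(F))$, the empty fine log space is terminal over any open subspace of $X$ missing $a^{-1}(r(F))$ entirely, which handles the closed-image case; the general case requires a more delicate reduction to the open subspace of $X$ on which the $X_i$ already cover, and it is here that one must be most careful.
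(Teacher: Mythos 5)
Your handling of composition and base change is correct and matches the paper: the two-step construction of the terminal object for $sr$, the strictness argument via the factorization through $\sms{X}_1 \times_G F$ followed by a base change of $\sms{f}_1 \times g_1$, and the identification $\A(X,a',r') \cong \A(X,ia',r)$ when $F' = F \times_G G'$ (which is exactly what Lemma~\ref{lem:Katocategory1} for \eqref{functor2} packages) are all as in the paper. The gap is in locality, and it is the step you yourself flag as delicate, plus one you do not flag: the reduction to an open cover $\{(U_i,V_i)\}$ of $F$ with $U_i = r^{-1}(V_i)$ is not possible in general. It already fails in the motivating example: for the sharpened blowup $r : \ov{\Bl}_{I}\,\Spec \NN^2 \to \Spec \NN^2$, the two closed points of the exceptional locus lie in different affine charts $U_1,U_2$ and both map to the closed point $\m$ of $\Spec \NN^2$, whose only open neighborhood is the whole space; so no $V \ni \m$ has $r^{-1}(V)$ inside a single chart, even though $r$ restricted to each chart (over all of $\Spec\NN^2$) is realizable. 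Without $U_i = r^{-1}(V_i)$ the equivalence $\A(X_i, a|\sms{X}_i, r|U_i) \simeq \A(X_i, a|\sms{X}_i, r)$ breaks down, so the local hypothesis never hands you terminal objects for the \emph{full} $r$ over an open cover of $X$, and Lemma~\ref{lem:Katocategory2} cannot be invoked. Your residual worry about points of $X$ lying over $G \setminus r(F)$ is a symptom of the same misdirection.

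The correct argument glues over a cover of $F$, not of $X$. Keep the $(U_i,V_i)$ as arbitrary neighborhoods in $r$ witnessing local realizability, set $X_i := a^{-1}(V_i)$, and let $(f_i : X_i' \to X_i,\, g_i : \sms{X}_i' \to U_i)$ be terminal in $\A(X_i, a|\sms{X}_i, r|U_i)$. By Lemma~\ref{lem:Katocategory1} for \eqref{functor1}, the restriction of $(f_i,g_i)$ to $f_i^{-1}(X_i \cap X_j) \cap g_i^{-1}(U_i \cap U_j)$ is terminal in $\A(X_i\cap X_j,\,\cdot\,, r|U_i\cap U_j)$, so these restrictions are canonically identified for $i$ and $j$, the cocycle condition holds by uniqueness of maps to terminal objects, and the $X_i'$ glue along the opens $g_i^{-1}(U_i\cap U_j)$ to a log space $X'$ over $X$ carrying a map $g' : \sms{X}' \to F$. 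Terminality is then verified by covering an arbitrary test object $(f : Y \to X, g : \sms{Y} \to F)$ by the opens $g^{-1}(U_i)$ --- these \emph{do} cover $Y$ because the $U_i$ cover $F$, and $f(g^{-1}(U_i)) \subseteq X_i$ since $a\sms{f} = rg$ --- and gluing the unique maps to the $(f_i,g_i)$. This is what the paper's (admittedly terse) appeal to ``the same gluing arguments used to prove Lemma~\ref{lem:Katocategory2}'' amounts to: the argument is formally parallel, but the cover lives on $F$, pulled back along $g$, rather than on $X$.
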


\begin{proof}  Suppose $r : F \to G$ and $s : G \to H$ are realizable and we want to prove $sr$ is realizable.  Let $X$ be a fine log space, $a : \sms{X} \to H$ an $\fSMS$ morphism.  Since $s$ is realizable, we have a terminal object $(f:X' \to X,g)$ of $\A(X,a,s)$.  Since $r$ is realizable, we have a terminal object $(k:X'' \to X', l)$ of $\A(X',g,r)$.  Then it is easy to see that $(fk,l)$ is terminal in $\A(X,a,sr)$.  If $r$ and $s$ are strongly terminal, then we draw a big cartesian diagram relating these terminal objects and use stability of strict $\fSMS$ morphisms under composition and base change to prove that $\sms{X}'' \to \sms{X} \times_H F$ is strict,\footnote{The argument is much like the argument for smoothness in the proof of Theorem~\ref{thm:logsmooth} using the ``big diagram" \eqref{bigcartesiandiagram}.} so that $sr$ is strongly realizable.

Stability under base change follows easily from Lemma~\ref{lem:Katocategory1} for \eqref{functor2}.  The local nature of realizable morphisms is proved by the same gluing arguments used to prove Lemma~\ref{lem:Katocategory2}. \end{proof}

\begin{lem} \label{lem:geometricrealization1} Let $X$ be a fine log space, $x$ a point of $X$, $Q$ a fine monoid, $a : \sms{X} \to (\Spec Q,\ov{\M}_Q)$ an $\fSMS$ morphism.  Then there exists a neighborhood $U$ of $x$ in $X$, a fine monoid $S$, a strict, surjective monoid homomorphism $h : S \to Q$, and a monoid homomorphism $b : S \to \M_X(U)$ such that the $\fSMS$ diagram $$ \xym{ \sms{X} \ar[r]^-a & (\Spec Q, \ov{\M}_Q) \ar[d]^h_{\cong} \\ \sms{U} \ar[u] \ar[r]^-{\sms{b}} & (\Spec S, \ov{\M}_S) } $$ commutes. \end{lem}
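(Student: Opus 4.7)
The plan is to construct $S$ as a fibered product of $Q$ with a local fine chart for $\M_X$ over the characteristic stalk at $x$. First, using fineness of $X$ together with Lemma~\ref{lem:chartformorphism1} and the Shrinking Argument of \S\ref{section:chartsforlogstructures}, I would choose a fine chart $c : R \to \M_X(U_0)$ on a neighborhood $U_0$ of $x$ such that $R = F_R^{-1} R$, where $F_R := c_x^{-1}(\M_{X,x}^*)$; after this replacement the induced map $\ov{c}_x : \ov{R} \to \ov{\M}_{X,x}$ is an isomorphism. By the $\Spec$ and sharpening adjunctions (\S\ref{section:monoidalspacedefinitions}, \S\ref{section:Specrevisited}), the $\fSMS$ morphism $a$ corresponds to a monoid homomorphism $a : Q \to \ov{\M}_X(X)$, with stalk $a_x : Q \to \ov{\M}_{X,x}$.

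With these data in hand, I would set
\[ S := R \times_{\ov{\M}_{X,x}} Q \]
(fibered product of monoids with respect to $\ov{c}_x$ and $a_x$), $h := \pi_2 : S \to Q$, and $b := c \circ \pi_1 : S \to \M_X(U_0)$. Since $R \times Q$ is integral and $S$ is a submonoid, $S$ is integral; since a finite inverse limit of finitely generated monoids is finitely generated (\S\ref{section:monoidbasics}), $S$ is fine. Surjectivity of $h$ is immediate: given $q \in Q$, lift $a_x(q) \in \ov{\M}_{X,x}$ along the composition $R \twoheadrightarrow \ov{R} \xrightarrow{\ov{c}_x} \ov{\M}_{X,x}$ to some $r \in R$, yielding $(r,q) \in S$.

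The heart of the argument, which I expect to be the main obstacle, is verifying that $\ov{h} : \ov{S} \to \ov{Q}$ is an isomorphism (i.e.\ that $h$ is strict). First one computes $S^* = R^* \times Q^*$ by noting that an invertible element of $S$ must project to units in both factors, and conversely the constraint $\ov{c}_x(r) = a_x(q)$ is automatically satisfied on $R^* \times Q^*$ because $\ov{R}$ and $\ov{\M}_{X,x}$ are sharp (so $\ov{c}_x$ and $a_x$ kill units). Passing to sharpenings, elements of $\ov{S}$ correspond to pairs $(\bar{r},\bar{q}) \in \ov{R} \times \ov{Q}$ satisfying $\ov{c}_x(\bar{r}) = \ov{a}_x(\bar{q})$, and since $\ov{c}_x$ is an isomorphism $\bar{r}$ is uniquely determined by $\bar{q}$, which furnishes an inverse to $\ov{\pi}_2$. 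While elementary, this step requires careful bookkeeping with the several sharpenings involved.

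Finally, for commutativity of the diagram, unwinding through the adjunction reduces the claim to the equality $\pi \circ b = a|_U \circ h$ of monoid homomorphisms $S \to \ov{\M}_X(U)$, where $\pi : \M_X \to \ov{\M}_X$ is the structural sharpening. For $(r,q) \in S$ this becomes $\ov{c}(r) = a(q)|_U$ in $\ov{\M}_X(U)$, which holds at the stalk at $x$ by the defining condition of the fibered product. Since $S$ is finitely generated, I can shrink $U_0$ once to a neighborhood $U$ of $x$ on which the equality holds for a finite generating set of $S$, and hence on all of $S$, completing the construction.
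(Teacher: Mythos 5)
Your proof is correct, but it takes a genuinely different route from the paper's. The paper works chart-free: assuming $Q$ sharp, it shrinks $U$ so that $a(q)|_U$ lifts to $\M_X(U)$ for each $q\in Q$, sets $R := \M_X(U)\times_{\ov{\M}_X(U)} Q$, and observes that the second projection is strict and surjective essentially by construction; but this $R$ is far from finitely generated (its unit group is all of $\O_X^*(U)$), so the paper must invoke Lemma~\ref{lem:strictfiniteness} to extract a fine submonoid $S\subseteq R$ on which the projection to $Q$ remains strict and surjective. You instead form the fibered product over the characteristic stalk $\ov{\M}_{X,x}$ against a chart that has been localized (via the Shrinking Argument) so that $\ov{R}\to\ov{\M}_{X,x}$ is an isomorphism. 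This makes finiteness of $S$ automatic, and your verifications that $S^* = R^*\times Q^*$ and that $\ov{S}\cong \ov{R}\times_{\ov{\M}_{X,x}}\ov{Q}\cong\ov{Q}$ are sound. The trade-offs: the paper pays with Lemma~\ref{lem:strictfiniteness} but gets commutativity over $U$ on the nose, whereas you pay with the Shrinking Argument and with one final shrink to propagate the stalk-level identity $\pi(b(s)) = a(h(s))|_U$ from $x$ to a neighborhood, which you correctly handle using finite generation of $S$. One bookkeeping remark: the fact that a finite inverse limit of finitely generated monoids is finitely generated is invoked in the paper as a ``general finiteness result'' (e.g.\ in the proofs of Lemma~\ref{lem:strictfiniteness} and Lemma~\ref{lem:refinements}) rather than being stated in \S\ref{section:monoidbasics}, so you may wish to adjust that citation.
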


\begin{proof}  We can assume $Q$ is sharp and we can hence view the map $a$ as a monoid homomorphism $a : Q \to \ov{\M}_X(X)$.  Since $\M_X \to \ov{\M}_X$ is a surjection of sheaves and $Q$ is finitely generated, we can find a neighborhood $U$ of $x$ such that $a(q)|U \in \ov{\M}_X(U)$ lifts to $\M_X(U)$ for every $q \in Q$.  Define a monoid $R$ by making the right square in the diagram $$ \xym{ \O_X^*(U) \ar[r] \ar@{=}[d] & R \ar[d] \ar[r]^-h & Q \ar[d] \\ \O_X^*(U) \ar[r] & \M_X(U) \ar[r] & \ov{\M}_X(U) }$$ cartesian.  It is easy to see that $R^* = \O_X^*(U)$ and that $h$ is strict and surjective.  If we were not concerned about finiteness, we could simply take $S=R$.  Certainly $R$ is integral since it can be viewed as a submonoid of $\M_X(U) \times Q$.  By Lemma~\ref{lem:strictfiniteness} we can find a finitely generated (hence fine) submonoid $S \subseteq R$ such that $h|S :S \to Q$ is strict and surjective---this is as desired. \end{proof}

\begin{lem} \label{lem:geometricrealization2} Let $X$ be a fine log space, $h : Q \to P$ a refinement of fine monoids, $a : Q \to \M_X(X)$ a monoid homomorphism, $\sms{a} : \sms{X} \to (\Spec Q,\ov{\M}_Q)$ the induced map of sharp monoidal spaces.  The Kato category $\A(X,\sms{a},\Spec \ov{h})$ has a terminal object $$(f_T : T \to X, g_T : \sms{T} \to (\Spec P,\ov{\M}_P))$$ and the map $f_T$ is log smooth.\footnote{We have not defined log smoothness for general log spaces, so this statement is only meaningful for $\LDS$ and $\PLDS$, though it will be clear from the proof that $f_T$ is also log smooth in other contexts.}  If $h$ is a good refinement (resp.\ strong refinement), then $T \cong X \times_{\AA(Q)} \AA(P)$ (resp.\ \eqref{terminalobjectmap} is strict). \end{lem}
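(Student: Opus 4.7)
The plan is to reduce to the good-refinement case via the refinement factorization $h = p \circ i$ from \S\ref{section:monoidrefinements}, where $i : Q \to R := (h^{\rm gp})^{-1}(P) \subseteq Q^{\rm gp}$ is always a good refinement (Lemma~\ref{lem:refinements}\eqref{groupiso}, as $i^{\rm gp}$ is an isomorphism) and $\ov{p} : \ov{R} \to \ov{P}$ admits a unique section $\ov{s}$ with $\ov{s}\ov{h} = \ov{i}$ when $h$ is a refinement (Lemma~\ref{lem:refinements}\eqref{sectionuniqueness}). Before this, I would reduce to working locally on $X$ using Lemma~\ref{lem:Katocategory2} and closure of realizability under composition (Lemma~\ref{lem:realizablemorphisms}), using Lemma~\ref{lem:geometricrealization1} as needed to lift $\sms{a}$ to a genuine chart.

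For the good refinement case ($p$ an isomorphism), I set $T := X \times_{\AA(Q)} \AA(P)$, where the fibered product is taken in fine log spaces and the map $X \to \AA(Q)$ comes from $a$ via the modular interpretation (Proposition~\ref{prop:APdescription}). Let $f_T : T \to X$ be the projection and let $g_T : \sms{T} \to \Spec \ov{P}$ be the composition of $\sms{T} \to \sms{\AA(P)}$ with the canonical map $\sms{\AA(P)} \to \Spec \ov{P}$. Log smoothness of $f_T$ follows because $\AA(h) : \AA(P) \to \AA(Q)$ is log smooth (Example~\ref{example:RPlogsmooth}, since $h$ is monic) and log smoothness is stable under base change (Theorem~\ref{thm:logsmoothness}). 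Universality amounts to the following: given a fine log space $Y$, a map $Y \to \AA(Q)$, and a compatible $g : \sms{Y} \to \Spec \ov{P}$, there is a unique extension $P \to \M_Y(Y)$ of the corresponding $Q \to \M_Y(Y)$. Uniqueness follows from $P \subseteq Q^{\rm gp}$ and integrality of $\M_Y$ (via Lemma~\ref{lem:injectivepullback}); existence follows because the face data encoded by $g$ at each stalk exactly certifies that the prescribed image $h^{\rm gp}(q)$ of each $q \in P$ lies in $\M_{Y,y}$ (not merely $\M_{Y,y}^{\rm gp}$). Strictness of the comparison map $\sms{T} \to \sms{X} \times_{\Spec \ov{Q}} \Spec \ov{P}$ in the strong refinement case then follows from $\ov{p}$ being an isomorphism combined with a stalk-wise check using the cartesian diagram \eqref{exactnessdiagram}.

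For a general refinement, I would invoke Lemma~\ref{lem:realizablemorphisms} to reduce to realizability of $\Spec \ov{p}$, having just handled $\Spec \ov{i}$. The key observation is that, with $(f_i : X_i \to X, g_i : \sms{X_i} \to \Spec \ov{R})$ the terminal object for $i$ constructed above, the terminal object of $\A(X_i, g_i, \Spec \ov{p})$ is simply $(\Id_{X_i}, \ov{s} \circ g_i)$. Indeed, for any $(f' : Y \to X_i, g' : \sms{Y} \to \Spec \ov{P})$ in this Kato category, the compatibility $\Spec \ov{p} \circ g' = g_i \sms{f'}$ and the uniqueness of $\ov{s}$ force $g' = \ov{s} \circ g_i \circ \sms{f'}$, so $f'$ itself is the unique morphism from this object to $(\Id_{X_i}, \ov{s}\circ g_i)$. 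The terminal object for $\A(X, \sms{a}, \Spec \ov{h})$ is then $(f_i, \ov{s} \circ g_i)$, which inherits log smoothness of $f_T = f_i$ from the good refinement step.

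The main obstacle is the existence half of the universality argument in the good refinement case. One must show that the face data encoded by $g: \sms{Y} \to \Spec \ov{P}$ globally promotes the monoid homomorphism $Q \to \M_Y(Y)$ to a monoid homomorphism $P \to \M_Y(Y)$. The candidate extension is the composition $P \subseteq Q^{\rm gp} \to \M_Y(Y)^{\rm gp}$ induced by $a$, and the question is whether its image lies in $\M_Y(Y)$. At each stalk this follows from $g(y) \in \Spec \ov{P}$ specifying a face that contains the image of the relevant elements, but promoting this to a global statement requires a sheaf-theoretic argument combining the integral structure on $\M_Y$, finite generation of $P$, and Lemma~\ref{lem:injectivepullback}-style reasoning; here the fact that $R$ is a \emph{cartesian} pullback in \eqref{exactnessdiagram} is what ultimately makes the construction go through.
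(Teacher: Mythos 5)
Your construction lands on the same terminal object as the paper's proof --- $T = X\times_{\AA(Q)}\AA(R)$ with $g_T$ the composite of the natural map $\sms{T}\to(\Spec R,\ov{\M}_R)$ with $\Spec\ov{s}$ --- and your good-refinement step, the uniqueness arguments via Lemma~\ref{lem:injectivepullback}, and the log smoothness claim all match the paper's reasoning.  (The ``existence'' step you flag as the main obstacle is fine and needs less machinery than you suggest: membership of a section of $\M_Y^{\rm gp}$ in $\M_Y$ can be checked on characteristics, since $\M_{Y,y}$ is the full preimage of $\ov{\M}_{Y,y}$ under $\M_{Y,y}^{\rm gp}\to\ov{\M}_{Y,y}^{\rm gp}$ for an integral log structure.)  The real difference is one of packaging: the paper verifies terminality of $(f_T,g_T)$ in $\A(X,\sms{a},\Spec\ov{h})$ in a single step, while you factor through the intermediate category $\A(X_i,g_i,\Spec\ov{p})$ and invoke the composition mechanism of Lemma~\ref{lem:realizablemorphisms}.

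That factorization is where the one genuine gap sits.  You prove only the uniqueness half of terminality of $(\Id_{X_i},(\Spec\ov{s})\circ g_i)$ in $\A(X_i,g_i,\Spec\ov{p})$: you never check that this pair is an \emph{object} of that category, i.e.\ that $(\Spec\ov{p})\circ(\Spec\ov{s})\circ g_i = g_i$, equivalently that $\ov{s}\,\ov{p}$ becomes the identity after composing with $g_i^\dagger:\ov{R}\to\ov{\M}_{X_i}(X_i)$.  The section identity $\ov{p}\,\ov{s}=\Id_{\ov{P}}$ you are using gives the composite in the wrong order; what is needed is $\ov{s}\,\ov{p}=\Id_{\ov{R}}$, which is not part of the definition of a refinement.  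It is nonetheless true: $\ov{s}\,\ov{p}\,\ov{i}=\ov{s}\,\ov{h}=\ov{i}$, and $\ov{i}^{\rm gp}$ is surjective (see the proof of Lemma~\ref{lem:refinements}\eqref{groupsurjective}), so $(\ov{s}\,\ov{p})^{\rm gp}=\Id$ on $\ov{R}^{\rm gp}$ and hence $\ov{s}\,\ov{p}=\Id$ on the integral monoid $\ov{R}\subseteq\ov{R}^{\rm gp}$.  (In particular $\ov{p}$ is always an isomorphism when $h$ is a refinement, so your intermediate map $\Spec\ov{p}$ is secretly the realization of a \emph{strong} refinement.)  You must supply this argument: without it the claimed terminal object of $\A(X_i,g_i,\Spec\ov{p})$ is not known to belong to the category at all, and the reduction collapses.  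The paper's one-step verification avoids the issue entirely, because checking that $(f_T,g_T)$ is an object of $\A(X,\sms{a},\Spec\ov{h})$ only ever uses $(\Spec\ov{h})\circ(\Spec\ov{s})=\Spec(\ov{s}\,\ov{h})=\Spec\ov{i}$, i.e.\ exactly the identity $\ov{s}\,\ov{h}=\ov{i}$ built into Definition~\ref{defn:monoidrefinement}.
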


\begin{proof}  As in \S\ref{section:monoidrefinements}, we define a (fine) monoid $R$ by the \emph{cartesian} diagram \be \xym@C+10pt{ Q \ar@/_1pc/[rdd] \ar[rd]^i \ar@/^1pc/[rrd]^h \\ & R \ar[r]^-p \ar[d] & P \ar[d] \\ & Q^{\rm gp} \ar[r]^-{h^{\rm gp}} & P^{\rm gp} } \ee so the map $\ov{p}: \ov{R} \to \ov{P}$ has a section $\ov{s} : \ov{P} \to \ov{R}$ satisfying $\ov{i} = \ov{s} \ov{h}$ by definition of ``refinement" (Definition~\ref{defn:monoidrefinement}).  Set $T := X \times_{\AA(Q)} \AA(R)$.  The fibered product here is taken in \emph{integral} log spaces, so $T$ is a fine log space.  Let $f_T : T \to X$ be the projection.  There is a natural $\SMS$ morphism $g_T : \sms{T} \to (\Spec P, \ov{\M}_P)$ obtained by composing the projection $\sms{T} \to \sms{\AA(R)}$, the natural map $\sms{\AA(R)} \to (\Spec R, \ov{\M}_R)$, and the map $\Spec \ov{s}$, using the equalities \be (\Spec P, \ov{\M}_P) & = & (\Spec \ov{P}, \ov{\M}_{\ov{P}}) \\  (\Spec R, \ov{\M}_R) & = & (\Spec \ov{R}, \ov{\M}_{\ov{R}}). \ee    The situation is summed up by the commutative $\SMS$ diagram $$ \xym@C+10pt{ & (\Spec P,\ov{\M}_P) \ar[d]_{\Spec \ov{p}} \ar@{=}[rd] \\ \sms{T} \ar[r]^-{\pi} \ar[d]_{\sms{f}_T} & (\Spec R,\ov{\M}_R) \ar[rd]_{\Spec \ov{i}} \ar[r]^-{\Spec \ov{s}} & (\Spec P, \ov{\M}_P) \ar[d]^-{\Spec \ov{h}} \\ \sms{X} \ar[rr]^-{\sms{a}} & & (\Spec Q,\ov{\M}_Q) } $$ where $g_T = (\Spec \ov{s})\pi$.  In particular, the ``big" square on the bottom commutes, so we have constructed an object of $\A(X,a,h)$ as in the statement of the lemma; denote it by $T$ for simplicity.

We claim that this object $T$ is terminal.  The rest of the statements follow easily:  The map $f_T$ is log smooth because it is a base change of $\AA(i)$, which is log smooth because $i$ is an injective map of monoids (Example~\ref{example:RPlogsmooth}).\footnote{The map $i$ also induces an isomorphism $i^{\rm gp}$, which ensures log smoothness in other contexts where injectivity alone is insufficient.}  If $h$ is a strong refinement, then $\ov{p}$ is an isomorphism by definition and $\ov{s}$ must be its inverse (c.f.\ Lemma~\ref{lem:refinements}\eqref{sectionuniqueness}), so $\Spec \ov{s}$ is strict, hence so is \eqref{terminalobjectmap} because it is a composition of the map $\sms{T} \to \sms{X} \times_{\Spec \ov{Q}} \Spec \ov{R}$ (which is easily seen to be strict by using the obvious charts), and the map \bne{anotherstrictmap} \Id \times \Spec \ov{s} : \sms{X} \times_{\Spec \ov{Q}} \Spec \ov{R} & \to & \sms{X} \times_{\Spec \ov{Q}} \Spec \ov{P}, \ene which is a since it is a base change of the latter.

To see that $T$ is terminal, suppose $f : Y \to X$ is a map of log spaces and $g : \sms{Y} \to (\Spec P,\ov{\M}_P)$ is a map of monoidal spaces making \bne{commdiag1} & \xym{ \sms{Y} \ar[d]_{\sms{f}} \ar[r]^-g & (\Spec P,\ov{\M}_P) \ar[d]^{\Spec \ov{h}} \\ \sms{X} \ar[r]^-{\sms{a}} & (\Spec Q,\ov{\M}_Q) } \ene commute.  Commutativity of \eqref{commdiag1} is equivalent to commutativity of the diagram \bne{commdiag2} & \xym{ \ov{\M}_Y(Y) & \ar[l]_-g \ov{P} \\ \ov{\M}_X(X) \ar[u]^{\ov{f}^\dagger} & \ar[l]_-{\sms{a}} \ov{Q} \ar[u]_{\ov{h}} } \ene of monoids.   First we claim that the composition $$(f^\dagger)^{\gp} a^{\gp} : Q^{\gp} \to \M_Y^{\gp}(Y)$$ takes $R \subseteq Q^{\gp}$ into $\M_Y(Y) \subseteq \M_Y^{\gp}(Y).$  Indeed, this can be checked on the level of characteristics, where it is immediate from the definition of $R$ and commutativity of \eqref{commdiag2}.  So, we have a map of monoids $b := (f^\dagger)^{\gp} a^{\gp} : R \to \M_Y(Y)$ making the diagram of monoids \bne{commdiag3} & \xym{ \M_Y(Y) & \ar[l]_-b R \\ \M_X(X) \ar[u]^{f^\dagger} & \ar[l]_-a Q \ar[u]_i } \ene commute, hence making the diagram of log spaces $$ \xym{ Y \ar[d]_f \ar[r]^-b & \AA(R) \ar[d]^{\Spec i} \\ X \ar[r]^-a & \AA(Q) } $$ commute.  We thus obtain a $\LogEsp/X$ morphism \be k := (f,b) : Y \to T \ee using the fiber product description of $T$.  To show that $k$ determines a $\A(X,\sms{a},\Spec \ov{h})$ morphism to $T$, we must check that $g_T \sms{k} = g$.  By definition of $g_T$, this is equivalent to showing that $g=\ov{b} \ov{s} : \ov{P} \to \ov{\M}_Y(Y)$.  Using the equality $\ov{s}\ov{h} = \ov{i}$, \eqref{commdiag3}, and \eqref{commdiag2}, we see that \bne{mainequality} \ov{b}\ov{s} \ov{h} & = & \ov{b} \ov{i} \\ \nonumber & = & \ov{f}^\dagger \ov{a} \\ \nonumber & = & g \ov{h}.\ene  Since $h$ is a refinement, $\ov{h}^{\rm gp}$ is surjective (Lemma~\ref{lem:refinements}\eqref{groupsurjective}), so $\ov{h}$ is an epimorphism in the category of integral monoids (Lemma~\ref{lem:injectivepullback}), hence the equality \eqref{mainequality} implies the desired equality $g=\ov{b} \ov{s}$ because we assume $\ov{\M}_Y(Y)$ is integral.   

Finally, suppose $$k' = (f,b') : Y \to T = X \times_{\AA(Q)} \AA(R) $$ is another map of log spaces over $X$ with $g_T \sms{k}' = g$ (i.e.\ another $\A(X,\sms{a},\Spec h)$ morphism from $(f,g)$ to $T$).  The fact that $k' =(f,b')$ is a well-defined map to the fibered product means we have a commutative diagram $$ \xym{ \M_Y(Y) & \ar[l]_-{b'} R \\ \M_X(X) \ar[u]^{f^\dagger} & \ar[l]_-a Q \ar[u]_i } $$ of monoids.  Comparing with \eqref{commdiag3}, we see that $b'i = bi$.  But $i^{\rm gp}$ is an isomorphism (\S\ref{section:monoidrefinements}), so $i$ is an epimorphism in the category of integral monoids (Lemma~\ref{lem:injectivepullback}), so $b=b'$ (because $\M_Y(Y)$ is integral), so $k=k'$.  \end{proof}

\begin{thm} \label{thm:geometricrealization}  Refinements (resp.\ strong refinements) in $\fSMS$ (Definition~\ref{defn:LMSrefinement}) are realizable (resp.\ strongly realizable).  \end{thm}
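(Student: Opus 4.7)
The plan is to reduce the theorem to showing that the generating morphisms of Definition~\ref{defn:LMSrefinement} are realizable. Specifically, since Lemma~\ref{lem:realizablemorphisms} says realizable (resp.\ strongly realizable) $\fSMS$ morphisms form a class that is local and closed under composition and base change, it suffices to prove that for each refinement (resp.\ strong refinement) $h' : Q \to P$ of fine monoids, the $\fSMS$ morphism $\Spec \ov{h'} : (\Spec P, \ov{\M}_P) \to (\Spec Q, \ov{\M}_Q)$ is realizable (resp.\ strongly realizable). Fix such an $h'$, a fine log space $X$, and an $\fSMS$ morphism $a : \sms{X} \to (\Spec Q, \ov{\M}_Q)$. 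By Lemma~\ref{lem:Katocategory2}, existence of a terminal object of $\A(X, a, \Spec \ov{h'})$ and strictness of \eqref{terminalobjectmap} may be checked locally on $X$, so fix a point $x \in X$.

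Apply Lemma~\ref{lem:geometricrealization1} to obtain a neighborhood $U$ of $x$, a fine monoid $S$, a strict surjection $\kappa : S \to Q$, and a monoid homomorphism $b : S \to \M_X(U)$ with $\sms{b} = \Spec(\ov{\kappa}) \circ (a|\sms{U})$. Since $\kappa$ is strict, $\ov{\kappa}$ is an isomorphism, hence so is $\Spec \ov{\kappa}$. I claim the Kato categories $\A(U, a|\sms{U}, \Spec \ov{h'})$ and $\A(U, \sms{b}, \Spec \ov{h'\kappa})$ coincide on the nose: the defining relation $\Spec(\ov{h'}) g = (a|\sms{U}) \sms{f}$ of the first becomes $\Spec(\ov{h'\kappa}) g = \sms{b} \sms{f}$ upon composing with the isomorphism $\Spec \ov{\kappa}$, and conversely. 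Furthermore $h'\kappa : S \to P$ is itself a refinement (resp.\ strong refinement) of fine monoids: by Lemma~\ref{lem:refinements}\eqref{sharpening} this reduces to showing that precomposing the refinement $\ov{h'}$ with the isomorphism $\ov{\kappa}$ yields a refinement, and indeed the isomorphism $\ov{\kappa}^{\gp}$ identifies the pullback $R_{\ov{h'\kappa}}$ with $R_{\ov{h'}}$, so the projection $\ov{p}$ and any section $\ov{s}$ witnessing the refinement (resp.\ strong refinement) property of $\ov{h'}$ transfer directly to $\ov{h'\kappa}$.

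Applying Lemma~\ref{lem:geometricrealization2} to $b$ and $h'\kappa$ then yields a terminal object $(f_T : T \to U, g_T : \sms{T} \to (\Spec P, \ov{\M}_P))$ of $\A(U, \sms{b}, \Spec \ov{h'\kappa})$, hence of $\A(U, a|\sms{U}, \Spec \ov{h'})$; in the strong case, strictness of $\sms{T} \to \sms{U} \times_{(\Spec S, \ov{\M}_S)} (\Spec P, \ov{\M}_P)$ transfers through the base isomorphism $\Spec \ov{\kappa}$ to strictness of $\sms{T} \to \sms{U} \times_{(\Spec Q, \ov{\M}_Q)} (\Spec P, \ov{\M}_P)$, which is exactly \eqref{terminalobjectmap}. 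The main technical obstacle is precisely the bookkeeping verifying that the two Kato categories and the two fiber products coincide cleanly through $\Spec \ov{\kappa}$, along with the composition statement that $h'\kappa$ inherits the (strong) refinement property from $h'$; once these are in hand, the theorem is a direct assembly of the reductions above with Lemma~\ref{lem:geometricrealization2}.
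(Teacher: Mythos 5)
Your proposal is correct and follows essentially the same route as the paper: reduce via Lemma~\ref{lem:realizablemorphisms} to the generating morphisms $\Spec\ov{h}$, localize via Lemma~\ref{lem:Katocategory2}, use Lemma~\ref{lem:geometricrealization1} to replace $Q$ by a fine monoid $S$ with the same sharpening mapping to $\M_X(U)$, and conclude with Lemma~\ref{lem:geometricrealization2}. The only difference is that you spell out the bookkeeping (the identification of Kato categories through $\Spec\ov{\kappa}$ and the verification that $h'\kappa$ remains a (strong) refinement) that the paper compresses into the phrase ``replacing $Q$ with a different fine monoid with the same sharpening.''
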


\begin{proof}  From Definition~\ref{defn:LMSrefinement} and Lemma~\ref{lem:realizablemorphisms} we reduce to proving that \be \Spec \ov{h} : (\Spec P,\ov{\M}_P) & \to & (\Spec Q, \ov{\M}_Q) \ee is realizable (resp.\ strongly realizable) for each refinement (resp.\ strong refinement) of fine monoids $h : Q \to P$.  Fix such a map $h$, a fine log space $X$, and an $\SMS$ morphism $b : \sms{X} \to (\Spec Q,\ov{\M}_Q)$.  We must show that $\A(X,b,\Spec \ov{h})$ has a terminal object (and that \eqref{terminalobjectmap} is strict when $h$ is a strong refinement).  By Lemma~\ref{lem:Katocategory2} this is local on $X$, so by Lemma~\ref{lem:geometricrealization1} we can assume, after possibly shrinking $X$ and replacing $Q$ with a different fine monoid with the same sharpening, that $b=\ov{a}$ for a map $a : Q \to \M_X(X)$.  The result then follows from Lemma~\ref{lem:geometricrealization2}. \end{proof}

The geometric realization we have considered in this section is closely related to the realization of fans of \S\ref{section:realizationoffans}.  For example:

\begin{prop} \label{prop:geometricrealization} Let $X$ be a fine log space, $Q$ a fine monoid, $a : Q \to \M_X(X)$ a monoid homomorphism, $r : F \to \Spec Q$ a map of fine fans such that $F$ can be covered by affines $\Spec P$ with $Q \to P$ a good refinement (for example, a group isomorphism---in particular, $r$ can be the blowup of $Q$ at an ideal $I \subseteq Q$).  Then the (log differentiable space part of the) terminal object of $\A(X,\ov{a},r)$ is given by $X \times_{\AA(Q)} \AA(F)$. \end{prop}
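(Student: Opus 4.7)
The plan is to exhibit $T := X \times_{\AA(Q)} \AA(F)$ (fibered product in fine log spaces), equipped with the $\SMS$ morphism $g := \ov{\tau}_F \circ \ov{p}_2 : \ov{T} \to F$, as the (log space underlying the) terminal object of $\A(X, \ov{a}, r)$.  Here $p_2 : T \to \AA(F)$ is the second projection and $\ov{\tau}_F$ is the sharpening of the canonical $\LMS$ counit $\tau_F : |\AA(F)| \to F$.  The compatibility $rg = \ov{a}\,\ov{p}_1$ then follows immediately from the defining square of the fibered product together with naturality of $\tau$ applied to $r : F \to \Spec Q$, so $(p_1, g)$ is a bona fide object of $\A(X, \ov{a}, r)$.

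To verify terminality, I would fix an arbitrary $(f : Y \to X, \phi : \ov{Y} \to F) \in \A(X, \ov{a}, r)$ and use the hypothesized cover $F = \bigcup_i V_i$ with $V_i = \Spec P_i$ and $h_i : Q \to P_i$ a good refinement.  Set $Y_i := \phi^{-1}(V_i)$ and $T_i := p_2^{-1}(\AA(V_i))$.  Since $\AA$ preserves open embeddings (Theorem~\ref{thm:AY}\eqref{oe}) and open embeddings are stable under base change in $\fLogEsp$, we have $T_i \cong X \times_{\AA(Q)} \AA(V_i)$, each $T_i \hookrightarrow T$ is an open embedding, and these cover $T$ (because $\AA$ preserves Zariski covers).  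Naturality of $\tau$ applied to the open embedding $V_i \hookrightarrow F$ identifies $g|_{T_i}$, viewed as a map to $V_i$, with the canonical $\SMS$ morphism $g_i : \ov{T}_i \to V_i$ appearing in Lemma~\ref{lem:geometricrealization2}.  That lemma, applied to the good refinement $h_i$, produces a unique $\fLogEsp/X$-morphism $k_i : Y_i \to T_i$ satisfying $g_i \ov{k}_i = \phi|_{\ov{Y}_i}$.

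The main obstacle is showing that $k_i$ and $k_j$ agree on the overlap $Y_i \cap Y_j = \phi^{-1}(V_i \cap V_j)$.  I would handle this by covering $V_i \cap V_j \subseteq \Spec P_i$ by basic affine opens $W = \Spec S^{-1}P_i$ for faces $S$ of $P_i$, and arguing that each composition $Q \to S^{-1}P_i$ is again a good refinement.  This rests on two facts: being a good refinement forces $h_i^{\gp}$ to be injective (a nonzero element of $\Ker h_i^{\gp}$ would lie in $R_{h_i}$ and contradict $p_{h_i}$ being an isomorphism), and surjectivity of the corresponding $p$ onto $S^{-1}P_i$ follows by writing any $x \in S^{-1}P_i$ as $p - s$ with $p \in P_i$, $s \in S$, and lifting each through the isomorphism $p_{h_i}$.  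With this in hand, Lemma~\ref{lem:geometricrealization2} applied over each such $W$ forces agreement of $k_i$ and $k_j$ on $(Y_i \cap Y_j) \cap \phi^{-1}(W)$, and varying $W$ gives agreement on the full overlap.  The $k_i$ therefore glue into a unique $\fLogEsp/X$-morphism $k : Y \to T$ satisfying $g\ov{k} = \phi$, identifying $X \times_{\AA(Q)} \AA(F)$ with the log space underlying the terminal object of $\A(X, \ov{a}, r)$.
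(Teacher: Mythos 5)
Your proof is correct and follows the same route the paper takes, namely restricting to the affine opens $\Spec P_i$ of $F$, invoking Lemma~\ref{lem:geometricrealization2} for good refinements there, and gluing; the paper compresses all of this into a one-sentence appeal to the ``local nature'' of the construction. The one substantive detail you supply that the paper leaves implicit is the verification that the overlap gluing is consistent, via the observation that the localization $Q \to S^{-1}P_i$ of a good refinement at a face is again a good refinement---this is checked correctly and is exactly what makes the uniqueness clause of Lemma~\ref{lem:geometricrealization2} applicable on intersections.
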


\begin{proof} This is clear from the local nature of the construction of such a terminal object, and the explicit recipe for this terminal object in Lemma~\ref{lem:geometricrealization2}. \end{proof}

\subsection{Saturation} \label{section:saturation}  An integral log space $X$ is called \emph{saturated} iff $\M_{X,x}$ is a saturated monoid (\S\ref{section:monoidbasics}) for every $x \in X$.  It is easy to see that an integral monoid $P$ is saturated iff $\ov{P}$ is saturated, so $X$ is saturated iff $\ov{\M}_{X,x}$ is saturated for all $x \in X$.

Now suppose $X$ is a fine log space so that $\sms{X}$ is a fine sharp monoidal space.  Let $\sms{X}^{\rm sat}$ be the fine sharp monoidal space obtained by sharpening the locally monoidal space $\Spec_{\sms{X}} \ov{\M}_X^{\rm sat}$.  (This sharpening step is necessary---the saturation of a fine, sharp monoid need not be charp.)  The map $\sms{X}^{\rm sat} \to \sms{X}$ is terminal among $\SMS$ morphisms from a saturated sharp monoidal space to $\sms{X}$.  If $\sms{X} \to (\Spec P, \ov{\M}_P)$ is strict, then we have a cartesian $\SMS$ diagram $$ \xym{ \sms{X}^{\rm sat} \ar[r] \ar[d] & (\Spec P^{\rm sat}, \ov{\M}_{P^{\rm sat}}) \ar[d] \\ \sms{X} \ar[r] & (\Spec P,\ov{\M}_P) } $$ as in Corollary~\ref{cor:sharpeningSpec}.  It then follows from Lemma~\ref{lem:refinements}\eqref{saturationrefinement} that $\sms{X}^{\rm sat} \to \sms{X}$ is a strong refinement of sharp monoidal spaces.  By Theorem~\ref{thm:geometricrealization}, the Kato category $\A(X,\sms{X}^{\rm sat} \to \sms{X})$ has a terminal object, which we will denote $X^{\rm sat}$, and the map $\sms{X^{\rm sat}} \to \sms{X}^{\rm sat}$ is strict, hence $X^{\rm sat}$ is an fs log space.  Unravelling the universal property of this terminal object, we see that $X^{\rm sat} \to X$ is terminal among maps from a saturated log space to $X$.

\begin{lem} \label{lem:PLDSsaturation} Let $X$ be a fine positive log differential space.  Then the map $X^{\rm sat} \to X$ is log smooth and a homeomorphism on topological spaces. \end{lem}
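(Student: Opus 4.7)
The plan is to work locally on $X$ and reduce to the manifestly tractable base change of $\RR_+(P^{\rm sat}) \to \RR_+(P)$. Since $X$ is fine, after shrinking to a neighborhood of a given point we may choose a fine chart $P \to \M_X(X)$, equivalently a strict morphism $\iota : X \to \RR_+(P)$. By Lemma~\ref{lem:refinements}\eqref{saturationrefinement} the saturation $P \hookrightarrow P^{\rm sat}$ is a good refinement of fine monoids, and because $\sms{\iota}$ is strict the pullback of $\Spec \ov{P^{\rm sat}} \to \Spec \ov{P}$ along $\sms{\iota}$ is precisely $\sms{X}^{\rm sat} \to \sms{X}$ (cf.\ the pullback formula discussed in \S\ref{section:saturation}, or Corollary~\ref{cor:pullbackSpec} combined with $\ov{\M}_X^{\rm sat} = \sms{\iota}^*\ov{\M}_P^{\rm sat}$). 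Consequently the Kato category defining $X^{\rm sat}$ is equivalent to $\A(X,\sms{\iota},\Spec \ov{P^{\rm sat}}\to \Spec\ov{P})$, and Proposition~\ref{prop:geometricrealization} yields
\[
X^{\rm sat} \;\cong\; X \times^{\rm fine}_{\RR_+(P)} \RR_+(P^{\rm sat})
\]
locally on $X$.

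Given this, log smoothness is immediate: Example~\ref{example:RPlogsmooth} shows that $\RR_+(P^{\rm sat}) \to \RR_+(P)$ is log smooth, since the inclusion $P \hookrightarrow P^{\rm sat}$ of fine monoids is injective, and Theorem~\ref{thm:logsmoothness} delivers stability of log smoothness under base change in fine $\PLDS$.

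For the homeomorphism assertion, the key observation is that the other projection $X^{\rm sat} \to \RR_+(P^{\rm sat})$ is still strict, since strictness is preserved under base change: if $\iota^{-1}\M_{\RR_+(P)} \to \M_X$ is an isomorphism, the pushout defining the log structure on the coherent fibered product collapses to the pulled-back log structure from $\RR_+(P^{\rm sat})$. Hence the log structure on the coherent fibered product is already fs, so it agrees with the fine fibered product and the underlying differentiable space of $X^{\rm sat}$ is simply $\u{X} \times_{\u{\RR}_+(P)} \u{\RR}_+(P^{\rm sat})$. By Gordan's Lemma (Theorem~\ref{thm:dense}\eqref{dense5}), $P \hookrightarrow P^{\rm sat}$ is an injective finite map of fine monoids, whence Lemma~\ref{lem:saturationhomeo}\eqref{U4} says that $|\RR_+(P^{\rm sat})| \to |\RR_+(P)|$ is a homeomorphism. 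Base-changing this homeomorphism along $|X| \to |\RR_+(P)|$ (using that $\DS \to \Top$ commutes with finite inverse limits) gives the desired homeomorphism $|X^{\rm sat}| \to |X|$.

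The main delicate step, and the place where care is needed, is the identification of the fine fibered product with the coherent one---i.e., showing that no nontrivial integration step intervenes in the local formation of $X^{\rm sat}$. This hinges on the strictness of $\iota$, together with the pushout description of log structures on fibered products, which forces the result to be fs on the nose. Once this point is secured, the remainder of the argument is a direct assembly of Example~\ref{example:RPlogsmooth}, Theorem~\ref{thm:logsmoothness}, Theorem~\ref{thm:dense}, and Lemma~\ref{lem:saturationhomeo}.
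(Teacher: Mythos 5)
Your proof is correct and follows essentially the same route as the paper's: localize to a fine chart, identify $X^{\rm sat}$ with the base change of $\RR_+(P^{\rm sat}) \to \RR_+(P)$ along the strict chart map, and conclude via Example~\ref{example:RPlogsmooth} (log smoothness) and Lemma~\ref{lem:saturationhomeo} (homeomorphism). The only difference is that you make explicit why the fibered product taken in fine log spaces has the same underlying space as the coherent one---strictness of the chart collapses the pushout log structure to the pullback of the already-fs structure on $\RR_+(P^{\rm sat})$---a point the paper leaves implicit in its appeal to base-change stability of homeomorphisms.
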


\begin{proof} This is local on $X$ so we can assume there is a fine chart $P \to \M_X(X)$, in which case we have a cartesian diagram $$ \xym{ X^{\rm sat} \ar[d] \ar[r] & \RR_+(P^{\rm sat}) \ar[d]^f \\ X \ar[r] & \RR_+(P) } $$ (with strict horizontal arrows) where $f$ is the $\PLDS$ realization of the saturation $P \into P^{\rm sat}$.  Log smooth maps and homeomorphisms are closed under base change in $\PLDS$, so it is enough to prove that $f$ is log smooth (for this, see Example~\ref{example:RPlogsmooth}) and a homeomorphism (for this, see \S\ref{section:examples}).  \end{proof}

\subsection{Resolution of singularities} \label{section:resolution}  Our basic results on resolution of singularities for log differentiable spaces are obtained formally by combining the ``combinatorial" resolution result Theorem~\ref{thm:SMSresolution} and the ``realization" result Theorem~\ref{thm:geometricrealization}.

\begin{thm} Let $X$ be an fs log differentiable space (resp.\ fine positive log differentiable space).  Then there is a locally projective (hence Euclidean proper), surjective, log smooth morphism $r : X' \to X$ of fs log differentiable spaces (resp.\ fine positive log differentiable spaces) such that $X'$ is free and $r$ is an isomorphism over the free locus of $X$. \end{thm}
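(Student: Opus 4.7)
The plan is to combine the combinatorial resolution of Theorem~\ref{thm:SMSresolution} with the geometric realization machinery of Theorem~\ref{thm:geometricrealization}, exploiting the faithfulness of the sharpening functor $X \mapsto \sms{X}$ from fs log differentiable spaces to fs sharp monoidal spaces. That is, I would first resolve the combinatorial shadow $\sms{X}$ of $X$ by a strong refinement in $\fSMS$, and then ``lift'' this strong refinement to an actual morphism in the category of log differentiable spaces (or positive log differentiable spaces) via the Kato category formalism.

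Concretely, I would proceed as follows. Apply Theorem~\ref{thm:SMSresolution} to the fs sharp monoidal space $\sms{X}$: choose any covering of $X$ by strict charts $a_i : \sms{U}_i \to (\Spec P_i,\ov{\M}_{P_i})$ with $P_i$ fs, let $I$ be the generalized ideal sequence obtained from the canonical resolution $I(P_i)$ of Theorem~\ref{thm:resolutionofmonoids}, and set $F := \ov{\Bl}_I \sms{X}$. Then $r_0 : F \to \sms{X}$ is a strong refinement of $\fSMS$ with $F$ free, which is an isomorphism over $\sms{X}^{\mathrm{free}} = X^{\mathrm{free}}$. By Theorem~\ref{thm:geometricrealization}, $r_0$ is strongly realizable, so the Kato category $\A(X, \mathrm{Id}, r_0)$ admits a terminal object $(r : X' \to X,\, g : \sms{X}' \to F)$ with $g$ strict. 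Strictness of $g$ and freeness of $F$ immediately imply that $X'$ is free.

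It remains to verify the claimed properties of $r$. Log smoothness, locally projectivity (Euclidean properness in the positive setting), surjectivity, and the isomorphism property over $X^{\mathrm{free}}$ are all local on $X$, so by Lemma~\ref{lem:Katocategory2} it suffices to check them for each chart. On a chart $U_i \to \AA(P_i)$, Proposition~\ref{prop:geometricrealization} identifies the restriction of $r$ with the base change of the realized blowup $\AA(\Bl_{I(P_i)} \Spec P_i) \to \AA(P_i)$ along $U_i \to \AA(P_i)$; here we use that each step in the ideal sequence is a blowup of a coherent ideal in a fine fan, hence a group isomorphism (Theorem~\ref{thm:blowup}), hence a good refinement (Lemma~\ref{lem:goodrefinement}). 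Now the realization of such a blowup is log smooth (Lemma~\ref{lem:blowuprealizationlogsmooth}) and projective resp.\ proper Euclidean (Lemma~\ref{lem:blowuprealizationproper}); since log smooth maps and (locally) projective / proper Euclidean maps are stable under base change (Theorem~\ref{thm:logsmoothness} and the remarks in \S\ref{section:propermorphisms}), $r$ inherits these properties locally. For surjectivity, the $\CC$-scheme realization of each blowup $\Bl_{I(P_i)} \Spec P_i \to \Spec P_i$ is surjective by property \eqref{saturatedblowups} of the canonical resolution, so Lemma~\ref{lem:surjectivity2} gives surjectivity of the realization in $\PLDS$ and, since the maps are group isomorphisms and $P_i$ is fs, also in $\LDS$; surjectivity of $r$ then follows from surjectivity of its base change. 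Finally, the isomorphism property over $X^{\mathrm{free}}$ descends by base change from the combinatorial statement in Theorem~\ref{thm:SMSresolution} via Proposition~\ref{prop:geometricrealization}.

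The main obstacle, rather than any single computation, is conceptual bookkeeping: one must match up three different incarnations of the resolution — the combinatorial one on $\sms{X}$, the abstract terminal object in the Kato category, and the concrete fibered product with $\AA(\Bl_{I(P_i)} \Spec P_i)$ on charts — and verify that the gluing implicit in Lemma~\ref{lem:Katocategory2} transports the projectivity, surjectivity, and log smoothness across these identifications. A secondary subtlety is the passage from good refinements of fans to their sharpenings, where one must invoke Lemma~\ref{lem:refinements}\eqref{sharpening} and the cartesian diagram \eqref{sharpenedblowupdiagram} to ensure that the Kato-category-theoretic construction of $r$ coincides locally with the explicit base change described by Proposition~\ref{prop:geometricrealization}.
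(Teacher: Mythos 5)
Your proposal is essentially the paper's own proof: the same two-step strategy (Theorem~\ref{thm:SMSresolution} for the combinatorial resolution of $\sms{X}$, then Theorem~\ref{thm:geometricrealization} to realize the strong refinement as a terminal object of the Kato category with $g$ strict), the same local reduction via Lemma~\ref{lem:Katocategory2} and Proposition~\ref{prop:geometricrealization} to the base change of $\AA(\Bl_{I(P_i)}\Spec P_i) \to \AA(P_i)$, and the same three lemmas (\ref{lem:blowuprealizationproper}, \ref{lem:blowuprealizationlogsmooth}, \ref{lem:surjectivity2}) to transport projectivity/Euclidean properness, log smoothness, and surjectivity.

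There is one small but genuine omission: in the positive case the hypothesis is only that $X$ is \emph{fine}, not fs, whereas Theorem~\ref{thm:SMSresolution} (and the canonical ideal sequence of Theorem~\ref{thm:resolutionofmonoids}) requires the sharp monoidal space to be \emph{fs}. Your argument as written therefore only covers fs positive log differentiable spaces. The paper closes this by first passing to the saturation $X^{\rm sat} \to X$, which by Lemma~\ref{lem:PLDSsaturation} is log smooth and a homeomorphism in $\PLDS$ (and hence has all the required properties), and then applying the fs resolution to $X^{\rm sat}$. You should add this saturation step to handle the merely fine positive case.
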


\begin{proof}  Suppose first that $X$ is an fs log differentiable space.  Let $r : \ov{\Bl}_I \sms{X} \to \sms{X}$ be the ``canonical resolution" of the fs sharp monoidal space $\sms{X}$ from Theorem~\ref{thm:SMSresolution}.  Since $r$ is a refinement, the Kato category $\A(X,r)$ has a terminal object $$(r : X' \to X, g : \sms{X}' \to \ov{\Bl}_I \sms{X}).$$  We claim that the $\LDS$ morphism $r : X' \to X$ for this terminal object is as desired.

Since $r$ is in fact a \emph{strong} refinement, it is \emph{strongly} realizable (Theorem~\ref{thm:geometricrealization}), so $g$ is strict, hence $X'$ is free because $\ov{\Bl}_I \sms{X}$ is free.

Choose an open cover $\{ U_i \}$ of $X$ and fs charts $a_i : P_i \to \M_X(U_i)$ for the $\M_X|U_i$.  This yields charts \be \ov{a}_i : \sms{U}_i & \to & (\Spec P_i, \ov{\M}_{P_i}) \ee for the fs (resp.\ fine) sharp monoidal spaces $\sms{U}_i$.  By construction of $X'$, the log differentiable space $U_i' := r^{-1}(U_i)$ is the terminal object of $\A(U_i,\ov{a}_i,\ov{r}_i)$, where \be \ov{r}_i : \ov{\Bl}_{ \ov{I}_i }  (\Spec P_i, \ov{\M}_{P_i}) & \to &  (\Spec P_i, \ov{\M}_{P_i}) \ee is the canonical resolution of the fs sharp monoidal space $(\Spec P_i, \ov{\M}_{P_i})$.  By the discussion at the end of \S\ref{section:functorialresolution}, the map $\ov{r}_i$ is the sharpening (as the notation suggests) of the canonical resolution \be r_i : \ov{\Bl}_{ I_i }  P_i & \to & \Spec P_i \ee of the fs fan $\Spec P_i$.  The map $r_i$ is a group isomorphism of fs fans (Lemma~\ref{lem:blowup}), hence, in particular, a good refinement (Lemma~\ref{lem:groupisomorphism}), so by Proposition~\ref{prop:geometricrealization} we have a cartesian $\LDS$ diagram $$ \xym{ U_i' \ar[d] \ar[r] & \AA(\Bl_{I_i}) P_i ) \ar[d]^{\AA(r_i)} \\ U_i \ar[r]^{a_i} & \AA(P_i) }$$ (with strict horizontal arrows) where $\AA(r_i)$ is the $\LDS$ realization of $r_i$.  

The properties of $r$ asserted in the theorem are local on $X$ and stable under base change (the part about the free locus is stable under strict base change), so we need only prove that they hold for $\AA(r_i)$.  The map $\AA(r_i)$ is Euclidean proper by Lemma~\ref{lem:blowuprealizationproper}, log smooth by Lemma~\ref{lem:blowuprealizationlogsmooth}, and surjective by Lemma~\ref{lem:surjectivity2}.  It is an isomorphism over the free locus of $\AA(P_i)$ because $r_i$ is an isomorphism over the free locus of $\Spec P_i$. 

Notice that the same argument goes through verbatim with $\LDS$ replaced by $\PLDS$.  In $\PLDS$ we can weaken ``fs" to ``fine" because we can first take the saturation of $X^{\rm sat} \to X$ (this map certainly has all the desired properties in light of Lemma~\ref{lem:PLDSsaturation}), then apply our fs $\PLDS$ resolution theorem to $X^{\rm sat}$. \end{proof}

\begin{cor} Suppose $X \in \PLDS$ is log smooth.  Then there is a manifold with corners $X'$ and a locally projective, surjective, log smooth $\PLDS$ morphism $r : X' \to X$ which is an isomorphism over the free locus of $X$. \end{cor}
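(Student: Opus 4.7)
The plan is to derive this corollary directly from the preceding theorem combined with the characterization of manifolds with corners from Definition~\ref{defn:mfdwcorners} (a manifold with corners is a positive log smooth $\PLDS$ with free log structure) together with the compositional stability of log smoothness (Theorem~\ref{thm:logsmoothness}). There is essentially no new content; the work is bookkeeping.

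First, I would observe that a log smooth $X \in \PLDS$ is by definition fine, so the preceding theorem applies and produces a locally projective, surjective, log smooth morphism $r : X' \to X$ of fine $\PLDS$'s, with $X'$ free and $r$ an isomorphism over the free locus of $X$. All the asserted properties of $r$ in the corollary are then immediate from the theorem itself, so the only remaining thing to verify is that $X'$ is a manifold with corners in the sense of Definition~\ref{defn:mfdwcorners}.

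Unpacking that definition, I need to check two things about $X'$: that its log structure is free (which is already in the conclusion of the theorem) and that $X'$ is positive log smooth, meaning log smooth when viewed as a $\PLDS$ over $\Spec \RR$ equipped with the trivial log structure. For the latter, I would note that $X$ is log smooth over $\Spec \RR$ by hypothesis, and $r : X' \to X$ is log smooth by the theorem; since log smooth morphisms of fine $\PLDS$'s are stable under composition (Theorem~\ref{thm:logsmoothness}), the structure morphism $X' \to \Spec \RR$ is log smooth, i.e., $X'$ is log smooth. Combining with freeness, $X'$ is a manifold with corners.

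There is no main obstacle — the entire difficulty is packaged into the preceding theorem (which in turn rests on Theorem~\ref{thm:geometricrealization} and the combinatorial resolution Theorem~\ref{thm:SMSresolution}) and into the equivalence of Definitions~\ref{defn:mfdwcorners1} and \ref{defn:mfdwcorners} established via Proposition~\ref{prop:mfdcorners} and Theorem~\ref{thm:mfdcorners}. The only mild subtlety worth flagging is the use of Theorem~\ref{thm:logsmoothness}: one must verify its hypothesis that we are in the category of fine $\PLDS$'s, which is exactly the setting output by the theorem, so the composition of log smooth morphisms remains log smooth without any saturation or additional regularity assumption.
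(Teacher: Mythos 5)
Your proof is correct and matches the paper's intended argument: the paper states this corollary without proof, treating it as immediate from the preceding theorem once one notes that a log smooth $X \in \PLDS$ is fine (so the theorem applies) and that $X'$, being free and log smooth over $\Spec \RR$ by composition (Theorem~\ref{thm:logsmoothness}), is a manifold with corners in the sense of Definition~\ref{defn:mfdwcorners}. Nothing is missing.
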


\end{document}